\tikzstyle pt=[fill,black]
\tikzstyle ptbl=[fill,black]
\setlist[enumerate,1]{label=(\roman*)}
\setlist[enumerate,2]{label=(\alph*)}
\theoremstyle{plain}
\newtheorem{thm-int}{Theorem}
\newtheorem{thm}{Theorem}[subsection]
\newtheorem{prop}[thm]{Proposition}
\newtheorem{cor}[thm]{Corollary}
\newtheorem{lem}[thm]{Lemma}
\theoremstyle{definition}
\newtheorem{dfn}[thm]{Definition}
\theoremstyle{remark}
\newtheorem{ex}[thm]{Example}
\newtheorem{rem}[thm]{Remark}
\newtheorem{cons}[thm]{Construction}
\begin{document}
	
	\title{Heights on toric varieties for singular metrics: Local theory}
	\author[G. Peralta]{Gari Y. Peralta Alvarez \orcidlink{0000-0002-1362-8126}}
	\thanks{The author acknowledges support from the Deutsche Forschungsgemeinschaft (DFG, German Research Foundation) under Germany’s Excellence Strategy – The Berlin Mathematics Research Center MATH+ (EXC-2046/1, project ID: 390685689). The author was also supported by the collaborative research center SFB 1085 Higher Invariants - Interactions between Arithmetic Geometry and Global Analysis, funded by the DFG}
	\email{gari.peralta@mathematik.uni-regensburg.de}
	\date{January 20, 2026}
	\subjclass[2020]{Primary 14G40; Secondary 14M25, 32U05, 14T90, 44A15}
	\begin{abstract}
		We show that the (toric) local height of a toric variety with respect to a semipositive torus-invariant singular metric is given by the integral of a concave function over a compact convex set. This generalizes a result of Burgos, Philippon, and Sombra for the case of continuous metrics and answers a question raised by Burgos, Kramer, and Kühn in 2016.
	\end{abstract}
	
	\maketitle
	\setcounter{tocdepth}{1}
	\tableofcontents
	
	\section{Introduction}
	The height of a projective variety $X$ over $\mathbb{Q}$ is an invariant which measures its arithmetic complexity, in an analogous way as the (intersection-theoretic) degree measures its algebraic complexity. This notion generalizes the height of points considered by Northcott, Weil, and others in Diophantine geometry and is a central tool in Arakelov theory. Heights are useful for establishing finiteness statements in arithmetic geometry; the most famous result is Faltings's Theorem \cite{Fal83}, previously known as Mordell's conjecture. It states that a curve $C$ over $\mathbb{Q}$ of genus at least two has a finite number of rational points.
	
	A seminal idea of Arakelov \cite{Ar} is that, by considering an integral model $\mathcal{X}$ of $X$ and adjoining the associated complex-analytic variety $X(\mathbb{C})$, the resulting object behaves similarly to a proper flat variety over a projective curve. Then, for the case of $X$ being a curve, he showed that the height of point in $X$ admits an intersection-theoretic interpretation on the ``arithmetic surface'' $\mathcal{X} \sqcup X(\mathbb{C})$. Gillet and Soulé later generalized this to arbitrary dimension $d = \textup{dim}(X)$ \cite{GS}, \cite{BGS}; we briefly recall this definition. First, they introduce the arithmetic Chow ring $\widehat{\textup{CH}}^{\bullet} (\mathcal{X})_{\mathbb{Q}}$, which comes equipped with a trace map  $\int \colon \widehat{\textup{CH}}^{d+1} (\mathcal{X})_{\mathbb{Q}} \rightarrow \mathbb{R}$. Then, for each hermitian line bundle $\overline{\mathcal{L}} = (\mathcal{L}, \| \cdot \|)$, where $\mathcal{L}$ is a line bundle on $\mathcal{X}$ and $\| \cdot \|$ is a smooth hermitian metric on the line bundle $L_{\mathbb{C}} = \mathcal{L} \otimes \mathbb{C}$ over $X(\mathbb{C})$, they define its first arithmetic Chern class $\widehat{c}_{1}(\overline{\mathcal{L}}) \in \widehat{\textup{CH}}^{1} (\mathcal{X})_{\mathbb{Q}}$. Finally, the height of $\mathcal{X}$ with respect to $\overline{\mathcal{L}}$ is defined as the real number
	\begin{displaymath}
		\textup{h}(\mathcal{X}; \overline{\mathcal{L}}) = \int  \widehat{c}_{1}(\overline{\mathcal{L}})^{d+1}.
	\end{displaymath}
	
	An alternative approach, considered by Zhang \cite{Z95} and Gubler \cite{Gub98}, is to define heights \textit{adelically}, in the sense that a global arithmetic object is completely described by assembling its local pieces. For instance, a hermitian line bundle $\overline{\mathcal{L}}$ on $\mathcal{X}$ induces a metrized line bundle $\overline{L}$ on $X$. That is, a pair consisting of a line bundle $L$ on the generic fibre $X$ of $\mathcal{X}$ and a family of continuous $v$-adic metrics $\lbrace \| \cdot \|_v \rbrace_{v \in \mathfrak{M}(\mathbb{Q})}$ on the line bundles $L_v = L \otimes \mathbb{Q}_v$ over $X_v = X \times_{\mathbb{Q}} \textup{Spec}(\mathbb{Q}_v)$, where $v$ runs over all places $\mathfrak{M}(\mathbb{Q})$ of $\mathbb{Q}$. Then, one defines the local height $\textup{h}(X_v; \overline{L}_v)$ (\ref{local-h-def}), and the global height of Gillet and Soulé is recovered by taking the sum over all places. Concretely, we have the equation
	\begin{displaymath}
		\textup{h}(X; \overline{L}) = \sum_{v \in \mathfrak{M}(\mathbb{Q})} \textup{h}(X_v; \overline{L}_v)  = 	\textup{h}(\mathcal{X}; \overline{\mathcal{L}}).
	\end{displaymath}
	The heights obtained by both approaches generalize to arbitrary cycles on varieties over a global field.
	
	While theoretically satisfactory, the machinery described above has two shortcomings: the technicalities involved in these definitions complicate the computation of examples. Additionally, these definitions do not apply to singular metrics, whose importance is discussed in~Subsection~\ref{int-sm}. The first shortcoming was addressed by Burgos, Philippon, and Sombra, whose work lays out a comprehensive convex-analytic description of Arakelov theory for proper toric varieties~\cite{BPS}, which we briefly recall in~Subsection~\ref{int-tv}. The second shortcoming was studied by Burgos and Kramer in \cite{BK24}, where they define heights for a large class of singular metrics. This article is the first of two parts, where we extend the results of \cite{BPS} to compute heights of toric varieties for the class of singular metrics considered in \cite{BK24}. To do this, we developed a formalism inspired by the theory of adelic line bundles of Yuan and Zhang \cite{Y-Z}. In this article, we present the theory over a local field (\ref{notation}), in both the archimedean and non-archimedean cases. The global theory over a number field will be treated in the second part of the series.
	
	\subsection{The local Arakelov theory of proper toric varieties}\label{int-tv}
	Let $K$ be a local field and $d$ be a positive integer. We denote by $N$ the group of cocharacters of the $d$-dimensional split torus $\mathbb{G}_{m}^{d}$ over $K$. Then, for a complete fan  $\Sigma$ in $N_{\mathbb{R}}$, there are functorial assignments
	\begin{center}
		\begin{tikzcd}
			\Sigma \arrow[r, mapsto] & X_{\Sigma} \arrow[r, mapsto, "{ \textup{an} }"] &  X_{\Sigma}^{\textup{an}}  \arrow[r, mapsto, "{ \textup{Trop}}"] & N_{\Sigma},
		\end{tikzcd}
	\end{center}
	where $X_{\Sigma}/K$ is the toric variety associated to $\Sigma$, $X_{\Sigma}^{\textup{an}}$ is its Berkovich analytification, and $N_{\Sigma}$ is the tropical toric variety associated to $\Sigma$ (see~\ref{tropical-toric-polytope}). The guiding principle is that torus invariant objects on $X_{\Sigma}$ or $X_{\Sigma}^{\textup{an}}$ are in correspondence with an object on $N_{\Sigma}$. A \textit{toric arithmetic divisor} $\overline{D}$ on $X_{\Sigma}$ is a pair $(D,g_{D})$ consisting of a Cartier divisor $D$ on $X_{\Sigma}$ and a Green's function on $X_{\Sigma}^{\textup{an}}$ of continuous type, both being torus invariant. Any such function $g_{D}$ induces a continuous metric on the line bundle $\mathcal{O}_{X_{\Sigma}}(D)$. These divisors form a group, denoted $\overline{\textup{Div}}_{\mathbb{T}}(X_{\Sigma})$. Then, Burgos, Philippon, and Sombra establish a correspondence
	\begin{center}
		\begin{tikzcd}
			\lbrace  D \textup{ nef, } g_{D} \textup{ of continuous and psh} \rbrace \arrow[r , leftrightarrow] & \lbrace  \Delta \textup{ convex polytope, } \vartheta \colon \Delta \rightarrow \mathbb{R} \textup{ continuous concave} \rbrace
		\end{tikzcd}
	\end{center}
	This is obtained by assigning to a nef toric divisor $D$ its corresponding convex polytope $\Delta_D$, and to a Green's function $g_D$ of continuous and psh type the Legendre-Fenchel transform $\vartheta_{\overline{D}}$ of its tropicalization. Moreover, they give the following integral formula for the local toric height of $X_{\Sigma}$ with respect to $\overline{D}$
	\begin{displaymath}
		\textup{h}^{\textup{tor}}(X_{\Sigma}; \overline{D}) = (d+1)!  \int_{\Delta_{D}} \vartheta_{\overline{D}}.
	\end{displaymath}
	The local toric height is defined as a difference of local heights (\ref{local-toric-h-proj-def}). However, in the global setting, one recovers the usual global height by summing the local toric heights over all places. In this paper, we generalize these results to the following kind of singular metrics.
	
	\subsection{Singular metrics}\label{int-sm} For each natural number $g \geq 1$, denote by $\mathcal{A}_{g}$ the moduli stack of principally polarized abelian schemes of dimension $g$ over $\mathbb{Z}$. Then, let $\overline{\omega} = (\omega, \| \cdot \|_{\textup{inv}})$ be the hermitian vector bundle on $\mathcal{A}_{g}$ consisting of the Hodge bundle $\omega$ with its natural invariant metric $ \| \cdot \|_{\textup{inv}}$. The metric $ \| \cdot \|_{\textup{inv}}$ is logarithmically singular as it approaches the boundary of $\mathcal{A}_{g}$. In particular, the arithmetic intersection theory of Gillet and Soul\'{e} is no longer valid in this situation. 
	
	Building on the results of Kühn~\cite{K01} for the case of $g=1$ (also independently obtained by Bost in~\cite{B98}), Burgos, Kramer and Kühn generalized the theory of Gillet and Soul\'{e}, taking into account hermitian vector bundles with logarithmically singular metrics (\cite{BKK5},~\cite{BKK7}). In particular, they were able to define the height $\textup{h}(\mathcal{A}_{g}; \, \overline{\omega})$. Their constructions rely on the fact that logarithmic singularities of metrics are mild enough so that Chern-Weil theory remains valid. That is, the geometric degree $\textup{deg}(\omega_{\mathbb{C}})$ of the line bundle $\omega_{\mathbb{C}}$ is given by the formula
	\begin{displaymath}
		\textup{deg}(\omega_{\mathbb{C}})= d! \int_{\mathcal{A}_{g}(\mathbb{C})} c_1(\omega_{\mathbb{C}},  \| \cdot \|_{\textup{inv}})^{d},
	\end{displaymath}
	where $d = g(g+1)/2$ is the (complex) dimension of $\mathcal{A}_{g}(\mathbb{C})$, and $c_1(\omega_{\mathbb{C}},  \| \cdot \|_{\textup{inv}})$ is the first Chern class of $(\omega_{\mathbb{C}},  \| \cdot \|_{\textup{inv}})$. Note that the volume of $\mathcal{A}_{g}(\mathbb{C})$ appearing as a factor on the right-hand side was made explicit by Siegel in \cite{Sie36}. The height $\textup{h}(\mathcal{A}_{g}; \, \overline{\omega})$ has been computed in \cite{B98} and \cite{K01} for $g=1$, in \cite{JvP22} for $g=2$, and in \cite{BBK07} for some Hilbert modular surfaces.
	
	For each $g\geq 1$, there is a universal abelian scheme $\pi \colon \mathcal{B}_g \rightarrow \mathcal{A}_g$. A natural question is whether Burgos-Kramer-Kühn's extension can be used to compute the height of $\mathcal{B}_g$ with respect to the hermitian line bundle $\overline{\mathcal{J}} \coloneqq \pi^{\ast} \overline{\omega} \otimes \overline{\mathcal{L}}$, where $ \overline{\mathcal{L}} = (\mathcal{L}, \| \cdot \|_{\textup{inv}})$ is the hermitian line bundle corresponding to twice the principal polarization on $\mathcal{B}_g$ together with its natural invariant metric. In their subsequent article~\cite{BKK16}, the authors investigate the case of $g=1$. They show that the natural invariant metric on the line bundle $L_{\mathbb{C}} = \mathcal{L} \otimes \mathbb{C}$ acquires singularities more severe than logarithmic. In particular, Chern-Weil theory no longer holds. Nevertheless, they compute a degree by replacing divisors with the so-called \textit{b-divisors}. This degree is obtained as a limit of geometric intersection numbers, computed over an infinite chain of successive blow-ups of $\mathcal{B}_g (\mathbb{C})$ along the boundary where the metric becomes singular.
	
	Finally, we mention the connection with the toric setting. At the end of the article~\cite{BKK16}, the authors give the following example of a toric metric with the same kind of singularities as the invariant metric on $L$. Let $\mathbb{P}^{2}_{\mathbb{C}}$ be the projective plane over $\mathbb{C}$, with homogeneous coordinates $(x_0; x_1; x_2)$. Then, we consider the singular metric $\| \cdot \|_{\textup{sing}}$ on the line bundle $\mathcal{O}_{\mathbb{P}^{2}_{\mathbb{C}}}(1)$ induced by the Green's function
	\begin{displaymath}
		- \log \| x_0 \|_{\textup{sing}} \coloneqq \begin{cases} \frac{\log |x_1/x_0|\log |x_2/x_0|}{\log |x_1/x_0| + \log |x_2/x_0|} & |x_0| \geq \max \lbrace |x_1|,|x_2| \rbrace \\
			\max \lbrace \log |x_1/x_0|, \log |x_2/x_0| \rbrace & \textup{otherwise.} \end{cases}
	\end{displaymath}
	This metric is singular, $\mathbb{S}$\textit{-invariant} and \textit{plurisubharmonic}. Moreover, they conjecture that this metric corresponds to a nef toric b-divisor, and in turn, it should correspond to the compact convex set
	\begin{displaymath}
		\Delta_{\textup{sing}} \coloneqq \lbrace (x,y) \in M_{\mathbb{R}} \, \vert \, x,y \geq 0, \, x+y \leq 1, \, \sqrt{x} + \sqrt{y} \geq 1 \rbrace.
	\end{displaymath}
	This toric conjecture was shown to be true by Botero in~\cite{Bot19}, where she developed a theory of toric b-divisors and defined the corresponding geometric intersection numbers. This marks the starting point of this article, which is motivated by two main questions:
	\begin{enumerate}
		\item Is there a correspondence between the set of $\mathbb{S}$-invariant Green's functions of continuous and psh type for the nef toric b-divisor $D$ and the set of concave functions $\vartheta$ on the compact convex set $\Delta_D$?
		\item Does the integral formula for the local toric height hold in this situation?
	\end{enumerate}
	In this article, we answer both questions affirmatively.
	
	\subsection{Main results}
	While the theory of b-divisors is nicely suited for geometric problems, such as the computation of intersection numbers previously mentioned, Yuan and Zhang's theory of \textit{adelic divisors} on quasi-projective varieties~\cite{Y-Z} is better suited for arithmetic problems. We propose toric analogs of their constructions and develop the corresponding convex-analytic descriptions. Subsequently, we use these descriptions to extend the toric height of Burgos, Philippon, and Sombra to account for the singular metrics considered by Burgos, Kramer, and Kühn~(\cite{BKK16}).
	
	Let $X_{\Sigma}/K$ be a projective toric variety and $\overline{L}=(L,\| \cdot \|)$ be a toric metrized line bundle on $X_{\Sigma}$, whose metric $\| \cdot \|$ is singular. A crucial observation is that the singularities of a toric metric are contained in a torus-invariant Zariski closed proper subset. Then, the restriction of the metric $\| \cdot \|$ to the split torus $U = \mathbb{G}_{m}^{d} / K$ is a continuous. We emulate Yuan and Zhang's idea, that is, to consider the class of toric metrics on $U^{\textup{an}}$ that can be approximated by restrictions of continuous metrics on the compactifications $X_{\Sigma^{\prime}}^{\textup{an}}$ of  $U^{\textup{an}}$. Thus, we define the group of \textit{toric compactified arithmetic divisors} of $U$ over $K$ as the completion $\overline{\textup{Div}}_{\mathbb{T}}(U/K)$ with respect to the boundary topology of the direct limit
	\begin{displaymath}
		\varinjlim_{\Sigma^{\prime}} \overline{\textup{Div}}_{\mathbb{T}}(X_{\Sigma^{\prime}}) \otimes {\mathbb{Q}},
	\end{displaymath}
	where $\Sigma^{\prime}$ runs over all projective fans in $N_{\mathbb{R}}$, ordered by refinement (see~\ref{cptf-dfn-local} for details). This is an example of an \textit{abstract divisoral space}, a notion recently introduced by Cai and Gubler in~\cite{CGub24} which formalizes the compactification procedure first described~in~\cite{Y-Z}.  An element $\overline{D}$ of this group is given by a Cauchy sequence $\lbrace (D_{n}, g_{D_n}) \rbrace_{n \in \mathbb{N}}$ of toric arithmetic divisors $\overline{D}_{n}$ on $X_{\Sigma_n}$. If the toric compactified arithmetic divisor $\overline{D}$ is nef, we show that the convex polytopes $\Delta_{D_n}$ converge in the Hausdorff distance to a compact convex set $\Delta_{D}$, and the functions $ \vartheta_{\overline{D}_{n}}$ converge to a closed concave function  $\vartheta_{\overline{D}}: \Delta_{D} \rightarrow \mathbb{R}_{- \infty }$ (which is finite on the relative interior of $\Delta_D$). The convergence of polytopes recovers Botero's result on toric b-divisors~\cite{Bot19}. Then, we have the following correspondence.
	\begin{thm-int}\label{1}
		The assignment $\overline{D} \mapsto  \vartheta_{\overline{D}}$ induces a bijective correspondence
		\begin{center}
			\begin{tikzcd}
				\lbrace \overline{D} \textup{ nef divisor on } U/K \rbrace \arrow[r , leftrightarrow] & \lbrace \vartheta: \Delta \rightarrow \mathbb{R}_{ - \infty } \textup{ closed concave on } \Delta \textup{ compact convex} \rbrace.
			\end{tikzcd}
		\end{center}
		The singularities of the associated metric are determined by the growth of $\vartheta_{\overline{D}}$ along the boundary of the compact convex set $\Delta_{D}$ associated to the toric compactified divisor $D$.
	\end{thm-int}
	This result is proven in~\ref{roof-functions-UK}. Note that it already answers the first question posed in the motivation for this article (in the local case). It is worth mentioning that in \cite{Son24}, Song has independently arrived at a related result using substantially different methods. Concretely, in~Theorem~4.5~of~\cite{Son24}, he characterizes a certain class of toric compactified divisors which are given by the pullback under the tropicalization map of a conical and continuous function on $N_{\mathbb{R}}$. In Theorem~\ref{arith-toric-div-UK}, we estabish an isomorphism 
	\begin{displaymath}
		\overline{\textup{Div}}_{\mathbb{T}}(U/K) \cong \mathcal{C}(N_{\mathbb{R}}) \oplus \mathcal{SL}(N_{\mathbb{R}}),
	\end{displaymath}
	where $\mathcal{C}(N_{\mathbb{R}})$ is the space of conical continuous functions in $N_{\mathbb{R}}$, and $\mathcal{SL}(N_{\mathbb{R}})$ is the space of sublinear functions on $N_{\mathbb{R}}$. Therefore, the toric divisors considered by Song form a subgroup of the group of toric compactified arithmetic divisors considered in this article.
	
	Our main result generalizes the local toric height of Burgos, Philippon, and Sombra to our formalism of toric compactified arithmetic divisors. In Definition~\ref{local-toric-h-dfn}, we introduced the \textit{local toric height} of $U$ with respect to $\overline{D}$. This definition is justified by a crucial example, discussed in~\ref{CEH-1}~and~\ref{motivation-local-toric-h}. Then, in Theorems~\ref{local-toric-h-UK-1}~and~\ref{local-toric-h-UK-2}, we use arguments from convex analysis to prove the following formula, which answers the second question. 
	\begin{thm-int}\label{2}
		The local toric height of $U/K$ with respect to $\overline{D} = \lbrace \overline{D}_{n} \rbrace_{n \in \mathbb{N}}$ satisfies
		\begin{align*}
			\textup{h}^{\textup{tor}}(U; \overline{D}) = (d+1)!  \int_{\Delta_{D}} \vartheta_{\overline{D}} = (d+1)! \,  \lim_{n \in \mathbb{N}} \int_{\Delta_{D_n}} \vartheta_{\overline{D}_{n}}.
		\end{align*}
		The height is either finite or equal to $- \infty$; it is finite if and only if $\vartheta_{\overline{D}} \in L^1 (\Delta_{D})$.
	\end{thm-int}
	As mentioned in Remark~\ref{mixed-energy-2}, our generalized local toric height is a particular instance of the mixed relative energy of Burgos and Kramer, introduced~in~\cite{BK24}. Nevertheless, the integral formula above makes the local toric height suitable for concrete computations, such as the ones in Examples~\ref{sum-not-finite}~and~\ref{local-height-singular}.
	
	\subsection{Acknowledgments}
	We sincerely thank José I. Burgos Gil and Jürg Kramer for introducing the author to this subject and for the numerous discussions regarding the results presented here. We also thank Marco Flores, Walter Gubler, and Klaus Künnemann for their helpful observations. 
	
	\subsection{Notation and conventions}\label{notation}
	We introduce the notations and conventions that will appear throughout this manuscript. The natural numbers $\mathbb{N}$ include $0$. For each $a \in \mathbb{R}$ denote:
	\begin{itemize}
		\item The set $\mathbb{R}_{>a} \coloneqq \lbrace x \in \mathbb{R} \, \vert \, x > a \rbrace.$ When $a=0$, these are the positive reals.
		\item The set $\mathbb{R}_{\geq a} \coloneqq \lbrace x \in \mathbb{R} \, \vert \, x \geq a \rbrace.$ When $a=0$, these are the non-negative reals.
		\item The set $\mathbb{R}_{- \infty} \coloneqq \lbrace - \infty \rbrace \cup \mathbb{R}$ with the conventions $-\infty < x$ and $-\infty + x = -\infty$ for all $x \in \mathbb{R}$. We equip it with the topology that makes the map $\log : \mathbb{R}_{\geq 0} \rightarrow \mathbb{R}_{- \infty}$ a homeomorphism. We define $\mathbb{R}_{\infty} \coloneqq \mathbb{R} \cup \lbrace \infty \rbrace$ and $\mathbb{R}_{\pm \infty} \coloneqq \lbrace \pm \infty \rbrace \cup \mathbb{R} $ analogously.
	\end{itemize}
	A ring $R$ is always assumed to be commutative, with multiplicative unit $1$. Denote by $R^{\times}$ the group of invertible elements. If $A$ is a finitely generated Abelian group, we write $A_{R} \coloneqq A \otimes_{\mathbb{Z}} R$, the extension of scalars to $R$.
	
	By a \textit{lattice} $N$, we mean a free Abelian group of finite rank $\textup{rk}(N)=d$. Denote by $ M \coloneqq N^{\vee} = \textup{Hom}(N,\mathbb{Z})$ its dual lattice. The natural pairing $\langle \cdot , \cdot \rangle \colon M \times N \rightarrow \mathbb{Z}$ given by evaluation
	\begin{displaymath}
		\langle m , n \rangle \coloneqq m(n).
	\end{displaymath}
	To ease notation, we write $\langle \cdot, \cdot \rangle$ for the induced pairing $\langle \cdot, \cdot \rangle_{R}$ obtained by extending scalars to $R$. When $R = \mathbb{R}$, we equip $N_{\mathbb{R}}$ with the Euclidean topology induced by a norm $\| \cdot \|$. Let $x \in N_{\mathbb{R}}$ and $r \geq 0$, we denote by $\textup{B}(x,r)$ and $\overline{\textup{B}}(x,r)$ the open and closed balls of radius $r$ centred at $x$ respectively. Given a closed subset $C \subset N_{\mathbb{R}}$, the distance from $x$ to $C$ is denoted by $\textup{dist}(x, C) \in \mathbb{R}_{\geq 0}$. We abuse notation slightly by using the same symbols for the corresponding objects on $M_{\mathbb{R}}$, which we equip with the dual norm. Finally, let $\textup{Vol}_M$ be the Haar measure on $M_{\mathbb{R}}$, normalized so that $M$ has covolume $1$.
	
	If $X$ is a topological space, we denote by $C^0 (X)$ the space of continuous real-valued functions and by $C^{0}_{\textup{bd}} (X) \subset C^0 (X)$ the subspace of bounded functions. The space $C^{0}_{\textup{bd}} (X)$ is a Banach space with the uniform norm $\| \cdot \|_{\infty}$. If $X$ is compact, then $C^{0}_{\textup{bd}} (X) = C^0 (X)$. If $X$ admits a differentiable structure, denote by $C^k (X)$ the space of $k$-continuously differentiable real-valued functions on $X$, with $k=\infty$ for smooth functions.
	
	By a local field $(K,| \cdot |)$, we mean either $(\mathbb{R}, | \cdot |_{\infty})$, $(\mathbb{C}, |\cdot |_{\infty})$ with the archimedean absolute value $|\cdot |_{\infty}$, or a field $K$ complete with respect to a discrete absolute value. In the non-archimedean case, we denote by $v \coloneqq - \log | \cdot |$ the associated discrete valuation, $\mathcal{O}_K$ its valuation ring, $\mathfrak{m}$ the unique prime ideal of $\mathcal{O}_K$. We let $k \coloneqq \mathcal{O}_K / \mathfrak{m}$ be its residue field.
	
	All of our schemes are assumed to be Noetherian. We denote by $\mathcal{O}_{\mathcal{X}}$ and $K(\mathcal{X})$ the structure sheaf and function field of the (integral) scheme $\mathcal{X}$, respectively.  By a $d$-dimensional \textit{variety} $\mathcal{X}$ over a base scheme $\mathcal{S}$, denoted $\mathcal{X}/\mathcal{S}$, we mean an integral, separated, normal scheme $\mathcal{X} \rightarrow \mathcal{S}$, which is flat and of finite type over $\mathcal{S}$ and has relative dimension $d = \textup{dim}_{\mathcal{S}} (\mathcal{X})$. We say that the variety $\mathcal{X}/\mathcal{S}$ is projective (resp. quasi-projective, smooth) if the structural morphism $\mathcal{X} \rightarrow \mathcal{S}$ is projective (resp. quasi-projective, smooth). A morphism of varieties over $\mathcal{S}$ (or $\mathcal{S}$-morphism) is a morphism $\mathcal{X}_1 \rightarrow \mathcal{X}_2$ compatible with the respective structural morphisms.  We reserve plain Roman letters for schemes over a field, e.g., $X \rightarrow \textup{Spec}(K)$.
	
	Let $\mathcal{X}/\mathcal{S}$ be a variety and $\textup{Div}(\mathcal{X})$ be the group of Cartier divisors on $\mathcal{X}$. We have a morphism $\textup{div}\colon K(\mathcal{X})^{\times} \rightarrow \textup{Div}(\mathcal{X})$ given by the assignment $f \mapsto \textup{div}(f)$. The image of the morphism $\textup{div}$ is the subgroup $\textup{Pr}(\mathcal{X})$ of \textit{principal} Cartier divisors on $\mathcal{X}$. The \textit{Cartier class group} of $\mathcal{X}$ is the quotient $\textup{Cl}(\mathcal{X})\coloneqq \textup{Div}(\mathcal{X}) / \textup{Pr}(\mathcal{X})$. Denote by $\textup{Pic}(\mathcal{X})$ the Picard group of $\mathcal{X}$, i.e., the group of isomorphism classes of line bundles over $\mathcal{X}$. Let $\mathcal{L}$ be a line bundle over $\mathcal{X}$ and $s \neq 0$ be a rational section of $\mathcal{L}$. The variety $\mathcal{X}$ is integral, therefore the assignment $s \mapsto \textup{div}(s)$ induces an isomorphism $\textup{Pic}(\mathcal{X}) \cong \textup{Cl} (\mathcal{X})$. Since $\mathbb{Q}$ is flat over $\mathbb{Z}$, this isomorphism is preserved after scalar extension. Given $\mathcal{D} \in \textup{Div}(\mathcal{X})$, we denote by $\mathcal{O}_{\mathcal{X}}(\mathcal{D})$ the associated line bundle.
	
	\section{Review of Yuan-Zhang's theory of compactified divisors}
	In this section, we summarize some concepts and results on the Arakelov theory of quasi-projective varieties, introduced by Yuan and Zhang in \cite{Y-Z}. Our approach differs slightly; we avoid the use of $(\mathbb{Q},\mathbb{Z})$-divisors and base-valued schemes. Since we only consider the geometric and local cases, we opt for the term \textit{``compactified divisor''} instead of \textit{``adelic divisor''}. In the literature, the latter is only used in the global arithmetic setting, for example, in~\cite{BPS}~and~\cite{BMPS}.
	
	\subsection{The geometric case} Throughout this subsection, we let the base scheme $\mathcal{S}$ be the spectrum of a field  $K$ or a Dedekind domain $\mathcal{O}_K$, and fix a quasi-projective variety $\mathcal{U} / \mathcal{S}$. When we want to specify $\mathcal{S}$, we often abuse notation by writing the ring instead of its spectrum. This should not cause any confusion.
	\begin{dfn}
		By a \textit{projective model of }$\mathcal{U}$\textit{ over }$\mathcal{S}$, we mean a projective variety $\mathcal{X} / \mathcal{S}$ together with an open embedding $\pi \colon \mathcal{U} \rightarrow \mathcal{X}$ over $\mathcal{S}$. A \textit{morphism of projective models of} $\mathcal{U}$ \textit{over} $\mathcal{S}$ is a proper $\mathcal{S}$-morphism $f \colon \mathcal{X}_1 \rightarrow \mathcal{X}_2$ between projective models of $\mathcal{U}$ satisfying $f \circ \pi_1 = \pi_2$. In this case, we say that $\mathcal{X}_1$ \textit{dominates} $\mathcal{X}_2$. We denote by $\textup{PM}(\mathcal{U}/\mathcal{S})$ the category of projective models of $\mathcal{U}$ over $\mathcal{S}$.
	\end{dfn}
	\begin{rem}
		If $\mathcal{S}$ is the spectrum of a Dedekind domain $\mathcal{O}_K$ and $U/K$ is a variety, an \textit{arithmetic model of} $U$ \textit{over} $\mathcal{O}_K$ is a projective variety $\mathcal{X}/\mathcal{O}_K$ together with an open embedding of $U$ into the generic fibre $X$ of $\mathcal{X}$. Assume further that $\mathcal{O}_K$ is a local ring (and thus a DVR). In this setting, the notions of arithmetic and projective model of $U$ over $\mathcal{O}_K$ coincide. 
	\end{rem}
	\begin{rem}\label{cofiltered}
		The category $\textup{PM}(\mathcal{U}/\mathcal{S})$ is cofiltered. Indeed, given projective models $\pi_i  \colon \mathcal{U} \rightarrow \mathcal{X}_i$, $i~\in~\lbrace1,2\rbrace$, denote by $\Delta_{\mathcal{U}}$ the diagonal morphism. The normalization $\mathcal{X}$ of the scheme-theoretic image of $\mathcal{U}$ under the map $(\pi_1 \times \pi_2) \circ \Delta_{\mathcal{U}}$ induces a model dominating both of them. For details, see~p.61~in~Section~2.7~of~\cite{Y-Z}.
	\end{rem}
	Given a morphism of varieties $f \colon \mathcal{X} \rightarrow \mathcal{Y}$ that is either flat or dominant. there is an induced pullback morphism $f^{\ast} \colon \textup{Div}(\mathcal{Y}) \rightarrow \textup{Div}(\mathcal{X})$ (See \href{https://stacks.math.columbia.edu/tag/01WQ}{Section~01WQ}~of~\cite{Stacks}). In the case of a projective model $\pi \colon \mathcal{U} \rightarrow \mathcal{X}$, the pullback morphism is the restriction map to $\mathcal{U}$. We often denote the restriction $\pi^{\ast} \mathcal{D}$ of a divisor $\mathcal{D}$ as $\mathcal{D}|_{\mathcal{U}}$. This map gives a notion of models for divisors.
	\begin{dfn}
		Let $\mathcal{X}$ be a projective model of $\mathcal{U}$ over $\mathcal{S}$, and consider a divisor $\mathcal{D} \in \textup{Div}(\mathcal{U})_{\mathbb{Q}}$. A \textit{model of} $\mathcal{D}$ \textit{on} $\mathcal{X}$ is a $\mathbb{Q}$-divisor $\mathcal{D}^{\prime}$ on $\mathcal{X}$ such that $\mathcal{D}^{\prime}|_{\mathcal{U}} = \mathcal{D}$. The \textit{group of model divisors of }$\mathcal{U}$ \textit{over }$\mathcal{S}$ is defined as the direct limit
		\begin{displaymath}
			\textup{Div}(\mathcal{U}/\mathcal{S})_{\textup{mod}}\coloneqq \varinjlim_{\mathcal{X} \in \textup{PM}(\mathcal{U}/\mathcal{S})} \textup{Div}(\mathcal{X})_{\mathbb{Q}}.
		\end{displaymath}
	\end{dfn}
	In other words, a \textit{model divisor} $\mathcal{D} \in \textup{Div}(\mathcal{U}/\mathcal{S})_{\textup{mod}}$ is an equivalence class of models of a given $\mathbb{Q}$-divisor $\mathcal{E}$ on $\mathcal{U}$. Here,we declare the models $\mathcal{D}_1$ and $\mathcal{D}_2$ of $\mathcal{E}$ equivalent, denoted by $\mathcal{D}_1 \sim \mathcal{D}_2$, if their pullbacks to a common projective model $\mathcal{X}$ coincide. We abuse notation by denoting a model divisor $\mathcal{D}$ and a representative of its associated equivalence class by the same symbol. This should not cause any confusion. Moreover, if $\mathcal{D}$ has a representative on a model $\mathcal{X}$, we often say that $\mathcal{D}$ \textit{is defined on }$\mathcal{X}$.
	
	The notion of effectivity extends to the level of model divisors. Concretely, a model divisor $\mathcal{D}$ is \textit{effective} if it is represented by an effective Cartier $\mathbb{Q}$-divisor. By Lemmas~2.3.5~and~2.3.5~in~\cite{Y-Z}, this is well-defined. As usual, effectivity induces a partial order: Given model divisors $\mathcal{D}_1$ and $\mathcal{D}_2$, we say that $\mathcal{D}_1 \geq \mathcal{D}_2$ if the model divisor $\mathcal{D}_1 - \mathcal{D}_2$ is effective. This partial order can be used to define a topology on the group $\textup{Div}(\mathcal{U}/\mathcal{S})_{\textup{mod}}$.
	\begin{dfn}
		By a \textit{boundary divisor} of $\mathcal{U}/\mathcal{S}$, we mean a pair $(\mathcal{X}, \mathcal{B})$ consisting of a projective model $\pi \colon \mathcal{U} \rightarrow \mathcal{X}$ and an effective divisor $\mathcal{B} \in \textup{Div}(\mathcal{X})$ satisfying the support condition $|\mathcal{B}| = \mathcal{X} \setminus \mathcal{U}$. The \textit{boundary norm} $\| \cdot \|_{\mathcal{B}}$ on $\textup{Div} (\mathcal{U}/\mathcal{S})_{\textup{mod}}$ is given by
		\begin{displaymath}
			\| \mathcal{D} \|_{\mathcal{B}} \coloneqq \inf \lbrace \varepsilon \in \mathbb{Q}_{> 0} \, \vert \, - \varepsilon \cdot \mathcal{B} \leq \mathcal{D} \leq \varepsilon \cdot \mathcal{B} \rbrace,
		\end{displaymath}
		with the convention that $\inf \emptyset = \infty$. The \textit{boundary topology} on $\textup{Div} (\mathcal{U}/\mathcal{S})_{\textup{mod}}$ is the topology induced by $\| \cdot \|_{\mathcal{B}}$.
	\end{dfn}
	\begin{rem}\label{extended-norm}
		The boundary norm $\| \cdot \|_{\mathcal{B}}$ is not a norm in the usual sense; it attains the value~$\infty$. Indeed, for a model divisor $\mathcal{D}$ we have $\| \mathcal{D} \|_{\mathcal{B}} = \infty$ if and only if $\mathcal{D}|_{\mathcal{U}} \neq 0$. Nevertheless, the boundary norm is a so-called \textit{extended norm}, satisfying the following properties:
		\begin{enumerate}
			\item $\| \mathcal{D} \|_{\mathcal{B}} =0$ if and only if $\mathcal{D}=0$.
			\item $\| a \cdot \mathcal{D} \|_{\mathcal{B}}  = |a| \cdot\| \mathcal{D} \|_{\mathcal{B}} $ for all $a \in \mathbb{Q}$.
			\item The triangle inequality holds: $ \| \mathcal{D}_1 + \mathcal{D}_2 \|_{\mathcal{B}}  \leq \| \mathcal{D}_1 \|_{\mathcal{B}}  + \| \mathcal{D}_2 \|_{\mathcal{B}} $.
		\end{enumerate}
		Moreover, if $(\mathcal{X}, \mathcal{B})$ and $(\mathcal{X}^{\prime}, \mathcal{B}^{\prime})$ are boundary divisors, the respective boundary norms are equivalent in the usual sense: There is a real number $r>1$ such that $r^{-1} \| \cdot \|_{\mathcal{B}^{\prime}} \leq \| \cdot \|_{\mathcal{B}} \leq r \| \cdot \|_{\mathcal{B}^{\prime}}$. Therefore, the boundary topology is independent of the choice of boundary divisor. For details, see Lemma~2.4.1~of~\cite{Y-Z}.
	\end{rem}
	\begin{dfn}
		The \textit{group of compactified divisors of }$\mathcal{U}$\textit{ over }$\mathcal{S}$ is the completion $\textup{Div} (\mathcal{U}/\mathcal{S})$ of the group $\textup{Div} (\mathcal{U}/\mathcal{S})_{\textup{mod}}$ with respect to the boundary topology. A \textit{compactified divisor of }$\mathcal{U}$ \textit{over} $\mathcal{S}$ is an element of $\textup{Div} (\mathcal{U}/\mathcal{S})$.
	\end{dfn}
	Concretely, a compactified divisor $\mathcal{D} \in \textup{Div} (\mathcal{U}/\mathcal{S})$ naturally identifies with an equivalence class of Cauchy sequences in $\textup{Div}(\mathcal{U}/\mathcal{S})_{\textup{mod}}$, where two Cauchy sequences $\lbrace \mathcal{D}_n \rbrace_{n \in \mathbb{N}}$ and $\lbrace \mathcal{E}_n \rbrace_{n \in \mathbb{N}}$ are equivalent if their difference converges to $0$. We abuse notation by writing $\mathcal{D} = \lbrace \mathcal{D}_n \rbrace_{n \in \mathbb{N}}$.
	\begin{cons}\label{decreasing}
		For many of our arguments, it will be convenient to represent a compactified divisor $\mathcal{D}$ by a decreasing Cauchy sequence $\lbrace \mathcal{D}_n \rbrace_{n \in \mathbb{N}}$. This is always possible: Start with any Cauchy sequence $\lbrace \mathcal{E}_k \rbrace_k$ representing $\mathcal{D}$. For each positive integer $n$, there is a $k_n$ such that for all $k , m \geq k_n$ we have $-1/n \cdot \mathcal{B} \leq \mathcal{E}_k - \mathcal{E}_m \leq 1/n \cdot \mathcal{B}$. Then, it suffices to consider the sequence $\lbrace \mathcal{D}_n \rbrace_{n \in \mathbb{N}}$ given by $\mathcal{D}_n \coloneqq \mathcal{E}_{k_n} + 1/n \cdot \mathcal{B}$ for all $n$ positive and $\mathcal{D}_0 = \mathcal{D}_1$.
	\end{cons}
	The notion of nefness can be extended to compactified divisors. Recall that a Cartier $\mathbb{Q}$-divisor $\mathcal{D}$ on a projective variety $\mathcal{X} / \mathcal{S}$ is \textit{(relatively) nef} if for every vertical curve $\mathcal{C}$ in $\mathcal{X}$, the intersection product $\mathcal{D} \cdot \mathcal{C}$ is non-negative. Note that for $\mathcal{S}=\textup{Spec}(K)$, this is the usual notion of nefness. Then, a model divisor $\mathcal{D}$ is \textit{(relatively) nef} if it has a nef representative. Since the morphisms of projective models are proper and birational, the projection formula shows that this is well-defined. We denote by $\textup{Div}^{\textup{nef}}(\mathcal{U}/\mathcal{S})_{\textup{mod}}$ the cone consisting of all nef model divisors of $\mathcal{U}/\mathcal{S}$. We then present the following definition.
	\begin{dfn}\label{rel-nef-dfn}
		The  \textit{cone of nef compactified divisors of }$\mathcal{U}$\textit{ over} $\mathcal{S}$ is the closure $\textup{Div}^{\textup{nef}}(\mathcal{U}/\mathcal{S})$ of the cone $\textup{Div}^{\textup{nef}}(\mathcal{U}/\mathcal{S})_{\textup{mod}}$ with respect to the boundary topology. The \textit{space of integrable compactified divisors} is defined as the difference of cones
		\begin{displaymath}
			\textup{Div}^{\textup{int}} (\mathcal{U}/\mathcal{S}) \coloneqq \textup{Div}^{\textup{nef}} (\mathcal{U}/\mathcal{S}) -\textup{Div}^{\textup{nef}}(\mathcal{U}/\mathcal{S}).
		\end{displaymath}
	\end{dfn}
	Let $K$ be a field and $U / K$ be a quasi-projective variety of dimension $d$. On each projective model $X$ of $U$ over $K$, there is a pairing given by intersection numbers of divisors. Then, the projection formula yields a well-defined intersection pairing $\textup{Div}(U/K)_{\textup{mod}}^{d} \rightarrow \mathbb{Q}$, where the intersection number $D_1 \cdot \ldots \cdot D_d$ is computed over a projective model on which all of the $D_i$ are defined. We state Proposition~4.1.1~of~\cite{Y-Z}, which extends continuously this pairing to the space of integrable compactified divisors.
	\begin{thm}\label{int-prod-K}
		Let $K$ be a field and $U / K$ be a quasi-projective variety of dimension $d$. The intersection pairing on model divisors extends continuously to a pairing $\textup{Div}^{\textup{nef}}(U/K)^{d} \rightarrow \mathbb{R}$ on the nef cone. It is given by the assignment
		\begin{displaymath}
			D_1 \cdot \ldots \cdot D_d \coloneqq \lim_{n \rightarrow \infty}  D_{1,n} \cdot \ldots \cdot D_{d,n}.
		\end{displaymath}
		The number $D_1 \cdot \ldots \cdot D_d$ is independent of the choice of nef sequences $D_i = \lbrace D_{i,n} \rbrace_{n \in \mathbb{N}}$. This extends by linearity to a symmetric multilinear pairing $\textup{Div}^{\textup{int}}(U/K)^{d} \rightarrow \mathbb{R}$.
	\end{thm}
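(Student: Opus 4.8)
The strategy is to define the extended pairing by the displayed limit and to reduce everything --- existence of the limit, independence of the representing sequences, and continuity --- to one Lipschitz-type estimate in the boundary norm $\|\cdot\|_{\mathcal{B}}$ attached to a fixed boundary divisor $\mathcal{B}$ of $U/K$; the multilinear extension to $\textup{Div}^{\textup{int}}(U/K)$ is then purely formal. First I would fix nef representing sequences $D_i=\{D_{i,n}\}_n$ and note that, since $\|\cdot\|_{\mathcal{B}}$ is finite exactly on the model divisors restricting to $0$ on $U$ (Remark~\ref{extended-norm}), the Cauchy condition forces $D_{i,n}|_U$ to be eventually constant; discarding finitely many terms I may assume that all $D_{i,n}$ restrict to the same divisor on $U$ and that $\|D_{i,n}-D_{i,m}\|_{\mathcal{B}}\le 1$ for all $n,m$. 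The only geometric input is the elementary positivity statement: on a $d$-dimensional projective variety over $K$, if $E$ is an effective $\mathbb{Q}$-Cartier divisor (more generally a pseudoeffective class) and $P_1,\dots,P_{d-1}$ are nef, then $E\cdot P_1\cdots P_{d-1}\ge 0$ --- writing $E=\sum_l b_l W_l$ reduces this to nonnegativity of products of nef classes on the prime divisors $W_l$, i.e. to Kleiman's theorem.

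\textbf{The uniform bound on mixed boundary intersection numbers.} This is the heart of the matter. I would fix a reference model $X_0$ on which all $D_{i,N}$ and the divisor $\mathcal{B}$ are defined, fix an ample divisor $A_0$ on $X_0$, and choose a large multiple $G_i:=cH_0$ of a fixed ample $H_0$ so that $[G_i]-[D_{i,N}]-[\mathcal{B}]$ is ample on $X_0$. For $n\ge N$ I pass to a model $Y_n$ dominating $X_0$ and $X_n$: there $D_{i,n}$ is nef, and since $D_{i,n}\le D_{i,N}+\mathcal{B}$ on $Y_n$ the class $[G_i|_{Y_n}]-[D_{i,n}]$ is a sum of a nef and an effective class, hence pseudoeffective. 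A telescoping identity together with the positivity input then yields $A_0|_{Y_n}\cdot D_{1,n}\cdots D_{d-1,n}\le A_0\cdot G_1\cdots G_{d-1}$, a constant independent of $n$. Pushing the $1$-cycle $D_{1,n}\cdots D_{d-1,n}\cap[Y_n]$ forward to $X_0$ gives a class $\gamma_n$ which lies in the Mori cone $\overline{\textup{NE}}(X_0)$ (test against nef classes and use positivity) and satisfies $A_0\cdot\gamma_n\le A_0\cdot G_1\cdots G_{d-1}$; by Kleiman's criterion such classes form a bounded set, so $\mathcal{B}\cdot D_{1,n}\cdots D_{d-1,n}=\mathcal{B}\cdot\gamma_n$ is bounded. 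Running the same argument on a common model of $X_0$, $X_n$, $X_m$ bounds every mixed number $\mathcal{B}\cdot P_1\cdots P_{d-1}$ with the $P_j$ chosen among $\{D_{i,n},D_{i,m}\}$; call a common bound $C$.

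\textbf{Conclusion.} From the telescoping identity
\begin{displaymath}
  D_{1,n}\cdots D_{d,n}-D_{1,m}\cdots D_{d,m}=\sum_{k=1}^{d}D_{1,n}\cdots D_{k-1,n}\,(D_{k,n}-D_{k,m})\,D_{k+1,m}\cdots D_{d,m},
\end{displaymath}
together with the bound $|P_1\cdots P_{k-1}\cdot\delta\cdot P_{k+1}\cdots P_d|\le 2\|\delta\|_{\mathcal{B}}\cdot C$ (obtained by writing $\delta=(\delta+\varepsilon\mathcal{B})-\varepsilon\mathcal{B}$ with $\varepsilon=\|\delta\|_{\mathcal{B}}$ and applying the positivity input and the uniform bound), one gets $|D_{1,n}\cdots D_{d,n}-D_{1,m}\cdots D_{d,m}|\le 2C\sum_k\|D_{k,n}-D_{k,m}\|_{\mathcal{B}}\to 0$. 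Hence the sequence is Cauchy in $\mathbb{R}$ and converges; the same estimate applied to two nef representatives of the same $D_i$ gives independence of the choice, and applied on a bounded neighbourhood gives local Lipschitz continuity of the pairing $\textup{Div}^{\textup{nef}}(U/K)^d\to\mathbb{R}$. Symmetry is inherited from the model pairings in the limit. Finally, writing $D_i=P_i-Q_i$ with $P_i,Q_i$ nef and setting $D_1\cdots D_d:=\sum_{S\subseteq\{1,\dots,d\}}(-1)^{|S|}\prod_{i\in S}Q_i\prod_{i\notin S}P_i$, the usual polarization argument (using additivity and symmetry of the nef pairing) shows this is well defined and yields a symmetric multilinear pairing on $\textup{Div}^{\textup{int}}(U/K)^d$.

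\textbf{Expected main obstacle.} The delicate step is the uniform bound of the second paragraph. Because $\|\cdot\|_{\mathcal{B}}$ is only an \emph{extended} norm and the model divisors $D_{i,n}$ live on projective models that change with $n$, one cannot work on a single variety with a single ample class; the boundedness must be transported to a fixed reference model by pushing curve classes forward and invoking salience of its Mori cone, which is precisely where projectivity over a field (and Kleiman's criterion) is used.
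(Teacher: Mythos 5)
The paper does not give an independent proof of this statement: it cites it verbatim as Proposition~4.1.1 of \cite{Y-Z} and moves on, so there is no ``paper's own proof'' to compare against. What you have reconstructed is essentially the argument Yuan and Zhang give at that point of their text. The engine of the proof is exactly the uniform bound on $\mathcal{B}\cdot P_1\cdots P_{d-1}$ obtained by transporting the nef $1$-cycle classes to a fixed reference model, trapping them in a compact slice of the Mori cone via Kleiman's criterion, and then feeding that bound into a telescoping sum controlled by $\|\cdot\|_{\mathcal{B}}$. Your decomposition $\delta=(\delta+\varepsilon\mathcal{B})-\varepsilon\mathcal{B}$ with the resulting estimate $|P_1\cdots\delta\cdots P_d|\le 2\|\delta\|_{\mathcal{B}}\,C$ is the right Lipschitz-type input, and the reduction of positivity to Kleiman's nonnegativity of intersections of nef classes on each component of an effective divisor is correct.

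Two small points to tighten. First, since everything is a $\mathbb{Q}$-divisor and $\|\delta\|_{\mathcal{B}}$ is an infimum over $\mathbb{Q}_{>0}$ that need not be attained nor rational, you should choose a rational $\varepsilon'>\|\delta\|_{\mathcal{B}}$ when forming $\delta+\varepsilon'\mathcal{B}$ and then let $\varepsilon'\downarrow\|\delta\|_{\mathcal{B}}$; the estimate survives the limit. Second, in the uniform bound you invoke that $[G_i|_{Y_n}]-[D_{i,N}|_{Y_n}]-[\mathcal{B}|_{Y_n}]$ is nef because it is the pullback of an ample class along a proper birational morphism; it is worth saying this explicitly, since the class ceases to be ample on $Y_n$ and nefness is all you use. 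With those clarifications the proposal is a correct and complete reconstruction of the cited Yuan--Zhang argument.
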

	Note that all of these constructions can be translated to the language of line bundles via the correspondence induced by $\mathcal{D} \mapsto \mathcal{O}_{\mathcal{X}}(\mathcal{D})$. For details, see~Section~2.5~of~\cite{Y-Z}.
	
	\subsection{The local arithmetic case}\label{local-theory} Throughout this subsection, $K = (K, | \cdot |)$ is a local field, and the varieties $X/K$ and $U/K$ are projective and quasi-projective respectively.  The analytic structure of $K$ allows us to enhance the constructions in the previous section with analytic objects: Green's functions for divisors and metrics on line bundles. 
	
	A variety $Y/K$ has an associated \textit{Berkovich analytification} $Y^{\textup{an}}$. We sketch its construction and recall some of its properties. For an open affine subset $V = \textup{Spec}(A)$ of $Y$, the Berkovich analytification of $V$ over $K$ is the set $V^{\textup{an}}$ of multiplicative seminorms $y = | \cdot |_y$ on $A$ whose restriction to $K$ is bounded by $| \cdot |$. It is endowed with the coarsest topology such that for all $f \in A$, the function $|f ( \cdot )| \colon  V^{\textup{an}} \rightarrow \mathbb{R}_{\geq 0}$ given by $|f(y)| \coloneqq | f |_{y}$ is continuous, and the assignment $y \mapsto \textup{ker}(|\cdot |_y)$ induces a continuous map $V^{\textup{an}} \rightarrow V$. For each $y \in V^{\textup{an}}$, the seminorm $| \cdot |_y$ induces an absolute value on the field of fractions of the integral domain $A / \textup{ker}( | \cdot |_y )$. We denote by $H_y$ its completion, and write $f(y)$ for the image of $f \in A$ in $H_y$. On $V^{\textup{an}}$ there is a sheaf of \textit{analytic functions} $\mathcal{O}_{V^{\textup{an}}}$, where a function
	\begin{displaymath}
		g \colon W \longrightarrow \bigsqcup_{y \in W} H_y
	\end{displaymath}
	on an open subset $W$ of $V^{\textup{an}}$ is analytic if for each $y \in W$ we have $g(y) \in H_y$ and there exists an open neighbourhood $W_y$ of $y$ such that, for all $z \in W_y$ and $\varepsilon >0$, there are $a,b \in A$ with $b(z)\neq 0$ satisfying $\left\vert g(z) - a(z)/b(z) \right\vert < \varepsilon$. The locally ringed space $(Y^{\textup{an}},\mathcal{O}_{Y^{\textup{an}}})$ is obtained by gluing an open affine cover of $Y$. Then, we have the following properties:
	\begin{enumerate}
		\item The space $Y^{\textup{an}}$ is locally compact Hausdorff. The variety $Y$ is proper if and only if the space $Y^{\textup{an}}$ is compact.
		\item Given a morphism $\pi :Y \rightarrow Z$ over $K$, there is an induced map $\pi^{\textup{an}} \colon Y^{\textup{an}} \rightarrow Z^{\textup{an}}$. Thus, we have an analytification functor from the category of varieties over $K$ to the category of Berkovich $K$-analytic spaces. The morphism $\pi$ is flat (resp. unramified, smooth, \'{e}tale, separated, injective, surjective, open immersion, isomorphism) if and only if $\pi^{\textup{an}}$ has the same property. For any point $z \in Z^{\textup{an}}$, the fibre $Y^{\textup{an}}_{z} = (\pi^{\textup{an}})^{-1}(z)$ is canonically isomorphic to the analytification of the fibre $Y \times_{K} \textup{Spec}(H_z) \rightarrow \textup{Spec}(H_z)$.
		\item Let $K$ be non-archimedean, $Y / K$ be proper, $\pi \colon Y \rightarrow \mathcal{Y}$ be a model of $Y$ over $\mathcal{O}_{K}$ with special fibre $\mathcal{Y} \times_{\mathcal{O}_K} k$. Then, there is a map $\textup{red} \colon Y^{\textup{an}} \rightarrow \mathcal{Y} \times_{\mathcal{O}_K} k$ known as the \textit{reduction} which is surjective and anticontinuous. It is defined via the valuative criterion of properness, extending the morphism $\textup{Spec}(H_y) \rightarrow Y$ to a morphism $\textup{Spec}(\mathcal{O}_{H_y}) \rightarrow \mathcal{Y}$ and mapping $y$ to the image of the special fiber of the latter.
		\item  Assume that $K$ is Archimedean. Denote by $Y(\mathbb{C})$ the complex-analytic variety associated with $Y$. If $K = \mathbb{C}$, then $Y^{\textup{an}}$ is homeomorphic to $Y(\mathbb{C})$. If $K = \mathbb{R}$, then $Y^{\textup{an}}$ is homeomorphic to the quotient of $Y(\mathbb{C})$ by the action of complex conjugation $\zeta$. To have a uniform treatment of the archimedean case, if $K=\mathbb{R}$, we regard $Y^{\textup{an}}$ as the pair $(Y(\mathbb{C}), \zeta)$ and require every object on $Y^{\textup{an}}$ to be invariant under the action of $\zeta$.
	\end{enumerate}
	For details on Berkovich spaces and complex-analytic varieties, we refer~to~\cite{Ber}~and~\cite{Dem12} respectively. Now, we recall the definition of a Green's function for a divisor.
	\begin{dfn}\label{arith-div-dfn}
		Let $D \in \textup{Div}(U)_{\mathbb{Q}}$ and $P$ be a property of functions (e.g., continuous, smooth, locally bounded, etc.). A function $g \colon U^{\textup{an}} \rightarrow \mathbb{R}_{\pm \infty}$ is a \textit{Green's function for} $D$ \textit{of} $P$ \textit{type} if for every Zariski open $V \subset U$ and every local equation $f$ for $D$ on $V$, the function $g + \log |f|^2$ satisfies the property $P$ on $V^{\textup{an}}$. The pair $\overline{D}\coloneqq (D,g_D)$ is an \textit{arithmetic divisor of }$P$ \textit{type on} $U$. We denote by $\overline{\textup{Div}}(U)_{\mathbb{Q}}$ the group of arithmetic $\mathbb{Q}$-divisors of continuous type on $U$.
	\end{dfn}
	There is a notion of linear equivalence for arithmetic divisors. Let $f \in K(U)^{\times}$ be a rational function. Trivially, the function $g_{f} \coloneqq - \log |f|^2$ on $U^{\textup{an}}$ is a Green's function for $\textup{div}(f)$ of continuous type. This yields a morphism $\overline{\textup{div}} \colon K(U)^{\times} \rightarrow \overline{\textup{Div}}(U)$ given by $\overline{\textup{div}}(f)\coloneqq (\textup{div}(f), g_f )$. The image of $\overline{\textup{div}}$ is the group $\overline{\textup{Pr}}(U)$ of \textit{principal arithmetic divisors on} $U$, and the \textit{arithmetic class group} $\overline{\textup{Cl}}(U)$ is its cokernel.
	
	If $K$ is archimedean, the role of a ``well-behaved function'' is played by smooth functions. On the other hand, when $K$ is non-archimedean, this role is played by \textit{algebraic} functions. We sketch their construction: Consider a divisor $D \in \textup{Div}(X)_{\mathbb{Q}}$ on a projective variety $X/K$. Let $\pi \colon X \rightarrow \mathcal{X}$ be a projective model over $\mathcal{O}_K$ and $\mathcal{D}$ be a model of $D$ on $\mathcal{X}$. We choose any integer $m>0$ such that $m \cdot \mathcal{D}$ is an integral Cartier divisor. For $x \in X^{\textup{an}}$, choose an open affine neighbourhood $\mathcal{V} \subset \mathcal{X}$ of $\textup{red}(x)$. In particular, we get that $x \in V^{\textup{an}}$, where $V = \mathcal{V} \cap X$. Then, define $g_{\mathcal{D},D}(x) \coloneqq - \log | f (x)|^{2/m}$. Since any two local equations cutting out $m \cdot \mathcal{D}$ differ by a unit, this value does not depend on the choice of $f$. Therefore, we obtain a well-defined Green's function $g_{\mathcal{D},D}$ for $D$ of continuous type. A Green's function $g$ for $D$ is said to be of \textit{algebraic type} if it is of the form $g = g_{ \mathcal{D},D}$. If additionally $D=0$, then $g$ is said to be an \textit{algebraic} function on $X^{\textup{an}}$. In the literature, these are often referred to as \textit{model} functions. For instance, in~Definition~2.1~of~\cite{BFJ16}. Moreover, $\mathcal{D}$ is effective if and only if the Green's function $g_{\mathcal{D}, D}$ is non-negative (Lemma~3.3.3~of~\cite{Y-Z}). 
	
	The assignment $\mathcal{D} \mapsto (D, g_{\mathcal{D},D})$ induces a group morphism $\textup{an} \colon \textup{Div}(\mathcal{X})_{\mathbb{Q}} \rightarrow \overline{\textup{Div}}(X)_{\mathbb{Q}}$, known as the \textit{analytification map} on divisors. The analytification map is functorial in the following sense. Let $f \colon \mathcal{X} \rightarrow \mathcal{Y}$ be a morphism of projective models of $X$ over $\mathcal{O}_K$. By Proposition~1.3.6~of~\cite{BPS}, the pullback morphism $f^{\ast} \colon \textup{Div}(\mathcal{Y})_{\mathbb{Q}} \rightarrow \textup{Div}(\mathcal{X})_{\mathbb{Q}}$ commutes with the analytification map. That is, $\textup{an} \circ f^{\ast} = \textup{an}$. Taking direct limits, there is a (unique) induced analytification map on the group of model divisors $\textup{an} \colon \textup{Div}(X/\mathcal{O}_K)_{\textup{mod}} \rightarrow \overline{\textup{Div}}(X)_{\mathbb{Q}}$. The image of this map is a group consisting of all arithmetic divisors of algebraic type on $X$. This motivates the following definition.
	\begin{dfn}\label{model-type}
		Let $X/K$ be a projective variety and $D \in \textup{Div}(X)_{\mathbb{Q}}$. We say that a Green's function $g$ for $D$ is of \textit{model type} if $g$ is of smooth or algebraic type, depending on $K$. If additionally $D=0$, we say that $g$ is a model function on $X^{\textup{an}}$. The group of arithmetic divisors of model type on $X$ is denoted by $\overline{\textup{Div}}(X)_{\textup{mod}}$. The space of model functions on $X^{\textup{an}}$ is denoted by $C^{0}(X^{\textup{an}})_{\textup{mod}}$.
	\end{dfn}
	The following proposition shows that the arithmetic divisors of continuous type are uniform limits of arithmetic divisors of model type.
	\begin{prop}\label{model-dense}
		Let $(D,g)$ be an arithmetic divisor of continuous type on $X$. Then, there exists a sequence $\lbrace g_n \rbrace_{n \in \mathbb{N}}$ of Green's functions of model type for $D$ converging uniformly to $g$.
	\end{prop}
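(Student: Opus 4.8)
The plan is to reduce the statement to the known uniform density of model functions (respectively smooth functions) in the space of continuous functions on the compact space $X^{\textup{an}}$, by subtracting off one fixed Green's function of model type.

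\emph{Step 1: a model-type Green's function for $D$ exists.} Pick $m \geq 1$ with $mD$ an integral Cartier divisor. If $K$ is archimedean, glue the trivial metrics on a finite trivializing affine cover of $X^{\textup{an}}$ by a smooth partition of unity to obtain a smooth hermitian metric on $\mathcal{O}_X(mD)$; the induced metric on $\mathcal{O}_X(D)$ gives a Green's function $g_0$ for $D$ of smooth type. If $K$ is non-archimedean, write $D$ as a difference of two very ample divisors (possible since $X$ is projective), close up the corresponding embeddings into projective space over $\mathcal{O}_K$, normalize, and pass to a projective model $\mathcal{X}/\mathcal{O}_K$ dominating both; this produces a $\mathbb{Q}$-Cartier model $\mathcal{D}$ of $D$ on $\mathcal{X}$, and we set $g_0 := g_{\mathcal{D},D}$ as in Definition~\ref{model-type}.

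\emph{Step 2: reduction to a density statement.} Since $g$ and $g_0$ are Green's functions for the same divisor $D$, on every Zariski open $V \subset X$ with local equation $f$ for $D$ both $g + \log|f|^2$ and $g_0 + \log|f|^2$ are continuous on $V^{\textup{an}}$, hence so is their difference $\varphi := g - g_0$; as such $V^{\textup{an}}$ cover $X^{\textup{an}}$ and $X^{\textup{an}}$ is compact (because $X$ is projective), we get $\varphi \in C^0(X^{\textup{an}})$. It now suffices to produce model functions $\varphi_n$ with $\varphi_n \to \varphi$ uniformly: then $g_n := g_0 + \varphi_n$ is again a Green's function for $D$ of model type --- addition preserves smoothness in the archimedean case, while in the non-archimedean case, realizing $\varphi_n$ as an algebraic function attached to a vertical $\mathbb{Q}$-Cartier divisor $\mathcal{E}_n$ on a model $\mathcal{X}_n$ dominating $\mathcal{X}$, one has $g_n = g_{\pi^{\ast}\mathcal{D} + \mathcal{E}_n,\, D}$ --- and $g_n \to g$ uniformly.

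\emph{Step 3: density of model functions, and the main obstacle.} When $K$ is archimedean, the algebra of smooth functions on $X^{\textup{an}}$ contains the constants and separates points, so Stone--Weierstrass yields the approximation; if $K = \mathbb{R}$ one averages the approximants over the conjugation $\zeta$, which preserves both smoothness and the uniform estimates. When $K$ is non-archimedean, the needed statement is Gubler's density theorem: for a proper variety over a complete discretely valued field the $\mathbb{Q}$-vector space of model functions is uniformly dense in $C^0(X^{\textup{an}})$ (\cite{Gub98}); alternatively, model functions form a lattice containing the constants and separating the points of $X^{\textup{an}}$, so a lattice version of Stone--Weierstrass applies. This non-archimedean density is the only genuinely nontrivial ingredient; granting it as a cited theorem, the remaining points --- existence of $g_0$, stability of model type under adding a model function on a common model, and $\zeta$-invariance in the real case --- are routine bookkeeping. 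If instead one wanted a self-contained argument, the crux would be exactly the separation of Berkovich points by model functions underlying Gubler's theorem.
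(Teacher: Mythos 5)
Your proof is correct and follows essentially the same route as the paper: fix a reference Green's function $g_0$ for $D$ of model type, observe that $g - g_0$ is a genuine continuous function on the compact space $X^{\textup{an}}$, and then invoke the density of smooth (archimedean) or model (non-archimedean, via Gubler or Boucksom--Favre--Jonsson) functions in $C^{0}(X^{\textup{an}})$. The paper simply takes the existence of $g_0$ and the $\zeta$-equivariance for granted, whereas you spell them out; the mathematical content and the one nontrivial input (the non-archimedean density theorem) are identical.
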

	\begin{proof}
		Let $g^{\prime}$ be any reference Green's function for $D$ of model type, and denote $f = g- g'$. Then, it suffices to find a sequence $\lbrace f_n \rbrace_{n \in \mathbb{N}}$ of model functions converging uniformly to $f$.  If $K$ is archimedean, this is the well-known fact that the space $C^{\infty}(X^{\textup{an}})$ of smooth functions is dense in the space $C^{0}(X^{\textup{an}})$ of continuous functions with respect to the uniform norm. If $K$ is non-archimedean, it is also well-known that the space of model functions is dense in the space of continuous functions. For instance, see Theorem~7.12~of~\cite{Gub98} or Corollary~2.3~of~\cite{BFJ16}.
	\end{proof}
	Now, we extend these concepts to the quasi-projective case. Let $K$ be non-archimedean and $\pi \colon U \rightarrow \mathcal{X}$ be a projective model over $\mathcal{O}_K$. The image of $\pi$ is contained in the generic fibre $X$ of $\mathcal{X}$, which is projective over $K$. The open embedding $\pi \colon U \rightarrow X$ induces a dense embedding $\pi^{\textup{an}} \colon U^{\textup{an}} \rightarrow X^{\textup{an}}$. Then, there is an induced pullback map $\pi^{\ast} \colon \overline{\textup{Div}}(X)_{\mathbb{Q}} \rightarrow \overline{\textup{Div}}(U)_{\mathbb{Q}}$ given by restriction. Note that every projective model of $X$ over $\mathcal{O}_K$ is also a model of $U$. Therefore, the universal property of direct limits gives a commutative diagram
	\begin{center}
		\begin{tikzcd}
			\textup{Div}(X/\mathcal{O}_K)_{\textup{mod}} \arrow[d] \arrow[r, "{\textup{an}}"] & \overline{\textup{Div}}(X)_{\textup{mod}} \arrow[r] \arrow[d] &\overline{\textup{Div}}(X)_{\mathbb{Q}} \arrow[d] \\
			\textup{Div}(U/\mathcal{O}_K)_{\textup{mod}} \arrow[r, "{\textup{an}}"] & \displaystyle \varinjlim_{Y \in \textup{PM}(U/K)} \overline{\textup{Div}}(Y)_{\textup{mod}} \arrow[r] & \overline{\textup{Div}}(U)_{\mathbb{Q}}.
		\end{tikzcd}
	\end{center}
	In the archimedean setting, the group $\textup{Div}(U/\mathcal{O}_K)_{\textup{mod}}$ is not defined. However, we still have the commutative diagram below. These diagrams motivate the next definition.
	\begin{center}
		\begin{tikzcd}
			\overline{\textup{Div}}(X)_{\textup{mod}} \arrow[r] \arrow[d] &\overline{\textup{Div}}(X)_{\mathbb{Q}} \arrow[d] \\
			\displaystyle \varinjlim_{Y \in \textup{PM}(U/K)} \overline{\textup{Div}}(Y)_{\textup{mod}} \arrow[r] & \overline{\textup{Div}}(U)_{\mathbb{Q}}
		\end{tikzcd}
	\end{center}
	\begin{dfn}\label{arith-model-div}
		The \textit{group of model arithmetic divisors of }$U$ \textit{over} $K$ is the direct limit
		\begin{displaymath}
			\overline{\textup{Div}}(U/K)_{\textup{mod}} \coloneqq \varinjlim_{X \in \textup{PM}(U/K)} \overline{\textup{Div}}(X)_{\textup{mod}}.
		\end{displaymath}
	\end{dfn}
	Recall that an arithmetic divisor $(D,g)$ is \textit{effective} (resp. \textit{strictly effective}) if $D$ is effective and the function $g$ is non-negative (resp. positive). As in the geometric case, effectivity induces a partial order on $\overline{\textup{Div}}(U/K)_{\textup{mod}}$, which we use to replicate the definitions of boundary divisors (which are strictly effective) and boundary topology in this setting. We then present the following definition.
	\begin{dfn}\label{cptf-dfn}
		The \textit{group of compactified arithmetic divisors of} $U$ \textit{over }$K$ is the completion $\overline{\textup{Div}}(U/K)$ of $\overline{\textup{Div}}(U/K)_{\textup{mod}}$ with respect to the boundary topology.
	\end{dfn}
	Shrinking the quasi-projective variety is functorial. This is Corollary~3.4.2 of~\cite{Y-Z}.
	\begin{lem}
		Let $U \rightarrow V$ be an open immersion of quasi-projective varieties over $K$. Then, the restriction map induces a continuous injective map $\overline{\textup{Div}}(V/K) \rightarrow \overline{\textup{Div}}(U/K)$.
	\end{lem}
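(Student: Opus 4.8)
My plan is to realize the restriction first as an injective homomorphism on model arithmetic divisors, then to verify it is Lipschitz for suitable boundary norms so that it extends to the completions, and finally to prove the extension is injective by examining an arbitrary Cauchy sequence in its kernel. For the first step, observe that for every projective model $\pi\colon V\hookrightarrow X$ of $V$ the composite $U\hookrightarrow V\hookrightarrow X$ makes $X$ a projective model of $U$, and a morphism of models of $V$ is automatically a morphism of the corresponding models of $U$ (two morphisms into a separated scheme that agree on the dense open $U$ agree); this gives a fully faithful functor $\textup{PM}(V/K)\to\textup{PM}(U/K)$, and since $\overline{\textup{Div}}(X)_{\textup{mod}}$ depends only on $X$, the universal property of colimits yields $\rho\colon\overline{\textup{Div}}(V/K)_{\textup{mod}}\to\overline{\textup{Div}}(U/K)_{\textup{mod}}$, $[(X,D,g)]\mapsto[(X,D,g)]$. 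To prove $\rho$ injective, suppose two such classes become equal over $U$, so their pullbacks coincide on some model $W$ of $U$ dominating both representatives over $U$. Using that $\textup{PM}(V/K)$ is cofiltered (Remark~\ref{cofiltered}) I would choose a model $W'$ of $V$ dominating the two representatives over $V$, and then a model $W''$ of $U$ dominating $W$ and $W'$ over $U$; the two induced morphisms $W''\to X_i$ agree on the dense open $U$, hence agree, so the pullbacks to $W''$ coincide, and as $\mu\colon W''\to W'$ is proper birational between normal varieties, $\mu_{\ast}\mu^{\ast}=\textup{id}$ on Cartier $\mathbb{Q}$-divisors and $(\mu^{\textup{an}})^{\ast}$ is injective on functions (since $\mu^{\textup{an}}$ is surjective); therefore the pullbacks already agree on $W'$, i.e.\ the two classes agree over $V$.

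For continuity, fix a boundary divisor $(Z,B_0)$ of $V$, choose a model $Z_U$ of $U$ dominating $Z$ via $\nu$ (blowing up and normalizing, if needed, so that $Z_U\setminus U$ supports an effective Cartier divisor), and take a boundary divisor of $U$ of the form $B_1=\nu^{\ast}B_0+B_1''$ with $B_1''$ effective and supported over $V\setminus U$, with a Green's function $g_{B_1}=\nu^{\ast}g_{B_0}+h$, $h\ge 0$. Since $|\nu^{\ast}B_0|\subseteq|B_1|$, compactness of $Z_U^{\textup{an}}$ together with the lower semicontinuity and strict positivity of $g_{B_1}$ furnish a constant $r\ge 1$ with $\nu^{\ast}B_0\le r\,B_1$ as arithmetic divisors; pulling this back to common models, $-\varepsilon B_0\le\overline{D}\le\varepsilon B_0$ over $V$ implies $-r\varepsilon B_1\le\overline{D}\le r\varepsilon B_1$ over $U$, so $\|\rho(\overline{D})\|_{B_1}\le r\,\|\overline{D}\|_{B_0}$. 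A Lipschitz group homomorphism is uniformly continuous, so $\rho$ extends uniquely to a continuous homomorphism $\bar\rho\colon\overline{\textup{Div}}(V/K)\to\overline{\textup{Div}}(U/K)$ of the completions.

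For injectivity of $\bar\rho$, let $\mathcal{D}=\{\overline{D}_n\}$, $\overline{D}_n=(D_n,g_n)$, be Cauchy in $\overline{\textup{Div}}(V/K)_{\textup{mod}}$ with $\|\rho(\overline{D}_n)\|_{B_1}\to0$; I must show $\|\overline{D}_n\|_{B_0}\to0$. Being Cauchy forces $\|\overline{D}_n-\overline{D}_m\|_{B_0}<\infty$ for large $n,m$, hence $D_n|_V=D_m|_V$ (the arithmetic analogue of Remark~\ref{extended-norm}), so the underlying divisors $D_n|_V$ stabilize to some $E\in\textup{Div}(V)_{\mathbb{Q}}$; likewise $\|\rho(\overline{D}_n)\|_{B_1}<\infty$ forces $D_n|_U=0$, so $|E|\subseteq V\setminus U$. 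Next I would show $E=0$: for a prime component $Y_0$ of $|E|$, the divisorial valuation $v=\textup{ord}_{Y_0}$ of $K(V)$ has no center on $U$ (as $Y_0\not\subseteq U$) but has one on the proper variety $Z_U$, which therefore lies in $Z_U\setminus U=|B_1|$, so $v(B_1)>0$; on the other hand $v(D_n)=v(D_n|_V)=v(E)$ is independent of $n$, and applying $v$ to $-\varepsilon_n B_1\le\rho(\overline{D}_n)\le\varepsilon_n B_1$ (which holds for some $\varepsilon_n\to0$) gives $|v(E)|\le\varepsilon_n\,v(B_1)\to0$, whence $v(E)=0$; as $Y_0$ was arbitrary, $E=0$, so $D_n|_V=0$ for $n$ large. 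For such $n$, $g_n|_{V^{\textup{an}}}$ is continuous on $V^{\textup{an}}$ (a Green's function of model type for the zero divisor there), and since $B_0$ is supported away from $V$ the function $g_{B_0}|_{V^{\textup{an}}}$ is continuous, hence bounded on compacta; the Cauchy estimate $\sup_{V^{\textup{an}}}|g_n-g_m|/g_{B_0}\to0$ then shows $g_n$ converges uniformly on compacta to a continuous function $g_\infty$, while $\sup_{U^{\textup{an}}}|g_n|/g_{B_1}\le\|\rho(\overline{D}_n)\|_{B_1}\to0$ forces $g_\infty\equiv0$ on the dense open $U^{\textup{an}}$, hence on all of $V^{\textup{an}}$; letting $m\to\infty$ in the Cauchy estimate then yields $\|\overline{D}_n\|_{B_0}=\sup_{V^{\textup{an}}}|g_n|/g_{B_0}\to0$ (this supremum, computed on a common model of $V$, already encodes the Cartier part through its boundary values). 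Hence $\mathcal{D}=0$.

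The construction of $\rho$ and its Lipschitz continuity are essentially formal. The hard part is the last step: ruling out that a Cauchy sequence on $V$ escapes in a direction that becomes invisible after restriction to $U$. This is exactly what the divisorial-valuation argument does for the underlying divisors, after which — once those divisors die on $V$ — the whole boundary norm is governed by the Green's functions on $V^{\textup{an}}$, where the Cauchy condition forces convergence to a continuous function that must vanish because it already vanishes on the dense subset $U^{\textup{an}}$. A secondary technical point is the careful choice of the boundary divisor $B_1$ of $U$ dominating $\nu^{\ast}B_0$, on which the Lipschitz estimate (hence the very existence of $\bar\rho$) rests.
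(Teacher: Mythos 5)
The paper does not give a proof of this lemma; it is cited verbatim as Corollary~3.4.2 of Yuan--Zhang, so there is no in-paper argument to compare against. Judged on its own, your proof is essentially correct and complete, and the overall architecture (an injective homomorphism at the model level, a Lipschitz estimate against boundary norms, then injectivity on completions via a valuative argument for the Cartier part and a density argument for the Green's functions) is the right one. Two places deserve more care.

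First, when you show that the stabilized restriction $E = D_n|_V$ is zero, you apply $v = \textup{ord}_{Y_0}$ to the inequalities $-\varepsilon_n B_1 \leq \rho(\overline D_n) \leq \varepsilon_n B_1$ and use that $v(B_1)>0$. This part is right, and the point that $v$ has no center on $U$ (since the unique center on the separated $V$ is the generic point of $Y_0\not\in U$) is exactly the correct mechanism. What you should make explicit is that $v(B_1)$ is computed by taking a local equation of $B_1$ at the center of $v$ on $Z_U$, that this center lies in $|B_1|$, and that the resulting value is independent of the choice of model of $U$ dominating $Z_U$, so that additivity $v(\varepsilon_n B_1 - D_n) = \varepsilon_n v(B_1) - v(D_n)$ is legitimate. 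You say all of this implicitly, and it holds, but it is worth writing out since it is the crux of the argument.

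Second, the parenthetical claim that ``this supremum, computed on a common model of $V$, already encodes the Cartier part through its boundary values'' is doing genuine work and is the only spot where a reader might balk. The rigorous version is: once $D_n|_V = 0$, any model representative $D_n$ is supported in $X_n\setminus V$; for each boundary component $Y$ of $X_n\setminus V$ the ratio $|g_n|/g_{B_0}$ blows up near $Y^{\textup{an}}$ at a rate governed by the coefficient of $Y$ in $D_n$ (since $g_{B_0}$ is bounded below by a positive constant on the compact $X_n^{\textup{an}}$ and has positive coefficient along every component of $X_n\setminus V$ lying over $Z\setminus V$, and bounded where $\nu^{-1}(V)$ exceeds $V$); thus the bound $|g_n|\le\varepsilon g_{B_0}$ on the dense open $V^{\textup{an}}$ forces $|\textup{coeff}_Y D_n| \le \varepsilon\,\textup{coeff}_Y B_0$, and then both effectivities $\varepsilon B_0 \pm D_n\ge 0$ and the extension of the Green's function inequality from $V^{\textup{an}}$ to $X_n^{\textup{an}}$ follow by continuity. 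Once that is in place, the ``let $m\to\infty$ pointwise'' step is exactly right and gives $\|\overline D_n\|_{B_0}\le\varepsilon$ for $n$ large. Finally, a remark on economy: after Remark~\ref{cptf-1} the group $\overline{\textup{Div}}(V/K)$ is identified with a closed subgroup of $\overline{\textup{Div}}(V)_{\mathbb Q}$, from which injectivity of restriction is immediate; but since that identification (Proposition~3.6.1 of [Y-Z]) is established only later, the direct argument you give is the appropriate one at this point in the development.
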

	\begin{rem}\label{cptf-1}
		As in the geometric case, compactified arithmetic divisors are represented by Cauchy sequences of model divisors. By abuse of notation, we will write $\overline{D} = \lbrace (D_n ,g_n) \rbrace_{n \in \mathbb{N}}$. By definition of the boundary norm, the sequence $\lbrace g_n \rbrace_{n \in \mathbb{N}}$ converges uniformly on every compact subset of $U^{\textup{an}}$. Thus, its pointwise limit $g \colon U^{\textup{an}} \rightarrow \mathbb{R}_{\pm \infty}$ is a Green's function for $D|_U$ of continuous type. Moreover, one can show that the compactified arithmetic divisor $\overline{D}$ naturally identifies with the arithmetic divisor $(D|_U, g) \in \overline{\textup{Div}}(U)_{\mathbb{Q}}$:
		\begin{enumerate}
			\item For a projective variety $X/K$, the category $\textup{PM}(X/K)$ consist of a single object; the identity morphism. It follows that a boundary divisor is of the form $\overline{B}=(0,g_{B})$, where $g_{B}$ is smooth and positive on $X^{\textup{an}}$. By compactness, there are $c_1, c_2 \in \mathbb{R}_{>0}$ satisfying $c_1 > g_{B} > c_2$. Therefore, a sequence $\lbrace (D_n,g_n) \rbrace_{n \in \mathbb{N}}$ converges in the boundary topology if and only if the sequence $\lbrace D_n \rbrace_{n \in \mathbb{N}}$ is eventually constant, and the sequence $\lbrace g_n \rbrace_{n \in \mathbb{N}}$ converges uniformly on $X^{\textup{an}}$. By Proposition~\ref{model-dense}, we get that $ \overline{\textup{Div}}(X/K) =  \overline{\textup{Div}}(X)_{\mathbb{Q}}$.
			\item As in the projective case, $\overline{D} \in \overline{\textup{Div}}(U)_{\mathbb{Q}}$ is of \textit{model type} if it belongs to the image of $\overline{\textup{Div}}(U/K)_{\textup{mod}}$ under the restriction map. We denote by $\overline{\textup{Div}}(U)_{\textup{mod}}$ and $C^{0}(U^{\textup{an}})_{\textup{mod}}$ the groups of arithmetic divisors and continuous functions of model type on $U$. A boundary divisor $\overline{B}=(B,g_B)$ of $U/K$ restricts to $(0, g_{B}|_{U^{\textup{an}}})$ and induces a boundary topology on $\overline{\textup{Div}}(U)_{\mathbb{Q}}$. By Lemma~3.6.2~of~\cite{Y-Z}, the space $\overline{\textup{Div}}(U)_{\mathbb{Q}}$ is complete with respect to this boundary topology. Then, Proposition~3.6.1~of~\cite{Y-Z} gives an isomorphism of topological groups $\overline{\textup{Div}}(U/K) \rightarrow \overline{\textup{Div}}(U)_{\textup{cptf}}$, where $\overline{\textup{Div}}(U)_{\textup{cptf}}$ is the closure of $\overline{\textup{Div}}(U)_{\textup{mod}}$ in $\overline{\textup{Div}}(U)_{\mathbb{Q}}$. Similarly, we define $C^{0}(U^{\textup{an}})_{\textup{cptf}}$ as the closure of $C^{0}(U^{\textup{an}})_{\textup{mod}}$ in $C^{0}(U^{\textup{an}})$.
			\item If $K$ is non-archimedean, there is an analytification map $\textup{an} \colon \textup{Div}(U/\mathcal{O}_K) \rightarrow \overline{\textup{Div}}(U)_{\mathbb{Q}}$ induced by the universal property of direct limits and extension by continuity. Then, Proposition~3.6.1~of~\cite{Y-Z} gives a sequence
			\begin{center}
				\begin{tikzcd}
					\textup{Div}(U/\mathcal{O}_K) \arrow[r, "{\textup{an}}"] & \overline{\textup{Div}}(U/K) \arrow[r] & \overline{\textup{Div}}(U)_{\textup{cptf}}.
				\end{tikzcd}
			\end{center}
			where each map is an isomorphism.
		\end{enumerate}
		By a similar argument, $\overline{\textup{Div}}(U/K)$ also coincides with the completion in the boundary topology of the direct limit of arithmetic divisors of continuous type on projective models of $U/K$. In the following, we identify the groups $\overline{\textup{Div}}(U/K)$ and $\overline{\textup{Div}}(U)_{\textup{cptf}}$, omitting the latter notation.
	\end{rem}
	The assignment $(D,g) \mapsto D$ induces a continuous group morphism $\textup{for} \colon \overline{\textup{Div}}(U/K) \rightarrow \textup{Div}(U/K)$, known as the \textit{forgetful map}. Theorem~3.6.4~of~\cite{Y-Z} below describes the kernel of this map in terms of the asymptotic growth of the associated Green's functions as they approach the boundary.
	\begin{thm}\label{exact-seq-local}
		The forgetful map induces a short exact sequence of topological groups
		\begin{center}
			\begin{tikzcd}
				0 \arrow[r] & C^{0}(U^{\textup{an}})_{\textup{cptf}} \arrow[r] & \overline{\textup{Div}}(U/K) \arrow[r, "{\textup{for}}"] & \textup{Div}(U/K) \arrow[r] & 0.
			\end{tikzcd}
		\end{center}
		Let $(X, (B,g_B))$ be any boundary divisor of $U/K$. Then, the kernel $C^{0}(U^{\textup{an}})_{\textup{cptf}}$ satisfies
		\begin{displaymath}
			C^{0}(U^{\textup{an}})_{\textup{cptf}} = \lbrace g_{B} \cdot h|_{U^{\textup{an}}} \, \vert \, h \in C^{0}(X^{\textup{an}}) \textup{ and } h(X^{\textup{an}} \setminus U^{\textup{an}}) = 0 \rbrace.
		\end{displaymath}
	\end{thm}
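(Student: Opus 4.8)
The plan is to mimic the proof of Theorem~3.6.4 of~\cite{Y-Z}: first establish the sequence on the level of model arithmetic divisors, then pass to the boundary-topology completion. On the model level the forgetful map $\textup{for}\colon \overline{\textup{Div}}(U/K)_{\textup{mod}} \to \textup{Div}(U/K)_{\textup{mod}}$ is plainly a surjective group homomorphism (every model divisor on a projective model of $U$ admits a Green's function of model type, obtained from a rational section of the associated line bundle cutting out the divisor together with a smooth or algebraic metric, according to whether $K$ is archimedean or not), it is continuous since $\|\textup{for}(D,g)\|_B \le \|(D,g)\|_{\overline B}$ for any boundary divisor $\overline B = (B,g_B)$, and its kernel is visibly $\{(0,g) \mid g \in C^0(U^{\textup{an}})_{\textup{mod}}\}$. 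What has to be shown is that these properties survive completion: that $\textup{for}$ stays surjective, that its kernel becomes $C^0(U^{\textup{an}})_{\textup{cptf}}$ with the asserted description, and that $\textup{for}$ is open, so that the sequence is one of topological groups.

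The crux, and what I expect to be the main obstacle, is an approximate lifting property for $\textup{for}$: given $\varepsilon \in \mathbb{Q}_{>0}$ and a model divisor $D$ with $-\varepsilon B \le D \le \varepsilon B$, I want to produce a Green's function $g$ for $D$ of continuous type on a projective model of $U$ with $|g| \le 2\varepsilon\, g_B$, so that $\|(D,g)\|_{\overline B} \le 2\varepsilon$ and $(D,g)$ defines an element of $\overline{\textup{Div}}(U/K)$ by the last paragraph of Remark~\ref{cptf-1}. Observe first that $\|D\|_B < \infty$ forces $D|_U = 0$ (Remark~\ref{extended-norm}), so $D$ is supported on the boundary. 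I would write $D = \tfrac12\big((\varepsilon B + D) - (\varepsilon B - D)\big)$ as a difference of effective, boundary-supported model divisors, each bounded above by $2\varepsilon B$, and treat an effective boundary-supported $\mathcal E \le 2\varepsilon B$ separately: choose a non-negative Green's function $g^{0}_{\mathcal E}$ for $\mathcal E$ (from the canonical section $s_{\mathcal E}$ of $\mathcal O_X(\mathcal E)$ and a continuous metric scaled so that $\sup_{x \in X^{\textup{an}}} \|s_{\mathcal E}(x)\| \le 1$) and truncate, setting $g_{\mathcal E} \coloneqq \min(g^{0}_{\mathcal E},\, 2\varepsilon\, g_B)$. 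Comparing the orders of the logarithmic poles along the boundary (this is where $\mathcal E \le 2\varepsilon B$ is used) one checks that $g_{\mathcal E}$ is still a Green's function for $\mathcal E$, now of continuous type, with $0 \le g_{\mathcal E} \le 2\varepsilon\, g_B$; then $g \coloneqq \tfrac12(g_{\varepsilon B + D} - g_{\varepsilon B - D})$ works. This truncation handles the archimedean and non-archimedean cases uniformly.

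Granting the lifting property, the rest should be formal, following the standard pattern for lifting Cauchy sequences through an open surjective homomorphism of topological groups. For surjectivity of $\textup{for}$ after completion I would take a compactified divisor $D = \{D_n\}_n$, pass to a representative with $\|D_{n+1} - D_n\|_B < 2^{-n}$, lift $D_1$ and each telescoping difference $D_{n+1} - D_n$ by the lifting property, and sum the lifts; the resulting sequence $\{(D_n, g_n)\}_n$ is then Cauchy in $\overline{\textup{Div}}(U/K)$ and maps to $D$. The same telescoping argument gives openness of $\textup{for}$. For exactness at the middle term the inclusion $C^0(U^{\textup{an}})_{\textup{cptf}} \subseteq \ker(\textup{for})$ is immediate; conversely, if $\overline D = \{(D_n,g_n)\}$ lies in $\ker(\textup{for})$ then $\|D_n\|_B \to 0$, so the lifting property furnishes Green's functions $g_n'$ for $D_n$ with $\|(D_n,g_n')\|_{\overline B} \to 0$, and $\overline D = \overline D - 0$ is represented by the Cauchy sequence $\{(0,\, g_n - g_n')\}_n$, whose limit lies in the boundary-closure of $C^0(U^{\textup{an}})_{\textup{mod}}$, that is, in $C^0(U^{\textup{an}})_{\textup{cptf}}$.

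Finally, to read off the explicit form of the kernel, I would work through the identification $\overline{\textup{Div}}(U/K) \cong \overline{\textup{Div}}(U)_{\textup{cptf}}$ of Remark~\ref{cptf-1}. A model function on $U^{\textup{an}}$ is the restriction of a model function $\phi$ on some $X^{\textup{an}}$, and since $g_B$ is continuous, strictly positive on $U^{\textup{an}}$, and tends to $+\infty$ along $X^{\textup{an}} \setminus U^{\textup{an}}$ (hence is bounded below on $U^{\textup{an}}$ by a positive constant), the quotient $\phi/g_B$ extends continuously to $X^{\textup{an}}$ by $0$ on $X^{\textup{an}} \setminus U^{\textup{an}}$. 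A boundary-Cauchy sequence $\{\phi_n|_{U^{\textup{an}}}\}$ then has $\{\phi_n/g_B\}$ uniformly Cauchy on the compact space $X^{\textup{an}}$, converging to some $h \in C^0(X^{\textup{an}})$ with $h|_{X^{\textup{an}} \setminus U^{\textup{an}}} = 0$ and boundary-limit $g_B\, h|_{U^{\textup{an}}}$; this gives one inclusion. For the reverse one I would show that $\{\phi/g_B \mid \phi \text{ a model function on } X^{\textup{an}}\}$ is uniformly dense among the continuous functions on $X^{\textup{an}}$ that vanish on the boundary, using density of model functions in $C^0(X^{\textup{an}})$ (Proposition~\ref{model-dense}), continuity of division by $g_B$, and the Stone--Weierstrass theorem on the locally compact space $U^{\textup{an}}$; hence $g_B\, h|_{U^{\textup{an}}} \in C^0(U^{\textup{an}})_{\textup{cptf}}$ for every admissible $h$.
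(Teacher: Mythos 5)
The paper does not prove Theorem~\ref{exact-seq-local}; it is quoted verbatim from Theorem~3.6.4 of \cite{Y-Z}, so there is no in-paper proof to compare against. Your reconstruction of the Yuan--Zhang argument is essentially correct: the lifting property via the decomposition $D = \tfrac12\bigl((\varepsilon B + D) - (\varepsilon B - D)\bigr)$ and the truncation $\min(g^0_{\mathcal E}, 2\varepsilon\, g_B)$, the telescoping argument for surjectivity and openness, and the change of variable $f \mapsto f/g_B$ for the kernel description all work. Two compressed steps deserve to be spelled out. First, the pointwise minimum of two non-negative Green's functions of continuous type is not in general a Green's function for the coefficientwise minimum of the divisors (e.g.\ $\min\bigl(-\log|z_1|^2, -\log|z_2|^2\bigr)$ blows up at $z_1=z_2=0$ while $\min(V_1,V_2)=0$); what saves you is precisely the assumed inequality $\mathcal E \le 2\varepsilon B$, which forces $\min(\mathcal E, 2\varepsilon B)=\mathcal E$ and makes the surplus pole of $2\varepsilon g_B - g^0_{\mathcal E}$ go to $+\infty$ along every component where the coefficients differ, so the truncation coincides with $g^0_{\mathcal E}$ near those components; one should verify this also at crossing points, where the excess is continuous as a map to $(-\infty,+\infty]$, so the min remains continuous. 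Second, for density of $\{\phi/g_B \mid \phi \text{ model}\}$ in $C_0(U^{\textup{an}})$, Stone--Weierstrass cannot be applied to this set itself (in the non-archimedean case, model functions are not closed under products), but it does apply to the honest subalgebra $\{f/g_B \mid f \in C^0(X^{\textup{an}})\}$, which separates points and vanishes nowhere on $U^{\textup{an}}$; then density of model functions (Proposition~\ref{model-dense}) and the lower bound $g_B \ge c > 0$ give uniform density of $\{\phi/g_B\}$ inside $\{f/g_B\}$, and the two dense inclusions compose. Neither point is a gap, but both need to be written out for the argument to stand.
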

	In other words, an element $\overline{D} \in  \overline{\textup{Div}}(U/K)$ is completely determined by a pair $\overline{D}=(D,g)$, where $D \in \textup{Div}(U/K)$ and $g$ is a Green's function for $D|_U$ of continuous type on $U^{\textup{an}}$. Intuitively, the above result says that a function $f$ belongs to the kernel if and only if the function $f/g_B$ tends to $0$ as it ``approaches'' the boundary. It is immediate that the inclusions $\overline{\textup{Div}}(U/K) \subset \overline{\textup{Div}}(U)_{\mathbb{Q}}$ and $C^{0}(U^{\textup{an}})_{\textup{cptf}} \subset C^{0}(U^{\textup{an}})$ are strict.
	\begin{rem}[Singular metrics]\label{singularities-metrics}
		Let $L$ be a line bundle on $U/K$ and $L^{\textup{an}}$ be its analytification. A \textit{singular metric} $\| \cdot \|$ on $L^{\textup{an}}$ as an assignment which, to each pair $(V,s_V)$ consisting of an open set $V$ and a non-vanishing section $s_V \in \Gamma(L,V)$, associates a function $- \log \| s_V \|^2 \colon V^{\textup{an}} \rightarrow \mathbb{R}_{\pm \infty }$ satisfying the gluing condition
		\begin{displaymath}
			- \log \| s_W \|^2 = \log |s_V /s_W|^2 - \log \| s_W \|^2 \quad \textup{on } V^{\textup{an}} \cap W^{\textup{an}}.
		\end{displaymath}
		The pair $\overline{L}=(L, \| \cdot \|)$ is called a \textit{singular metrized line bundle}. Given $P$ a property of functions (e.g., continuous, smooth, locally bounded, etc.), a singular metric is $P$ if each function $- \log \| s \|^2$ satisfies $P$. If $\| \cdot \|$ is continuous and finite, we remove the prefix ``singular''. Observe that the assignment 	$(\overline{L}, s) \mapsto \overline{\textup{div}}(s) \coloneq (\textup{div}(s), - \log \| s  \|^2 )$ induces an isomorphism $\overline{\textup{Pic}}(U) \cong \overline{\textup{Cl}}(U)$, where $\overline{\textup{Pic}}(U)$ is the group of isomorphism classes of continuous metrized line bundles on $U$. Then, the theory of compactified arithmetic divisors allows us to study the following typical situation:  Let $\overline{L}=(L,\| \cdot \|)$ be a singular metrized line bundle on a projective variety $X/K$. Suppose that the singular locus of $\| \cdot \|$ is contained in a proper closed subset $Z$ of $X$, and let $s$ be a rational section such that $Z \subset |\textup{div}(s)|$. Then, the restriction of $\overline{L}$ to the quasi-projective variety $U= X \setminus|\textup{div}(s)|$ is a continuous metrized line bundle, and the arithmetic divisor $\overline{\textup{div}}(s)|_U$ is of continuous type. Theorem \ref{exact-seq-local} gives necessary conditions for $\overline{\textup{div}}(s)|_U$ to be a compactified arithmetic divisor. Examples of this situation can be found~in~~\cite{K01},~\cite{BKK7}~and~\cite{BKK16}. In summary, we can move back and forth between the divisorial and line bundle perspectives. In this article, we opt for a divisorial approach. For the corresponding constructions on line bundles, see~Sections~2.5~and~3.4~of~\cite{Y-Z}.
	\end{rem}
	\subsection{Nefness, measures and local heights.} This subsection is devoted to local heights, which play the role of intersection numbers for the local case. As before, $U$ and $X$ are quasi-projective and projective varieties over $K$, respectively. If $K$ is archimedean, we further assume that these varieties are smooth. This is a mild assumption for intersection-theoretic purposes: We can always shrink $U$ so that it is smooth, and then pass from a possibly singular projective model $X'$ to a smooth projective model $X$ by taking successive blowups with center contained in the boundary $X \setminus U$. By the projection formula, intersection numbers are not affected by these operations.
	
	First, we recall the notion of nefness for arithmetic divisors. First, we assume $K=\mathbb{C}$, so the analytification of $X/\mathbb{C}$ is the projective complex manifold $X(\mathbb{C})$. Let $V \subset X(\mathbb{C})$ be an open set and $\phi \in L^{1}_{\textup{loc}}(V)$ locally integrable. Any such function induces a $(0,0)$-current on $V$. To avoid redundancy, one identifies locally integrable functions as in the $L^{1}_{\textup{loc}}$-topology; two functions are identified if they coincide almost everywhere. Regarding $\phi$ as a $(0,0)$-current on $V$, we may form the closed $(1,1)$-current $\textup{dd}^{\textup{c}} \phi$ on $V$, where $\textup{d}^{\textup{c}} \coloneqq \frac{i}{4 \pi} (\overline{\partial} - \partial )$. By~Theorem~5.8~in~Ch.~I~of~\cite{Dem12}, the function $\phi$ is \textit{plurisubharmonic} (\textit{psh} for short) if and only if the current $\textup{dd}^{\textup{c}} \phi$ is positive. The analogous definition for Green's functions is stated below.
	\begin{dfn}\label{psh-type}
		Let $\overline{D}=(D,g)$ be an arithmetic divisor on $X/\mathbb{C}$ of locally integrable type. Define the $(1,1)$-current $\omega_{D}(g) \coloneqq \textup{dd}^{\textup{c}} g+ \delta_D$, where $\delta_D$ is the current of integration along $D$. We say that the Green's function $g$ is of \textit{plurisubharmonic} type (psh type for short) if the $(1,1)$-current $\omega_{D}(g)$ is positive. If additionally $D$ is nef and $g$ is of continuous type, we say that $\overline{D}$ is \textit{nef}.
	\end{dfn}
	Now, we let $K$ be non-archimedean. By Remark~\ref{cptf-1}, there is an isomorphism of topological groups $\textup{an} \colon \textup{Div}(X/\mathcal{O}_K) \rightarrow \overline{\textup{Div}}(X)_{\mathbb{Q}}$. Then, an arithmetic divisor $\overline{D}=(D,g)$ on $X$ is \textit{nef} if it belongs to the image of the nef cone $ \textup{Div}^{\textup{nef}}(X/\mathcal{O}_K)$, introduced in Definition~\ref{rel-nef-dfn}. This means that the divisor $D$ is nef and the Green's function $g$ is the uniform limit of a sequence $\lbrace g_{n} \rbrace_{n \in \mathbb{N}}$ of Green's functions of model type for $D$, where each $g_n$ is induced by the analytification of a nef model $\mathcal{D}_n$ of $D$ over $\mathcal{O}_K$. Then, we extend these definitions to the quasi-projective case.
	\begin{dfn}\label{nef-local}
		The \textit{cone of semipositive compactified divisors of }$U$ \textit{over }$K$ is the closure $\overline{\textup{Div}}^{\textup{nef}}(U/K)$ of the cone $\overline{\textup{Div}}^{\textup{nef}} (U/K)_{\textup{mod}}$ of nef model arithmetic divisors with respect to the boundary topology. The \textit{space of integrable compactified arithmetic divisors} is defined as the difference of cones
		\begin{displaymath}
			\overline{\textup{Div}}^{\textup{int}} (U/K) \coloneqq \overline{\textup{Div}}^{\textup{nef}} (U/K) - \overline{\textup{Div}}^{\textup{nef}} (U/K).
		\end{displaymath}
	\end{dfn}
	\begin{rem}
		A posteriori, we recover the same notion of nefness as in the projective case. If $K$ is non-archimedean, by Remark~\ref{cptf-1}, we may identify the cones $\textup{Div}^{\textup{nef}} (U/\mathcal{O}_K)$ and $\overline{\textup{Div}}^{\textup{nef}} (U/K)$. On the other hand, if $K$ is archimedean, we may always shrink $U$ to find a nef boundary divisor $\overline{B}$ of $U/K$. By the argument in Remark~\ref{decreasing}, for each $\overline{D} \in \overline{\textup{Div}}^{\textup{nef}} (U/K)$ there exist a decreasing sequence $\lbrace \overline{D}_n \rbrace_{n \in \mathbb{N}}$ of nef model arithmetic divisors representing $\overline{D}=(D,g_D)$. It follows that the compactified divisor $D$ is nef and the current $\omega_{D}(g) =\lim_{n\in\mathbb{N}} \omega_{D_n}(g_n)$ is positive, as it is the limit of a decreasing sequence of positive currents. For details, see Section~3.6.6~of~\cite{Y-Z}.
	\end{rem}
	Let $X / K$ be a smooth projective variety of dimension $d$. Given a closed subvariety $Y$ of $X$ of dimension $e$, and model divisors $\overline{D}_1, \ldots ,\overline{D}_{e}$ on $X$, there is a signed measure
	\begin{displaymath}
		\langle \omega_{D_1}(g_1) \wedge \ldots \wedge \omega_{D_e}(g_e) \rangle \wedge \delta_Y
	\end{displaymath}
	on $X^{\textup{an}}$. It is given by the product of smooth (resp. algebraic) currents. Moreover, if the arithmetic divisors are nef, then this signed measure is a measure. For details, we refer~to~\cite{Dem12}~and~\cite{CD12} for the archimedean and non-archimedean cases, respectively. By Proposition~1.4.5~of~\cite{BPS}, this construction is extended by weak convergence to nef divisors $\overline{D}_i \in \textup{Div}^{\textup{nef}}(X)_{\mathbb{Q}}$. If $Y=X$ (and therefore $e=d$), the measure $\langle \omega_{D_1}(g_1) \wedge \ldots \wedge \omega_{D_d}(g_d) \rangle \wedge \delta_X$ is known as the \textit{Monge-Amp\`{e}re measure} in the archimedean case, and the \textit{Chambert-Loir measure} in the non-archimedean case. Now, consider arithmetic divisors $\overline{D}_0, \ldots ,\overline{D}_{e}$ of continuous type on $X$ such that the geometric divisor $D_0$ intersects $Y$ properly. By Theorem~1.4.10~from~\cite{BPS}, the Green's function $g_0$ is integrable with respect to the measure $\langle \omega_{D_1}(g_1) \wedge \ldots \wedge \omega_{D_e}(g_e) \rangle \wedge \delta_Y$. Then, we recall the definition and properties of local heights.
	\begin{dfn}\label{local-h-def}
		Let $X / K$ be a smooth projective variety of dimension $d$. Let $Y $ be a subvariety of dimension $e$ and $\overline{D}_1, \ldots, \overline{D}_{e} \in \overline{\textup{Div}}(X)_{\mathbb{Q}}$ be nef arithmetic divisors intersecting $Y$ properly. The \textit{local height of }$Y$\textit{ with respect to} $\overline{D}_0, \ldots, \overline{D}_{e}$ is inductively defined as the number
		\begin{displaymath}
			\textup{h}(Y; \overline{D}_0, \ldots ,\overline{D}_e) \coloneqq \textup{h}(Y \cdot D_0; \overline{D}_1, \ldots, \overline{D}_e) + \int_{X^{\textup{an}}} g_0 \langle \omega_{D_1}(g_1) \wedge \ldots \wedge \omega_{D_e}(g_e) \rangle \wedge \delta_Y.
		\end{displaymath}
		If $Y =X$, define the local arithmetic intersection numbers $\overline{D}_0\cdot \ldots \cdot \overline{D}_{d}$ as $\textup{h}(X; \overline{D}_0, \ldots, \overline{D}_d)$. These definitions extend by linearity to an arbitrary cycle $Y$ on $X$.
	\end{dfn}
	\begin{thm}\label{local-h-thm}
		Let $X/K$ be a projective variety, $Y$ be $e$-dimensional cycle on $X$, and $\overline{D}_0, \ldots , \overline{D}_e$ be nef arithmetic divisors whose divisorial parts $D_0, \ldots , D_e$ meet $Y$ properly. Then, the following statements hold:
		\begin{enumerate}
			\item The local height $\textup{h}(Y; \overline{D}_0, \ldots ,\overline{D}_e)$ is symmetric and multilinear whenever it is defined.
			\item Let $Y^{\prime}$ be an $e$-dimensional cycle on the projective variety $X^{\prime}$ over $K$, and let $\phi  \colon X' \rightarrow X$ be a proper morphism over $K$. Then, we have a projection formula
			\begin{displaymath}
				\textup{h}(Y^{\prime}; \phi^{\ast} \overline{D}_0, \ldots, \phi^{\ast} \overline{D}_e) = \textup{h}(\phi_{\ast} Y^{\prime}; \overline{D}_0, \ldots, \overline{D}_e).
			\end{displaymath}
			\item Let $f_0 \in K(X)^{\times}$ be a rational function such that the divisors $D_0 + \textup{div}(f_0),  \ldots, D_e$ meet $Y$ properly. Consider the zero-cycle $Y \cdot D_1 \cdot \ldots \cdot D_e = \sum_{i} m_i \cdot p_i$ on $X$, where $m_i \in \mathbb{Q}$ and $p_i \in X$, and the principal arithmetic divisor $\overline{\textup{div}}(f_0) = (\textup{div}(f_0), - \log |f_0|^2 )$. Then, we can compute the following difference
			\begin{displaymath}
				\textup{h}(Y ; \overline{D}_0, \ldots , \overline{D}_e) - \textup{h}(Y ; \overline{D}_{0} + \overline{\textup{div}}(f_0), \overline{D}_1, \ldots , \overline{D}_e) = \log \prod_{i} |f_0(p_i)^{m_i}|.
			\end{displaymath}
			\item Let $\overline{D}_{0}^{\prime}$ be an arithmetic divisor on $X$ sharing the same divisorial part as $\overline{D}_{0}$. Then,
			\begin{displaymath}
				\overline{D}_0\cdot \ldots \cdot \overline{D}_{d} - \overline{D}_{0}^{\prime}\cdot \ldots \cdot \overline{D}_{d}  = \int_{X^{\textup{an}}} (g_0 - g_{0}^{\prime}) \langle \omega_{D_1}(g_1) \wedge \ldots \wedge \omega_{D_d}(g_d) \rangle \wedge \delta_X.
			\end{displaymath}
		\end{enumerate}
	\end{thm}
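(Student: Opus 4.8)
The plan is to derive all four assertions from the recursion of Definition~\ref{local-h-def} by the methods of~\cite[Ch.~1]{BPS} (after Gillet--Soul\'e, Zhang, Gubler, and Chambert-Loir), to which I would refer for the complete computations. In the archimedean case $X$ and $X^{\prime}$ are smooth by the standing assumption, so the measures $\langle\omega_{D_1}(g_1)\wedge\cdots\wedge\omega_{D_e}(g_e)\rangle\wedge\delta_Y$ are defined; in the non-archimedean case one uses the calculus of~\cite{CD12}. Two reductions are used throughout: (a) reduce to arithmetic divisors of model type, for which these measures are honest products of currents---smooth, resp.\ algebraic---and the Green's functions may be differentiated; and (b) pass from model type to nef continuous type via the uniform approximation of Proposition~\ref{model-dense}, the weak convergence of the measures above along decreasing model approximations, and the integrability of Green's functions along properly intersecting cycles (Theorem~1.4.10 of~\cite{BPS}), which together let one pass to the limit in the recursion. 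Each part is an induction on $e$.

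For statement~(i), multilinearity is immediate, since intersection of a cycle with a divisor, the product $\langle\,\cdot\,\rangle$, the current of integration $\delta_{(-)}$, and the integral are all linear in each entry. For symmetry, interchanging two of the entries $\overline{D}_1,\ldots,\overline{D}_e$ changes nothing (after peeling off $\overline{D}_0$ the remaining height is symmetric by induction, and $\langle\omega_{D_1}(g_1)\wedge\cdots\wedge\omega_{D_e}(g_e)\rangle$ is symmetric because currents commute), so it suffices to treat the interchange of $\overline{D}_0$ and $\overline{D}_1$. Expanding $\textup{h}(Y;\overline{D}_0,\overline{D}_1,\overline{D}_2,\ldots,\overline{D}_e)$ and $\textup{h}(Y;\overline{D}_1,\overline{D}_0,\overline{D}_2,\ldots,\overline{D}_e)$ with the recursion (twice each), the two copies of $\textup{h}(Y\cdot D_0\cdot D_1;\overline{D}_2,\ldots,\overline{D}_e)$ cancel, and after writing $\omega_{D_i}(g_i)=\textup{dd}^{\textup{c}}g_i+\delta_{D_i}$ the remaining identity reduces to the self-adjointness
\begin{displaymath}
	\int_{X^{\textup{an}}}g_0\;\textup{dd}^{\textup{c}}g_1\wedge\nu\wedge\delta_Y=\int_{X^{\textup{an}}}g_1\;\textup{dd}^{\textup{c}}g_0\wedge\nu\wedge\delta_Y,\qquad\nu\coloneqq\langle\omega_{D_2}(g_2)\wedge\cdots\wedge\omega_{D_e}(g_e)\rangle,
\end{displaymath}
together with the matching of the boundary terms supported on $|D_0|\cap|D_1|$ (arising from the $\delta$-terms and the integration by parts), which cancel precisely because $D_0$ and $D_1$ meet $Y$ properly. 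For Green's functions of model type the displayed equality is ordinary integration by parts (Stokes' theorem); the general case follows by the limiting argument~(b). I expect this to be the main obstacle: the passage to the limit is where one must genuinely invoke weak convergence of the Monge--Amp\`ere/Chambert-Loir measures and integrability to interchange limits with integrals, and it is this symmetry that makes the bracketed intersection number $\overline{D}_0\cdot\ldots\cdot\overline{D}_d$ well defined.

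Statement~(iv) is immediate: apply the recursion once to $\overline{D}_0\cdot\ldots\cdot\overline{D}_d$ and to $\overline{D}_0^{\prime}\cdot\ldots\cdot\overline{D}_d$, peeling off the common divisor $D_0$ (which meets $X$ properly); the terms $\textup{h}(X\cdot D_0;\overline{D}_1,\ldots,\overline{D}_d)$ coincide, and subtracting leaves the integral of Definition~\ref{local-h-def} with $g_0$ replaced by $g_0-g_0^{\prime}$, namely $\int_{X^{\textup{an}}}(g_0-g_0^{\prime})\langle\omega_{D_1}(g_1)\wedge\cdots\wedge\omega_{D_d}(g_d)\rangle\wedge\delta_X$. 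For~(ii) I would induct on $e$. When $e=0$ both sides compute the same sum of values of $g_0$ along $\phi^{\textup{an}}(Y^{\prime})$, since $\phi^{\ast}g_0=g_0\circ\phi^{\textup{an}}$ and $\phi^{\textup{an}}$ maps the $0$-cycle $Y^{\prime}$ onto $\phi_{\ast}Y^{\prime}$. For the step, expand both sides with the recursion: the geometric terms agree by the projection formula for intersection products, $\phi_{\ast}(Y^{\prime}\cdot\phi^{\ast}D_0)=(\phi_{\ast}Y^{\prime})\cdot D_0$, together with the inductive hypothesis; the analytic terms agree because $\phi^{\textup{an}}$ is proper and pushes forward the relevant measure, $\phi^{\textup{an}}_{\ast}\big(\langle\omega_{\phi^{\ast}D_1}(\phi^{\ast}g_1)\wedge\cdots\wedge\omega_{\phi^{\ast}D_e}(\phi^{\ast}g_e)\rangle\wedge\delta_{Y^{\prime}}\big)=\langle\omega_{D_1}(g_1)\wedge\cdots\wedge\omega_{D_e}(g_e)\rangle\wedge\delta_{\phi_{\ast}Y^{\prime}}$, so that $\int_{(X^{\prime})^{\textup{an}}}(\phi^{\ast}g_0)(\cdots)\wedge\delta_{Y^{\prime}}=\int_{X^{\textup{an}}}g_0(\cdots)\wedge\delta_{\phi_{\ast}Y^{\prime}}$ by the change-of-variables formula for integrals of currents under proper morphisms (both identities checked first in model type, then extended by~(b)).

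For statement~(iii) I would again induct on $e$. The base case $e=0$ is the defining change-of-section property of the $0$-dimensional local height; in particular, taking $\overline{D}_0$ to be the zero divisor, $\textup{h}(\sum_i m_i p_i;\overline{\textup{div}}(f_0))=-\log\prod_i|f_0(p_i)^{m_i}|$. Now let $e\geq 1$. The key observation is that, by the Poincar\'e--Lelong formula (resp.\ its non-archimedean counterpart), the Green's function $-\log|f_0|^2$ of $\overline{\textup{div}}(f_0)$ is of model type with vanishing curvature, $\omega_{\textup{div}(f_0)}(-\log|f_0|^2)=\textup{dd}^{\textup{c}}(-\log|f_0|^2)+\delta_{\textup{div}(f_0)}=0$; moreover $\textup{div}(f_0)$ is numerically trivial, so $\overline{\textup{div}}(f_0)$ is nef, and $|\textup{div}(f_0)|\subseteq|D_0|\cup|D_0+\textup{div}(f_0)|$, so the proper-intersection hypotheses carry over to every intermediate cycle. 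Using~(i) to move $\overline{\textup{div}}(f_0)$ to the last slot of $\textup{h}(Y;\overline{\textup{div}}(f_0),\overline{D}_1,\ldots,\overline{D}_e)$ and iterating the recursion, each integral term has the current $\omega_{\textup{div}(f_0)}(-\log|f_0|^2)=0$ among the factors of its $\langle\,\cdot\,\rangle$-product and therefore vanishes, whence
\begin{displaymath}
	\textup{h}(Y;\overline{\textup{div}}(f_0),\overline{D}_1,\ldots,\overline{D}_e)=\textup{h}(Y\cdot D_1\cdots D_e;\overline{\textup{div}}(f_0))=-\log\prod_i|f_0(p_i)^{m_i}|.
\end{displaymath}
On the other hand, $\overline{D}_0+\overline{\textup{div}}(f_0)=(D_0+\textup{div}(f_0),\,g_0-\log|f_0|^2)$; peeling off the first slot in $\textup{h}(Y;\overline{D}_0,\ldots,\overline{D}_e)$ and in $\textup{h}(Y;\overline{D}_0+\overline{\textup{div}}(f_0),\overline{D}_1,\ldots,\overline{D}_e)$, and using multilinearity from~(i) together with $Y\cdot(D_0+\textup{div}(f_0))=Y\cdot D_0+Y\cdot\textup{div}(f_0)$, the difference in question equals
\begin{displaymath}
	\int_{X^{\textup{an}}}\log|f_0|^2\,\mu-\textup{h}(Y\cdot\textup{div}(f_0);\overline{D}_1,\ldots,\overline{D}_e),\qquad\mu\coloneqq\langle\omega_{D_1}(g_1)\wedge\cdots\wedge\omega_{D_e}(g_e)\rangle\wedge\delta_Y.
\end{displaymath}
Peeling off $\overline{\textup{div}}(f_0)$ directly gives $\textup{h}(Y\cdot\textup{div}(f_0);\overline{D}_1,\ldots,\overline{D}_e)=\textup{h}(Y;\overline{\textup{div}}(f_0),\overline{D}_1,\ldots,\overline{D}_e)+\int_{X^{\textup{an}}}\log|f_0|^2\,\mu$, which by the first display is $-\log\prod_i|f_0(p_i)^{m_i}|+\int_{X^{\textup{an}}}\log|f_0|^2\,\mu$; substituting into the second display, the two integrals cancel, leaving $\log\prod_i|f_0(p_i)^{m_i}|$. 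Apart from the symmetry in~(i), everything here is formal bookkeeping with the recursion of Definition~\ref{local-h-def}, the Poincar\'e--Lelong formula, and the functoriality of analytification and of pushforward of currents.
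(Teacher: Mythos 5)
The paper's own ``proof'' is a bare citation of Theorem~1.4.17 of~\cite{BPS}; no internal argument is given, so there is no paper proof to compare against. Your reconstruction is structurally faithful to the BPS argument: induction on $e$, reduction to model-type Green's functions (where $\langle\omega_{D_1}(g_1)\wedge\cdots\wedge\omega_{D_e}(g_e)\rangle$ is an honest product of smooth or algebraic currents), integration by parts for symmetry, pushforward of currents plus the projection formula for intersection products for~(ii), the vanishing of $\omega_{\textup{div}(f_0)}(-\log|f_0|^2)$ by Poincar\'e--Lelong for~(iii), and passage to continuous-type divisors by weak convergence of the Monge--Amp\`ere/Chambert-Loir measures together with the integrability theorem (Theorem~1.4.10 of~\cite{BPS}). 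You also correctly identify the real technical burden as the limiting step in~(i), and your observation that proper intersection of $Y$ with both $D_0$ and $D_0+\textup{div}(f_0)$ forces $p_i\notin|\textup{div}(f_0)|$ is the right justification for the permutation you perform.

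One normalization caveat in~(iii). The base case you quote,
\begin{displaymath}
	\textup{h}\Bigl(\textstyle\sum_i m_i p_i;\,\overline{\textup{div}}(f_0)\Bigr) = -\log\prod_i|f_0(p_i)^{m_i}|,
\end{displaymath}
does not come out of Definition~\ref{local-h-def} as literally written. With $\overline{\textup{div}}(f_0)=(\textup{div}(f_0),-\log|f_0|^2)$ and no factor of $\tfrac{1}{2}$ in front of the integral in that definition, the recursion gives $\sum_i m_i(-\log|f_0(p_i)|^2) = -\log\prod_i|f_0(p_i)^{2m_i}|$, so that the left-hand side of~(iii) evaluates to $\log\prod_i|f_0(p_i)^{2m_i}|$, twice the quantity stated in the theorem. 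This is an inconsistency internal to the paper (Definition~\ref{local-h-def} appears to have dropped the compensating $\tfrac12$ that BPS's Definition~1.4.11 uses to offset the $-\log|\cdot|^2$ normalization of Green's functions); your argument tacitly adopts the BPS-consistent normalization, which is the correct one for the quoted formula, but if the goal is to derive the theorem \emph{from the paper's own definitions as written}, the factor has to be tracked and the discrepancy noted.
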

	\begin{proof}
		See Theorem~1.4.17~of~\cite{BPS}.
	\end{proof}
	By part \textit{(iii)} of the above theorem, the local height is not invariant under linear equivalence. Nevertheless, one can show that certain differences of local heights satisfy this property.
	\begin{lem}\label{difference-h}
		Let $X/K$ be a projective variety, $Y$ be $e$-dimensional cycle on $X$, and $\overline{D}_0, \ldots , \overline{D}_e$ be nef arithmetic divisors whose divisorial parts $D_0, \ldots , D_e$ meet $Y$ properly. For each $0 \leq i \leq e$, let $\overline{D}_{i}^{\prime}$ be a nef arithmetic divisor on $X/K$ whose underlying geometric divisor is equal to $D_i$, and let $f_i  \in K(X)^{\times}$ be a rational function such that the divisors $D_0 + \textup{div}(f_0),\ldots , D_e + \textup{div}(f_e)$ meet $Y$ properly. Then, have an identity
		\begin{displaymath}
			\textup{h}(Y ; \overline{D}_0, \ldots , \overline{D}_e) - \textup{h}(Y ; \overline{D}_{0}^{\prime}, \ldots , \overline{D}_{e}^{\prime}) = \textup{h}(Y ; \overline{E}_0, \ldots , \overline{E}_e) - \textup{h}(Y ; \overline{E}_{0}^{\prime}, \ldots , \overline{E}_{e}^{\prime}),
		\end{displaymath}
		where $\overline{E}_i \coloneqq \overline{D}_i + \overline{\textup{div}}(f_i)$ and $\overline{E}_{i}^{\prime} \coloneqq \overline{D}_{i}^{\prime} + \overline{\textup{div}}(f_i)$.
	\end{lem}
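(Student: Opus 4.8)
The plan is to modify the arithmetic divisors one index at a time and to exploit that adding a principal arithmetic divisor $\overline{\textup{div}}(f_i)$ changes a Green's function by $-\log|f_i|^2$ but leaves the attached curvature measure unchanged. First I would record, for each $i$, that $\overline{E}_i$ and $\overline{D}_i$ are the arithmetic divisors $\overline{\textup{div}}(f_is_i)$ and $\overline{\textup{div}}(s_i)$ of one and the same metrized line bundle $\overline{L}_i=(\mathcal{O}_X(D_i),\|\cdot\|_i)$, where $s_i$ is a rational section of $\mathcal{O}_X(D_i)$ with $\overline{\textup{div}}(s_i)=\overline{D}_i$; likewise $\overline{E}_i'$ and $\overline{D}_i'$ come from a single metrized line bundle $\overline{L}_i'$. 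Consequently $D_i+\textup{div}(f_i)$ is numerically equivalent to the nef divisor $D_i$, hence nef, and since the curvature current (in the archimedean case) or the Chambert--Loir-type measure (in the non-archimedean case) depends only on the underlying metrized line bundle, $\overline{E}_i$ and $\overline{E}_i'$ are nef arithmetic divisors. Together with the hypothesis that $D_0+\textup{div}(f_0),\ldots,D_e+\textup{div}(f_e)$ meet $Y$ properly, this shows all four local heights in the statement are defined; and one has the cancellation $g_{E_i}-g_{E_i'}=\bigl(g_{D_i}-\log|f_i|^2\bigr)-\bigl(g_{D_i'}-\log|f_i|^2\bigr)=g_{D_i}-g_{D_i'}$.

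Next I would telescope. For $0\le k\le e+1$ put
\begin{displaymath}
\textup{h}_k\coloneqq\textup{h}\bigl(Y;\overline{D}_0',\ldots,\overline{D}_{k-1}',\overline{D}_k,\ldots,\overline{D}_e\bigr),\qquad\textup{h}_k'\coloneqq\textup{h}\bigl(Y;\overline{E}_0',\ldots,\overline{E}_{k-1}',\overline{E}_k,\ldots,\overline{E}_e\bigr),
\end{displaymath}
so that the left-hand side of the asserted identity is $\textup{h}_0-\textup{h}_{e+1}$ and the right-hand side is $\textup{h}_0'-\textup{h}_{e+1}'$, all the intermediate local heights being defined by the previous paragraph. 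It then suffices to prove $\textup{h}_k-\textup{h}_{k+1}=\textup{h}_k'-\textup{h}_{k+1}'$ for every $k$ and sum.

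For a fixed $k$, the arithmetic divisors entering $\textup{h}_k$ and $\textup{h}_{k+1}$ agree except in the $k$-th slot, where $\overline{D}_k$ is replaced by $\overline{D}_k'$, both with geometric part $D_k$. Using the symmetry of the local height (Theorem~\ref{local-h-thm}(i)) to move the $k$-th slot to the front and then applying the inductive definition (Definition~\ref{local-h-def}), the purely geometric term $\textup{h}(Y\cdot D_k;\,\cdot\,)$ is identical in $\textup{h}_k$ and $\textup{h}_{k+1}$ and therefore cancels, leaving
\begin{displaymath}
\textup{h}_k-\textup{h}_{k+1}=\int_{X^{\textup{an}}}\bigl(g_{D_k}-g_{D_k'}\bigr)\,\mu_k,\qquad\mu_k\coloneqq\Bigl\langle\,\bigwedge_{j<k}\omega_{D_j'}(g_{D_j'})\wedge\bigwedge_{j>k}\omega_{D_j}(g_{D_j})\,\Bigr\rangle\wedge\delta_Y ,
\end{displaymath}
and, the same way, $\textup{h}_k'-\textup{h}_{k+1}'=\int_{X^{\textup{an}}}\bigl(g_{E_k}-g_{E_k'}\bigr)\,\mu_k'$ with $\mu_k'$ the analogous measure built from the $\overline{E}_j'$ for $j<k$ and the $\overline{E}_j$ for $j>k$. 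By the first paragraph these $\overline{E}$'s have the same underlying metrized line bundles as the corresponding $\overline{D}$'s, so $\mu_k'=\mu_k$; combined with $g_{E_k}-g_{E_k'}=g_{D_k}-g_{D_k'}$ this gives $\textup{h}_k-\textup{h}_{k+1}=\textup{h}_k'-\textup{h}_{k+1}'$, which finishes the argument.

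I expect the only step not reducing to bookkeeping with symmetry, multilinearity and the inductive definition of local heights to be the invariance of the curvature measure under adding $\overline{\textup{div}}(f_i)$: archimedean-side this is the Poincar\'e--Lelong identity $\textup{dd}^{\textup{c}}\log|f_i|^2=\delta_{\textup{div}(f_i)}$, whence $\omega_{D_i+\textup{div}(f_i)}\bigl(g_{D_i}-\log|f_i|^2\bigr)=\omega_{D_i}(g_{D_i})$, and non-archimedean-side it is the corresponding insensitivity of the Chambert--Loir measure to the choice of rational section. I would also check that the displayed integrals converge, which follows from the integrability of continuous-type Green's functions of nef divisors against nef curvature measures (Theorem~1.4.10 of~\cite{BPS}) under the proper-intersection hypotheses, and is anyway implicit in the local heights being defined.
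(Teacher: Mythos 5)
Your proof is correct, but it telescopes in the ``other direction'' from the paper. The paper fixes the metrics and passes from $\overline{D}_i$ to $\overline{E}_i=\overline{D}_i+\overline{\textup{div}}(f_i)$ one index at a time, invoking Theorem~\ref{local-h-thm}.\textit{(iii)} to read off, at each step, a correction $\delta_i=\log\prod_j|f_i(p_j)^{m_j}|$ that depends only on $f_i$ and the geometric zero-cycle; since $D_j'=D_j$, the same $\delta_i$ appears in the primed and unprimed telescoping sums, and the identity drops out. You instead fix the divisorial data and pass from $\overline{D}_k$ to $\overline{D}_k'$ one index at a time, reducing the step $\textup{h}_k-\textup{h}_{k+1}$ to an integral of $g_{D_k}-g_{D_k'}$ against a mixed measure (this is the general-$Y$ version of Theorem~\ref{local-h-thm}.\textit{(iv)}, which you correctly re-derive from symmetry and Definition~\ref{local-h-def} since \textit{(iv)} is stated only for $Y=X$), and then observe that adding $\overline{\textup{div}}(f_j)$ leaves both the integrand difference and the measure unchanged --- the latter by Poincar\'e--Lelong in the archimedean case and by section-independence of the Chambert--Loir measure in the non-archimedean case. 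The paper's route keeps everything at the level of height differences and never needs to pass through curvature invariance explicitly; yours makes the metric-independence of the geometric correction visible at the level of currents, at the cost of invoking Poincar\'e--Lelong. Both arguments also quietly require, as you note, that the intermediate heights $\textup{h}_k$, $\textup{h}_k'$ are defined; your opening paragraph establishing nefness of the $\overline{E}_i$, $\overline{E}_i'$ and the proper-intersection bookkeeping takes care of this.
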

	\begin{proof}
		Applying Theorem~\ref{local-h-thm}.\textit{(iii)} twice, we obtain
		\begin{align*}
			\delta_0 \coloneqq \log  \prod_{i} |f_0 (p_i)^{m_i} |  &= \textup{h}(Y ; \overline{D}_0, \ldots , \overline{D}_e) - \textup{h}(Y ; \overline{E}_0,\overline{D}_1, \ldots , \overline{D}_e) \\
			&=  \textup{h}(Y ; \overline{D}_{0}^{\prime}, \ldots , \overline{D}_{e}^{\prime}) - \textup{h}(Y ; \overline{E}_{0}^{\prime}, \overline{D}_{1}^{\prime}, \ldots , \overline{D}_{e}^{\prime}).
		\end{align*}
		Note that $\delta_0$ depends only on the rational function $f_0$ and the divisors $D_1, \ldots, D_e$. Proceeding similarly, for each $1 \leq i \leq e$, we obtain the identity
		\begin{align*}
			\delta_i  &\coloneqq \textup{h}(Y ; \overline{E}_{0}, \ldots , \overline{E}_{i-1}, \overline{D}_{i}, \ldots , \overline{D}_{e}) - \textup{h}(Y ; \overline{E}_{0}, \ldots , \overline{E}_{i-1},  \overline{E}_{i}, \overline{D}_{i+1}, \ldots , \overline{D}_{e})\\
			& = \textup{h}(Y ; \overline{E}_{0}^{\prime} , \ldots , \overline{E}_{i-1}^{\prime}, \overline{D}_{i}^{\prime}, \ldots , \overline{D}_{e}^{\prime}) - \textup{h}(Y ; \overline{E}_{0}^{\prime}, \ldots, \overline{E}_{i-1}^{\prime}, \overline{E}_{i}^{\prime}, \overline{D}_{i+1}^{\prime}, \ldots , \overline{D}_{e}^{\prime}),
		\end{align*}
		where the number $\delta_i$ depends only on the rational function $f_i$ and the underlying geometric divisors $E_0, \ldots, E_{i-1}$ and $D_{i+1}, \ldots, D_{e}$. Summing over $i$, the right-hand side is a telescopic sum. After simplification, it yields
		\begin{displaymath}
			\sum_{i=0}^{e} \delta_i = \textup{h}(Y ; \overline{D}_0, \ldots , \overline{D}_e) - \textup{h}(Y ; \overline{E}_0, \ldots , \overline{E}_e) =  \textup{h}(Y ; \overline{D}_{0}^{\prime}, \ldots , \overline{D}_{e}^{\prime}) -  \textup{h}(Y ; \overline{E}_{0}^{\prime}, \ldots , \overline{E}_{e}^{\prime}).
		\end{displaymath}
		Rearranging the above equation, we obtain the desired identity.
	\end{proof}
	As in the geometric case, the local arithmetic intersection numbers induce a symmetric, multilinear map on the group of model divisors of a quasi-projective variety $U$ over $K$. 
	\begin{cor}
		Let $U/K$ be a quasi-projective variety. Then, the local arithmetic intersection numbers extend to a symmetric, multilinear map on $\overline{\textup{Div}}^{\textup{nef}} (U/K)_{\textup{mod}}$ and  $\overline{\textup{Div}}^{\textup{int}} (U/K)_{\textup{mod}}$, whenever the underlying geometric divisors intersect properly.
	\end{cor}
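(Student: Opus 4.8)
The plan is to imitate the construction of the geometric intersection pairing on model divisors (the paragraph preceding Theorem~\ref{int-prod-K}), now enriched with the analytic data carried by the Green's functions. Given nef model arithmetic divisors $\overline{D}_0,\dots,\overline{D}_d$ on $U/K$ whose geometric parts meet properly, I would first invoke the cofilteredness of $\textup{PM}(U/K)$ (Remark~\ref{cofiltered}) to choose one projective model $\pi\colon U\to X$ on which all the $\overline{D}_i$ are defined, with nef representatives there: pullback along a proper birational morphism of projective models preserves arithmetic nefness (for the geometric part this is the projection formula for vertical curves; for the Green's function it is the stability of the relevant semipositivity condition under pullback). When $K$ is archimedean I would further arrange $X$ to be smooth by a sequence of blow-ups with centres in the boundary $X\setminus U$, which changes none of the local heights below by Theorem~\ref{local-h-thm}.(ii). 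The natural definition is then $\overline{D}_0\cdots\overline{D}_d := \textup{h}(X;\overline{D}_0,\dots,\overline{D}_d)$, which makes sense precisely because the hypothesis ``the underlying geometric divisors intersect properly'' is read as proper intersection of the representatives $D_0,\dots,D_d$ on $X$, so that all the intermediate cycles in Definition~\ref{local-h-def} are defined.

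The serious point is independence of the chosen model. If $X_1$ and $X_2$ both work, cofilteredness gives a model $X_3$ dominating both, but $X_3$ need not have the geometric parts meeting it properly, since proper intersection is destroyed by pullback to a blow-up (an exceptional divisor over a point of $|D_i|\cap|D_j|$ becomes a common component of the pullbacks). To circumvent this I would move the geometric parts: by a moving lemma applied on the three projective models simultaneously, choose $f_0,\dots,f_d\in K(U)^{\times}$ so that the divisors $E_i:=D_i+\textup{div}(f_i)$, and every partial sub-collection of $E$'s and $D$'s appearing below, meet $X_1$, $X_2$ and $X_3$ properly. The crucial observation is that $\overline{\textup{div}}(f_i)=(\textup{div}(f_i),-\log|f_i|^2)$ is nef with vanishing curvature (Poincar\'e--Lelong: $\omega_{\textup{div}(f_i)}(-\log|f_i|^2)=0$), so $\overline{E}_i:=\overline{D}_i+\overline{\textup{div}}(f_i)$ is again nef; in particular $\textup{h}(X_j;\overline{E}_0,\dots,\overline{E}_d)$ is defined on each model, and all three agree by the projection formula Theorem~\ref{local-h-thm}.(ii), since the pullbacks of the $\overline{E}_i$ from $X_1$ and from $X_2$ coincide as arithmetic divisors on $X_3$. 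It then remains to compare $\textup{h}(X_j;\overline{D}_0^{(j)},\dots,\overline{D}_d^{(j)})$ with $\textup{h}(X_j;\overline{E}_0^{(j)},\dots,\overline{E}_d^{(j)})$ on each $X_j$ by iterating Theorem~\ref{local-h-thm}.(iii): the error terms are sums $\sum_p m_p\log|f_i(p)|$ over the zero-cycles cut out by the $E$'s and $D$'s, which can be rewritten as integrals of $\log|f_i|$ against a common semipositive Monge--Amp\`ere--type measure pulled back to $X_3^{\textup{an}}$, so they are computed identically on $X_1$ and on $X_2$. (This bookkeeping is exactly what Lemma~\ref{difference-h} is designed to encapsulate, applied after pulling everything back to a common model.) Hence $\textup{h}(X_1;\overline{D}_\bullet^{(1)})=\textup{h}(X_2;\overline{D}_\bullet^{(2)})$ and the definition is unambiguous.

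Symmetry and $\mathbb{Q}_{\geq 0}$-multilinearity on $\overline{\textup{Div}}^{\textup{nef}}(U/K)_{\textup{mod}}$ are then inherited directly from the corresponding properties of the local height in Theorem~\ref{local-h-thm}.(i), once one notes that finite sums and positive rational multiples of nef model arithmetic divisors are again nef model arithmetic divisors defined on one common model, and that on the locus where all the geometric parts meet properly this operation stays within that locus. Finally, writing $\overline{\textup{Div}}^{\textup{int}}(U/K)_{\textup{mod}}=\overline{\textup{Div}}^{\textup{nef}}(U/K)_{\textup{mod}}-\overline{\textup{Div}}^{\textup{nef}}(U/K)_{\textup{mod}}$, the pairing extends to the integrable cone by forcing multilinearity; one checks the value is independent of the presentation of each integrable divisor as a difference of two nef ones, which follows from multilinearity on the nef cone. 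I expect essentially all of the work to sit in the independence-of-model step: concretely, in running the moving lemma so that every partial collection appearing in the telescoping argument meets $X_1$, $X_2$ and $X_3$ properly, and in verifying that the correction terms transport correctly along the maps $X_3\to X_j$.
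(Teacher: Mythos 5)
The paper's own proof is the one-line citation of Theorem~\ref{local-h-thm}.(i) and (ii), so your overall route is the intended one, and you correctly isolate the nontrivial step: proper intersection can be lost on a common dominating model $X_3$, so (ii) cannot be applied to it directly. However, the repair you propose does not close the gap. You ask the moving lemma to arrange that ``every partial sub-collection of $E$'s and $D$'s'' meets $X_1$, $X_2$, and $X_3$ properly, but the sub-collection occurring at the first step of your iteration of (iii) consists entirely of the unmoved $D_i$'s, which no choice of $f_i$ changes; if these already fail to meet properly on $X_3$, the requirement is unattainable. Since the corrections $\sum_p m_p\log|f_i(p)|$ supplied by (iii) depend on the actual zero-cycles $Y\cdot D_{i_1}\cdots D_{i_k}$ and not merely on their degrees, transporting them between $X_1$ and $X_2$ via the projection formula really does require those cycles to be defined on $X_3$. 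That is a genuine gap. The parenthetical appeal to Lemma~\ref{difference-h} is also misplaced: that lemma compares heights for two families of Green's functions sharing the \emph{same} divisorial parts (it is the tool behind the well-definedness of the toric height in Remark~\ref{remark-toric-h-def}), which is a different comparison from the one you need here. The intended reading of the paper's one-line proof is almost certainly that the local height taken from Theorem~1.4.17 of~\cite{BPS} is built, following Gubler's refined local heights, so as to be defined even when the divisors do not meet the cycle properly; with that in hand $\textup{h}(X_3;\pi_1^\ast\overline{D}_\bullet)$ makes sense on the cover, and (ii) gives independence of the model directly, without any moving argument.
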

	\begin{proof}
		This is implied by parts \textit{(i)} and \textit{(ii)} of Theorem~\ref{local-h-thm}.
	\end{proof}
	\begin{rem}\label{mixed-energy-1}
		In the article~\cite{BK24}, Burgos and Kramer extended Yuan and Zhang's arithmetic intersection pairing using the notion of \textit{mixed relative energy}. This construction is based on the \textit{relative energy}~of~\cite{Darvas}. The key observation is that over the archimedean places, the mixed relative energy coincides with a certain difference of local arithmetic intersection numbers, similar to those considered in Lemma~\ref{difference-h}. We relate this notion to the local toric height in Remark~\ref{mixed-energy-2}.
	\end{rem}
	
	\section{A toric geometric analog of Yuan-Zhang's theory}\label{3}
	In this section, we study the group of toric compactified divisors of a quasi-projective toric variety. In some important cases, for instance, the split torus of dimension $d$ over a field, this group and its nef cone can be computed explicitly. We emphasize that some of these results can be traced back to the work of Botero on toric b-divisors~\cite{Bot19}. However, we also discuss the case of a split torus over a discrete valuation ring. 
	
	\subsection{Toric compactified divisors} We fix notation that will be repeatedly used throughout the section. For an affine scheme $\mathcal{S}=\textup{Spec}(R)$, the \textit{multiplicative group} $\mathbb{G}_{m}/\mathcal{S}$ is the affine group scheme $\textup{Spec}(R[T, T^{-1}])$. In the following, we let $\mathcal{S}$ be either the spectrum of a field $K$ or a Dedekind domain $\mathcal{O}_K$ with field of fractions $K$. For a positive integer $d$, we let $\mathcal{U}\coloneqq \mathbb{G}^{d}_{m}/\mathcal{S}$ be the \textit{split} $d$-\textit{dimensional torus} over $\mathcal{S}$. When the base scheme is a Dedekind domain, we denote by $U \coloneqq \mathbb{G}_{m}^{d}/K$ the corresponding split tori. In this case, $U$ naturally identifies with the generic fibre of $\mathcal{U}$. Then, we recall the following definition.
	\begin{dfn}\label{toric-scheme}
		A $d$-dimensional \textit{toric scheme over} $\mathcal{S}$ is a triple $(\mathcal{X},\pi,\mu)$ consisting of a variety $\mathcal{X}/\mathcal{S}$ of relative dimension $d$, an open embedding $\pi \colon U \rightarrow X$ into the generic fibre $X$ of $\mathcal{X}$, and an $\mathcal{S}$-group scheme action $\mu \colon \mathcal{U} \times_{\mathcal{S}} \mathcal{X} \rightarrow \mathcal{X}$ extending the action of $U$ on itself by translations.
	\end{dfn}
	We often obviate the embedding and action to lighten up notation and say that $\mathcal{X}/\mathcal{S}$ is a toric scheme. When the base scheme is a field, $\mathcal{X}$ coincides with its generic fibre and we recover the usual definition of a toric variety (for instance, Definition~3.1.1~of~\cite{CLS}). We have the corresponding notions for divisors and morphisms.
	\begin{dfn}
		Let $\mathcal{X}/\mathcal{S}$ be a toric scheme. A divisor $\mathcal{D} \in \textup{Div}( \mathcal{X})_{\mathbb{Q}}$ is \textit{toric} if it satisfies $\mu^{\ast} \mathcal{D} = p_{2}^{\ast} \mathcal{D}$, where $\mu$ is the action of $\mathcal{U}$ and $p_{2} \colon \mathcal{U} \times_{\mathcal{S}}  \mathcal{X} \rightarrow \mathcal{X}$ is the projection to the second factor. Denote by $ \textup{Div}_{\mathbb{T}}( \mathcal{X})_{\mathbb{Q}}$ the group of toric Cartier $\mathbb{Q}$-divisors on $\mathcal{X}$.
	\end{dfn}
	\begin{dfn}
		For each $i \in \lbrace 1,2 \rbrace$, let $(\mathcal{X}_i, \pi_i, \mu_i)$ be a toric scheme over $\mathcal{S}$ and $\rho \colon \mathcal{U}_{1} \rightarrow \mathcal{U}_{2}$ be an $\mathcal{S}$-homomorphism between their tori. An $\mathcal{S}$-morphism $f \colon \mathcal{X}_1 \rightarrow \mathcal{X}_2$ is $\rho$\textit{-equivariant} if the following diagram
		\begin{center}
			\begin{tikzcd}
				\mathcal{U}_{1} \times_{\mathcal{S}} \mathcal{X}_1 \arrow[d, "{\rho \times f}"] \arrow[r, "{\mu_1}"] & \mathcal{X}_1  \arrow[d, "{f}"] \\
				\mathcal{U}_{2} \times_{\mathcal{S}} \mathcal{X}_2  \arrow[r, "{\mu_2}"] & \mathcal{X}_2 
			\end{tikzcd}
		\end{center}
		commutes. A $\rho$-equivariant morphism $f$ is $\rho$-\textit{toric} if it restricts to a morphism of tori $U_1 \rightarrow U_2$. If the homomorphism $\rho$ is clear, we say that $f \colon \mathcal{X}_1 \rightarrow \mathcal{X}_2$ is equivariant (resp. toric).
	\end{dfn}
	Throughout this subsection, we fix a quasi-projective toric scheme $\mathcal{X}_0 /\mathcal{S}$. The notion of a projective model readily generalizes to the toric setting.
	\begin{dfn}
		A projective model $\pi \colon \mathcal{X}_0 \rightarrow \mathcal{X}$ over $\mathcal{S}$ is \textit{toric} if both $\mathcal{X}/\mathcal{S}$ and $\pi$ are toric. A \textit{morphism of toric projective models of} $\mathcal{X}_0$ \textit{over} $\mathcal{S}$ is a morphism of projective models which is toric. We denote by $\textup{PM}_{\mathbb{T}}(\mathcal{X}_0 /\mathcal{S})$ the category of toric projective models of $\mathcal{X}_{0}$ over $\mathcal{S}$.
	\end{dfn}
	\begin{rem}
		The category $\textup{PM}_{\mathbb{T}}(\mathcal{X}_0 /\mathcal{S})$ is defined as a subcategory of $\textup{PM}(\mathcal{X}_0 /\mathcal{S})$, and these are not equivalent. To carry over the notion of compactified divisors to the toric setting, we need to show that $\textup{PM}_{\mathbb{T}}(\mathcal{X}_0 /\mathcal{S})$ is cofiltered. Later in this section, we will give a proof of this using a combinatorial description of the category $\textup{PM}_{\mathbb{T}}(\mathcal{X}_0 /\mathcal{S})$ for the case when $\mathcal{S}$ is the spectrum of a field or a discrete valuation ring (See~Propositions~\ref{tqpmod}~and~\ref{PM-over-DVR}).
	\end{rem}
	Given a toric projective model $\pi \colon \mathcal{X}_{0} \rightarrow \mathcal{X}$ over $\mathcal{S}$ and a toric divisor $\mathcal{D} \in \textup{Div}_{\mathbb{T}}( \mathcal{X}_{0})_{\mathbb{Q}}$, a \textit{toric model} of $\mathcal{D}$ on $\mathcal{X}$ is a toric divisor $\mathcal{D}^{\prime}$ on $\mathcal{X}$ such that $\mathcal{D}^{\prime}|_{\mathcal{X}_{0}} = \mathcal{D}$. Then, the \textit{group of toric model divisors of} $\mathcal{X}_0$ \textit{over} $\mathcal{S}$ is defined as the direct limit
	\begin{displaymath}
		\textup{Div}_{\mathbb{T}}(\mathcal{X}_0/\mathcal{S})_{\textup{mod}} \coloneqq \varinjlim_{\mathcal{X} \in \textup{PM}_{\mathbb{T}}(\mathcal{X}_0/\mathcal{S})} \textup{Div}_{\mathbb{T}}(\mathcal{X})_{\mathbb{Q}}.
	\end{displaymath}
	We denote by $\textup{Div}_{\mathbb{T}}^{\textup{nef}}(\mathcal{X}_0/\mathcal{S})_{\textup{mod}}$ the $\mathbb{Q}$-cone of nef toric model divisors. By a \textit{toric boundary divisor of} $\mathcal{U}/\mathcal{S}$ we mean a boundary divisor $(\mathcal{X},\mathcal{B})$ such that both the projective model $\pi \colon \mathcal{X}_0 \rightarrow \mathcal{X}$ and the divisor $\mathcal{B}$ are toric. This induces a boundary norm on $	\textup{Div}_\mathbb{T}(\mathcal{X}_0/\mathcal{S})_{\textup{mod}}$, giving rise to a boundary topology. As before, this topology is independent of the choice of boundary divisor. We are now ready to introduce the analogous compactified objects.
	\begin{dfn}
		The \textit{group of toric compactified divisors of} $\mathcal{X}_0/\mathcal{S}$ is the completion $\textup{Div}_{\mathbb{T}}(\mathcal{X}_0/\mathcal{S})$ of $\textup{Div}_{\mathbb{T}}(\mathcal{X}_0/\mathcal{S})_{\textup{mod}}$ with respect to the boundary topology. The cone $\textup{Div}_{\mathbb{T}}^{\textup{nef}}(\mathcal{X}_0/\mathcal{S})$ of nef toric divisors is the closure of $\textup{Div}_{\mathbb{T}}^{\textup{nef}}(\mathcal{X}_0/\mathcal{S})_{\textup{mod}}$, and the space of integrable toric divisors is the difference
		\begin{displaymath}
			\textup{Div}_{\mathbb{T}}^{\textup{nef}}(\mathcal{X}_0/\mathcal{S}) \coloneq \textup{Div}_{\mathbb{T}}^{\textup{nef}}(\mathcal{X}_0/\mathcal{S}) - \textup{Div}_{\mathbb{T}}^{\textup{nef}}(\mathcal{X}_0/\mathcal{S}).
		\end{displaymath}
	\end{dfn}
	Shrinking the quasi-projective toric scheme is functorial in the following sense.
	\begin{lem}\label{shrinking}
		Let $f \colon \mathcal{X}_{0} \rightarrow \mathcal{X}_{0}^{\prime}$ be a toric open immersion. Then, the following diagram in the category of topological abelian groups commutes
		\begin{center}
			\begin{tikzcd}
				\textup{Div}_{\mathbb{T}}(\mathcal{X}_{0}^{\prime}/\mathcal{S}) \arrow[r] \arrow[d] &  \textup{Div}_{\mathbb{T}}(\mathcal{X}_{0}/\mathcal{S}) \arrow[d] \\
				\textup{Div}(\mathcal{X}_{0}^{\prime}/\mathcal{S}) \arrow[r] & \textup{Div}(\mathcal{X}_{0}/\mathcal{S}).
			\end{tikzcd}
		\end{center}
	\end{lem}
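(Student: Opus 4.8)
The plan is to prove the statement first at the level of model divisors, where it is essentially tautological, and then to extend it to the completions by continuity and density.

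First I would unwind the four arrows. Because $f$ is a toric open immersion, every toric projective model $\mathcal{X}'$ of $\mathcal{X}_0'$ over $\mathcal{S}$ is, via precomposition with $f$, also a toric projective model of $\mathcal{X}_0$ over $\mathcal{S}$, and a toric Cartier $\mathbb{Q}$-divisor on $\mathcal{X}'$ stays toric whichever torus-invariant dense open one regards $\mathcal{X}'$ as compactifying. Hence restriction of divisors, together with the inclusion ``forget the torus action'', assemble into a square of maps on the four model-divisor groups, obtained as colimits of the evident diagrams indexed by $\textup{PM}_{\mathbb{T}}(\mathcal{X}_0'/\mathcal{S})$ and $\textup{PM}(\mathcal{X}_0'/\mathcal{S})$. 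This square commutes for the trivial reason that, along either path, a toric divisor $\mathcal{D}'$ on a toric model $\mathcal{X}'$ of $\mathcal{X}_0'$ is sent to the class in $\textup{Div}(\mathcal{X}_0/\mathcal{S})_{\textup{mod}}$ of the very same divisor $\mathcal{D}'$, now regarded on the model $\mathcal{X}'$ of $\mathcal{X}_0$.

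Next I would verify that all four maps are continuous for the boundary topologies. The vertical maps are isometric embeddings: effectivity of Cartier divisors and the associated partial order do not see the torus action, so for a toric boundary divisor $(\mathcal{X}_1,\mathcal{B})$ the toric boundary norm is the restriction of the ordinary one, and by Remark~\ref{extended-norm} the resulting topology does not depend on that choice. For the horizontal restriction maps, choose boundary divisors $(\mathcal{X}_1',\mathcal{B}')$ of $\mathcal{X}_0'/\mathcal{S}$ and $(\mathcal{X}_1,\mathcal{B})$ of $\mathcal{X}_0/\mathcal{S}$ on a common dominating model (toric in the top row); since there the complement of $\mathcal{X}_0$ contains the complement of $\mathcal{X}_0'$, one has $\mathcal{B}'\le c\,\mathcal{B}$ for some $c>0$, hence $\|\mathcal{D}'|_{\mathcal{X}_0}\|_{\mathcal{B}}\le c\,\|\mathcal{D}'\|_{\mathcal{B}'}$, so the restriction maps are continuous, indeed Lipschitz. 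For the bottom row this is precisely the geometric analog of Corollary~3.4.2 of~\cite{Y-Z}. Finally, each of the four model-divisor groups is dense in the corresponding group of compactified divisors, so by the continuity just established the four maps extend uniquely and continuously; the extended square is the one in the statement, and it commutes because the model-level square does, all of its arrows being morphisms of topological abelian groups.

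I expect the only genuine subtlety to be the continuity of the horizontal maps, that is, the comparison of boundary divisors: a compactification of $\mathcal{X}_0$ carries strictly more boundary than the corresponding compactification of $\mathcal{X}_0'$, so one must place compatible boundary divisors on a common dominating model and absorb the discrepancy into the Lipschitz constant $c$ --- this is also why the restriction maps fail to be norm-preserving. Everything else --- well-definedness of the arrows, the commutativity, and the passage to completions --- is formal, granted the cofilteredness of $\textup{PM}_{\mathbb{T}}(\mathcal{X}_0/\mathcal{S})$ (Propositions~\ref{tqpmod} and~\ref{PM-over-DVR}) and the existence of toric boundary divisors.
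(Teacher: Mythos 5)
Your proof is correct and takes essentially the same route as the paper: construct the square at the model-divisor level (where commutativity is tautological, since all four arrows regard a toric divisor on a toric model as ``the same'' divisor), verify continuity of the horizontal restriction maps via the estimate $\|\mathcal{D}\|_{\mathcal{B}} \leq c\,\|\mathcal{D}\|_{\mathcal{B}'}$ coming from $\mathcal{B}' \leq c\,\mathcal{B}$ on a common model, and pass to completions by the universal property. Two small remarks: your Lipschitz inequality is written in the correct direction (the paper's proof places the constant $b$ on the wrong side, evidently a typo); and your observation that the verticals are isometric (hence injective, since effectivity of a toric model divisor is a scheme-theoretic condition, checkable on any dominating model by Lemma~2.3.5~of~\cite{Y-Z}) is slightly stronger than what the lemma claims --- the paper only asserts continuity here and explicitly defers injectivity of the verticals to a later convex-analytic argument.
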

	\begin{proof}
		Since $\textup{PM}_{\mathbb{T}}(\mathcal{X}_0 /\mathcal{S})$ is defined as a subcategory of $\textup{PM}(\mathcal{X}_0 /\mathcal{S})$ and a toric boundary divisor of $\mathcal{X}_0/\mathcal{S}$ is a boundary divisor in the Yuan-Zhang sense, we obtain a canonical continuous group morphism $\textup{Div}_{\mathbb{T}}(\mathcal{X}_0/\mathcal{S}) \rightarrow  \textup{Div}(\mathcal{X}_0/\mathcal{S})$. The second vertical map is obtained in the same way. The map $f \colon \mathcal{X}_{0} \rightarrow \mathcal{X}_{0}^{\prime}$ is a toric open immersion, and so, every (toric) projective model of $\mathcal{X}_{0}^{\prime}$ over $\mathcal{S}$ is trivially a (toric) projective model of $\mathcal{X}_{0}$ over $\mathcal{S}$. Then, we have horizontal maps at the level of model divisors. Consider toric boundary divisors $\mathcal{B}$ and $\mathcal{B}^{\prime}$ for $\mathcal{X}_{0}$ and $\mathcal{X}_{0}^{\prime}$ respectively, which we can assume are defined on the same toric projective model. By Remark~\ref{extended-norm}, the condition $\mathcal{X}_{0} \subset \mathcal{X}_{0}^{\prime}$ implies $ \| \mathcal{B}^{\prime} \|_{\mathcal{B}} < b < \infty $ for some positive rational number $b$. It follows from the definition of the boundary norm that $\| \mathcal{D} \|_{\mathcal{B}^{\prime}} < b \cdot \| \mathcal{D} \|_{\mathcal{B}}$ for each $\mathcal{D} \in \textup{Div}_{\mathbb{T}}(\mathcal{X}_{0}^{\prime}/\mathcal{S})$, showing continuity. Therefore, the horizontal maps exist and are continuous. Commutativity of the diagram is obvious.
	\end{proof}
	\begin{rem}
		The vertical maps in the previous lemma are actually embeddings of topological groups. It only remains to show injectivity. Later, we will establish this fact in the case where $\mathcal{S}$ is a field or a DVR. To do this, we will use the convex analytic description of $\textup{Div}_{\mathbb{T}}(\mathcal{X}_0/\mathcal{S})$. Similar assertions for $\textup{Div}_{\mathbb{T}}^{\textup{nef}}(\mathcal{X}_0/\mathcal{S})$ and $\textup{Div}_{\mathbb{T}}^{\textup{int}}(\mathcal{X}_0/\mathcal{S})$ also hold. Using these techniques, one can also show that the horizontal maps are injective (See Lemmas~\ref{toric-top-emb},~\ref{toric-top-emb-DVR}). 
	\end{rem}
	
	\subsection{The case over a field}\label{3-1} The so-called ``\textit{toric dictionary}'' describes the geometry of toric varieties over a field $K$ in terms of convex geometry. We begin with a summary of these descriptions, then extend them to the context of compactified divisors. Denote by $N \coloneqq \textup{Hom}_{K}(\mathbb{G}_{m},U)$ the group of \textit{cocharacters} of $U/K$. Its dual $M \coloneqq \textup{Hom}_K ( U , \mathbb{G}_{m})$ is called the group of \textit{characters} of $U/K$. Both $M$ and $N$ are lattices of rank~$d$. For each character $m \in M$, we denote by $\chi^{m}$ its corresponding element in the group algebra $K \left[ M \right]$.
	
	\begin{cons}[The toric variety of a fan]
		For each fan $\Sigma$ in $N_{\mathbb{R}}$, there is an associated toric variety $X_{\Sigma}/K$. We sketch its construction below.
		\begin{enumerate}
			\item For each $\sigma \in \Sigma$, denote by $M_{\sigma} \coloneqq \sigma^{\vee} \cap M $ the semigroup of lattice points in the dual cone $\sigma^{\vee} \coloneqq \lbrace x \in M_{\mathbb{R}} \, \vert \, \langle x, u \rangle \geq 0 \, \textup{for all } u \in \sigma\rbrace$. Then, $M_{\sigma}$ generates the commutative $K$-algebra $K\left[ M_{\sigma} \right]$. Define the affine variety $X_{\sigma} \coloneqq \textup{Spec}( K \left[ M_{\sigma}\right] )$. This is an affine toric variety over $K$. The collection $\lbrace X_{\sigma} \rbrace_{\sigma \in \Sigma}$ of affine toric varieties constitutes the building blocks of $X_{\Sigma}$.
			\item Let $\tau$ be a face of $\sigma$. By definition, there exists $m \in M_{\sigma}$ such that $\tau = \sigma \cap H_m$, where $H_m$ is the hyperplane determined by the equation $\langle m, \cdot \rangle = 0$. The variety $X_{\tau}$ corresponds to the principal open set $(X_{\sigma})_m$, given by the localization of $K \left[ M_{\sigma} \right]$ at $\chi^{m}$. This induces an open embedding $X_{\tau} \rightarrow X_{\sigma}$, identifying $X_{\tau}$ with an open subset of $X_{\sigma}$. Every pair of cones $ \sigma_1, \sigma_2 \in \Sigma$ intersect at a common face $ \tau$, hence the identifications $X_{\tau} \rightarrow X_{\sigma_i}$, $i  \in \lbrace 1,2 \rbrace$, allows us to glue the family $\lbrace X_{\sigma} \rbrace_{\sigma \in \Sigma}$.
			\item The affine toric variety $X_{\lbrace 0 \rbrace}$ is isomorphic to  the torus $U$. For each cone $\sigma$, the assignment $\chi^m \mapsto \chi^m \otimes \chi^m$ determines a ring morphism $\mu^{\sharp}_{\sigma} \colon K \left[  M_{\sigma} \right] \rightarrow K \left[  M_{\sigma} \right] \otimes_K K \left[ M \right]$. This induces an action $\mu_{\sigma} \colon U \times_K X_{\sigma} \rightarrow X_{\sigma}$, and the family $\lbrace \mu_{\sigma} \rbrace_{\sigma \in \Sigma}$ glues into an action $\mu$ of $U$ on $X_{\Sigma}$, giving it the structure of toric variety.
		\end{enumerate}
		Many properties of the variety $X_{\Sigma}$ correspond with properties of the fan $\Sigma$ and vice versa. For example, the toric variety $X_{\Sigma}$ is proper (resp. smooth) if and only if the fan $\Sigma$ is complete (resp. smooth). For details, see sections \S~3.1--3.3~of~\cite{CLS}.
	\end{cons} 
	A toric morphism  $f \colon X_{\Sigma_1} \rightarrow X_{\Sigma_2}$  induces a $\mathbb{Z}$-linear map $\phi \colon  N_1 \rightarrow N_2$, where $\phi (u)$ is the cocharacter $f \circ u \colon \mathbb{G}_m \rightarrow U_2$. The induced $\mathbb{R}$-linear map $\phi \colon (N_1)_{\mathbb{R}} \rightarrow (N_2)_{\mathbb{R}}$ is compatible with $\Sigma_1$ and $\Sigma_2$ (see~Definition~\ref{compatible}). The following result shows that every toric morphism arises in this way.
	\begin{prop}\label{toricmorphism}
		There is a bijection between toric morphisms $f \colon X_{\Sigma_1} \rightarrow X_{\Sigma_2}$ and  $\mathbb{Z}$-linear maps $\phi \colon N_1 \rightarrow N_2$ compatible with the fans $\Sigma_1$ and $\Sigma_2$. The morphism is proper if and only if $|\Sigma_1| = \phi^{-1}(|\Sigma_2|)$. In particular, proper birational toric morphisms are equivalent to refinements (up to isomorphism of the torus).
	\end{prop}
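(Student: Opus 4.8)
The plan is to follow the classical toric dictionary (for instance \S3.3 and \S3.4 of \cite{CLS}), reorganised so that everything is phrased functorially in terms of the semigroup algebras $K[M_\sigma]$; this way the base field $K$ need not be algebraically closed, since all of the affine pieces are explicit spectra of monoid algebras and the arguments reduce to computations with monomials.

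First I would construct the map from morphisms to linear maps. Given a toric morphism $f\colon X_{\Sigma_1}\to X_{\Sigma_2}$, its restriction to the open tori $f|_{U_1}\colon U_1\to U_2$ is a homomorphism of split $K$-tori, and precomposing with cocharacters $u\colon\mathbb{G}_m\to U_1$ produces the $\mathbb{Z}$-linear map $\phi\colon N_1\to N_2$ (this is the map already singled out in the excerpt). To see that $\phi$ is compatible with the fans in the sense of Definition~\ref{compatible}, I would use the orbit--cone correspondence: for $\sigma_1\in\Sigma_1$ and $u$ in the relative interior of $\sigma_1$, the limit of $u(t)$ as $t\to 0$ exists in $X_{\sigma_1}\subseteq X_{\Sigma_1}$; since $f$ is a morphism it takes this limit to a point of $X_{\Sigma_2}$, so the one-parameter subgroup $\phi\circ u$ has a limit in $X_{\Sigma_2}$, which forces $\phi(u)\in|\Sigma_2|$. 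Letting $u$ range over the relative interior of $\sigma_1$ and using convexity, $\phi(\sigma_1)$ is contained in the (unique minimal) cone of $\Sigma_2$ whose relative interior meets $\phi(\mathrm{relint}\,\sigma_1)$, which is compatibility.

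Conversely, given a compatible $\phi$, for each $\sigma_1\in\Sigma_1$ I choose $\sigma_2\in\Sigma_2$ with $\phi(\sigma_1)\subseteq\sigma_2$. Then the dual map $\phi^\vee\colon M_2\to M_1$ sends $M_{\sigma_2}=\sigma_2^\vee\cap M_2$ into $M_{\sigma_1}=\sigma_1^\vee\cap M_1$, so it induces a $K$-algebra homomorphism $K[M_{\sigma_2}]\to K[M_{\sigma_1}]$ and hence a morphism $f_{\sigma_1}\colon X_{\sigma_1}\to X_{\sigma_2}\hookrightarrow X_{\Sigma_2}$. I would then verify: (a) $f_{\sigma_1}$ is independent of the choice of $\sigma_2$, since any two admissible choices share a common face still containing $\phi(\sigma_1)$; (b) the $f_{\sigma_1}$ agree on the overlaps $X_{\sigma_1\cap\sigma_1'}$, so they glue to a morphism $f\colon X_{\Sigma_1}\to X_{\Sigma_2}$; (c) $f$ is $\rho$-equivariant for the homomorphism $\rho$ attached to $\phi$, which is immediate from the comultiplication $\chi^m\mapsto\chi^m\otimes\chi^m$. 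That the two assignments are mutually inverse follows because a homomorphism of split tori over $K$ is determined by its action on characters (equivalently cocharacters), and an equivariant morphism of affine toric varieties is determined by the induced semigroup map; this gives the asserted bijection.

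For properness I would use the valuative criterion over $\operatorname{Spec}R$ for $R$ a DVR whose fraction field is an extension of $K$; finite type and separatedness of $f$ hold in all cases. Compatibility already gives $|\Sigma_1|\subseteq\phi^{-1}(|\Sigma_2|)$. If $f$ is proper and $n\in\phi^{-1}(|\Sigma_2|)\cap N_1$, the cocharacter $u_n$ over $K((t))$ has a limit in $X_{\Sigma_2}$ as $t\to 0$, so the valuative criterion forces $u_n$ to have a limit in $X_{\Sigma_1}$, whence $n\in|\Sigma_1|$; as lattice points are dense in $\phi^{-1}(|\Sigma_2|)$ and $|\Sigma_1|$ is closed, $\phi^{-1}(|\Sigma_2|)=|\Sigma_1|$. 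Conversely, if the supports match, I would check the valuative criterion directly: a compatible pair $\operatorname{Spec}K(R)\to X_{\Sigma_1}$ and $\operatorname{Spec}R\to X_{\Sigma_2}$ yields, after translating by the torus, a valuation vector in $|\Sigma_2|=\phi(\text{something in }|\Sigma_1|)$, so the $K(R)$-point lands in an affine chart $X_{\sigma_1}$ whose monoid algebra maps into $R$, giving the unique extension; hence $f$ is proper. The ``in particular'' is the special case: a birational toric morphism is one whose restriction to the tori is an isomorphism, so after composing with that isomorphism of tori we may take $N_1=N_2$ and $\phi=\mathrm{id}$, and then properness becomes $|\Sigma_1|=|\Sigma_2|$, which, together with compatibility, is exactly the statement that $\Sigma_1$ refines $\Sigma_2$.

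The main obstacle I anticipate is the properness equivalence: making the dictionary between limits of one-parameter subgroups and membership in $|\Sigma_i|$ fully rigorous through the valuative criterion, and confirming that it is unaffected by $K$ being non-closed or imperfect. Since the charts are spectra of explicit monoid algebras, this ultimately reduces to tracking monomial valuations, so it should go through; the gluing verifications in the inverse construction are routine but need care about overlapping cones.
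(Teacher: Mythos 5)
Your proposal is correct in outline but takes a more self-contained route than the paper, which disposes of the first two assertions by citing Theorems~3.3.4 and~3.4.11 of \cite{CLS} and proves only the ``in particular.'' Your treatment of that last part is essentially the paper's argument (compose with the torus isomorphism to reduce to $\phi=\mathrm{id}$, then properness plus compatibility gives a refinement; the paper just makes the intermediate fan $\Sigma=\psi(\Sigma_2)$ and the isomorphism $g\colon X_{\Sigma_2}\to X_\Sigma$ explicit). For the bijection and the properness criterion you rework the \cite{CLS} proofs from scratch via monoid algebras and the valuative criterion, which buys explicit field-independence -- a worthwhile point here since $K$ is a general local field while \cite{CLS} is written over $\mathbb{C}$, though the transfer to split toric varieties over any field is standard.

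One step is genuinely thin and, as written, would not go through: your compatibility argument. After showing $\phi(u)\in|\Sigma_2|$ for each lattice point $u\in\mathrm{relint}(\sigma_1)$, you invoke ``convexity'' to conclude that $\phi(\sigma_1)$ lies inside a single cone of $\Sigma_2$. Convexity of $\phi(\sigma_1)$ and the inclusion $\phi(\sigma_1)\subset|\Sigma_2|$ do not by themselves give this: a convex cone contained in the support of a fan need not lie in any one cone of the fan (e.g.\ a $2$-dimensional cone straddling two maximal cones of a complete fan). Your phrase ``the unique minimal cone of $\Sigma_2$ whose relative interior meets $\phi(\mathrm{relint}\,\sigma_1)$'' presupposes exactly the uniqueness you still need to prove. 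The missing ingredient is the orbit--cone correspondence applied to the distinguished point: every lattice point $u\in\mathrm{relint}(\sigma_1)$ yields the same limit $\lim_{t\to 0}u(t)=\gamma_{\sigma_1}$, so $f(\gamma_{\sigma_1})=\lim_{t\to 0}(\phi u)(t)=\gamma_{\tau}$ where $\tau\in\Sigma_2$ is the unique cone with $\phi(u)\in\mathrm{relint}(\tau)$. Since the left-hand side does not depend on $u$, neither does $\tau$; hence $\phi(\mathrm{relint}(\sigma_1)\cap N_1)\subset\mathrm{relint}(\tau)$, and density plus closedness give $\phi(\sigma_1)\subset\tau$. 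Spelling this out replaces the ``convexity'' hand-wave and makes the bijection proof complete; the rest of your proposal is sound.
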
 
	\begin{proof}
		The first two assertions are Theorems~3.3.4~and~3.4.11~of~\cite{CLS}, respectively. The last one is Exercise~3.4.14, which we prove for completeness. Clearly, a refinement determines a proper birational morphism. Conversely, let $f$ be such a map. Then, the induced map $\phi \colon N_1  \rightarrow N_2$ is an isomorphism, with inverse $\psi$. Since $f$ is proper, the fan $\Sigma \coloneqq \lbrace \psi(\sigma) \, \vert \, \sigma \in \Sigma_2 \rbrace$ has support $|\Sigma_1|$. The compatibility condition on $\phi$ implies that $\Sigma_1$ is a refinement of $\Sigma$. Therefore, the map $g \colon X_{\Sigma_2} \rightarrow X_{\Sigma}$ induced by $\psi$ is an isomorphism, and the composition $g \circ f \colon X_{\Sigma_1} \rightarrow X_{\Sigma}$ corresponds to a refinement.
	\end{proof}
	Theorem~11.1.9~of~\cite{CLS} gives a toric resolution of singularities, which does not depend on the characteristic of the field $K$. We will use it repeatedly throughout this article. For the reader's convenience, it is included below.
	\begin{thm}\label{toricres}
		Every fan $\Sigma$ has a refinement $\Sigma'$ with the following properties:
		\begin{enumerate}[label=(\roman*)]
			\item The fan $\Sigma'$ is smooth and contains every smooth cone of $\Sigma$.
			\item $\Sigma'$ is obtained from $\Sigma$ by a sequence of star subdivisions.
			\item The induced toric morphism $X_{\Sigma'} \rightarrow X_{\Sigma}$ is a projective resolution of singularities.
		\end{enumerate}
	\end{thm}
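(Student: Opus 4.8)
The plan is to produce $\Sigma'$ as a finite composition of star subdivisions, carried out in two stages: first \emph{simplicialize} the fan, then iteratively \emph{reduce the multiplicities} of its simplicial cones until all of them equal $1$. The guiding principle for property (i) is that a face of a smooth cone is again smooth, so it suffices to only ever star subdivide at a primitive lattice point $v$ lying in the relative interior of a \emph{non-smooth} cone $\sigma_0$ of the current fan: the cones altered by the star subdivision $\Sigma^{*}(v)$ are precisely those containing $v$, i.e.\ those having $\sigma_0$ as a face, and all of these are non-smooth. Hence every smooth (and more generally every simplicial) cone of $\Sigma$ survives unchanged into $\Sigma'$, and property (ii) holds by construction.

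For the first stage I would induct on the lexicographically ordered pair $(D, b_D)$, where $D$ is the largest dimension of a non-simplicial cone of the current fan and $b_D$ the number of non-simplicial cones of that dimension. If there is a non-simplicial cone, pick a minimal one $\sigma_0$ and a lattice point $v$ in its relative interior; passing to $\Sigma^{*}(v)$ subdivides $\sigma_0$ and the cones containing it into cones of the form $\textup{Cone}(v, F)$ with $F$ a proper face, and a short case check shows that $(D, b_D)$ strictly decreases. Iterating, one reaches a simplicial refinement $\Sigma_1$ of $\Sigma$ still containing all simplicial cones of $\Sigma$; this is essentially Proposition~11.1.7~of~\cite{CLS}.

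The heart of the argument is the second stage. For a simplicial cone $\sigma = \textup{Cone}(u_1, \dots, u_k)$ with primitive ray generators, let $\textup{mult}(\sigma)$ be the index of $\mathbb{Z} u_1 + \dots + \mathbb{Z} u_k$ in $N \cap \textup{span}_{\mathbb{R}}(\sigma)$; then $\sigma$ is smooth exactly when $\textup{mult}(\sigma) = 1$, and $\textup{mult}(\tau)$ divides $\textup{mult}(\sigma)$ whenever $\tau$ is a face of $\sigma$. Suppose $\Sigma_1$ is not smooth, put $m = \max_{\sigma} \textup{mult}(\sigma) > 1$, choose $\sigma'$ with $\textup{mult}(\sigma') = m$ (necessarily a maximal cone, by the divisibility just noted), and choose a minimal non-smooth face $\sigma_0 = \textup{Cone}(u_1, \dots, u_\ell)$ of $\sigma'$. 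Because $\sigma_0$ is \emph{minimally} non-smooth, every nonzero lattice point of its half-open fundamental parallelepiped has all barycentric coordinates strictly between $0$ and $1$ — a lattice point with a vanishing coordinate would lie in a proper face whose multiplicity is then $> 1$, contradicting minimality — so, after rescaling to a primitive vector, we obtain $v = \sum_{i=1}^{\ell} t_i u_i$ with $0 < t_i < 1$, lying in the relative interior of $\sigma_0$. Star subdividing at $v$ replaces each cone $\tau$ containing $\sigma_0$ by the simplicial cones obtained by swapping a single generator $u_i$ of $\sigma_0$ for $v$, and the determinantal identity shows each such cone has multiplicity $t_i \cdot \textup{mult}(\tau) < \textup{mult}(\tau) \leq m$. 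Thus no cone of multiplicity $m$ is created while $\sigma'$ is destroyed, so the lexicographically ordered pair $(m, \#\{\sigma : \textup{mult}(\sigma) = m\})$ strictly decreases; a double induction then terminates with a smooth refinement $\Sigma'$ of $\Sigma$, satisfying (i) and (ii).

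Finally, for property (iii): by Proposition~\ref{toricmorphism} each star subdivision $\Sigma^{*}(v)$ induces a proper birational toric morphism $X_{\Sigma^{*}(v)} \to X_\Sigma$, which is moreover projective — one exhibits a piecewise linear function on $N_{\mathbb{R}}$ that is linear on each cone of $\Sigma$ and strictly convex with respect to $\Sigma^{*}(v)$, equivalently realizes $\Sigma^{*}(v)$ as a normalized blow-up of $X_\Sigma$. Composing the finite chain of these morphisms, and using that projective morphisms are stable under composition, the induced map $\varphi \colon X_{\Sigma'} \to X_\Sigma$ is projective and birational; $X_{\Sigma'}$ is smooth since $\Sigma'$ is, and since every smooth cone of $\Sigma$ lies in $\Sigma'$, the map $\varphi$ is an isomorphism over the smooth locus of $X_\Sigma$, hence a resolution of singularities. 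I expect the main obstacle to be the combinatorial bookkeeping in the second stage — the divisibility of $\textup{mult}$ under faces, the fact that minimal non-smoothness forces an interior lattice point in the parallelepiped, the multiplication formula $t_i \cdot \textup{mult}(\tau)$ for the swapped cones, and the stability of simpliciality under star subdivision — each elementary but needing care to make the induction run.
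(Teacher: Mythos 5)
The paper does not prove this statement; it is quoted verbatim from Theorem~11.1.9 of \cite{CLS} and used as a black box, so there is no internal proof to compare against. Your reconstruction is essentially the argument given in \cite{CLS}~\S11.1: first simplicialize by a strictly decreasing induction on (largest dimension of a non-simplicial cone, number of such cones), then lower the maximal multiplicity by star subdividing at a primitive interior lattice point of a minimally non-smooth face, using the determinant identity $\textup{mult}(\textup{Cone}(v,u_1,\dots,\hat u_i,\dots,u_k)) = t_i \cdot \textup{mult}(\tau)$ and the observation that minimality forces all barycentric coordinates $t_i$ of that lattice point to lie in $(0,1)$. The two-stage double induction, the preservation of smooth (indeed simplicial) cones because only cones containing the non-smooth $\sigma_0$ are touched, and the projectivity of the composite via exhibiting each $\Sigma^*(v)$ as a normalized blow-up are all correct and match the standard proof. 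No gaps.
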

	\begin{cons}[The cone-orbit correspondence]\label{cone-orbit}
		Let $\Sigma$ be a fan in $N_{\mathbb{R}}$. For each cone $\sigma$, we have lattices $N(\sigma) \coloneqq N/(N \cap \textup{Span}(\sigma))$ and $M(\sigma) \coloneqq N(\sigma)^{\vee} = M \cap \sigma^{\perp}$, where $\sigma^{\perp}$ is the linear space orthogonal to $\sigma$. Then, $O(\sigma)=\textup{Spec}(K \left[ M(\sigma) \right] )$ is a torus of dimension $d-\textup{dim}(\sigma)$. The surjective ring morphism $K \left[ M_{\sigma} \right]  \rightarrow K \left[ M( \sigma ) \right] $ determined by the assignment
		\begin{displaymath}
			\chi^m \mapsto \begin{cases} \chi^m, & m\in \sigma^{\perp} \\ 0, & \textup{otherwise} \end{cases}
		\end{displaymath}
		induces an equivariant locally closed immersion $O(\sigma ) \rightarrow X_{\sigma} \rightarrow X_{\Sigma}$. Under this immersion, $O(\sigma )$ identifies with the torus orbit of the image of the identity element in $O(\sigma )$. This gives a correspondence between the $n$-dimensional cones of $\Sigma$ and the $(d-n)$-dimensional orbits of~$X_{\Sigma}$. The Zariski closure $V(\sigma )$ of $O(\sigma )$ in $X_{\Sigma}$ admits the following description: For each cone $\tau$ containing $\sigma$, write $\overline{\tau}$ for its image in the quotient $N(\sigma)$. The collection $\Sigma(\sigma) \coloneqq \lbrace \overline{ \tau } \, | \, \tau \in \Sigma, \,  \sigma \leq \tau \rbrace$ is a fan in $N(\sigma)$, and the family of immersions $\lbrace O(\tau) \rightarrow X_{\Sigma} \, \vert \, \sigma \leq \tau \in \Sigma \rbrace$ glue into an equivariant (but not toric) closed immersion $\iota_{\sigma}\colon X_{\Sigma(\sigma)}\rightarrow X_{\Sigma}$. It identifies $X_{\Sigma(\sigma)}$ with $V(\sigma)$. For details, see Section~\S~3.3~of~\cite{CLS}.
	\end{cons}
	The cone-orbit correspondence implies that a ray $\rho$ of $\Sigma$ corresponds with the closure $V(\rho)$ of an orbit of codimension $1$. Then, a Weil divisor on $X_{\Sigma}$ is \textit{toric} if it belongs to the free abelian group generated by the set $\lbrace V(\rho) \, | \, \rho \in \Sigma(1) \rbrace$. On the other hand, a toric integral Cartier divisor on $X_{\Sigma}/K$ (as opposed to a $\mathbb{Q}$-Cartier divisor) is of the form $D = \lbrace (X_{\sigma}, \chi^{-m_{\sigma}})\rbrace_{\sigma \in \Sigma}$, where $\chi^{-m_{\sigma}}$ is the rational function on $X_{\sigma}$ coming from a character $m_{\sigma}$. The glueing condition means that the functions $\chi^{m_{\sigma'}-m_{\sigma}}$ are units on the overlaps $X_{\sigma \cap \sigma'}$. Using this representation, we can attach to $D$ a virtual support function $\Psi_{D} \in \mathcal{SF}(\Sigma, \mathbb{Q})$, which on each cone $\sigma$ is defined as $\Psi_{D}(u)\coloneqq \langle m_{\sigma} , u \rangle$. The support function $\Psi_D$ is integral, that is, $\Psi_{D} (N \cap |\Sigma|) \subset \mathbb{Z}$. This construction is extended to toric $\mathbb{Q}$-divisors by linearity. The following result states that every toric divisor arises in this way.
	\begin{prop}\label{toric-divisors}
		The map $\mathcal{SF} \colon \textup{Div}_{\mathbb{T}}(X_{\Sigma})_{\mathbb{Q}} \rightarrow \mathcal{SF}(\Sigma, \mathbb{Q})$ given by the assignment $D \mapsto \Psi_D$ is an isomorphism. The toric divisors $D$ and $D'$ are linearly equivalent if and only if $\Psi_{D} - \Psi_{D'}$ is linear. If $\Sigma$ is complete, there is an exact sequence
		\begin{center}
			\begin{tikzcd}
				0 \arrow[r] & M \arrow[r] & \textup{Div}_{\mathbb{T}}(X_{\Sigma}) \arrow[r] &  \textup{Cl}(X_{\Sigma}) \arrow[r] & 0
			\end{tikzcd}
		\end{center}
		and the Cartier class group $\textup{Cl}(X_{\Sigma})$ is a free $\mathbb{Z}$-module. Moreover, the groups of toric Cartier divisors and toric Weil divisors coincide if and only if $\Sigma$ is smooth.
	\end{prop}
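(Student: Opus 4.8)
\emph{Overall plan.} The proposition collects four facts of the toric dictionary, and the plan is to reduce each one to a statement about virtual support functions, invoking \cite[\S\S 4.1--4.2]{CLS} for the parts that are entirely standard. I would first treat integral toric Cartier divisors. For $D=\{(X_\sigma,\chi^{-m_\sigma})\}_{\sigma\in\Sigma}$, the gluing condition recalled above --- namely that $\chi^{m_{\sigma'}-m_\sigma}$ be a unit on $X_{\sigma\cap\sigma'}$, equivalently $m_\sigma-m_{\sigma'}\in(\sigma\cap\sigma')^{\perp}\cap M$ --- says exactly that the function $\Psi_D$ defined on each $\sigma$ by $u\mapsto\langle m_\sigma,u\rangle$ is well defined and continuous on $|\Sigma|$, hence lies in $\mathcal{SF}(\Sigma,\mathbb{Z})$. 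Conversely, an integral virtual support function restricts on each cone to an integral linear functional, which extends to some $m_\sigma\in M$ (here one uses that the lattice points of $\sigma$ generate the saturated sublattice $N\cap\textup{Span}(\sigma)$), and the compatibilities across common faces give back the local data; so $D\mapsto\Psi_D$ is surjective. It is injective because $\Psi_D=0$ forces $m_\sigma\in\sigma^{\perp}\cap M$, whence each $\chi^{-m_\sigma}$ is a unit on $X_\sigma$ and $D=0$. This map is additive, hence a group isomorphism $\textup{Div}_{\mathbb{T}}(X_\Sigma)\cong\mathcal{SF}(\Sigma,\mathbb{Z})$; clearing denominators (there are finitely many maximal cones) and using flatness of $\mathbb{Q}$ over $\mathbb{Z}$ upgrades this to the stated isomorphism on $\mathbb{Q}$-divisors.

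\emph{Linear equivalence and the exact sequence.} If $D,D'$ are toric divisors with $D-D'=\textup{div}(f)$ for some rational function $f$, then $D-D'$ is supported on the boundary $X_\Sigma\setminus U$, so $f$ has neither zero nor pole on the torus $U=\mathbb{G}_m^d$; since $\mathcal{O}(U)^{\times}=K^{\times}\cdot\{\chi^m:m\in M\}$, this forces $f=\lambda\chi^m$ with $\lambda\in K^{\times}$ and $m\in M$, and therefore $\Psi_D-\Psi_{D'}=\Psi_{\textup{div}(\chi^m)}=\langle m,\cdot\rangle$ is linear. The converse is immediate from the isomorphism above, and both directions extend $\mathbb{Q}$-linearly. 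For the exact sequence, assume $\Sigma$ is complete. The map $m\mapsto\textup{div}(\chi^m)$ is injective because $\langle m,\cdot\rangle$ vanishes on $|\Sigma|=N_{\mathbb{R}}$ only for $m=0$; and by the computation just made its image equals the group of principal toric divisors, giving exactness at $\textup{Div}_{\mathbb{T}}(X_\Sigma)$. Exactness at $\textup{Cl}(X_\Sigma)$ amounts to the statement that on a complete toric variety every Cartier divisor class has a torus-invariant representative (move a given divisor by a generic element of the torus, \cite[\S 4.1]{CLS}), together with the identification $\textup{Cl}(X_\Sigma)\cong\textup{Pic}(X_\Sigma)$ from the notation section.

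\emph{Freeness, and Cartier versus Weil.} For freeness, fix a maximal cone $\sigma_0$; as $\Sigma$ is complete, $\dim\sigma_0=d$, so $\Psi\mapsto m_{\sigma_0}$, where $m_{\sigma_0}$ is the unique element of $M$ with $\Psi|_{\sigma_0}=\langle m_{\sigma_0},\cdot\rangle$, is a well-defined group homomorphism $\mathcal{SF}(\Sigma,\mathbb{Z})\to M$ splitting the inclusion $M\hookrightarrow\mathcal{SF}(\Sigma,\mathbb{Z})\cong\textup{Div}_{\mathbb{T}}(X_\Sigma)$. Hence $\textup{Cl}(X_\Sigma)\cong\mathcal{SF}(\Sigma,\mathbb{Z})/M$ is a subgroup of $\mathcal{SF}(\Sigma,\mathbb{Z})$, which is finitely generated and free (it embeds, via $\Psi\mapsto(m_\sigma)_\sigma$, into the product of copies of $M$ indexed by the maximal cones), so $\textup{Cl}(X_\Sigma)$ is free. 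Finally, a toric Weil divisor $\sum_\rho a_\rho V(\rho)$ is Cartier if and only if for every maximal cone $\sigma$ there is $m_\sigma\in M$ with $\langle m_\sigma,u_\rho\rangle=-a_\rho$ for the primitive generators $u_\rho$ of the rays $\rho\leq\sigma$; such an $m_\sigma$ exists for arbitrary integers $a_\rho$ precisely when the $u_\rho$ extend to a $\mathbb{Z}$-basis of $N$, i.e.\ precisely when $\sigma$ is smooth. Therefore the two divisor groups coincide if and only if every cone of $\Sigma$ is smooth, that is, $\Sigma$ is smooth.

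\emph{Expected main obstacle.} None of the steps is deep; the real work is organizing the reductions so that the passage to $\mathbb{Q}$-coefficients is clean throughout, and verifying carefully that $\Psi\mapsto m_{\sigma_0}$ is genuinely additive and a section --- this is exactly where the full-dimensionality of $\sigma_0$, hence completeness of $\Sigma$, enters. If a fully self-contained account is wanted, the longest piece to write out is the existence of a torus-invariant representative in each Cartier class on a complete toric variety, where I would give the generic-translation argument rather than simply cite \cite{CLS}.
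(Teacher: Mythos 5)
The paper's ``proof'' of this proposition is a bare citation to \cite[Props.\ 4.2.6, 4.2.12, Thms.\ 4.1.3, 4.2.1]{CLS}, so there is no in-paper argument to compare against; your proposal reconstitutes essentially the standard argument that those references contain, and the overall structure is correct.

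The one place where your sketch misdescribes the mechanism is the parenthetical ``move a given divisor by a generic element of the torus'' for surjectivity of $\textup{Div}_{\mathbb{T}}(X_\Sigma)\to\textup{Cl}(X_\Sigma)$, which you even flag at the end as the argument you would write out in full. Generic translation does not produce a torus-invariant representative: translating a non-invariant Cartier divisor by a single $t\in U$ still gives a non-invariant divisor, and there is no limiting/averaging step that works at the level of divisors. The correct route (and the one in \cite[\S 4.1]{CLS}) is the localization argument: since $U\cong\mathbb{G}_m^d=\textup{Spec}\,K[M]$ is a factorial affine scheme, $\textup{Cl}(U)=0$, so any Cartier divisor $D$ on $X_\Sigma$ satisfies $D|_U=\textup{div}(f)|_U$ for some rational function $f$, and $D-\textup{div}(f)$ is then a Cartier divisor supported on $X_\Sigma\setminus U=\bigcup_\rho V(\rho)$. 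A Cartier divisor supported on the boundary is automatically torus-invariant, and by your own local characterization (existence of $m_\sigma$ on each $X_\sigma$) it is a toric Cartier divisor. Replacing the generic-translation sentence by this localization argument closes the only real gap; everything else in your write-up (the identification $\textup{Div}_{\mathbb{T}}(X_\Sigma)\cong\mathcal{SF}(\Sigma,\mathbb{Z})$, the computation $\mathcal{O}(U)^\times=K^\times\cdot\{\chi^m\}$ for linear equivalence, the splitting $\Psi\mapsto m_{\sigma_0}$ for a full-dimensional $\sigma_0$ and the embedding into $\prod_{\sigma\in\Sigma(d)}M$ for freeness, and the vertex-lattice criterion for Cartier vs.\ Weil) is sound.
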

	\begin{proof}
		See Propositions~4.2.6,~4.2.12 and Theorems~4.1.3,~4.2.1~of~\cite{CLS}.
	\end{proof}
	Proposition~3.3.17~of~\cite{BPS} below shows that support functions of divisors behave well under pullback morphisms. This will be a key ingredient in our descriptions.
	\begin{lem}\label{pbsup}
		Let $f \colon X_{\Sigma_1} \rightarrow X_{\Sigma_2}$ be a dominant toric morphism inducing the homomorphism $\phi \colon N_1 \rightarrow N_2$. Let $D$ be a toric divisor on $X_{\Sigma_2}$ with support function $\Psi_{D}$. Then, the pullback $f^{\ast} D$ is a toric divisor on $X_{\Sigma_1}$, and its support function is $\Psi_{D} \circ \phi$. In particular, if $f$ restricts to the identity on tori, then $\Psi_{D} = \Psi_{f^{\ast} D}$.
	\end{lem}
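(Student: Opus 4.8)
The plan is to reduce to an integral toric Cartier divisor and then compute the pullback on the affine toric charts, where the statement becomes an elementary identity about characters.

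First I would invoke linearity: every toric $\mathbb{Q}$-divisor is a $\mathbb{Q}$-linear combination of integral toric Cartier divisors, and both $f^{\ast}$ and $D \mapsto \Psi_D$ are $\mathbb{Q}$-linear, so it suffices to treat an integral toric Cartier divisor, which by the discussion preceding Proposition~\ref{toric-divisors} has the form $D = \lbrace (X_{\sigma}, \chi^{-m_{\sigma}}) \rbrace_{\sigma \in \Sigma_2}$ with $\Psi_D(u) = \langle m_{\sigma}, u \rangle$ for $u \in \sigma$. Since $f$ is dominant, the pullback $f^{\ast} D \in \textup{Div}(X_{\Sigma_1})$ is defined, and it is computed locally, so it is enough to describe it on each affine chart $X_{\sigma'}$, $\sigma' \in \Sigma_1$.

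Next, because $\phi$ is compatible with the fans $\Sigma_1$ and $\Sigma_2$, for each $\sigma' \in \Sigma_1$ there is a cone $\sigma \in \Sigma_2$ with $\phi_{\mathbb{R}}(\sigma') \subseteq \sigma$; dualizing, the transpose $\phi^{\vee} \colon M_2 \rightarrow M_1$ satisfies $\phi^{\vee}(\sigma^{\vee}) \subseteq (\sigma')^{\vee}$, so it restricts to the semigroup homomorphism $M_{\sigma} \rightarrow M_{\sigma'}$ that defines the restriction $f \colon X_{\sigma'} \rightarrow X_{\sigma}$. The key computation is that for any character $m \in M_2$ one has $f^{\ast}(\chi^{m}) = \chi^{\phi^{\vee}(m)}$: indeed, $\chi^{m}$ pulls back along $f|_{U_1}$ to the function of the character $m \circ f|_{U_1}$, and for $u \in N_1$ one has $m \circ f|_{U_1} \circ u = \langle m, \phi(u) \rangle = \langle \phi^{\vee}(m), u \rangle$, so $m \circ f|_{U_1} = \phi^{\vee}(m)$. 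Hence the local equation of $f^{\ast} D$ on $X_{\sigma'}$ is $\chi^{-\phi^{\vee}(m_{\sigma})}$; in particular $f^{\ast} D$ is again a toric divisor (its local equations are monomials of the required shape), and its support function on $\sigma'$ is
\begin{displaymath}
	u \longmapsto \langle \phi^{\vee}(m_{\sigma}), u \rangle = \langle m_{\sigma}, \phi(u) \rangle = \Psi_D(\phi(u)).
\end{displaymath}
As the cones $\sigma' \in \Sigma_1$ cover $|\Sigma_1|$, this gives $\Psi_{f^{\ast} D} = \Psi_D \circ \phi$, and the value $\Psi_D(\phi(u))$ is independent of the chosen $\sigma \supseteq \phi(\sigma')$ since $\Psi_D$ is a well-defined function on $|\Sigma_2|$. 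The final assertion is then immediate: if $f$ restricts to the identity on tori, then $N_1 = N_2$ and $\phi = \textup{id}$, so $\Psi_{f^{\ast} D} = \Psi_D$.

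I do not expect a genuine obstacle here; the only points that need care are (a) checking that the monomial local data $\lbrace (X_{\sigma'}, \chi^{-\phi^{\vee}(m_{\sigma})}) \rbrace_{\sigma' \in \Sigma_1}$ glue into a Cartier divisor, which is automatic from the functoriality of $f^{\ast}$ together with the gluing condition for $D$ (the transition characters $\chi^{m_{\sigma_1} - m_{\sigma_2}}$, units on the overlaps $X_{\sigma_1 \cap \sigma_2}$, pull back to units), and (b) bookkeeping the distinction between the dual pairings on $M_1 \times N_1$ and $M_2 \times N_2$, which is entirely absorbed into the identity $\langle \phi^{\vee}(m), u \rangle = \langle m, \phi(u) \rangle$.
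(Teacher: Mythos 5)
Your proof is correct, and it is the standard chart-by-chart argument that the cited reference (Proposition~3.3.17 of \cite{BPS}) uses; the paper itself simply invokes that reference without reproducing a proof, so your self-contained version matches the intended argument. The key identity $\langle \phi^{\vee}(m), u \rangle = \langle m, \phi(u)\rangle$ together with the observation $\phi^{\vee}(\sigma^{\vee}) \subseteq (\sigma')^{\vee}$ for $\phi(\sigma') \subseteq \sigma$ is exactly the content one needs, and your reduction to integral toric Cartier divisors and the remarks about gluing and well-definedness are all sound.
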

	Many properties of a toric divisor $D$ can be read from its support function $\Psi_D$. A guiding principle is that under the toric dictionary, positivity conditions translate into convexity properties. For instance, this is true for the conditions of being nef or ample. Before stating the relevant result, recall that a toric divisor $D$ has an associated polyhedron $\Delta_D \in N_{\mathbb{R}}$, given by
	\begin{displaymath}
		\Delta_{D}\coloneqq \lbrace m \in M_{\mathbb{R}} \, \vert \, \langle m , v_{\tau} \rangle \geq \Psi (v_{\tau}), \textup{ for all } \tau \in \Sigma(1)\rbrace,
	\end{displaymath}
	where $v_{\tau}$ is the ray generator of $\tau$ (i.e. $\mathbb{N} \cdot v_{\tau} = \tau \cap N$). We have the following characterization.
	\begin{thm}\label{nefdiv}
		Let $\Sigma$ be a complete fan in $N_{\mathbb{R}}$ and $X_{\Sigma}/K$ its toric variety. The assignments $D \mapsto \Psi_D$ and $D \mapsto \Delta_D$ induce bijections between the sets:
		\begin{enumerate}[label=(\roman*)]
			\item Nef toric divisors $D \in \textup{Div}_{\mathbb{T}}^{\textup{nef}}(X_{\Sigma})_{\mathbb{Q}}$.
			\item Concave rational support functions $\Psi \in  \mathcal{SF}^{+}(\Sigma, \mathbb{Q})$.
			\item Rational polytopes $\Delta \subset M_{\mathbb{R}}$ with support function $\Psi_{\Delta} \in \mathcal{SF}(\Sigma, \mathbb{Q})$.
		\end{enumerate}
		Moreover, the toric divisor $D$ is ample if and only if the support function $\Psi_D$ is strictly concave.
	\end{thm}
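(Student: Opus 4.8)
The plan is to derive the statement from the classical toric dictionary, using Proposition~\ref{toric-divisors} to eliminate the first column: since $D \mapsto \Psi_D$ already identifies $\textup{Div}_{\mathbb{T}}(X_{\Sigma})_{\mathbb{Q}}$ with the whole of $\mathcal{SF}(\Sigma,\mathbb{Q})$, the content of the equivalence (i)$\leftrightarrow$(ii) is exactly that $D$ is nef if and only if $\Psi_D$ is concave. First I would reduce to the case where $\Sigma$ is smooth: Theorem~\ref{toricres} furnishes a smooth refinement $\Sigma'$ and an induced proper birational toric morphism $p \colon X_{\Sigma'} \to X_{\Sigma}$; by Lemma~\ref{pbsup} one has $\Psi_{p^{\ast}D} = \Psi_D$ as functions on $N_{\mathbb{R}}$, so concavity is unchanged, and nefness is unchanged as well, because $p^{\ast}$ carries nef divisors to nef divisors, while conversely for every curve $B \subset X_{\Sigma}$ there is a curve $B' \subset X_{\Sigma'}$ with $p_{\ast}B' = mB$ for some $m > 0$, so $D \cdot B = m^{-1}\, p^{\ast}D \cdot B' \ge 0$ by the projection formula.

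With $\Sigma$ smooth and complete, I would test nefness on torus-invariant curves. By the structure of the cone of curves of a complete toric variety (Chapter~6 of~\cite{CLS}), the classes of the curves $V(\tau)$ for walls $\tau \in \Sigma(d-1)$ generate this cone, so $D$ is nef if and only if $D \cdot V(\tau) \ge 0$ for every wall $\tau$. Fixing such a $\tau$, adjacent to the maximal cones $\sigma_{\pm} = \tau + \mathbb{R}_{\ge 0}\, u_{\pm}$, I would trivialize $\mathcal{O}_{X_{\Sigma}}(D)$ over $X_{\sigma_{+}}$ and $X_{\sigma_{-}}$ by the characters $m_{\sigma_{\pm}}$ and compute the degree of the restriction of this line bundle to $V(\tau)$; using the smooth wall relation $u_{+} + u_{-} + \sum_i a_i v_i = 0$, that degree is the standard linear combination of the values of $\Psi_D$ at $u_{+}$, $u_{-}$ and the ray generators $v_i$ of $\tau$, and the inequality $D \cdot V(\tau) \ge 0$ becomes precisely the local condition that $\Psi_D$ bends concavely across $\tau$. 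Since $\Sigma$ is complete, a virtual support function that is concave across every wall is concave on all of $N_{\mathbb{R}}$, giving $\Psi_D \in \mathcal{SF}^{+}(\Sigma,\mathbb{Q})$; the converse is the same computation read in reverse. The Kleiman-type criterion for toric curves and the wall-crossing intersection formula are the genuinely technical ingredients here, and supplying full proofs of them rather than quoting~\cite{CLS} is where I expect the main work to lie.

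For (ii)$\leftrightarrow$(iii) I would invoke Legendre--Fenchel duality. If $\Psi \in \mathcal{SF}^{+}(\Sigma,\mathbb{Q})$, then $\Delta := \Delta_D$ is defined by finitely many rational inequalities, one per ray of $\Sigma$, and is bounded because its recession cone $\{ m \in M_{\mathbb{R}} : \langle m, u\rangle \ge 0 \text{ for all } u \in |\Sigma| = N_{\mathbb{R}} \}$ is trivial; hence $\Delta$ is a rational polytope, and because $\Psi_D$ is concave the constraints over the rays already determine $\Delta_D$, so $\Psi_{\Delta} = \Psi$. Conversely, the support function of any rational polytope $\Delta$ is concave and is recovered by $\Psi \mapsto \Delta_{\Psi}$, and if in addition $\Psi_{\Delta} \in \mathcal{SF}(\Sigma,\mathbb{Q})$---equivalently, $\Sigma$ refines the normal fan of $\Delta$---then $\Psi_{\Delta} \in \mathcal{SF}^{+}(\Sigma,\mathbb{Q})$. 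Thus $\Psi \mapsto \Delta_{\Psi}$ and $\Delta \mapsto \Psi_{\Delta}$ are mutually inverse, they match $D \mapsto \Delta_D$ under $D \mapsto \Psi_D$, and composing with (i)$\leftrightarrow$(ii) yields all three bijections.

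For the ampleness assertion, recall that $\Psi_D$ is \emph{strictly} concave relative to $\Sigma$ when it is concave and the maximal cones of $\Sigma$ are exactly its maximal domains of linearity, i.e. the vertices $m_{\sigma}$, $\sigma \in \Sigma(d)$, are pairwise distinct. If $D$ is ample, then $\Delta_D$ is full-dimensional with normal fan $\Sigma$, which forces this distinctness; conversely, strict concavity makes $\Sigma$ the normal fan of the full-dimensional polytope $\Delta_D$, and the toric divisor attached to a full-dimensional polytope on the toric variety of its own normal fan is ample. I would simply cite Theorem~6.1.14 of~\cite{CLS} for this last equivalence, which adds nothing beyond the bookkeeping already carried out above.
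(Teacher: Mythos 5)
Your proposal is correct, but it takes a genuinely different route from the paper, which simply cites Theorems~6.1.7 and 6.1.14 of \cite{CLS} and stops. You instead rebuild the (i)$\leftrightarrow$(ii) equivalence from lower-level ingredients: the reduction to a smooth model via Theorem~\ref{toricres} together with Lemma~\ref{pbsup} and the projection formula is clean and valid, the appeal to the toric Mori cone (generated by the classes $[V(\tau)]$ for walls $\tau$) to test nefness on finitely many invariant curves is the right criterion, and the smooth wall-crossing computation does turn $D\cdot V(\tau)\ge 0$ into exactly the local concavity of $\Psi_D$ across $\tau$, which for a complete fan globalizes. Likewise, deriving (ii)$\leftrightarrow$(iii) by Legendre--Fenchel duality---completeness of $\Sigma$ killing the recession cone of $\Delta_D$, and concavity of $\Psi_D$ ensuring $\Psi_{\Delta_D}=\Psi_D$---reproduces the content of CLS's polytope conditions without invoking the basepoint-free criterion. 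The tradeoff is that you are still leaning on \cite{CLS} for the Toric Cone Theorem and the wall-crossing intersection formula, so nothing is gained in logical self-containment; what your version buys is transparency about \emph{which} pieces of the toric dictionary actually drive the result, which the paper's one-line citation does not provide. For the ampleness assertion you end up citing the same Theorem~6.1.14 as the paper, so there the two accounts coincide. One small remark: the reduction to the smooth case is not strictly necessary, since the Toric Cone Theorem holds for any complete toric variety, but it does let you avoid the multiplicity bookkeeping in the singular wall-crossing formula, which is a reasonable economy.
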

	\begin{proof}
		See Theorem~6.1.7~and~6.1.14~of~\cite{CLS}.
	\end{proof}
	\begin{rem}[Quasi-projective toric varieties]\label{t-q-p}
		By a \textit{projective fan} $\Sigma$ in $N_{\mathbb{R}}$, we mean a complete fan that admits a strictly convex rational support function $\Psi \in \mathcal{SF}(\Sigma, \mathbb{Q})$. Then, the toric variety $X_{\Sigma}/K$ is projective if and only if the fan $\Sigma$ is projective. Indeed, if $X_{\Sigma}/K$ is projective, it is proper and so $\Sigma$ is complete. By Proposition~\ref{toric-divisors}, an ample divisor $E$ on $X_{\Sigma}$ is linearly equivalent to a toric divisor $D$. It follows from Theorem~\ref{nefdiv} that $\Psi_D$ is strictly convex, and therefore $\Sigma$ is projective. The converse is immediate from the same theorem. Now, consider a toric variety $X_{\Sigma_{0}}/K$ which is quasi-projective, hence an open subvariety of a projective variety. The existence of a toric projective model of $X_{\Sigma_0}/K$ means that we can identify $X_{\Sigma_0}/K$ as an open subvariety of a toric projective variety, and this identification is compatible with the toric structure. Then, a natural question is: When does $X_{\Sigma_{0}}/K$ admit a toric projective model? By Proposition~\ref{toricmorphism}, $\pi \colon X_{\Sigma_{0}} \rightarrow X_{\Sigma}$ is a toric projective model if and only if the fan $\Sigma_{0}$ is a subfan of the projective fan $\Sigma$. Here, a fan $\Sigma_0$ is a \textit{subfan} of $\Sigma$ if every cone of $\Sigma_0$ also belongs to $\Sigma$. Finally, Theorem~7.2.4~of~\cite{CLS} gives a partial answer: If the support $|\Sigma_0|$ is convex, the toric variety $X_{\Sigma_{0}}$ admits a projective model.
	\end{rem}
	Now we describe the category of toric projective models $\textup{PM}_{\mathbb{T}}( X_{\Sigma_0}/K)$.  Denote by $\textup{PF}(N_{\mathbb{R}}, \Sigma_0)$ the poset of projective fans $\Sigma \in N_{\mathbb{R}}$ having $\Sigma_0$ as a subfan, ordered by refinements. If the fan $\Sigma_{0}$ is smooth, we write  $\textup{SPM}_{\mathbb{T}}( X_{\Sigma_0}/K)$ and $\textup{SPF}(N_{\mathbb{R}}, \Sigma_0)$ for the subcategories of smooth toric projective models and smooth projective fans, respectively. If $\Sigma_0$ is the fan $\lbrace \lbrace 0 \rbrace \rbrace$ corresponding to the torus $U/K$, we omit it from the notation. We then present the following results.
	\begin{prop}\label{tqpmod}
		Let $X_{\Sigma_0}/K$ be quasi-projective. Then, $\textup{PM}_{\mathbb{T}}( X_{\Sigma_0}/K)$ is equivalent to the cofiltered category $\textup{PF}(N_{\mathbb{R}}, \Sigma_0)$. Assume further that $X_{\Sigma_0}$ is smooth, then $\textup{SPM}_{\mathbb{T}}( X_{\Sigma_0}/K)$ is cofinal in $\textup{PM}_{\mathbb{T}}( X_{\Sigma_0}/K)$.
	\end{prop}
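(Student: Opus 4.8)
The plan is to identify $\textup{PM}_{\mathbb{T}}(X_{\Sigma_0}/K)$ with the poset $\textup{PF}(N_{\mathbb{R}},\Sigma_0)$ by the functor that sends a toric projective model to the fan of its target, and then to establish cofilteredness and, when $\Sigma_0$ is smooth, cofinality of the smooth models, working entirely on the combinatorial side.

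For the equivalence, I would first treat objects. If $\pi\colon X_{\Sigma_0}\to X$ is a toric projective model, then $X$ is proper and toric, so $X=X_{\Sigma}$ for a complete fan $\Sigma$ on the cocharacter lattice of its torus; since $\pi$ is a toric \emph{open} immersion, its restriction to tori is an equivariant open immersion between two $d$-dimensional tori, hence an isomorphism because the only orbit of a torus is the whole torus. Using this isomorphism to identify the torus of $X_{\Sigma}$ with $U$, the morphism $\pi$ becomes toric and induces the identity $N\to N$; by Proposition~\ref{toricmorphism}, compatibility of the identity with $\Sigma_0$ and $\Sigma$ together with $\pi$ being an open immersion forces $\Sigma_0$ to be a subfan of $\Sigma$, and then Remark~\ref{t-q-p} shows $\Sigma$ is a projective fan. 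Conversely, for $\Sigma\in\textup{PF}(N_{\mathbb{R}},\Sigma_0)$ the inclusion of the subfan $\Sigma_0\subseteq\Sigma$ gives a toric open immersion $X_{\Sigma_0}\to X_{\Sigma}$ into a projective variety, that is, a toric projective model. So $X_{\Sigma}\mapsto\Sigma$ is well defined and essentially surjective. For morphisms, a morphism of toric projective models $f\colon X_{\Sigma_1}\to X_{\Sigma_2}$ is a proper toric $K$-morphism with $f\circ\pi_1=\pi_2$; restricting to $U$, where $\pi_1,\pi_2$ restrict to the identity, gives $f|_U=\mathrm{id}$, so the linear map induced by $f$ is the identity. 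By Proposition~\ref{toricmorphism} there is at most one toric morphism inducing a prescribed linear map, hence at most one morphism $X_{\Sigma_1}\to X_{\Sigma_2}$ in $\textup{PM}_{\mathbb{T}}(X_{\Sigma_0}/K)$, and by the properness criterion of that proposition such a morphism exists precisely when $|\Sigma_1|=|\Sigma_2|$ (automatic, both fans being complete) and the identity is compatible with $\Sigma_1$ and $\Sigma_2$, that is, when $\Sigma_1$ refines $\Sigma_2$. This makes the object map fully faithful and gives the equivalence $\textup{PM}_{\mathbb{T}}(X_{\Sigma_0}/K)\simeq\textup{PF}(N_{\mathbb{R}},\Sigma_0)$.

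It then remains to see that $\textup{PF}(N_{\mathbb{R}},\Sigma_0)$ is cofiltered and that $\textup{SPF}(N_{\mathbb{R}},\Sigma_0)$ is cofinal when $\Sigma_0$ is smooth; by the equivalence these give the two assertions of the proposition. Being a poset, $\textup{PF}(N_{\mathbb{R}},\Sigma_0)$ is cofiltered as soon as it is nonempty and any two elements have a common lower bound. Nonemptiness is the existence of a toric projective compactification of $X_{\Sigma_0}$, which holds by quasi-projectivity (see Remark~\ref{t-q-p}). For a common refinement of $\Sigma_1,\Sigma_2\in\textup{PF}(N_{\mathbb{R}},\Sigma_0)$, I would write each $\Sigma_i$ as the normal fan of a full-dimensional rational polytope $P_i\subset M_{\mathbb{R}}$ (possible exactly because $\Sigma_i$ is a complete projective fan) and observe that the normal fan of $P_1+P_2$ is again complete and projective and equals the coarsest common refinement $\Sigma_1\wedge\Sigma_2$; since every cone of $\Sigma_0$ lies in both $\Sigma_1$ and $\Sigma_2$ it is a cone of $\Sigma_1\wedge\Sigma_2$, so $\Sigma_1\wedge\Sigma_2\in\textup{PF}(N_{\mathbb{R}},\Sigma_0)$ refines both. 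For cofinality, given $\Sigma\in\textup{PF}(N_{\mathbb{R}},\Sigma_0)$ with $\Sigma_0$ smooth, Theorem~\ref{toricres} produces a smooth refinement $\Sigma'$ of $\Sigma$ containing every smooth cone of $\Sigma$, in particular every cone of $\Sigma_0$, so $\Sigma_0\subseteq\Sigma'$, and with $X_{\Sigma'}\to X_{\Sigma}$ projective; as $X_{\Sigma}/K$ is projective so is $X_{\Sigma'}/K$, hence $\Sigma'$ is a projective fan and $\Sigma'\in\textup{SPF}(N_{\mathbb{R}},\Sigma_0)$ refines $\Sigma$.

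The step I expect to be the main obstacle is pinning down that every morphism of toric projective models induces the identity on $N$, equivalently that the torus of such a model is canonically identified with $U$; this rests on the observation that an equivariant open immersion between tori of equal dimension is an isomorphism. Together with the nonemptiness of $\textup{PF}(N_{\mathbb{R}},\Sigma_0)$, that is, the existence of a toric projective compactification of a quasi-projective toric variety, these are the only non-formal inputs; the remaining convex-geometric verifications (common refinements via Minkowski sums, and the smooth case via toric resolution of singularities) are routine.
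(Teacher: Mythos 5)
Your proof is correct and follows essentially the same route as the paper: the equivalence is obtained by identifying a toric projective model with its projective fan via Proposition~\ref{toricmorphism} and Remark~\ref{t-q-p}, cofilteredness comes from a common refinement that is again projective, and cofinality follows from the toric resolution of singularities in Theorem~\ref{toricres}. The only difference is cosmetic: where you establish projectivity of the common refinement by identifying it as the normal fan of the Minkowski sum $P_1+P_2$, the paper argues on the dual side by taking the common refinement $\Sigma_1\cdot\Sigma_2$ from Remark~\ref{refin} and noting that $\Psi_1+\Psi_2$ is a strictly concave support function on it.
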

	\begin{proof}
		Both equivalences of categories follow from Proposition~\ref{toricmorphism} and Remark~\ref{t-q-p}. If $X_{\Sigma_0}$ is smooth, Theorem~\ref{toricres} shows the cofinality of $\textup{SPM}_{\mathbb{T}}( X_{\Sigma_0}/K)$ in $\textup{PM}_{\mathbb{T}}( X_{\Sigma_0}/K)$. Indeed, the resolution of singularities fixes the fan $\Sigma_0$. We only need to show that $\textup{PF}(N_{\mathbb{R}}, \Sigma_0)$ is cofiltered. This follows from Remark~\ref{refin}: Given two fans $\Sigma_1$ and $\Sigma_2$ in $\textup{PF}(N_{\mathbb{R}}, \Sigma_0)$, they admit a common refinement $\Sigma_{3}$ fixing $\Sigma_0$. By definition, there are a strictly concave support functions $\Psi_i \in \mathcal{SF}(\Sigma_i, \mathbb{Q})$, $i\in\lbrace1,2\rbrace$. Then, the function $\Psi_1 + \Psi_2 \in \mathcal{SF}(\Sigma_3, \mathbb{Q})$ is strictly concave.
	\end{proof}
	\begin{lem}\label{toricmodeldivisors}
		Let $X_{\Sigma_0}/K$ be quasi-projective and $D$ a model divisor. Then, the assignment $D \mapsto \Psi_D$ is well-defined and induces an isomorphism
		\begin{displaymath}
			\mathcal{SF} \colon \textup{Div}_{\mathbb{T}} (X_{\Sigma_0}/K)_{\textup{mod}} \longrightarrow  \bigcup_{\Sigma \in \textup{PF}(N_{\mathbb{R}}, \Sigma_0 )} \mathcal{SF}(\Sigma, \mathbb{Q}).
		\end{displaymath}
	\end{lem}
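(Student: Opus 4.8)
The plan is to exhibit $\mathcal{SF}$ as the filtered colimit of the levelwise isomorphisms $\mathcal{SF}_{\Sigma} \colon \textup{Div}_{\mathbb{T}}(X_{\Sigma})_{\mathbb{Q}} \xrightarrow{\sim} \mathcal{SF}(\Sigma, \mathbb{Q})$ from Proposition~\ref{toric-divisors}. First I would use Proposition~\ref{tqpmod} to replace the indexing category $\textup{PM}_{\mathbb{T}}(X_{\Sigma_0}/K)$ by the directed poset $\textup{PF}(N_{\mathbb{R}}, \Sigma_0)$ of projective fans containing $\Sigma_0$ as a subfan, so that
\[
\textup{Div}_{\mathbb{T}}(X_{\Sigma_0}/K)_{\textup{mod}} = \varinjlim_{\Sigma} \textup{Div}_{\mathbb{T}}(X_{\Sigma})_{\mathbb{Q}},
\]
where, for a refinement $\Sigma'$ of $\Sigma$, the transition map is the pullback $\pi_{\Sigma'\Sigma}^{\ast}$ along the proper birational toric morphism $X_{\Sigma'} \to X_{\Sigma}$ of Proposition~\ref{toricmorphism}.

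Next I would organize the target as a directed system. For a refinement $\Sigma'$ of $\Sigma$, every cone of $\Sigma'$ is contained in a cone of $\Sigma$, so a function that is $\mathbb{Q}$-linear on the cones of $\Sigma$ is $\mathbb{Q}$-linear on the cones of $\Sigma'$; this gives a natural inclusion $\mathcal{SF}(\Sigma, \mathbb{Q}) \hookrightarrow \mathcal{SF}(\Sigma', \mathbb{Q})$. Since $\textup{PF}(N_{\mathbb{R}}, \Sigma_0)$ is directed and these maps are injective, the colimit of $\{\mathcal{SF}(\Sigma, \mathbb{Q})\}_{\Sigma}$ is canonically the increasing union $\bigcup_{\Sigma} \mathcal{SF}(\Sigma, \mathbb{Q})$ inside the space of piecewise-$\mathbb{Q}$-linear functions on $N_{\mathbb{R}}$.

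The crucial step, which simultaneously yields well-definedness, is to verify that the family $\{\mathcal{SF}_{\Sigma}\}_{\Sigma}$ is a morphism of directed systems. For a refinement $\Sigma'$ of $\Sigma$ the morphism $\pi_{\Sigma'\Sigma}$ restricts to the identity on the torus, hence induces the identity on cocharacter lattices, so Lemma~\ref{pbsup} gives $\Psi_{\pi_{\Sigma'\Sigma}^{\ast} D} = \Psi_{D}$ for every toric $\mathbb{Q}$-divisor $D$ on $X_{\Sigma}$; this is exactly the commutativity of the square formed by $\pi_{\Sigma'\Sigma}^{\ast}$, $\mathcal{SF}_{\Sigma}$, $\mathcal{SF}_{\Sigma'}$ and the inclusion $\mathcal{SF}(\Sigma, \mathbb{Q}) \hookrightarrow \mathcal{SF}(\Sigma', \mathbb{Q})$. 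In particular, two representatives of a single model divisor have equal support functions after passing to a common refinement, so $D \mapsto \Psi_{D}$ descends to $\textup{Div}_{\mathbb{T}}(X_{\Sigma_0}/K)_{\textup{mod}}$.

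Finally I would conclude by the general fact that a levelwise isomorphism of directed systems induces an isomorphism on colimits, which together with the identification of the target colimit with $\bigcup_{\Sigma} \mathcal{SF}(\Sigma, \mathbb{Q})$ gives the statement. I do not expect a serious obstacle here: the entire argument is the functoriality of filtered colimits, and the only points needing a line of justification are the naturality square (via Lemma~\ref{pbsup} together with the triviality of the lattice map induced by a refinement) and the remark that a filtered colimit of injections is a union.
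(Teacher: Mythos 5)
Your proposal is correct and follows essentially the same route as the paper's proof: replace the indexing category by $\textup{PF}(N_{\mathbb{R}}, \Sigma_0)$ via Proposition~\ref{tqpmod}, use Lemma~\ref{pbsup} to get the naturality square (and hence well-definedness), apply the levelwise isomorphism of Proposition~\ref{toric-divisors}, and identify the colimit with the increasing union. The only difference is that you spell out the naturality of the family $\{\mathcal{SF}_{\Sigma}\}_\Sigma$ more explicitly than the paper does, which is a harmless elaboration.
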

	\begin{proof}
		Two toric divisors induce the same model divisor if they coincide after pullback to a common model. Then, by Proposition~\ref{pbsup}, the assignment $D \mapsto \Psi_D$ is well-defined. Proposition~\ref{tqpmod} gives that $\textup{PM}_{\mathbb{T}}(X_{\Sigma_0}/K)$ is equivalent to $\textup{PF}(N_{\mathbb{R}}, \Sigma_0)$. Using the isomorphism in Proposition~\ref{toric-divisors} and taking direct limits, we obtain an isomorphism
		\begin{displaymath}
			\mathcal{SF}\colon \textup{Div}_{\mathbb{T}} (X_{\Sigma_0}/K)_{\textup{mod}} \longrightarrow \varinjlim_{\Sigma \in \textup{PF}(N_{\mathbb{R}}, \Sigma_0 )} \mathcal{SF}(\Sigma, \mathbb{Q}).
		\end{displaymath}
		Note that $\Sigma' \geq \Sigma$ implies that $\mathcal{SF}(\Sigma, \mathbb{Q})  \subset \mathcal{SF}(\Sigma', \mathbb{Q})$. It follows that
		\begin{displaymath}
			\varinjlim_{\Sigma \in \textup{PF}(N_{\mathbb{R}}, \Sigma_0 )} \mathcal{SF}(\Sigma, \mathbb{Q}) \cong \bigcup_{\Sigma \in \textup{PF}(N_{\mathbb{R}}, \Sigma_0 )} \mathcal{SF}(\Sigma, \mathbb{Q}).
		\end{displaymath}
	\end{proof}
	The following lemma characterizes a toric boundary divisor $B$ and the induced boundary norm $\| \cdot \|_B$ in terms of its support function $\Psi_B$.
	\begin{lem}\label{toric-bdry-div}
		Let $X_{\Sigma_0}/K$ be quasi-projective, $\pi \colon X_{\Sigma_0} \rightarrow X_{\Sigma}$ be a toric projective model, and $B$ be a toric divisor on $X_{\Sigma}$ with support function $\Psi_B$. Then, $B$ is a boundary divisor of $X_{\Sigma_0}/K$ if and only if  $\Psi_{B}(x) = 0$ for all $x \in |\Sigma_0|$ and $\Psi_{B} (x)<0$ for all $x \in N_{\mathbb{R}} \setminus |\Sigma_{0}|$. Moreover, if $D$ is a toric model divisor of $X_{\Sigma_0}/K$ with support function $\Psi_D$, we have the identity
		\begin{displaymath}
			\| D \|_{B} = \inf \lbrace \varepsilon \in \mathbb{Q}_{> 0} \, \vert \, |\Psi_{D} (x) | \leq \varepsilon | \Psi_{B} (x) | \textup{ for all } x \in N_{\mathbb{R}} \rbrace.
		\end{displaymath}
	\end{lem}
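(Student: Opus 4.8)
The plan is to translate both assertions into statements about virtual support functions via the toric dictionary, so that they become elementary facts about fans and piecewise linear functions. Throughout I would use two standard inputs. First, by Proposition~\ref{toric-divisors} (cf.~\cite{CLS}) a toric $\mathbb{Q}$-divisor $E$ on $X_{\Sigma}$ has support function $\Psi_E$, the coefficient of $V(\rho)$ in $E$ is $-\Psi_E(v_\rho)$, and, since $\Sigma$ is complete and $\Psi_E$ is linear on each cone, $E$ is effective if and only if $\Psi_E \leq 0$ on all of $N_{\mathbb{R}}$. Second, the combinatorial fact that for $\sigma \in \Sigma$ one has $\sigma \in \Sigma_0 \iff \textup{relint}(\sigma) \cap |\Sigma_0| \neq \emptyset$: if $x \in \textup{relint}(\sigma) \cap \tau$ with $\tau \in \Sigma_0$, then $\sigma$ is the unique cone of $\Sigma$ whose relative interior contains $x$, hence a face of $\tau$, hence in $\Sigma_0$. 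In particular $\rho \in \Sigma_0(1) \iff v_\rho \in |\Sigma_0|$, while by the cone--orbit correspondence (Construction~\ref{cone-orbit}) $V(\rho) \cap X_{\Sigma_0} = \emptyset \iff \rho \notin \Sigma_0(1)$ and $O(\sigma) \subseteq V(\rho) \iff \rho \leq \sigma$.

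For the characterization of boundary divisors I would argue both directions. Suppose $B$ is a toric boundary divisor. Effectivity gives $\Psi_B \leq 0$ on $N_{\mathbb{R}}$, so it suffices to show $\{\Psi_B = 0\} = |\Sigma_0|$. For the inclusion $\supseteq$: every $\sigma \in \Sigma_0$ has all rays in $\Sigma_0(1)$, and for $\rho \in \Sigma_0(1)$ the prime divisor $V(\rho)$ meets $X_{\Sigma_0}$, hence $V(\rho) \not\subseteq |B| = X_{\Sigma} \setminus X_{\Sigma_0}$, which forces its coefficient to vanish, i.e. $\Psi_B(v_\rho) = 0$; by linearity of $\Psi_B$ on $\sigma$ this gives $\Psi_B|_\sigma \equiv 0$. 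For the inclusion $\subseteq$: if $\Psi_B(x) = 0$ with $x \in \textup{relint}(\sigma)$, then the linear functional $\Psi_B|_\sigma$ is $\leq 0$ on $\sigma$ and vanishes at an interior point, hence vanishes on $\sigma$; thus every ray $\rho \leq \sigma$ has coefficient $0$ in $B$, so $O(\sigma) \not\subseteq |B| = X_{\Sigma} \setminus X_{\Sigma_0}$, which forces $\sigma \in \Sigma_0$ and $x \in |\Sigma_0|$. Conversely, assume $\Psi_B = 0$ on $|\Sigma_0|$ and $\Psi_B < 0$ off $|\Sigma_0|$. Then $\Psi_B \leq 0$, so $B$ is effective, and by the combinatorial fact the coefficient $-\Psi_B(v_\rho)$ of $V(\rho)$ is $0$ for $\rho \in \Sigma_0(1)$ and $> 0$ otherwise, whence $|B| = \bigcup_{\rho \notin \Sigma_0(1)} V(\rho) \subseteq X_{\Sigma} \setminus X_{\Sigma_0}$. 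For the reverse inclusion, let $\sigma \in \Sigma \setminus \Sigma_0$; then $\textup{relint}(\sigma) \cap |\Sigma_0| = \emptyset$, so any $x \in \textup{relint}(\sigma)$ has $\Psi_B(x) < 0$, and writing $x$ as a non-negative combination of the ray generators of $\sigma$ and using linearity of $\Psi_B$ on $\sigma$ shows that some ray $\rho \leq \sigma$ has $\Psi_B(v_\rho) < 0$, hence positive coefficient, so $O(\sigma) \subseteq V(\rho) \subseteq |B|$. Therefore $|B| = X_{\Sigma} \setminus X_{\Sigma_0}$ and $(X_{\Sigma}, B)$ is a toric boundary divisor.

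For the boundary norm formula I would first pass to a common model: $B$ is defined on $\Sigma$ and $D$ on some $\Sigma' \in \textup{PF}(N_{\mathbb{R}}, \Sigma_0)$, and by cofilteredness of $\textup{PF}(N_{\mathbb{R}}, \Sigma_0)$ (Proposition~\ref{tqpmod}) there is a common refinement $\Sigma''$; pulling back to $\Sigma''$ preserves both support functions by Lemma~\ref{pbsup}, and effectivity of model divisors is tested on such a model. Now $\varepsilon B - D$ has support function $\varepsilon \Psi_B - \Psi_D$, so $D \leq \varepsilon B \iff \varepsilon \Psi_B - \Psi_D \leq 0$ on $N_{\mathbb{R}}$; likewise $-\varepsilon B \leq D \iff \Psi_D + \varepsilon \Psi_B \leq 0$ on $N_{\mathbb{R}}$. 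Since $B$ is a boundary divisor we have $\Psi_B \leq 0$, so the two conditions together say exactly that $-\varepsilon |\Psi_B(x)| \leq \Psi_D(x) \leq \varepsilon |\Psi_B(x)|$, i.e. $|\Psi_D(x)| \leq \varepsilon |\Psi_B(x)|$, for all $x \in N_{\mathbb{R}}$. Taking the infimum over $\varepsilon \in \mathbb{Q}_{>0}$ gives the claimed identity.

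The main obstacle, modest as it is, will be the inclusion $\{\Psi_B = 0\} \subseteq |\Sigma_0|$ (equivalently, the strict negativity of $\Psi_B$ outside $|\Sigma_0|$): this is the one place where the full support condition $|B| = X_{\Sigma} \setminus X_{\Sigma_0}$ is genuinely needed rather than mere effectivity, through the observation that an orbit $O(\sigma)$ lies in $|B|$ only if some ray of $\sigma$ carries a strictly positive coefficient. The remaining steps are bookkeeping with the toric dictionary and with Lemmas~\ref{pbsup} and~\ref{toricmodeldivisors}.
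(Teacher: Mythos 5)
Your proof is correct and follows the same route as the paper's: decompose $X_{\Sigma_0}$ into torus orbits via the cone--orbit correspondence, read off the Weil divisor of $B$ as $\sum_{\rho}-\Psi_B(v_\rho)V(\rho)$ so that effectivity and the support condition $|B|=X_\Sigma\setminus X_{\Sigma_0}$ become sign conditions on $\Psi_B$, and then translate $-\varepsilon B\leq D\leq\varepsilon B$ into $|\Psi_D|\leq\varepsilon|\Psi_B|$ pointwise on $N_{\mathbb{R}}$. You usefully spell out one step the paper compresses into ``since $\Psi_B$ is linear on each cone'': namely that if $\Psi_B$ vanishes at a relative-interior point of $\sigma\in\Sigma$, then by nonpositivity and linearity it vanishes on all of $\sigma$, so $O(\sigma)$ misses $|B|$ and therefore $\sigma\in\Sigma_0$.
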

	\begin{proof}
		We use the cone-orbit correspondence~\ref{cone-orbit} to write $X_{\Sigma_{0}}$ as a union of torus orbits
		\begin{displaymath}
			X_{\Sigma_0} = \bigcup_{\sigma \in \Sigma_0} X_{\sigma} = \bigcup_{\sigma \in \Sigma_0} \bigcup_{\tau \leq \sigma } O(\tau) = \bigcup_{\tau \in \Sigma_0 } O(\tau ).
		\end{displaymath}
		Let $\rho$ be a ray in $\Sigma$. The closure of torus orbit $V(\rho)$ corresponding to $\rho$ can be written as
		\begin{displaymath}
			V(\rho) = \bigcup_{ \rho \leq \tau } O(\tau ).
		\end{displaymath}
		The orbits $O(\tau)$ are pairwise disjoint, therefore $V(\rho) \cap X_{\Sigma_0} = \emptyset$ if and only if $\rho \notin \Sigma_0$. The Weil divisor associated to $B$ is of the form
		\begin{displaymath}
			\sum_{\rho \in  \Sigma(1)} -\Psi_{B}(v_{\rho}) \cdot V(\rho)
		\end{displaymath}
		where $v_{\rho}$ is the ray generator of $\rho$. Then, its support $|B|$ is given by the union of the $V(\rho)$ such that $\Psi_{B}(v_{\rho}) \neq 0$. The divisor $B$ is effective if and only if $\Psi_{B}(v_{\rho}) \leq 0$ for every ray $\rho$. Since $\Psi_{B}$ is linear on each cone of $\Sigma$, we conclude that $B$ is a boundary divisor of $X_{\Sigma_0}/K$ if and only if $\Psi_{B}(x) = 0$ for all $x \in |\Sigma_0|$ and $\Psi_{B} (x)<0$ for all $x \in N_{\mathbb{R}} \setminus |\Sigma_{0}|$. 
		
		Similarly, a toric model divisor $D$ is effective if and only if $\Psi_{D} (x) \leq 0$ for all $x \in N_{\mathbb{R}}$. Therefore, the inequality $- \varepsilon B \leq D \leq \varepsilon B$ holds if and only if $|\Psi_{D} (x)| \leq  \varepsilon | \Psi_{B} (x) |$ for all $x \in N_{\mathbb{R}}$. The desired identity is clear from the definition of boundary norm.
	\end{proof}
	The next step is to extend the notion of support function to toric compactified divisors.
	\begin{lem}\label{toric-comp-SF}
		Let $X_{\Sigma_0}/K$ be a quasi-projective and $D = \lbrace D_n \rbrace_{n \in \mathbb{N}}$ be a toric compactified divisor. Then, the function $\Psi_{D}$ given by the pointwise limit $\lim_{n \in \mathbb{N}} \Psi_{D_n}$ is conical, continuous and satisfies $\Psi_{D}|_{|\Sigma_0|} \in \mathcal{SF}(\Sigma_0, \mathbb{Q})$. Moreover, the assignment $D \mapsto \Psi_D$ induces an injective continuous group morphism $\mathcal{SF} \colon \textup{Div}_{\mathbb{T}} (X_{\Sigma_0} /K) \rightarrow \mathcal{C}(N_{\mathbb{R}})$, where $\mathcal{C}(N_{\mathbb{R}})$ is the space of conical continuous functions on $N_{\mathbb{R}}$.
	\end{lem}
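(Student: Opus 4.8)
The whole argument runs through the estimate furnished by Lemma~\ref{toric-bdry-div}: if $E$ is a toric model divisor with $\|E\|_{B}<\infty$, then $|\Psi_{E}(x)|\leq\|E\|_{B}\cdot|\Psi_{B}(x)|$ for every $x\in N_{\mathbb{R}}$, where $\Psi_{B}$ is a fixed continuous conical function with $\Psi_{B}=0$ on $|\Sigma_{0}|$ and $\Psi_{B}<0$ elsewhere. First I would establish the existence of $\Psi_{D}$: writing $D=\{D_{n}\}_{n}$, the Cauchy condition gives, for each $\varepsilon>0$, an index $N$ with
\begin{displaymath}
|\Psi_{D_{n}}(x)-\Psi_{D_{m}}(x)|\leq\|D_{n}-D_{m}\|_{B}\cdot|\Psi_{B}(x)|<\varepsilon\cdot|\Psi_{B}(x)|\qquad(n,m\geq N).
\end{displaymath}
Since $\Psi_{B}$ is bounded on every compact subset of $N_{\mathbb{R}}$, the sequence $\{\Psi_{D_{n}}\}_{n}$ is uniformly Cauchy on compacta, hence converges uniformly on compact sets to a continuous function $\Psi_{D}$, which is therefore also its pointwise limit. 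The same displayed inequality, applied to two equivalent Cauchy sequences, shows $\Psi_{D}$ is independent of the chosen representative, so $D\mapsto\Psi_{D}$ is well defined. Each $\Psi_{D_{n}}$ is conical, so $\Psi_{D}$ is conical as well.

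Next I would pin down the behaviour of $\Psi_{D}$ on $|\Sigma_{0}|$. Taking $\varepsilon=1$ above yields an $N$ with $\|D_{n}-D_{m}\|_{B}<\infty$ for $n,m\geq N$; by the formula in Lemma~\ref{toric-bdry-div}, finiteness of this norm forces $\Psi_{D_{n}}-\Psi_{D_{m}}$ to vanish on $\{\Psi_{B}=0\}=|\Sigma_{0}|$, so the function $\Psi_{D_{n}}|_{|\Sigma_{0}|}$ is the same for all $n\geq N$. As each $D_{n}$ is defined on a toric projective model $X_{\Sigma_{n}}$ of $X_{\Sigma_{0}}$, the fan $\Sigma_{0}$ is a subfan of $\Sigma_{n}$, so (Lemma~\ref{toricmodeldivisors}) $\Psi_{D_{n}}|_{|\Sigma_{0}|}\in\mathcal{SF}(\Sigma_{0},\mathbb{Q})$; passing to the limit gives $\Psi_{D}|_{|\Sigma_{0}|}=\Psi_{D_{N}}|_{|\Sigma_{0}|}\in\mathcal{SF}(\Sigma_{0},\mathbb{Q})$. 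Additivity $\Psi_{D+D'}=\Psi_{D}+\Psi_{D'}$ follows by taking limits in the corresponding identity for model divisors (Lemma~\ref{toricmodeldivisors}). For continuity, the displayed bound passes to the completion, giving $|\Psi_{D}(x)-\Psi_{D'}(x)|\leq\|D-D'\|_{B}\cdot|\Psi_{B}(x)|$ for all $x$; hence $\|D-D'\|_{B}\to0$ forces $\Psi_{D}\to\Psi_{D'}$ uniformly on compacta, i.e. $\mathcal{SF}$ is continuous for the topology of uniform convergence on compact subsets of $\mathcal{C}(N_{\mathbb{R}})$.

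The remaining point is injectivity. Assume $\Psi_{D}=0$; I claim $\|D_{n}\|_{B}\to0$, which means $D=0$ in $\textup{Div}_{\mathbb{T}}(X_{\Sigma_{0}}/K)$. Fix a rational $\varepsilon>0$ and pick $N_{0}\geq N$ large enough that $\|D_{n}-D_{N_{0}}\|_{B}<\varepsilon$ for all $n\geq N_{0}$, so that $|\Psi_{D_{n}}(x)-\Psi_{D_{N_{0}}}(x)|\leq\varepsilon\,|\Psi_{B}(x)|$ for all such $n$ and all $x$. Letting $n\to\infty$ and using $\Psi_{D_{n}}(x)\to\Psi_{D}(x)=0$ gives $|\Psi_{D_{N_{0}}}(x)|\leq\varepsilon\,|\Psi_{B}(x)|$ for every $x$, that is, $\|D_{N_{0}}\|_{B}\leq\varepsilon$. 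Therefore $\|D_{n}\|_{B}\leq\|D_{N_{0}}\|_{B}+\|D_{n}-D_{N_{0}}\|_{B}\leq 2\varepsilon$ for $n\geq N_{0}$, and since $\varepsilon$ was arbitrary, $\|D_{n}\|_{B}\to0$.

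The one place that needs care is that the boundary norm controls $\Psi_{D_{n}}$ only after weighting by $|\Psi_{B}|$, which degenerates along $|\Sigma_{0}|$; a priori this leaves both the regularity (and rationality) of $\Psi_{D}$ over $|\Sigma_{0}|$ and injectivity in doubt, the weighted estimate being vacuous there. The resolution consists of the two elementary observations above: the geometric parts of the $D_{n}$ stabilize, so $\Psi_{D_{n}}|_{|\Sigma_{0}|}$ is eventually independent of $n$ (hence rational and piecewise linear), and the pointwise limit of the weighted Cauchy estimate transfers the bound $\|D_{N_{0}}\|_{B}\leq\varepsilon$ back to a single model divisor, which suffices to kill the sequence. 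No delicate convex-analytic input about the shape of $\Psi_{B}$ near $|\Sigma_{0}|$ is needed.
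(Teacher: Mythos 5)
Your proof is correct, and most of it follows the paper's own line: the existence and conicality of $\Psi_D$, the stabilization of $\Psi_{D_n}|_{|\Sigma_0|}$ (and hence its rationality), and the continuity of $\mathcal{SF}$ are all established, as in the paper, from the weighted Cauchy bound supplied by Lemma~\ref{toric-bdry-div}. A cosmetic remark: you describe continuity of $\mathcal{SF}$ as continuity for uniform convergence on compacta, whereas the paper phrases the same estimate directly as a bound $\|\Psi_E\|_{\mathcal{C}} < \varepsilon\,\|\Psi_B\|_{\mathcal{C}}$ in the $\mathcal{C}$-norm; for conical functions these topologies coincide, so nothing is lost, but the $\mathcal{C}$-norm phrasing matches the codomain more transparently.

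The one genuinely different step is injectivity, and there your argument is cleaner than the paper's. The paper proves the contrapositive: assuming $D \neq 0$, it splits into the cases $\|D\|_B = \infty$ (handled via the stabilization of $\Psi_{D_n}|_{|\Sigma_0|}$) and $0 < \|D\|_B = \delta < \infty$ (handled by extracting points $x_n$ with $|\Psi_{D_n}(x_n)| > (\delta/2)|\Psi_B(x_n)|$ and then applying the reverse triangle inequality together with the Cauchy condition to get $|\Psi_D(x_n)| \geq (\delta/4)|\Psi_B(x_n)| > 0$). You instead argue forward: from $\Psi_D = 0$, letting $n \to \infty$ in the Cauchy estimate transfers the bound onto the single term $\Psi_{D_{N_0}}$, yielding $\|D_{N_0}\|_B \leq \varepsilon$, and then the triangle inequality gives $\|D_n\|_B \to 0$. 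This treats both cases at once; if some $\|D_n\|_B$ were infinite, your bound $\|D_{N_0}\|_B \leq \varepsilon < \infty$ would automatically force $D_{N_0}|_{X_{\Sigma_0}} = 0$, so the infinite-norm regime is ruled out rather than needing separate treatment. The trade-off is small, but your version avoids the case split and the reverse-triangle manipulation, and it makes explicit the useful fact that for an extended-norm Cauchy sequence whose pointwise limit vanishes, the norms themselves go to zero --- which is exactly the mechanism that makes the completion work.
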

	\begin{proof}
		Let $B$ be a boundary divisor of $X_{\Sigma_{0}}/K$. By the identity in Lemma~\ref{toric-bdry-div}, the Cauchy condition of $ \lbrace D_n \rbrace_{n \in \mathbb{N}}$ means: For every $\varepsilon >0$ there is $n_{\varepsilon}$ such that for all $n,m > n_{\varepsilon}$ and all $x \in N_{\mathbb{R}}$, we have $|\Psi_{D_n} (x) - \Psi_{D_m} (x) | < \varepsilon \cdot | \Psi_{B}(x)|$. Then, the sequence $\lbrace \Psi_{D_n} \rbrace_{n \in \mathbb{N}}$ converges uniformly on each compact subset of $N_{\mathbb{R}}$, and the function $\Psi_{D}$ given by the limit is continuous. The conical property is preserved by limits, therefore $\Psi_D$ is conical. Moreover, the sequence $\lbrace \Psi_{D_n} (x) \rbrace_{n \geq n_{\varepsilon}}$ is constant for all $x \in |\Sigma_0|$. Therefore, the restriction of $\Psi_{D}$ to $|\Sigma_0|$ is a rational support function on the fan $\Sigma_0$. It is clear that the map $\mathcal{SF}$ is a group morphism; now we show that it is continuous. It is enough to show that its restriction to $\textup{Div}_{\mathbb{T}} (X_{\Sigma_0}/K)_{\textup{mod}}$ is continuous. Let $E$ be a toric model divisor satisfying $\| E \|_{B} < \varepsilon $. By Lemma~\ref{toric-bdry-div}, for all $x \in N_{\mathbb{R}}$ we have
		\begin{displaymath}
			| \Psi_E (x) | < \varepsilon  | \Psi_{B}(x) | \leq \varepsilon \cdot \| \Psi_{B} \|_{\mathcal{C}} \cdot \| x \|, \quad  \| \Psi_{B} \|_{\mathcal{C}} \coloneqq \sup \lbrace |\Psi_B(x) | \, | \, \| x \| =1 \rbrace. 
		\end{displaymath}
		Then, $\| \Psi_E \|_{\mathcal{C}} <   \varepsilon \cdot \| \Psi_{B} \|_{\mathcal{C}}$. The map $\mathcal{SF}$ is bounded, and therefore continuous. It remains only to show that $\mathcal{SF}$ is injective. Assume that $D = \lbrace D_n \rbrace_{n \in \mathbb{N}} \neq 0$, we want to show that $\Psi_{D} \neq 0$. The norm of $\| D \|_{B}$ is given by $\lim_n \| D_n \|_B$, and so $\|D\|_B > 0$. If $\|D \|_B = \infty$, we get that $\| D_n \|_B = \infty$ for all but a finite number of $n$'s. On the other hand, we have shown that the sequence of functions $\lbrace \Psi_{D_n}|_{|\Sigma_{0}|} \rbrace_{n \in \mathbb{N}}$ is eventually constant. Combining these two facts, there exists $x \in |\Sigma_0|$ such that $\Psi_{D_n}(x) = \Psi_D (x) \neq 0$ for all but a finite number of $n$'s. If $\|D \|_B = \delta < \infty$, possibly after passing to a subsequence, we may assume that $\|D_n\| < \infty$ for all $n \in \mathbb{N}$. By definition of a limit, we may find a positive integer $n_0$ such that for all $n> n_0$, we have $\| D_n \|_{B} > \delta / 2$. By the identity in Lemma~\ref{toric-bdry-div}, for each $n> n_0$ there exist a point $x_n \in N_{\mathbb{R}}$ such that 
		\begin{displaymath}
			|\Psi_{D_n} (x_n)| > \delta / 2 \cdot | \Psi_{B} (x_n) |.
		\end{displaymath}
		Note that $\Psi_{D_n} (x) = \Psi_{B}(x) =0$ for all $x \in |\Sigma_0|$. Then, we get that $x_n \in N_{\mathbb{R}} \setminus | \Sigma_0 |$. By Lemma~\ref{toric-bdry-div}, $|\Psi_{B} (x_n)| > 0$. Since the sequence $\lbrace D_n \rbrace_{n \in \mathbb{N}}$ is Cauchy, we can find $n_1 > n_0$ such that for all $n, \, m > n_1$  and all $x \in N_{\mathbb{R}}$ we have
		\begin{displaymath}
			\delta / 4 \cdot | \Psi_{B} (x) | \geq |\Psi_{D_m} (x) - \Psi_{D_n} (x)|. 
		\end{displaymath}
		By the reverse triangle inequality and the two previous estimates, for all $m,n > n_1$ we have
		\begin{align*}
			| \Psi_{D_m} (x_n) | & \geq  | |\Psi_{D_n} (x_n)| -  |\Psi_{D_m} (x_n) - \Psi_{D_n}(x_n) | | \\ 
			& = |\Psi_{D_n} (x_n)| -  |\Psi_{D_m} (x_n) - \Psi_{D_n} (x_n) | \\
			& \geq  (\delta / 2 - \delta/4) | \Psi_{B} (x_n) | = \delta /4 \cdot | \Psi_{B} (x_n) | > 0.
		\end{align*}
		We take the limit over $m$ to get $	| \Psi_{D} (x_n) | \geq  \delta / 4 \cdot | \Psi_{B} (x_n) | > 0$ and conclude that $\Psi_{D} \neq 0$.
	\end{proof}
	\begin{dfn}
		Let $D \in \textup{Div}_{\mathbb{T}} (X_{\Sigma_{0}}/K)$ and let $\Psi_D \in \mathcal{C}(N_{\mathbb{R}})$ be the function defined in~\ref{toric-comp-SF}. The function $\Psi_{D}$ is called the \textit{(virtual) support function} of $D$.
	\end{dfn}
	We have the following functoriality property, which shows that the top horizontal map in Lemma~\ref{shrinking} is injective.
	\begin{lem}\label{toric-top-emb}
		Let $\iota \colon X_{\Sigma_0} \rightarrow X_{\Sigma_1}$ be a toric birational morphism between quasi-projective toric varieties over $K$. Then, there is an induced pullback map $\iota^{\ast} \colon \textup{Div}_{\mathbb{T}} (X_{\Sigma_1}/K) \rightarrow \textup{Div}_{\mathbb{T}} (X_{\Sigma_0}/K)$ which is continuous, injective, and satisfies  $\mathcal{SF} \circ \iota^{\ast} = \mathcal{SF}$, where $\mathcal{SF}$ is the map defined in Lemma~\ref{toric-comp-SF}. Moreover, $\iota^{\ast}$ is a topological group embedding if and only if $\iota$ is proper.
	\end{lem}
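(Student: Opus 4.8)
The plan is to translate everything into support functions and to run every estimate through the norm formula of Lemma~\ref{toric-bdry-div}. First I would invoke Proposition~\ref{toricmorphism}: a toric birational morphism $\iota$ corresponds to an isomorphism $\phi\colon N_0\to N_1$ compatible with $\Sigma_0$ and $\Sigma_1$, so after identifying $N_0=N_1=:N$ and $\phi=\textup{id}$, every cone of $\Sigma_0$ lies inside a cone of $\Sigma_1$; hence $|\Sigma_0|\subseteq|\Sigma_1|$, and by the properness criterion of Proposition~\ref{toricmorphism} one has $|\Sigma_0|=|\Sigma_1|$ precisely when $\iota$ is proper. To build $\iota^{\ast}$ on model divisors, I would check that for any $\Sigma\in\textup{PF}(N_{\mathbb{R}},\Sigma_1)$ and any toric projective model $X_{\widetilde{\Sigma}_0}$ of $X_{\Sigma_0}/K$, the common refinement $\Sigma'=\{\sigma\cap\tau\mid\sigma\in\Sigma,\ \tau\in\widetilde{\Sigma}_0\}$ belongs to $\textup{PF}(N_{\mathbb{R}},\Sigma_0)$: it is complete; it is projective since the sum of the strictly convex support functions witnessing projectivity of $\Sigma$ and of $\widetilde{\Sigma}_0$ (in the sense of Remark~\ref{t-q-p}) is strictly convex on $\Sigma'$; and $\Sigma_0$ is a subfan of $\Sigma'$ because each $\sigma_0\in\Sigma_0$ is contained in a cone $\tau$ of $\Sigma_1\subseteq\Sigma$, whence $\sigma_0=\sigma_0\cap\tau\in\Sigma'$. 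Since pulling back along $X_{\Sigma'}\to X_{\Sigma}$ leaves support functions unchanged (Lemma~\ref{pbsup}), Lemma~\ref{toricmodeldivisors} shows that ``keeping the support function'' identifies $\textup{Div}_{\mathbb{T}}(X_{\Sigma_1}/K)_{\textup{mod}}=\bigcup_{\Sigma\in\textup{PF}(N_{\mathbb{R}},\Sigma_1)}\mathcal{SF}(\Sigma,\mathbb{Q})$ with a subgroup of $\textup{Div}_{\mathbb{T}}(X_{\Sigma_0}/K)_{\textup{mod}}$; I take this inclusion to be $\iota^{\ast}$ on model divisors, so that $\mathcal{SF}\circ\iota^{\ast}=\mathcal{SF}$ there by construction.

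Next I would prove continuity and extend to the completions. Fix toric boundary divisors $B_0$ of $X_{\Sigma_0}/K$ and $B_1$ of $X_{\Sigma_1}/K$; by Lemma~\ref{toric-bdry-div} their support functions are conical, vanish exactly on $|\Sigma_i|$, are negative elsewhere, and $\|D\|_{B_i}=\inf\{\varepsilon\in\mathbb{Q}_{>0}\mid|\Psi_D|\le\varepsilon|\Psi_{B_i}|\text{ on }N_{\mathbb{R}}\}$. Since $|\Sigma_0|\subseteq|\Sigma_1|$, a compactness-and-piecewise-linearity argument on the rays of a fan refining the ambient ones yields a constant $c$ with $|\Psi_{B_1}|\le c\,|\Psi_{B_0}|$ on $N_{\mathbb{R}}$ (the analogue of $\|\mathcal{B}'\|_{\mathcal{B}}<\infty$ in Remark~\ref{extended-norm}); as $\iota^{\ast}$ preserves support functions this gives $\|\iota^{\ast}D\|_{B_0}\le c\,\|D\|_{B_1}$, so $\iota^{\ast}$ is uniformly continuous for the extended boundary norms. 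Because $\textup{Div}_{\mathbb{T}}(X_{\Sigma_0}/K)$ is complete, $\iota^{\ast}$ extends uniquely to a continuous group morphism $\textup{Div}_{\mathbb{T}}(X_{\Sigma_1}/K)\to\textup{Div}_{\mathbb{T}}(X_{\Sigma_0}/K)$; taking pointwise limits of support functions (Lemma~\ref{toric-comp-SF}) shows $\mathcal{SF}\circ\iota^{\ast}=\mathcal{SF}$ on the whole group, and injectivity of $\iota^{\ast}$ then follows at once from injectivity of $\mathcal{SF}$ in Lemma~\ref{toric-comp-SF}, since $\mathcal{SF}(D)=\mathcal{SF}(\iota^{\ast}D)$ forces $\iota^{\ast}D=0\Rightarrow D=0$.

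The heart of the statement is the equivalence ``$\iota^{\ast}$ is a topological embedding $\iff$ $\iota$ is proper $\iff$ $|\Sigma_0|=|\Sigma_1|$''. If $|\Sigma_0|=|\Sigma_1|$, I would choose $B_0$ and $B_1$ with the \emph{same} support function $\Psi_B$ (any rational conical piecewise-linear function vanishing exactly on $|\Sigma_0|=|\Sigma_1|$ and negative elsewhere, realized on suitable projective fans refining $\Sigma_0$ and $\Sigma_1$ by the common-refinement construction above); then the formula of Lemma~\ref{toric-bdry-div} gives $\|\iota^{\ast}D\|_{B_0}=\|D\|_{B_1}$ on model divisors, so $\iota^{\ast}$ is an isometry and hence a topological embedding after completing. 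Conversely, if $|\Sigma_0|\subsetneq|\Sigma_1|$, I would exhibit a model divisor $D$ on $X_{\Sigma_1}$ whose support function is piecewise linear on a projective fan refining $\Sigma_1$, vanishes on $|\Sigma_0|$, but is nonzero somewhere on $|\Sigma_1|$: such a $D$ has $\|D\|_{B_1}=\infty$ (its support function is nonzero where $\Psi_{B_1}$ vanishes), while $\|\iota^{\ast}D\|_{B_0}<\infty$ by the same polyhedral domination argument, since its support function vanishes on $|\Sigma_0|=\{\Psi_{B_0}=0\}$. Then the divisors $\tfrac1n D$ satisfy $\iota^{\ast}(\tfrac1n D)\to 0$ in $\textup{Div}_{\mathbb{T}}(X_{\Sigma_0}/K)$ while $\tfrac1n D\not\to 0$ in $\textup{Div}_{\mathbb{T}}(X_{\Sigma_1}/K)$, so $\iota^{\ast}$ is not a homeomorphism onto its image.

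I expect the main obstacle to be exactly this last step: one must actually produce the witness support function — i.e.\ show that restriction to $|\Sigma_0|$ has nontrivial kernel on the $\mathcal{SF}$-functions attached to $X_{\Sigma_1}$ once $|\Sigma_0|\subsetneq|\Sigma_1|$ — and then verify the polyhedral comparison $|\Psi_D|\le c\,|\Psi_{B_0}|$. These are elementary facts about piecewise-linear conical functions, but they are where the combinatorics of the non-proper part $|\Sigma_1|\setminus|\Sigma_0|$ really enters: non-properness of $\iota$ means the boundary of $X_{\Sigma_0}$ records directions invisible to the boundary of $X_{\Sigma_1}$, and that is precisely what breaks the equivalence of the two extended boundary norms.
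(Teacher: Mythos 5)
Most of your argument follows the paper's own route and is correct: the reduction to support functions, the construction of $\iota^{\ast}$ on model divisors via the common refinement $\Sigma_0'\cdot\Sigma_1'$, the polyhedral comparison $|\Psi_{B_1}|\le c\,|\Psi_{B_0}|$ for continuity, injectivity via $\mathcal{SF}$, and the forward direction ($|\Sigma_0|=|\Sigma_1|\Rightarrow$ embedding, via $|B_0|=|B_1|$ and equivalence of norms) all match Lemma~\ref{toric-top-emb}'s proof in substance. You go further than the paper by attempting an explicit proof of the converse direction (the paper only establishes proper $\Rightarrow$ embedding and leaves the other implication implicit), which is a useful thing to try to make honest.

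However, the converse argument has a genuine gap, localized exactly where you predicted. First, you write ``a projective fan refining $\Sigma_1$''; this should be ``a projective fan having $\Sigma_1$ as a \emph{subfan}''. These are opposite relations here: a refinement of $\Sigma_1$ would give a toric variety mapping properly \emph{onto} $X_{\Sigma_1}$, not a projective \emph{model} of $X_{\Sigma_1}$ (cf.\ Remark~\ref{t-q-p} and Proposition~\ref{tqpmod}). Second, and more seriously, with the correct notion the witness you want --- a rational support function $\Psi_D$, piecewise linear on a projective fan containing $\Sigma_1$ as a subfan, vanishing on $|\Sigma_0|$ but not on $|\Sigma_1|$ --- need not exist. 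If a cone $\sigma\in\Sigma_1\setminus\Sigma_0$ has all of its rays contained in $\Sigma_0$, then any such $\Psi_D$, being linear on $\sigma$ and zero on $\sigma$'s rays, is forced to vanish on $\sigma$; for instance, take $\Sigma_0$ to consist of the three rays $\rho_{e_1},\rho_{e_2},\rho_{-e_1-e_2}$ of the fan of $\mathbb{P}^2$ and $\Sigma_1=\Sigma_0\cup\{c(e_1,e_2)\}$, so $|\Sigma_0|\subsetneq|\Sigma_1|$ yet every model divisor on $X_{\Sigma_1}$ vanishing on $|\Sigma_0|$ automatically vanishes on $|\Sigma_1|$. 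In this situation $\|D\|_{B_1}=\infty$ would require $\|\iota^{\ast}D\|_{B_0}=\infty$ as well, so your sequence $\tfrac1n D$ argument never gets off the ground, and the restriction map $\mathcal{SF}(\Sigma_1,\mathbb{Q})\to\mathcal{SF}(\Sigma_0,\mathbb{Q})$ is in fact injective. You would need a different mechanism to distinguish the two boundary topologies in such cases --- or to verify that the lemma's hypotheses rule them out --- before the ``only if'' direction can be considered established.
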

	\begin{proof}
		Following the proof of Proposition~\ref{toricmorphism}, we may assume that $\iota$ is the identity on the torus $U$. Then, the induced linear map $\phi(\iota)$ is the identity on $N_{\mathbb{R}}$. It follows that every cone $\sigma_0 \in \Sigma_0$ is a subset of a cone $\sigma_1 \in \Sigma_1$. In particular, $|\Sigma_0| \subset |\Sigma_1|$. For each $i =0,1$, let $\Sigma_{i}^{\prime}$ be a projective fan having $\Sigma_i$ as a subfan. Consider the fan $\Sigma = \Sigma_{0}^{\prime} \cdot \Sigma_{1}^{\prime}$ consisting of the intersections $\sigma_0 \cap \sigma_1$, where $\sigma_i \in \Sigma_{i}^{\prime}$. Arguing as in Proposition~\ref{tqpmod}, the fan $\Sigma$ is projective and refines each $\Sigma_{i}^{\prime}$. Since each cone $\sigma_0 \in \Sigma_0$ is a subset of some cone $\sigma_1 \in \Sigma_1$, the cone $\sigma_0 = \sigma_0 \cap \sigma_1$ belongs to $\Sigma$. Therefore, $\Sigma_0$ is a subfan of $\Sigma$. Taking pullbacks with respect to the toric morphisms induced by these refinements, we obtain morphisms $\textup{Div}_{\mathbb{T}} (X_{\Sigma_{i}^{\prime}})_{\mathbb{Q}} \rightarrow \textup{Div}_{\mathbb{T}} (X_{\Sigma})_{\mathbb{Q}}$. Note that the set $F$ consisting of all such fans $\Sigma$ is a cofinal subposet of $\textup{PF}(N_{\mathbb{R}}, \Sigma_0)$. Taking direct limits, we obtain a commutative diagram
		\begin{center}
			\begin{tikzcd}[row sep = tiny]
				\textup{Div}_{\mathbb{T}} (X_{\Sigma_1}/K)_{\textup{mod}} \arrow[r] &\displaystyle \varinjlim_{\Sigma \in F} \textup{Div}_{\mathbb{T}} (X_{\Sigma})_{\mathbb{Q}} & \textup{Div}_{\mathbb{T}} (X_{\Sigma_{0}}/K)_{\textup{mod}} \arrow[l, "{\cong}"'].
			\end{tikzcd}
		\end{center}
		The arrow on the left is injective, and the arrow on the right is an isomorphism. Composing with the inverse of the latter, we obtain a morphism $\iota^{\ast} \colon \textup{Div}_{\mathbb{T}} (X_{\Sigma_1}/K)_{\textup{mod}} \longrightarrow \textup{Div}_{\mathbb{T}} (X_{\Sigma_0}/K)_{\textup{mod}}$ which is injective. The construction of $\iota^{\ast}$ together with Proposition~\ref{pbsup} imply that $\mathcal{SF} \circ \iota^{\ast} = \mathcal{SF}$. We only need to show that it is continuous with respect to the boundary topology. For each $i = 0,1$, let $B_i$ be a boundary divisor of $X_{\Sigma_i}/K$. We can assume that both boundary divisors are defined on the same model of $X_{\Sigma_0}/K$. The condition $|\Sigma_0| \subset |\Sigma_1|$ and Lemma~\ref{toric-bdry-div} imply that $|B_1| \subset |B_0|$. Then, continuity is proven in the same way as in Lemma~\ref{shrinking}. Therefore, we obtain the continuous group morphism $\iota^{\ast} \colon \textup{Div}_{\mathbb{T}} (X_{\Sigma_1}/K) \longrightarrow \textup{Div}_{\mathbb{T}} (X_{\Sigma_0}/K)$, which satisfies $\mathcal{SF} \circ \iota^{\ast} = \mathcal{SF}$. By Lemma~\ref{toric-comp-SF}, the map $\mathcal{SF}$ is injective. Thus, the map $ \iota^{\ast}$ is injective. Finally, the morphism $\iota$ is proper if and only if $|\Sigma_0| = |\Sigma_1|$. In this case, Lemma~\ref{toric-bdry-div} implies that $|B_1| = |B_0|$. In particular, the boundary norms induced by these divisors are equivalent.
	\end{proof}
	Let $X_{\Sigma_{0}}/K$ be quasi-projective. By the previous lemma, the toric open immersion $U \rightarrow X_{\Sigma_0}$ given by the toric structure induces a canonical identification of the group $ \textup{Div}_{\mathbb{T}} (X_{\Sigma_0}/K)$ as a subgroup of the toric compactified divisors of the torus $U/K$. Similar statements hold for the nef cones and the spaces of integrable divisors. Therefore, for intersection-theoretic purposes, we will restrict our discussion to the case of $X_{\Sigma_0} = U$. This has an additional advantage: The boundary topology of $\textup{Div}_{\mathbb{T}} (U/K)$ is easier to work with. In particular, we can compute the image of the map $\mathcal{SF}$, giving a complete description of the group of toric compactified divisors.
	\begin{thm}\label{torgeomdiv}
		The map $\mathcal{SF} \colon  \textup{Div}_{\mathbb{T}} (U/K) \rightarrow \mathcal{C}(N_{\mathbb{R}})$, which to a toric compactified divisor $D$ assigns its support function $\Psi_D$, is an isomorphism of Banach spaces. Here, $\mathcal{C}(N_{\mathbb{R}})$ denotes the space of real-valued, conical and continuous functions on $N_{\mathbb{R}}$.
	\end{thm}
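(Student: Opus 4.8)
The plan is to reduce everything to the Banach space $C(S)$, where $S \coloneqq \lbrace x \in N_{\mathbb{R}} : \|x\| = 1 \rbrace$, and then to conclude via a lattice form of the Stone--Weierstrass theorem together with the lemmas already proved. As a preliminary observation, restriction to $S$ gives an isometric linear bijection $\mathcal{C}(N_{\mathbb{R}}) \to C(S)$: a continuous function $h$ on $S$ extends to $N_{\mathbb{R}}$ by $\Psi(x) \coloneqq \|x\|\,h(x/\|x\|)$, which is conical and, since $S$ is compact, continuous at the origin; conversely a conical function vanishing on $S$ vanishes everywhere, and clearly $\|\Psi\|_{\mathcal{C}} = \|\Psi|_S\|_{\infty}$. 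As $S$ is compact, $C(S)$ is complete, so $\mathcal{C}(N_{\mathbb{R}})$ is a Banach space.

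First I would transport the boundary topology on $\textup{Div}_{\mathbb{T}}(U/K)_{\textup{mod}}$ along the isomorphism $\mathcal{SF}$ of Lemma~\ref{toricmodeldivisors}. Fix a toric boundary divisor $B$ of $U/K$. By Lemma~\ref{toric-bdry-div}, its support function $\Psi_B \in \mathcal{C}(N_{\mathbb{R}})$ vanishes only at the origin and is strictly negative elsewhere, so by compactness of $S$ there are constants $0 < c_1 \le c_2$ with $c_1\|x\| \le |\Psi_B(x)| \le c_2\|x\|$ for all $x$. Combining this with the formula $\|D\|_B = \inf\lbrace \varepsilon > 0 : |\Psi_D| \le \varepsilon|\Psi_B| \text{ on } N_{\mathbb{R}}\rbrace$ of Lemma~\ref{toric-bdry-div} gives $c_2^{-1}\|\Psi_D\|_{\mathcal{C}} \le \|D\|_B \le c_1^{-1}\|\Psi_D\|_{\mathcal{C}}$ for every model divisor $D$. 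Hence $\mathcal{SF}$ restricts to a bi-Lipschitz isomorphism of normed spaces from $\textup{Div}_{\mathbb{T}}(U/K)_{\textup{mod}}$ onto $\bigl(\bigcup_{\Sigma}\mathcal{SF}(\Sigma,\mathbb{Q}),\,\|\cdot\|_{\mathcal{C}}\bigr)$, the union being over all projective fans $\Sigma$ in $N_{\mathbb{R}}$. Passing to completions, and using that the ambient space $\mathcal{C}(N_{\mathbb{R}})$ is complete, the completion of the right-hand side is the \emph{closure} of $\bigcup_{\Sigma}\mathcal{SF}(\Sigma,\mathbb{Q})$ in $\mathcal{C}(N_{\mathbb{R}})$, while the completion of the left-hand side is $\textup{Div}_{\mathbb{T}}(U/K)$ by definition. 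Thus $\mathcal{SF}$ is a topological isomorphism of $\textup{Div}_{\mathbb{T}}(U/K)$ onto that closure, and the theorem follows once we show that $\bigcup_{\Sigma}\mathcal{SF}(\Sigma,\mathbb{Q})$ is dense in $\mathcal{C}(N_{\mathbb{R}})$, equivalently that its image $\mathcal{A}$ in $C(S)$ is dense.

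For density I would invoke the Kakutani--Krein form of Stone--Weierstrass: a sublattice $\mathcal{A} \subseteq C(S)$, with $S$ compact Hausdorff, is dense provided that for all distinct $p,q \in S$, all $a,b \in \mathbb{R}$ and all $\varepsilon > 0$ there is $f \in \mathcal{A}$ with $|f(p)-a| < \varepsilon$ and $|f(q)-b| < \varepsilon$. That $\mathcal{A}$ is a sublattice is seen as follows: if $f,g$ are rational conical functions that are piecewise linear on complete rational fans, choose a common rational refinement $\Sigma$ and cut each cone of $\Sigma$ by the rational hyperplane $\lbrace f = g\rbrace$; this produces a complete rational fan on which both $\max(f,g)$ and $\min(f,g)$ are piecewise linear and rational, and max/min of conical functions is conical. (Here I use that a function piecewise linear on a fan is piecewise linear on every refinement, together with the standard fact that every complete rational fan admits a projective refinement; hence these functions indeed lie in $\bigcup_{\Sigma}\mathcal{SF}(\Sigma,\mathbb{Q})$.) For the two-point condition, given $p \neq q$ choose primitive lattice vectors $v, w$ with $v/\|v\|$ and $w/\|w\|$ close to $p$ and $q$, complete the two rays $\mathbb{R}_{\geq 0}v$ and $\mathbb{R}_{\geq 0}w$ to a complete rational fan, and use that a piecewise linear conical function may be assigned arbitrary rational values on the primitive ray generators: prescribing suitable rational values at $v$ and $w$ makes the resulting function take values within $\varepsilon$ of $a$ and $b$ at $p$ and $q$, by continuity. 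Kakutani--Krein then gives $\overline{\mathcal{A}} = C(S)$, completing the proof that $\mathcal{SF} \colon \textup{Div}_{\mathbb{T}}(U/K) \to \mathcal{C}(N_{\mathbb{R}})$ is an isomorphism of Banach spaces.

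The main obstacle I anticipate is exactly this density step: making the two-point interpolation argument precise while remaining inside the category of \emph{projective} fans rests on the standard but not entirely trivial fact that every complete rational fan has a projective refinement, and one must be careful that the interpolating piecewise linear functions can be chosen rational-valued with a uniformly controlled error. By contrast, the Banach-space structure of $\mathcal{C}(N_{\mathbb{R}})$ and the equivalence of $\|\cdot\|_B$ with $\|\cdot\|_{\mathcal{C}}$ are routine once the identification with $C(S)$ is in place, and injectivity and continuity of $\mathcal{SF}$ are already furnished by Lemma~\ref{toric-comp-SF}.
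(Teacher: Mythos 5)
Your proof is correct in substance and takes a genuinely different route from the paper's. Both arguments agree on the first half: identify $\mathcal{C}(N_{\mathbb{R}})$ with $C(\mathcal{S}^{d-1})$ via restriction to the unit sphere (this is Remark~\ref{comp-conical} in the paper), and use Lemma~\ref{toric-bdry-div} together with compactness of $\mathcal{S}^{d-1}$ to show the boundary norm $\|\cdot\|_B$ is equivalent to the $\mathcal{C}$-norm, so that $\mathcal{SF}$ identifies $\textup{Div}_{\mathbb{T}}(U/K)$ with the closure of $\mathcal{SF}(N_{\mathbb{R}}, \mathbb{Q})$ in $\mathcal{C}(N_{\mathbb{R}})$. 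The divergence is in the density step. The paper proves this directly (Theorem~\ref{density-1}): it builds, from uniform continuity of $f|_{\mathcal{S}^{d-1}}$, a sequence of ever finer rational simplicial fans and prescribes rational approximate values of $f$ at the vertices, yielding explicit rational support functions converging in the $\mathcal{C}$-norm. You instead invoke the Kakutani--Krein lattice form of Stone--Weierstrass, verifying that the image $\mathcal{A}$ of $\mathcal{SF}(N_{\mathbb{R}}, \mathbb{Q})$ in $C(\mathcal{S}^{d-1})$ is a sublattice and satisfies two-point interpolation. Your approach is shorter if Kakutani--Krein is taken as known, but it pushes the fan-combinatorial work into the background: closure under $\max/\min$ and the interpolation step both require that a complete rational fan can always be refined to a \emph{projective} one (which Theorem~\ref{toricres} does not literally give, since it only asserts that the morphism $X_{\Sigma'}\to X_\Sigma$ is projective, not that $X_{\Sigma'}$ is projective over $K$), and the two-point argument additionally needs the fan to be chosen \emph{simplicial}, since prescribing arbitrary values on ray generators of a non-simplicial cone need not extend to a linear function on that cone. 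These are standard facts and you flag the first one, but they deserve to be made explicit; the paper's direct construction sidesteps them by working over the fan of the hypercube and only refining by star subdivisions along rays, which keeps everything visibly simplicial and projective. Both proofs buy the same theorem; yours exchanges an explicit approximation scheme for a general functional-analytic black box plus a bit more fan bookkeeping.
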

	\begin{proof}
		By Lemma~\ref{toric-comp-SF}, the map $\mathcal{SF}$ is continuous and injective. By Lemma~\ref{toric-bdry-div}, a boundary divisor $B$ of $U/K$ satisfies $\Psi_B (x) < 0$ for all $x \neq 0$. The function $\Psi_B$ is continuous, and therefore bounded on the unit sphere. Define the constants
		\begin{displaymath}
			c \coloneqq \inf \lbrace |\Psi_B (x)| \, \vert \, \| x \| =1 \rbrace, \quad C \coloneqq \sup \lbrace |\Psi_B (x)| \, \vert \, \| x \| =1 \rbrace.
		\end{displaymath}
		By definition, we have an inequality $0< c \leq \| \Psi_B \|_{\mathcal{C}} \leq C $. This, together with the identity
		\begin{displaymath}
			\| D \|_{B} = \inf \lbrace \varepsilon \in \mathbb{Q}_{> 0} \, \vert \, |\Psi_{D} (x) | \leq \varepsilon \cdot | \Psi_{B} (x) | \textup{ for all } x \in N_{\mathbb{R}} \rbrace
		\end{displaymath}
		induce an equivalence of norms. Therefore, $\textup{Div}_{\mathbb{T}} (U/K)$ is isomorphic to its image under $\mathcal{SF}$. Using Lemma~\ref{toricmodeldivisors}, the group of toric compactified divisors is isomorphic to the closure in $\mathcal{C}(N_{\mathbb{R}})$ of the $\mathbb{Q}$-vector space of rational support functions
		\begin{displaymath}
			\mathcal{SF}(N_{\mathbb{R}}, \mathbb{Q}) = \bigcup_{\Sigma \in \textup{PF}(N_{\mathbb{R}})}  \mathcal{SF}(\Sigma,\mathbb{Q}).
		\end{displaymath}
		Here, we used Theorem~\ref{toricres} to show that the set on the right-hand side contains the one on the left: Every complete fan $\Sigma^{\prime}$ admits a smooth projective refinement $\Sigma \in  \textup{PF}(N_{\mathbb{R}})$. By the Density Theorem~\ref{density-1}, the result follows.
	\end{proof}
	As in the projective case (Theorem\ref{nefdiv}), the nefness of toric compactified divisors corresponds with the concavity of support functions.
	\begin{prop}\label{m-d-approx-1}
		Let $D \in \textup{Div}_{\mathbb{T}} (U/K)$. The toric compactified divisor $D$ is nef if and only if its support function $\Psi_{D}$ is concave. Moreover, we can choose the sequence $D = \lbrace D_n \rbrace_{n \in \mathbb{N}}$ of nef toric model divisors to be decreasing, i.e., $D_n \geq D_{n+1}$ for all $n \in \mathbb{N}$.
	\end{prop}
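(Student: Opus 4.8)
The plan is to transport the statement through the isomorphism of Banach spaces $\mathcal{SF}\colon \textup{Div}_{\mathbb{T}}(U/K) \to \mathcal{C}(N_{\mathbb{R}})$ of Theorem~\ref{torgeomdiv}, under which the nef toric model divisors correspond, by Theorem~\ref{nefdiv} and Lemma~\ref{toricmodeldivisors}, exactly to the rational concave piecewise-linear conical functions. Two translations will be used throughout: since $\mathcal{SF}$ is linear and, by the proof of Lemma~\ref{toric-bdry-div}, a toric model divisor $E$ is effective if and only if $\Psi_E \le 0$ on $N_{\mathbb{R}}$, a sequence of toric model divisors is decreasing if and only if the associated sequence of support functions is pointwise increasing; and, by Lemma~\ref{toric-comp-SF} together with the norm equivalence in the proof of Theorem~\ref{torgeomdiv}, convergence in the boundary topology corresponds to uniform convergence of support functions on the unit sphere (equivalently, on compact sets). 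The ``only if'' direction is then immediate: a nef $D$ is a boundary-topology limit of nef toric model divisors $D_n$, so $\Psi_D$ is the pointwise limit of the functions $\Psi_{D_n}$ (Lemma~\ref{toric-comp-SF}), each concave by Theorem~\ref{nefdiv}, and concavity passes to pointwise limits.

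For the ``if'' direction and the ``moreover'', assume $\Psi \coloneqq \Psi_D$ is concave. Being also conical, continuous and finite, its stability set $\Delta \coloneqq \{m \in M_{\mathbb{R}} \mid \langle m, \cdot\rangle \ge \Psi\}$ is a nonempty compact convex subset of $M_{\mathbb{R}}$: nonemptiness is concavity of $\Psi$, and boundedness follows from $\Psi(x) \le \langle m, x\rangle \le -\Psi(-x)$ for $m \in \Delta$, which forces $\|m\| \le \|\Psi\|_{\mathcal{C}}$ in the dual norm. By the standard duality between conical concave functions and their stability sets, $\Psi(x) = \min_{m \in \Delta}\langle m, x\rangle$. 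Now approximate $\Delta$ from outside by nested rational polytopes: for each $n$, intersect finitely many rational half-spaces, each containing the $\tfrac1n$-neighbourhood $\Delta + \overline{\textup{B}}(0,\tfrac1n)$ of $\Delta$, together with a fixed large rational box, to obtain a rational polytope squeezed between $\Delta + \overline{\textup{B}}(0,\tfrac1n)$ and $\Delta + \overline{\textup{B}}(0,\tfrac2n)$, and then replace it by its intersection with the polytope produced at step $n-1$. This gives rational polytopes $\Delta_n$ with $\Delta \subseteq \Delta_{n+1} \subseteq \Delta_n$ and $\Delta_n \to \Delta$ in the Hausdorff distance. By the argument in the proof of Theorem~\ref{torgeomdiv} each $\Psi_{\Delta_n}$ is a rational support function on some projective fan, and being concave it is the support function of a nef toric model divisor $D_n$ (Theorem~\ref{nefdiv}, Lemma~\ref{toricmodeldivisors}). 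From $\Delta_n \supseteq \Delta_{n+1} \supseteq \Delta$ we get $\Psi_{\Delta_n} \le \Psi_{\Delta_{n+1}} \le \Psi$, hence $D_n \ge D_{n+1}$; and Hausdorff convergence $\Delta_n \to \Delta$ amounts to uniform convergence $\Psi_{\Delta_n} \to \Psi$ on the unit sphere, i.e. to convergence in $\mathcal{C}(N_{\mathbb{R}})$. Applying the inverse of the homeomorphism $\mathcal{SF}$, the decreasing sequence $\{D_n\}_n$ of nef toric model divisors converges to $D$ in the boundary topology; in particular $D$ is nef.

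I expect the only real work to be in the ``if'' direction: producing the approximating polytopes simultaneously \emph{rational} and \emph{nested} and then transferring the resulting Hausdorff convergence to convergence of the divisors. Both points are handled by the dictionary set up above --- the effectivity criterion of Lemma~\ref{toric-bdry-div} and the classical equivalence between Hausdorff convergence of convex bodies and uniform convergence of their support functions --- so no genuinely new difficulty arises beyond this bookkeeping. As an alternative for the ``moreover'' alone, once $D$ is known to be nef one can instead invoke Construction~\ref{decreasing} applied with a \emph{nef} toric boundary divisor of $U/K$ (for example the nef boundary divisor whose support function is $x \mapsto -\sum_{i} |x_i|$ in coordinates), using that a sum of nef divisors is nef so that the decreasing representatives produced there are again nef.
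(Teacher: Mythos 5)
Your proof is correct. The ``only if'' direction coincides with the paper's: continuity of $\mathcal{SF}$ gives pointwise convergence of the $\Psi_{D_n}$ to $\Psi_D$, and concavity passes to pointwise limits. For the ``if'' direction and the ``moreover'', you diverge from the paper, which simply invokes the prepared Theorem~\ref{mono-approx-2} (density, via increasing sequences, of $\mathcal{SF}^{+}(N_{\mathbb{R}},\mathbb{Q})$ in $\mathcal{C}^{+}(N_{\mathbb{R}})$) and applies $\mathcal{SF}^{-1}$. Your construction --- nested rational polytopes $\Delta_n$ outer-approximating the stability set $\Delta$, produced by intersecting finitely many rational supporting half-spaces around each $\tfrac1n$-thickening --- is in essence a dualization of the argument the paper runs inside Theorem~\ref{mono-approx-2}, where the increasing $\Psi_n$ are built from convex hulls of rays through finitely many $(v,\Psi(v))$, $v\in N^{\textup{pr}}$, i.e.\ inner approximations of $\textup{hyp}(\Psi)$ whose stability sets are exactly outer rational polytope approximations of $\Delta$. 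Both arguments are sound. The paper's version is more economical because the convex-analytic work is modularized in the appendix (and Lemma~\ref{density-2} there takes care of getting from a complete to a projective fan); your version is self-contained, and has the small technical advantage that the $\Delta_n$ are forced to be full-dimensional (since they contain $\Delta+\overline{\textup{B}}(0,1/n)$), so their normal fans are automatically projective without any extra argument. Your alternative for the ``moreover'' --- applying Construction~\ref{decreasing} with a nef toric boundary divisor such as the one with support function $x\mapsto -\sum_i|x_i|$ --- is also valid, since adding a nef model divisor preserves nefness; it just presupposes the ``if'' direction, as you note.
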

	\begin{proof}
		Suppose that $D$ is nef and let $\lbrace D_n \rbrace_{n \in \mathbb{N}}$ be a Cauchy sequence of nef toric model divisors representing $D$. By Theorem~\ref{nefdiv}, the support function $\Psi_n$ of $D_n$ is concave. Continuity of $\mathcal{SF}$ implies that the sequence of concave support functions $\lbrace \Psi_{D_n} \rbrace_{n \in \mathbb{N}}$ converges pointwise to the conical continuous function $\Psi_{D}$. Therefore, $\Psi_D$ is concave. Conversely, assume that $\Psi_{D}$ is concave. By Theorem~\ref{mono-approx-2}, there exist an increasing sequence $\lbrace \Psi_n \rbrace_{n \in \mathbb{N}}$ in $\mathcal{SF}^{+}(N_{\mathbb{R}}, \mathbb{Q})$ which converges in the $\mathcal{C}$-norm to $\Psi_{D}$. Define $D_n = \mathcal{SF}^{-1}(\Psi_n)$. The inverse $\mathcal{SF}^{-1}$ is continuous, and so the sequence $D= \lbrace D_n \rbrace_{n \in \mathbb{N}}$ of nef toric model divisors converges in the boundary topology to $D$. Note that the map $\mathcal{SF}$ sends decreasing sequences into increasing sequences. Therefore, the constructed sequence $\lbrace D_n \rbrace_{n \in \mathbb{N}}$ is decreasing. 
	\end{proof}
	A conical concave function $\Psi \colon N_{\mathbb{R}} \rightarrow \mathbb{R}$ is entirely determined by its stability set $\textup{stab}(\Psi)$, which is compact and convex. If $\Psi = \Psi_D$ is the support function of a nef compactified divisor $D$, we say that $\Delta_D \coloneqq \textup{stab}(\Psi_D)$ is the compact convex set associated to $D$. Then, the following result is a compactified analog of Theorem~\ref{nefdiv}.
	\begin{thm}\label{nefcorrespondence}
		The assignments $D \mapsto \Psi_D$ and $D \mapsto \Delta_D$ induce isomorphisms of cones which are also homeomorphisms between:
		\begin{enumerate}
			\item The cone $\textup{Div}_{\mathbb{T}}^{\textup{nef}} (U/K)$ of nef toric compactified divisors of $U/K$.
			\item The cone $\mathcal{C}^{+}(N_{\mathbb{R}})$ of concave conical functions on $N_{\mathbb{R}}$.
			\item The cone $\mathcal{K}^{+}(M_{\mathbb{R}})$ of compact convex subsets of $M_{\mathbb{R}}$.
		\end{enumerate}
	\end{thm}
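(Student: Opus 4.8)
The plan is to deduce both correspondences from the isomorphism $\mathcal{SF}$ of Theorem~\ref{torgeomdiv} together with the classical convex-analytic duality between conical concave functions and compact convex sets; essentially no new toric input is needed, the substantive work having been done in Theorem~\ref{torgeomdiv} and Proposition~\ref{m-d-approx-1}.

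For (1) $\leftrightarrow$ (2), I would argue as follows. Theorem~\ref{torgeomdiv} gives that $\mathcal{SF} \colon \textup{Div}_{\mathbb{T}}(U/K) \to \mathcal{C}(N_{\mathbb{R}})$ is a linear homeomorphism, so it restricts to a homeomorphism from any subset onto its image. By Proposition~\ref{m-d-approx-1}, a toric compactified divisor $D$ is nef precisely when $\Psi_D$ is concave; hence $\mathcal{SF}$ carries $\textup{Div}_{\mathbb{T}}^{\textup{nef}}(U/K)$ bijectively and bicontinuously onto the cone $\mathcal{C}^{+}(N_{\mathbb{R}})$ of conical concave continuous functions. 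Since $\mathcal{SF}$ is additive and commutes with positive scaling, and since sums and positive multiples of conical concave functions are again conical concave, this restriction is an isomorphism of cones. This settles the equivalence of (1) and (2).

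For (2) $\leftrightarrow$ (3), I would set up the mutually inverse assignments $\Psi \mapsto \textup{stab}(\Psi) = \{ m \in M_{\mathbb{R}} \mid \langle m, u\rangle \geq \Psi(u) \text{ for all } u \in N_{\mathbb{R}} \}$ and $\Delta \mapsto h_{\Delta}$, where $h_{\Delta}(u) := \min_{m \in \Delta} \langle m, u\rangle$. The checks to carry out are: $\textup{stab}(\Psi)$ is closed (an intersection of closed half-spaces), bounded (a finite conical concave function is bounded below on the unit sphere, which bounds $\|m\|$ in the dual norm), and nonempty (a finite concave function admits a supporting linear functional at every point, which by conicality lies in $\textup{stab}(\Psi)$); so $\textup{stab}(\Psi) \in \mathcal{K}^{+}(M_{\mathbb{R}})$. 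Dually $h_{\Delta}$ is finite, conical and concave, hence in $\mathcal{C}^{+}(N_{\mathbb{R}})$. That the two maps are mutually inverse is the biconjugation statement $h_{\textup{stab}(\Psi)} = \Psi$ for closed conical concave functions, together with $\textup{stab}(h_{\Delta}) = \Delta$ since a compact convex set equals the intersection of its supporting half-spaces. Finally, $\textup{stab}(\Psi_1 + \Psi_2) = \textup{stab}(\Psi_1) + \textup{stab}(\Psi_2)$ (Minkowski sum) and $\textup{stab}(\lambda\Psi) = \lambda\,\textup{stab}(\Psi)$ for $\lambda > 0$, so this is an isomorphism of cones, $\mathcal{K}^{+}(M_{\mathbb{R}})$ carrying Minkowski addition. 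For the topological part I would invoke the classical identity $d_{H}(\Delta_1, \Delta_2) = \sup_{\|u\| = 1} |h_{\Delta_1}(u) - h_{\Delta_2}(u)|$; since $\Psi_i = h_{\Delta_i}$ and the $\mathcal{C}$-norm is the supremum over the unit sphere of $N_{\mathbb{R}}$, the right-hand side equals $\|\Psi_1 - \Psi_2\|_{\mathcal{C}}$, so $\Psi \mapsto \textup{stab}(\Psi)$ is an isometry, in particular a homeomorphism. Composing with the map of the previous paragraph gives the three-way identification, the bijections being exactly the ones induced by $D \mapsto \Psi_D$ and $D \mapsto \Delta_D$.

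The main point demanding genuine care is the exactness of the duality in (2) $\leftrightarrow$ (3): that $\textup{stab}(\Psi)$ is nonempty and bounded and that biconjugation recovers $\Psi$ with no gap. This relies precisely on $\Psi$ being finite on all of $N_{\mathbb{R}}$ and positively homogeneous — no recession directions and no affine part — and so must be kept conceptually separate from the $\mathbb{R}_{-\infty}$-valued situation of Theorem~\ref{1}, where $\Delta_D$ is still compact but the associated function may tend to $-\infty$ along the boundary. Everything else is a routine unwinding of Theorem~\ref{torgeomdiv}, Proposition~\ref{m-d-approx-1}, and standard convex analysis.
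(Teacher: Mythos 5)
Your argument is correct and is in substance the same as the paper's. The paper's proof is a one-line citation of Theorem~\ref{torgeomdiv}, Proposition~\ref{m-d-approx-1}, and Corollary~\ref{Kompact}; you use the first two exactly as the paper does for the step (1)$\leftrightarrow$(2), and your step (2)$\leftrightarrow$(3) is precisely the content of Corollary~\ref{Kompact} (established in the paper's appendix via Lemma~\ref{geom-convergence}, Proposition~\ref{LF-prop} and Proposition~\ref{conical-compact}), re-derived by you directly from the classical duality between compact convex sets and conical concave functions, including the isometry $d_{\mathcal{H}}(\Delta_1,\Delta_2)=\|h_{\Delta_1}-h_{\Delta_2}\|_{\mathcal{C}}$ with respect to the dual norms.
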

	\begin{proof}
		This follows from Theorem~\ref{torgeomdiv}, Proposition~\ref{m-d-approx-1}, and Corollary~\ref{Kompact}.
	\end{proof}
	Let $A_1, ... , A_r \subset M_{\mathbb{R}}$ be bounded convex sets and $I \subset \lbrace 1,2, \dots, r \rbrace$. We write $A_I$ for the Minkowski sum $\sum_{i\in I} A_i$. The \textit{mixed volume of } $A_1, \dots , A_r$ is given by the formula
	\begin{displaymath}
		\textup{MV}_M ( A_1, \dots , A_r) \coloneqq \sum_{I \subset \lbrace 1,2, \dots, r \rbrace } (-1)^{r -|I|} \textup{Vol}_M (A_I).
	\end{displaymath}
	Then, Theorem~13.4.1~of~\cite{CLS} states that the intersection numbers of nef toric divisors can be computed in terms of the volumes of their associated lattice polytopes. 
	\begin{thm}\label{toricdegree}
		Let $\Sigma$ be a complete fan in $N_{\mathbb{R}}$ and $X_{\Sigma}/K$ its toric variety of dimension $d$. Let $D, \, D_1, \ldots , \, D_d \in \textup{Div}_{\mathbb{T}}^{\textup{nef}}(X_{\Sigma})_{\mathbb{Q}}$ with associated polytopes $\Delta, \Delta_1, \ldots ,\Delta_d$ respectively. Then, the intersection number $D_1 \cdot \ldots \cdot D_d$ is equal to $\textup{MV}_M (\Delta_1, \ldots ,\Delta_d)$. In particular, $D^d = d! \, \textup{Vol}_{M}(\Delta)$.
	\end{thm}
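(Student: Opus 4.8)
The plan is to reduce the assertion to the classical fact that the intersection numbers of nef Cartier divisors on a complete toric variety are Minkowski mixed volumes of the associated polytopes (Theorem~13.4.1 of~\cite{CLS}), reaching the general case by multilinearity and the projection formula. First I would pass to integral divisors. Both sides are symmetric and multilinear in $D_1, \ldots, D_d$: the intersection product is multilinear by construction, and $\textup{MV}_M$ is symmetric and, by its defining inclusion--exclusion formula together with the behaviour of $\textup{Vol}_M$ under Minkowski sums, additive and positively homogeneous in each slot. On the divisor side one has $\Psi_{D+D'} = \Psi_D + \Psi_{D'}$ and $\Psi_{\lambda D} = \lambda \Psi_D$, so the concavity furnished by Theorem~\ref{nefdiv} gives $\Delta_{D+D'} = \Delta_D + \Delta_{D'}$ and $\Delta_{\lambda D} = \lambda\Delta_D$ for $\lambda \in \mathbb{Q}_{\geq 0}$. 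Choosing a positive integer $m$ with every $m D_i$ an integral nef toric divisor, both sides are multiplied by $m^{-d}$, so we may assume $D_1, \ldots, D_d$ integral.

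Next I would remove the singularities of $X_\Sigma$. By Theorem~\ref{toricres} the complete fan $\Sigma$ has a smooth refinement $\Sigma'$, inducing a proper birational toric morphism $f \colon X_{\Sigma'} \to X_\Sigma$ (refining once more by the toric Chow lemma, one may take $\Sigma'$ smooth and projective if the cited result requires projectivity). Normalizing $f$ to be the identity on the torus as in Proposition~\ref{toricmorphism}, Lemma~\ref{pbsup} gives $\Psi_{f^{\ast}D_i} = \Psi_{D_i}$, so $f^{\ast}D_i$ is a nef toric divisor with the same associated polytope $\Delta_i$; and since $f$ is proper and birational, the projection formula yields $D_1 \cdot \ldots \cdot D_d = f^{\ast}D_1 \cdot \ldots \cdot f^{\ast}D_d$. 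On $X_{\Sigma'}$ the latter equals $\textup{MV}_M(\Delta_1, \ldots, \Delta_d)$ by Theorem~13.4.1 of~\cite{CLS}, which is the first claim. For the ``in particular,'' take $D_1 = \cdots = D_d = D$; then $\Delta_I = |I|\cdot\Delta$ for $I \subset \lbrace 1, \ldots, d \rbrace$, so $\textup{MV}_M(\Delta, \ldots, \Delta) = \bigl(\sum_{k=0}^{d} (-1)^{d-k}\binom{d}{k}k^d\bigr)\textup{Vol}_M(\Delta) = d!\,\textup{Vol}_M(\Delta)$.

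The step I expect to be the main obstacle is matching the precise hypotheses and normalization in~\cite{CLS}: Theorem~13.4.1 there may be phrased for (smooth) projective toric varieties and with a particular convention for the mixed volume, so the two reductions above must be arranged so that the final invocation is literally applicable. The genuinely non-formal input, that toric intersection numbers compute Minkowski mixed volumes, lies entirely in~\cite{CLS}; everything else is bookkeeping with the correspondences recalled in this subsection.
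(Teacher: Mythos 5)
Your proof is correct and takes the route the paper intends: the paper states Theorem~\ref{toricdegree} by direct appeal to Theorem~13.4.1 of~\cite{CLS} without writing out a proof, and your argument supplies the standard bookkeeping (scaling to integral divisors, passing to a smooth or projective refinement via Theorem~\ref{toricres}, Lemma~\ref{pbsup}, and the projection formula) that makes the citation literally applicable, together with the finite-difference identity $\sum_{k=0}^{d}(-1)^{d-k}\binom{d}{k}k^{d}=d!$ for the self-intersection statement. The one minor slip is logical rather than mathematical: you derive multilinearity of $\textup{MV}_M$ ``from the inclusion--exclusion formula together with the behaviour of $\textup{Vol}_M$ under Minkowski sums,'' but Minkowski-additivity of mixed volume is not immediate from Definition~\ref{mixed-integral}'s inclusion--exclusion expression (that expression is itself the polarization of the volume polynomial, whose multilinearity is the nontrivial input); fortunately your reduction only needs positive homogeneity in each slot, which does follow directly, so the argument stands.
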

	Using Proposition~\ref{m-d-approx-1} and Beer's~Theorem~\cite{Beer}~below, we can extend Theorem~\ref{toricdegree} to the compactified setting.
	\begin{thm}\label{cont-vol}
		The volume function $ \textup{Vol}_M \colon \mathcal{K}^{+}(M_{\mathbb{R}}) \rightarrow \mathbb{R}_{\geq 0}$ is continuous with respect to the Hausdorff distance $\textup{d}_{\mathcal{H}}$.
	\end{thm}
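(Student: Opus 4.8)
The plan is to establish sequential continuity of $\textup{Vol}_M$ at an arbitrary compact convex set $\Delta \subset M_{\mathbb{R}}$, which is enough because $(\mathcal{K}^{+}(M_{\mathbb{R}}), \textup{d}_{\mathcal{H}})$ is a metric space. Fix a sequence $\lbrace \Delta_n \rbrace_{n \in \mathbb{N}}$ with $\varepsilon_n \coloneqq \textup{d}_{\mathcal{H}}(\Delta_n, \Delta) \to 0$. I would sandwich $\textup{Vol}_M(\Delta_n)$ between the volume of an outer convex approximation and the volume of an inner convex approximation of $\Delta$, both of which converge to $\textup{Vol}_M(\Delta)$. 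The two ingredients are: (a) continuity of the Haar measure $\textup{Vol}_M$ along a monotone family of bounded measurable sets; and (b) the elementary fact that a strict homothet of a convex body towards one of its interior points is contained in the interior of that body.

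For the outer bound, first I would note that $\Delta_n \subseteq \Delta + \varepsilon_n \overline{\textup{B}}(0,1)$, so $\textup{Vol}_M(\Delta_n) \leq \textup{Vol}_M(\Delta + \varepsilon_n \overline{\textup{B}}(0,1))$. The compact sets $\Delta + \varepsilon \overline{\textup{B}}(0,1)$ decrease to $\Delta$ as $\varepsilon \downarrow 0$ (their intersection is $\Delta$, since $\Delta$ is closed) and all have finite volume, so by continuity of $\textup{Vol}_M$ from above one gets $\textup{Vol}_M(\Delta + \varepsilon_n \overline{\textup{B}}(0,1)) \to \textup{Vol}_M(\Delta)$; hence $\limsup_n \textup{Vol}_M(\Delta_n) \leq \textup{Vol}_M(\Delta)$.

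For the inner bound, the case $\textup{Vol}_M(\Delta) = 0$ is trivial, so I would assume $\textup{Vol}_M(\Delta) > 0$, which forces $\Delta$ to have nonempty interior. Fix $x_0 \in \textup{int}(\Delta)$, and for $t \in (0,1)$ set $\Delta_t \coloneqq (1-t)x_0 + t\Delta$; by (b) this is a compact subset of $\textup{int}(\Delta)$, so $\rho_t \coloneqq \textup{dist}(\Delta_t, M_{\mathbb{R}} \setminus \Delta) > 0$. The key step is to show $\Delta_t \subseteq \Delta_n$ as soon as $\varepsilon_n < \rho_t$: if some $y \in \Delta_t$ failed to lie in the closed convex set $\Delta_n$, I would separate $y$ from $\Delta_n$ by a unit vector $u$ with $\langle u, y \rangle > c \coloneqq \sup_{z \in \Delta_n} \langle u, z \rangle$, pick $s$ with $\varepsilon_n < s < \rho_t$, and observe that $p \coloneqq y + su$ lies in $\overline{\textup{B}}(y,s) \subseteq \Delta$ while $\textup{dist}(p, \Delta_n) \geq \langle u, p \rangle - c > s > \varepsilon_n$, contradicting $\Delta \subseteq \Delta_n + \varepsilon_n \overline{\textup{B}}(0,1)$. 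Granting this, $\textup{Vol}_M(\Delta_n) \geq \textup{Vol}_M(\Delta_t) = t^{d}\,\textup{Vol}_M(\Delta)$ for all large $n$, so $\liminf_n \textup{Vol}_M(\Delta_n) \geq t^{d}\,\textup{Vol}_M(\Delta)$; letting $t \to 1^{-}$ gives $\liminf_n \textup{Vol}_M(\Delta_n) \geq \textup{Vol}_M(\Delta)$, and combining with the outer bound yields $\textup{Vol}_M(\Delta_n) \to \textup{Vol}_M(\Delta)$.

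The main obstacle, and the only step that is not soft measure theory or routine convexity, is the inclusion $\Delta_t \subseteq \Delta_n$ for large $n$; the separating-hyperplane argument above is one way to get it, and a shorter alternative is to deduce it from the chain $\Delta_t + \rho_t \overline{\textup{B}}(0,1) \subseteq \Delta \subseteq \Delta_n + \varepsilon_n \overline{\textup{B}}(0,1)$ via Rådström's cancellation lemma. Since the statement is classical, one may also simply invoke Beer's theorem and refer to \cite{Beer} for the details.
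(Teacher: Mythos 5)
Your proof is correct. Note, however, that the paper does not prove this statement at all: it states it as ``Beer's Theorem'' and refers to the one-page note \cite{Beer}, treating it as a classical fact. So there is no in-text argument to compare against; your contribution is a self-contained proof of a result the paper merely cites.

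Your argument is sound and is essentially the standard sandwich. The outer bound via $\Delta_n \subseteq \Delta + \varepsilon_n \overline{\textup{B}}(0,1)$ and continuity of the Haar measure from above is immediate (the family $\Delta + \varepsilon\overline{\textup{B}}(0,1)$ is nested in $\varepsilon$ with intersection $\Delta$ because $\Delta$ is closed, and all members are compact). For the inner bound, dispatching $\textup{Vol}_M(\Delta) = 0$ via the outer bound alone, using Rockafellar's Theorem~6.1 (an interior homothet $\Delta_t = (1-t)x_0 + t\Delta$ lands in $\textup{int}(\Delta)$), and then forcing $\Delta_t \subseteq \Delta_n$ once $\varepsilon_n < \rho_t$ are all correct, and $\textup{Vol}_M(\Delta_t) = t^d\,\textup{Vol}_M(\Delta)$ by translation invariance and scaling of the Haar measure. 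Both of your routes to the key inclusion work. The separating-hyperplane computation (with $p = y + su$ and $\textup{dist}(p,\Delta_n) \geq \langle u, p\rangle - c > s > \varepsilon_n$) tacitly assumes the Euclidean norm, whereas the paper's Remark~\ref{conventions} fixes the $\ell^\infty$ norm; this is harmless because continuity of $\textup{Vol}_M$ in the Hausdorff topology is a norm-independent statement (all norms are equivalent and the Haar measure does not depend on the norm), but it would be worth a parenthetical. Your alternative via R\aa dstr\"om cancellation from the chain $\Delta_t + \rho_t\overline{\textup{B}}(0,1) \subseteq \Delta \subseteq \Delta_n + \varepsilon_n\overline{\textup{B}}(0,1) \subseteq \Delta_n + \rho_t\overline{\textup{B}}(0,1)$ sidesteps this norm issue entirely and is arguably cleaner; the inclusion $\Delta_t + \rho_t\overline{\textup{B}}(0,1) \subseteq \Delta$ does hold because any point of $\overline{\textup{B}}(y,\rho_t)$ escaping $\Delta$ would force the segment from $y \in \textup{int}(\Delta)$ to cross $\partial\Delta$ at distance strictly less than $\rho_t$ from $y$, contradicting $\textup{dist}(y,\partial\Delta) \geq \rho_t$.
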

	\begin{prop}\label{comp-int-num}
		Let $D_1, \ldots , D_d \in \textup{Div}_{\mathbb{T}}^{\textup{nef}}(U/K)$. Write $ D_i = \lbrace D_{i,n} \rbrace_{n \in \mathbb{N}}$ as a Cauchy sequence of nef toric model divisors and let $\Delta_{i}$ and $\Delta_{i,n}$ be the corresponding compact convex sets. Then, the sequence of intersection numbers $\lbrace D_{1,n} \cdot \ldots \cdot D_{d,n} \rbrace_{n \in \mathbb{N}}$ converges and it is given by the formula
		\begin{displaymath}
			D_1 \cdot \ldots \cdot D_d \coloneqq \lim_{n \in \mathbb{N}} D_{1,n} \cdot \ldots \cdot D_{d,n} = \textup{MV}_M (\Delta_{1}, \ldots ,\Delta_{d}).
		\end{displaymath}
		In particular, the degree of a nef toric compactified divisor is given by $D^{d} = d! \, \textup{Vol}_{M} (\Delta_{D})$.
	\end{prop}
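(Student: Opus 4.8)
The plan is to deduce the formula by combining the continuity of the Yuan--Zhang intersection pairing (Theorem~\ref{int-prod-K}) with the classical toric degree formula (Theorem~\ref{toricdegree}) and the fact that the correspondence of Theorem~\ref{nefcorrespondence} is a homeomorphism, using Beer's continuity theorem (Theorem~\ref{cont-vol}) to transport Minkowski sums and their volumes to the limit.

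\emph{Step 1: the left-hand limit exists and equals a limit of mixed volumes.} For each fixed $n$, I would use that $\textup{PM}_{\mathbb{T}}(U/K)$ is cofiltered (Proposition~\ref{tqpmod}) to realize $D_{1,n},\ldots,D_{d,n}$ as nef toric $\mathbb{Q}$-divisors on one common projective toric model $X_{\Sigma_n}$ (the pullback of a nef divisor along a proper toric morphism is again nef); by Lemma~\ref{pbsup} and the projection formula, the resulting intersection number $D_{1,n}\cdot\ldots\cdot D_{d,n}\in\mathbb{Q}$ is independent of this choice. Since the $D_i$ are nef compactified divisors, Theorem~\ref{int-prod-K} guarantees that the sequence $\{D_{1,n}\cdot\ldots\cdot D_{d,n}\}_{n}$ converges and that its limit, by definition $D_1\cdot\ldots\cdot D_d$, does not depend on the chosen nef Cauchy sequences representing the $D_i$. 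On $X_{\Sigma_n}$, Theorem~\ref{toricdegree} gives $D_{1,n}\cdot\ldots\cdot D_{d,n}=\textup{MV}_M(\Delta_{1,n},\ldots,\Delta_{d,n})$, where $\Delta_{i,n}=\Delta_{D_{i,n}}$ is the rational polytope attached to $D_{i,n}$ by Theorem~\ref{nefdiv}.

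\emph{Step 2: pass the mixed volumes to the limit.} Expanding $\textup{MV}_M$ as the alternating sum $\sum_{I\subset\{1,\ldots,d\}}(-1)^{d-|I|}\textup{Vol}_M(\Delta_{I})$ over the Minkowski sums $\Delta_I=\sum_{i\in I}\Delta_i$ (and likewise with the $\Delta_{i,n}$), it suffices to prove $\textup{Vol}_M(\Delta_{I,n})\to\textup{Vol}_M(\Delta_I)$ for each fixed $I$. By Theorem~\ref{nefcorrespondence}, $D\mapsto\Delta_D$ is an \emph{isomorphism of cones} onto $\mathcal{K}^{+}(M_{\mathbb{R}})$ which is simultaneously a homeomorphism for the boundary topology and the Hausdorff distance; in particular it is additive, so $\Delta_{I,n}=\Delta_{\sum_{i\in I}D_{i,n}}$ and $\Delta_I=\Delta_{\sum_{i\in I}D_i}$. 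Since $\textup{Div}_{\mathbb{T}}(U/K)$ is a Banach space (Theorem~\ref{torgeomdiv}) and $D_{i,n}\to D_i$ for each $i$, the finite sums $\sum_{i\in I}D_{i,n}$ converge to $\sum_{i\in I}D_i$; applying the homeomorphism gives $\Delta_{I,n}\to\Delta_I$ in the Hausdorff distance, and Beer's Theorem~\ref{cont-vol} then yields $\textup{Vol}_M(\Delta_{I,n})\to\textup{Vol}_M(\Delta_I)$. Chaining the two steps gives $D_1\cdot\ldots\cdot D_d=\lim_n\textup{MV}_M(\Delta_{1,n},\ldots,\Delta_{d,n})=\textup{MV}_M(\Delta_1,\ldots,\Delta_d)$, and specializing all $D_i$ to a single nef $D$, together with $\textup{MV}_M(\Delta,\ldots,\Delta)=d!\,\textup{Vol}_M(\Delta)$ (recorded in Theorem~\ref{toricdegree}), yields the final assertion.

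\emph{The main obstacle} I anticipate lies entirely in Step~1: making precise that the ordinary intersection number $D_{1,n}\cdot\ldots\cdot D_{d,n}$ is well defined and computable via Theorem~\ref{toricdegree} --- choosing a common projective toric model, replacing the $D_{i,n}$ by genuinely nef toric representatives on it, and verifying independence of all these choices --- so that Theorem~\ref{int-prod-K} applies. Once everything lives on complete toric varieties, the passage to the limit is forced by the homeomorphism of Theorem~\ref{nefcorrespondence} and the continuity of volume, and requires no additional input.
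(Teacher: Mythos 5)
Your Step~2 is essentially the paper's proof: expand $\textup{MV}_M$ as the alternating sum $\sum_I(-1)^{d-|I|}\textup{Vol}_M(\Delta_I)$, note that the correspondence $D\mapsto\Delta_D$ of Theorem~\ref{nefcorrespondence} is additive and a homeomorphism so that $\Delta_{I,n}\to\Delta_I$ in the Hausdorff metric for each fixed $I$, and conclude by Beer's Theorem~\ref{cont-vol}. The only real divergence is Step~1, where you invoke Yuan--Zhang's Theorem~\ref{int-prod-K} to pre-establish that $\lim_n D_{1,n}\cdots D_{d,n}$ exists and is independent of the chosen nef Cauchy sequences. That appeal is valid — nef toric model divisors are nef model divisors, and the toric boundary topology maps continuously into the Yuan--Zhang boundary topology (Lemma~\ref{shrinking}) — but it is logically redundant: your own Step~2 already shows the sequence of intersection numbers converges, with a limit expressed intrinsically through the $\Delta_i$ (which depend only on the $D_i$, not the representing sequences). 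The paper simply omits Theorem~\ref{int-prod-K} entirely and lets the mixed-volume computation carry the convergence, which is cleaner and avoids needing to cross between the toric and general compactified settings. Your concern about Step~1 (choosing a common model, nef representatives, independence) is addressed by construction: a nef toric model divisor is by definition represented by a nef toric divisor on some projective toric model, Proposition~\ref{tqpmod} gives a common refinement, and the projection formula for toric morphisms (together with Lemma~\ref{pbsup}) makes the intersection number independent of these choices; no further work is needed there.
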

	\begin{proof}
		The proof is straightforward. For each $I \subset \lbrace 1,2, \ldots ,d \rbrace$ and $n \in \mathbb{N}$, we consider the divisors $D_I \coloneqq \sum_{i \in I} D_i$ and $D_{I,n} \coloneqq \sum_{i \in I} D_{i,n}$. Since $\textup{Div}_{\mathbb{T}}^{\textup{nef}} (U/K)$ is a closed cone, we know that $D_I$ and $D_{I,n}$ are nef and satisfy
		\begin{displaymath}
			D_I = \lim_{n\in\mathbb{N}} D_{I,n}.
		\end{displaymath}
		Their corresponding compact convex sets are given by $\Delta_I = \sum_{i \in I} \Delta_i$ and $\Delta_{I,n} = \sum_{i \in I} \Delta_{i,n}$, respectively. By Theorem~\ref{toricdegree}, for each $n \in \mathbb{N}$ we have
		\begin{displaymath}
			D_{1,n} \cdot \ldots \cdot D_{d,n}  = \textup{MV}_M ( \Delta_{1,n}, \ldots , \Delta_{d,n}) = \sum_{I \subset \lbrace 1,2, \ldots,d \rbrace } (-1)^{d -|I|} \textup{Vol}_M (\Delta_{I,n}).
		\end{displaymath}
		Applying Theorem~\ref{cont-vol} to each compact convex set $\Delta_I = \lim_n \Delta_{I,n}$, we conclude that
		\begin{displaymath}
			D_1 \cdot \ldots \cdot D_d = \lim_{n \in \mathbb{N}} D_{1,n} \cdot \ldots \cdot D_{d,n} =\lim_{n \in \mathbb{N}} \textup{MV}_M ( \Delta_{1,n}, \ldots , \Delta_{d,n}) = \textup{MV}_M (\Delta_{1}, \ldots ,\Delta_{d}).
		\end{displaymath}
	\end{proof}
	Then, we state the following toric analog of Theorem~\ref{int-prod-K}. 
	\begin{thm}\label{comp-int-pair}
		The intersection pairing $\textup{Div}_{\mathbb{T}}^{\textup{nef}} (U/K)^d \rightarrow\mathbb{R}$ given by the formula
		\begin{displaymath}
			D_1 \cdot ... \cdot D_d =  \textup{MV}_M (\Delta_{D_1}, \ldots ,\Delta_{D_d}), \quad  \Delta_i = \textup{stab}(\Psi_{D_i}),
		\end{displaymath}
		extends continuously the intersection pairing on nef toric model divisors. By linearity, it is extended further to a multilinear, symmetric pairing $\textup{Div}_{\mathbb{T}}^{\textup{int}} (U/K)^d \longrightarrow \mathbb{R}$ on the space of integrable divisors.
	\end{thm}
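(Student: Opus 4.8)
The plan is to reduce everything to facts already in hand, principally Proposition~\ref{comp-int-num} and Theorem~\ref{nefcorrespondence}. Proposition~\ref{comp-int-num} already shows that for nef toric compactified divisors $D_1, \ldots, D_d$ with representing Cauchy sequences $D_i = \lbrace D_{i,n} \rbrace_{n \in \mathbb{N}}$ of nef toric model divisors, the limit $\lim_{n} D_{1,n} \cdot \ldots \cdot D_{d,n}$ exists, is independent of the chosen sequences, and equals $\textup{MV}_M(\Delta_{D_1}, \ldots, \Delta_{D_d})$. So the remaining tasks are: to check that this formula restricts to the ordinary intersection pairing on nef toric model divisors; to verify that it is jointly continuous on $\textup{Div}_{\mathbb{T}}^{\textup{nef}}(U/K)^d$ for the boundary topology; and to establish symmetry together with additivity and $\mathbb{R}_{\geq 0}$-homogeneity in each slot, so that the pairing extends by the usual recipe to a symmetric $\mathbb{R}$-multilinear pairing on $\textup{Div}_{\mathbb{T}}^{\textup{int}}(U/K)^d$.

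For the restriction claim I would observe that a nef toric model divisor is defined on some projective model $X_{\Sigma}$, where its support function is a rational concave function in $\mathcal{SF}^{+}(\Sigma, \mathbb{Q})$ and its stability set $\textup{stab}(\Psi_D)$ is precisely the lattice polytope $\Delta_D$ of Theorem~\ref{nefdiv}; then Theorem~\ref{toricdegree} identifies $D_1 \cdot \ldots \cdot D_d$ with $\textup{MV}_M(\Delta_{D_1}, \ldots, \Delta_{D_d})$, which is exactly the proposed value. For continuity, the key is Theorem~\ref{nefcorrespondence}: the map $D \mapsto \Delta_D$ is a homeomorphism from $\textup{Div}_{\mathbb{T}}^{\textup{nef}}(U/K)$ onto $\mathcal{K}^{+}(M_{\mathbb{R}})$ equipped with the Hausdorff distance $\textup{d}_{\mathcal{H}}$, so the pairing factors as $(D_1, \ldots, D_d) \mapsto (\Delta_{D_1}, \ldots, \Delta_{D_d}) \mapsto \textup{MV}_M(\Delta_{D_1}, \ldots, \Delta_{D_d})$. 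The first map is a homeomorphism by assumption; for the second, I would expand $\textup{MV}_M$ by the inclusion--exclusion formula over the Minkowski sums $\Delta_I$, note that $\Delta_I = \lim_n \Delta_{I,n}$ in $\textup{d}_{\mathcal{H}}$ whenever each $\Delta_i = \lim_n \Delta_{i,n}$, and invoke Beer's Theorem~\ref{cont-vol} on the continuity of $\textup{Vol}_M$, exactly as in the proof of Proposition~\ref{comp-int-num}. Since the nef toric model divisors are dense in $\textup{Div}_{\mathbb{T}}^{\textup{nef}}(U/K)$, this simultaneously shows that the formula is the unique continuous extension of the model intersection pairing.

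For multilinearity and symmetry, symmetry is immediate from the inclusion--exclusion formula defining $\textup{MV}_M$. For additivity, I would use that $D \mapsto \Delta_D$ is an isomorphism of cones (Theorem~\ref{nefcorrespondence}) and that $\mathcal{SF}$ is a group homomorphism (Lemma~\ref{toric-comp-SF}), which gives $\Delta_{D + D'} = \Delta_D + \Delta_{D'}$ and $\Delta_{\lambda D} = \lambda \Delta_D$ for $\lambda \in \mathbb{R}_{\geq 0}$; combining this with the classical multilinearity of the mixed volume under Minkowski sums and its degree-one homogeneity in each argument shows the pairing is additive and $\mathbb{R}_{\geq 0}$-homogeneous in each variable on the nef cone. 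A symmetric pairing with these properties on a cone $C$ extends uniquely to a symmetric $\mathbb{R}$-multilinear pairing on $C - C$ via the standard sign-alternating formula applied to difference decompositions $D_i = D_i^{+} - D_i^{-}$ into nef divisors; well-definedness reduces, as usual, to the observation that $D^{+} - D^{-} = E^{+} - E^{-}$ implies $D^{+} + E^{-} = E^{+} + D^{-}$ inside the nef cone, where additivity has already been checked.

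I expect the whole argument to be routine bookkeeping once Proposition~\ref{comp-int-num} and Theorem~\ref{nefcorrespondence} are in place; the one point I would want to isolate and prove carefully is the convex-geometric identity $\textup{stab}(\Psi_1 + \Psi_2) = \textup{stab}(\Psi_1) + \textup{stab}(\Psi_2)$ for conical concave functions --- equivalently, that addition of nef toric compactified divisors corresponds to Minkowski sum of the associated compact convex sets --- since this, together with the multilinearity of $\textup{MV}_M$ over Minkowski sums, is what powers the extension from the nef cone to the full space of integrable divisors.
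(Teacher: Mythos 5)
Your proposal is correct and follows essentially the same route as the paper: Proposition~\ref{comp-int-num} already yields the continuous extension of the pairing to the nef cone, and both you and the paper pass from there to $\textup{Div}_{\mathbb{T}}^{\textup{int}}(U/K)$ by polarization (the paper does this tersely by expanding $(D_{1,n}-D_{2,n})^d$ with the binomial formula and taking limits term by term via Proposition~\ref{comp-int-num}, leaving multilinearity, symmetry, and well-definedness of the cone-to-group extension implicit, whereas you verify these directly). The one item you flagged for separate proof --- the identity $\textup{stab}(\Psi_1+\Psi_2)=\textup{stab}(\Psi_1)+\textup{stab}(\Psi_2)$ --- is already recorded in Corollary~\ref{Kompact} (the isometry $\textup{stab}$ commutes with sums), so no further argument is needed there.
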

	\begin{proof}
		Let $D \in \textup{Div}_{\mathbb{T}}^{\textup{int}}(U/K)$. Write $D = D_1 - D_2$, where each $D_i=\lbrace D_{i,n} \rbrace_{n \in \mathbb{N}}$ is a Cauchy sequence of nef toric model divisors. For each $n \in \mathbb{N}$, we have
		\begin{displaymath}
			(D_{1,n} - D_{2,n})^d = \sum_{k =0}^{d} (-1)^{d-k} \binom{d}{k} D_{1,n}^{k} \cdot D_{2,n}^{d-k}.
		\end{displaymath}
		Observe that the sum in the right is finite, and the number of terms does not depend on $n$. By Proposition~\ref{comp-int-num}, it follows that
		\begin{align*}
			(D_{1} - D_{2})^d & = \lim_{n \in \mathbb{N}}  \sum_{k =0}^{n} (-1)^{n-k} \binom{n}{k} D_{1,n}^{k} \cdot D_{2,n}^{n-k} = \sum_{k =0}^{d} (-1)^{d-k} \binom{d}{k} D_{1}^{k} \cdot D_{2}^{d-k}.
		\end{align*}
	\end{proof}
	We finish this section with a comparison between our toric compactified divisors and the toric b-divisors~of~\cite{Bot19}.
	\begin{rem}[Toric compactified divisors and toric b-divisors]
		Let $\Sigma$ be a complete and smooth fan in $N_{\mathbb{R}}$ and denote by $R(\Sigma)$ the directed poset of smooth and complete fans $\Sigma' \geq \Sigma$, ordered by refinement. The groups of toric Cartier and Weil b-divisors on $X_{\Sigma}$ are defined as
		\begin{displaymath}
			\textup{Ca}(X_{\Sigma})_{\mathbb{Q}} \coloneqq \varinjlim_{\Sigma' \in R(\Sigma)} \textup{Div}_{\mathbb{T}}(X_{\Sigma'})_{\mathbb{Q}}, \quad \textup{We}(X_{\Sigma})_{\mathbb{Q}}  \coloneqq \varprojlim_{\Sigma' \in R(\Sigma)} \textup{Div}_{\mathbb{T}}(X_{\Sigma'})_{\mathbb{Q}},
		\end{displaymath}
		Where maps in the limits are pullbacks and pushforwards induced by toric birational morphisms, respectively. By Remark~2.5~of~\cite{Bot19}, there is an injective morphism $\textup{Ca}(X_{\Sigma_0})_{\mathbb{Q}} \rightarrow \textup{We}(X_{\Sigma_0})_{\mathbb{Q}}$. Theorem~\ref{toricres} implies that the directed set $\textup{SPF}(N_{\mathbb{R}})$ is cofinal in $R(\Sigma)$. Therefore, we have an isomorphism 
		\begin{displaymath}
			\textup{Ca}(X_{\Sigma})_{\mathbb{Q}} \cong  \textup{Div}_{\mathbb{T}} (U/K)_{\textup{mod}}.
		\end{displaymath}
		Now, we compare toric Weil b-divisors and toric compactified divisors. From Lemma~2.9~of~\cite{Bot19}, to each toric Weil b-divisor one can associate a $\mathbb{Q}$-conical function $\Phi_D \colon N_{\mathbb{Q}} \rightarrow \mathbb{Q}$. The assignment $D \mapsto \Phi_D$ induces an isomorphism between $\textup{We}(X_{\Sigma_0})_{\mathbb{Q}}$ and the $\mathbb{Q}$-vector space of $\mathbb{Q}$-conical functions $\Phi\colon N_{\mathbb{Q}} \rightarrow \mathbb{Q}$. After extending scalars to $\mathbb{R}$, elements $D\in \textup{We}(X_{\Sigma_0})_{\mathbb{R}}$ correspond to $\mathbb{Q}$-conical functions $\Phi\colon N_{\mathbb{Q}} \rightarrow \mathbb{R}$. On the other hand, we have an isomorphism $\mathcal{SF} \colon \textup{Div}_{\mathbb{T}} (U/K) \longrightarrow \mathcal{C}(N_{\mathbb{R}})$. Since every support function $\Psi_D$ is continuous, it is determined uniquely by its values on the dense subset $N_{\mathbb{Q}}$. Therefore, we get an injective map $\textup{Div}_{\mathbb{T}} (U/K) \rightarrow \textup{We}(X_{\Sigma_0})_{\mathbb{R}}$ inducing the identification
		\begin{displaymath}
			\textup{Div}_{\mathbb{T}} (U/K) \cong \lbrace D \in \textup{We}(X_{\Sigma_0})_{\mathbb{R}} \, \vert \, \Phi_{D} \colon N_{\mathbb{Q}} \rightarrow \mathbb{R}  \textup{ extends continuously to } N_{\mathbb{R}} \rbrace.
		\end{displaymath}
		Finally, Remark~3.7~of~\cite{Bot19} shows that nefness implies concavity. In contrast, Theorem~\ref{m-d-approx-1} states that for a toric compactified divisor, nefness is equivalent to concavity. Moreover, every conical concave function is the support function of a toric compactified divisor. Finally, the formula in Theorem~3.10~of~\cite{Bot19} for intersection numbers of nef toric b-divisors agrees with our formula in Theorem~\ref{comp-int-pair}.
	\end{rem}
	\subsection{The case over a DVR}\label{DVR-1}
	This subsection describes similar constructions as the ones appearing in Subsection~\ref{3-1}, where we consider a discrete valuation ring as our base scheme (DVR for short). First, we give a summary of the theory of toric schemes over a DVR. These results first appeared in Chapter~4~of~\cite{KKMS}, while Chapter~3~of~\cite{BPS} serves as a modern reference. Then, we extend these results to the compactified setting.
	
	Throughout this subsection, $\mathcal{O}_K$ is a discrete valuation ring with field of fractions $K$. Let $\mathfrak{m}$ be its unique maximal ideal, $\varpi$ be a generator of $\mathfrak{m}$, and $k = \mathcal{O}_K / \mathfrak{m}$ be its residue field. We remind the reader of the notation introduced at the start of Sections~3.1~and~3.2: The split $d$-dimensional torus $\mathcal{U}/\mathcal{O}_K$, its generic fibre $U/K$, and the lattice $M$ (resp. $N$) of (co)characters of $U$. 
	\begin{cons}[The toric scheme of a fan]
		For each fan $\widetilde{\Sigma}$ in $N_{\mathbb{R}} \times \mathbb{R}_{\geq 0}$ there is an associated toric scheme $\mathcal{X}_{\widetilde{\Sigma}}/\mathcal{O}_K$. We sketch its construction below:
		\begin{enumerate}
			\item For each cone $\sigma  \in \widetilde{\Sigma}$, we let $\sigma^{\vee} \subset M_{\mathbb{R}} \times \mathbb{R}$ be its dual cone, $\widetilde{M}_{\sigma} \coloneqq (M \times \mathbb{Z}) \cap \sigma^{\vee}$ be the associated semigroup, and $\mathcal{O}_K [ \widetilde{M}_{\sigma} ]$ be the induced semigroup algebra. Since $\sigma$ is contained in the upper half-space $N_{\mathbb{R}} \times \mathbb{R}_{\geq 0}$, we know that $(0,1) \in \sigma^{\vee}$. Then, the affine toric scheme $\mathcal{X}_{\sigma}$ associated to $\sigma$ is defined as
			\begin{displaymath}
				\mathcal{X}_{\sigma} \coloneqq \textup{Spec}( \mathcal{O}_{K} [ \widetilde{M}_{\sigma} ] / ( \chi^{(0,1)} - \varpi) ).
			\end{displaymath}
			Note that the toric scheme associated to the vertical ray $\tau_{\textup{vert}} \coloneqq \mathbb{R}_{\geq 0} \cdot (0,1)$ is canonically isomorphic to the torus $\mathcal{U}/\mathcal{O}_K$.
			\item For each face $\tau$ of $\sigma$, there is an induced open immersion $\mathcal{X}_{\tau} \rightarrow \mathcal{X}_{\sigma}$ which identifies $\mathcal{X}_{\tau}$ as an open subscheme of $\mathcal{X}_{\sigma}$. We use these identifications to glue the family of affine toric schemes $\lbrace \mathcal{X}_{\sigma} \rbrace_{\sigma \in  \widetilde{\Sigma} } $ and obtain a scheme $\mathcal{X}_{\widetilde{\Sigma}} / \mathcal{O}_K$. The action of $\mathcal{U}$ on $\mathcal{X}_{\widetilde{\Sigma}}$ is constructed as in the case of toric varieties.
			\item We regard $\Sigma \coloneqq \lbrace \sigma = \widetilde{\sigma} \cap N_{\mathbb{R}} \times \lbrace 0 \rbrace \, \vert \, \widetilde{\sigma} \in \widetilde{\Sigma} \rbrace$ as a fan in $N_{\mathbb{R}}$. Its associated toric variety $X_{\Sigma}/K$ is isomorphic to the generic fibre of $\mathcal{X}_{\widetilde{\Sigma}} / \mathcal{O}_K$.
		\end{enumerate}
		Properties of the toric scheme $\mathcal{X}_{\widetilde{\Sigma}} / \mathcal{O}_K$ correspond to properties of the fan $\widetilde{\Sigma}$. For instance, the toric scheme $\mathcal{X}_{\widetilde{\Sigma}} / \mathcal{O}_K$ is proper if and only if the fan $\widetilde{\Sigma}$ is complete in $N_{\mathbb{R}} \times \mathbb{R}_{\geq 0}$. By this we mean $|\widetilde{\Sigma}| = N_{\mathbb{R}}\times \mathbb{R}_{\geq 0}$. This is not standard notation, but it should not cause any confusion.
	\end{cons}
	Now, we discuss toric morphisms. Let $\phi \colon  (N_1)_{\mathbb{R}} \times \mathbb{R}_{\geq 0} \rightarrow (N_2)_{\mathbb{R}} \times \mathbb{R}_{\geq 0}$ be an $\mathbb{R}$-linear map which is compatible with the lattice structures and the fans $\widetilde{\Sigma}_1$ and $\widetilde{\Sigma}_2$ (see Definition~\ref{compatible}). Then, the map $\phi$ induces a toric morphism $f \colon \mathcal{X}_{\widetilde{\Sigma}_1} \rightarrow \mathcal{X}_{\widetilde{\Sigma}_2}$. The map $f$ is proper if and only if $\phi(|\widetilde{\Sigma}_1|) =  |\widetilde{\Sigma}_2|$. The next proposition summarizes the results appearing in~\S~IV.3.(e)-(g)~of~\cite{KKMS}, describing the category of proper toric schemes over $\mathcal{O}_K$.
	\begin{prop}\label{toric-over-S}
		The assignments $\widetilde{\Sigma} \mapsto \mathcal{X}_{\widetilde{\Sigma}}$ and $\phi \mapsto f$ induce an equivalence between the category of complete fans in $N_{\mathbb{R}}\times \mathbb{R}_{\geq 0}$ with compatible linear maps, and the category of proper toric schemes over $\mathcal{O}_K$ with toric morphisms. In particular, proper birational morphisms of proper toric schemes over $\mathcal{O}_K$ correspond (up to isomorphism) with refinements of fans.
	\end{prop}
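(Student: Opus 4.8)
The plan is to transpose the proof of the corresponding statement over a field (Proposition~\ref{toricmorphism}) to the DVR setting, adapting each step and invoking the structure theory of~\cite{KKMS} at the one genuinely non-formal point. Four things must be shown: that the assignments $\widetilde{\Sigma}\mapsto\mathcal{X}_{\widetilde{\Sigma}}$, $\phi\mapsto f$ define a functor (this is contained in the construction and the paragraph preceding the statement); that it is essentially surjective onto proper toric schemes over $\mathcal{O}_K$; that it is fully faithful; and that proper birational morphisms correspond to refinements.

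For essential surjectivity I would first dispatch the affine case. If $\mathcal{V}/\mathcal{O}_K$ is a $\mathcal{U}$-invariant, normal, flat, finite-type affine scheme whose generic fibre is an affine toric variety with torus $U$, then $\Gamma(\mathcal{V},\mathcal{O}_{\mathcal{V}})$ is a finitely generated $\mathcal{O}_K$-subalgebra of $K[M]$ stable under the $\mathcal{U}$-coaction; by the classification in~\S~IV.3~of~\cite{KKMS}, any such algebra equals $\mathcal{O}_K[\widetilde{M}_\sigma]/(\chi^{(0,1)}-\varpi)$ for a unique cone $\sigma\subset N_{\mathbb{R}}\times\mathbb{R}_{\geq 0}$, the key inputs being that $\mathcal{O}_K$ is a valuation ring and that $\varpi$ generates $\mathfrak{m}$. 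Given a proper toric scheme $\mathcal{X}/\mathcal{O}_K$, I would cover it by finitely many $\mathcal{U}$-invariant affine opens $\mathcal{X}_{\sigma_i}$; the gluing morphisms force the $\sigma_i$ to meet along common faces, hence to assemble into a fan $\widetilde{\Sigma}$ with $\mathcal{X}\cong\mathcal{X}_{\widetilde{\Sigma}}$. Finally, properness of $\mathcal{X}/\mathcal{O}_K$ is equivalent, by the valuative criterion applied to $\mathcal{O}_K$-points, to $|\widetilde{\Sigma}|=N_{\mathbb{R}}\times\mathbb{R}_{\geq 0}$, in exact analogy with the equivalence between completeness of a fan and properness of the associated toric variety over a field.

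For full faithfulness, a toric morphism $f\colon\mathcal{X}_{\widetilde{\Sigma}_1}\to\mathcal{X}_{\widetilde{\Sigma}_2}$ restricts by definition to an $\mathcal{O}_K$-homomorphism of split tori $\mathcal{U}_1\to\mathcal{U}_2$, i.e. a $\mathbb{Z}$-linear map $N_1\to N_2$; extending it by the identity on the $\mathbb{Z}$-factor, which is forced by compatibility with the relation $\chi^{(0,1)}=\varpi$, and then extending scalars produces the map $\phi\colon(N_1)_{\mathbb{R}}\times\mathbb{R}_{\geq 0}\to(N_2)_{\mathbb{R}}\times\mathbb{R}_{\geq 0}$. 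That $f$ extends over all of $\mathcal{X}_{\widetilde{\Sigma}_1}$ rather than merely over the torus is equivalent to $\phi$ carrying each cone of $\widetilde{\Sigma}_1$ into a cone of $\widetilde{\Sigma}_2$, checked affine-locally through the semigroups $\widetilde{M}_{\sigma}$, exactly as in Theorem~3.3.4~of~\cite{CLS}; conversely such a $\phi$ induces $f$ by functoriality of $\sigma\mapsto\mathcal{O}_K[\widetilde{M}_\sigma]/(\chi^{(0,1)}-\varpi)$, and the two assignments are mutually inverse because both morphisms and compatible linear maps are determined by the underlying torus homomorphism together with the cone-compatibility condition.

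For the last assertion I would argue as in Proposition~\ref{toricmorphism}: if $f$ is proper and birational, birationality makes $\phi$ an isomorphism of lattices (extended to the $\mathbb{Z}$-factor) with inverse $\psi$, while properness gives $\phi(|\widetilde{\Sigma}_1|)=|\widetilde{\Sigma}_2|$; replacing $\widetilde{\Sigma}_2$ by $\psi(\widetilde{\Sigma}_2)\coloneqq\lbrace\psi(\sigma)\, \vert \,\sigma\in\widetilde{\Sigma}_2\rbrace$, which corresponds to an isomorphic toric scheme, we may take $\phi=\mathrm{id}$, so that $|\widetilde{\Sigma}_1|=|\widetilde{\Sigma}_2|$ and the compatibility condition says precisely that $\widetilde{\Sigma}_1$ refines $\widetilde{\Sigma}_2$; the converse is clear. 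The one step that is not formal, and hence the principal obstacle, is the affine classification of normal toric schemes over the DVR used for essential surjectivity; this is the content of~\S~IV.3.(e)--(g)~of~\cite{KKMS}, which for the purposes of this article we simply cite.
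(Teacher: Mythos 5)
The paper gives no proof of this proposition at all: the sentence preceding the statement says it ``summarizes the results appearing in \S~IV.3.(e)--(g) of \cite{KKMS}'' and the proposition is left as a citation. Your sketch is correct and rests on the same foundation, but you actually spell out the reduction: you isolate the genuine input (the classification of $\mathcal{U}$-stable normal finite-type $\mathcal{O}_K$-subalgebras of $K[M]$, which is exactly what KKMS \S~IV.3.(e)--(g) provides), show how the global case follows by gluing and the valuative criterion, reduce fullness/faithfulness to the semigroup-algebra computation as in Theorem~3.3.4 of \cite{CLS}, and transpose the argument of Proposition~\ref{toricmorphism} for the ``in particular'' clause. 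One point worth tightening: you say that extending the torus homomorphism ``by the identity on the $\mathbb{Z}$-factor'' is ``forced by compatibility with $\chi^{(0,1)}=\varpi$.'' That relation only forces $(0,1)\mapsto(0,1)$ on the dual side; a priori $\phi$ could still carry $(n,t)\mapsto(\alpha^{\vee}(n)+t\,\beta^{\vee},t)$ for some shear $\beta^{\vee}\in N_2$. What actually kills the shear is the requirement in the paper's definition that a toric morphism restrict to a \emph{group} homomorphism $U_1\to U_2$ of generic tori: a nonzero $\beta^{\vee}$ would induce a translation by $\varpi^{\beta^\vee}$ on the generic fibre, which is not a group homomorphism. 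You should cite the torus-homomorphism condition, not the relation $\chi^{(0,1)}=\varpi$, as the reason the $\mathbb{Z}$-factor is fixed.
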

	\begin{rem}
		We have a cone-orbit correspondence for the toric scheme $\mathcal{X}_{\widetilde{\Sigma}}/\mathcal{O}_K$. As in the case over a field (Construction~\ref{cone-orbit}), cones of dimension $n$ correspond to torus orbits of dimension $d-n$. The orbits $\textbf{O}(\sigma)$ are divided into two kinds, depending on the cone $\sigma$ being contained in $N_{\mathbb{R}} \times \lbrace 0 \rbrace$ or not. In the first case, $\textbf{O}(\sigma)$ identifies with a torus orbit in the toric variety $X_{\Sigma}/K$ given by the generic fibre, and its Zariski closure $\textbf{V}(\sigma)$ in $\mathcal{X}_{\widetilde{\Sigma}}$ is a toric scheme over $\mathcal{O}_K$ of relative dimension $d-n$. This type of orbit is called \textit{horizontal}. In the second case, the torus orbit $\textbf{O}(\sigma)$ is contained in the special fibre of $\mathcal{X}_{\widetilde{\Sigma}}/\mathcal{O}_K$, and its Zariski closure $\textbf{V}(\sigma)$ has the structure of a toric variety of dimension $n$ over the residue field $k$. This type of orbit is called \textit{vertical}. In particular, the irreducible components of the special fibre of $\mathcal{X}_{\widetilde{\Sigma}}/\mathcal{O}_K$ are toric varieties over the residue field $k$, and correspond with the rays of $\widetilde{\Sigma}$ which are not contained in $N_{\mathbb{R}} \times \lbrace 0 \rbrace$. For details, see~Section~3.5,~pp.~97--99~of~\cite{BPS}.
	\end{rem}
	Let $\widetilde{\Sigma}$ be a complete fan in $N_{\mathbb{R}} \times \mathbb{R}_{\geq 0}$. Given an integral toric Cartier divisor $\mathcal{D}$ on $\mathcal{X}_{\widetilde{\Sigma}}$, we can write $\mathcal{D} = \lbrace (\mathcal{X}_{\sigma}, \chi^{(-m_{\sigma},-\ell_{\sigma})}) \rbrace_{\sigma \in \widetilde{\Sigma}}$, where $(m_{\sigma}, \ell_{\sigma}) \in M \times \mathbb{Z}$. Then, the divisor $\mathcal{D}$ has an associated rational (virtual) support function $\Phi_{\mathcal{D}} \in \mathcal{SF}(\widetilde{\Sigma}, \mathbb{Q})$ which, resticted to a cone $\sigma \in \widetilde{\Sigma}$, is given by the equation $\Phi_{\mathcal{D}}(x)\coloneqq \langle (m_{\sigma}, \ell_{\sigma}), x \rangle$. This is extended by linearity to all toric Cartier $\mathbb{Q}$-divisors. Many properties of the divisor $\mathcal{D}$ can be read from its support function $\Phi_{\mathcal{D}}$. For instance, the restriction $D$ of $\mathcal{D}$ to the generic fibre $X_{\Sigma}$ of $\mathcal{X}_{\widetilde{\Sigma}}/\mathcal{O}_K$ has support function $\Psi_D = \Phi_{\mathcal{D}}|_{N_{\mathbb{R}}\times \lbrace 0 \rbrace}$. Now, consider the polyhedral complex $\Pi \coloneqq \lbrace \sigma \cap N_{\mathbb{R}} \times \lbrace 1 \rbrace \, \vert \, \sigma \in \widetilde{\Sigma} \rbrace$. Its support $|\Pi|$ is equal to $N_{\mathbb{R}} \times \lbrace 1 \rbrace$, so the restriction of $\Phi_{\mathcal{D}}$ to $N_{\mathbb{R}}\times \lbrace 1 \rbrace$ identifies with a function $\gamma_{\mathcal{D}} \colon N_{\mathbb{R}} \rightarrow \mathbb{R}$, which is rational piecewise affine on  $\Pi$. The function $\Psi_D$ can also be recovered from $\gamma_{\mathcal{D}}$, it is equal the recession function $\textup{rec}(\gamma_{\mathcal{D}})$ of $\gamma_{\mathcal{D}}$ (see~Definition~\ref{G-def}). The following results summarize the properties of these constructions.
	\begin{prop}\label{toric-divisors-DVR-1}
		Let $\mathcal{X}_{\widetilde{\Sigma}} / \mathcal{O}_K$ be a proper toric scheme, and $\mathcal{D}$ be a toric divisor on $\mathcal{X}_{\widetilde{\Sigma}}$. Then:
		\begin{enumerate}
			\item The assignment $\mathcal{D} \mapsto \Phi_{\mathcal{D}}$  induces an isomorphism $\mathcal{SF} \colon \textup{Div}_{\mathbb{T}}(\mathcal{X}_{\widetilde{\Sigma}})_{\mathbb{Q}} \rightarrow \mathcal{SF}( \widetilde{\Sigma}, \mathbb{Q})$, where $ \mathcal{SF}( \widetilde{\Sigma}, \mathbb{Q})$ is the space of rational support functions on the fan $\widetilde{\Sigma}$.
			\item The assignment $\mathcal{D} \mapsto \gamma_{\mathcal{D}}$ induces an isomorphism $\mathcal{PA} \colon \textup{Div}_{\mathbb{T}}(\mathcal{X}_{\widetilde{\Sigma}})_{\mathbb{Q}} \rightarrow \mathcal{PA}( \Pi, \mathbb{Q})$, where $ \mathcal{PA}( \Pi, \mathbb{Q})$ is the space of rational piecewise affine functions on the polyhedral complex $\Pi$.
		\end{enumerate}
		Moreover, if $D$ is the restriction of $\mathcal{D}$ to the generic fibre $X_{\Sigma}$ of $\mathcal{X}_{\widetilde{\Sigma}}$, its support function satisfies $\Psi_D = \Phi_{\mathcal{D}}|_{N_{\mathbb{R}}\times \lbrace 0 \rbrace} = \textup{rec}(\gamma_{\mathcal{D}})$, where $\textup{rec}(\gamma_{\mathcal{D}})$ is the recession function of $\gamma_{\mathcal{D}}$ (Definition~\ref{G-def}).
	\end{prop}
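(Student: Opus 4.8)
The plan is to follow the blueprint of the over-a-field case, Proposition~\ref{toric-divisors}, adapting it to the two features forced by the base $\mathcal{O}_K$: the relation $\chi^{(0,1)}=\varpi$ in the coordinate rings of the affine charts, and the fact that the generic-fibre fan $\Sigma$ sits as the horizontal part of $\widetilde{\Sigma}$. \textbf{Part (i).} The input is that a $\mathcal{U}$-invariant Cartier divisor on an affine toric scheme $\mathcal{X}_{\sigma}/\mathcal{O}_K$ is principal, generated by a semi-invariant, hence has a local equation of the form $\chi^{(-m_{\sigma},-\ell_{\sigma})}$ with $(m_{\sigma},\ell_{\sigma})\in M\times\mathbb{Z}$ (Section~3.5 of~\cite{BPS}); this makes the assignment $\mathcal{D}\mapsto\Phi_{\mathcal{D}}$ meaningful. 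It is well defined: for a maximal (i.e.\ $(d+1)$-dimensional) cone $\sigma$ the dual cone $\sigma^{\vee}$ is strongly convex, so the only monomial units of $\mathcal{O}_K[\widetilde{M}_{\sigma}]/(\chi^{(0,1)}-\varpi)$ are those of $\mathcal{O}_K^{\times}$ — here one uses crucially that $\varpi$ is \emph{not} a unit of $\mathcal{O}_K$ — whence the pair $(m_{\sigma},\ell_{\sigma})$ is uniquely determined by $\mathcal{D}$; the datum on a lower-dimensional cone is the restriction of the datum on a maximal cone containing it, which exists by completeness of $\widetilde{\Sigma}$. The cocycle condition for $\mathcal{D}$ translates verbatim into $\langle(m_{\sigma},\ell_{\sigma}),\cdot\rangle=\langle(m_{\sigma'},\ell_{\sigma'}),\cdot\rangle$ on $\sigma\cap\sigma'$, i.e.\ into $\Phi_{\mathcal{D}}$ being a well-defined continuous function that is linear on each cone, so $\mathcal{SF}$ maps into $\mathcal{SF}(\widetilde{\Sigma},\mathbb{Q})$. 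It is additive (local equations multiply), injective ($\Phi_{\mathcal{D}}=0$ forces $(m_{\sigma},\ell_{\sigma})=0$ on maximal cones, hence $\mathcal{D}=0$), and surjective: given an integral $\Phi\in\mathcal{SF}(\widetilde{\Sigma},\mathbb{Z})$ with slopes $(m_{\sigma},\ell_{\sigma})$, the collection $\{(\mathcal{X}_{\sigma},\chi^{(-m_{\sigma},-\ell_{\sigma})})\}_{\sigma\in\widetilde{\Sigma}}$ glues to a toric Cartier divisor, because on $\mathcal{X}_{\sigma\cap\sigma'}$ the exponent $(m_{\sigma}-m_{\sigma'},\ell_{\sigma}-\ell_{\sigma'})$ vanishes on $\textup{Span}(\sigma\cap\sigma')$, so both $\chi^{(m_{\sigma}-m_{\sigma'},\ell_{\sigma}-\ell_{\sigma'})}$ and its inverse lie in $\mathcal{O}_K[\widetilde{M}_{\sigma\cap\sigma'}]/(\chi^{(0,1)}-\varpi)$. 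Tensoring with $\mathbb{Q}$ yields the stated isomorphism.

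\textbf{Part (ii).} I would deduce this from (i) by composing with the restriction-to-the-slice map $\mathcal{SF}(\widetilde{\Sigma},\mathbb{Q})\to\mathcal{PA}(\Pi,\mathbb{Q})$, $\Phi\mapsto\Phi|_{N_{\mathbb{R}}\times\{1\}}$, so that $\gamma_{\mathcal{D}}=\Phi_{\mathcal{D}}|_{N_{\mathbb{R}}\times\{1\}}$. This map is linear and lands in $\mathcal{PA}(\Pi,\mathbb{Q})$ because $\Phi$ is linear on each cone $\sigma$, hence affine on $\sigma\cap(N_{\mathbb{R}}\times\{1\})$. It is injective: a support function is conical, so $\Phi|_{N_{\mathbb{R}}\times\{1\}}=0$ forces $\Phi\equiv0$ on $N_{\mathbb{R}}\times\mathbb{R}_{>0}$ and then, by continuity, on all of $N_{\mathbb{R}}\times\mathbb{R}_{\geq0}$. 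For surjectivity, given $\gamma\in\mathcal{PA}(\Pi,\mathbb{Q})$ I would set $\widetilde{\gamma}(x,t):=t\,\gamma(x/t)$ for $t>0$ and $\widetilde{\gamma}(x,0):=\textup{rec}(\gamma)(x)$: if $\gamma(x)=\langle m,x\rangle+c$ on $P=\sigma\cap(N_{\mathbb{R}}\times\{1\})$, then $\widetilde{\gamma}(x,t)=\langle(m,c),(x,t)\rangle$ on the non-horizontal cone $\sigma$, while on a horizontal cone of $\widetilde{\Sigma}$ — which equals $\textup{rec}(P)$ for an adjacent $P$, since $\textup{rec}(\Pi)=\Sigma$ — it equals the linear function $\textup{rec}(\gamma|_P)=\langle m,\cdot\rangle$. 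Thus $\widetilde{\gamma}$ is conical, piecewise linear on $\widetilde{\Sigma}$, rational, and continuous across $N_{\mathbb{R}}\times\{0\}$ by the standard fact that $t\,\gamma(x/t)\to\textup{rec}(\gamma)(x_0)$ as $(x,t)\to(x_0,0)$ for piecewise-affine $\gamma$ (Section~3.5 of~\cite{BPS}); hence $\widetilde{\gamma}\in\mathcal{SF}(\widetilde{\Sigma},\mathbb{Q})$ is the required preimage of $\gamma$, and $\mathcal{PA}$ is the composite isomorphism.

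\textbf{The ``moreover'' part, and the main obstacle.} For $D=\mathcal{D}|_{X_{\Sigma}}$: on the generic fibre $\varpi\in K^{\times}$ is a unit, so the local equation $\chi^{(-m_{\sigma},-\ell_{\sigma})}$ of $\mathcal{D}$ restricts to $\varpi^{-\ell_{\sigma}}\chi^{-m_{\sigma}}$, which cuts out the same Cartier divisor as $\chi^{-m_{\sigma}}$ on the chart $X_{\sigma\cap(N_{\mathbb{R}}\times\{0\})}$; therefore $\Psi_D(v)=\langle m_{\sigma},v\rangle=\Phi_{\mathcal{D}}(v,0)$, i.e.\ $\Psi_D=\Phi_{\mathcal{D}}|_{N_{\mathbb{R}}\times\{0\}}$, and $\Phi_{\mathcal{D}}|_{N_{\mathbb{R}}\times\{0\}}=\textup{rec}(\gamma_{\mathcal{D}})$ is exactly the boundary case of the slice construction, $\Phi_{\mathcal{D}}(v,0)=\lim_{t\to0^+}t\,\gamma_{\mathcal{D}}(v/t)=\textup{rec}(\gamma_{\mathcal{D}})(v)$ in the sense of Definition~\ref{G-def}. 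The only genuinely new points compared with the field case are mild: in (i) one must pin down the monomial units of $\mathcal{O}_K[\widetilde{M}_{\sigma}]/(\chi^{(0,1)}-\varpi)$ in the presence of the non-unit $\chi^{(0,1)}=\varpi$, and in (ii) one needs the continuity of $\widetilde{\gamma}$ across the horizontal boundary, which is where the recession function enters. Both are handled by the structural analysis of toric schemes over a DVR in~\cite{KKMS} and~\cite{BPS}, and I expect the continuity statement in (ii) to be the only place requiring genuine care.
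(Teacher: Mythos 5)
Your proof is correct and follows essentially the same route as the paper, which simply cites \S~IV.3.(h) of \cite{KKMS} for part (i), Corollary~2.1.13 of \cite{BPS} for part (ii), and Proposition~2.6.9 of \cite{BPS} for the ``moreover'' clause; you have unpacked the underlying arguments (the unit analysis in the charts for (i), the slice $N_{\mathbb{R}}\times\{1\}$ bijection for (ii), and the $\varpi$-becomes-a-unit observation for the restriction to the generic fibre) in exactly the expected way.
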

	\begin{proof}
		The first isomorphism is~\S~IV.3.(h)~of~\cite{KKMS}. The second isomorphism follows the first, together with Corollary~2.1.13~of~\cite{BPS}. The last assertion is immediate from~Proposition~2.6.9~of~\cite{BPS}. See also Theorems~3.6.7~and~3.6.8~of~\cite{BPS}.
	\end{proof}
	The isomorphism $\mathcal{SF}$ satisfies the following functorial property.
	\begin{prop}\label{toric-divisors-DVR-2}
		Let $f \colon \mathcal{X}_{\widetilde{\Sigma}_1} \rightarrow \mathcal{X}_{\widetilde{\Sigma}_2}$ be a dominant toric morphism of proper toric schemes over $\mathcal{O}_K$ induced by the linear map $\phi$, and $\mathcal{D}$ be a toric divisor on $\mathcal{X}_{\widetilde{\Sigma}_2}$ with support function $\Phi_{\mathcal{D}}$. Then, the pullback $f^{\ast} \mathcal{D}$ is a toric divisor on $\mathcal{X}_{\widetilde{\Sigma}_1}$ and its support function is $\Phi_{\mathcal{D}} \circ \phi$. In particular, if $\phi$ is the identity, then $\Phi_{\mathcal{D}} = \Phi_{f^{\ast} \mathcal{D}}$ and $\gamma_{\mathcal{D}} = \gamma_{f^{\ast} \mathcal{D}}$.
	\end{prop}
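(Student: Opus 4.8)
The plan is to reduce to the case of a single integral toric Cartier divisor and then to track the combinatorics through the transpose of $\phi$. By part~(i) of Proposition~\ref{toric-divisors-DVR-1} the assignment $\mathcal{D} \mapsto \Phi_{\mathcal{D}}$ is $\mathbb{Q}$-linear, and the pullback $f^{\ast}$ is compatible with this $\mathbb{Q}$-vector space structure; hence it is enough to treat an integral toric Cartier divisor $\mathcal{D} = \lbrace (\mathcal{X}_{\sigma}, \chi^{(-m_{\sigma}, -\ell_{\sigma})}) \rbrace_{\sigma \in \widetilde{\Sigma}_2}$, for which $\Phi_{\mathcal{D}}$ restricts on each cone $\sigma \in \widetilde{\Sigma}_2$ to $x \mapsto \langle (m_{\sigma}, \ell_{\sigma}), x \rangle$.

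First I would describe $f$ chart by chart. Since $\phi$ is compatible with $\widetilde{\Sigma}_1$ and $\widetilde{\Sigma}_2$ (Definition~\ref{compatible}), for every $\sigma' \in \widetilde{\Sigma}_1$ there is $\sigma \in \widetilde{\Sigma}_2$ with $\phi(\sigma') \subseteq \sigma$, and $f$ restricts to a morphism of affine toric schemes $\mathcal{X}_{\sigma'} \to \mathcal{X}_{\sigma}$. On semigroup algebras this is induced by the transpose $\phi^{\vee} \colon \widetilde{M}_{\sigma} \to \widetilde{M}_{\sigma'}$, that is, by $\chi^{(m,\ell)} \mapsto \chi^{\phi^{\vee}(m,\ell)}$. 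The compatibility of $\phi$ with the structural morphisms to $\textup{Spec}(\mathcal{O}_K)$ forces $\phi$ to preserve the last coordinate, equivalently $\phi^{\vee}(0,1) = (0,1)$, so $\phi^{\vee}$ carries the relation $\chi^{(0,1)} - \varpi$ into itself and the above map descends to the quotients $\mathcal{O}_K[\widetilde{M}_{\sigma}]/(\chi^{(0,1)} - \varpi)$.

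Since $f$ is dominant, the pullback $f^{\ast}\mathcal{D}$ is a well-defined Cartier divisor, cut out on $\mathcal{X}_{\sigma'}$ by $f^{\ast}\chi^{(-m_{\sigma}, -\ell_{\sigma})} = \chi^{-\phi^{\vee}(m_{\sigma}, \ell_{\sigma})}$; the collection $\lbrace (\mathcal{X}_{\sigma'}, \chi^{-\phi^{\vee}(m_{\sigma}, \ell_{\sigma})}) \rbrace_{\sigma' \in \widetilde{\Sigma}_1}$ has exactly the shape characterizing toric divisors, so $f^{\ast}\mathcal{D}$ is toric. Its support function on $\sigma'$ is $x \mapsto \langle \phi^{\vee}(m_{\sigma}, \ell_{\sigma}), x \rangle = \langle (m_{\sigma}, \ell_{\sigma}), \phi(x) \rangle = \Phi_{\mathcal{D}}(\phi(x))$, that is $\Phi_{f^{\ast}\mathcal{D}} = \Phi_{\mathcal{D}} \circ \phi$. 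For the final assertion, taking $\phi$ to be the identity gives $\Phi_{f^{\ast}\mathcal{D}} = \Phi_{\mathcal{D}}$; restricting these support functions to $N_{\mathbb{R}} \times \lbrace 1 \rbrace$ and invoking part~(ii) of Proposition~\ref{toric-divisors-DVR-1} yields $\gamma_{f^{\ast}\mathcal{D}} = \gamma_{\mathcal{D}}$ as functions on $N_{\mathbb{R}}$.

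The only delicate point is the bookkeeping for the quotient by $\chi^{(0,1)} - \varpi$, namely checking that the semigroup map induced by $\phi^{\vee}$ respects the uniformizer $\varpi$; away from this the argument is identical to the one for toric varieties over a field recorded in Lemma~\ref{pbsup}, so an alternative, shorter route is to cite the corresponding statement in~\cite{KKMS} and only make the compatibility with $\varpi$ explicit.
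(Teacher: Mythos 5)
Your argument is correct, but it diverges from the paper in a structural way: the paper discharges Proposition~\ref{toric-divisors-DVR-2} with a one-line citation to Proposition~3.6.15 of~\cite{BPS}, while you reconstruct that result from first principles. Your chart-by-chart computation is sound: the reduction to an integral divisor via $\mathbb{Q}$-linearity, the identification of $f$ on an affine piece with the ring map induced by $\phi^{\vee}$, and the adjunction $\langle \phi^{\vee}(m_{\sigma},\ell_{\sigma}), x\rangle = \langle (m_{\sigma},\ell_{\sigma}), \phi(x)\rangle$ all go through, and you correctly pinpoint the only subtlety specific to the DVR setting, namely that $\phi^{\vee}(0,1) = (0,1)$ (equivalently, $\phi$ fixes the last coordinate) is forced if the ring map is to be $\mathcal{O}_K$-linear, hence to descend to the quotients by $(\chi^{(0,1)}-\varpi)$; this is exactly condition~(3) in BPS's definition of a compatible affine map. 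The trade-off is the usual one: your proof is self-contained and makes visible the one place where the DVR case differs from Lemma~\ref{pbsup}, at the cost of redoing work the paper delegates to its reference. One small correction: the short route is to cite \cite{BPS} Proposition~3.6.15 rather than \cite{KKMS}; KKMS Chapter~IV sets up the foliated semigroup algebras and cone-to-chart dictionary, but the explicit statement about virtual support functions and their behaviour under pullback in the $\gamma_{\mathcal{D}}$ normalisation is in BPS.
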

	\begin{proof}
		This is Proposition~3.6.15~of~\cite{BPS}.
	\end{proof}
	The following result is an analog of Proposition~\ref{nefdiv} for the DVR case. It relates the positivity properties of a toric divisor to the concavity properties of its support function.
	\begin{prop}\label{nef-dvr}
		Let $\mathcal{X}_{\widetilde{\Sigma}} / \mathcal{O}_K$ be proper. Let $\mathcal{D} \in \textup{Div}_{\mathbb{T}}(\mathcal{X}_{\widetilde{\Sigma}})_{\mathbb{Q}}$ with corresponding functions $\Phi_{\mathcal{D}}$ and $\gamma_{\mathcal{D}}$. Then, the following are equivalent:
		\begin{enumerate}
			\item The divisor $\mathcal{D}$ is relatively nef, i.e, $\mathcal{D} \cdot C \geq 0$ for every vertical curve $C$. 
			\item The function $\Phi_{\mathcal{D}}$ is concave.
			\item The function $\gamma_{\mathcal{D}}$ is concave.
		\end{enumerate}
		Moreover, the toric divisor $\mathcal{D}$ is ample if and only if its support function $\Phi_{\mathcal{D}}$ is strictly concave.
	\end{prop}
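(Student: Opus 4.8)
The plan is to bootstrap the statement from the toric dictionary on the generic and special fibres, together with a toric intersection formula over the discrete valuation ring, all of which are in \cite{BPS}. The purely convex-analytic equivalence (ii)$\Leftrightarrow$(iii) should come first and cheaply: $\Phi_{\mathcal{D}}$ is conical and piecewise linear on $\widetilde{\Sigma}\subset N_{\mathbb{R}}\times\mathbb{R}_{\geq 0}$, its restriction to the slice $N_{\mathbb{R}}\times\{1\}$ is $\gamma_{\mathcal{D}}$, and by Proposition~\ref{toric-divisors-DVR-1} its restriction to the boundary face $N_{\mathbb{R}}\times\{0\}$ is $\textup{rec}(\gamma_{\mathcal{D}})$. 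A conical function on the closed half-space is concave on the whole cone if and only if it is concave on the open half $N_{\mathbb{R}}\times\mathbb{R}_{>0}$ (equivalently, its unit-height slice $\gamma_{\mathcal{D}}$ is concave) \emph{and} its boundary values coincide with the recession function of that slice; since the latter holds automatically by Proposition~\ref{toric-divisors-DVR-1}, (ii) and (iii) are equivalent. So everything reduces to relating relative nefness (i) with concavity of $\gamma_{\mathcal{D}}$.

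For (iii)$\Rightarrow$(i): by definition I only need to check $\mathcal{D}\cdot C\geq 0$ for vertical curves, i.e.\ curves inside the special fibre, and the special fibre of $\mathcal{X}_{\widetilde{\Sigma}}/\mathcal{O}_K$ is a (possibly reducible) union of projective toric varieties over $k$. Since the Mori cone of a projective toric variety is generated by classes of torus-invariant curves \cite{CLS}, it suffices to test $\mathcal{D}$ against torus-invariant vertical curves. Such a curve corresponds to a wall $\tau$ of $\widetilde{\Sigma}$, i.e.\ a codimension-one cone that is a common facet of two maximal (hence $(d{+}1)$-dimensional) cones $\sigma_1,\sigma_2$; because $\widetilde{\Sigma}$ lives in the upper half-space, no wall can be contained in $N_{\mathbb{R}}\times\{0\}$, so $\tau$ meets $N_{\mathbb{R}}\times\{1\}$ in a $(d{-}1)$-cell of the complex $\Pi$ separating the two maximal cells $\sigma_i\cap(N_{\mathbb{R}}\times\{1\})$. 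The toric intersection formula over $\mathcal{O}_K$ (see \cite[\S 3.6]{BPS}) expresses $\mathcal{D}\cdot C_\tau$ as a positive lattice multiplicity times the change of slope of $\gamma_{\mathcal{D}}$ across that $(d{-}1)$-cell, so concavity of $\gamma_{\mathcal{D}}$ forces $\mathcal{D}\cdot C_\tau\geq 0$.

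For the converse (i)$\Rightarrow$(iii): if $\mathcal{D}$ is relatively nef then in particular $\mathcal{D}\cdot C_\tau\geq 0$ for every wall $\tau$, so by the same formula $\gamma_{\mathcal{D}}$ bends the right way across every $(d{-}1)$-cell of $\Pi$; since $|\Pi|=N_{\mathbb{R}}$ is convex, the standard local-to-global criterion for piecewise affine functions upgrades this to global concavity of $\gamma_{\mathcal{D}}$, giving (iii). For the ampleness statement I would rerun this wall-by-wall analysis with strict inequalities, using that ampleness of $\mathcal{D}$ over the affine base $\mathcal{O}_K$ can be checked fibrewise: on the generic fibre $X_\Sigma/K$ ampleness of $D$ is equivalent to strict concavity of $\Psi_D=\textup{rec}(\gamma_{\mathcal{D}})$ by Theorem~\ref{nefdiv}, and on the special fibre it is the toric Nakai criterion over $k$; together these amount to strict concavity of $\Phi_{\mathcal{D}}$ on $N_{\mathbb{R}}\times\mathbb{R}_{\geq 0}$.

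I expect the main obstacle to be the combinatorial bookkeeping in the DVR intersection formula of Step~(iii)$\Rightarrow$(i): correctly identifying torus-invariant vertical curves with walls of $\widetilde{\Sigma}$, pinning down the lattice multiplicities and signs in $\mathcal{D}\cdot C_\tau$, and ensuring that walls never slip into the boundary face $N_{\mathbb{R}}\times\{0\}$; together with justifying the reduction to torus-invariant curves when the special fibre is reducible or non-reduced. These are exactly the points worked out in \cite[Chapter~3]{BPS}, so in practice the proof can be organized as a reduction to the results recalled in Propositions~\ref{toric-divisors-DVR-1} and \ref{toric-divisors-DVR-2} plus Theorem~\ref{nefdiv}.
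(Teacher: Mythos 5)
The paper's proof of this proposition is a one-line citation to Theorem~3.7.1 of~\cite{BPS} (and \S~IV.3.(k) of~\cite{KKMS}); your outline faithfully reconstructs that standard toric argument — reducing to torus-invariant vertical curves via the toric Mori cone, reading $\mathcal{D}\cdot C_\tau$ as a slope change of $\gamma_{\mathcal{D}}$ across a wall, and using a fibrewise (Nakai-type) criterion for the ampleness statement — so the approach matches the cited source. One small imprecision: in your (ii)$\Leftrightarrow$(iii) step the claimed ``if and only if'' is not literally correct (concavity of $\Phi_{\mathcal{D}}$ only forces $\Phi_{\mathcal{D}}|_{N_{\mathbb{R}}\times\{0\}}\leq\textup{rec}(\gamma_{\mathcal{D}})$ in general, not equality), but since $\Phi_{\mathcal{D}}$ is a continuous virtual support function the equality holds automatically by Proposition~\ref{toric-divisors-DVR-1}, and the direction you actually use — concave slice plus boundary agreement with the recession function implies concavity of the conical extension — is the correct one, so this does not affect the proof.
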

	\begin{proof}
		This is Theorem~3.7.1~of~\cite{BPS}. See also~\S~IV.3.(k)~of~\cite{KKMS}.
	\end{proof}
	As in the previous subsection, the category of toric projective models of a quasi-projective toric scheme $\mathcal{X}_{\widetilde{\Sigma}_0}/\mathcal{O}_K$ admits a combinatorial description. First, we introduce notation and then state the description below. We say that a fan $\widetilde{\Sigma}$ in $N_{\mathbb{R}} \times \mathbb{R}_{\geq 0}$ is \textit{projective} if it is complete and it admits a strictly concave rational support function. The above result implies that a toric scheme $\mathcal{X}_{\widetilde{\Sigma}}/\mathcal{O}_K$ is projective if and only if the fan $\widetilde{\Sigma}$ is projective. Then, we let $\textup{PF}(N_{\mathbb{R}}\times \mathbb{R}_{\geq 0}, \widetilde{\Sigma}_0)$ be the poset of projective fans in $N_{\mathbb{R}} \times \mathbb{R}_{\geq 0}$ having $\widetilde{\Sigma}_0$ as a subfan, ordered by refinement.
	\begin{prop}\label{PM-over-DVR}
		Let $\mathcal{X}_{\widetilde{\Sigma}_0}/\mathcal{O}_K$ be quasi-projective. Then, the category $\textup{PM}_{\mathbb{T}}(\mathcal{X}_{\widetilde{\Sigma}_0}/\mathcal{O}_K)$ of toric projective models is equivalent to the cofiltered category $\textup{PF}(N_{\mathbb{R}}\times \mathbb{R}_{\geq 0}, \widetilde{\Sigma}_0)$.
	\end{prop}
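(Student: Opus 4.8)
The plan is to follow the template of Proposition~\ref{tqpmod}, replacing the dictionary between toric varieties and fans in $N_{\mathbb{R}}$ by the dictionary between proper toric schemes over $\mathcal{O}_K$ and complete fans in $N_{\mathbb{R}}\times\mathbb{R}_{\geq 0}$ recorded in Proposition~\ref{toric-over-S}. First I would set up the identification on objects. Given a toric projective model $\pi\colon\mathcal{X}_{\widetilde{\Sigma}_0}\to\mathcal{X}$, the properness of $\mathcal{X}/\mathcal{O}_K$ together with Proposition~\ref{toric-over-S} give $\mathcal{X}\cong\mathcal{X}_{\widetilde{\Sigma}}$ for a complete fan $\widetilde{\Sigma}$ in $N_{\mathbb{R}}\times\mathbb{R}_{\geq 0}$, and since $\mathcal{X}$ is projective, Proposition~\ref{nef-dvr} forces $\widetilde{\Sigma}$ to be projective. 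As in the proof of Proposition~\ref{toricmorphism}, the open immersion $\pi$ restricts to an isomorphism of tori, so after normalizing we may assume the induced linear map is the identity of $N_{\mathbb{R}}\times\mathbb{R}_{\geq 0}$; then $\pi$ being a toric open immersion is equivalent to $\widetilde{\Sigma}_0$ being a subfan of $\widetilde{\Sigma}$, since the torus-invariant open subscheme $\mathcal{X}_{\widetilde{\Sigma}_0}$ must be a union of the affine charts $\mathcal{X}_\sigma$. Conversely, every $\widetilde{\Sigma}\in\textup{PF}(N_{\mathbb{R}}\times\mathbb{R}_{\geq 0},\widetilde{\Sigma}_0)$ produces a toric projective model $\mathcal{X}_{\widetilde{\Sigma}_0}\to\mathcal{X}_{\widetilde{\Sigma}}$.

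Next I would identify the morphisms. A morphism of toric projective models is a proper toric $\mathcal{O}_K$-morphism $f\colon\mathcal{X}_{\widetilde{\Sigma}_1}\to\mathcal{X}_{\widetilde{\Sigma}_2}$ with $f\circ\pi_1=\pi_2$; since the $\pi_i$ are dense open immersions, $f$ is automatically birational. By Proposition~\ref{toric-over-S}, such a morphism is, up to isomorphism of the torus, a refinement of fans, and the compatibility $f\circ\pi_1=\pi_2$ pins the torus isomorphism down to the identity and forces the refinement to restrict to the identity on $\widetilde{\Sigma}_0$, that is, it is a morphism in $\textup{PF}(N_{\mathbb{R}}\times\mathbb{R}_{\geq 0},\widetilde{\Sigma}_0)$. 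Conversely every such refinement induces a morphism of models. These assignments are mutually inverse, giving the claimed equivalence of categories (both sides being posets).

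It then remains to show that $\textup{PF}(N_{\mathbb{R}}\times\mathbb{R}_{\geq 0},\widetilde{\Sigma}_0)$ is cofiltered. Nonemptiness is the content of the quasi-projectivity hypothesis (the DVR analog of the discussion in Remark~\ref{t-q-p}): a quasi-projective toric scheme embeds as a toric open subscheme of a projective one, which exhibits $\widetilde{\Sigma}_0$ as a subfan of a projective fan. For the downward-directed condition, given $\widetilde{\Sigma}_1,\widetilde{\Sigma}_2\in\textup{PF}(N_{\mathbb{R}}\times\mathbb{R}_{\geq 0},\widetilde{\Sigma}_0)$, I would form the common refinement $\widetilde{\Sigma}_3$ whose cones are the intersections $\sigma_1\cap\sigma_2$ with $\sigma_i\in\widetilde{\Sigma}_i$: it is a rational fan contained in $N_{\mathbb{R}}\times\mathbb{R}_{\geq 0}$ refining both $\widetilde{\Sigma}_i$, and since $\widetilde{\Sigma}_0$ is a subfan of each $\widetilde{\Sigma}_i$ every cone $\sigma_0\in\widetilde{\Sigma}_0$ equals $\sigma_0\cap\sigma_0\in\widetilde{\Sigma}_3$, so $\widetilde{\Sigma}_0$ is a subfan of $\widetilde{\Sigma}_3$. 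Finally, if $\Psi_i$ is a strictly concave rational support function for $\widetilde{\Sigma}_i$ as in Proposition~\ref{nef-dvr}, then $\Psi_1+\Psi_2$ is piecewise affine on $\widetilde{\Sigma}_3$ and strictly concave, so $\widetilde{\Sigma}_3$ is projective and cofilteredness follows. I expect the only genuine work to be this last point — checking that the cellwise-intersection fan is really a fan refining both $\widetilde{\Sigma}_i$ and that the sum of the strictly concave support functions is strictly concave and piecewise affine on it — since the equivalence of categories is essentially a formal consequence of Proposition~\ref{toric-over-S}.
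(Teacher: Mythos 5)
Your proof is correct and takes essentially the same route as the paper's: the paper's proof is a one-liner stating that the result follows ``in the same way as Proposition~\ref{tqpmod}, using Proposition~\ref{toric-over-S},'' and that earlier proof is precisely the argument you spell out — identify toric projective models with projective fans having $\widetilde{\Sigma}_0$ as a subfan via the toric dictionary, then establish cofilteredness by intersecting cones as in Remark~\ref{refin} and noting that the sum of two strictly concave rational support functions on the common refinement is again strictly concave. You merely unpack the details the paper leaves implicit (in particular the reduction to the torus identity and the poset nature of both categories).
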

	\begin{proof}
		Using Propositions~\ref{toric-over-S}, this is proven in the same way as Proposition~\ref{tqpmod}.
	\end{proof}
	Now, we use these results to compute the group of toric model divisors and characterize the toric boundary divisors of a given quasi-projective toric scheme.
	\begin{lem}\label{toric-model-div-S}
		Let $\mathcal{X}_{\widetilde{\Sigma}_0}/\mathcal{O}_K$ be quasi-projective and $\mathcal{D}$ be a model divisor. Then, the assignments $\mathcal{D} \mapsto \Phi_{\mathcal{D}}$ and $\mathcal{D}\mapsto \gamma_{\mathcal{D}}$ are well-defined and induce isomorphisms
		\begin{align*}
			\mathcal{SF} \colon \textup{Div}_{\mathbb{T}}(\mathcal{X}_{\widetilde{\Sigma}_0}/\mathcal{O}_K)_{\textup{mod}} & \longrightarrow  \bigcup_{\widetilde{\Sigma} \in \textup{PF}(N_{\mathbb{R}} \times \mathbb{R}_{\geq 0}, \widetilde{\Sigma}_0)} \mathcal{SF}( \widetilde{\Sigma}, \mathbb{Q}) \\
			\mathcal{PA} \colon \textup{Div}_{\mathbb{T}}(\mathcal{X}_{\widetilde{\Sigma}_0}/\mathcal{O}_K)_{\textup{mod}} & \longrightarrow  \bigcup_{\widetilde{\Sigma} \in \textup{PF}(N_{\mathbb{R}} \times \mathbb{R}_{\geq 0}, \widetilde{\Sigma}_0)} \mathcal{PA}( \Pi, \mathbb{Q})
		\end{align*}
		where the polyhedral complex $\Pi$ is obtained from intersecting $\widetilde{\Sigma}$ with the hyperplane $N_{\mathbb{R}}\times \lbrace 0 \rbrace$.
	\end{lem}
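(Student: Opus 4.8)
The plan is to transcribe the proof of Lemma~\ref{toricmodeldivisors} into the DVR setting, substituting each field-case input by its counterpart over $\mathcal{O}_K$.

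\emph{Well-definedness.} A toric model divisor on $\mathcal{X}_{\widetilde{\Sigma}_0}/\mathcal{O}_K$ is an equivalence class of toric divisors on toric projective models, two such being identified exactly when their pullbacks to a common dominating toric projective model agree. Such a common model always exists because $\textup{PM}_{\mathbb{T}}(\mathcal{X}_{\widetilde{\Sigma}_0}/\mathcal{O}_K)$ is cofiltered by Proposition~\ref{PM-over-DVR}, and by Proposition~\ref{toric-divisors-DVR-2} pullback along a toric morphism that induces the identity on $N_{\mathbb{R}}\times\mathbb{R}_{\geq 0}$ leaves both $\Phi_{\mathcal{D}}$ and $\gamma_{\mathcal{D}}$ unchanged; hence $\mathcal{D}\mapsto\Phi_{\mathcal{D}}$ and $\mathcal{D}\mapsto\gamma_{\mathcal{D}}$ descend to well-defined maps on $\textup{Div}_{\mathbb{T}}(\mathcal{X}_{\widetilde{\Sigma}_0}/\mathcal{O}_K)_{\textup{mod}}$.

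\emph{The isomorphisms.} By Proposition~\ref{PM-over-DVR} the category $\textup{PM}_{\mathbb{T}}(\mathcal{X}_{\widetilde{\Sigma}_0}/\mathcal{O}_K)$ is equivalent to the cofiltered poset $\textup{PF}(N_{\mathbb{R}}\times\mathbb{R}_{\geq 0},\widetilde{\Sigma}_0)$, so that
\[
\textup{Div}_{\mathbb{T}}(\mathcal{X}_{\widetilde{\Sigma}_0}/\mathcal{O}_K)_{\textup{mod}} \;\cong\; \varinjlim_{\widetilde{\Sigma}} \textup{Div}_{\mathbb{T}}(\mathcal{X}_{\widetilde{\Sigma}})_{\mathbb{Q}},
\]
the limit running over this poset. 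Proposition~\ref{toric-divisors-DVR-1} gives, term by term, isomorphisms $\mathcal{SF}\colon\textup{Div}_{\mathbb{T}}(\mathcal{X}_{\widetilde{\Sigma}})_{\mathbb{Q}}\to\mathcal{SF}(\widetilde{\Sigma},\mathbb{Q})$ and $\mathcal{PA}\colon\textup{Div}_{\mathbb{T}}(\mathcal{X}_{\widetilde{\Sigma}})_{\mathbb{Q}}\to\mathcal{PA}(\Pi,\mathbb{Q})$, and by Proposition~\ref{toric-divisors-DVR-2} these are compatible with the transition maps of the direct system: if $\widetilde{\Sigma}'$ refines $\widetilde{\Sigma}$, then pullback along the corresponding proper birational toric morphism corresponds to the inclusions $\mathcal{SF}(\widetilde{\Sigma},\mathbb{Q})\hookrightarrow\mathcal{SF}(\widetilde{\Sigma}',\mathbb{Q})$ and $\mathcal{PA}(\Pi,\mathbb{Q})\hookrightarrow\mathcal{PA}(\Pi',\mathbb{Q})$, where $\Pi'$ is the height-one slice of $\widetilde{\Sigma}'$ (a support function on the coarser fan is still a support function on the refinement, and a piecewise affine function on the coarser slice $\Pi$ is one on the finer slice $\Pi'$). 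Passing to the colimit over the directed poset, in which every transition map is an inclusion, each colimit is simply the union of its terms, which yields the asserted isomorphisms $\mathcal{SF}$ and $\mathcal{PA}$ onto $\bigcup_{\widetilde{\Sigma}}\mathcal{SF}(\widetilde{\Sigma},\mathbb{Q})$ and $\bigcup_{\widetilde{\Sigma}}\mathcal{PA}(\Pi,\mathbb{Q})$.

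\emph{Main obstacle.} The only point that needs a little care is the stability of the $\mathcal{PA}$-description under refinement: refining $\widetilde{\Sigma}$ simultaneously refines the height-one slice $\Pi$, and one must verify that the recession-function/restriction picture of Proposition~\ref{toric-divisors-DVR-1} is preserved by this operation. This is precisely the content of the ``in particular'' clause of Proposition~\ref{toric-divisors-DVR-2}, namely $\gamma_{\mathcal{D}}=\gamma_{f^{\ast}\mathcal{D}}$ when the induced linear map is the identity, so no new argument is required; everything else is a line-by-line transcription of the proof of Lemma~\ref{toricmodeldivisors}.
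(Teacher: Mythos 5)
Your proof is correct and follows the same route as the paper: well-definedness via the functoriality of Proposition~\ref{toric-divisors-DVR-2}, the equivalence from Proposition~\ref{PM-over-DVR}, the term-by-term isomorphisms of Proposition~\ref{toric-divisors-DVR-1}, and passage to the colimit (identified with a union because the transition maps are inclusions), exactly as in the proof of Lemma~\ref{toricmodeldivisors}. One small note: the lemma's statement has a typo (the slice should be $N_{\mathbb{R}}\times\{1\}$, not $N_{\mathbb{R}}\times\{0\}$), and your use of ``the height-one slice'' correctly implements the convention from Proposition~\ref{toric-divisors-DVR-1} and Proposition~\ref{toric-local-analytification}.
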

	\begin{proof}
		By Proposition~\ref{toric-divisors-DVR-2}, the assignments $\mathcal{D} \mapsto \Phi_{\mathcal{D}}$ and $\mathcal{D}\mapsto \gamma_{\mathcal{D}}$ are well-defined. The isomorphisms follow from Proposition~\ref{toric-divisors-DVR-1} and taking direct limits. See the proof of Lemma~\ref{toricmodeldivisors}.
	\end{proof}
	\begin{lem}\label{toric-bdy-top-DVR}
		Let $\mathcal{X}_{\widetilde{\Sigma}_0}/\mathcal{O}_K$ be quasi-projective, $\pi \colon \mathcal{X}_{\widetilde{\Sigma}_0} \rightarrow \mathcal{X}_{\widetilde{\Sigma}}$ be a toric projective model, and $\mathcal{B}$ be a toric divisor on $\mathcal{X}_{\widetilde{\Sigma}}$ with associated functions $\Phi_{\mathcal{B}}$ and $\gamma_{\mathcal{B}}$. Then, $\mathcal{B}$ is a boundary divisor of $\mathcal{X}_{\widetilde{\Sigma}_0}/\mathcal{O}_K$ if and only if $\Phi_{\mathcal{B}}(x) = 0$ for all $x \in |\widetilde{\Sigma}_0|$ and  $\Phi_{\mathcal{B}}(x) < 0$ otherwise. Moreover, for each toric model divisor $\mathcal{D}$ with associated functions $\Phi_{\mathcal{D}}$ and $\gamma_{\mathcal{D}}$, we have the following identities
		\begin{align*}
			\| \mathcal{D} \|_{\mathcal{B}} & = \inf \lbrace \varepsilon \in \mathbb{Q}_{>0} \, \vert \, | \Phi_{\mathcal{D}}(w) | \leq \varepsilon \cdot  | \Phi_{\mathcal{B}}(w) | \textup{ for all } w \in N_{\mathbb{R}} \times \mathbb{R}_{\geq 0} \rbrace \\
			& = \inf \lbrace \varepsilon \in \mathbb{Q}_{>0} \, \vert \, | \gamma_{\mathcal{D}}(x) | \leq \varepsilon \cdot  | \gamma_{\mathcal{B}}(x) | \textup{ for all } x \in N_{\mathbb{R}} \rbrace.
		\end{align*}
	\end{lem}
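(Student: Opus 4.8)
The plan is to transport the proof of Lemma~\ref{toric-bdry-div} to the DVR setting, using the cone-orbit correspondence over $\mathcal{O}_K$ recalled after Proposition~\ref{toric-over-S} in place of the one over a field. First I would record the relevant orbit decomposition: $\mathcal{X}_{\widetilde{\Sigma}_0} = \bigcup_{\tau \in \widetilde{\Sigma}_0} \textbf{O}(\tau)$ and $\mathcal{X}_{\widetilde{\Sigma}} = \bigcup_{\tau \in \widetilde{\Sigma}} \textbf{O}(\tau)$, so $\mathcal{X}_{\widetilde{\Sigma}} \setminus \mathcal{X}_{\widetilde{\Sigma}_0} = \bigcup_{\tau \in \widetilde{\Sigma} \setminus \widetilde{\Sigma}_0} \textbf{O}(\tau)$, while $\textbf{V}(\rho) = \bigcup_{\rho \leq \tau} \textbf{O}(\tau)$ for every ray $\rho$ of $\widetilde{\Sigma}$, all orbits being pairwise disjoint. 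Since $\widetilde{\Sigma}_0$ is a subfan, a ray $\rho$ of $\widetilde{\Sigma}$ lies in $\widetilde{\Sigma}_0$ if and only if its primitive generator $v_\rho$ lies in $|\widetilde{\Sigma}_0|$: if $v_\rho$ belongs to a cone $\sigma \in \widetilde{\Sigma}_0$, then the fan axiom forces $\rho = \rho \cap \sigma$ to be a face of $\sigma$, hence $\rho \in \widetilde{\Sigma}_0$. Writing the Weil divisor of a toric divisor $\mathcal{B}$ on $\mathcal{X}_{\widetilde{\Sigma}}$ as $\sum_{\rho} -\Phi_{\mathcal{B}}(v_\rho) \cdot \textbf{V}(\rho)$ (Proposition~\ref{toric-divisors-DVR-1} and \S~IV.3~of~\cite{KKMS}), effectivity of $\mathcal{B}$ amounts to $\Phi_{\mathcal{B}}(v_\rho) \leq 0$ for all rays $\rho$, and $|\mathcal{B}|$ is the union of the $\textbf{V}(\rho)$ with $\Phi_{\mathcal{B}}(v_\rho) \neq 0$.

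I would then prove the characterization of boundary divisors exactly as in the field case. If $\mathcal{B}$ is a boundary divisor, then $\Phi_{\mathcal{B}}(v_\rho) = 0$ for every $\rho \in \widetilde{\Sigma}_0(1)$ --- otherwise $\textbf{O}(\rho) \subset |\mathcal{B}| \cap \mathcal{X}_{\widetilde{\Sigma}_0} = \emptyset$ --- so $\Phi_{\mathcal{B}}$, being linear on each cone of $\widetilde{\Sigma}$, vanishes on $|\widetilde{\Sigma}_0|$; and given $x \notin |\widetilde{\Sigma}_0|$, the cone $\tau$ of $\widetilde{\Sigma}$ containing $x$ in its relative interior is not in $\widetilde{\Sigma}_0$, hence $\textbf{O}(\tau) \subset |\mathcal{B}|$, so some ray $\rho \leq \tau$ has $\Phi_{\mathcal{B}}(v_\rho) < 0$, and expressing $x$ as a strictly positive combination of the ray generators of $\tau$ yields $\Phi_{\mathcal{B}}(x) < 0$. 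Conversely, the two hypotheses on $\Phi_{\mathcal{B}}$ give $\Phi_{\mathcal{B}}(v_\rho) \leq 0$ for all rays (so $\mathcal{B}$ is effective), with equality exactly for $\rho \in \widetilde{\Sigma}_0(1)$; the same relative-interior argument then shows that every $\textbf{O}(\tau)$ with $\tau \notin \widetilde{\Sigma}_0$ lies in some $\textbf{V}(\rho)$ with $\Phi_{\mathcal{B}}(v_\rho) < 0$, whence $|\mathcal{B}| = \mathcal{X}_{\widetilde{\Sigma}} \setminus \mathcal{X}_{\widetilde{\Sigma}_0}$.

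For the norm identities I would first note that a toric $\mathbb{Q}$-model divisor $\mathcal{D}$ defined on $\mathcal{X}_{\widetilde{\Sigma}}$ is effective if and only if $\Phi_{\mathcal{D}} \leq 0$ on $N_{\mathbb{R}} \times \mathbb{R}_{\geq 0}$: one direction is the ray-generator criterion, the other holds because $\Phi_{\mathcal{D}}$ is linear on each cone, which is a non-negative span of its rays. Choosing a common refinement $\widetilde{\Sigma}$ on which both $\mathcal{D}$ and $\mathcal{B}$ are defined, the relation $-\varepsilon \mathcal{B} \leq \mathcal{D} \leq \varepsilon \mathcal{B}$ becomes, using $\Phi_{\mathcal{B}} \leq 0$, precisely $|\Phi_{\mathcal{D}}(w)| \leq \varepsilon\,|\Phi_{\mathcal{B}}(w)|$ for all $w \in N_{\mathbb{R}} \times \mathbb{R}_{\geq 0}$, and the first identity follows from the definition of $\|\cdot\|_{\mathcal{B}}$. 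For the second identity I would use the relation between $\Phi_{\mathcal{D}}$ and $\gamma_{\mathcal{D}}$ from Proposition~\ref{toric-divisors-DVR-1}: by conicality $\Phi_{\mathcal{D}}(x,t) = t\,\gamma_{\mathcal{D}}(x/t)$ for $t > 0$, while $\Phi_{\mathcal{D}}(x,0) = \textup{rec}(\gamma_{\mathcal{D}})(x)$ (Definition~\ref{G-def}), and similarly for $\mathcal{B}$; restricting the inequality $|\Phi_{\mathcal{D}}| \leq \varepsilon\,|\Phi_{\mathcal{B}}|$ to the slice $t = 1$ gives $|\gamma_{\mathcal{D}}| \leq \varepsilon\,|\gamma_{\mathcal{B}}|$ on $N_{\mathbb{R}}$, and conversely rescaling by $t > 0$ and passing to the limit $t \to 0^+$ recovers the inequality on all of $N_{\mathbb{R}} \times \mathbb{R}_{\geq 0}$, including the hyperplane $N_{\mathbb{R}} \times \{0\}$, so the two infima coincide. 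I expect the only delicate points to be bookkeeping: the correct normalization of the Weil divisor of a toric Cartier divisor over $\mathcal{O}_K$ (distinguishing horizontal and vertical rays), and checking that the recession-function limit at $N_{\mathbb{R}} \times \{0\}$ is valid simultaneously for $\Phi_{\mathcal{D}}$ and $\Phi_{\mathcal{B}}$, so that the identity persists even at points where $\Phi_{\mathcal{B}}$ happens to vanish off the vertical ray.
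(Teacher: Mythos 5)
Your proof is correct and follows essentially the same route as the paper: the paper simply invokes Lemma~\ref{toric-bdry-div} for the boundary-divisor characterization (which you spell out via the cone-orbit correspondence over $\mathcal{O}_K$, exactly as that earlier proof does over a field), and for the equality of infima the paper likewise passes between $\Phi$ and $\gamma$ via the slice at $t=1$, conicality, and continuity at $N_{\mathbb{R}} \times \{0\}$. The ``delicate point'' you flag at the end is handled automatically by taking limits, as you note, and poses no issue.
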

	\begin{proof}
		The characterization of toric boundary divisors is proven in the same way as Lemma~\ref{toric-bdry-div}. It remains to show the equality of infima. Let $\varepsilon >0$ and suppose we have a functional inequality $|\Phi_{\mathcal{D}}| \leq  \varepsilon \cdot |\Phi_{\mathcal{B}}|$. By definition, we have $\gamma_{\mathcal{D}} = \Phi_{\mathcal{D}} |_{N_{\mathbb{R}}\times \lbrace 1 \rbrace}$ (and the same statement for $\mathcal{B}$). Therefore, we get a functional inequality $|\gamma_{\mathcal{D }}| \leq  \varepsilon \cdot |\gamma_{\mathcal{B}}|$. Conversely, by the conical property of the support functions, the functional inequality $|\gamma_{\mathcal{D }}| \leq  \varepsilon \cdot |\gamma_{\mathcal{B}}|$ implies that $|\Phi_{\mathcal{D}}(x)| \leq  \varepsilon \cdot |\Phi_{\mathcal{B}}(x)|$ for all $x \in N_{\mathbb{R}} \times \mathbb{R}_{>0}$. This inequality is extended to $N_{\mathbb{R}} \times \mathbb{R}_{\geq0}$ by continuity of the support functions. Taking the infimum over $\varepsilon$, we get the desired identity.
	\end{proof}
	For each toric compactified divisor $\mathcal{D}$, we will associate a support function $\Phi_{\mathcal{D}}$. As in the case over a field (Lemma~\ref{toric-comp-SF}), this assignment involves the space $\mathcal{C}(N_{\mathbb{R}} \times \mathbb{R}_{\geq 0})$ of continuous conical functions on $N_{\mathbb{R}} \times \mathbb{R}_{\geq 0}$, which becomes a Banach space when endowed with the $\mathcal{C}$-norm (see Definition~\ref{C-norm}). We will also attach a function $\gamma_{\mathcal{D}}$ on $N_{\mathbb{R}}$, which will no longer be piecewise affine. Instead, it will be given by the sum of a conical function and a sublinear function. This will involve the Banach space $\mathcal{G}(N_{\mathbb{R}})$, introduced in Definition~\ref{G-def}. The following results are analogs of Lemma~\ref{toric-comp-SF}~and~\ref{toric-top-emb} in the DVR setting.
	\begin{lem}\label{sup-fun-dvr-1}
		Let $\mathcal{X}_{\widetilde{\Sigma}_0}/\mathcal{O}_K$ be quasi-projective, $X_{\Sigma_{0}}/K$ be its generic fibre, and $\mathcal{D}= \lbrace \mathcal{D}_n \rbrace_{n \in \mathbb{N}}$ be a toric compactified divisor of $\mathcal{X}_{\widetilde{\Sigma}_0}/\mathcal{O}_K$. Then:
		\begin{enumerate}
			\item The function $\Phi_{\mathcal{D}}$ given by the pointwise limit $\lim_{n \in \mathbb{N}} \Phi_{\mathcal{D}_n}$ is conical, continuous, and its restriction $\Phi_{\mathcal{D}}|_{|\widetilde{\Sigma}_{0}|}$ is a rational support function on the fan $\widetilde{\Sigma}_0$.
			\item The sequence $\lbrace D_n \rbrace_{n\in \mathbb{N}}$ given by the generic fibres of the divisors $\mathcal{D}_n$ determines a toric compactified divisor $D$ of $X_{\Sigma_{0}}/K$. Its support function satisfies $\Psi_D = \Phi_{\mathcal{D}}|_{N_{\mathbb{R}}\times \lbrace 0 \rbrace}$.
			\item The function $\gamma_{\mathcal{D}} \coloneqq  \Phi_{\mathcal{D}}|_{N_{\mathbb{R}}\times \lbrace 1 \rbrace}$ is continuous and the difference  $\gamma_{\mathcal{D}} - \Psi_D$ is sublinear.
		\end{enumerate}
		Moreover, the assignments $\mathcal{D} \mapsto \Phi_{\mathcal{D}}$ and $\mathcal{D} \mapsto \gamma_{\mathcal{D}}$ induce continuous injective group morphisms $\mathcal{SF} \colon \textup{Div}_{\mathbb{T}}(\mathcal{X}_{\widetilde{\Sigma}_0}/\mathcal{O}_K)  \rightarrow \mathcal{C}(N_{\mathbb{R}} \times \mathbb{R}_{\geq 0})$ and $\mathcal{PA} \colon \textup{Div}_{\mathbb{T}}(\mathcal{X}_{\widetilde{\Sigma}_0}/\mathcal{O}_K) \rightarrow \mathcal{G}(N_{\mathbb{R}})$, respectively.
	\end{lem}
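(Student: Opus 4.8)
The plan is to follow the proof of Lemma~\ref{toric-comp-SF} line by line, with $N_{\mathbb{R}}\times\mathbb{R}_{\geq 0}$ in place of $N_{\mathbb{R}}$, the support $|\widetilde{\Sigma}_0|$ in place of $|\Sigma_0|$, and Lemma~\ref{toric-bdy-top-DVR} in place of Lemma~\ref{toric-bdry-div}. First I would fix a toric boundary divisor $\mathcal{B}$ of $\mathcal{X}_{\widetilde{\Sigma}_0}/\mathcal{O}_K$. By the first identity of Lemma~\ref{toric-bdy-top-DVR}, the Cauchy condition on $\{\mathcal{D}_n\}$ becomes: for every $\varepsilon>0$ there is $n_\varepsilon$ with $|\Phi_{\mathcal{D}_n}(w)-\Phi_{\mathcal{D}_m}(w)|<\varepsilon\,|\Phi_{\mathcal{B}}(w)|$ for all $n,m>n_\varepsilon$ and all $w\in N_{\mathbb{R}}\times\mathbb{R}_{\geq 0}$. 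Since $\Phi_{\mathcal{B}}$ is continuous, it is bounded on every compact subset, so $\{\Phi_{\mathcal{D}_n}\}$ converges uniformly on compacta; the limit $\Phi_{\mathcal{D}}$ is continuous, and conical because conicity passes to limits. As $\Phi_{\mathcal{B}}$ vanishes on $|\widetilde{\Sigma}_0|$, the sequence $\{\Phi_{\mathcal{D}_n}(w)\}_{n\geq n_\varepsilon}$ is constant there, and each $\Phi_{\mathcal{D}_n}|_{|\widetilde{\Sigma}_0|}$ is the support function of the restriction $\mathcal{D}_n|_{\mathcal{X}_{\widetilde{\Sigma}_0}}$, hence lies in $\mathcal{SF}(\widetilde{\Sigma}_0,\mathbb{Q})$; so does the limit. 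This gives \textit{(i)}.

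For \textit{(ii)}, Proposition~\ref{toric-divisors-DVR-1} gives $\Psi_{D_n}=\Phi_{\mathcal{D}_n}|_{N_{\mathbb{R}}\times\{0\}}$ for the generic fibres $D_n=\mathcal{D}_n|_{X_{\Sigma_0}}$. The restriction $B$ of $\mathcal{B}$ has support function $\Psi_B=\Phi_{\mathcal{B}}|_{N_{\mathbb{R}}\times\{0\}}$, and since $|\widetilde{\Sigma}_0|\cap(N_{\mathbb{R}}\times\{0\})=|\Sigma_0|\times\{0\}$, Lemma~\ref{toric-bdry-div} shows $B$ is a toric boundary divisor of $X_{\Sigma_0}/K$. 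Restricting the defining inequality in the formula for $\|\cdot\|_{\mathcal{B}}$ from $N_{\mathbb{R}}\times\mathbb{R}_{\geq 0}$ to the slice $N_{\mathbb{R}}\times\{0\}$ can only weaken it, so $\|D_n-D_m\|_B\leq\|\mathcal{D}_n-\mathcal{D}_m\|_{\mathcal{B}}$; hence $\{D_n\}$ is Cauchy and defines $D\in\textup{Div}_{\mathbb{T}}(X_{\Sigma_0}/K)$ with $\Psi_D=\lim_n\Psi_{D_n}=\Phi_{\mathcal{D}}|_{N_{\mathbb{R}}\times\{0\}}$, and the same inequality makes $\mathcal{D}\mapsto D$ continuous.

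For \textit{(iii)}, $\gamma_{\mathcal{D}}=\Phi_{\mathcal{D}}|_{N_{\mathbb{R}}\times\{1\}}$ is continuous as a restriction of $\Phi_{\mathcal{D}}$; from $\Phi_{\mathcal{D}}(x,t)=t\,\gamma_{\mathcal{D}}(x/t)$ for $t>0$ and continuity of $\Phi_{\mathcal{D}}$ at $t=0$ one gets $\Psi_D=\textup{rec}(\gamma_{\mathcal{D}})$. The functions $\gamma_{\mathcal{D}_n}$ are piecewise affine, hence lie in $\mathcal{G}(N_{\mathbb{R}})$, and by the second identity of Lemma~\ref{toric-bdy-top-DVR} they form a Cauchy sequence for the $\mathcal{G}$-norm; so $\gamma_{\mathcal{D}}\in\mathcal{G}(N_{\mathbb{R}})$, its conical component is $\textup{rec}(\gamma_{\mathcal{D}})=\Psi_D$, and therefore $\gamma_{\mathcal{D}}-\Psi_D$ is its sublinear component. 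Continuity of the group morphisms $\mathcal{SF}$ and $\mathcal{PA}$ then follows from the two norm estimates in Lemma~\ref{toric-bdy-top-DVR}, exactly as the bound $\|\Phi_E\|_{\mathcal{C}}\leq\varepsilon\,\|\Phi_{\mathcal{B}}\|_{\mathcal{C}}$ (for $\|E\|_{\mathcal{B}}<\varepsilon$) gives continuity in Lemma~\ref{toric-comp-SF}.

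It remains to prove injectivity. Since $\Phi_{\mathcal{D}}$ is determined by $\gamma_{\mathcal{D}}$ via $\Phi_{\mathcal{D}}(x,t)=t\,\gamma_{\mathcal{D}}(x/t)$ on $N_{\mathbb{R}}\times\mathbb{R}_{>0}$ and $\Phi_{\mathcal{D}}|_{N_{\mathbb{R}}\times\{0\}}=\textup{rec}(\gamma_{\mathcal{D}})$, one has $\Phi_{\mathcal{D}}=0$ iff $\gamma_{\mathcal{D}}=0$, so it suffices to prove $\mathcal{SF}$ injective. Assuming $\mathcal{D}=\{\mathcal{D}_n\}\neq 0$, so $\|\mathcal{D}\|_{\mathcal{B}}=\lim_n\|\mathcal{D}_n\|_{\mathcal{B}}>0$, I would run the dichotomy of Lemma~\ref{toric-comp-SF} verbatim: if $\|\mathcal{D}\|_{\mathcal{B}}=\infty$ then some $\Phi_{\mathcal{D}_n}|_{|\widetilde{\Sigma}_0|}$ is eventually a fixed nonzero support function, forcing $\Phi_{\mathcal{D}}\neq 0$; if $\|\mathcal{D}\|_{\mathcal{B}}=\delta<\infty$ one chooses, for large $n$, points $w_n\in(N_{\mathbb{R}}\times\mathbb{R}_{\geq 0})\setminus|\widetilde{\Sigma}_0|$ with $|\Phi_{\mathcal{D}_n}(w_n)|>(\delta/2)\,|\Phi_{\mathcal{B}}(w_n)|$ and combines this, via the reverse triangle inequality, with the Cauchy estimate to obtain $|\Phi_{\mathcal{D}}(w_n)|\geq(\delta/4)\,|\Phi_{\mathcal{B}}(w_n)|>0$. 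I expect the main obstacle to be the bookkeeping in this injectivity argument within the half-space geometry — in particular checking that $\Phi_{\mathcal{B}}$ stays bounded away from $0$ on the compact slices of $N_{\mathbb{R}}\times\mathbb{R}_{\geq 0}$ disjoint from $|\widetilde{\Sigma}_0|$ — together with unwinding Definition~\ref{G-def} carefully enough to pin down $\gamma_{\mathcal{D}}-\Psi_D$ as a sublinear function in \textit{(iii)}.
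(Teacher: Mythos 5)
Your proposal matches the paper's proof: both transplant the argument of Lemma~\ref{toric-comp-SF} to $N_{\mathbb{R}}\times\mathbb{R}_{\geq 0}$ via the boundary-norm identities in Lemma~\ref{toric-bdy-top-DVR}, both obtain \textit{(ii)} by restricting to the slice $N_{\mathbb{R}}\times\{0\}$ and invoking Lemma~\ref{toric-bdry-div}, and both establish \textit{(iii)} by showing $\{\gamma_{\mathcal{D}_n}\}$ is Cauchy in the Banach space $\mathcal{G}(N_{\mathbb{R}})$ and reading off sublinearity of $\gamma_{\mathcal{D}}-\Psi_D$ from the decomposition $\mathcal{G}=\mathcal{SL}\oplus\mathcal{C}$. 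The only difference is cosmetic: the paper proves $\mathcal{SF}$ injective first (by copying the dichotomy) and then reduces $\mathcal{PA}$ to it, whereas you first note $\Phi_{\mathcal{D}}=0\iff\gamma_{\mathcal{D}}=0$ and then run the dichotomy; your worry about $\Phi_{\mathcal{B}}$ being bounded away from zero on compact slices is unnecessary, since the argument only needs pointwise positivity of $|\Phi_{\mathcal{B}}|$ at the specific points $w_n$ outside $|\widetilde{\Sigma}_0|$, which Lemma~\ref{toric-bdy-top-DVR} already gives.
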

	\begin{proof}
		To show \textit{(i)}, we copy the proof of Lemma~\ref{toric-comp-SF}. Additionally, we get that the map $\mathcal{SF}$ is continuous and injective. For \textit{(ii)}, we use Proposition~\ref{toric-divisors-DVR-1} to see that $\lbrace D_n \rbrace_{n\in \mathbb{N}}$ is a sequence of toric model divisors of $X_{\Sigma_0}/K$. By Lemmas~\ref{toric-bdry-div}~and~\ref{toric-bdy-top-DVR}, the restriction to the generic fibre of a toric boundary divisor $\mathcal{B}$ of $\mathcal{X}_{\widetilde{\Sigma}_0}/\mathcal{O}_K$ gives a toric boundary divisor $B$ of $X_{\Sigma_0}/K$. In particular, the induced sequence $\lbrace D_n \rbrace_{n\in \mathbb{N}}$ is Cauchy in the boundary norm induced by $B$. Hence, it determines a toric compactified divisor $D$ of $X_{\Sigma_0}/K$. Additionally, the support function of $D_n$ is given by $\Psi_{D_n} =  \Phi_{\mathcal{D}_n}|_{N_{\mathbb{R}}\times \lbrace 0 \rbrace}$. Then, Lemma~\ref{toric-comp-SF} shows that $\Psi_D = \Phi_{\mathcal{D}}|_{N_{\mathbb{R}}\times \lbrace 0 \rbrace}$. Now, we prove \textit{(iii)}. For each $n \in \mathbb{N}$, let $\gamma_{\mathcal{D}_n}$ be the piecewise affine function associated to $\mathcal{D}_n$. By definition, $\gamma_{\mathcal{D}_n} = \Phi_{\mathcal{D}_n}|_{N_{\mathbb{R}}\times \lbrace 1 \rbrace} $. It follows that the function
		\begin{displaymath}
			\gamma_{\mathcal{D}_n} - \Psi_{D_n} = \Phi_{\mathcal{D}_n}|_{N_{\mathbb{R}}\times \lbrace 1 \rbrace} - \Phi_{\mathcal{D}_n}|_{N_{\mathbb{R}}\times \lbrace 0 \rbrace}
		\end{displaymath}
		is bounded on $N_{\mathbb{R}}$. It follows that $\gamma_{\mathcal{D}_n} \in \mathcal{G}(N_{\mathbb{R}})$ for every $n \in \mathbb{N}$. On the other hand, the sequence $\lbrace \mathcal{D}_n \rbrace$ is Cauchy in the boundary norm. Then, the identity in Lemma~\ref{toric-bdy-top-DVR} shows that the sequence $\lbrace \gamma_{\mathcal{D}_n} \rbrace_{n\in\mathbb{N}}$ is Cauchy in the $\mathcal{G}$-norm (See Definition~\ref{G-def}) and its limit is $\gamma_{\mathcal{D}}$. Since $\mathcal{G}(N_{\mathbb{R}})$ is a Banach space, the function $\gamma_{\mathcal{D}}$ belongs to it. Therefore, the difference $\gamma_{\mathcal{D}} - \Psi_D$ is sublinear. Continuity of the group morphism $\mathcal{PA}$ follows from the identity in Lemma~\ref{toric-bdy-top-DVR}. It remains to show the injectivity of $\mathcal{PA}$. Suppose that $\gamma_{\mathcal{D}} = \Phi_{\mathcal{D}}|_{N_{\mathbb{R}}\times \lbrace 1 \rbrace} = 0$ for some toric compactified divisor $\mathcal{D}$. Then, the conical property of $\Phi_{\mathcal{D}}$ and continuity shows that $\Phi_{\mathcal{D}} = 0$. The group morphism $\mathcal{SF}$ is injective, therefore $\mathcal{D} = 0$. 
	\end{proof}
	\begin{lem}\label{toric-top-emb-DVR}
		Let $\iota \colon \mathcal{X}_{\widetilde{\Sigma}_0} \rightarrow \mathcal{X}_{\widetilde{\Sigma}_1}$ be a birational toric morphism between quasi-projective toric schemes over $\mathcal{O}_K$. Then, the induced pullback $\iota^{\ast} \colon  \textup{Div}_{\mathbb{T}} (\mathcal{X}_{\widetilde{\Sigma}_1}/\mathcal{O}_K) \rightarrow  \textup{Div}_{\mathbb{T}} (\mathcal{X}_{\widetilde{\Sigma}_0}/\mathcal{O}_K)$ is continuous, injective, and satisfies $\mathcal{SF} \circ \iota^{\ast} = \mathcal{SF}$ (resp. $\mathcal{PA} \circ \iota^{\ast} = \mathcal{PA}$), where $\mathcal{SF}$ (resp $\mathcal{PA}$) is the map defined in Lemma~\ref{sup-fun-dvr-1}. Moreover, the morphism $\iota^{\ast}$ is a topological group embedding if and only if the map $\iota$ is proper.
	\end{lem}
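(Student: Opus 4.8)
The plan is to transcribe the proof of Lemma~\ref{toric-top-emb} into the setting of fans in $N_{\mathbb{R}} \times \mathbb{R}_{\geq 0}$, systematically replacing each ingredient used there by its DVR counterpart: Proposition~\ref{toricmorphism} by Proposition~\ref{toric-over-S}, Proposition~\ref{tqpmod} by Proposition~\ref{PM-over-DVR}, Lemma~\ref{toricmodeldivisors} by Lemma~\ref{toric-model-div-S}, Proposition~\ref{pbsup} by Proposition~\ref{toric-divisors-DVR-2}, Lemma~\ref{toric-bdry-div} by Lemma~\ref{toric-bdy-top-DVR}, and Lemma~\ref{toric-comp-SF} by Lemma~\ref{sup-fun-dvr-1}. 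First I would reduce to the case in which $\iota$ restricts to the identity on the torus $\mathcal{U}$: by the reduction used in the proofs of Propositions~\ref{toricmorphism} and~\ref{toric-over-S}, composing with a suitable torus automorphism replaces $\iota$ by a toric morphism whose induced linear map $\phi$ is the identity on $N_{\mathbb{R}} \times \mathbb{R}_{\geq 0}$, and this does not affect whether $\iota^{\ast}$ is continuous, injective, or a topological embedding. Compatibility of $\phi = \mathrm{id}$ with the two fans then forces every cone of $\widetilde{\Sigma}_0$ to lie inside a cone of $\widetilde{\Sigma}_1$; in particular $|\widetilde{\Sigma}_0| \subseteq |\widetilde{\Sigma}_1|$, with equality exactly when $\iota$ is proper.

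Second, I would construct $\iota^{\ast}$ on model divisors. Pick projective fans $\widetilde{\Sigma}_i'$ in $N_{\mathbb{R}} \times \mathbb{R}_{\geq 0}$ containing $\widetilde{\Sigma}_i$ as a subfan for $i = 0,1$; these exist by quasi-projectivity and by the toric resolution of singularities Theorem~\ref{toricres}, which applies to any fan supported in $N_{\mathbb{R}} \times \mathbb{R}_{\geq 0}$ since star subdivisions preserve the support. The common refinement $\widetilde{\Sigma} \coloneqq \widetilde{\Sigma}_0' \cdot \widetilde{\Sigma}_1'$ obtained by intersecting cones is again projective — a sum of strictly concave support functions is strictly concave, as in Proposition~\ref{PM-over-DVR} — refines both $\widetilde{\Sigma}_i'$, and contains $\widetilde{\Sigma}_0$ as a subfan, since $\sigma_0 = \sigma_0 \cap \sigma_1 \in \widetilde{\Sigma}$ whenever $\sigma_0 \in \widetilde{\Sigma}_0$ and $\sigma_1 \in \widetilde{\Sigma}_1$ is a cone containing it. The family $F$ of all such refinements $\widetilde{\Sigma}$ is cofinal in $\textup{PF}(N_{\mathbb{R}} \times \mathbb{R}_{\geq 0}, \widetilde{\Sigma}_0)$, so passing to the direct limit of the pullback maps $\textup{Div}_{\mathbb{T}}(\mathcal{X}_{\widetilde{\Sigma}_i'})_{\mathbb{Q}} \to \textup{Div}_{\mathbb{T}}(\mathcal{X}_{\widetilde{\Sigma}})_{\mathbb{Q}}$ and identifying the limit over $F$ with $\textup{Div}_{\mathbb{T}}(\mathcal{X}_{\widetilde{\Sigma}_0}/\mathcal{O}_K)_{\textup{mod}}$ via Lemma~\ref{toric-model-div-S} produces an injective group morphism $\iota^{\ast}$ on model divisors; by construction together with Proposition~\ref{toric-divisors-DVR-2} it preserves support functions, so $\mathcal{SF} \circ \iota^{\ast} = \mathcal{SF}$ and $\mathcal{PA} \circ \iota^{\ast} = \mathcal{PA}$.

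Third, I would prove continuity and pass to completions. Fix toric boundary divisors $\mathcal{B}_i$ of $\mathcal{X}_{\widetilde{\Sigma}_i}/\mathcal{O}_K$ with piecewise affine data on a common refinement and chosen with $\mathcal{B}_0 \geq \mathcal{B}_1$. The inclusion $|\widetilde{\Sigma}_0| \subseteq |\widetilde{\Sigma}_1|$ together with the characterization of boundary divisors in Lemma~\ref{toric-bdy-top-DVR} gives $|\mathcal{B}_1| \subseteq |\mathcal{B}_0|$, hence a positive rational $b$ with $\|\mathcal{B}_1\|_{\mathcal{B}_0} < b < \infty$; the norm identities of that lemma then yield $\|\iota^{\ast}\mathcal{D}\|_{\mathcal{B}_0} \leq b\,\|\mathcal{D}\|_{\mathcal{B}_1}$ for every model divisor $\mathcal{D}$, exactly as in Lemma~\ref{shrinking}. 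Thus $\iota^{\ast}$ is continuous and extends by continuity to $\iota^{\ast}\colon \textup{Div}_{\mathbb{T}}(\mathcal{X}_{\widetilde{\Sigma}_1}/\mathcal{O}_K) \to \textup{Div}_{\mathbb{T}}(\mathcal{X}_{\widetilde{\Sigma}_0}/\mathcal{O}_K)$, still satisfying $\mathcal{SF} \circ \iota^{\ast} = \mathcal{SF}$ and $\mathcal{PA} \circ \iota^{\ast} = \mathcal{PA}$; injectivity on the completion is then immediate from the injectivity of $\mathcal{SF}$ (equivalently $\mathcal{PA}$) established in Lemma~\ref{sup-fun-dvr-1}.

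Finally, the embedding statement. If $\iota$ is proper then $|\widetilde{\Sigma}_0| = |\widetilde{\Sigma}_1|$, so Lemma~\ref{toric-bdy-top-DVR} gives $|\mathcal{B}_0| = |\mathcal{B}_1|$, the two boundary norms are equivalent in the sense of Remark~\ref{extended-norm}, and $\iota^{\ast}$ identifies the boundary topologies. For the converse I would show that if $|\widetilde{\Sigma}_0| \subsetneq |\widetilde{\Sigma}_1|$ then $\iota^{\ast}$ is not a topological embedding: after refining $\widetilde{\Sigma}_1$ to a projective fan (star-subdividing, if necessary, a cone met by $|\widetilde{\Sigma}_1| \setminus |\widetilde{\Sigma}_0|$ so that it acquires a ray $\rho$ with $\rho \subseteq |\widetilde{\Sigma}_1| \setminus |\widetilde{\Sigma}_0|$, while $|\widetilde{\Sigma}_0|$ stays a union of cones), take the model divisor $\mathcal{D}$ of $\mathcal{X}_{\widetilde{\Sigma}_1}/\mathcal{O}_K$ whose virtual support function vanishes on every ray contained in $|\widetilde{\Sigma}_0|$ and equals $-1$ on $\rho$. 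Then $\Phi_{\mathcal{D}}$ does not vanish on $|\widetilde{\Sigma}_1|$, so $\|\mathcal{D}\|_{\mathcal{B}_1} = \infty$ by Lemma~\ref{toric-bdy-top-DVR}, whereas $\Phi_{\mathcal{D}}$ vanishes wherever $\Phi_{\mathcal{B}_0}$ does and both are piecewise linear on a common refinement, so $\|\iota^{\ast}\mathcal{D}\|_{\mathcal{B}_0} < \infty$; hence no inequality $\|\cdot\|_{\mathcal{B}_1} \leq C\,\|\iota^{\ast}(\cdot)\|_{\mathcal{B}_0}$ can hold. I expect the \emph{main obstacle} to be precisely this converse direction — in particular arranging the refinement so that a suitable ray appears while $\widetilde{\Sigma}_0$ remains a subcomplex — together with a careful check that the structural inputs (the combinatorial description of toric projective models over a DVR, the boundary-norm formulas, and the injectivity of $\mathcal{SF}$ and $\mathcal{PA}$) transfer unchanged to fans in $N_{\mathbb{R}} \times \mathbb{R}_{\geq 0}$; everything else is a formal rerun of the proof over a field.
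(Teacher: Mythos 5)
Your proposal matches the paper exactly: the paper's own proof is simply ``Copy the proof of Lemma~\ref{toric-top-emb},'' and you carry out that transcription faithfully, swapping in the DVR analogues (Propositions~\ref{toric-over-S},~\ref{PM-over-DVR},~\ref{toric-divisors-DVR-2}, Lemmas~\ref{toric-model-div-S},~\ref{toric-bdy-top-DVR},~\ref{sup-fun-dvr-1}) for their field-case counterparts. You even go slightly further by spelling out the ``only if'' half of the embedding statement (exhibiting a model divisor with $\|\mathcal{D}\|_{\mathcal{B}_1}=\infty$ but $\|\iota^{\ast}\mathcal{D}\|_{\mathcal{B}_0}<\infty$), which Lemma~\ref{toric-top-emb}'s proof leaves implicit; that addition is sound and harmless.
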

	\begin{proof}
		Copy the proof of Lemma~\ref{toric-top-emb}.
	\end{proof}
	Let $\mathcal{X}_{\widetilde{\Sigma}_0}/\mathcal{O}_K$ be a quasi-projective toric scheme. By definition, there is a toric open immersion $\iota \colon U \rightarrow \mathcal{X}_{\widetilde{\Sigma}_0}$. Regarding the torus $U/K$ as a quasi-projective toric scheme over $\mathcal{O}_K$, the above result induces a canonical identification of the group of toric compactified divisors of $\mathcal{X}_{\widetilde{\Sigma}_0}/\mathcal{O}_K$ as a subgroup of  $\textup{Div}_{\mathbb{T}} (U/\mathcal{O}_K)$, the toric compactified divisors of $U/\mathcal{O}_K$. We will study this group in detail in the next section using the analytification map on divisors. Now, we focus our attention on the canonical model of $U$ over $\mathcal{O}_K$: the split torus $\mathcal{U}/\mathcal{O}_K$ of relative dimension $d$.
	\begin{ex}[Canonical models]\label{canonical-model}
		Let $\Sigma$ be a fan in $N_{\mathbb{R}}$ and $X_{\Sigma}/K$ the corresponding toric variety. There is a canonical way to construct a toric scheme $\mathcal{X}_{\Sigma_{\textup{can}}}$ over $\mathcal{O}_K$ whose generic fibre is $X_{\Sigma}$. This toric scheme is known as the \textit{canonical model} of the toric variety $X_{\Sigma}/K$. First, we regard $\Sigma$ as a fan in $N_{\mathbb{R}} \times \mathbb{R}_{\geq 0}$ by identifying the latter with $N_{\mathbb{R}}$. Then, we define the fan $\Sigma_{\textup{can}}\coloneqq \Sigma \cup \lbrace c( \sigma \cup \lbrace (0,1) \rbrace ) \, \vert \, \sigma \in \Sigma \rbrace$, where $c( A )$ is the convex cone spanned by the set $A$. Clearly, the intersection of $\Sigma_{\textup{can}}$ with $N_{\mathbb{R}} \times \mathbb{R}_{\geq 0}$ is $\Sigma$. Additionally, the fan $\Sigma_{\textup{can}}$ is complete in $N_{\mathbb{R}} \times \mathbb{R}_{\geq 0}$ if and only if $\Sigma$ is complete in $N_{\mathbb{R}}$. We abuse notation by writing $\mathcal{X}_{\Sigma} = \mathcal{X}_{\Sigma_{\textup{can}}}$ whenever there is no risk of confusion.
		
		Similarly, every toric divisor $D \in  \textup{Div}_{\mathbb{T}}( X_{\Sigma})_{\mathbb{Q}}$ admits a \textit{canonical model} $\mathcal{D}_{\textup{can}} \in \textup{Div}_{\mathbb{T}}( \mathcal{X}_{\Sigma})_{\mathbb{Q}}$. Equivalently, the function $\Psi_D \in \mathcal{SF}(\Sigma, \mathbb{Q})$ extends to a support function $\Phi_{\mathcal{D}_{\textup{can}}} \in  \mathcal{SF}(\Sigma_{\textup{can}}, \mathbb{Q})$. Indeed, we define $\Phi_{\mathcal{D}_{\textup{can}}}(x,t) \coloneqq \Psi(x)$ for all $t \in \mathbb{R}_{\geq 0}$. Trivially, the function $\Phi_{\mathcal{D}_{\textup{can}}}$ is strictly concave if and only if $\Psi_D$ is strictly concave. Therefore, the fan $\Sigma_{\textup{can}}$ is projective if and only if the fan $\Sigma$ is projective. By Proposition~\ref{PM-over-DVR}, every projective variety $X_{\Sigma}/K$ admits a toric arithmetic model $\mathcal{X}_{\Sigma}/\mathcal{O}_K$. It follows from Lemmas~\ref{toric-bdry-div}~and~\ref{toric-bdy-top-DVR} that a pair $(X_{\Sigma},B)$ is a toric boundary divisor of $X_{\Sigma_{0}}/K$ if and only if the pair $(\mathcal{X}_{\Sigma}, \mathcal{B}_{\textup{can}})$ is a boundary divisor of $\mathcal{X}_{\Sigma_0}/ \mathcal{O}_K$, and they satisfy $\gamma_{\mathcal{B},\textup{can}} = \Psi_{B}$ (See~Lemma~\ref{sup-fun-dvr-1}).
		
		We apply these constructions to the case of the torus $U/K$, whose fan consists of the cone $\lbrace 0 \rbrace$. The canonical fan extending it is the set $\lbrace \lbrace 0 \rbrace, \tau_{\textup{vert}} \rbrace$, where $\tau_{\textup{vert}}$ is they ray generated by the vector $(0,1)$.  Identify $N = \mathbb{Z}^{d}$ and let $e_1,...,e_d$ be the standard basis. Write $e_0 = - (e_1 + ... + e_d)$. Let $\Sigma$ be the fan whose cones are spanned by all proper subsets of $\lbrace e_0, ..., e_d \rbrace$. The toric variety $X_{\Sigma}/K$ is the projective space $\mathbb{P}^{d}/K$ with the usual toric structure. Let $B$ be the toric divisor given by the sum of the hyperplanes $H_i$ determined by the equations $\chi^{e_i} = 0$. Its support function is determined by $\Psi_B (e_i) = -1$. By Lemma~\ref{toric-bdry-div}, $B$ is a boundary divisor of $U/K$. Then, the canonical model $\mathcal{X}_{\Sigma}/\mathcal{O}_K$ is the projective space $\mathbb{P}^{d}/\mathcal{O}_K$ with the usual toric structure, and $\mathcal{B}_{\textup{can}}$ is the toric boundary divisor of $\mathcal{U}/\mathcal{O}_K$ given by the sum of the respective hyperplanes.
	\end{ex}
	Now, we describe the group of toric compactified divisors of the torus $\mathcal{U}/\mathcal{O}_K$.
	\begin{thm}\label{CD-torus-DVR}
		Let $\mathcal{U}$ be the split $d$-dimensional torus over the discrete valuation ring $\mathcal{O}_K$. The continuous injective group morphism $\mathcal{PA} \colon \textup{Div}_{\mathbb{T}} (\mathcal{U} /\mathcal{O}_K) \rightarrow \mathcal{G}(N_{\mathbb{R}})$ given by $\mathcal{D} \mapsto \gamma_{\mathcal{D}}$ identifies the group of toric compactified divisors of $\mathcal{U}/\mathcal{O}_K$ with the closure in the $\mathcal{C}$-norm of the space $\mathcal{PA}(N_{\mathbb{R}}, \mathbb{Q})$ of rational piecewise affine functions on $N_{\mathbb{R}}$ (See Definitions~\ref{C-norm}~and~\ref{G-def}).
	\end{thm}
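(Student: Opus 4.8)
The plan is to run the argument of Theorem~\ref{torgeomdiv} in the DVR setting, with the conical support functions on $N_{\mathbb{R}}$ replaced by the functions $\gamma_{\mathcal{D}}$ on $N_{\mathbb{R}}$. By Lemma~\ref{sup-fun-dvr-1}, the map $\mathcal{PA}\colon \textup{Div}_{\mathbb{T}}(\mathcal{U}/\mathcal{O}_K)\to\mathcal{G}(N_{\mathbb{R}})$ is already a continuous injective group morphism, so two things remain: (a) show that $\mathcal{PA}$ is a topological embedding, i.e.\ that the boundary norm and the $\mathcal{C}$-norm agree up to equivalence on model divisors; and (b) compute the image of $\mathcal{PA}$, identifying the image on model divisors with $\mathcal{PA}(N_{\mathbb{R}},\mathbb{Q})$.

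\textbf{Step (a): $\mathcal{PA}$ is a topological embedding.} I would fix the convenient toric boundary divisor of $\mathcal{U}/\mathcal{O}_K$ coming from Example~\ref{canonical-model}: the canonical model $\mathcal{B}_{\textup{can}}$ of the toric boundary divisor $B=\sum_i H_i$ of $\mathbb{P}^d/K$. By that example $\gamma_{\mathcal{B}_{\textup{can}}}=\Psi_B$ is conical, and by Lemma~\ref{toric-bdry-div} (applied with $\Sigma_0=\lbrace\lbrace0\rbrace\rbrace$) it satisfies $\Psi_B(x)<0$ for all $x\neq 0$. Continuity of $\Psi_B$ and compactness of the unit sphere then produce constants $0<c\leq C<\infty$ with $c\,\|x\|\leq|\Psi_B(x)|\leq C\,\|x\|$ for all $x\in N_{\mathbb{R}}$, using conicality. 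Substituting this into the identity
\[
\| \mathcal{D} \|_{\mathcal{B}_{\textup{can}}} = \inf \lbrace \varepsilon \in \mathbb{Q}_{>0} \, \vert \, | \gamma_{\mathcal{D}}(x) | \leq \varepsilon \cdot  | \Psi_{B}(x) | \text{ for all } x \in N_{\mathbb{R}} \rbrace
\]
of Lemma~\ref{toric-bdy-top-DVR} gives $c\,\| \mathcal{D} \|_{\mathcal{B}_{\textup{can}}}\leq \|\gamma_{\mathcal{D}}\|_{\mathcal{C}}\leq C\,\| \mathcal{D} \|_{\mathcal{B}_{\textup{can}}}$, where $\|\gamma\|_{\mathcal{C}}=\sup_{x\neq 0}|\gamma(x)|/\|x\|$ (an extended norm restricting to the usual $\mathcal{C}$-norm on conical functions). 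Since the boundary topology is independent of the chosen boundary divisor, this shows $\mathcal{PA}$ restricts to a norm equivalence of $\textup{Div}_{\mathbb{T}}(\mathcal{U}/\mathcal{O}_K)_{\textup{mod}}$ onto its image, hence is a topological embedding.

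\textbf{Step (b): the image on model divisors is $\mathcal{PA}(N_{\mathbb{R}},\mathbb{Q})$.} By Lemma~\ref{toric-model-div-S}, the image of $\mathcal{PA}$ on $\textup{Div}_{\mathbb{T}}(\mathcal{U}/\mathcal{O}_K)_{\textup{mod}}$ is $\bigcup_{\widetilde{\Sigma}}\mathcal{PA}(\Pi,\mathbb{Q})$, the union over projective fans $\widetilde{\Sigma}$ in $N_{\mathbb{R}}\times\mathbb{R}_{\geq 0}$ containing $\lbrace\lbrace0\rbrace,\tau_{\textup{vert}}\rbrace$ as a subfan, with $\Pi$ the polyhedral complex cut out on $N_{\mathbb{R}}\times\lbrace 1\rbrace$. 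One inclusion is immediate since each such $\mathcal{PA}(\Pi,\mathbb{Q})$ consists of rational piecewise affine functions on $N_{\mathbb{R}}$. For the converse, given a rational piecewise affine $\gamma$ on $N_{\mathbb{R}}$, I would choose a rational polyhedral complex $\Pi_0$ with $|\Pi_0|=N_{\mathbb{R}}$ on which $\gamma$ is affine on each cell, form the associated complete fan $\widetilde{\Sigma}_0$ in $N_{\mathbb{R}}\times\mathbb{R}_{\geq 0}$ (cones over $\Pi_0\times\lbrace 1\rbrace$, their faces, and the recession cones in $N_{\mathbb{R}}\times\lbrace 0\rbrace$; the standard cone construction, cf.~\cite{BPS}), which contains $\tau_{\textup{vert}}$ and induces $\Pi_0$ on $N_{\mathbb{R}}\times\lbrace1\rbrace$; then refine $\widetilde{\Sigma}_0$ to a projective fan $\widetilde{\Sigma}$ fixing its smooth cones, in particular $\tau_{\textup{vert}}$, by Theorem~\ref{toricres} together with the existence of projective refinements of complete fans (Remark~\ref{refin}). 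Refinement preserves the support, so $|\widetilde{\Sigma}|=N_{\mathbb{R}}\times\mathbb{R}_{\geq 0}$, and $\gamma$ is affine on the cells of the induced complex $\Pi$ refining $\Pi_0$, hence $\gamma\in\mathcal{PA}(\Pi,\mathbb{Q})$.

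\textbf{Conclusion and main obstacle.} Combining (a) and (b): $\textup{Div}_{\mathbb{T}}(\mathcal{U}/\mathcal{O}_K)$ is the completion of $\textup{Div}_{\mathbb{T}}(\mathcal{U}/\mathcal{O}_K)_{\textup{mod}}$ in the boundary topology, which under $\mathcal{PA}$ becomes the completion of $\mathcal{PA}(N_{\mathbb{R}},\mathbb{Q})$ for the $\mathcal{C}$-norm; since $\mathcal{G}(N_{\mathbb{R}})$ is a Banach space containing $\mathcal{PA}(N_{\mathbb{R}},\mathbb{Q})$, this completion is precisely the closure of $\mathcal{PA}(N_{\mathbb{R}},\mathbb{Q})$ in $\mathcal{G}(N_{\mathbb{R}})$, and by continuity of $\mathcal{PA}$ together with density of the model divisors this closure is exactly the image of $\mathcal{PA}$ on all of $\textup{Div}_{\mathbb{T}}(\mathcal{U}/\mathcal{O}_K)$. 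I expect the main obstacle to be Step~(b): verifying that an arbitrary rational piecewise affine function on $N_{\mathbb{R}}$ genuinely arises from a \emph{projective} toric model over $\mathcal{O}_K$, which requires the toric resolution and projective-refinement machinery for fans in $N_{\mathbb{R}}\times\mathbb{R}_{\geq 0}$ and the bookkeeping that the refinements retain both the support condition $|\widetilde{\Sigma}|=N_{\mathbb{R}}\times\mathbb{R}_{\geq 0}$ and the vertical ray $\tau_{\textup{vert}}$.
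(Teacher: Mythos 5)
Your proposal is correct and takes essentially the same approach as the paper: identify the model divisors with $\mathcal{PA}(N_{\mathbb{R}},\mathbb{Q})$ via Lemma~\ref{toric-model-div-S}, use $\mathcal{B}_{\textup{can}}$ from Example~\ref{canonical-model} together with the identity in Lemma~\ref{toric-bdy-top-DVR} to match the boundary norm with the $\mathcal{C}$-norm, and conclude by completeness. The only organizational difference is in Step~(b): the paper proves that $\textup{PF}(N_{\mathbb{R}}\times\mathbb{R}_{\geq 0},\tau_{\textup{vert}})$ is cofinal in $\textup{PF}(N_{\mathbb{R}}\times\mathbb{R}_{\geq 0})$ by intersecting an arbitrary projective fan with $\Sigma_{\textup{can}}$ (adding the strictly concave support functions, as in Proposition~\ref{tqpmod}), rather than constructing a fan from $\gamma$ and then refining to a projective one; both routes end up needing the same fact that every complete fan in $N_{\mathbb{R}}\times\mathbb{R}_{\geq 0}$ admits a projective refinement, which you, like the paper, invoke somewhat informally.
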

	\begin{proof}
		First, we show that the target of the isomorphism $\mathcal{PA}$ in Lemma~\ref{toric-model-div-S} is the $\mathbb{Q}$-vector space $\mathcal{PA}(N_{\mathbb{R}}, \mathbb{Q})$. By Example~\ref{canonical-model}, the torus $\mathcal{U}/\mathcal{O}_K$ is the toric scheme given by the fan $\lbrace \lbrace 0 \rbrace, \tau_{\textup{vert}} \rbrace$, that is, the faces of the ray $\tau_{\textup{vert}}$ generated by the vector $(0,1)$. By abuse of notation, we write $\tau_{\textup{vert}}$ for this fan. We claim that the poset $\textup{PF}(N_{\mathbb{R}} \times \mathbb{R}_{\geq 0}, \tau_{\textup{vert}} )$, consisting of projective fans in $N_{\mathbb{R}}\times \mathbb{R}_{\geq 0}$ having $\tau_{\textup{vert}}$ as a subfan, is cofinal in the poset $\textup{PF}(N_{\mathbb{R}} \times \mathbb{R}_{\geq 0})$ of all projective fans in $N_{\mathbb{R}}\times \mathbb{R}_{\geq 0}$. Let $\widetilde{\Sigma}$ be a projective fan in $N_{\mathbb{R}} \times \mathbb{R}_{\geq 0}$ and let $\Sigma$ be any projective fan in $N_{\mathbb{R}}$. Following Example~\ref{canonical-model}, the fan $\Sigma_{\textup{can}}$ in $N_{\mathbb{R}}\times \mathbb{R}_{\geq 0}$ is projective and $\tau_{\textup{vert}} \in \Sigma_{\textup{can}}$. Arguing as in Proposition~\ref{tqpmod}, the fan $\widetilde{\Sigma} \cdot \Sigma_{\textup{can}}$ given by intersecting their cones refines the fan $\widetilde{\Sigma}$, is projective, and contains the ray $\tau_{\textup{vert}}$, which proves the claim. This implies the equality of $\mathbb{Q}$-vector spaces
		\begin{displaymath}
			\bigcup_{\widetilde{\Sigma} \in \textup{PF}(N_{\mathbb{R}} \times \mathbb{R}_{\geq 0}, \tau_{\textup{vert}}  )} \mathcal{SF}(\widetilde{\Sigma}, \mathbb{Q}) = \bigcup_{\widetilde{\Sigma} \in \textup{PF}(N_{\mathbb{R}} \times \mathbb{R}_{\geq 0})} \mathcal{SF}(\widetilde{\Sigma}, \mathbb{Q}) = \mathcal{SF}(N_{\mathbb{R}} \times \mathbb{R}_{\geq 0}, \mathbb{Q}).
		\end{displaymath}
		By Proposition~\ref{toric-divisors-DVR-1}, restricting each support function to the hyperplane $N_{\mathbb{R}} \times \lbrace 1 \rbrace$, we get
		\begin{displaymath}
			\bigcup_{\widetilde{\Sigma} \in \textup{PF}(N_{\mathbb{R}} \times \mathbb{R}_{\geq 0}, \tau_{\textup{vert}}  )} \mathcal{PA}( \Pi, \mathbb{Q}) = \mathcal{PA}(N_{\mathbb{R}}, \mathbb{Q}).
		\end{displaymath}
		Summarizing, we have shown that $\mathcal{PA} \colon \textup{Div}_{\mathbb{T}} (\mathcal{U} /\mathcal{O}_K)_{\textup{mod}} \rightarrow \mathcal{PA}(N_{\mathbb{R}}, \mathbb{Q})$ is an isomorphism (\ref{toric-model-div-S}).
		
		Following Example~\ref{canonical-model}, $\mathcal{B}_{\textup{can}}$ is a toric boundary divisor of $\mathcal{U}/\mathcal{O}_K$, whose piecewise affine function $\gamma_{\mathcal{B},\textup{can}}$ coincides with the support function $\Psi_{B}$. By Proposition~\ref{toric-bdy-top-DVR}, for each model divisor $\mathcal{D}$ we have the identity
		\begin{displaymath}
			\| \mathcal{D} \|_{\mathcal{B}_{\textup{can}}} = \inf \lbrace \varepsilon \in \mathbb{Q}_{>0} \, \vert \, | \gamma_{\mathcal{D}}(x) | \leq \varepsilon \cdot  | \Psi_B (x) | \textup{ for all } x \in N_{\mathbb{R}} \rbrace.
		\end{displaymath}
		Since $B$ is a boundary divisor of $U/K$, by the proof of Theorem~\ref{torgeomdiv}, the extended norm induced by the right-hand side is equivalent to the $\mathcal{C}$-norm. Therefore, the image of the group morphism $\mathcal{PA}$ identifies with the closure in the $\mathcal{C}$-norm of  $\mathcal{PA}(N_{\mathbb{R}}, \mathbb{Q})$.
	\end{proof}
	We have the final theorem of this section: A characterization of the cone of nef toric compactified divisors of $\mathcal{U}/\mathcal{O}_K$. This is an analog of Theorem~\ref{nefcorrespondence} in the DVR setting.
	\begin{thm}\label{nef-cone-US}
		Let $\mathcal{U}/\mathcal{O}_K$ be the split torus of dimension relative $d$, and $\mathcal{D}$ be a toric compactified divisor. The assignment $\mathcal{D} \mapsto \gamma_{\mathcal{D}}$ induces a continuous bijection between:
		\begin{enumerate}
			\item The cone $\textup{Div}_{\mathbb{T}}^{\textup{nef}} (\mathcal{U} /\mathcal{O}_K)$ of nef toric compactified divisors of $\mathcal{U}/\mathcal{O}_K$.
			\item The set of globally Lipschitz concave functions $\gamma \colon N_{\mathbb{R}} \rightarrow \mathbb{R}$ satisfying $\gamma(0) \in \mathbb{Q}$.
		\end{enumerate}
	\end{thm}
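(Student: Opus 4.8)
The plan is to deduce the statement from the description of the nef cone over a field (Theorem~\ref{nefcorrespondence}) combined with the identification of $\textup{Div}_{\mathbb{T}}(\mathcal{U}/\mathcal{O}_K)$ in Theorem~\ref{CD-torus-DVR}. Continuity and injectivity of the assignment $\mathcal{D}\mapsto\gamma_{\mathcal{D}}$ are already provided by Lemma~\ref{sup-fun-dvr-1}, so the task reduces to identifying the image of the nef cone $\textup{Div}_{\mathbb{T}}^{\textup{nef}}(\mathcal{U}/\mathcal{O}_K)$. Throughout I will use the elementary fact that a finite concave function $\gamma$ on $N_{\mathbb{R}}$ is globally Lipschitz if and only if its recession function $\textup{rec}(\gamma)$ is finite everywhere, and in that case $\gamma$ is $\|\textup{rec}(\gamma)\|_{\mathcal{C}}$-Lipschitz (this follows from the chain of inequalities $\textup{rec}(\gamma)(v)\leq\gamma(x+v)-\gamma(x)\leq-\textup{rec}(\gamma)(-v)$ valid for concave $\gamma$); in particular the condition ``globally Lipschitz'' on the target is simply a reformulation of the finiteness of the recession.

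For the inclusion of the image in the stated set, I would take $\mathcal{D}\in\textup{Div}_{\mathbb{T}}^{\textup{nef}}(\mathcal{U}/\mathcal{O}_K)$ and represent it by a Cauchy sequence $\lbrace\mathcal{D}_n\rbrace_{n\in\mathbb{N}}$ of nef toric model divisors (Construction~\ref{decreasing}). By Proposition~\ref{nef-dvr} each $\gamma_{\mathcal{D}_n}$ is concave and rational piecewise affine, so by continuity of $\mathcal{PA}$ the limit $\gamma_{\mathcal{D}}$ is concave. By Lemma~\ref{sup-fun-dvr-1}(ii) the generic fibres $D_n$ form a Cauchy sequence of nef toric model divisors of $U/K$, hence $D=\lbrace D_n\rbrace$ is a nef toric compactified divisor of $U/K$ whose support function $\Psi_D$ is a finite conical concave function by Theorem~\ref{nefcorrespondence}; since $\gamma_{\mathcal{D}}-\Psi_D$ is sublinear (Lemma~\ref{sup-fun-dvr-1}(iii)) we obtain $\textup{rec}(\gamma_{\mathcal{D}})=\Psi_D$, and therefore $\gamma_{\mathcal{D}}$ is globally Lipschitz. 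For the rationality at the origin, I would use that for $\mathcal{U}/\mathcal{O}_K$ the fan $\widetilde{\Sigma}_0$ is the vertical ray $\tau_{\textup{vert}}$ (Example~\ref{canonical-model}), so Lemma~\ref{sup-fun-dvr-1}(i) says that $\Phi_{\mathcal{D}}$ restricted to $\tau_{\textup{vert}}$ is a rational support function, whence $\gamma_{\mathcal{D}}(0)=\Phi_{\mathcal{D}}(0,1)\in\mathbb{Q}$; equivalently, since every toric boundary divisor of $\mathcal{U}/\mathcal{O}_K$ vanishes along $\tau_{\textup{vert}}$, the identity in Lemma~\ref{toric-bdy-top-DVR} forces $\lbrace\gamma_{\mathcal{D}_n}(0)\rbrace_{n\in\mathbb{N}}$ to be eventually constant and hence rational.

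For surjectivity, I would start with a concave, globally Lipschitz $\gamma\colon N_{\mathbb{R}}\to\mathbb{R}$ with $\gamma(0)\in\mathbb{Q}$, set $\Psi\coloneqq\textup{rec}(\gamma)$ and $\Delta\coloneqq\textup{stab}(\Psi)$ (a finite conical concave function and a compact convex set, respectively), and note that $\partial\gamma(0)$ is a nonempty compact subset of $\Delta$. Fixing $a_0\in\partial\gamma(0)$, a sequence $a_n\in M_{\mathbb{Q}}$ with $a_n\to a_0$, and rational piecewise affine concave functions $\rho_n\geq\gamma$ with $\rho_n\searrow\gamma$ pointwise and $\textup{rec}(\rho_n)\to\Psi$ uniformly on the unit sphere (such $\rho_n$ exist by a standard rational roof approximation, compare the monotone approximation in Theorem~\ref{mono-approx-2}), I would define $\gamma_n\coloneqq\min(\gamma(0)+\langle a_n,\cdot\rangle,\rho_n)$. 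This $\gamma_n$ is rational piecewise affine and concave; since each affine summand of $\rho_n$ dominates $\gamma$ we have $\rho_n(0)\geq\gamma(0)$, so $\gamma_n(0)=\gamma(0)$; and because $\gamma(0)+\langle a_0,\cdot\rangle$ is a supporting affine function of $\gamma$ at the origin, $\gamma_n\to\min(\gamma(0)+\langle a_0,\cdot\rangle,\gamma)=\gamma$ locally uniformly while $\textup{rec}(\gamma_n)=\min(\langle a_n,\cdot\rangle,\textup{rec}(\rho_n))\to\min(\langle a_0,\cdot\rangle,\Psi)=\Psi$ uniformly on the sphere (using $a_0\in\Delta$), so that $\gamma_n\to\gamma$ in the $\mathcal{C}$-norm of $\mathcal{G}(N_{\mathbb{R}})$. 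By Lemma~\ref{toric-model-div-S} each $\gamma_n$ equals $\gamma_{\mathcal{D}_n}$ for a unique toric model divisor $\mathcal{D}_n$ of $\mathcal{U}/\mathcal{O}_K$, which is nef by Proposition~\ref{nef-dvr}, and by Theorem~\ref{CD-torus-DVR} (equivalently Lemma~\ref{toric-bdy-top-DVR}) the sequence $\lbrace\mathcal{D}_n\rbrace_{n\in\mathbb{N}}$ is Cauchy in the boundary norm and therefore defines an element $\mathcal{D}$ of the closed cone $\textup{Div}_{\mathbb{T}}^{\textup{nef}}(\mathcal{U}/\mathcal{O}_K)$ with $\gamma_{\mathcal{D}}=\gamma$.

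I expect the surjectivity step to be the main obstacle. The difficulty is that the approximating functions must simultaneously be rational, piecewise affine, concave, \emph{equal to $\gamma$ at the origin}, and $\mathcal{C}$-norm convergent — and the requirement at the origin is unavoidable, because every toric boundary divisor of $\mathcal{U}/\mathcal{O}_K$ vanishes along the vertical ray, so by Lemma~\ref{toric-bdy-top-DVR} the boundary norm is degenerate there and any Cauchy sequence of model divisors has eventually constant value at $0$. Reconciling ``equal to $\gamma$ at the origin'' with a rational roof approximation that must otherwise do its work away from $0$ and on the sphere is precisely what the minimum with a single rational linear function through $(0,\gamma(0))$ achieves, and the hypothesis $\gamma(0)\in\mathbb{Q}$ is exactly what makes this possible. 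Combining the two inclusions with the injectivity and continuity from Lemma~\ref{sup-fun-dvr-1} then yields the claimed continuous bijection.
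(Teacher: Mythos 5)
Your argument for the forward inclusion (showing $\gamma_{\mathcal{D}}$ is globally Lipschitz concave with $\gamma_{\mathcal{D}}(0)\in\mathbb{Q}$) is essentially the paper's, including the observation that the boundary norm is degenerate along $\tau_{\textup{vert}}$, which forces $\lbrace\gamma_{\mathcal{D}_n}(0)\rbrace_n$ to be eventually constant and rational. That part is fine.

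The surjectivity step, however, has a genuine gap, and it is precisely in the place you flag as the main obstacle. Your sequence $\gamma_n=\min(\gamma(0)+\langle a_n,\cdot\rangle,\rho_n)$ does \emph{not} converge to $\gamma$ in the $\mathcal{C}$-norm unless $\gamma$ is differentiable at the origin. The reason: for $n$ large, $\rho_n(0)>\gamma(0)$, so on a small ball around $0$ the minimum is achieved by the affine function, i.e.\ $\gamma_n=\gamma(0)+\langle a_n,\cdot\rangle$ near $0$. Thus $\textup{D}_{+}(\gamma_n,\hat{x})(0)=\langle a_n,\hat{x}\rangle\to\langle a_0,\hat{x}\rangle$, which is the support function of the singleton $\{a_0\}$. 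But $\textup{D}_{+}(\gamma,\hat{x})(0)$ is the support function of $\partial\gamma(0)$ (Lemma~\ref{dir-der-2}), and unless $\partial\gamma(0)=\{a_0\}$ there is some $\hat{x}$ with $\textup{D}_{+}(\gamma,\hat{x})(0)<\langle a_0,\hat{x}\rangle$. Since the $\mathcal{C}$-norm controls $|\gamma(x)-\gamma_n(x)|/\|x\|$ all the way down to $x\to 0$ (this is the boundary stratum $\{0\}\times\mathcal{S}^{d-1}$ in Proposition~\ref{bar-eta}), it follows that $\|\gamma-\gamma_n\|_{\mathcal{C}}$ is bounded away from $0$. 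Local uniform convergence plus uniform convergence of the recessions on the sphere is only $\mathcal{G}$-norm information and is strictly weaker. Already for $d=1$ and $\gamma(x)=-|x|$ with $a_0=0$ your construction gives $\gamma_n\approx 0$ near $0$, so $|\gamma(x)-\gamma_n(x)|/\|x\|\approx 1$ there for all $n$.

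The paper's proof of the theorem therefore does not use a one-affine-function cap at the origin; it invokes Theorem~\ref{closure-P-C}, whose proof builds the approximants on rational simplicial complexes carved out of annuli $[\delta_n,R_n]\times\mathcal{S}^{d-1}$ and cones over them at the origin. That construction approximates the conical concave function $\textup{D}_{+}(\gamma,\cdot)(0)$ by rational support functions near $0$ \emph{and} the recession near $\infty$ \emph{simultaneously}, which is exactly the two boundary strata of $\overline{\eta}$ that the $\mathcal{C}$-norm sees. Your argument handles $\{\infty\}\times\mathcal{S}^{d-1}$ but not $\{0\}\times\mathcal{S}^{d-1}$. (A secondary, more minor issue: your existence claim for a \emph{decreasing} rational piecewise affine approximation $\rho_n\searrow\gamma$ with $\textup{rec}(\rho_n)\to\Psi$ is not supplied by Theorem~\ref{mono-approx-2}, which produces increasing approximants of conical functions, nor by Lemma~\ref{mono-approx-1}, whose approximants are not piecewise affine; this step would also need its own argument.)
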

	\begin{proof}
		Suppose that $\mathcal{D} \in \textup{Div}_{\mathbb{T}} (\mathcal{U} /\mathcal{O}_K)$ is nef. By definition, there is a Cauchy sequence $ \lbrace \mathcal{D}_n \rbrace_{n \in \mathbb{N}}$ of nef toric model divisors representing $\mathcal{D}$. By the previous theorem, the induced sequence $\lbrace \gamma_{\mathcal{D}_n} \rbrace_{n \in \mathbb{N}}$ of rational piecewise affine concave functions converges in the $\mathcal{C}$-norm to the function $\gamma_{\mathcal{D}}$. Therefore, $\gamma_{\mathcal{D}} \in \mathcal{G}^{+}(N_{\mathbb{R}})$ (See~\ref{G-def}). It is immediate from the definition of the $\mathcal{C}$-norm that the sequence of rational numbers $ \lbrace \gamma_{\mathcal{D}_n} (0) \rbrace_{n \in \mathbb{N}}$ is eventually constant.
		
		Conversely, let $\gamma$ be a globally Lipschitz concave function on $N_{\mathbb{R}}$ such that $\gamma (0) \in \mathbb{Q}$. Apply the approximation Theorem~\ref{closure-P-C} to obtain an increasing sequence of concave rational piecewise affine functions $\lbrace \gamma_n \rbrace_{n \in \mathbb{N}}$ converging in the $\mathcal{C}$-norm to $\gamma$. By Proposition~\ref{nef-dvr}, the sequence $\lbrace \gamma_n \rbrace_{n \in \mathbb{N}}$ corresponds to a sequence of nef toric model divisors $\lbrace \mathcal{D}_n \rbrace_{n \in \mathbb{N}}$ which, after Lemma~\ref{toric-bdy-top-DVR}, is Cauchy in the boundary norm. Theorem~\ref{CD-torus-DVR} gives that the limit $\mathcal{D}$ of the Cauchy sequence $\lbrace \mathcal{D}_n \rbrace_{n \in \mathbb{N}}$ is a nef toric compactified divisor satisfying $\gamma_{\mathcal{D}} = \gamma$. The result follows.
	\end{proof}
	
	\section{A toric local arithmetic analog of Yuan-Zhang's theory}
	This section focuses on the local arithmetic theory of toric quasi-projective varieties, where we prove the main theorems announced in the introduction. These results generalize the work of Burgos, Philippon, and Sombra on toric arithmetic divisors of continuous type on projective toric varieties (see~\cite{BPS}) to admit Green's functions with singularities which are worse than logarithmic (see Remark~\ref{singularities-metrics}). This section is divided into three parts. First, we give a summary of the previously mentioned results of \cite{BPS}. In the second subsection, we introduce the group of toric compactified divisors and describe its properties in convex-analytic terms. The last subsection is devoted to the study of the nef cone and the integral formula for the local toric height. Throughout this section, we fix a local field $K$, and use freely the notation introduced in Subsection~\ref{notation} and the previous Section~\ref{3}. 
	
	\subsection{The arithmetic of projective toric varieties over a local field}
	Let $X_{\Sigma}/K$ be a toric variety with torus action $\mu \colon U \times X_{\Sigma} \rightarrow X_{\Sigma}$. The analytification functor, introduced in Section~\ref{local-theory}, associates to $X_{\Sigma}$ a Berkovich $K$-analytic variety $X_{\Sigma}^{\textup{an}}$ and $K$-analytic group action $\mu^{\textup{an}} \colon U^{\textup{an}} \times X_{\Sigma}^{\textup{an}} \rightarrow X_{\Sigma}^{\textup{an}}$, which restricts to an action $\mu^{\textup{an}} \colon \mathbb{S} \times X_{\Sigma}^{\textup{an}} \rightarrow X_{\Sigma}^{\textup{an}}$ of the \textit{compact torus} $\mathbb{S}$; the $K$-analytic subgroup of $U^{\textup{an}}$ given by $\mathbb{S}:= \lbrace p \in U^{\textup{an}} \, | \, |\chi^m (p)| = 1 \textup{ for all } m \in M \rbrace$. For details on $K$-analytic groups and their actions, we refer to Chapter~5~of~\cite{Ber} or Section~4.2~of~\cite{BPS} for a summary. The $\mathbb{S}$-orbits of this action are described nicely by the \textit{tropicalization functor} $\textup{Trop}$. We sketch its construction below and refer to Sections~4.1~and~4.2~of~\cite{BPS} for details.
	\begin{cons}[Tropicalization functor]\label{tropical-toric-polytope}
		A point $x \in N_{\mathbb{R}}$ is a linear functional on $M_{\mathbb{R}}$; thus, it is determined by its values at the characters $m \in M$. Then, the \textit{tropicalization map} is the continuous map $\textup{Trop} \colon U^{\textup{an}} \rightarrow N_{\mathbb{R}}$ induced by the assignment
		\begin{displaymath}
			p \longmapsto (m \mapsto - \log |\chi^m (p)| ).
		\end{displaymath}
		Observe that $\textup{Trop}(t) = 0$ if and only if $t \in \mathbb{S}$. By Proposition~4.2.6~of~\cite{BPS}, for all $t, p  \in U^{\textup{an}}$ we have $\textup{Trop}(t \cdot p) = \textup{Trop}(t) + \textup{Trop}(p)$. Therefore,  $\textup{Trop}^{-1}(\textup{Trop}(p)) = \mathbb{S} \cdot p$, and so the tropicalization map encodes the $\mathbb{S}$-orbits. The cone-orbit correspondence (\ref{cone-orbit}) gives a stratification of $X_{\Sigma}$ by tori $O(\sigma)$. Then, the family of tropicalization maps $\lbrace \textup{Trop}: O(\sigma )^{\textup{an}} \rightarrow N(\sigma)_{\mathbb{R}} \rbrace_{\sigma \in \Sigma}$ induces a surjective map $\textup{Trop}: X_{\Sigma}^{\textup{an}} \rightarrow N_{\Sigma}$, where $N_{\Sigma}$ is called the \textit{tropical toric variety associated to }$\Sigma$. As a set, it is given by the disjoint union of the $N(\sigma)_{\mathbb{R}}$. For the definition of its topology, we refer to Section~4.1.~p.118~of~\cite{BPS}. We list some of its properties below:
		\begin{enumerate}
			\item The map $\textup{Trop}: X_{\Sigma}^{\textup{an}} \rightarrow N_{\Sigma}$ is continuous, (topologically) proper, and it identifies $N_{\Sigma}$ with the topological quotient of $X_{\Sigma}^{\textup{an}}$ by $\mathbb{S}$.
			\item  Let $f \colon X_{\Sigma_1} \rightarrow X_{\Sigma_2}$ be a toric morphism and $\phi \colon (N_1)_{\mathbb{R}} \rightarrow (N_2)_{\mathbb{R}}$ be the linear map given by Proposition~\ref{toricmorphism}. Then, there is an induced continuous map $\phi \colon N_{1,\Sigma_1} \rightarrow N_{2,\Sigma_2}$ extending $\phi$, compatible with the respective stratifications, and satisfying $\textup{Trop} \circ f^{\textup{an}} = \phi \circ \textup{Trop}$.
			\item If $X_{\Sigma}/K$ is projective and $D$ is an ample toric divisor with corresponding convex polytope $\Delta_D$, then  $N_{\Sigma}$ and $\Delta_D$ are homeomorphic. In particular, $N_{\Sigma}$ is compact.
			\item If $K$ is non-archimedean, there exists a continuous proper section $\theta_{\Sigma} \colon N_{\Sigma} \rightarrow X_{\Sigma}^{\textup{an}}$ of $\textup{Trop}$. For all $p \in X_{\Sigma}^{\textup{an}}$, it satisfies $\theta_{\Sigma} (\textup{Trop} (p)) = \xi \cdot p$, where $\xi$ is the \textit{Gauss point} in $\mathbb{S}$. That is, the point corresponding to the multiplicative valuation on $K[M]$ given by $\sum_{m \in M} \alpha_m \, \chi^m \mapsto  \max_{m \in M} | \alpha_m |$.
		\end{enumerate}
		Therefore, we get a \textit{tropicalization functor} $\textup{Trop}$ from the category of toric varieties over $K$ to the category of topological spaces.
	\end{cons}
	By the above discussion, a function $f : X_{\Sigma}^{\textup{an}} \rightarrow \mathbb{R}_{\pm \infty}$ is $\mathbb{S}$\textit{-invariant} if there exists a function $\widetilde{f} \colon N_{\Sigma} \rightarrow \mathbb{R}_{\pm \infty}$ such that $f = \widetilde{f} \circ \textup{Trop}$.	We denote by $C^{0}_{\mathbb{S}}(X_{\Sigma}^{\textup{an}})$ the space of $\mathbb{S}$-invariant continuous functions on $X_{\Sigma}^{\textup{an}}$. Then, we have the following definition.
	\begin{dfn}
		Let $X_{\Sigma}/K$ be a quasi-projective toric variety. An arithmetic divisor $\overline{D}= (D,g)$ on $X_{\Sigma}$ is said to be \textit{toric} if its divisorial part $D$ is toric and its Green's function $g$ is $\mathbb{S}$-invariant. We denote by $\overline{\textup{Div}}_{\mathbb{T}}(X_{\Sigma})_{\mathbb{Q}}$ the group of toric arithmetic divisors of continuous type on $X_{\Sigma}$.
	\end{dfn}
	Using the tropicalization map, we can construct an analytic analog of a support function. Let $\overline{D}=(D,g)$ be a toric divisor on a quasi-projective variety $X_{\Sigma}/K$. The support $|D|$ of the toric divisor $D$ is contained in the complement of the torus $X_{\Sigma} \setminus U$. Then, the restriction of $g$ to the analytic torus $U^{\textup{an}}$ is a continuous function. By definition of $\mathbb{S}$-invariance, there exists a continuous function $\gamma_{\overline{D}} \colon N_{\mathbb{R}} \rightarrow \mathbb{R}$ satisfying $g|_{U^{\textup{an}}} = - \gamma_{\overline{D}} \circ \textup{Trop}$. We call $\gamma_{\overline{D}}$ the \textit{tropical Green's function associated to }$\overline{D}$. This nomenclature is not standard, as there is a minus sign in the definition of $\gamma_{\overline{D}}$. The justification of this sign comes from the fact that nef toric arithmetic divisors $\overline{D}$ will correspond to concave functions $\gamma_{\overline{D}}$. First, we show that this assignment is injective.
	\begin{prop}\label{G}
		Let $X_{\Sigma}/K$ be quasi-projective and $\overline{D} = (D,g)$ be a toric arithmetic divisor on $X_{\Sigma}$. The assignment $\overline{D} \mapsto \gamma_{\overline{D}}$ induces an injective group morphism $\mathcal{G} \colon \overline{\textup{Div}}_{\mathbb{T}}(X_{\Sigma})_{\mathbb{Q}} \longrightarrow C^{0}(N_{\mathbb{R}})$.
	\end{prop}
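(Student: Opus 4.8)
The plan is to check three things in turn: that $\gamma_{\overline{D}}$ is a well-defined element of $C^{0}(N_{\mathbb{R}})$, that $\overline{D}\mapsto\gamma_{\overline{D}}$ is additive, and that it has trivial kernel; only the last requires real work.

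For well-definedness and additivity: since $D$ is toric its support lies in $X_{\Sigma}\setminus U$, so $f=1$ is a local equation for $D$ on the torus and $g|_{U^{\textup{an}}}=g+\log|1|^{2}$ is a continuous, real-valued function on $U^{\textup{an}}$. The $\mathbb{S}$-invariance of $g$ gives $\widetilde g\colon N_{\Sigma}\to\mathbb{R}_{\pm\infty}$ with $g=\widetilde g\circ\textup{Trop}$; restricting to the open stratum $N_{\mathbb{R}}\subset N_{\Sigma}$ yields $g|_{U^{\textup{an}}}=(\widetilde g|_{N_{\mathbb{R}}})\circ\textup{Trop}$, and we set $\gamma_{\overline{D}}:=-\widetilde g|_{N_{\mathbb{R}}}$. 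By property (i) of Construction~\ref{tropical-toric-polytope} applied to the fan $\lbrace\lbrace 0\rbrace\rbrace$, the map $\textup{Trop}\colon U^{\textup{an}}\to N_{\mathbb{R}}$ exhibits $N_{\mathbb{R}}$ as the topological quotient $U^{\textup{an}}/\mathbb{S}$; since $g|_{U^{\textup{an}}}$ is continuous and real-valued, so is $\gamma_{\overline{D}}$, and it is characterised uniquely by $g|_{U^{\textup{an}}}=-\gamma_{\overline{D}}\circ\textup{Trop}$. Additivity (hence $\mathbb{Q}$-linearity) is then immediate, since $(g_{1}+g_{2})|_{U^{\textup{an}}}$ descends along $\textup{Trop}$ to $\gamma_{\overline{D}_{1}}+\gamma_{\overline{D}_{2}}$; thus $\mathcal{G}$ is a group morphism.

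For injectivity, assume $\gamma_{\overline{D}}=0$, i.e. $g$ vanishes on $U^{\textup{an}}$, which is dense in $X_{\Sigma}^{\textup{an}}$. I first claim $D=0$. Otherwise, writing $D$ as a toric Weil divisor $\sum_{\rho\in\Sigma(1)}c_{\rho}V(\rho)$, some $c_{\rho_{0}}\neq0$. Choose an affine open $W\subseteq X_{\Sigma}$ containing the generic point of $V(\rho_{0})$ and disjoint from $V(\rho)$ for every ray $\rho\neq\rho_{0}$; since $X_{\Sigma}\setminus U=\bigcup_{\rho\in\Sigma(1)}V(\rho)$, we get $D|_{W}=c_{\rho_{0}}\,(V(\rho_{0})\cap W)$, a nonzero divisor. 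Write $D|_{W}=\textup{div}(f)$ for a rational function $f$ (replacing $D$ by an integer multiple if needed, which is harmless) with $\textup{ord}_{V(\rho_{0})\cap W}(f)=c_{\rho_{0}}\neq0$; replacing $f$ by $f^{-1}$ if $c_{\rho_{0}}<0$, we obtain $f_{0}\in\lbrace f,f^{-1}\rbrace$ regular on $W$ and vanishing along $W\cap V(\rho_{0})$, so that $h:=g+\varepsilon\log|f_{0}|^{2}$ (where $\varepsilon=\pm1$ makes $h=g+\log|f|^{2}$) is continuous and real-valued on $W^{\textup{an}}$ by Definition~\ref{arith-div-dfn}. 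Fix $q\in(W\cap V(\rho_{0}))^{\textup{an}}$; then $|f_{0}(q)|=0$. As $W\cap U$ is Zariski-dense in the irreducible variety $W$, the set $(W\cap U)^{\textup{an}}$ is dense in $W^{\textup{an}}$, so we may take $p_{n}\to q$ with $p_{n}\in(W\cap U)^{\textup{an}}\subseteq U^{\textup{an}}$. Then $g(p_{n})=0$, hence $h(p_{n})=\varepsilon\log|f_{0}(p_{n})|^{2}$; as $n\to\infty$ the left side tends to $h(q)\in\mathbb{R}$ while $|f_{0}(p_{n})|\to|f_{0}(q)|=0$ forces the right side to diverge, a contradiction. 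Hence $D=0$, so $g$ is a Green's function for the zero divisor of continuous type; taking $f=1$ on each chart shows $g$ is continuous and real-valued on $X_{\Sigma}^{\textup{an}}$, and vanishing on the dense $U^{\textup{an}}$ it is identically $0$. Therefore $\overline{D}=0$ and $\mathcal{G}$ is injective.

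The main obstacle is the implication $\gamma_{\overline{D}}=0\Rightarrow D=0$: one must produce a chart exhibiting a single boundary component, control the sign of the order of vanishing of its local equation, and combine the continuity of $|f_{0}(\cdot)|$ on $W^{\textup{an}}$ with the density of $U^{\textup{an}}$ in $X_{\Sigma}^{\textup{an}}$ --- both valid uniformly in the archimedean and non-archimedean settings, so no case split is required. Everything else is formal. (If one is willing to invoke from~\cite{BPS} that $\gamma_{\overline{D}}-\Psi_{D}$ is bounded on $N_{\mathbb{R}}$ whenever $\overline{D}$ is of continuous type, then $\gamma_{\overline{D}}=0$ makes the conical function $\Psi_{D}$ bounded, hence $0$, and $D=0$ follows at once; the direct argument above avoids this input.)
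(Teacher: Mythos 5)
Your proof is correct and follows essentially the same route as the paper. The paper states it more tersely: once $g=0$ on $U^{\textup{an}}$ (hence everywhere by density and continuity), the pair $(D,0)$ is an arithmetic divisor of continuous type, so every local equation $h$ of $D$ has $\log|h|$ continuous and finite, and therefore $h$ has neither zeros nor poles, forcing $D=0$. You unpack that last sentence into a concrete contradiction by approaching a boundary point of $V(\rho_0)$ from inside $U^{\textup{an}}$, which is a fair elaboration of the same idea. One small detail to tighten: besides choosing $W$ affine and disjoint from the other $V(\rho)$, you also need $D|_W$ to actually be principal, which does not hold for an arbitrary affine $W$; it does hold if you shrink $W$, or simply take $W=X_{\rho_0}$, where every toric Cartier divisor is given by a single character. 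On your parenthetical alternative: the boundedness of $\gamma_{\overline{D}}-\Psi_D$ (Proposition~\ref{gamma}, and the analogous statement in [BPS]) is only available when $X_\Sigma$ is \emph{projective}, whereas Proposition~\ref{G} is asserted for $X_\Sigma$ quasi-projective — already for $X_\Sigma=U$ the Green's function of a continuous-type divisor can have unbounded $\gamma_{\overline{D}}$. So the direct argument is not merely an option for avoiding that input; it is needed for the generality claimed.
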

	\begin{proof}
		It is clear that $\mathcal{G}$ is a group morphism. Now, we show that it is injective. Since $U^{\textup{an}}$ is dense in $X_{\Sigma}^{\textup{an}}$, a Green's function of continuous type is determined by its values on $U^{\textup{an}}$. Then, for toric divisors $\overline{D}_i =(D_i ,g_i )$ on $X_{\Sigma}$, the equality $\gamma_{\overline{D}_1} = \gamma_{\overline{D}_2}$ holds if and only if $g_1 = g_2$ holds. Assuming that $g_1 = g_2$, the pair $(D_1 - D_2, 0)$ is an arithmetic divisor of continuous type on $X_{\Sigma}$. Therefore, for every local equation $(V,h)$ of $D$, the function $\log |h|$ is continuous and finite on $V^{\textup{an}}$. Then, $h$ does not have poles or zeros on $V$, which implies $D_1 - D_2 = 0$.
	\end{proof}
	The morphism $\mathcal{G}$ satisfies the following functorial property.
	\begin{prop}\label{gamma-birational}
		Let $f\colon X_{\Sigma_0} \rightarrow X_{\Sigma_1}$ be a dominant toric morphism between quasi-projective toric varieties over $K$, inducing the linear map $\phi \colon (N_1)_{\mathbb{R}} \rightarrow (N_2)_{\mathbb{R}}$. Let $\overline{D}$ be a toric arithmetic divisor on $X_{\Sigma_1}$ with tropical Green's function $\gamma_{\overline{D}}$. Then, the pullback $f^{\ast} \overline{D}$ is a toric divisor on $X_{\Sigma_0}$, and its tropical Green's function satisfies  $\gamma_{f^{\ast} \overline{D}} = \gamma_{\overline{D}} \circ \phi$. In particular, if $f$ restricts to the identity on tori, we get $\gamma_{f^{\ast} \overline{D}} = \gamma_{\overline{D}}$.
	\end{prop}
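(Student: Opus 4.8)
The plan is to handle the divisorial part and the analytic part separately, and then read off the tropical Green's function from the compatibility of $\textup{Trop}$ with toric morphisms.

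First I would record that the underlying geometric divisor behaves correctly: since $f$ is a dominant toric morphism, Lemma~\ref{pbsup} tells us that $f^{\ast}D$ is a toric Cartier $\mathbb{Q}$-divisor on $X_{\Sigma_0}$ with support function $\Psi_D\circ\phi$. The pullback of the arithmetic divisor is by definition $f^{\ast}\overline{D} = (f^{\ast}D,\, g\circ f^{\textup{an}})$, and $g\circ f^{\textup{an}}$ is a Green's function for $f^{\ast}D$ of continuous type directly from Definition~\ref{arith-div-dfn}, because $f^{\textup{an}}$ is continuous and a local equation for $D$ pulls back to a local equation for $f^{\ast}D$. So the real content is to show that $g\circ f^{\textup{an}}$ is $\mathbb{S}$-invariant and to compute its tropical Green's function.

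Next I would use that a toric morphism restricts to a homomorphism of tori $U_0\to U_1$, so that $f^{\textup{an}}$ restricts to a map $U_0^{\textup{an}}\to U_1^{\textup{an}}$; on these open subsets the identity $\textup{Trop}\circ f^{\textup{an}} = \phi\circ\textup{Trop}$ of property~(2) in Construction~\ref{tropical-toric-polytope} holds with $\textup{Trop}$ and $\phi$ the honest tropicalization map and linear map. Using the defining relation $g|_{U_1^{\textup{an}}} = -\gamma_{\overline{D}}\circ\textup{Trop}$ and composing with $f^{\textup{an}}$ gives
\begin{displaymath}
(g\circ f^{\textup{an}})|_{U_0^{\textup{an}}} = -\gamma_{\overline{D}}\circ\textup{Trop}\circ f^{\textup{an}} = -(\gamma_{\overline{D}}\circ\phi)\circ\textup{Trop}.
\end{displaymath}
Since $\gamma_{\overline{D}}\circ\phi$ is continuous on $(N_0)_{\mathbb{R}}$ (a continuous function composed with a linear map), this identity simultaneously shows that $g\circ f^{\textup{an}}$ is $\mathbb{S}$-invariant on $U_0^{\textup{an}}$ — hence $f^{\ast}\overline{D}$ is a toric arithmetic divisor — and identifies its tropical Green's function as $\gamma_{f^{\ast}\overline{D}} = \gamma_{\overline{D}}\circ\phi$. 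Finally, if $f$ restricts to the identity on tori, then the associated cocharacter map is $\phi(u) = f\circ u = u$, so $\phi = \textup{id}$ and $\gamma_{f^{\ast}\overline{D}} = \gamma_{\overline{D}}$.

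I do not expect a serious obstacle; the only point requiring a little care is making sure the compatibility $\textup{Trop}\circ f^{\textup{an}} = \phi\circ\textup{Trop}$ is invoked on the open tori, where it is the statement about the tropicalization map proper, rather than on the compactified tropical toric varieties $N_{\Sigma_i}$. Because the tropical Green's function is defined purely in terms of the restrictions of $g$ and $g\circ f^{\textup{an}}$ to the analytic tori, this is exactly the version needed, and no input beyond Lemma~\ref{pbsup}, Construction~\ref{tropical-toric-polytope}, and the injectivity of $\mathcal{G}$ from Proposition~\ref{G} is required.
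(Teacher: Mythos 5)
Your proof takes essentially the same route as the paper's: use dominance and toricity to get a pullback of toric arithmetic divisors, then invoke the functoriality of $\textup{Trop}$ to compute the tropical Green's function. The paper's proof is considerably terser (it just cites the induced pullback map, asserts toricity is preserved, and says the identity follows from functoriality of $\textup{Trop}$), and your version is a reasonable fleshing-out of it.

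One small point is worth tightening. You argue that the torus-only compatibility $\textup{Trop}\circ f^{\textup{an}}=\phi\circ\textup{Trop}$ on $U_0^{\textup{an}}\to U_1^{\textup{an}}$ is ``exactly the version needed,'' and indeed it suffices to \emph{compute} $\gamma_{f^{\ast}\overline{D}}$. But to conclude that $f^{\ast}\overline{D}$ is a \emph{toric} arithmetic divisor, you must show $g\circ f^{\textup{an}}$ is $\mathbb{S}$-invariant on all of $X_{\Sigma_0}^{\textup{an}}$, not only on $U_0^{\textup{an}}$, i.e.\ that it factors through $\textup{Trop}\colon X_{\Sigma_0}^{\textup{an}}\to N_{\Sigma_0}$. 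The clean route is the full functoriality in Construction~\ref{tropical-toric-polytope}\,(2), which provides the continuous extension $\phi\colon N_{\Sigma_0}\to N_{\Sigma_1}$ of the linear map with $\textup{Trop}\circ f^{\textup{an}}=\phi\circ\textup{Trop}$ globally; writing $g=\tilde g\circ\textup{Trop}$ immediately gives $g\circ f^{\textup{an}}=(\tilde g\circ\phi)\circ\textup{Trop}$. (Alternatively a density argument works, but it needs a word of care because $g$ is only of continuous type, not continuous, across the support of $D$.) Finally, the appeal to the injectivity of $\mathcal{G}$ from Proposition~\ref{G} is not actually used anywhere in your argument and can be dropped.
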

	\begin{proof}
		By hypothesis, the toric morphism $f$ is dominant. Then, there is an induced pullback map $f^{\ast} \colon \overline{\textup{Div}}(X_{\Sigma_2})_{\mathbb{Q}} \rightarrow \overline{\textup{Div}}(X_{\Sigma_1})_{\mathbb{Q}}$. Since $f$ is toric, the pullback map $f^{\ast}$ restricts to the subgroup of toric elements. The identity  $\gamma_{f^{\ast} \overline{D}} = \gamma_{\overline{D}} \circ \phi$ follows from the functoriality of $\textup{Trop}$.
	\end{proof}
	From now on, and until the end of the subsection, the toric variety $X_{\Sigma}/K$ is projective. We state below Proposition~4.3.10~of~\cite{BPS}, which characterizes the $\mathbb{S}$-invariant Green's functions of continuous type of a given toric divisor on $X_{\Sigma}$ in terms of the induced tropical Green's functions.
	\begin{prop}\label{gamma}
		Let $X_{\Sigma}/K$ be a projective toric variety, and $D$ be a toric divisor on $X_{\Sigma}$ with support function $\Psi_D$. Then, an $\mathbb{S}$-invariant continuous function $g \colon U^{\textup{an}} \rightarrow \mathbb{R}$ extends to a Green's function of continuous type for $D$ if and only if $\Psi_D - \gamma$ extends to a continuous function on $N_{\Sigma}$, where $g = - \gamma \circ \textup{Trop}$. If this is the case, then the function $|\Psi_D - \gamma|$ is bounded.
	\end{prop}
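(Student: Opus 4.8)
The statement is essentially a comparison between two descriptions of ``well-behavedness near the boundary'': being a Green's function of continuous type for $D$ (a condition phrased on $X_\Sigma^{\textup{an}}$ via local equations) versus the tropical condition that $\Psi_D - \gamma$ extends continuously across $N_\Sigma$. The plan is to reduce everything to the torus orbits via the cone-orbit correspondence and the compatibility of $\textup{Trop}$ with the stratification, since both $X_\Sigma^{\textup{an}}$ and $N_\Sigma$ decompose into pieces indexed by $\sigma \in \Sigma$. First I would recall that a Green's function $g$ for $D$ of continuous type means that for each $\sigma$ and each local equation $\chi^{-m_\sigma}$ of $D$ on $X_\sigma$, the function $g + \log|\chi^{-m_\sigma}|^2 = g - 2\langle m_\sigma, \textup{Trop}(\cdot)\rangle$ (on the torus part $U^{\textup{an}} \subset X_\sigma^{\textup{an}}$) extends continuously to $X_\sigma^{\textup{an}}$. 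Writing $g|_{U^{\textup{an}}} = -\gamma\circ\textup{Trop}$ and recalling $\Psi_D|_\sigma = \langle m_\sigma,\cdot\rangle$, this says $-(\gamma - \Psi_D)\circ\textup{Trop}$ (up to the factor $2$, which I will absorb by the normalization or carry along) extends continuously to $X_\sigma^{\textup{an}}$ in a neighbourhood of the orbit $O(\sigma)$.

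The key step is then a translation lemma: an $\mathbb{S}$-invariant function $h\circ\textup{Trop}$ on $U^{\textup{an}}$, with $h\colon N_{\mathbb R}\to\mathbb R$ continuous, extends continuously to $X_\sigma^{\textup{an}}$ near $O(\sigma)^{\textup{an}}$ if and only if $h$ extends continuously to the partial compactification $N_{\mathbb{R}} \hookrightarrow N_\sigma^{\text{loc}}$ corresponding to $\sigma$. This follows from property (i) of the tropicalization functor (Construction~\ref{tropical-toric-polytope}): $\textup{Trop}\colon X_\sigma^{\textup{an}}\to N_\sigma^{\text{loc}}$ is continuous, proper, and realizes $N_\sigma^{\text{loc}}$ as the topological quotient by the compact group $\mathbb{S}$; an $\mathbb{S}$-invariant continuous function on $X_\sigma^{\textup{an}}$ therefore descends to a continuous function on the quotient, and conversely. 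Assembling this over all $\sigma\in\Sigma$ and using property (ii) (compatibility of $\textup{Trop}$ with the maps between strata, so the local pieces glue to a function on all of $N_\Sigma$), one gets: $g$ is a Green's function of continuous type for $D$ iff $\Psi_D-\gamma$ extends continuously to $N_\Sigma$. Here one uses that $D$ is supported away from $U$, so the only obstruction to continuity of $g$ is along $X_\Sigma\setminus U$, exactly the locus that $\textup{Trop}$ sends to $N_\Sigma\setminus N_{\mathbb{R}}$.

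For the final assertion that $|\Psi_D-\gamma|$ is bounded when it extends continuously: here I would invoke that $X_\Sigma/K$ is projective, hence $\Sigma$ is complete and by property (iii) of Construction~\ref{tropical-toric-polytope} the tropical toric variety $N_\Sigma$ is \emph{compact} (homeomorphic to $\Delta_{D'}$ for any ample $D'$). A continuous function on the compact space $N_\Sigma$ is bounded, and its restriction to the dense subset $N_{\mathbb R}$ — which is exactly $\Psi_D - \gamma$ — is therefore bounded. This is the place where projectivity (completeness of $\Sigma$) is genuinely used; for a merely quasi-projective $X_\Sigma$ the boundedness would fail.

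The main obstacle I expect is the translation lemma in the non-archimedean case: verifying cleanly that an $\mathbb{S}$-invariant continuous function on $X_\sigma^{\textup{an}}$ is precisely the pullback of a continuous function on $N_\sigma^{\text{loc}}$, and that ``extends continuously near $O(\sigma)^{\textup{an}}$'' matches ``extends continuously across the added boundary stratum of $N_\sigma^{\text{loc}}$.'' In the archimedean case this is a routine quotient-topology argument (with the $\zeta$-invariance convention handled by noting $\zeta$ acts trivially on $\textup{Trop}$), and the existence of the continuous proper section $\theta_\Sigma$ in property (iv) makes the non-archimedean case equally clean — one can use $\theta_\Sigma$ to produce the continuous extension on $N_\Sigma$ from a continuous function on $X_\Sigma^{\textup{an}}$, and compose with $\textup{Trop}$ for the converse. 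Once the lemma is in place, everything else is bookkeeping with the stratifications, and one should cite Proposition~4.3.10 of~\cite{BPS} as the precise reference since the statement is taken verbatim from there.
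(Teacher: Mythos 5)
The paper offers no proof of its own here; it quotes Proposition~4.3.10 of \cite{BPS} verbatim. Your sketch is a sound reconstruction of that argument: the heart of it is exactly the quotient property of $\textup{Trop}$ recorded in Construction~\ref{tropical-toric-polytope}(i), which makes $\mathbb{S}$-invariant continuous functions on $X_\Sigma^{\textup{an}}$ correspond bijectively to continuous functions on $N_\Sigma$; combined with the local description of ``Green's function of continuous type'' via $g+\log|\chi^{-m_\sigma}|^2$ on each chart $X_\sigma^{\textup{an}}$, this yields the equivalence, and compactness of $N_\Sigma$ (which genuinely needs projectivity) gives the final boundedness assertion.

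The factor of $2$ you plan to ``absorb'' deserves to be pinned down, because it exposes a real wrinkle in the paper's conventions rather than a harmless rescaling. With Definition~\ref{arith-div-dfn} the Green's function condition is that $g+\log|\chi^{-m_\sigma}|^2$ extend continuously, and since $\log|\chi^{-m_\sigma}|^2 = 2\,\Psi_D\circ\textup{Trop}$ on $U^{\textup{an}}$ while $g|_{U^{\textup{an}}} = -\gamma\circ\textup{Trop}$, the tropical condition you actually obtain under those conventions is that $2\Psi_D - \gamma$, not $\Psi_D - \gamma$, extend continuously to $N_\Sigma$. A quick check on $\mathbb{P}^1$ with the Fubini--Study Green's function $g=\log(1+|z|^2)$ for $D=[\infty]$ confirms this: one finds $\gamma(u)=-\log(1+e^{-2u})$ and $\Psi_D(u)=\min(u,0)$, so $\Psi_D-\gamma\to+\infty$ as $u\to-\infty$, whereas $2\Psi_D-\gamma$ does extend continuously to $N_\Sigma\cong[-\infty,+\infty]$. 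The source of the discrepancy is that \cite{BPS} normalizes Green's functions via $-\log\|\cdot\|$ (no square), whereas the present paper uses $-\log\|\cdot\|^2$ but kept the BPS statement unchanged. In your write-up you should either carry the $2$ through and state the result as $2\Psi_D-\gamma$ extends continuously, or else fix the convention at the outset (e.g.\ define $\gamma$ via $g|_{U^{\textup{an}}}=-2\gamma\circ\textup{Trop}$, or define Green's functions via $g+\log|f|$); waving it off as a normalization that can be silently absorbed is not quite honest, since $\Psi_D$ and $\gamma$ are not free to rescale independently. With that bookkeeping repaired, the argument is complete and correct.
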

	Recall the notion of an arithmetic divisor of model type from Definition~\ref{model-type}. Then, we denote by $\overline{\textup{Div}}_{\mathbb{T}}(X_{\Sigma})_{\textup{mod}}$ the group of toric arithmetic divisors of model type on $X_{\Sigma}/K$. If $K$ is non-archimedean, we have the following characterization of the group of toric arithmetic divisors of model type.
	\begin{prop}\label{toric-local-analytification}
		Let $K$ be non-archimedean, $X_{\Sigma}/K$ be projective, and $\pi \colon X_{\Sigma} \rightarrow \mathcal{X}_{\widetilde{\Sigma}}$ be a toric projective model of $X_{\Sigma}$ over $\mathcal{O}_K$. Then, for each toric divisor $\mathcal{D}$ on $\mathcal{X}_{\widetilde{\Sigma}}$, its analytification $\overline{D}$ is toric and the tropical Green's function $ \gamma_{\overline{D}}$ coincides with the piecewise affine function $\gamma_{\mathcal{D}}$ associated to $\mathcal{D}$ (\ref{toric-divisors-DVR-1}). Moreover, the assignment $\overline{D} \mapsto \gamma_{\overline{D}}$ induces an isomorphism
		\begin{displaymath}
			\mathcal{G} \colon \overline{\textup{Div}}_{\mathbb{T}}(X_{\Sigma})_{\textup{mod}} \longrightarrow   \bigcup_{\widetilde{\Sigma} \in \textup{PF}(N_{\mathbb{R}}\times \mathbb{R}_{\geq 0}, \Sigma)}  \mathcal{PA}( \Pi, \mathbb{Q}),
		\end{displaymath}
		where $\textup{PF}(N_{\mathbb{R}}\times \mathbb{R}_{\geq 0}, \Sigma)$ is the category of projective fans $\widetilde{\Sigma}$ in $N_{\mathbb{R}}\times \mathbb{R}_{\geq 0}$ having $\Sigma$ as a subfan, and $\mathcal{PA}(\Pi, \mathbb{Q})$ is the space of rational piecewise affine functions on the polyhedral complex $\Pi$ obtained by intersecting the fan $\widetilde{\Sigma}$ with the hyperplane $N_{\mathbb{R}} \times \lbrace 1 \rbrace$.
	\end{prop}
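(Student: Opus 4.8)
The plan is to split the statement into the pointwise identity $\gamma_{\overline{D}}=\gamma_{\mathcal{D}}$ (together with the toricity of $\textup{an}(\mathcal{D})$) and the claimed isomorphism, and to derive everything from a computation on the affine toric charts of $\mathcal{X}_{\widetilde{\Sigma}}$ plus the combinatorial descriptions of Subsection~\ref{DVR-1}. First I would treat a single integral toric divisor, written as $\mathcal{D}=\lbrace(\mathcal{X}_\sigma,\chi^{(-m_\sigma,-\ell_\sigma)})\rbrace_{\sigma\in\widetilde{\Sigma}}$, the $\mathbb{Q}$-divisor case following by clearing denominators. That the restriction $D$ to the generic fibre $X_\Sigma$ is toric is contained in Proposition~\ref{toric-divisors-DVR-1}. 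For the Green's function I would evaluate $g_{\mathcal{D},D}$ on the dense torus $U^{\textup{an}}$: given $x\in U^{\textup{an}}$ whose reduction lies in the stratum attached to $\sigma$, the monomial $\chi^{(-m_\sigma,-\ell_\sigma)}$ is a local equation for $\mathcal{D}$ on $\mathcal{X}_\sigma$, and the relation $\chi^{(0,1)}=\varpi$ lets me factor it as $\chi^{-m_\sigma}\cdot\varpi^{-\ell_\sigma}$. Using $-\log|\chi^{m_\sigma}(x)|=\langle m_\sigma,\textup{Trop}(x)\rangle$ and $-\log|\varpi|=v(\varpi)$, one reads off (with the normalizations fixed in Construction~\ref{tropical-toric-polytope}) that $-\log|\chi^{(-m_\sigma,-\ell_\sigma)}(x)|=\langle(m_\sigma,\ell_\sigma),(\textup{Trop}(x),1)\rangle=\gamma_{\mathcal{D}}(\textup{Trop}(x))$, so $g_{\mathcal{D},D}|_{U^{\textup{an}}}=-\gamma_{\mathcal{D}}\circ\textup{Trop}$. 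In particular this value depends only on $\textup{Trop}(x)$, hence $g_{\mathcal{D},D}$ is $\mathbb{S}$-invariant, $\overline{D}=\textup{an}(\mathcal{D})$ is a toric arithmetic divisor, and $\gamma_{\overline{D}}=\gamma_{\mathcal{D}}$. The only nonformal input is knowing which chart $\mathcal{X}_\sigma$ contains $\textup{red}(x)$, i.e. the compatibility between the reduction and tropicalization maps (that $\textup{red}(x)$ lies over $\sigma$ exactly when $(\textup{Trop}(x),1)\in\sigma$), which is part of the construction of $N_{\widetilde{\Sigma}}$ and the Gauss section $\theta_\Sigma$ recalled in Construction~\ref{tropical-toric-polytope} (see also Section~3.5 of~\cite{BPS}).

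For the isomorphism, $\mathcal{G}$ is injective already on $\overline{\textup{Div}}_{\mathbb{T}}(X_\Sigma)_{\mathbb{Q}}$ by Proposition~\ref{G}. Viewing $X_\Sigma$ as the quasi-projective toric scheme over $\mathcal{O}_K$ attached to $\Sigma\subset N_{\mathbb{R}}\times\lbrace0\rbrace$ (with $\Sigma_{\textup{can}}$ of Example~\ref{canonical-model} as a projective model), Lemma~\ref{toric-model-div-S} and Proposition~\ref{toric-divisors-DVR-1} give an isomorphism $\mathcal{PA}\colon\textup{Div}_{\mathbb{T}}(X_\Sigma/\mathcal{O}_K)_{\textup{mod}}\xrightarrow{\ \sim\ }\bigcup_{\widetilde{\Sigma}}\mathcal{PA}(\Pi,\mathbb{Q})$, and the first part shows that analytification carries $\textup{Div}_{\mathbb{T}}(X_\Sigma/\mathcal{O}_K)_{\textup{mod}}$ into $\overline{\textup{Div}}_{\mathbb{T}}(X_\Sigma)_{\textup{mod}}$ with $\mathcal{G}\circ\textup{an}=\mathcal{PA}$. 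This already yields surjectivity of $\mathcal{G}$ onto $\bigcup_{\widetilde{\Sigma}}\mathcal{PA}(\Pi,\mathbb{Q})$ and injectivity of $\textup{an}$. To conclude that $\mathcal{G}$ is an isomorphism it remains to see that $\textup{an}$ is surjective onto $\overline{\textup{Div}}_{\mathbb{T}}(X_\Sigma)_{\textup{mod}}$ — equivalently, that an $\mathbb{S}$-invariant Green's function of algebraic type for a toric divisor is always realized by a toric divisor on a toric model over $\mathcal{O}_K$ — after which $\mathcal{G}=\mathcal{PA}\circ\textup{an}^{-1}$.

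The hard part is exactly this last surjectivity, because the model $\mathcal{X}'$ producing a given algebraic Green's function need not be toric, so Proposition~\ref{toric-divisors-DVR-1} does not apply directly. I would reduce to the case $D=0$: by Proposition~\ref{gamma} the difference of two algebraic Green's functions for $D$ is an $\mathbb{S}$-invariant model function on $X_\Sigma^{\textup{an}}$, and one needs that every such function is rational piecewise affine on the trace $\Pi$ of some projective fan in $N_{\mathbb{R}}\times\mathbb{R}_{\geq0}$ refining $\Sigma$ — this is the structure of $\mathbb{S}$-invariant model metrics in the local toric theory of~\cite{BPS} (and can alternatively be obtained from toric resolution applied to $\mathcal{X}'$). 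Granting it, the remaining verifications — that $\textup{an}$ respects the group laws and the boundary norms, so that the above identifications are isomorphisms of topological groups — are routine and follow from the functoriality of analytification and Proposition~\ref{toric-divisors-DVR-2}.
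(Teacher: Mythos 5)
Your outline follows the same route as the paper's proof, which simply cites Proposition~4.5.3 and Corollary~4.5.5 of~\cite{BPS} for the pointwise identity $\gamma_{\overline{D}}=\gamma_{\mathcal{D}}$ and for the fact that every $\mathbb{S}$-invariant algebraic Green's function of a toric divisor is realized by a toric model over $\mathcal{O}_K$, and then invokes Lemma~\ref{toric-model-div-S}. The surjectivity you flag as the ``hard part'' is exactly what Corollary~4.5.5 of~\cite{BPS} supplies, and your chart computation is the content of Proposition~4.5.3 — though note a sign slip: with $\textup{Trop}(x)(m)=-\log|\chi^m(x)|$ and $\chi^{(0,1)}=\varpi$, one finds $\log|\chi^{(-m_\sigma,-\ell_\sigma)}(x)|=\langle(m_\sigma,\ell_\sigma),(\textup{Trop}(x),1)\rangle=\gamma_{\mathcal{D}}(\textup{Trop}(x))$, not $-\log|\chi^{(-m_\sigma,-\ell_\sigma)}(x)|$, which is consistent with the conclusion $g_{\mathcal{D},D}|_{U^{\textup{an}}}=-\gamma_{\mathcal{D}}\circ\textup{Trop}$.
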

	\begin{proof}
		The first part follows from Proposition~4.5.3 and Corollary~4.5.5~of~\cite{BPS}. Then, the isomorphism is immediate from Lemma~\ref{toric-model-div-S}.
	\end{proof}
	\begin{ex}[Canonical Green's functions]\label{canonical-GF}
		Let $D$ be a toric divisor on $X_{\Sigma}/K$ with support function $\Psi_D$. There is a \textit{canonical Green's function} $g_{D,\textup{can}}$ for $D$ of continuous type, and we write $\overline{D}_{\textup{can}} = (D,g_{D,\textup{can}})$.  Indeed, consider $\gamma_{D,\textup{can}}:= \Psi_D$. Trivially, the function $\Psi_D - \gamma_{D,\textup{can}}$ extends continuously to $N_{\Sigma}$. By Proposition~\ref{gamma}, the $\mathbb{S}$-invariant function $g_{D,\textup{can}}$ given by $\gamma_{D,\textup{can}} \circ \textup{Trop}$ extends to a Green's function for $D$. Additionally, if $K$ is non-archimedean, the arithmetic divisor $\overline{D}_{\textup{can}}$ is given by analytification of the canonical model $\mathcal{D}_{\textup{can}}$ of $D$ (See Example~\ref{canonical-model}). Finally, the assignment $D \mapsto \overline{D}_{\textup{can}}$ induces a group morphism  $\textup{can} \colon \textup{Div}_{\mathbb{T}}(X_{\Sigma})_{\mathbb{Q}} \rightarrow \overline{\textup{Div}}_{\mathbb{T}}(X_{\Sigma})_{\mathbb{Q}}$, which we call the \textit{canonical map}. It is a section of the \textit{forgetful map} $\textup{for} \colon \overline{\textup{Div}}_{\mathbb{T}}(X_{\Sigma})_{\mathbb{Q}} \rightarrow \textup{Div}_{\mathbb{T}}(X_{\Sigma})_{\mathbb{Q}}$, induced by the assignment $\overline{D} \mapsto D$.
	\end{ex}
	\begin{cons}[$\mathbb{S}$-symmetrization]\label{S-inv-proj}
		Let $X_{\Sigma}/K$ be a projective toric variety and $f$ be a continuous function on $X_{\Sigma}^{\textup{an}}$. There is an averaging process that takes the function $f$ and returns an $\mathbb{S}$-invariant function $f^{\mathbb{S}}$. The assignment $f \mapsto f^{\mathbb{S}}$ induces an $\mathbb{R}$-linear map $( \cdot )^{\mathbb{S}} \colon C^{0}(X_{\Sigma}^{\textup{an}}) \rightarrow C^{0}_{\mathbb{S}}(X_{\Sigma}^{\textup{an}})$ which we call the $\mathbb{S}$\textit{-symmetrization}. It is defined as follows: If $K$ is archimedean, for each $p \in X_{\Sigma}^{\textup{an}}$ the function $f^{\mathbb{S}}$ is given by the integral
		\begin{displaymath}
			f^{\mathbb{S}}(p):= \int_{\mathbb{S}} f(t \cdot p) \, \textup{d}\mu_{\mathbb{S}}(t),
		\end{displaymath}
		where $\mu_{\mathbb{S}}$ is the Haar probability measure on $\mathbb{S}$. If $K$ is non-archimedean, the function $f^{\mathbb{S}}$ is defined as the compostion $f \circ \theta_{\Sigma} \circ \textup {Trop}$. It is clear that $f^{\mathbb{S}}$ is continuous and $\mathbb{S}$-invariant. We have a similar statement for arithmetic divisors. Let $\overline{D}=(D,g)$ be an arithmetic divisor of continuous type on $X_{\Sigma}$ such that its divisorial part $D$ is toric. The analytification of the support $|D|^{\textup{an}}$ is an $\mathbb{S}$-invariant set. Then, define the function $g^{\mathbb{S}} \colon X_{\Sigma}^{\textup{an}} \setminus |D|^{\textup{an}} \rightarrow \mathbb{R}$ in the same way as above. Proposition~4.3.4~of~\cite{BPS} gives the following properties:
		\begin{enumerate}
			\item The pair $\overline{D}^{\mathbb{S}} \coloneqq (D, g^{\mathbb{S}})$ is a toric arithmetic divisor of continuous type on $X_{\Sigma}$.
			\item Let $\lbrace g_n \rbrace_{n \in \mathbb{N}}$ be a sequence of Green's functions of continuous type for $D$ converging uniformly to $g$. Then, the sequence $\lbrace g_{n}^{\mathbb{S}} \rbrace_{n \in \mathbb{N}}$ converges uniformly to $g^{\mathbb{S}}$.
			\item If $K$ is archimedean and $g$ is of smooth type (resp. psh type), then $g^{\mathbb{S}}$ is of smooth type (resp. psh type).
		\end{enumerate}
		By Proposition~\ref{model-dense}, we conclude that every $\mathbb{S}$-invariant Green's function of continuous type for $D$ is the uniform limit of $\mathbb{S}$-invariant Green's functions of smooth type for $D$.
	\end{cons}
	Recall the notion of a nef arithmetic divisor, introduced in Definition~\ref{nef-local}. Then, we denote by $\textup{Div}^{\textup{nef}}_{\mathbb{T}}(X_{\Sigma})_{\mathbb{Q}}$ the cone of nef toric arithmetic divisors on $X_{\Sigma}$. The space of integrable toric arithmetic divisors is the difference
	\begin{displaymath}
		\textup{Div}^{\textup{int}}_{\mathbb{T}}(X_{\Sigma})_{\mathbb{Q}} \coloneqq \textup{Div}^{\textup{nef}}_{\mathbb{T}}(X_{\Sigma})_{\mathbb{Q}} - \textup{Div}^{\textup{nef}}_{\mathbb{T}}(X_{\Sigma})_{\mathbb{Q}}.
	\end{displaymath}
	As in the geometric case, the nefness of a toric arithmetic divisor $\overline{D}$ corresponds to the concavity of the tropical Green's function $\gamma_{\overline{D}}$. Indeed, we have the following theorem.Theorem~4.8.1~of~\cite{BPS}.
	\begin{thm}\label{nef-tor-arith-proj}
		Let $X_{\Sigma}/K$ be a projective toric variety and $D$ be a nef toric divisor with support function $\Psi_D$ and convex polytope $\Delta_D$. Then, the assignments $\overline{D} \mapsto \gamma_{\overline{D}}$ and $\overline{D} \mapsto \gamma_{\overline{D}}^{\vee}$, where $\gamma_{\overline{D}}^{\vee}$ is the Legendre-Fenchel transform of $\gamma_{\overline{D}}$ (\ref{LF-dfn}) induce bijections between:
		\begin{enumerate}
			\item The set of Green's functions $g$ of continuous type for $D$ such that $\overline{D}=(D,g)$ is nef.
			\item The set of concave functions $\gamma \colon N_{\mathbb{R}} \rightarrow \mathbb{R}$ such that $|\Psi_D - \gamma|$ is bounded.
			\item The set of concave continuous functions $\vartheta \colon \Delta_D \rightarrow \mathbb{R}$.
		\end{enumerate}
		Assume further that $K$ is non-archimedean. Then, a nef divisor $\overline{D}$ is of model type if and only if its tropical Green's function $\gamma_{\overline{D}}$ is rational piecewise affine. 
	\end{thm}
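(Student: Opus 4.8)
The plan is to decouple the statement into a purely convex-analytic equivalence (2)~$\Leftrightarrow$~(3) and an ``arithmetic'' equivalence (1)~$\Leftrightarrow$~(2), the latter being the one that encodes nefness as concavity; composing the two, the maps $\overline{D}\mapsto\gamma_{\overline{D}}$ and $\overline{D}\mapsto\gamma_{\overline{D}}^{\vee}$ give the two announced bijections.

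For (2)~$\Leftrightarrow$~(3) I would first recall from Proposition~\ref{G} that $\mathcal{G}\colon\overline{D}\mapsto\gamma_{\overline{D}}$ is injective, so everything reduces to identifying images. The key convex-analytic input is that the Legendre--Fenchel transform is an order-reversing involution on closed concave functions and that $\Psi_D^{\vee}$ is the function vanishing on $\Delta_D$ and equal to $-\infty$ off $\Delta_D$. Hence a concave $\gamma$ on $N_{\mathbb{R}}$ with $|\Psi_D-\gamma|$ bounded has $\gamma^{\vee}$ with effective domain exactly $\Delta_D$ and bounded there; since a closed concave function that is finite on a polytope is bounded below by its vertex values and continuous on that polytope, $\gamma^{\vee}$ lies in~(3). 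Conversely a continuous concave $\vartheta$ on the compact set $\Delta_D$ is closed, so $\vartheta^{\vee}$ is concave and finite on $N_{\mathbb{R}}$ with $|\Psi_D-\vartheta^{\vee}|\le\|\vartheta\|_{\infty}$, and the double transform recovers $\vartheta$. I expect the only mildly delicate point here to be the continuity of a bounded closed concave function up to the boundary of a polytope; I would either cite the relevant convex-analysis lemma or prove it by writing a boundary-approaching sequence as $(1-s_n)p+s_n w_n$ with $w_n\in\Delta_D$ and $s_n\to 0$, and combining concavity with upper semicontinuity.

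For (1)~$\Leftrightarrow$~(2): by Proposition~\ref{gamma}, writing $g=-\gamma_{\overline{D}}\circ\textup{Trop}$, once $(D,g)$ is a Green's function of continuous type (equivalently $\Psi_D-\gamma_{\overline{D}}$ extends continuously to $N_{\Sigma}$, whence $|\Psi_D-\gamma_{\overline{D}}|$ is automatically bounded), the remaining content is the equivalence ``$\overline{D}$ nef $\iff\gamma_{\overline{D}}$ concave''. I would settle the model-type case first: in the non-archimedean setting $\overline{D}=\textup{an}(\mathcal{D})$ for a toric divisor $\mathcal{D}$ on a toric model $\mathcal{X}_{\widetilde{\Sigma}}/\mathcal{O}_K$ with $\gamma_{\overline{D}}=\gamma_{\mathcal{D}}$ (Proposition~\ref{toric-local-analytification}), and $\mathcal{D}$ is nef iff $\gamma_{\mathcal{D}}$ is concave (Proposition~\ref{nef-dvr}); in the archimedean setting, for an $\mathbb{S}$-invariant smooth Green's function the toric expression for $\textup{dd}^{\textup{c}}$ on $U^{\textup{an}}$ gives $\omega_D(g)|_{U^{\textup{an}}}\ge 0\iff\gamma_{\overline{D}}$ concave, and the continuous extension of $\Psi_D-\gamma_{\overline{D}}$ to $N_{\Sigma}$ propagates positivity across $X_{\Sigma}^{\textup{an}}\setminus U^{\textup{an}}$. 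For a general nef $\overline{D}$, I would approximate $g$ uniformly by model-type Green's functions coming from nef models (Definition~\ref{nef-local}, Proposition~\ref{model-dense}), pass to $\mathbb{S}$-symmetrizations via Construction~\ref{S-inv-proj} to make them $\mathbb{S}$-invariant and still nef, so each $\gamma_n$ is concave and $\gamma_n\to\gamma_{\overline{D}}$ uniformly on the compact $N_{\Sigma}$, forcing $\gamma_{\overline{D}}$ concave. Conversely, a concave $\gamma_{\overline{D}}$ with $|\Psi_D-\gamma_{\overline{D}}|$ bounded is globally Lipschitz, so Theorem~\ref{closure-P-C} (non-archimedean) or mollification (archimedean) yields concave model-type $\gamma_n\to\gamma_{\overline{D}}$ in the $\mathcal{C}$-norm with $|\Psi_D-\gamma_n|$ uniformly bounded; the corresponding $\overline{D}_n$ are nef with divisorial part $D$, and, the nef cone being closed and convergence with fixed divisorial part being uniform convergence of Green's functions, $\overline{D}$ is nef.

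Finally, for the ``moreover'' in the non-archimedean case: if $\overline{D}$ is nef of model type then $\gamma_{\overline{D}}=\gamma_{\mathcal{D}}$ is rational piecewise affine by Proposition~\ref{toric-local-analytification}; conversely a rational piecewise affine concave $\gamma_{\overline{D}}$ is the restriction to $N_{\mathbb{R}}\times\{1\}$ of a rational support function on a complete fan $\widetilde{\Sigma}$ in $N_{\mathbb{R}}\times\mathbb{R}_{\ge 0}$ with $\widetilde{\Sigma}\cap(N_{\mathbb{R}}\times\{0\})=\Sigma$, which after a projective refinement defines a toric model $\mathcal{D}$ whose analytification has tropical Green's function $\gamma_{\overline{D}}$ and divisorial part $\textup{rec}(\gamma_{\overline{D}})=\Psi_D$ (Proposition~\ref{gamma}), hence equals $\overline{D}$. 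I expect the main obstacle overall to be the general-case direction of (1)~$\Leftrightarrow$~(2): specifically, checking that $\mathbb{S}$-symmetrization preserves nefness beyond the smooth/psh-type statements recorded in Construction~\ref{S-inv-proj}, and that positivity of the current genuinely propagates from $U^{\textup{an}}$ to the boundary strata.
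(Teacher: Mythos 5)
The paper itself does not prove this statement: its ``proof'' is the single line ``This is an amalgamation of Theorems~4.8.1~and~4.5.10~of~\cite{BPS}.'' So your from-scratch reconstruction is taking a genuinely different route, and it is worth saying clearly what that route buys and where it is incomplete.

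Your decomposition into (2)$\,\Leftrightarrow\,$(3) by Legendre--Fenchel duality and (1)$\,\Leftrightarrow\,$(2) by a model-type/general-type dichotomy is the right outline, and it does mirror the internal structure of the BPS argument. The (2)$\,\Leftrightarrow\,$(3) step is sound modulo the standard Gale--Klee--Rockafellar-type fact that a finite closed concave function on a polytope is bounded and continuous up to the boundary; the paper itself flags this via Corollary~32.3.4 of~\cite{Roc} in the remark after Proposition~\ref{bounds-concave}, so you could cite it rather than improvise the boundary-sequence argument. The ``moreover'' clause is also correctly reduced to Proposition~\ref{toric-local-analytification} together with the canonical-model/extension construction.

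The genuine gap is exactly where you suspect it. In the general-case direction ``$\overline D$ nef $\Rightarrow \gamma_{\overline D}$ concave,'' you propose to take a sequence of nef model-type Green's functions $g_n\to g$ and replace each by its $\mathbb{S}$-symmetrization $g_n^{\mathbb{S}}$. In the archimedean case Construction~\ref{S-inv-proj}(iii) does record that $\mathbb{S}$-symmetrization preserves psh type, so that step is fillable from the paper's own statements (you still owe the reader an extension-of-positive-currents argument to propagate $\omega_D(g)\ge 0$ from $U^{\textup{an}}$ across the boundary strata, which is not recorded here and is not automatic, though standard). In the non-archimedean case, however, $g_n^{\mathbb{S}}=g_n\circ\theta_\Sigma\circ\textup{Trop}$ is merely an $\mathbb{S}$-invariant continuous function; it is not visibly a model function, and even if it were, nothing in Construction~\ref{S-inv-proj} says that it comes from a \emph{nef} model. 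The assertion ``the $\mathbb{S}$-symmetrization of a semipositive (model-type) toric metric is semipositive'' is precisely Proposition~4.7.1 of~\cite{BPS}, a substantive result proved in that reference by a separate argument; it is not deducible from the ingredients recorded in this paper. Without it, the claim ``each $\gamma_n$ is concave'' does not follow, and this is the hinge of the whole (1)$\,\Rightarrow\,$(2) direction. There is also a smaller point in your model-type base case: you invoke ``$\textup{an}(\mathcal{D})$ nef $\iff\mathcal{D}$ relatively nef,'' but the paper's Definition~\ref{nef-local} only gives one implication for free (relatively nef model $\Rightarrow$ nef divisor via the constant sequence); the converse, for an arbitrary model, is part of what needs to be established and again circles back to the BPS machinery. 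In short: the decomposition is right, (2)$\,\Leftrightarrow\,$(3) and the ``moreover'' are in good shape, but the non-archimedean half of (1)$\,\Leftrightarrow\,$(2) is not closed without importing BPS Proposition~4.7.1 or an equivalent, which is why the paper simply cites~\cite{BPS}.
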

	\begin{proof}
		This is an amalgamation of Theorems~4.8.1~and~4.5.10~of~\cite{BPS}.
	\end{proof}
	Let $\overline{D}$ be a nef toric arithmetic divisor on the projective variety $X_{\Sigma}/K$. The above theorem introduces the concave function $\vartheta_{\overline{D}} \colon \Delta_{D} \rightarrow \mathbb{R}$, which is given by the Legendre-Fenchel transform of the tropical Green's function $\gamma_{\overline{D}}$. It is known as the \textit{roof function} of $\overline{D}$. It will play a key role when computing toric heights, which we now recall. Chapter~5~of~\cite{BPS} introduces toric heights using the language of toric metrized line bundles. The following remark shows that this definition can be stated in divisorial terms.
	\begin{rem}\label{remark-toric-h-def}
		Let $X_{\Sigma}/K$ be a projective toric variety of dimension $d$. Suppose we are given an $e$-dimensional irreducible subvariety $Y$ of $X_{\Sigma}$ and nef toric arithmetic divisors $\overline{D}_0, \ldots, \overline{D}_e$ on $X_{\Sigma}$. By successive applications of Chow's moving lemma, we may find rational functions $f_0, \ldots, f_e \in K(X_{\Sigma})^{\times}$ such that the divisors $D_i + \textup{div}(f_i)$, $i \in \lbrace 0, \ldots, e\rbrace$ meet $Y$ properly. Moreover, for each $i$, the toric arithmetic divisor $\overline{D}_{i, \textup{can}}$ is nef and has the same divisorial part as $\overline{D}_i$. Then, by Definition~\ref{local-h-def}, the difference of heights
		\begin{displaymath}
			\textup{h}(Y ; \overline{D}_0 + \overline{\textup{div}}(f_0), \ldots , \overline{D}_e + \overline{\textup{div}}(f_e))  - \textup{h}(Y ; \overline{D}_{0,\textup{can}} + \overline{\textup{div}}(f_0), \ldots , \overline{D}_{e,\textup{can}} + \overline{\textup{div}}(f_e))
		\end{displaymath}
		exist. By Lemma~\ref{difference-h}, the quantity above does not depend on the choice of the rational functions $f_0, \ldots, f_e \in K(X_{\Sigma})^{\times}$. This motivates the following definition.
	\end{rem}
	\begin{dfn}\label{local-toric-h-proj-def}
		Let $X_{\Sigma}/K$ be a projective toric variety of dimension $d$. Given an $e$-dimensional irreducible subvariety $Y$ of $X_{\Sigma}$ and nef toric arithmetic divisors $\overline{D}_0, \ldots, \overline{D}_e$ on $X_{\Sigma}$, the \textit{local toric height} of $Y$ with respect to $\overline{D}_0, \ldots, \overline{D}_e$, denoted by $\textup{h}^{\textup{tor}}(Y; \overline{D}_0 , \ldots , \overline{D}_e ) $, is defined as the difference
		\begin{displaymath}
			\textup{h}(Y ; \overline{D}_0 + \overline{\textup{div}}(f_0), \ldots , \overline{D}_e + \overline{\textup{div}}(f_e))  - \textup{h}(Y ; \overline{D}_{0,\textup{can}} + \overline{\textup{div}}(f_0), \ldots , \overline{D}_{e,\textup{can}}+\overline{\textup{div}}(f_e)),
		\end{displaymath}
		where $f_0, \ldots, f_e \in K(X_{\Sigma})^{\times}$ are as in Remark~\ref{remark-toric-h-def}. When $\overline{D}_{0} = \ldots = \overline{D}_{e}= \overline{D}$, we will denote $\textup{h}^{\textup{tor}}(Y; \overline{D}) \coloneqq \textup{h}^{\textup{tor}}(Y; \overline{D}_0 , \ldots , \overline{D}_e )$. This definition is extended to $e$-dimensional cycles and integrable toric arithmetic divisors by linearity.
	\end{dfn}
	\begin{rem}
		The local toric height is defined as a difference of local heights. Thus, it is symmetric and multilinear. It does not agree with the local height introduced in Definition~\ref{local-h-def}. However, in the global situation, the sum over all places of the local heights of a toric divisor equipped with the canonical Green's function at all places will be zero. Therefore, it can be used to compute global heights.
	\end{rem}
	The following lemma shows that the local toric arithmetic self-intersection numbers of a nef toric arithmetic divisor can be computed directly instead of using the inductive process described in Definition~\ref{local-h-def}.
	\begin{lem}\label{toric-mixed-energy}
		Let $X_{\Sigma}/K$ be a projective toric variety of dimension $d$ and $\overline{D}$ be a nef arithmetic toric divisor on $X_{\Sigma}$. Then,
		\begin{displaymath}
			\textup{h}^{\textup{tor}}(X_{\Sigma}; \overline{D}) = \sum_{j=0}^{d} \int_{X_{\Sigma}^{\textup{an}}} (g_D - g_{D,\textup{can}}) \langle \omega_{D}(g_D)^{\wedge j} \wedge \omega_{D}(g_{D,\textup{can}})^{\wedge (d-j)} \rangle \wedge \delta_{X_{\Sigma}}.
		\end{displaymath}
	\end{lem}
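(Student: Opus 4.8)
\emph{Proof proposal.} The plan is to unwind the definition of the local toric height (Definition~\ref{local-toric-h-proj-def}) into an ordinary local arithmetic intersection number of auxiliary divisors that are in general position, and then collapse the resulting difference by a telescoping argument built on Theorem~\ref{local-h-thm}.

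First I would fix rational functions $f_0,\ldots,f_d\in K(X_{\Sigma})^{\times}$ --- whose existence is guaranteed by Remark~\ref{remark-toric-h-def} --- so that the divisors $E_i\coloneqq D+\textup{div}(f_i)$ meet $X_{\Sigma}$ properly, and set $\overline{E}_i\coloneqq \overline{D}+\overline{\textup{div}}(f_i)$ and $\overline{E}_i'\coloneqq \overline{D}_{\textup{can}}+\overline{\textup{div}}(f_i)$. A preliminary observation is that all of these arithmetic divisors are nef: $\overline{D}$ is nef by hypothesis; $\overline{D}_{\textup{can}}$ is nef by Theorem~\ref{nef-tor-arith-proj}, since $\gamma_{D,\textup{can}}=\Psi_D$ is concave ($D$ being nef) and $|\Psi_D-\gamma_{D,\textup{can}}|\equiv 0$ is bounded; and $\overline{\textup{div}}(f_i)$ is nef because it is the analytification of the principal, hence relatively nef, model divisor $\textup{div}(f_i)$ on any model of $X_{\Sigma}$ (equivalently, over $\mathbb{C}$ its curvature current $\omega_{\textup{div}(f_i)}(-\log|f_i|^2)=\textup{dd}^{\textup{c}}(-\log|f_i|^2)+\delta_{\textup{div}(f_i)}$ vanishes by the Poincar\'{e}--Lelong formula). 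Since the nef cone is closed under addition, $\overline{E}_i$ and $\overline{E}_i'$ are nef, and their divisorial parts are the $E_i$, which meet $X_{\Sigma}$ properly. Definition~\ref{local-toric-h-proj-def} then reads
\[
\textup{h}^{\textup{tor}}(X_{\Sigma};\overline{D})=\textup{h}(X_{\Sigma};\overline{E}_0,\ldots,\overline{E}_d)-\textup{h}(X_{\Sigma};\overline{E}_0',\ldots,\overline{E}_d').
\]

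Next I would interpolate between the two tuples one factor at a time. For $0\le j\le d+1$ set $H_j\coloneqq \textup{h}(X_{\Sigma};\overline{E}_0',\ldots,\overline{E}_{j-1}',\overline{E}_j,\ldots,\overline{E}_d)$, so that $H_0$ and $H_{d+1}$ are the two heights above, and the difference equals $\sum_{j=0}^{d}(H_j-H_{j+1})$. Crucially, every tuple occurring here has divisorial part $(E_0,\ldots,E_d)$, so each $H_j$ is a well-defined local height of nef arithmetic divisors meeting $X_{\Sigma}$ properly. Using the symmetry of the local height (Theorem~\ref{local-h-thm}.(i)) to move the $j$-th factor into the first slot, and then Theorem~\ref{local-h-thm}.(iv) (applicable since $\overline{E}_j$ and $\overline{E}_j'$ have the common divisorial part $E_j$), I would obtain
\[
H_j-H_{j+1}=\int_{X_{\Sigma}^{\textup{an}}}(g_{E_j}-g_{E_j'})\,\Big\langle\bigwedge_{i<j}\omega_{E_i}(g_{E_i'})\wedge\bigwedge_{i>j}\omega_{E_i}(g_{E_i})\Big\rangle\wedge\delta_{X_{\Sigma}}.
\]
Finally I would translate back to $\overline{D}$. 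Since $g_{E_i}=g_D-\log|f_i|^2$ and $g_{E_i'}=g_{D,\textup{can}}-\log|f_i|^2$, we have $g_{E_j}-g_{E_j'}=g_D-g_{D,\textup{can}}$; and by the Poincar\'{e}--Lelong formula (resp.\ its non-archimedean analogue, i.e.\ the independence of the first Chern current of a metrized line bundle from the chosen rational section, cf.~\cite{Dem12},~\cite{CD12}) one gets $\omega_{E_i}(g_{E_i})=\textup{dd}^{\textup{c}}g_D+\delta_D=\omega_D(g_D)$ and likewise $\omega_{E_i}(g_{E_i'})=\omega_D(g_{D,\textup{can}})$. Hence $H_j-H_{j+1}=\int_{X_{\Sigma}^{\textup{an}}}(g_D-g_{D,\textup{can}})\langle\omega_D(g_{D,\textup{can}})^{\wedge j}\wedge\omega_D(g_D)^{\wedge(d-j)}\rangle\wedge\delta_{X_{\Sigma}}$, and summing over $j$ and reindexing $j\mapsto d-j$ (the closed positive $(1,1)$-currents involved commute) yields the asserted identity.

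I expect the substance of the proof to be bookkeeping rather than conceptual. The one point that needs care is checking that each local height in the telescoping satisfies the properness hypothesis of Definition~\ref{local-h-def} --- which it does, because the divisorial parts stay equal to $(E_0,\ldots,E_d)$ throughout --- together with invoking the correct form of ``the curvature current is independent of the section'' in the non-archimedean case. The nefness of $\overline{\textup{div}}(f_i)$, though elementary, is what legitimizes feeding $\overline{E}_i$ and $\overline{E}_i'$ into Theorem~\ref{local-h-thm}.
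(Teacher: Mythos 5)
Your proof is correct and follows essentially the same telescoping strategy as the paper's: both interpolate one factor at a time between the all-$\overline{D}$ and all-$\overline{D}_{\textup{can}}$ tuples and compute each step via Theorem~\ref{local-h-thm}, with your version making explicit the auxiliary rational functions $f_i$, the nefness of $\overline{\textup{div}}(f_i)$ and of the shifted divisors $\overline{E}_i$, $\overline{E}_i'$, and the Poincar\'{e}--Lelong cancellation $\omega_{E_i}(g_{E_i})=\omega_D(g_D)$, all of which the paper's proof compresses into ``an elementary algebraic manipulation.'' One small remark: the paper's proof cites part~(iii) of Theorem~\ref{local-h-thm}, but the formula it actually uses is that of part~(iv), which is the one you correctly invoke.
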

	\begin{proof}
		For each $j=0,\ldots,d$, define the number
		\begin{displaymath}
			h(j) \coloneqq \textup{h}(X_{\Sigma}; \underbrace{\overline{D}, \ldots, \overline{D}}_{j \textup{ times}},\underbrace{\overline{D}_{\textup{can}}, \ldots, \overline{D}_{\textup{can}}}_{d+1-j \textup{ times}}).
		\end{displaymath}
		The definition of toric height and an elementary algebraic manipulation show
		\begin{displaymath}
			\textup{h}^{\textup{tor}}(X_{\Sigma}; \overline{D}) = h(d+1) - h(0) = \sum_{j=0}^{d} h(j+1)-h(j).
		\end{displaymath}
		Apply part \textit{(iii)} of Theorem~\ref{local-h-thm} to obtain the identity
		\begin{displaymath}
			h(j+1)-h(j) = \int_{X_{\Sigma}^{\textup{an}}} (g_D - g_{D,\textup{can}}) \langle \omega_{D}(g_D)^{\wedge j} \wedge \omega_{D}(g_{D,\textup{can}})^{\wedge (d-j)} \rangle \wedge \delta_{X_{\Sigma}}.
		\end{displaymath}
		The result follows from substituting the second equation into the first.
	\end{proof}
	Now, we state Theorem~5.1.6~of~\cite{BPS}. This formula computes the height of a toric variety with respect to a nef toric arithmetic divisor in terms of its associated convex analytic objects. Our main result generalizes this formula to the compactified setting.
	\begin{thm}\label{local-toric-height-proj}
		Let $X_{\Sigma}/K$ be a projective toric variety of dimension $d$ and $\overline{D}$ a nef arithmetic toric divisor on $X_{\Sigma}$. Then,
		\begin{displaymath}
			\textup{h}^{\textup{tor}}(X_{\Sigma}; \overline{D}) = (d+1)! \int_{\Delta_D} \vartheta_{\overline{D}} \, \textup{dVol}_M, 
		\end{displaymath}
		where $\Delta_D$ and $\vartheta_{\overline{D}}$ are the corresponding convex polytope and roof function respectively.
	\end{thm}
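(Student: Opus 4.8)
The plan is to combine the ``mixed energy'' expansion of Lemma~\ref{toric-mixed-energy} with the description of toric Monge--Amp\`ere and Chambert--Loir measures via the tropicalization map, and then to transport the resulting integral over $N_{\mathbb{R}}$ into an integral over the polytope $\Delta_D$ by Legendre--Fenchel duality. Since $D$ is nef, its support function $\Psi_D$ is concave by Theorem~\ref{nefdiv}, so the canonical arithmetic divisor $\overline{D}_{\textup{can}}=(D,g_{D,\textup{can}})$ of Example~\ref{canonical-GF} is again a nef toric arithmetic divisor with the same divisorial part as $\overline{D}$: its tropical Green's function $\gamma_{\overline{D}_{\textup{can}}}=\Psi_D$ is concave and $|\Psi_D-\Psi_D|=0$ is bounded, so Theorem~\ref{nef-tor-arith-proj} applies, and its roof function is the identically zero function on $\Delta_D$. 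Applying Lemma~\ref{toric-mixed-energy} to $\overline{D}$ gives
\begin{displaymath}
	\textup{h}^{\textup{tor}}(X_{\Sigma}; \overline{D}) = \sum_{j=0}^{d} \int_{X_{\Sigma}^{\textup{an}}} (g_D - g_{D,\textup{can}}) \, \langle \omega_{D}(g_D)^{\wedge j} \wedge \omega_{D}(g_{D,\textup{can}})^{\wedge(d-j)} \rangle \wedge \delta_{X_{\Sigma}}.
\end{displaymath}

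The next step is to identify each measure. All of the metrics here are toric, so every signed measure $\langle \omega_D(g_D)^{\wedge j}\wedge\omega_D(g_{D,\textup{can}})^{\wedge(d-j)}\rangle$ is $\mathbb{S}$-invariant, and it is in fact a positive measure because $\overline{D}$ and $\overline{D}_{\textup{can}}$ are nef. Following the computation of Monge--Amp\`ere measures in the archimedean case and of Chambert--Loir measures in the non-archimedean case (where one uses the continuous section $\theta_{\Sigma}$ of $\textup{Trop}$ from Construction~\ref{tropical-toric-polytope}) of toric metrics in~\cite{BPS}, the pushforward of this measure under $\textup{Trop}\colon X_{\Sigma}^{\textup{an}}\to N_{\Sigma}$ is the mixed real Monge--Amp\`ere measure $\mathcal{M}_M(\gamma_{\overline{D}}^{(j)},\Psi_D^{(d-j)})$ of the concave functions $\gamma_{\overline{D}}$ ($j$ times) and $\Psi_D$ ($d-j$ times), a positive measure on $N_{\mathbb{R}}$ of total mass $d!\,\textup{Vol}_M(\Delta_D)=D^d$ (cf. Theorem~\ref{toricdegree}). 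Since $g_D|_{U^{\textup{an}}}=-\gamma_{\overline{D}}\circ\textup{Trop}$, $g_{D,\textup{can}}|_{U^{\textup{an}}}=-\Psi_D\circ\textup{Trop}$, and $X_{\Sigma}^{\textup{an}}\setminus U^{\textup{an}}$ has measure zero, the $j$-th summand becomes
\begin{displaymath}
	\int_{N_{\mathbb{R}}} (\Psi_D - \gamma_{\overline{D}}) \, d\,\mathcal{M}_M(\gamma_{\overline{D}}^{(j)},\Psi_D^{(d-j)}),
\end{displaymath}
an integral which converges because $\Psi_D-\gamma_{\overline{D}}$ is bounded by Proposition~\ref{gamma}.

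Finally I would pass to the Legendre--Fenchel picture. Under the gradient (Legendre) correspondence, the mixed real Monge--Amp\`ere measure $\mathcal{M}_M(\gamma_1,\dots,\gamma_d)$ transforms into the mixed measure attached to the concave Legendre--Fenchel transforms $\gamma_i^{\vee}$ on their domains; here $\Psi_D^{\vee}$ is the zero function on $\Delta_D$ while $\gamma_{\overline{D}}^{\vee}=\vartheta_{\overline{D}}$. Summing over $j$ and carrying out the convex-analytic bookkeeping of~\cite{BPS} (the expansion of a mixed integral of $d+1$ concave functions into sup-convolutions integrated over Minkowski sums of polytopes, matched with the telescoping already present in Lemma~\ref{toric-mixed-energy}), the sum $\sum_{j=0}^{d}\int_{N_{\mathbb{R}}}(\Psi_D-\gamma_{\overline{D}})\,d\mathcal{M}_M(\gamma_{\overline{D}}^{(j)},\Psi_D^{(d-j)})$ collapses to the mixed integral $\textup{MI}_M(\vartheta_{\overline{D}},\dots,\vartheta_{\overline{D}})$ of the $d+1$ roof functions (the $\Psi_D$-slots contributing through $\vartheta_{\overline{D}_{\textup{can}}}\equiv 0$). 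Evaluating the mixed integral on $d+1$ copies of the single function $\vartheta_{\overline{D}}\colon\Delta_D\to\mathbb{R}$ gives $(d+1)!\int_{\Delta_D}\vartheta_{\overline{D}}\,\textup{dVol}_M$, the constant $(d+1)!$ arising from the Stirling-type identity $\sum_{k=1}^{d+1}(-1)^{d+1-k}\binom{d+1}{k}k^{d+1}=(d+1)!$; this is the asserted formula.

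The main obstacle is this last step: making precise the change of variables that converts integrals of concave functions against mixed real Monge--Amp\`ere measures on $N_{\mathbb{R}}$ into integrals of their Legendre--Fenchel transforms over polytopes while tracking all constants, and reconciling the alternating structure inherited from Lemma~\ref{toric-mixed-energy} with the symmetric structure of the mixed integral. One must also verify that the measure-theoretic identification and the pushforward formula hold uniformly in the archimedean and non-archimedean settings, which is exactly where the tropicalization formalism of Construction~\ref{tropical-toric-polytope} does the work.
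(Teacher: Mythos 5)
The paper does not prove Theorem~\ref{local-toric-height-proj}: it simply cites Theorem~5.1.6 of Burgos--Philippon--Sombra. Your sketch is therefore a reconstruction of the BPS argument, and its overall strategy is the right one: expand the toric local height via Lemma~\ref{toric-mixed-energy}, identify the resulting Monge--Amp\`ere (resp.\ Chambert--Loir) measures as pushforwards along $\textup{Trop}$ of mixed real Monge--Amp\`ere measures on $N_{\mathbb{R}}$, and then pass to the polytope by Legendre--Fenchel duality. The final numerical check is also correct: evaluating $\textup{MI}_M$ on $d+1$ copies of $\vartheta_{\overline{D}}$ does give $(d+1)!\int_{\Delta_D}\vartheta_{\overline{D}}$, since the $(j+1)$-fold sup-convolution of a concave $\vartheta$ with itself is $(j+1)\vartheta(\,\cdot\,/(j+1))$ on $(j+1)\Delta_D$, each term contributes $(j+1)^{d+1}\int_{\Delta_D}\vartheta$, and $\sum_{k=1}^{d+1}(-1)^{d+1-k}\binom{d+1}{k}k^{d+1}=(d+1)!$; also $\vartheta_{\overline{D}_{\textup{can}}}\equiv 0$ on $\Delta_D$ because $\gamma_{\overline{D}_{\textup{can}}}=\Psi_D$ and $\Psi_D^{\vee}=\iota_{\Delta_D}$.

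However, there is a genuine gap in the middle. You assert, without proof, that the telescoping sum
$\sum_{j=0}^{d}\int_{N_{\mathbb{R}}}(\Psi_D-\gamma_{\overline{D}})\,d\mathcal{M}_M(\gamma_{\overline{D}}^{(j)},\Psi_D^{(d-j)})$
\textit{collapses} to the mixed integral $\textup{MI}_M(\vartheta_{\overline{D}},\dots,\vartheta_{\overline{D}})$. That identity is the entire analytic content of the theorem. It requires (a) the precise change-of-variables for mixed real Monge--Amp\`ere measures under Legendre duality, (b) an approximation argument to pass from piecewise-affine or smooth data to a general concave $\gamma_{\overline{D}}$ (over both archimedean and non-archimedean $K$), and (c) the combinatorial conversion of a telescoping sum in $j$ into the inclusion--exclusion form defining $\textup{MI}_M$. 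None of these is routine, and (a)+(b) occupy a substantial portion of BPS Chapter~2 and~5. You correctly flag this as ``the main obstacle,'' but as written you treat it as a black box rather than a step you have verified. So the proposal is a faithful outline of the BPS strategy, but not a proof: the crux---the Legendre-duality change of variables and the matching of the two sum structures---remains to be supplied.
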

	Finally, we state a mixed version of the above theorem, which includes the integrable case. It combines Corollary~5.1.9 and Remark~5.1.10~of~\cite{BPS}.
	\begin{dfn}\label{mixed-integral}
		For each $i = 0, \ldots , d$, let $\Delta_i$ be a compact convex subset of $M_{\mathbb{R}}$ and $g_i \colon \Delta_i \rightarrow \mathbb{R}$ be a concave function. The \textit{mixed integral} of $g_0, \ldots , g_d$ is defined as
		\begin{displaymath}
			\textup{MI}_M (g_0, \ldots , g_d ) \coloneqq  \sum_{j=0}^{d} (-1)^{d-j} \sum_{0 \leq i_0 < \ldots < i_j \leq d } \int_{\Delta_{i_0} + \ldots + \Delta_{i_j} } g_{i_0} \boxplus \ldots \boxplus g_{i_j} \textup{ dVol}_M . 
		\end{displaymath}
	\end{dfn}
	\begin{cor}\label{local-toric-height-proj-integrable}
		Let $\overline{D}_0, \ldots, \overline{D}_d \in  \overline{\textup{Div}}^{\textup{int}}_{\mathbb{T}}(X_{\Sigma})_{\mathbb{Q}}$. For each $i = 0, \ldots , d$, write $\overline{D}_{i}$ as a difference $\overline{D}_{i,+} - \overline{D}_{i,-}$ of nef arithmetic toric divisors. Denote by $\vartheta_{i,+}$ and $\vartheta_{i,-}$ the corresponding roof function of $\overline{D}_{i,+}$ and $\overline{D}_{i,-}$, respectively. Then,
		\begin{displaymath}
			\textup{h}^{\textup{tor}}(X_{\Sigma}; \overline{D}_0 , \ldots , \overline{D}_d ) = \sum_{\epsilon_0 , \ldots , \epsilon_d \in \lbrace \pm 1 \rbrace} \epsilon_0 \ldots \epsilon_d \, \textup{MI}_M ( \vartheta_{0,\epsilon_0}, \ldots , \vartheta_{d,\epsilon_d }).
		\end{displaymath}
		In particular, if  $\overline{D}_0, \ldots, \overline{D}_d \in  \overline{\textup{Div}}^{\textup{nef}}_{\mathbb{T}}(X_{\Sigma})_{\mathbb{Q}}$ $\vartheta_i$ is the roof function of $\overline{D}_i$, the above formula yields
		\begin{displaymath}
			\textup{h}^{\textup{tor}}(X_{\Sigma}; \overline{D}_0 , \ldots , \overline{D}_d ) = \textup{MI}_M ( \vartheta_{0}, \ldots , \vartheta_{d}).
		\end{displaymath}
	\end{cor}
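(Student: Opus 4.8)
The plan is to reduce the statement to the self-intersection formula of Theorem~\ref{local-toric-height-proj} by polarization, treating first the nef case and then passing to the integrable case by multilinearity. (Alternatively one could start from Lemma~\ref{toric-mixed-energy}, but the polarization route is cleaner.)

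By the Remark following Definition~\ref{local-toric-h-proj-def}, the local toric height $\textup{h}^{\textup{tor}}(X_\Sigma;-,\ldots,-)$ is a symmetric $(d+1)$-linear form on $\overline{\textup{Div}}^{\textup{int}}_{\mathbb{T}}(X_\Sigma)_{\mathbb{Q}}$ whose restriction to the diagonal is $\overline{D}\mapsto\textup{h}^{\textup{tor}}(X_\Sigma;\overline{D})$. Hence, for nef toric arithmetic divisors $\overline{D}_0,\ldots,\overline{D}_d$, the standard polarization identity for symmetric multilinear forms gives
\[
	(d+1)!\;\textup{h}^{\textup{tor}}(X_\Sigma;\overline{D}_0,\ldots,\overline{D}_d)=\sum_{\emptyset\neq I\subseteq\{0,\ldots,d\}}(-1)^{d+1-|I|}\;\textup{h}^{\textup{tor}}\Bigl(X_\Sigma;\textstyle\sum_{i\in I}\overline{D}_i\Bigr),
\]
where each $\sum_{i\in I}\overline{D}_i$ is again nef because $\overline{\textup{Div}}^{\textup{nef}}_{\mathbb{T}}(X_\Sigma)_{\mathbb{Q}}$ is a cone.

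Next I would rewrite each summand via the convex-analytic dictionary. By Theorem~\ref{local-toric-height-proj}, $\textup{h}^{\textup{tor}}(X_\Sigma;\sum_{i\in I}\overline{D}_i)=(d+1)!\int_{\Delta_I}\vartheta_I\,\textup{dVol}_M$, where $\Delta_I$ is the polytope and $\vartheta_I$ the roof function of $\sum_{i\in I}\overline{D}_i$. Since support functions add under sums of toric divisors (Proposition~\ref{toric-divisors}) and $\mathcal{G}$ is a group morphism (Proposition~\ref{G}), one has $\Delta_I=\sum_{i\in I}\Delta_{D_i}$ and $\gamma_{\sum_{i\in I}\overline{D}_i}=\sum_{i\in I}\gamma_{\overline{D}_i}$; since the Legendre--Fenchel transform sends sums of concave functions to sup-convolutions, $\vartheta_I=\boxplus_{i\in I}\vartheta_{\overline{D}_i}$ (with the conventions of Definition~\ref{mixed-integral}). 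Substituting and organizing the sum over nonempty subsets $I$ by their cardinality $|I|=j+1$ and increasing enumeration $i_0<\cdots<i_j$ produces exactly $\textup{MI}_M(\vartheta_0,\ldots,\vartheta_d)$, which is the ``in particular'' assertion.

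For arbitrary integrable divisors $\overline{D}_i=\overline{D}_{i,+}-\overline{D}_{i,-}$, I would expand $\textup{h}^{\textup{tor}}(X_\Sigma;\overline{D}_0,\ldots,\overline{D}_d)$ by multilinearity into $\sum_{\epsilon_0,\ldots,\epsilon_d\in\{\pm1\}}\epsilon_0\cdots\epsilon_d\,\textup{h}^{\textup{tor}}(X_\Sigma;\overline{D}_{0,\epsilon_0},\ldots,\overline{D}_{d,\epsilon_d})$ and apply the mixed nef formula just obtained to each of the $2^{d+1}$ terms. The main obstacle is essentially bookkeeping: aligning the combinatorial shape of the polarization identity with Definition~\ref{mixed-integral} and fixing the sign and normalization conventions in the identity $\vartheta_{\overline{D}_1+\overline{D}_2}=\vartheta_{\overline{D}_1}\boxplus\vartheta_{\overline{D}_2}$, together with $\Delta_{D_1+D_2}=\Delta_{D_1}+\Delta_{D_2}$; once these are settled the argument is a direct substitution. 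No integrability question arises, since the roof functions of nef divisors are continuous on compact polytopes by Theorem~\ref{nef-tor-arith-proj}.
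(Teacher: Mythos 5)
Your proof is correct. The paper does not give its own argument for this corollary but simply cites Corollary~5.1.9 and Remark~5.1.10 of~\cite{BPS}; those references in turn obtain the mixed formula from the diagonal one by precisely the polarization you carry out, so your proposal is in effect the standard reconstruction of that proof. The one point worth flagging is that you correctly match the sign $(-1)^{d+1-|I|}$ from the polarization identity with the $(-1)^{d-j}$ in Definition~\ref{mixed-integral} via $|I|=j+1$, and you correctly cancel the $(d+1)!$ factors; this is the place a careless application of polarization would go wrong. Your cited facts (symmetry/multilinearity from the remark after Definition~\ref{local-toric-h-proj-def}, additivity of tropical Green's functions from the morphism property of $\mathcal{G}$, $\vartheta_{\overline{D}_1+\overline{D}_2}=\vartheta_{\overline{D}_1}\boxplus\vartheta_{\overline{D}_2}$ from Proposition~\ref{LF-prop}(ii) since the $\gamma$'s have domain all of $N_{\mathbb{R}}$, and $\Delta_{D_1+D_2}=\Delta_{D_1}+\Delta_{D_2}$) are all available. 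The integrable case by multilinear expansion over signs $\epsilon_i\in\{\pm1\}$ is routine and correct.
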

	
	\subsection{Toric compactified arithmetic divisors}
	Throughout this subsection, $X_{\Sigma_0}/K$ is a quasi-projective toric variety. If $K$ is archimedean, we also require, for simplicity, that $X_{\Sigma_0}$ be smooth. This is a minor restriction; $X_{\Sigma_0}$ is normal. Hence, its singular locus is contained in a set of codimension at least 2. Moreover, the torus $U/K$ is an open smooth subvariety of $X_{\Sigma_0}$. Hence, we can always shrink $X_{\Sigma_0}$ to obtain a smooth toric variety over $K$. Then, we define the group of \textit{toric model arithmetic divisors} of $X_{\Sigma_{0}}$ over $K$ as the direct limit
	\begin{displaymath}
		\overline{\textup{Div}}_{\mathbb{T}}(X_{\Sigma_0}/K)_{\textup{mod}} \coloneqq \varinjlim_{X_{\Sigma} \in \textup{PM}_{\mathbb{T}}(X_{\Sigma_0}/K)} \overline{\textup{Div}}_{\mathbb{T}}(X_{\Sigma})_{\mathbb{Q}}.
	\end{displaymath}
	By Proposition~\ref{tqpmod} and the smoothness assumption on the archimedean case, the category $\textup{SPM}_{\mathbb{T}}(X_{\Sigma_0}/K)$ of smooth toric projective models is cofinal in the category $\textup{PM}_{\mathbb{T}}(X_{\Sigma_0}/K)$ of toric projective models of $X_{\Sigma_0}/K$. Then, the direct limit above can also be taken over the category of smooth projective toric models, yielding the same group. By a \textit{toric boundary (arithmetic) divisor of} $X_{\Sigma_0}/K$, we mean a boundary divisor $(X_{\Sigma},\overline{B})$ such that both the projective model $X_{\Sigma}$ and the arithmetic divisor $\overline{B}$ are toric. A toric boundary divisor $\overline{B}$ induces a boundary norm $\| \cdot \|_{\overline{B}}$ and a boundary topology on $\overline{\textup{Div}}_{\mathbb{T}} (X_{\Sigma_0}/K)_{\textup{mod}}$. Then, we have the following definition.
	\begin{dfn}\label{cptf-dfn-local}
		The group of \textit{toric compactified arithmetic divisors of }$X_{\Sigma_0}$ \textit{over} $K$ is the completion $\overline{\textup{Div}}_{\mathbb{T}} (X_{\Sigma_0}/K)$ of the group $\overline{\textup{Div}}_{\mathbb{T}} (X_{\Sigma_0}/K)_{\textup{mod}}$ with respect to the boundary topology. The cone $\overline{\textup{Div}}^{\textup{nef}}_{\mathbb{T}} (X_{\Sigma_0}/K)$ of nef toric compactified arithmetic divisors is the closure in the boundary topology of the cone of nef toric model arithmetic divisors. The space of integrable toric arithmetic divisors is the difference
		\begin{displaymath}
			\overline{\textup{Div}}^{\textup{int}}_{\mathbb{T}} (X_{\Sigma_0}/K) \coloneqq \overline{\textup{Div}}^{\textup{nef}}_{\mathbb{T}} (X_{\Sigma_0}/K)- \overline{\textup{Div}}^{\textup{nef}}_{\mathbb{T}} (X_{\Sigma_0}/K).
		\end{displaymath}
	\end{dfn}
	The following lemma describes the boundary norm in terms of tropical Green's functions.
	\begin{lem}\label{local-toric-bdry-norm}
		Let $X_{\Sigma_0}/K$ be a quasi-projective toric variety and $(X_{\Sigma}, \overline{B})$ be a toric boundary divisor of $X_{\Sigma_0}/K$. For each toric model arithmetic divisor $\overline{D}$, we have an identity
		\begin{displaymath}
			\| \overline{D} \|_{\overline{B}} = \inf \lbrace \varepsilon \in \mathbb{Q}_{>0} \, \vert \, | \gamma_{\overline{D}}(x) | \leq \varepsilon \cdot | \gamma_{\overline{B}}(x) | \textup{ for all } x \in N_{\mathbb{R}} \rbrace. 
		\end{displaymath}
		Moreover, if $B$ is a toric boundary divisor of $X_{\Sigma_0}/K$, then $\overline{B} = (B, g_{B,\textup{can}} + 1)$ is an arithmetic boundary divisor satisfying $\gamma_{\overline{B}} = \Psi_B -1$.
	\end{lem}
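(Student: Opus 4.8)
The plan is to prove the norm identity first for an arbitrary toric boundary arithmetic divisor $(X_{\Sigma},\overline{B})$ of $X_{\Sigma_0}/K$, and only afterwards to exhibit $(B,g_{B,\textup{can}}+1)$ as such a divisor, so that the two assertions are logically independent.

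\emph{The norm identity.} Given a toric model arithmetic divisor $\overline{D}$, I would first pass to a common toric projective model $X_{\Sigma}$ dominating the one carrying $\overline{B}$ and the one carrying $\overline{D}$; this is possible since $\textup{PM}_{\mathbb{T}}(X_{\Sigma_0}/K)$ is cofiltered (Proposition~\ref{tqpmod}), and by Lemma~\ref{pbsup} and Proposition~\ref{gamma-birational} neither the support function nor the tropical Green's function changes under these pullbacks. So assume $\overline{D}$ and $\overline{B}$ are defined on $X_{\Sigma}$. By definition, $\varepsilon\in\mathbb{Q}_{>0}$ lies in the set computing $\|\overline{D}\|_{\overline{B}}$ exactly when $\overline{D}+\varepsilon\overline{B}$ and $\varepsilon\overline{B}-\overline{D}$ are both effective. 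For a toric arithmetic divisor $(E,g_E)$ on $X_{\Sigma}$, effectivity means $E$ effective — equivalently $\Psi_E\le 0$ on $N_{\mathbb{R}}$ by Lemma~\ref{toric-bdry-div} — together with $g_E\ge 0$ on $X_{\Sigma}^{\textup{an}}\setminus|E|^{\textup{an}}$; since $g_E$ is $\mathbb{S}$-invariant and continuous there and $U^{\textup{an}}$ is dense in that complement, the latter is equivalent to $g_E\ge 0$ on $U^{\textup{an}}$, i.e. to $\gamma_{(E,g_E)}\le 0$ on $N_{\mathbb{R}}$. Applying this with $E=D+\varepsilon B$ and $E=\varepsilon B-D$, using $\Psi_{D+\varepsilon B}=\Psi_D+\varepsilon\Psi_B$, $\gamma_{\overline{D}+\varepsilon\overline{B}}=\gamma_{\overline{D}}+\varepsilon\gamma_{\overline{B}}$, and the facts $\Psi_B\le 0$ (as $B$ is effective) and $\gamma_{\overline{B}}<0$ on $N_{\mathbb{R}}$ (a strictly effective $\overline{B}$ has $g_B>0$ on $U^{\textup{an}}$), one finds that $-\varepsilon\overline{B}\le\overline{D}\le\varepsilon\overline{B}$ is equivalent to the conjunction $|\Psi_D(x)|\le\varepsilon|\Psi_B(x)|$ and $|\gamma_{\overline{D}}(x)|\le\varepsilon|\gamma_{\overline{B}}(x)|$ for all $x\in N_{\mathbb{R}}$.

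The crucial point — which I expect to be the step needing the most care — is that the analytic inequality already forces the geometric one. By Proposition~\ref{gamma} applied on the projective model $X_{\Sigma}$, the functions $\Psi_D-\gamma_{\overline{D}}$ and $\Psi_B-\gamma_{\overline{B}}$ are bounded on $N_{\mathbb{R}}$; since $\Psi_D$ and $\Psi_B$ are conical, this yields $\Psi_D(x)=\lim_{\lambda\to\infty}\gamma_{\overline{D}}(\lambda x)/\lambda$ and $\Psi_B(x)=\lim_{\lambda\to\infty}\gamma_{\overline{B}}(\lambda x)/\lambda$ for each $x\in N_{\mathbb{R}}$. Hence, from $|\gamma_{\overline{D}}(\lambda x)|\le\varepsilon|\gamma_{\overline{B}}(\lambda x)|$, dividing by $\lambda$ and letting $\lambda\to\infty$, one obtains $|\Psi_D(x)|\le\varepsilon|\Psi_B(x)|$. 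Therefore the set computing $\|\overline{D}\|_{\overline{B}}$ equals $\{\varepsilon\in\mathbb{Q}_{>0}\mid|\gamma_{\overline{D}}(x)|\le\varepsilon|\gamma_{\overline{B}}(x)|\text{ for all }x\in N_{\mathbb{R}}\}$, and the claimed identity follows upon taking infima.

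\emph{The example.} For the final assertion, let $B$ be a toric boundary divisor of $X_{\Sigma_0}/K$, represented on some $X_{\Sigma}$, and put $\overline{B}=(B,g_{B,\textup{can}}+1)$. By Example~\ref{canonical-GF}, $g_{B,\textup{can}}|_{U^{\textup{an}}}=-\Psi_B\circ\textup{Trop}$, so $(g_{B,\textup{can}}+1)|_{U^{\textup{an}}}=-(\Psi_B-1)\circ\textup{Trop}$ and therefore $\gamma_{\overline{B}}=\Psi_B-1$. It remains to verify that $\overline{B}$ is a toric boundary arithmetic divisor: it is toric, its divisorial part satisfies the support condition $|B|=X_{\Sigma}\setminus X_{\Sigma_0}$ by hypothesis, and it is strictly effective because $B$ is effective and, by Lemma~\ref{toric-bdry-div}, $\Psi_B\le 0$ on $N_{\mathbb{R}}$, so that $g_{B,\textup{can}}\ge 0$ on $U^{\textup{an}}$ and hence, by continuity of $g_{B,\textup{can}}$ and density of $U^{\textup{an}}$, $g_{B,\textup{can}}\ge 0$ on all of $X_{\Sigma}^{\textup{an}}\setminus|B|^{\textup{an}}$; thus $g_{B,\textup{can}}+1\ge 1>0$ there. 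Apart from this recession-function argument and the routine propagation of non-negativity from $U^{\textup{an}}$ to the full complement of the support, the proof uses only bookkeeping with the dictionary of Section~\ref{3}.
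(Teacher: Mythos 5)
Your proof is correct and follows essentially the same route as the paper: translate effectivity of $\overline{D}\pm\varepsilon\overline{B}$ into inequalities on $U^{\textup{an}}$ (equivalently on $N_{\mathbb{R}}$) via density and $\mathbb{S}$-invariance, and then note that the analytic inequality $|\gamma_{\overline{D}}|\le\varepsilon|\gamma_{\overline{B}}|$ already forces the geometric one $|\Psi_D|\le\varepsilon|\Psi_B|$ because $\Psi_D=\textup{rec}(\gamma_{\overline{D}})$ and the recession operation preserves inequalities. You spell out the recession step via the explicit limit $\Psi_D(x)=\lim_{\lambda\to\infty}\gamma_{\overline{D}}(\lambda x)/\lambda$ and you verify the boundary-divisor properties of $(B,g_{B,\textup{can}}+1)$ in full rather than simply citing Proposition~\ref{gamma}, but these are elaborations of the same argument rather than a different approach.
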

	\begin{proof}
		Write $\overline{D} = (D,g_D)$ and $\overline{B}= (B,g_B)$. By density of $U^{\textup{an}}$ and continuity of the Green's functions, if the condition $|g_D | \leq \varepsilon \cdot g_B$ holds on $U^{\textup{an}}$ then it holds globally. By definition of the tropical Green's function, for all $p \in U^{\textup{an}}$ we have $\gamma_{\overline{D}} (\textup{Trop}(p)) = - g_{D} (p)$. Therefore, the condition $|g_D | \leq \varepsilon \cdot g_B$ on $U^{\textup{an}}$ is equivalent to $|\gamma_{\overline{D}}| \leq \varepsilon \cdot | \gamma_{\overline{B}}|$ on $N_{\mathbb{R}}$. Since $\Psi_D = \textup{rec}(\gamma_{\overline{D}})$ and the recession map is monotonous, the condition $|\gamma_{\overline{D}}| \leq \varepsilon \cdot | \gamma_{\overline{B}}|$ implies the corresponding inequality on divisors. The identity follows. The second statement is immediate from Proposition~\ref{gamma}.
	\end{proof}
	We can use the previous lemma to extend the notion of tropical Green's function to the level of toric compactified arithmetic divisors. This relates the group of toric compactified arithmetic divisors with the Banach space $\mathcal{G}(N_{\mathbb{R}})$ introduced in Definition~\ref{G-def}.
	\begin{prop}\label{G/K}
		Let $X_{\Sigma_0}/K$ be a quasi-projective toric variety and $\overline{D} = \lbrace \overline{D}_m \rbrace_{n \in \mathbb{N}}$. The assignment $\overline{D} \mapsto \gamma_{\overline{D}} \coloneqq \lim_{n \in \mathbb{N}} \gamma_{\overline{D}_n}$ induces a group morphism $\mathcal{G} \colon \overline{\textup{Div}}_{\mathbb{T}}(X_{\Sigma_0}/K) \rightarrow \mathcal{G}(N_{\mathbb{R}})$, which is continuous and injective.
	\end{prop}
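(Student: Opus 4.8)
The plan is to mirror the proofs of Lemmas~\ref{toric-comp-SF}~and~\ref{sup-fun-dvr-1}. First I would fix a toric boundary arithmetic divisor $\overline{B}=(B,g_{B,\textup{can}}+1)$ of $X_{\Sigma_0}/K$ as provided by Lemma~\ref{local-toric-bdry-norm}, so that $\gamma_{\overline{B}}=\Psi_B-1$ where $\Psi_B$ is conical, $\Psi_B|_{|\Sigma_0|}=0$ and $\Psi_B(x)<0$ for $x\notin|\Sigma_0|$; in particular $1\le|\gamma_{\overline{B}}(x)|\le c\,(1+\|x\|)$ for some $c>0$. Since each $\overline{D}_n$ is defined on a projective toric model, Proposition~\ref{gamma} gives that $\gamma_{\overline{D}_n}-\Psi_{D_n}$ is bounded, and as $\Psi_{D_n}$ is conical and continuous we get $\gamma_{\overline{D}_n}\in\mathcal{G}(N_{\mathbb{R}})$. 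Using the identity in Lemma~\ref{local-toric-bdry-norm}, the Cauchy condition for $\{\overline{D}_n\}_{n\in\mathbb{N}}$ becomes: for every $\varepsilon>0$ there is $n_\varepsilon$ such that $|\gamma_{\overline{D}_n}(x)-\gamma_{\overline{D}_m}(x)|<\varepsilon\,|\gamma_{\overline{B}}(x)|\le\varepsilon c\,(1+\|x\|)$ for all $x\in N_{\mathbb{R}}$ and $n,m>n_\varepsilon$. As in Lemma~\ref{sup-fun-dvr-1}\textit{(iii)}, this makes $\{\gamma_{\overline{D}_n}\}_{n\in\mathbb{N}}$ a Cauchy sequence in the $\mathcal{G}$-norm; since $\mathcal{G}(N_{\mathbb{R}})$ is a Banach space (Definition~\ref{G-def}), the pointwise limit $\gamma_{\overline{D}}$ exists and lies in $\mathcal{G}(N_{\mathbb{R}})$. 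Independence of the representing Cauchy sequence follows from the same estimate (equivalent sequences differ by one whose $\gamma$'s tend to $0$ in the $\mathcal{G}$-norm), and additivity of $\mathcal{G}$ follows from Proposition~\ref{G} by passing to limits.

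Next I would establish continuity. As usual it suffices to bound $\mathcal{G}$ on the dense subgroup $\overline{\textup{Div}}_{\mathbb{T}}(X_{\Sigma_0}/K)_{\textup{mod}}$: if $\|\overline{D}\|_{\overline{B}}<\varepsilon$, then Lemma~\ref{local-toric-bdry-norm} gives $|\gamma_{\overline{D}}(x)|<\varepsilon\,|\gamma_{\overline{B}}(x)|\le\varepsilon c\,(1+\|x\|)$ for all $x$, which bounds $\|\gamma_{\overline{D}}\|_{\mathcal{G}}$ by a fixed multiple of $\varepsilon$, just as in Theorem~\ref{torgeomdiv}. Hence $\mathcal{G}$ is bounded, therefore continuous.

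The hard part will be injectivity, which I would handle as in Lemma~\ref{toric-comp-SF}. Assume $\overline{D}=\{\overline{D}_n\}_{n\in\mathbb{N}}\ne 0$, so $\|\overline{D}\|_{\overline{B}}=\lim_n\|\overline{D}_n\|_{\overline{B}}>0$, and split into two cases. If $\|\overline{D}\|_{\overline{B}}=\infty$, then $\|\overline{D}_n\|_{\overline{B}}=\infty$ for all but finitely many $n$; restricting to $|\Sigma_0|$, where $|\gamma_{\overline{B}}|\equiv 1$, the Cauchy estimate forces $\Psi_{D_n}|_{|\Sigma_0|}$ to be eventually equal to a fixed \emph{nonzero} rational support function $\Psi\in\mathcal{SF}(\Sigma_0,\mathbb{Q})$ with $\Psi=\textup{rec}(\gamma_{\overline{D}})|_{|\Sigma_0|}$; since $\gamma_{\overline{D}}$ differs from $\Psi$ by a bounded amount on $|\Sigma_0|$, it is unbounded there, so $\gamma_{\overline{D}}\ne 0$. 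If $0<\|\overline{D}\|_{\overline{B}}=\delta<\infty$, after passing to a subsequence with finite norms and $\|\overline{D}_n\|_{\overline{B}}>\delta/2$ for large $n$, the identity in Lemma~\ref{local-toric-bdry-norm} produces points $x_n\in N_{\mathbb{R}}$ with $|\gamma_{\overline{D}_n}(x_n)|>\tfrac{\delta}{2}\,|\gamma_{\overline{B}}(x_n)|$, and combining the reverse triangle inequality with the Cauchy estimate — exactly the computation in Lemma~\ref{toric-comp-SF} — yields $|\gamma_{\overline{D}}(x_n)|\ge\tfrac{\delta}{4}\,|\gamma_{\overline{B}}(x_n)|>0$ for suitable $n$. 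Either way $\gamma_{\overline{D}}\ne 0$, proving injectivity.

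I expect the main obstacle to be the bookkeeping in the infinite-norm case: one must verify that the ``geometric part'' of a toric compactified arithmetic divisor is well-defined, i.e.\ that $\textup{rec}(\gamma_{\overline{D}})|_{|\Sigma_0|}$ is the limit of the eventually-constant sequence $\Psi_{D_n}|_{|\Sigma_0|}$ and that its non-vanishing is detected by $\gamma_{\overline{D}}$. A minor point to check is that everything is uniform in the archimedean and non-archimedean cases; this holds because both the tropicalization formalism (Construction~\ref{tropical-toric-polytope}, Proposition~\ref{gamma}) and the boundary-norm identity of Lemma~\ref{local-toric-bdry-norm} are stated uniformly, and on the archimedean side the $\mathbb{S}$-symmetrization of Construction~\ref{S-inv-proj} ensures an adequate supply of toric model arithmetic divisors.
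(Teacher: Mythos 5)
Your proposal follows essentially the same route as the paper: the group morphism on model divisors via Propositions~\ref{gamma-birational}~and~\ref{G}, continuity from the bound $|\gamma_{\overline{D}}(x)| \leq \varepsilon\,|\gamma_{\overline{B}}(x)| \leq C\varepsilon(1+\|x\|)$ coming from Lemma~\ref{local-toric-bdry-norm}, and injectivity on the completion by transcribing the case-split argument from Lemma~\ref{toric-comp-SF} — which is exactly what the paper does, though it leaves the transcription to the reader.

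One point where the literal ``replace $D$ by $\overline{D}$ and $\Psi_D$ by $\gamma_{\overline{D}}$'' recipe needs a beat of care, and where your write-up is slightly imprecise: in the infinite-norm case you say that restricting the Cauchy estimate to $|\Sigma_0|$, where $|\gamma_{\overline{B}}|\equiv 1$, ``forces $\Psi_{D_n}|_{|\Sigma_0|}$ to be eventually equal to a fixed nonzero $\Psi$''. On $|\Sigma_0|$ the estimate $|\gamma_{\overline{D}_n}(x)-\gamma_{\overline{D}_m}(x)|<\varepsilon$ only gives uniform convergence of the tropical Green's functions, not eventual equality of their piecewise-linear parts. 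To get eventual constancy of $\Psi_{D_n}|_{|\Sigma_0|}$ you must take recession along rays in $|\Sigma_0|$ (divide by $\lambda$ along $\lambda\hat{x}$, $\hat{x}\in|\Sigma_0|\cap\mathcal{S}^{d-1}$, and let $\lambda\to\infty$), which kills the bounded-on-$|\Sigma_0|$ part and reduces you to the genuinely geometric Cauchy condition of Lemma~\ref{toric-comp-SF}. Likewise, that the eventual value $\Psi$ is nonzero is not automatic — it follows because $\|\overline{D}_n\|_{\overline{B}}=\infty$ for all large $n$, and by Lemma~\ref{local-toric-bdry-norm} this happens precisely when $\Psi_{D_n}|_{|\Sigma_0|}\neq 0$. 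These are bookkeeping steps rather than a real gap, but they are exactly the place where the arithmetic version is not a verbatim copy of the geometric one, so they are worth making explicit.
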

	\begin{proof}
		By Proposition~\ref{gamma-birational} and taking direct limits, the assignment $\overline{D} \mapsto \gamma_{\overline{D}}$ induces a group morphism on model divisors $\mathcal{G} \colon \overline{\textup{Div}}_{\mathbb{T}}(X_{\Sigma_0}/K)_{\textup{mod}} \rightarrow \mathcal{G}(N_{\mathbb{R}})$. Proposition~\ref{G} shows that this morphism is injective. To see that it is continuous, define $C \coloneqq \| \gamma_{\overline{B}} \|_{\mathcal{G}}$. Then, for all $x \in N_{\mathbb{R}}$, the inequality $ | \gamma_{\overline{D}}(x) | \leq \varepsilon \cdot | \gamma_{\overline{B}}(x) |$ implies $| \gamma_{\overline{D}}(x) | \leq C \, \varepsilon \cdot (1 + \| x \|)$. By Lemma~\ref{local-toric-bdry-norm}, we have the estimate $\| \gamma_{\overline{D}} \|_{\mathcal{G}} \leq C \cdot \| \overline{D} \|_{\mathcal{B}}$. Therefore, $\mathcal{G}$ is continuous and extends to the completion. The injectivity of the map $\mathcal{G}$ on the completion is proven in the same way as Lemma~\ref{toric-comp-SF}, replacing $D$ and $\Psi_D$ with their arithmetic counterparts $\overline{D}$ and $\gamma_{\overline{D}}$.
	\end{proof}
	\begin{lem}\label{G-functorial}
		Let $\iota \colon X_{\Sigma_0} \rightarrow X_{\Sigma_1}$ be a toric birational morphism. Then, there is a continuous injective pullback map $\iota^{\ast} \colon \overline{\textup{Div}}_{\mathbb{T}}(X_{\Sigma_1}/K) \longrightarrow \overline{\textup{Div}}_{\mathbb{T}}(X_{\Sigma_0}/K)$ satisfying $\mathcal{G} = \mathcal{G} \circ \iota^{\ast}$. Moreover, the pullback  $\iota^{\ast}$ is a topological embedding if and only if the map $\iota$ is proper.
	\end{lem}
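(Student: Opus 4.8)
The plan is to run the arithmetic analogue of the proof of Lemma~\ref{toric-top-emb}, replacing toric divisors and their support functions by toric arithmetic divisors and their tropical Green's functions. As a first reduction, by Proposition~\ref{toricmorphism} (composing with an isomorphism of the torus if necessary) one may assume that $\iota$ restricts to the identity on $U$, so that the induced linear map $\phi(\iota)$ is the identity on $N_{\mathbb{R}}$. Then every cone $\sigma_0 \in \Sigma_0$ is contained in some cone $\sigma_1 \in \Sigma_1$; in particular $|\Sigma_0| \subseteq |\Sigma_1|$, and $\Sigma_0$ is a subfan of any complete fan refining both $\Sigma_0$ and $\Sigma_1$.

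To build $\iota^{\ast}$ on model arithmetic divisors, for $i = 0,1$ choose a projective fan $\Sigma_i'$ having $\Sigma_i$ as a subfan and form the fan $\Sigma \coloneqq \Sigma_0' \cdot \Sigma_1'$ of pairwise intersections; as in Proposition~\ref{tqpmod}, $\Sigma$ is projective, refines both $\Sigma_i'$, and contains $\Sigma_0$ as a subfan. The induced toric morphisms $X_\Sigma \to X_{\Sigma_i'}$ are dominant, hence (as in the proof of Proposition~\ref{gamma-birational}) induce pullback maps $\overline{\textup{Div}}_{\mathbb{T}}(X_{\Sigma_i'})_{\mathbb{Q}} \to \overline{\textup{Div}}_{\mathbb{T}}(X_\Sigma)_{\mathbb{Q}}$, which are injective because on the level of support functions and tropical Green's functions they are (Propositions~\ref{pbsup} and~\ref{gamma-birational}). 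The collection $F$ of all such fans is cofinal in $\textup{PF}(N_{\mathbb{R}}, \Sigma_0)$, so taking direct limits produces a commutative diagram
\begin{center}
\begin{tikzcd}[row sep = tiny]
\overline{\textup{Div}}_{\mathbb{T}}(X_{\Sigma_1}/K)_{\textup{mod}} \arrow[r] & \displaystyle\varinjlim_{\Sigma \in F} \overline{\textup{Div}}_{\mathbb{T}}(X_\Sigma)_{\mathbb{Q}} & \overline{\textup{Div}}_{\mathbb{T}}(X_{\Sigma_0}/K)_{\textup{mod}} \arrow[l, "{\cong}"']
\end{tikzcd}
\end{center}
in which the right arrow is an isomorphism and the left arrow is injective (transition maps being injective). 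Composing with the inverse of the right arrow gives an injective $\iota^{\ast}$ on model divisors, and Proposition~\ref{gamma-birational} yields $\mathcal{G} = \mathcal{G} \circ \iota^{\ast}$.

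It remains to establish continuity and pass to the completion. Fix toric boundary arithmetic divisors $\overline{B}_i$ of $X_{\Sigma_i}/K$, which may be taken on a common toric projective model. By the characterization of toric boundary divisors (Lemmas~\ref{toric-bdry-div} and~\ref{local-toric-bdry-norm}), the inclusion $|\Sigma_0| \subseteq |\Sigma_1|$ forces $|B_1| \subseteq |B_0|$, hence $\| \overline{B}_1 \|_{\overline{B}_0} < \infty$; arguing as in Lemma~\ref{shrinking} this shows that $\iota^{\ast}$ is continuous for the boundary topologies, so it extends continuously to $\iota^{\ast} \colon \overline{\textup{Div}}_{\mathbb{T}}(X_{\Sigma_1}/K) \to \overline{\textup{Div}}_{\mathbb{T}}(X_{\Sigma_0}/K)$, still satisfying $\mathcal{G} = \mathcal{G} \circ \iota^{\ast}$. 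Injectivity on the completion then follows from the injectivity of $\mathcal{G}$ proved in Proposition~\ref{G/K}. Since $\mathcal{G}\circ\iota^{\ast}=\mathcal{G}$, the map $\iota^{\ast}$ is a topological embedding precisely when the boundary norms $\|\cdot\|_{\overline{B}_0}$ and $\|\cdot\|_{\overline{B}_1}$ are equivalent; by Lemmas~\ref{toric-bdry-div} and~\ref{local-toric-bdry-norm} this happens exactly when $|B_0| = |B_1|$, that is, when $|\Sigma_0| = |\Sigma_1|$, that is, when $\iota$ is proper (Proposition~\ref{toricmorphism}).

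I do not expect any genuine obstacle: the statement is the arithmetic counterpart of Lemma~\ref{toric-top-emb}, and the content lies in verifying that each ingredient of that proof has an arithmetic analogue --- Proposition~\ref{gamma-birational} for Proposition~\ref{pbsup}, Lemma~\ref{local-toric-bdry-norm} for Lemma~\ref{toric-bdry-div}, and Proposition~\ref{G/K} for Lemma~\ref{toric-comp-SF}. The one point needing a little care is the direct-limit bookkeeping: checking that the family $F$ of common-refinement fans is cofinal in the relevant indexing posets, and that injectivity of the transition maps on the arithmetic divisor groups (not merely on support functions) survives the passage to the colimit and then to the completion.
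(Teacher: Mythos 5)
Your proposal is correct and takes exactly the same route as the paper: the paper's own proof is literally the instruction ``Copy the proof of Lemma~\ref{toric-top-emb},'' and what you wrote is precisely that transcription, with the right dictionary of replacements (Proposition~\ref{gamma-birational} for Proposition~\ref{pbsup}, Lemma~\ref{local-toric-bdry-norm} for Lemma~\ref{toric-bdry-div}, and Proposition~\ref{G/K} for Lemma~\ref{toric-comp-SF}) filled in.
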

	\begin{proof}
		Copy the proof of Lemma~\ref{toric-top-emb}. 
	\end{proof}
	\begin{ex}[Canonical Green's functions]\label{can-cptf}
		We can extend the construction in Example~\ref{canonical-GF} to the compactified setting. Let $D = \lbrace D_n \rbrace_{n \in \mathbb{N}}$ be a toric compactified divisor on $X_{\Sigma_0}$. Denote by $\Psi_D$ and $\Psi_{D_n}$ the support functions of $D$ and $D_n$, respectively. Then, consider the sequence $\lbrace \overline{D}_{n,\textup{can}} \rbrace_{n \in \mathbb{N}}$, where $\overline{D}_{n,\textup{can}}$ is the model divisor $D_n$ with its canonical Green's function $g_{D_n,\textup{can}}$. By definition, the associated tropical Green's functions satisfy $\gamma_{D_n,\textup{can}} = \Psi_{D_n}$ and the sequence$\lbrace \gamma_{D_{n,\textup{can}}} \rbrace_{n \in \mathbb{N}}$ converges to the support function $\Psi_D$ of $D$. Lemma~\ref{local-toric-bdry-norm} implies that the sequence $\lbrace \overline{D}_{n,\textup{can}} \rbrace_{n \in \mathbb{N}}$ is Cauchy in the boundary topology, and we write $\overline{D}_{\textup{can}} = (D, g_{D,\textup{can}})$ for its limit. We call $g_{D,\textup{can}}$ the \textit{canonical Green's function} of the toric compactified divisor $D$. By Proposition~\ref{G/K}, its associated tropical Green's function is $\gamma_{D,\textup{can}} = \Psi_D$. The assignment $D \mapsto \overline{D}_{\textup{can}}$ induces a continuous injective morphism
		\begin{displaymath}
			\textup{can} \colon \textup{Div}_{\mathbb{T}}(X_{\Sigma_0}/K) \longrightarrow \overline{\textup{Div}}_{\mathbb{T}}(X_{\Sigma_0}/K),
		\end{displaymath}
		which we also call the \textit{canonical map}. The assignment $(D,g) \mapsto D$ induces a \textit{forgetful map}
		\begin{displaymath}
			\textup{for} \colon \overline{\textup{Div}}_{\mathbb{T}}(X_{\Sigma_0}/K) \longrightarrow \textup{Div}_{\mathbb{T}}(X_{\Sigma_0}/K).
		\end{displaymath}
		One trivially verifies that the canonical map is a continuous section of the forgetful map.
	\end{ex}
	Our next task is to compare the theory of toric compactified arithmetic divisors with the compactified arithmetic divisors of Yuan and Zhang (see Definition~\ref{cptf-dfn}).
	\begin{rem}[Compatibility with Yuan-Zhang's theory]\label{toric-cptf-1}
		Let $\overline{D} = \lbrace \overline{D}_n \rbrace_{n \in \mathbb{N}}$ be a toric compactified arithmetic divisor on $X_{\Sigma_0}/K$. Write $\overline{D}_n = (D_n, g_n)$ and observe that:
		\begin{enumerate}
			\item The sequence $\lbrace D_n \rbrace_{n \in \mathbb{N}}$ converges to a toric compactified divisor $D \in \textup{Div}_{\mathbb{T}}(X_{\Sigma}/K)$. In particular, $D|_{X_{\Sigma_0}} \in \textup{Div}_{\mathbb{T}}(X_{\Sigma_0})_{\mathbb{Q}}$.
			\item The sequence of functions $\lbrace g_n \rbrace_{n \in \mathbb{N}}$ converges uniformly on compact subsets of $X_{\Sigma_0}^{\textup{an}}$ to an $\mathbb{S}$-invariant Green's function $g$ of continuous type for the divisor $D|_{X_{\Sigma_0}}$. 
		\end{enumerate}
		Following Remark~\ref{cptf-1}, the assignment $\overline{D} \mapsto (D|_{X_{\Sigma_0}}, g)$ induces an isometric group morphism $\overline{\textup{Div}}_{\mathbb{T}}(X_{\Sigma_0}/K) \rightarrow \overline{\textup{Div}}(X_{\Sigma_0})_{\mathbb{Q}}$, where the target is equipped with the boundary topology induced by the image of a toric boundary divisor $\overline{B}$ of $X_{\Sigma_0}/K$. Observe that the subspace $\overline{\textup{Div}}_{\mathbb{T}}(X_{\Sigma_0})_{\mathbb{Q}}$ of toric divisors of $\overline{\textup{Div}}(X_{\Sigma_0})_{\mathbb{Q}}$ is a closed subgroup. Indeed, for every Cauchy sequence $\lbrace (E_n,g_n) \rbrace_{n \in \mathbb{N}}$, the induced sequence $\lbrace E_n \rbrace_{n \in \mathbb{N}}$ is eventually constant, and the limit of the sequence $\lbrace g_n \rbrace_{n \in \mathbb{N}}$ of $\mathbb{S}$-invariant functions is $\mathbb{S}$-invariant. Then, the morphism
		\begin{displaymath}
			\overline{\textup{Div}}_{\mathbb{T}}(X_{\Sigma_0}/K) \longrightarrow \overline{\textup{Div}}_{\mathbb{T}}(X_{\Sigma_0})_{\mathbb{Q}} \subset \overline{\textup{Div}}(X_{\Sigma_0})_{\mathbb{Q}}
		\end{displaymath}
		identifies the group of toric compactified arithmetic divisors as a closed topological subgroup of the arithmetic divisors of continuous type $\overline{\textup{Div}}_{\mathbb{T}}(X_{\Sigma_0})_{\mathbb{Q}}$ on the quasi-projective variety $X_{\Sigma_0}/K$. The injectivity follows from the commutativity of the diagram
		\begin{center}
			\begin{tikzcd}
				\overline{\textup{Div}}_{\mathbb{T}}(X_{\Sigma_0}/K) \arrow[r] \arrow[d, "{\mathcal{G}}"'] & \overline{\textup{Div}}_{\mathbb{T}}(X_{\Sigma_0})_{\mathbb{Q}} \arrow[d, "{\mathcal{G}}"] \\
				\mathcal{G}(N_{\mathbb{R}}) \arrow[r, hook] & C^{0}(N_{\mathbb{R}})
			\end{tikzcd}
		\end{center}
		and the fact that the maps $\mathcal{G}$ are injective (Propositions~\ref{G}~and~\ref{G/K}). It is clear from the definition of the group $\overline{\textup{Div}}_{\mathbb{T}}(X_{\Sigma_0}/K)$ that, under the above identification, it becomes a subgroup of the compactified arithmetic divisors $\overline{\textup{Div}}(X_{\Sigma_0}/K)$. Now, by example \ref{can-cptf}, the composition of $\textup{for} \circ \textup{can}$ is the identity on the group of toric compactified divisors $\textup{Div}_{\mathbb{T}}(X_{\Sigma_0}/K)$. Then, by the exact sequence in Theorem~\ref{exact-seq-local}, the group of toric compactified divisors identifies as a closed topological subgroup of the compactified divisors  $\textup{Div}(X_{\Sigma_0}/K)$. This shows the injectivity of the vertical maps in Lemma~\ref{shrinking}.
	\end{rem}
	The $\mathbb{S}$-symmetrization morphism introduced in Construction~\ref{S-inv-proj} can be defined in the same way for a toric quasi-projective $X_{\Sigma_0}/K$. Then, we have the following proposition.
	\begin{prop}\label{S-inv-cont}
		The $\mathbb{S}$-symmetrization morphism is continuous in the boundary topology. In particular, it induces a continuous morphism $( \cdot )^{\mathbb{S}} \colon C^{0}(X_{\Sigma_0}^{\textup{an}})_{\textup{cptf}} \rightarrow C^{0}_{\mathbb{S}}(X_{\Sigma_0}^{\textup{an}})_{\textup{cptf}}$, where $C^{0}_{\mathbb{S}}(X_{\Sigma_0}^{\textup{an}})_{\textup{cptf}}$ is the subspace of $\mathbb{S}$-invariant functions of the space of $C^{0}(X_{\Sigma_0}^{\textup{an}})_{\textup{cptf}}$ introduced in Remark~\ref{cptf-1}.
	\end{prop}
	\begin{proof}
		Let us prove the continuity. Let $f \in C^{0}(X_{\Sigma_0}^{\textup{an}})$. Without loss of generality, we may choose a toric boundary divisor as in Remark~\ref{toric-cptf-1}, and denote by $(0,g_B)$ the Green's function inducing the boundary topology. If $K$ is archimedean, for all $p \in X_{\Sigma_0}^{\textup{an}}$ we have
		\begin{displaymath}
			f^{\mathbb{S}}(p) = \int_{\mathbb{S}} f(t \cdot p) \, \textup{d}\mu_{\mathbb{S}}(t).
		\end{displaymath}
		If $K$ is non-archimedean, for all $p \in X_{\Sigma_0}^{\textup{an}}$ we have $	f^{\mathbb{S}}(p) = f (\theta_{\Sigma}( \textup {Trop}(p)))$. In both cases, we obtain an estimate
		\begin{displaymath}
			|f^{\mathbb{S}}(p)| \leq \max_{q \in \mathbb{S} \cdot p} |f(q)|.
		\end{displaymath}
		Since $g_{B}$ is $\mathbb{S}$-invariant, for each $\varepsilon>0$, the condition $|f(p)| \leq \varepsilon \cdot g_B (p)$ for all $p \in  X_{\Sigma_0}^{\textup{an}}$ implies that 
		\begin{displaymath}
			|f^{\mathbb{S}}(p)| \leq \varepsilon \cdot g_B (p), \quad \textup{for all } p \in X_{\Sigma}^{\textup{an}}.
		\end{displaymath}
		The continuity follows. By definition, the space $C^{0}(X_{\Sigma_0}^{\textup{an}})_{\textup{cptf}}$ is the closure in the boundary topology of $C^{0}(X_{\Sigma_0}^{\textup{an}})_{\textup{mod}}$. Then, the second statement follows.
	\end{proof}
	We are now ready to state and prove the main results of this subsection. This is a toric version of Yuan-Zhang's Theorem~\ref{exact-seq-local}.
	\begin{thm}\label{exact-seq-local-toric}
		Let $X_{\Sigma_0}/K$ be a toric quasi-projective variety. Then:
		\begin{enumerate}[label=(\roman*)]
			\item The forgetful map induces a split exact sequence
			\begin{center}
				\begin{tikzcd}
					0 \arrow[r] & C^{0}_{\mathbb{S}}(X_{\Sigma_0}^{\textup{an}})_{\textup{cptf}} \arrow[r] &  \overline{\textup{Div}}_{\mathbb{T}}(X_{\Sigma_0}/K) \arrow[r, "{ \textup{for}}"] & \textup{Div}_{\mathbb{T}}(X_{\Sigma_0}/K) \arrow[l, bend right = 30, "{\textup{can}}"'] \arrow[r] &0.
				\end{tikzcd}
			\end{center}
			\item There is an induced isomorphism
			\begin{displaymath}
				\overline{\textup{Div}}_{\mathbb{T}}(X_{\Sigma_0}/K) \cong  C^{0}_{\mathbb{S}}(X_{\Sigma_0}^{\textup{an}})_{\textup{cptf}} \oplus \textup{Div}_{\mathbb{T}}(X_{\Sigma_0}/K).
			\end{displaymath}
			\item Let $(X_{\Sigma}, (B,g_B))$ be a toric boundary divisor of $X_{\Sigma_0}/K$. Then, 
			\begin{displaymath}
				C^{0}_{\mathbb{S}}(X_{\Sigma_0}^{\textup{an}})_{\textup{cptf}} = g_{B} \cdot  \lbrace h \in C^{0}_{\mathbb{S}}(X_{\Sigma}^{\textup{an}}) \, \vert \, h( X_{\Sigma}^{\textup{an}} \setminus X_{\Sigma_0}^{\textup{an}}) = 0 \rbrace.
			\end{displaymath}
		\end{enumerate}
	\end{thm}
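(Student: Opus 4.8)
The plan is to deduce all three statements from Yuan and Zhang's exact sequence (Theorem~\ref{exact-seq-local}) by intersecting it with the toric (equivalently, $\mathbb{S}$-invariant) subobjects, using the compatibility established in Remark~\ref{toric-cptf-1}, the canonical section of Example~\ref{can-cptf}, and the $\mathbb{S}$-symmetrization of Proposition~\ref{S-inv-cont}. By Remark~\ref{toric-cptf-1} the forgetful map $\textup{for}\colon \overline{\textup{Div}}_{\mathbb{T}}(X_{\Sigma_0}/K) \to \textup{Div}_{\mathbb{T}}(X_{\Sigma_0}/K)$ is the restriction of the Yuan--Zhang forgetful map under the inclusions $\overline{\textup{Div}}_{\mathbb{T}}(X_{\Sigma_0}/K) \hookrightarrow \overline{\textup{Div}}(X_{\Sigma_0}/K)$ and $\textup{Div}_{\mathbb{T}}(X_{\Sigma_0}/K) \hookrightarrow \textup{Div}(X_{\Sigma_0}/K)$; in particular it is continuous. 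Surjectivity, together with the splitting asserted in (i), is immediate: by Example~\ref{can-cptf} the canonical map $\textup{can}$ is a continuous morphism with $\textup{for} \circ \textup{can} = \textup{id}$. Thus the whole content of (i) is the identification of $\ker(\textup{for})$, after which (ii) follows formally, since a continuously split short exact sequence of topological abelian groups yields a topological direct sum via $\overline{D} \mapsto (\overline{D} - \textup{can}(\textup{for}(\overline{D})), \textup{for}(\overline{D}))$.

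For the kernel, since the inclusions above are compatible with the forgetful maps, one has $\ker(\textup{for}) = \overline{\textup{Div}}_{\mathbb{T}}(X_{\Sigma_0}/K) \cap C^{0}(X_{\Sigma_0}^{\textup{an}})_{\textup{cptf}}$ by Theorem~\ref{exact-seq-local}. Every element of $\overline{\textup{Div}}_{\mathbb{T}}(X_{\Sigma_0}/K)$ is $\mathbb{S}$-invariant (Remark~\ref{toric-cptf-1}), so this intersection is contained in the $\mathbb{S}$-invariant part $C^{0}_{\mathbb{S}}(X_{\Sigma_0}^{\textup{an}})_{\textup{cptf}}$. Conversely, an $\mathbb{S}$-invariant element $f \in C^{0}(X_{\Sigma_0}^{\textup{an}})_{\textup{cptf}}$ is a boundary-limit of model functions $f_n$, and $f = f^{\mathbb{S}} = \lim_n f_n^{\mathbb{S}}$ by continuity of the $\mathbb{S}$-symmetrization (Proposition~\ref{S-inv-cont}); each $(0, f_n^{\mathbb{S}})$ is a toric model arithmetic divisor because the $\mathbb{S}$-symmetrization of a model function is again a model function (Construction~\ref{S-inv-proj} in the archimedean case, and the analogous statement via the section $\theta_{\Sigma}$ in the non-archimedean case), so $(0,f) \in \overline{\textup{Div}}_{\mathbb{T}}(X_{\Sigma_0}/K)$. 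This yields $\ker(\textup{for}) = C^{0}_{\mathbb{S}}(X_{\Sigma_0}^{\textup{an}})_{\textup{cptf}}$, and the inclusion $\iota$ of this kernel is the required map, finishing (i) and (ii).

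For (iii), Theorem~\ref{exact-seq-local} already gives $C^{0}(X_{\Sigma_0}^{\textup{an}})_{\textup{cptf}} = \{ g_B \cdot h|_{X_{\Sigma_0}^{\textup{an}}} : h \in C^{0}(X_{\Sigma}^{\textup{an}}),\ h(X_{\Sigma}^{\textup{an}} \setminus X_{\Sigma_0}^{\textup{an}}) = 0 \}$, so it remains to extract $\mathbb{S}$-invariant parts. The inclusion ``$\supseteq$'' is immediate, since for $\mathbb{S}$-invariant $h$ of the stated form the product $g_B h$ is $\mathbb{S}$-invariant and, restricted to $X_{\Sigma_0}^{\textup{an}}$, lies in $C^{0}(X_{\Sigma_0}^{\textup{an}})_{\textup{cptf}}$. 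For ``$\subseteq$'', given an $\mathbb{S}$-invariant element $g_B \cdot h|_{X_{\Sigma_0}^{\textup{an}}}$, replace $h$ by its $\mathbb{S}$-symmetrization $h^{\mathbb{S}}$ on the projective toric variety $X_{\Sigma}$ (Construction~\ref{S-inv-proj}): it is continuous and $\mathbb{S}$-invariant; it still vanishes on $X_{\Sigma}^{\textup{an}} \setminus X_{\Sigma_0}^{\textup{an}}$ because that set is a union of $\mathbb{S}$-orbits (it is the complement of the $\mathbb{S}$-invariant set $X_{\Sigma_0}^{\textup{an}}$); and since $g_B$ is $\mathbb{S}$-invariant and finite, positive on $X_{\Sigma_0}^{\textup{an}}$, we get $g_B h^{\mathbb{S}} = (g_B h)^{\mathbb{S}} = g_B h$ on $X_{\Sigma_0}^{\textup{an}}$, hence $h^{\mathbb{S}}|_{X_{\Sigma_0}^{\textup{an}}} = h|_{X_{\Sigma_0}^{\textup{an}}}$. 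Therefore the given function equals $g_B \cdot h^{\mathbb{S}}|_{X_{\Sigma_0}^{\textup{an}}}$, which is of the required form.

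The main obstacle is the step asserting $C^{0}_{\mathbb{S}}(X_{\Sigma_0}^{\textup{an}})_{\textup{cptf}} \subseteq \overline{\textup{Div}}_{\mathbb{T}}(X_{\Sigma_0}/K)$, that is, that an $\mathbb{S}$-invariant boundary-limit of model functions is again a boundary-limit of \emph{$\mathbb{S}$-invariant} model functions. This rests on the fact that the $\mathbb{S}$-symmetrization preserves model type: smooth functions symmetrize to smooth functions by Construction~\ref{S-inv-proj}, while in the non-archimedean case one must verify, via the Gauss section $\theta_{\Sigma}$ attached to a toric model, that algebraic (model) functions symmetrize to algebraic ones, so that $C^{0}_{\mathbb{S}}(X_{\Sigma_0}^{\textup{an}})_{\textup{cptf}}$ is simultaneously the $\mathbb{S}$-invariant part of $C^{0}(X_{\Sigma_0}^{\textup{an}})_{\textup{cptf}}$ and the boundary-closure of the $\mathbb{S}$-invariant model functions. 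Once this compatibility is recorded, the remainder is a formal manipulation of the Yuan--Zhang exact sequence together with its $\mathbb{S}$-invariant subobjects and the canonical section.
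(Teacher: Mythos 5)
Your overall strategy is the same as the paper's, and parts (ii) and (iii) are correct and essentially match the paper's argument. The issue is in the kernel computation for (i), where you assert as a needed step that the $\mathbb{S}$-symmetrization of a model function is again a model function, in particular in the non-archimedean case ``via the section $\theta_{\Sigma}$.'' You flag this yourself as the main obstacle, but you neither prove it nor notice that the paper is set up precisely to avoid it: in the toric setting, the definition of $\overline{\textup{Div}}_{\mathbb{T}}(X_{\Sigma_0}/K)_{\textup{mod}}$ (Definition~\ref{cptf-dfn-local}) is a direct limit of the groups $\overline{\textup{Div}}_{\mathbb{T}}(X_{\Sigma})_{\mathbb{Q}}$ of toric arithmetic divisors of \emph{continuous} type on projective toric models, not of smooth/algebraic (model) type. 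This is a deliberate deviation from the Yuan--Zhang setup, and the paper's proof of this theorem explicitly remarks that the $f_n^{\mathbb{S}}$ are ``not necessarily of model type.''

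Concretely, your chain $f = \lim_n f_n^{\mathbb{S}}$ with $f_n$ of model type is fine, and $f_n^{\mathbb{S}}$ is $\mathbb{S}$-invariant, continuous, and defined on a projective toric model. That is already enough to put $(0, f_n^{\mathbb{S}}) \in \overline{\textup{Div}}_{\mathbb{T}}(X_{\Sigma_0}/K)_{\textup{mod}} \cap \ker(\textup{for})$, and since $\ker(\textup{for})$ is closed in the complete group $\overline{\textup{Div}}_{\mathbb{T}}(X_{\Sigma_0}/K)$, the limit $f$ lies in $\ker(\textup{for})$. You do not need, and should not try to prove, that $f_n^{\mathbb{S}}$ is algebraic in the non-archimedean case; whether that is even true for a non-toric model $\mathcal{X}$ of $X_{\Sigma}$ is unclear, since the reduction defining an algebraic function is not a priori compatible with the Gauss section $\theta_\Sigma$ attached to the toric structure. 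Replacing your ``model type is preserved'' claim with the observation about continuous-type toric model divisors closes the gap and aligns your proof with the paper's.
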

	\begin{proof}
		Part \textit{(ii)} follows immediately from \textit{(i)}, which we now prove. By Example~\ref{can-cptf}, the forgetful map induces a split exact sequence
		\begin{center}
			\begin{tikzcd}
				0 \arrow[r] & \textup{Ker}(\textup{for}) \arrow[r] &  \overline{\textup{Div}}_{\mathbb{T}}(X_{\Sigma_0}/K) \arrow[r, "{ \textup{for}}"] & \textup{Div}_{\mathbb{T}}(X_{\Sigma_0}/K) \arrow[l, bend right = 30, "{\textup{can}}"'] \arrow[r] &0.
			\end{tikzcd}
		\end{center}
		We just need to show that the kernel of the forgetful map $ \textup{Ker}(\textup{for})$ is exactly $C^{0}_{\mathbb{S}}(X_{\Sigma_0}^{\textup{an}})_{\textup{cptf}}$. By Remark~\ref{toric-cptf-1} and the exact sequence in Theorem~\ref{exact-seq-local}, we have inclusions 
		\begin{displaymath}
			C^{0}_{\mathbb{S}}(X_{\Sigma_0}^{\textup{an}})_{\textup{mod}} \subset \textup{Ker}(\textup{for}) \subset C^{0}_{\mathbb{S}}(X_{\Sigma_0}^{\textup{an}})_{\textup{cptf}}.
		\end{displaymath}
		Observe that $\textup{Ker}(\textup{for})$ is a closed subgroup in the complete group $ \overline{\textup{Div}}_{\mathbb{T}}(X_{\Sigma_0}/K) $, therefore, it is complete. We will use this completeness to show the inclusion
		\begin{displaymath}
			C^{0}_{\mathbb{S}}(X_{\Sigma_0}^{\textup{an}})_{\textup{cptf}} \subset  \textup{Ker}(\textup{for}).
		\end{displaymath}
		Let $f \in C^{0}_{\mathbb{S}}(X_{\Sigma_0}^{\textup{an}})_{\textup{cptf}}$, by definition there is a sequence $\lbrace f_n \rbrace_{n \in \mathbb{N}}$ of functions in $C^{0}(X_{\Sigma_0}^{\textup{an}})_{\textup{mod}}$ converging in the boundary topology to $f$. By Proposition~\ref{S-inv-cont}, the sequence $\lbrace f_{n}^{\mathbb{S}} \rbrace_{n \in \mathbb{N}}$ converges in the boundary topology to $f$. Note that the functions $f_{n}^{\mathbb{S}}$ are not necessarily of model type. However, Construction~\ref{S-inv-proj} implies that each $f_n \in  \textup{Ker}(\textup{for})$. It follows that $f \in \textup{Ker}(\textup{for})$. It remains to show part \textit{(iii)}. By Theorem~\ref{exact-seq-local}, we know that
		\begin{displaymath}
			C^{0}(X_{\Sigma_0}^{\textup{an}})_{\textup{cptf}} = g_{B} \cdot \lbrace h \in C^{0}(X_{\Sigma}^{\textup{an}}) \, \vert \, h( X_{\Sigma}^{\textup{an}} \setminus X_{\Sigma_0}^{\textup{an}}) = 0 \rbrace.
		\end{displaymath}
		Since $g_B$ is $\mathbb{S}$-invariant, the result follows by imposing $\mathbb{S}$-invariance on both sides of the equation.
	\end{proof}
	\begin{cor}\label{ker-for-toric}
		With the notation of Theorem~\ref{exact-seq-local-toric}, the morphism $\mathcal{G}$ induces a bijection
		\begin{displaymath}
			C^{0}_{\mathbb{S}}(X_{\Sigma_0}^{\textup{an}})_{\textup{cptf}} \cong \gamma_{\overline{B}} \cdot \lbrace \eta \in  C^{0}(N_{\Sigma}) \, \vert \, \eta( N_{\Sigma} \setminus N_{\Sigma_0}) = 0 \rbrace.
		\end{displaymath}
		In other words, the set on the right-hand side is the subspace of $C^{0}(N_{\Sigma})$ consisting of functions vanishing at the boundary $N_{\Sigma} \setminus N_{\Sigma_0}$.
	\end{cor}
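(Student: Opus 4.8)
The plan is to obtain the statement by feeding the description of $C^{0}_{\mathbb{S}}(X_{\Sigma_0}^{\textup{an}})_{\textup{cptf}}$ from Theorem~\ref{exact-seq-local-toric}.\textit{(iii)} through the tropicalization dictionary of Construction~\ref{tropical-toric-polytope}. First I would fix a toric boundary divisor $(X_{\Sigma},\overline{B})$ of $X_{\Sigma_0}/K$; by Lemma~\ref{local-toric-bdry-norm} we may take $\overline{B}=(B,g_{B,\textup{can}}+1)$, so that the Green's function $g_{B}$ is $\mathbb{S}$-invariant and $g_{B}|_{U^{\textup{an}}}=-\gamma_{\overline{B}}\circ\textup{Trop}$ with $\gamma_{\overline{B}}=\Psi_{B}-1$. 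Theorem~\ref{exact-seq-local-toric}.\textit{(iii)} then identifies $C^{0}_{\mathbb{S}}(X_{\Sigma_0}^{\textup{an}})_{\textup{cptf}}$ with $g_{B}\cdot\{h\in C^{0}_{\mathbb{S}}(X_{\Sigma}^{\textup{an}})\,\vert\,h(X_{\Sigma}^{\textup{an}}\setminus X_{\Sigma_0}^{\textup{an}})=0\}$, and it remains to compute the effect of $\mathcal{G}$ (Proposition~\ref{G/K}) on elements of this shape.

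The argument proceeds in three steps. First, since $\textup{Trop}\colon X_{\Sigma}^{\textup{an}}\rightarrow N_{\Sigma}$ identifies $N_{\Sigma}$ with the topological quotient $X_{\Sigma}^{\textup{an}}/\mathbb{S}$ (Construction~\ref{tropical-toric-polytope}.(i)), pullback along $\textup{Trop}$ is an isomorphism $\textup{Trop}^{\ast}\colon C^{0}(N_{\Sigma})\rightarrow C^{0}_{\mathbb{S}}(X_{\Sigma}^{\textup{an}})$; write $h=\eta\circ\textup{Trop}$. Second, using the cone--orbit correspondence~\ref{cone-orbit} and the compatibility of $\textup{Trop}$ with the orbit stratifications, one checks $\textup{Trop}^{-1}(N_{\Sigma_0})=X_{\Sigma_0}^{\textup{an}}$ (both sides equal $\bigcup_{\sigma\in\Sigma_0}O(\sigma)^{\textup{an}}$), hence $X_{\Sigma}^{\textup{an}}\setminus X_{\Sigma_0}^{\textup{an}}=\textup{Trop}^{-1}(N_{\Sigma}\setminus N_{\Sigma_0})$; as $\textup{Trop}$ is surjective, $h$ vanishes on $X_{\Sigma}^{\textup{an}}\setminus X_{\Sigma_0}^{\textup{an}}$ if and only if $\eta$ vanishes on $N_{\Sigma}\setminus N_{\Sigma_0}$. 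Third, for a kernel element $(0,g)$ with $g=g_{B}h$ the definition of $\mathcal{G}$ gives $g|_{U^{\textup{an}}}=-\gamma_{(0,g)}\circ\textup{Trop}$, while $g|_{U^{\textup{an}}}=g_{B}|_{U^{\textup{an}}}\cdot h|_{U^{\textup{an}}}=-\bigl(\gamma_{\overline{B}}\cdot\eta|_{N_{\mathbb{R}}}\bigr)\circ\textup{Trop}$; since $\textup{Trop}(U^{\textup{an}})=N_{\mathbb{R}}$ this forces $\mathcal{G}(0,g)=\gamma_{\overline{B}}\cdot\eta|_{N_{\mathbb{R}}}$. Therefore $\mathcal{G}$ carries $C^{0}_{\mathbb{S}}(X_{\Sigma_0}^{\textup{an}})_{\textup{cptf}}$ onto $\gamma_{\overline{B}}\cdot\{\eta\in C^{0}(N_{\Sigma})\,\vert\,\eta(N_{\Sigma}\setminus N_{\Sigma_0})=0\}$, the product being read as the (unique, by density of $N_{\mathbb{R}}$ in $N_{\Sigma}$) continuous extension to $N_{\Sigma}$, which exists since $g_{B}h$ extends continuously and $\mathbb{S}$-invariantly to $X_{\Sigma}^{\textup{an}}$. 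Injectivity of the restriction is immediate from injectivity of $\mathcal{G}$ (Proposition~\ref{G/K}), so we obtain the asserted bijection; the closing reformulation is the kernel description of Theorem~\ref{exact-seq-local} transported through $\textup{Trop}$, namely that a function lies in this image exactly when its ratio with $\gamma_{\overline{B}}$ tends to $0$ towards $N_{\Sigma}\setminus N_{\Sigma_0}$.

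The computations in the first and third steps are routine, so the main point requiring care is the second one: showing that the analytic boundary $X_{\Sigma}^{\textup{an}}\setminus X_{\Sigma_0}^{\textup{an}}$ coincides with $\textup{Trop}^{-1}(N_{\Sigma}\setminus N_{\Sigma_0})$, i.e.\ that the toric open immersion $X_{\Sigma_0}\hookrightarrow X_{\Sigma}$ is simultaneously compatible with the orbit stratification of $X_{\Sigma}^{\textup{an}}$ and with that of $N_{\Sigma}$. This rests on the orbit decomposition $X_{\Sigma_0}=\bigcup_{\sigma\in\Sigma_0}O(\sigma)$, the analogous decomposition $N_{\Sigma_0}=\bigsqcup_{\sigma\in\Sigma_0}N(\sigma)_{\mathbb{R}}$, and the identity $\textup{Trop}^{-1}(N(\sigma)_{\mathbb{R}})=O(\sigma)^{\textup{an}}$ built into the tropicalization functor. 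It deserves to be spelled out because $N_{\Sigma_0}$ is open but not closed in $N_{\Sigma}$, and $\gamma_{\overline{B}}$ is finite precisely on $N_{\Sigma_0}$ and divergent towards $N_{\Sigma}\setminus N_{\Sigma_0}$; it is this matching of loci that makes the product $\gamma_{\overline{B}}\cdot\eta$ extend continuously across the boundary.
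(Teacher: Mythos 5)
Your proposal is correct and follows essentially the same route as the paper: invoke Theorem~\ref{exact-seq-local-toric}.\textit{(iii)}, use the commutative square of tropicalization maps and the identification of $N_{\Sigma}$ (resp.\ $N_{\Sigma_0}$) with the $\mathbb{S}$-quotient of $X_{\Sigma}^{\textup{an}}$ (resp.\ $X_{\Sigma_0}^{\textup{an}}$) to translate the vanishing condition, and conclude with $\mathcal{G}(\overline{B})=\gamma_{\overline{B}}$. You spell out more explicitly the orbit-level verification that $\textup{Trop}^{-1}(N_{\Sigma_0})=X_{\Sigma_0}^{\textup{an}}$ and the computation of $\mathcal{G}$ on kernel elements, which the paper leaves implicit, but the mathematical content coincides.
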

	\begin{proof}
		By Theorem~\ref{exact-seq-local-toric}, we have 
		\begin{displaymath}
			C^{0}_{\mathbb{S}}(X_{\Sigma_0}^{\textup{an}})_{\textup{cptf}} = g_{B} \cdot \lbrace h \in C_{\mathbb{S}}^{0}(X_{\Sigma}^{\textup{an}}) \, \vert \, h( X_{\Sigma}^{\textup{an}} \setminus X_{\Sigma_0}^{\textup{an}}) = 0 \rbrace.
		\end{displaymath}
		By the functorial property of the tropicalization map (Construction~\ref{tropical-toric-polytope}), we have a commutative diagram
		\begin{center}
			\begin{tikzcd}
				X_{\Sigma_0}^{\textup{an}} \arrow[r, "{\textup{Trop}}"] \arrow[d] & N_{\Sigma_0}  \arrow[d]  \\
				X_{\Sigma}^{\textup{an}} \arrow[r, "{\textup{Trop}}"] & N_{\Sigma}
			\end{tikzcd}
		\end{center}
		Moreover, the tropicalization map identifies the toric variety $N_{\Sigma}$ (resp. $N_{\Sigma_0}$) with the topological quotient of $X_{\Sigma}^{\textup{an}}$ (resp. $X_{\Sigma_0}^{\textup{an}}$) by compact torus $\mathbb{S}$. Combining these facts, the identity $h = \eta \circ \textup{Trop}$ given by $\mathbb{S}$-invariance induces a bijection
		\begin{displaymath}
			\lbrace h \in C_{\mathbb{S}}^{0}(X_{\Sigma}^{\textup{an}}) \, \vert \, h( X_{\Sigma}^{\textup{an}} \setminus X_{\Sigma_0}^{\textup{an}}) = 0 \rbrace \cong \lbrace \eta \in  C^{0}(N_{\Sigma}) \, \vert \, \eta( N_{\Sigma} \setminus N_{\Sigma_0}) = 0 \rbrace.
		\end{displaymath}
		The result follows from the equality $\mathcal{G}(\overline{B}) = \gamma_{\overline{B}}$.
	\end{proof}
	Finally, we compute the group of toric compactified arithmetic divisors on the torus.
	\begin{thm}\label{arith-toric-div-UK}
		Let $U/K$ be the split $d$-dimensional torus. The morphism 
		\begin{displaymath}
			\mathcal{G} \colon  \overline{\textup{Div}}_{\mathbb{T}}(U/K) \longrightarrow  \mathcal{G}(N_{\mathbb{R}})
		\end{displaymath}
		is an isomorphism of Banach spaces, where $\mathcal{C}(N_{\mathbb{R}})$ is the space of continuous conical functions, $\mathcal{SL}(N_{\mathbb{R}})$ is the space of continuous sublinear functions and $\mathcal{G}(N_{\mathbb{R}})$ is their direct sum, all equipped with the $\mathcal{G}$-norm (see Definition~\ref{G-def}). In particular,
		\begin{displaymath}
			\overline{\textup{Div}}_{\mathbb{T}}(U/K) \cong \mathcal{SL}(N_{\mathbb{R}}) \oplus  \mathcal{C}(N_{\mathbb{R}}).
		\end{displaymath}
	\end{thm}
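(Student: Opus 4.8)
The plan is to combine the splitting in Theorem~\ref{exact-seq-local-toric} with the geometric computation of Theorem~\ref{torgeomdiv} and the description of the kernel of the forgetful map in Corollary~\ref{ker-for-toric}. By Proposition~\ref{G/K} we already know that $\mathcal{G}$ is a continuous injective morphism of Banach spaces, so what remains is to identify its image with all of $\mathcal{G}(N_{\mathbb{R}})$ and to see that it is open onto it; since a continuous linear bijection between Banach spaces is automatically a topological isomorphism, the essential point is surjectivity, i.e. that every $\gamma\in\mathcal{G}(N_{\mathbb{R}})$ is the tropical Green's function of a toric compactified arithmetic divisor on $U/K$. Recall from Definition~\ref{G-def} that $\mathcal{G}(N_{\mathbb{R}})=\mathcal{C}(N_{\mathbb{R}})\oplus\mathcal{SL}(N_{\mathbb{R}})$ via $\gamma=\mathrm{rec}(\gamma)+\bigl(\gamma-\mathrm{rec}(\gamma)\bigr)$, and that this decomposition should match the split exact sequence of Theorem~\ref{exact-seq-local-toric}(i) applied to $X_{\Sigma_0}=U$, where $N_{\Sigma_0}=N_{\mathbb{R}}$.

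\textbf{The conical summand.} For $D\in\textup{Div}_{\mathbb{T}}(U/K)$ one has $\gamma_{\overline{D}_{\textup{can}}}=\Psi_D$ by Example~\ref{can-cptf}, so under the identification $\textup{Div}_{\mathbb{T}}(U/K)\cong\mathcal{C}(N_{\mathbb{R}})$ of Theorem~\ref{torgeomdiv} the composite $\mathcal{G}\circ\textup{can}$ is nothing but the inclusion $\mathcal{C}(N_{\mathbb{R}})\hookrightarrow\mathcal{G}(N_{\mathbb{R}})$. Hence $\mathcal{G}$ restricted to $\textup{can}\bigl(\textup{Div}_{\mathbb{T}}(U/K)\bigr)$ is a topological isomorphism onto $\mathcal{C}(N_{\mathbb{R}})$, and every conical continuous function lies in the image of $\mathcal{G}$.

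\textbf{The sublinear summand.} For the kernel of the forgetful map I would fix a toric projective model $X_{\Sigma}$ of $U/K$ carrying an \emph{ample} toric boundary divisor $B$ (take $\Psi_B=\Psi_{\Delta}$ for a lattice polytope $\Delta$ with $0$ in its interior and $\Sigma$ a smooth projective refinement of its normal fan, so that $N_{\Sigma}$ is homeomorphic to $\Delta_B$ with $N_{\mathbb{R}}$ identified with $\mathrm{ri}(\Delta_B)$, by Construction~\ref{tropical-toric-polytope}(3)), and take $\overline{B}=(B,g_{B,\textup{can}}+1)$, for which $\gamma_{\overline{B}}=\Psi_B-1$ satisfies $1\le|\gamma_{\overline{B}}(x)|\le C(1+\|x\|)$ by conicity and continuity of $\Psi_B$. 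By Corollary~\ref{ker-for-toric}, $\mathcal{G}$ carries $C^{0}_{\mathbb{S}}(U^{\textup{an}})_{\textup{cptf}}$ bijectively onto $\gamma_{\overline{B}}\cdot\{\eta\in C^{0}(\Delta_B):\eta|_{\partial\Delta_B}=0\}$; unwinding Definition~\ref{G-def} and using $|\gamma_{\overline{B}}|\asymp 1+\|x\|$, this product is precisely $\mathcal{SL}(N_{\mathbb{R}})$, and the boundary norm on the kernel corresponds to the $\mathcal{SL}$-norm. (Alternatively, one argues by density in the spirit of Theorems~\ref{torgeomdiv} and~\ref{CD-torus-DVR}: model divisors furnish, via Proposition~\ref{toric-local-analytification} in the non-archimedean case and $\mathbb{S}$-smooth Green's functions in the archimedean case, a subspace whose $\mathcal{G}$-closure is $\mathcal{G}(N_{\mathbb{R}})$ by the relevant density theorems, the $\mathbb{S}$-symmetrization of Proposition~\ref{S-inv-cont} keeping one inside the kernel; one then invokes the norm equivalence coming from Lemma~\ref{local-toric-bdry-norm} as in the proof of Theorem~\ref{torgeomdiv}.)

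\textbf{Conclusion and main obstacle.} Combining the two previous steps with Theorem~\ref{exact-seq-local-toric}(ii) produces a topological isomorphism $\overline{\textup{Div}}_{\mathbb{T}}(U/K)\cong\mathcal{SL}(N_{\mathbb{R}})\oplus\mathcal{C}(N_{\mathbb{R}})=\mathcal{G}(N_{\mathbb{R}})$ which on each summand is $\mathcal{G}$; since $\mathcal{G}$ is a single morphism, it \emph{is} this isomorphism, giving both assertions. I expect the main obstacle to be the identification of $\mathcal{G}\bigl(C^{0}_{\mathbb{S}}(U^{\textup{an}})_{\textup{cptf}}\bigr)$ with $\mathcal{SL}(N_{\mathbb{R}})$ \emph{as a normed space}: one must reconcile the abstract Banach-space definition of $\mathcal{SL}(N_{\mathbb{R}})$ (continuous functions of vanishing recession with controlled sublinear growth) with the concrete shape "$\gamma_{\overline{B}}$ times a continuous function on $\Delta_B$ vanishing on $\partial\Delta_B$", which hinges on the boundary strata of $N_{\Sigma}$ capturing every asymptotic direction of $N_{\mathbb{R}}$ — valid exactly because $B$ is ample — together with a uniform archimedean/non-archimedean treatment, which is precisely what Corollary~\ref{ker-for-toric} was set up to provide.
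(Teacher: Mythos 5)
Your proposal follows essentially the same route as the paper: decompose via Theorem~\ref{exact-seq-local-toric}, match the conical summand with $\mathcal{C}(N_{\mathbb{R}})$ via Theorem~\ref{torgeomdiv}, match the kernel $C^{0}_{\mathbb{S}}(U^{\textup{an}})_{\textup{cptf}}$ with $\mathcal{SL}(N_{\mathbb{R}})$ via Corollary~\ref{ker-for-toric}, and use the norm equivalence from Lemma~\ref{local-toric-bdry-norm} (the paper does this directly, which is as effective as your appeal to the open-mapping theorem). One small imprecision: after passing to a smooth projective refinement, the pullback of $B$ is no longer ample (only nef), and the identification of $\mathcal{SL}(N_{\mathbb{R}})$ with functions vanishing on $N_{\Sigma}\setminus N_{\mathbb{R}}$ hinges only on compactness of $N_{\Sigma}$ (equivalently, completeness of $\Sigma$), not on ampleness of $B$ — which is exactly what the paper's double-inclusion argument uses.
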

	\begin{proof}
		By Theorem~\ref{exact-seq-local-toric}, we know that the map $\mathcal{G}$ induces an isomorphism
		\begin{displaymath}
			\overline{\textup{Div}}_{\mathbb{T}}(U/K) \cong  C^{0}_{\mathbb{S}}(U^{\textup{an}})_{\textup{cptf}} \oplus \textup{Div}_{\mathbb{T}}(U/K).
		\end{displaymath}
		By Theorem~\ref{torgeomdiv}, we have an isomorphism of Banach spaces $\textup{Div}_{\mathbb{T}}(U/K) \cong  \mathcal{C}(N_{\mathbb{R}})$. We only need to show that $\mathcal{G}$ restricts to an isomorphism of Banach spaces $C^{0}_{\mathbb{S}}(U^{\textup{an}})_{\textup{cptf}} =  \mathcal{SL}(N_{\mathbb{R}})$. By Lemma~\ref{local-toric-bdry-norm}, if $B$ is a toric boundary divisor of $U/K$, then, the function $\Psi_B - 1$ corresponds to the tropical Green's function $\gamma_{\overline{B}}$ of the toric boundary divisor $(B, g_{B,\textup{can}} -1)$. Moreover, we have the identity
		\begin{displaymath}
			\| \overline{D} \|_{\overline{B}} = \inf \lbrace \varepsilon \in \mathbb{Q}_{>0} \, \vert \, | \gamma_{\overline{D}}(x) | \leq \varepsilon \cdot | \gamma_{\overline{B}}(x) | \textup{ for all } x \in N_{\mathbb{R}} \rbrace. 
		\end{displaymath}
		Observe that the norm induced by the right-hand side is equivalent to the $\mathcal{G}$-norm. Then, by Corollary~\ref{ker-for-toric}, we only need to verify that
		\begin{displaymath}
			\mathcal{SL}(N_{\mathbb{R}}) = \gamma_{\overline{B}} \ \lbrace \eta \in  C^{0}(N_{\Sigma}) \, \vert \, \eta( N_{\Sigma} \setminus N_{\Sigma_0}) = 0 \rbrace.
		\end{displaymath}
		Let $f$ be a sublinear function. By Definition~\ref{G-def} and the equivalence of norms, for each $\varepsilon > 0$, there exists a radius $R_{\varepsilon} > 0$ such that for all $\| x \| > R_{\varepsilon}$ we have $|f(x)| \leq \varepsilon \cdot |\gamma_{\overline{B}}(x)|$. Then, the function $f / \gamma_{\overline{B}}$ extends continuously to a function vanishing at the boundary $N_{\Sigma} \setminus N_{\Sigma_0}$. Therefore,
		\begin{displaymath}
			\mathcal{SL}(N_{\mathbb{R}}) \subset \gamma_{\overline{B}} \cdot  \lbrace \eta \in  C^{0}(N_{\Sigma}) \, \vert \, \eta( N_{\Sigma} \setminus N_{\Sigma_0}) = 0 \rbrace.
		\end{displaymath}
		To show the reverse inclusion, note that the toric variety $X_{\Sigma}$ is projective; therefore, the tropical toric variety $N_{\Sigma}$ is homeomorphic to a convex polytope. In particular, it is compact. Consider a continuous function $h$ vanishing on the set $N_{\Sigma} \setminus N_{\Sigma_0}$. Then, for each $\varepsilon >0$, there is a radius $R_{\varepsilon} > 0$ such that for all $\| x \| > R_{\varepsilon}$ we have $	|h(x)| \leq \varepsilon$. It follows that the function $h \cdot \gamma_{\overline{B}}$ is sublinear.
	\end{proof}
	
	\subsection{The nef cone and local toric heights}
	In this section, we extend the definition of the local toric height of Burgos, Philippon, and Sombra~\cite{BPS} to the quasi-projective case of Yuan and Zhang~\cite{Y-Z}. The main result is that the formula in Theorem~\ref{local-toric-height-proj} holds in this setting. A crucial difference with the projective case is that the local toric height of a toric quasi-projective variety with respect to an arbitrary nef toric compactified arithmetic divisor is not necessarily finite. Thus, we characterize the finiteness of the local toric height in terms of the associated tropical Green's function of the nef toric divisor. To do this, we first describe the cone of nef toric compactified arithmetic divisors.
	\begin{lem}\label{nef-tor-model-arith-div}
		Let $K$ be non-archimedean and $X_{\Sigma_0}/K$ be a quasi-projective toric variety. The map $\mathcal{G}$ induces an identification between the cone $\overline{\textup{Div}}_{\mathbb{T}}^{\textup{nef}}(X_{\Sigma_0}/K)_{\textup{mod}}$ of nef model arithmetic divisors and the set of all concave rational piecewise affine functions $f$ on $N_{\mathbb{R}}$ whose recession function $\textup{rec}(f)$ is a support function on some fan $\Sigma \in \textup{PF}(N_{\mathbb{R}}, \Sigma_0)$.
	\end{lem}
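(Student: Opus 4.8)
The plan is to reduce the statement to the projective toric case on each model, where it is essentially contained in Theorem~\ref{nef-tor-arith-proj}, and then to pass to the direct limit, with injectivity coming for free from Proposition~\ref{G/K}. First I would unwind the definitions: by Proposition~\ref{tqpmod} the category $\textup{PM}_{\mathbb{T}}(X_{\Sigma_0}/K)$ is equivalent to the cofiltered poset $\textup{PF}(N_{\mathbb{R}},\Sigma_0)$, so an element of $\overline{\textup{Div}}_{\mathbb{T}}^{\textup{nef}}(X_{\Sigma_0}/K)_{\textup{mod}}$ is the class of a nef toric arithmetic divisor $\overline{D}$ of model type on some $X_{\Sigma}$ with $\Sigma\in\textup{PF}(N_{\mathbb{R}},\Sigma_0)$, two such being identified when their pullbacks to a common refinement agree. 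By Proposition~\ref{gamma-birational} the tropical Green's function is unchanged under pullback along dominant toric morphisms restricting to the identity on $U$, so $\mathcal{G}$ descends to a well-defined map on the direct limit, and by Proposition~\ref{G/K} (via Proposition~\ref{G}) it is injective there. Thus it remains to compute the image.

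The heart of the argument is the analysis on a fixed projective model $X_{\Sigma}$, $\Sigma\in\textup{PF}(N_{\mathbb{R}},\Sigma_0)$, where I would combine three inputs: (a) by Theorem~\ref{nefdiv} a toric divisor $D$ on $X_{\Sigma}$ is nef iff $\Psi_D\in\mathcal{SF}^{+}(\Sigma,\mathbb{Q})$, and for any toric arithmetic divisor of continuous type one has $\textup{rec}(\gamma_{\overline{D}})=\Psi_D$ (the difference $\Psi_D-\gamma_{\overline{D}}$ is bounded by Proposition~\ref{gamma}, and $\Psi_D$ is conical); (b) by Theorem~\ref{nef-tor-arith-proj}, for a nef $D$ the assignment $\overline{D}\mapsto\gamma_{\overline{D}}$ is a bijection from Green's functions $g$ with $(D,g)$ nef onto concave functions $\gamma$ on $N_{\mathbb{R}}$ with $|\Psi_D-\gamma|$ bounded, and -- since $K$ is non-archimedean -- such an $\overline{D}$ is of model type exactly when $\gamma_{\overline{D}}$ is rational piecewise affine; (c) a concave rational piecewise affine function $\gamma$ on $N_{\mathbb{R}}$ automatically satisfies $|\gamma-\textup{rec}(\gamma)|<\infty$, which is the standard fact that such a $\gamma$ is the minimum of finitely many affine functions $\ell_i$, so that $\textup{rec}(\gamma)$ is the minimum of their linear parts and $\gamma-\textup{rec}(\gamma)$ is squeezed between the least and the greatest constant term of the $\ell_i$; moreover $\textup{rec}(\gamma)$ is then rational, concave and conical piecewise affine. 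Combining (a), (b), (c), the map $\mathcal{G}$ restricts to a bijection between nef model-type toric arithmetic divisors on $X_{\Sigma}$ and concave rational piecewise affine functions $\gamma$ on $N_{\mathbb{R}}$ with $\textup{rec}(\gamma)\in\mathcal{SF}^{+}(\Sigma,\mathbb{Q})$.

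Taking the union over $\Sigma\in\textup{PF}(N_{\mathbb{R}},\Sigma_0)$ then finishes the proof: on the source side this recovers all of $\overline{\textup{Div}}_{\mathbb{T}}^{\textup{nef}}(X_{\Sigma_0}/K)_{\textup{mod}}$, while on the target side $\bigcup_{\Sigma\in\textup{PF}(N_{\mathbb{R}},\Sigma_0)}\mathcal{SF}^{+}(\Sigma,\mathbb{Q})$ is exactly the set of concave rational conical functions that are support functions on some $\Sigma\in\textup{PF}(N_{\mathbb{R}},\Sigma_0)$; since a concave $\gamma$ has concave recession, one obtains precisely the set of concave rational piecewise affine functions $f$ on $N_{\mathbb{R}}$ for which $\textup{rec}(f)$ is a support function on some $\Sigma\in\textup{PF}(N_{\mathbb{R}},\Sigma_0)$, and the injectivity of $\mathcal{G}$ promotes this surjection to a bijection. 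The only point that needs care is the direct-limit bookkeeping -- checking that nefness and the model-type property are preserved by the transition maps (the projection formula together with the functoriality of analytification, Propositions~\ref{toric-divisors-DVR-2} and~\ref{gamma-birational}) and that nefness on each $X_{\Sigma}$ agrees with the notion arising from toric schemes over $\mathcal{O}_K$ (Proposition~\ref{nef-dvr}); I do not expect a genuine obstacle here, so the proof is in the end a synthesis of results already established.
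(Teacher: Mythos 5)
Your proposal is correct and follows essentially the same route as the paper, namely applying Theorem~\ref{nef-tor-arith-proj} on each projective toric model and passing to the direct limit as in Remark~\ref{direct-lim}. The supporting observations you supply (that $\textup{rec}(\gamma_{\overline{D}})=\Psi_D$, that a concave rational piecewise affine $\gamma$ automatically has $|\gamma-\textup{rec}(\gamma)|$ bounded, and that $\mathcal{G}$ is injective via Proposition~\ref{G/K}) are precisely the details the paper's one-line proof leaves implicit.
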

	\begin{proof}
		The result follows from Theorem~\ref{nef-tor-arith-proj} and taking direct limits as in Remark~\ref{direct-lim}.
	\end{proof}
	\begin{rem}\label{nef-cont-type}
		We have a similar description for the nef toric model arithmetic divisors of continuous type. Indeed, Proposition~\ref{nef-tor-arith-proj} implies
		\begin{displaymath}
			\varinjlim_{X_{\Sigma} \in \textup{PM}_{\mathbb{T}}(X_{\Sigma_0})} \overline{\textup{Div}}_{\mathbb{T}}^{\textup{nef}}(X_{\Sigma_0}/K)_{\mathbb{Q}} \cong \bigcup_{\Sigma \in \textup{PF}(N_{\mathbb{R}}, \Sigma_0)} \mathcal{G}^{+}_{\textup{rbd}}(\Sigma, \mathbb{Q}).
		\end{displaymath}
		where $\mathcal{G}^{+}_{\textup{rbd}}(\Sigma, \mathbb{Q})$ is the set of concave functions $f$ on $N_{\mathbb{R}}$ whose recession function $\textup{rec}(f)$ is a rational support function on $\Sigma$ and $|f- \textup{rec}(f)|$ is bounded. Moreover, parts \textit{(ii)} and \textit{(iii)} of Proposition~\ref{S-inv-cont} show that every $\mathbb{S}$-invariant Green's function of continuous and psh type is the limit in the uniform norm of a sequence of $\mathbb{S}$-invariant Green's functions of smooth and psh type. Then, the closures in the boundary topology of
		\begin{displaymath}
			\overline{\textup{Div}}_{\mathbb{T}}^{\textup{nef}}(X_{\Sigma_0}/K)_{\textup{mod}} \quad \textup{and} \varinjlim_{X_{\Sigma} \in \textup{PM}_{\mathbb{T}}(X_{\Sigma_0})} \overline{\textup{Div}}_{\mathbb{T}}^{\textup{nef}}(X_{\Sigma}/K)_{\mathbb{Q}}
		\end{displaymath}
		are the same. Therefore, we can use the nef toric model arithmetic divisors of continuous type to compute the nef cone $\overline{\textup{Div}}_{\mathbb{T}}^{\textup{nef}}(U/K)$.
	\end{rem}
	\begin{thm}\label{nef-cone-UK}
		Let $U/K$ be the $d$-dimensional split torus over $K$. The morphism $\mathcal{G}$, which to a toric compactified arithmetic divisor $\overline{D}$ assigns its tropical Green's function $\gamma_{\overline{D}}$, restricts to a continuous isomorphism of cones $\mathcal{G} \colon \overline{\textup{Div}}_{\mathbb{T}}^{\textup{nef}}(U/K) \rightarrow \mathcal{G}^{+}(N_{\mathbb{R}})$, where $\mathcal{G}^{+}(N_{\mathbb{R}})$ is the cone of globally Lipschitz concave functions on $N_{\mathbb{R}}$.
	\end{thm}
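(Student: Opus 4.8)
The plan is to leverage the already-established fact (Theorem~\ref{arith-toric-div-UK} together with Proposition~\ref{G/K}) that $\mathcal{G}\colon\overline{\textup{Div}}_{\mathbb{T}}(U/K)\to\mathcal{G}(N_{\mathbb{R}})$ is a homeomorphism of Banach spaces, so it restricts to a homeomorphism from the nef cone onto its image; everything then reduces to identifying that image with $\mathcal{G}^{+}(N_{\mathbb{R}})$, and continuity of $\mathcal{G}$ and of its inverse on the cone is inherited for free. Since $\overline{\textup{Div}}^{\textup{nef}}_{\mathbb{T}}(U/K)$ is by definition the closure in the boundary topology of the cone of nef toric model arithmetic divisors, its image under $\mathcal{G}$ is the $\mathcal{G}$-norm closure of the image of that model cone. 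By Remark~\ref{nef-cont-type} one may compute this closure using nef toric model arithmetic divisors \emph{of continuous type}, and combining Lemma~\ref{nef-tor-model-arith-div} with Theorem~\ref{nef-tor-arith-proj} their images under $\mathcal{G}$ are exactly the functions in $\bigcup_{\Sigma\in\textup{PF}(N_{\mathbb{R}})}\mathcal{G}^{+}_{\textup{rbd}}(\Sigma,\mathbb{Q})$. So it suffices to prove that the $\mathcal{G}$-norm closure of $\bigcup_{\Sigma}\mathcal{G}^{+}_{\textup{rbd}}(\Sigma,\mathbb{Q})$ equals $\mathcal{G}^{+}(N_{\mathbb{R}})$.

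For the inclusion ``$\subseteq$'': every element of $\mathcal{G}^{+}_{\textup{rbd}}(\Sigma,\mathbb{Q})$ is concave, concavity is preserved under pointwise limits, and $\mathcal{G}$-norm convergence dominates $\sup_{\|x\|\le R}|h(x)|/(1+R)$, hence implies uniform convergence on compacta. Thus any $\mathcal{G}$-norm limit $\gamma$ is a concave element of $\mathcal{G}(N_{\mathbb{R}})$. It remains to observe that a concave element $\gamma$ of $\mathcal{G}(N_{\mathbb{R}})$ is automatically globally Lipschitz: writing $\gamma=\Psi+f$ with $\Psi\in\mathcal{C}(N_{\mathbb{R}})$ conical and $f\in\mathcal{SL}(N_{\mathbb{R}})$, one computes $\textup{rec}(\gamma)=\Psi$, which is finite on all of $N_{\mathbb{R}}$; a finite concave function with everywhere-finite recession function has bounded superdifferentials $\bigcup_{x}\partial\gamma(x)$ and is therefore globally Lipschitz. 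This identifies $\mathcal{G}^{+}(N_{\mathbb{R}})$ with the set of concave elements of the Banach space $\mathcal{G}(N_{\mathbb{R}})$, in particular a closed subset; and it already yields the forward inclusion $\mathcal{G}\bigl(\overline{\textup{Div}}^{\textup{nef}}_{\mathbb{T}}(U/K)\bigr)\subseteq\mathcal{G}^{+}(N_{\mathbb{R}})$.

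For the inclusion ``$\supseteq$'' (surjectivity), fix $\gamma\in\mathcal{G}^{+}(N_{\mathbb{R}})$. Since $\gamma$ is globally Lipschitz its set of supporting slopes $\Delta_{\gamma}=\bigcup_{x}\partial\gamma(x)\subseteq M_{\mathbb{R}}$ is bounded, and $\gamma=\inf\{\ell : \ell\text{ affine},\ \ell\ge\gamma,\ \textup{slope}(\ell)\in\Delta_{\gamma}\}$. Choosing a countable family of such affine functions with slopes in $M_{\mathbb{Q}}$ dense in $\Delta_{\gamma}$ and perturbing the (finitely many at each stage) constant terms slightly upward to rational values, one gets a decreasing sequence $\gamma_{n}$ of concave rational piecewise affine functions with $\gamma_{n}\downarrow\gamma$ pointwise and $\textup{rec}(\gamma_{n})\downarrow\textup{rec}(\gamma)$. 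Each $\textup{rec}(\gamma_{n})$ is a rational support function, which after a toric resolution of singularities (Theorem~\ref{toricres}) lives on a smooth projective fan, and $|\gamma_{n}-\textup{rec}(\gamma_{n})|$ is bounded, so $\gamma_{n}\in\bigcup_{\Sigma}\mathcal{G}^{+}_{\textup{rbd}}(\Sigma,\mathbb{Q})$. That $\gamma_{n}\to\gamma$ in the $\mathcal{G}$-norm follows by combining a Dini argument on the unit sphere for the recession functions with the uniform-on-compacta control coming from $\gamma_{n}\downarrow\gamma$; this is the analytic heart of the argument, and is the analogue in the present setting of the approximation step in the proof of Theorem~\ref{nef-cone-US} (alternatively it can be extracted from the approximation theorems~\ref{mono-approx-2} and~\ref{closure-P-C}). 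Hence $\gamma=\mathcal{G}(\overline{D})$ with $\overline{D}=\lim_{n}\overline{D}_{n}$ nef.

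\textbf{The main obstacle} is precisely this last analytic step: producing rational piecewise affine concave approximants that converge in the \emph{global} $\mathcal{G}$-norm, i.e.\ approximating $\gamma$ uniformly on compacta while simultaneously matching its behaviour at infinity (its recession function) in the limit and staying in the rational lattice. The local approximation of a concave function by piecewise affine supporting minorants/majorants is classical; it is the uniform control at infinity — ensuring the recession gap $\textup{rec}(\gamma_{n})-\textup{rec}(\gamma)$ shrinks fast enough relative to $1+\|x\|$ — that requires care.
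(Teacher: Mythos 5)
Your proposal is correct and takes essentially the same route as the paper: reduce via the Banach-space isomorphism from Theorem~\ref{arith-toric-div-UK}, identify the image of the model nef cone via Lemma~\ref{nef-tor-model-arith-div}/Remark~\ref{nef-cont-type}, get the forward inclusion from closedness of $\mathcal{G}^{+}(N_{\mathbb{R}})$ (your superdifferential argument just re-derives Theorem~\ref{closedcone}), and conclude surjectivity from the density Theorem~\ref{closure-P}. The one caution is that your own sketch of the density step (rationally-perturbed affine minorants plus a Dini argument on the sphere) does not by itself give $\mathcal{G}$-norm convergence without something like Lemma~\ref{mono-conv-G} to bridge recession control and pointwise convergence; the safe move is to invoke Theorem~\ref{closure-P} directly, as the paper does.
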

	\begin{proof}
		By Theorem~\ref{arith-toric-div-UK}, we have an isomorphism of Banach spaces $	\mathcal{G} \colon \overline{\textup{Div}}_{\mathbb{T}}(U/K) \rightarrow \mathcal{G}(N_{\mathbb{R}})$. Since the tropical Green's function $\gamma_{\overline{D}}$ of a nef toric model arithmetic divisor $\overline{D}$ is concave, we know that the image of $\overline{\textup{Div}}_{\mathbb{T}}^{\textup{nef}}(U/K)$ under $\mathcal{G}$ is contained in the cone $\mathcal{G}^{+}(N_{\mathbb{R}})$.
		
		For the reverse inclusion, it is enough to show that the image of $\overline{\textup{Div}}_{\mathbb{T}}^{\textup{nef}}(U/K)_{\textup{mod}}$ is dense in $\mathcal{G}^{+}(N_{\mathbb{R}})$. If $K$ is non-archimedean, Lemma~\ref{nef-tor-model-arith-div} identifies this image with the set $\mathcal{PA}^{+}(N_{\mathbb{R}}, \mathbb{Q})$ of concave piecewise affine rational functions on $N_{\mathbb{R}}$. Then, apply Theorem~\ref{closure-P}, which states that $\mathcal{PA}^{+}(N_{\mathbb{R}}, \mathbb{Q})$ is dense in $ \mathcal{G}^{+}(N_{\mathbb{R}})$. If $K$ is archimedean, by Remark~\ref{nef-cont-type} we know that the image under $\mathcal{G}$ of the nef cone $\overline{\textup{Div}}_{\mathbb{T}}^{\textup{nef}}(X_{\Sigma_0}/K) $ is the closure in the $\mathcal{G}$-norm of the set
		\begin{displaymath}
			\bigcup_{\Sigma \in \textup{PF}(N_{\mathbb{R}})} \mathcal{G}^{+}_{\textup{rbd}}(\Sigma, \mathbb{Q}),
		\end{displaymath} 
		which contains the cone $\mathcal{PA}^{+}(N_{\mathbb{R}}, \mathbb{Q})$. Applying Theorem~\ref{closure-P}, we obtain the result.
	\end{proof}
	By Legendre-Fenchel duality, we obtain an alternative description of the nef cone $\overline{\textup{Div}}_{\mathbb{T}}^{\textup{nef}}(U/K)$ in terms of the corresponding roof functions.
	\begin{cor}\label{roof-functions-UK}
		There is a bijection between the nef cone $\overline{\textup{Div}}_{\mathbb{T}}^{\textup{nef}}(U/K)$ and the set of closed concave functions $\vartheta \colon M_{\mathbb{R}} \rightarrow \mathbb{R}_{-\infty}$ with bounded effective domain $\textup{dom}(\vartheta)$. It is given by the assignment $\overline{D} \mapsto \vartheta_{\overline{D}} = \gamma_{\overline{D}}^{\vee}$, where $\gamma_{\overline{D}}^{\vee}$ is the Legendre-Fenchel dual of $\gamma_{\overline{D}}$. Moreover, the set $\Delta \subset M_{\mathbb{R}}$ given by the closure of $\textup{dom}(\vartheta)$ is the compact convex set corresponding to the nef toric compactified arithmetic divisor $\overline{D} = \mathcal{G}^{-1}(\vartheta^{\vee})$.
	\end{cor}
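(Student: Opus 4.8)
The proof is a formal consequence of Theorem~\ref{nef-cone-UK} together with the elementary duality theory of concave functions. By Theorem~\ref{nef-cone-UK} the map $\mathcal{G}$ restricts to a bijection from $\overline{\textup{Div}}_{\mathbb{T}}^{\textup{nef}}(U/K)$ onto the cone $\mathcal{G}^{+}(N_{\mathbb{R}})$ of globally Lipschitz concave functions on $N_{\mathbb{R}}$. It therefore suffices to show that the Legendre--Fenchel transform $\gamma \mapsto \gamma^{\vee}$ (\ref{LF-dfn}) induces a bijection from $\mathcal{G}^{+}(N_{\mathbb{R}})$ onto the set $\mathcal{V}$ of closed concave functions $\vartheta \colon M_{\mathbb{R}} \rightarrow \mathbb{R}_{-\infty}$ with bounded effective domain; the roof function is then defined as $\vartheta_{\overline{D}} \coloneqq \gamma_{\overline{D}}^{\vee}$, and one recovers $\overline{D} = \mathcal{G}^{-1}((\vartheta_{\overline{D}})^{\vee})$ by biconjugation.

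First I would verify that the transform maps $\mathcal{G}^{+}(N_{\mathbb{R}})$ into $\mathcal{V}$ and is surjective onto it. If $\gamma$ is concave and $L$-Lipschitz for the fixed norm on $N_{\mathbb{R}}$, then for any $m \in M_{\mathbb{R}}$ with $\|m\| > L$ one chooses a unit vector $u$ with $\langle m, u \rangle > L$ and observes that $\langle m, x \rangle - \gamma(x) \to -\infty$ along the ray in direction $-u$, so $\gamma^{\vee}(m) = -\infty$; hence $\textup{dom}(\gamma^{\vee}) \subseteq \overline{\textup{B}}(0,L)$, and $\gamma^{\vee}$ is closed concave by construction of the transform. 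Conversely, if $\vartheta \in \mathcal{V}$ with $\textup{dom}(\vartheta) \subseteq \overline{\textup{B}}(0,L)$, then $\vartheta$ is bounded above on its domain (a closed concave function takes no value $+\infty$ and is upper semicontinuous, hence bounded above on a bounded set), so $\vartheta^{\vee}(x) = \inf_{m \in \textup{dom}(\vartheta)}(\langle m, x \rangle - \vartheta(m))$ is finite for every $x$, and the standard estimate $|\vartheta^{\vee}(x) - \vartheta^{\vee}(y)| \leq \sup_{m \in \textup{dom}(\vartheta)}|\langle m, x-y \rangle| \leq L\|x-y\|$ shows $\vartheta^{\vee} \in \mathcal{G}^{+}(N_{\mathbb{R}})$. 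Bijectivity is then Fenchel--Moreau biconjugation: $(\gamma^{\vee})^{\vee} = \gamma$ because $\gamma$ is continuous, hence closed, concave, and $(\vartheta^{\vee})^{\vee} = \vartheta$ because $\vartheta$ is closed concave.

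It remains to prove the last assertion. Put $\Delta \coloneqq \overline{\textup{dom}(\vartheta)}$; it is compact (bounded and closed) and convex. Let $\overline{D} = \mathcal{G}^{-1}(\vartheta^{\vee})$, so that $\gamma_{\overline{D}} = \vartheta^{\vee}$, and let $D = \textup{for}(\overline{D})$ be its underlying toric compactified divisor, which is nef since the forgetful map is continuous and sends nef model arithmetic divisors to nef model divisors. By Lemma~\ref{local-toric-bdry-norm} the support function of $D$ is $\Psi_{D} = \textup{rec}(\gamma_{\overline{D}})$, and by Theorem~\ref{nefcorrespondence} the compact convex set attached to $D$ is $\Delta_{D} = \textup{stab}(\Psi_{D})$. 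The last step is the identity $\textup{stab}(\textup{rec}(\gamma)) = \overline{\textup{dom}(\gamma^{\vee})}$, valid for any globally Lipschitz concave $\gamma$ on $N_{\mathbb{R}}$: indeed $\textup{rec}(\gamma)$ is conical concave, its Legendre--Fenchel transform is the concave indicator of $\textup{stab}(\textup{rec}(\gamma))$, while on the other hand $\textup{rec}(\gamma)^{\vee}$ is precisely the concave indicator of $\overline{\textup{dom}(\gamma^{\vee})}$. Applying this with $\gamma = \gamma_{\overline{D}} = \vartheta^{\vee}$ and using $(\vartheta^{\vee})^{\vee} = \vartheta$ yields $\Delta_{D} = \overline{\textup{dom}(\vartheta)} = \Delta$, as required.

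The only points that are not purely formal are the two convex-analytic dictionaries invoked above: the Lipschitz constant of $\gamma$ against the size of $\textup{dom}(\gamma^{\vee})$, and the recession function $\textup{rec}(\gamma)$ against $\overline{\textup{dom}(\gamma^{\vee})}$. Both are standard finite-dimensional facts, and the only care required is in handling the value $-\infty$ and the possible non-closedness of effective domains; I would record them as a short lemma in the convex-analytic preliminaries (or cite the corresponding statements there and in Rockafellar). Everything else reduces to the bijectivity of $\mathcal{G}$ on nef cones, already established in Theorem~\ref{nef-cone-UK}.
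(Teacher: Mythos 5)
Your argument is correct and follows the same route as the paper, which simply composes Theorem~\ref{nef-cone-UK} with Proposition~\ref{conical-compact}(ii). The Legendre--Fenchel bijection between globally Lipschitz concave functions and closed concave functions with bounded effective domain that you re-derive from scratch is precisely Proposition~\ref{conical-compact}(ii), so you could cite it directly; the identity $\Delta_D = \textup{cl}(\textup{dom}(\vartheta))$ you supply at the end (via $\Psi_D = \textup{rec}(\gamma_{\overline{D}})$, $\Delta_D = \textup{stab}(\Psi_D)$, and $\textup{stab}(\textup{rec}(\gamma)) = \textup{cl}(\textup{dom}(\gamma^{\vee}))$ from Proposition~\ref{LF-prop}(iii) together with Remark~\ref{conical-polyhedron}) is the detail the paper leaves implicit in ``it is immediate.''
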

	\begin{proof}
		It is immediate from Theorem~\ref{nef-cone-UK} and Proposition~\ref{conical-compact}
	\end{proof}
	
	The next task is to continuously extend the local toric height of \cite{BPS} to the compactified arithmetic setting. First, have the following observation. Let $\iota \colon X_{\Sigma_0} \rightarrow X_{\Sigma_1}$ be a toric open embedding of quasi-projective toric varieties over $K$. Then:
	\begin{enumerate}
		\item The pullback morphism $\iota^{\ast} \colon \overline{\textup{Div}}_{\mathbb{T}}(X_{\Sigma_1}/K) \rightarrow  \overline{\textup{Div}}_{\mathbb{T}}(X_{\Sigma_0}/K)$ from Corollary~\ref{G-functorial} maps the cone $\overline{\textup{Div}}_{\mathbb{T}}^{\textup{nef}}(X_{\Sigma_1}/K)_{\textup{mod}}$ into the cone $\overline{\textup{Div}}_{\mathbb{T}}^{\textup{nef}}(X_{\Sigma_0}/K)_{\textup{mod}}$. Therefore, the map $\iota^{\ast}$ restricts to a continuous injective morphism of cones
		\begin{displaymath}
			\iota^{\ast} \colon \overline{\textup{Div}}_{\mathbb{T}}^{\textup{nef}}(X_{\Sigma_1}/K) \longrightarrow  \overline{\textup{Div}}_{\mathbb{T}}^{\textup{nef}}(X_{\Sigma_0}/K).
		\end{displaymath}
		\item The projection formula in Theorem~\ref{local-h-thm} applied to the local toric height induces a local toric height pairing on the cone of nef toric model divisors on $X_{\Sigma_0}$. It is given by
		\begin{displaymath}
			\textup{h}^{\textup{tor}}(X_{\Sigma_0} ; \overline{D}_0 , \ldots , \overline{D}_d) \coloneqq \textup{h}^{\textup{tor}}(X_{\Sigma} ; \overline{D}_0 , \ldots , \overline{D}_d),
		\end{displaymath}
		where $X_{\Sigma}/K$ is a projective toric model of $X_{\Sigma_0}$ where the nef toric arithmetic divisors $\overline{D}_0, \ldots , \overline{D}_d$ are all defined. Since every projective toric model of $X_{\Sigma_1}$ is a projective toric model of $X_{\Sigma_0}$, the local toric height on model divisors satisfies
		\begin{displaymath}
			\textup{h}^{\textup{tor}}(X_{\Sigma_0} ; \iota^{\ast} \overline{D}_0 , \ldots , \iota^{\ast} \overline{D}_d) = \textup{h}^{\textup{tor}}(X_{\Sigma_1} ; \overline{D}_0 , \ldots , \overline{D}_d).
		\end{displaymath}
	\end{enumerate}
	We conclude that a continuous extension of the local toric height pairing to the nef cone of toric compactified arithmetic divisors satisfies the functorial property
	\begin{displaymath}
		\textup{h}^{\textup{tor}}(X_{\Sigma_0} ; \iota^{\ast} \overline{D}_0 , \ldots , \iota^{\ast} \overline{D}_d) = \textup{h}^{\textup{tor}}(X_{\Sigma_1} ; \overline{D}_0 , \ldots , \overline{D}_d).
	\end{displaymath}
	Since our primary goal is to study heights, we will restrict ourselves to the initial object in the category of $d$-dimensional quasi-projective toric varieties over $K$, namely, the $d$-dimensional split torus $U/K$.
	
	In Subsection~\ref{3-1}, we extended the intersection pairing on model toric divisors to the nef cone in two steps:
	\begin{enumerate}
		\item First, we showed that for each nef toric compactified divisor $D$, represented by a Cauchy sequence $\lbrace D_n \rbrace_{n \in \mathbb{N}}$ consisting of nef toric model divisors, the limit $\lim D_{n}^{d}$ exists and it is equal to $d! \, \textup{Vol}_M (\Delta_{D})$. In particular, it is independent of the choice of nef sequence $\lbrace D_n \rbrace_{n \in \mathbb{N}}$.
		\item Then, we used the multilinearity of intersection products and limits to give a formula for the intersection number $D_1 \cdot \ldots \cdot D_d$ of nef toric compactified divisors.
	\end{enumerate}
	This naive approach no longer works. The following example shows a new phenomenon appearing in the arithmetic situation. Concretely, the limit of local toric heights of an arbitrary Cauchy sequence of nef toric model arithmetic divisors might not converge.
	\begin{ex}\label{CEH-1}
		Let $K$ be non-archimedean and $U = \mathbb{G}_m/K$. For each $n \geq 1$, consider the nef toric model arithmetic divisor $\overline{D}_n$ corresponding to the concave rational piecewise affine function $\gamma_n \colon N_{\mathbb{R}} \rightarrow \mathbb{R}$, given by
		\begin{displaymath}
			\gamma_n (x) \coloneqq \begin{cases} x/n + n^2, & x \leq -n^3 \\ 0, & -n^3 \leq x \leq 0 \\ -x, & 0 \leq x. \end{cases}
		\end{displaymath}
		Observe that the sequence $\lbrace \gamma_n \rbrace_{n \in \mathbb{N}}$ is increasing, and it is Cauchy in the $\mathcal{G}$-norm. Indeed, if $n \geq m$ we have
		\begin{displaymath}
			(\gamma_n - \gamma_m ) (x) = \begin{cases} x/n - x/m + n^2 - m^2, & x \leq -n^3 \\ -x/m - m^{2}, & -n^3 \leq x \leq -m^3 \\ 0, & -m^3 \leq x. \end{cases}
		\end{displaymath}
		A quick calculation shows that $| \gamma_n (x) - \gamma_m (x)| \leq 4/m \cdot (1 + |x|)$ for all  $x\in N_{\mathbb{R}}$. Then, the sequence $\lbrace \gamma_n \rbrace_{n \in \mathbb{N}}$ converges in the $\mathcal{G}$-norm to the concave rational piecewise affine function 
		\begin{displaymath}
			\gamma (x) \coloneqq \begin{cases} 0, & x \leq 0 \\ -x,  & 0\leq x. \end{cases}
		\end{displaymath}
		The sequence $\lbrace \overline{D}_n \rbrace_{n \in \mathbb{N}}$ represents the nef toric model arithmetic divisor $\overline{D}$ corresponding to $\gamma$. Computing the Legendre-Fenchel duals $\vartheta_n \coloneqq \gamma_{n}^{\vee} $ and $\vartheta \coloneqq \gamma^{\vee}$, we obtain
		\begin{displaymath}
			\vartheta_n (y) = \begin{cases} 0, & -1 \leq y \leq 0 \\ -n^3 y, & 0 \leq y \leq 1/n \\ - \infty, & \textup{otherwise.} \end{cases} \quad \textup{and} \quad \vartheta(y) = \begin{cases} 0, & -1 \leq y \leq 0 \\ - \infty, & \textup{otherwise.} \end{cases}
		\end{displaymath}
		Now, we use Theorem~\ref{local-toric-height-proj} to compute the heights. We get
		\begin{displaymath}
			\textup{h}^{\textup{tor}}(U; \overline{D}_n ) = 2 \int_{-1}^{1/n} \vartheta_n (y) \textup{d}y = -n \quad \textup{ and }  \quad \textup{h}^{\textup{tor}}(U; \overline{D}) = 2 \int_{-1}^{0} \vartheta(y) \textup{d}y = 0.
		\end{displaymath}
		Summarizing, the sequence of nef toric model arithmetic divisors $\lbrace \overline{D}_n \rbrace_{n \in \mathbb{N}}$ is decreasing and converges in the boundary topology to the nef toric model arithmetic divisor $\overline{D}$. Nevertheless, we have
		\begin{displaymath}
			\lim_{n \rightarrow \infty} \textup{h}^{\textup{tor}}(U; \overline{D}_n ) = - \infty \quad \textup{while}  \quad \textup{h}^{\textup{tor}}(U; \overline{D}) =0.
		\end{displaymath}
	\end{ex}
	The following remark dissects the divergence phenomenon in the previous example and motivates our approach for the rest of this section.
	\begin{rem}\label{motivation-local-toric-h}
		Let $\lbrace \overline{D}_n \rbrace_{n \in \mathbb{N}}$ be a decreasing sequence of nef toric compactified arithmetic divisors converging to the toric compactified arithmetic divisor $\overline{D}$. Then, the sequence of roof functions $\lbrace \vartheta_n \rbrace_{n \in \mathbb{N}}$ is decreasing, bounded above by a constant $C$, and converges pointwise to $\vartheta$, the roof function of $\overline{D}$. In particular, the sequence of effective domains $\lbrace \textup{dom}(\vartheta_n) \rbrace_{n \in \mathbb{N}}$ is decreasing and their intersection is $\textup{dom}(\vartheta)$. Then, we have the following observations:
		\begin{enumerate}[label=(\arabic*)]
			\item For all $y \in \textup{dom}(\vartheta)$, we have an inequality $C \geq \vartheta_n (y) \geq \vartheta (y) > - \infty$. Therefore, the integral of $\vartheta_n$ on $\textup{dom}(\vartheta)$ is bounded below by the integral of $\vartheta$.
			\item Outside of the set $\textup{dom}(\vartheta)$, we have no control over the behaviour of the functions $\vartheta_n$ as $n$ grows. In the previous example, as $n$ tends to $\infty$, the function $\vartheta_n$ grows faster than the rate at which the sets $\textup{dom}(\vartheta_n)$ shrink. Hence, the sequence of integrals diverges.
		\end{enumerate}
		These observations suggest that the divergence phenomenon of the previous example can be avoided by imposing additional conditions on the functions $\vartheta_n$ or the sets $\textup{dom}(\vartheta_n)$. The following two options naturally come to mind:
		\begin{enumerate}[label=(\arabic*')]
			\item The first option is to control the growth of the functions $\vartheta_n$. For example, we may require the sequence of functions $\lbrace \vartheta_n  \rbrace_{n \in \mathbb{N}}$ to be bounded uniformly in $n$. That is,
			\begin{displaymath}
				\sup \lbrace |\vartheta_n (y) | \, \vert \, y \in \textup{dom}(\vartheta_n) \textup{ and } n \in \mathbb{N} \rbrace < \infty.
			\end{displaymath}
			By elementary properties of the Legendre-Fenchel transform and the definition of the map $\mathcal{G}$, this is equivalent to the sequence of functions $\lbrace g_{D_n} - g_{D_n, \textup{can}} \rbrace_{n \in \mathbb{N}}$ being bounded uniformly in $n$. By the isomorphism
			\begin{displaymath}
				\overline{\textup{Div}}_{\mathbb{T}}(U/K) \cong  C^{0}_{\mathbb{S}}(U^{\textup{an}})_{\textup{cptf}} \oplus \textup{Div}_{\mathbb{T}}(U/K),
			\end{displaymath}
			This is a purely analytic condition on the arithmetic divisors $\overline{D}_n$, in the sense that it does not impose further restrictions on the geometric divisors $D_n$.
			\item The second option is to control the sets $\textup{dom}(\vartheta_n)$. For example, we may require
			\begin{displaymath}
				\textup{cl}(\textup{dom}(\vartheta_n ))= \textup{cl}(\textup{dom}(\vartheta )), \quad \textup{ for all } n \in \mathbb{N}.
			\end{displaymath}
			By Theorem~\ref{nefcorrespondence}, this is equivalent to the sequence of nef toric compactified divisors $\lbrace D_n \rbrace_{n \mathbb{N}}$ being constant. This is a geometric condition.
		\end{enumerate}
		Since the toric height is defined only for toric model arithmetic divisors $\overline{D}_n$, the strategy is clear. First, we take the approach of (1') to generalize the toric height for nef compactified arithmetic divisors $\overline{D}$ such that $g_D - g_{D,\textup{can}}$ is bounded. Then, we take the approach of (2') to remove the boundedness condition on $g_D - g_{D,\textup{can}}$. This motivates the following definition.
	\end{rem}
	\begin{dfn}\label{local-toric-h-dfn}
		Let $\overline{D}_0, \ldots, \overline{D}_d$ be nef toric compactified arithmetic divisors on $U/K$. The \textit{local toric height} of $U/K$ with respect to $\overline{D}_0 , \ldots \overline{D}_d$, denoted by $\textup{h}^{\textup{tor}}(U; \overline{D}_{0}, \ldots , \overline{D}_{d})$, is defined as follows:
		\begin{enumerate}[label=(\roman*)]
			\item Suppose that, for each $0 \leq i \leq d$, the function $g_{D_i} - g_{D_{i},\textup{can}}$ is bounded. The number $\textup{h}^{\textup{tor}}(U; \overline{D}_{0}, \ldots , \overline{D}_{d})$ is given by the limit
			\begin{displaymath}
				\textup{h}^{\textup{tor}}(U; \overline{D}_{0}, \ldots , \overline{D}_{d})\coloneqq \lim_{n \rightarrow \infty} \textup{h}^{\textup{tor}}(U; \overline{D}_{0,n}, \ldots , \overline{D}_{d,n} ),
			\end{displaymath}
			where $\lbrace \overline{D}_{i,n} \rbrace_{n \in \mathbb{N}}$ is any decreasing sequence of nef toric model arithmetic divisors of $U/K$, converging in the boundary topology to $\overline{D}_i$, and such that the sequence $\lbrace g_{D_{i,n}} - g_{D_{i,n}, \textup{can}} \rbrace_{n \in \mathbb{N}}$ of Green's functions is bounded uniformly in $n$.
			\item For each $0 \leq i \leq d$, write $\overline{D}_i = (D_i , g_i)$. The number $\textup{h}^{\textup{tor}}(U; \overline{D}_{0}, \ldots , \overline{D}_{d})$ is given by
			\begin{displaymath}
				\textup{h}^{\textup{tor}}(U; \overline{D}_{0}, \ldots , \overline{D}_{d})\coloneqq \textup{inf} \lbrace \textup{h}^{\textup{tor}}(U; \overline{E}_{0}, \ldots , \overline{E}_{d})  \, \vert \, (\overline{E}_0 , \ldots, \overline{E}_d) \textup{ satisfying } (\star) \rbrace,
			\end{displaymath}
			where the condition $(\star )$ means that, for each $0 \leq i \leq d$, the nef toric compactified arithmetic divisor $\overline{E}_i = (D_i, g_{i}^{\prime})$ is such that $g_{i}^{\prime} \geq g_i$ and $g_{i}^{\prime} - g_{D_{i} ,\textup{can}}$ is bounded.
		\end{enumerate}
		This definition is extended to integrable toric compactified divisors by linearity. Additionally, if $\overline{D}_0 =  \ldots = \overline{D}_d = \overline{D}$, we write $\textup{h}^{\textup{tor}}(U; \overline{D}) = \textup{h}^{\textup{tor}}(U; \overline{D}_{0}, \ldots , \overline{D}_{d})$.
	\end{dfn}
	\begin{rem}\label{mixed-energy-2}[Comparison with Burgos-Kramer's mixed relative energy]
		Suppose that $K$ is archimedean and for each $i \in \lbrace 0, \ldots, d \rbrace$, let $\overline{D}_i$ be a nef toric compactified arithmetic divisor. It follows directly from Proposition \ref{toric-mixed-energy}, and Theorem~4.3~of~\cite{BK24} applied to the divisors $\overline{D}_i$ and $\overline{D}_{i}^{\prime} = \overline{D}_{i, \textup{can}}$, that the local toric height $\textup{h}^{\textup{tor}}(U; \overline{D}_{0}, \ldots , \overline{D}_{d})$ defined above coincides with the mixed relative energy $I_{\pmb{\varphi}^{\prime}}(\pmb{\varphi})$ of the functions $\varphi_i$ and $\varphi_{i}^{\prime}$ in the statement of the cited result.
	\end{rem}
	Now, we show that the integral formula in Theorem~\ref{local-toric-height-proj} holds in this setting. First, we show this for the case in which the function $g_{D} - g_{D,\textup{can}}$ is bounded. Afterwards, we remove this restriction.
	\begin{thm}\label{local-toric-h-UK-1}
		Let $U/K$ be the $d$-dimensional split torus and $\overline{D} = (D,g_D)$ be a nef toric compactified arithmetic divisor of $U/K$. Suppose that $\lbrace \overline{D}_n \rbrace_{n \in \mathbb{N}}$ is a decreasing sequence of nef toric model arithmetic divisors of $U/K$, converging in the boundary topology to $\overline{D}$, and such that the sequence of Green's functions $\lbrace g_{D_n} - g_{D_n, \textup{can}} \rbrace_{n \in \mathbb{N}}$ is bounded uniformly in $n$. Then, the function $g_D - g_{D,\textup{can}}$ is bounded and the following identity holds
		\begin{displaymath}
			\textup{h}^{\textup{tor}}(U, \overline{D}) \coloneqq \lim_{n \rightarrow \infty} \textup{h}^{\textup{tor}}(U;\overline{D}_n) = (d+1)! \int_{\Delta_D} \vartheta_{\overline{D}} \, \textup{dVol}_{M} > - \infty,
		\end{displaymath} 
		where $\vartheta_{\overline{D}}$ is the local roof function of $\overline{D}$ and $\Delta_D$ is the compact convex set associated to $D$. Conversely, if the function $g_D - g_{D,\textup{can}}$ is bounded, then such a sequence $\lbrace \overline{D}_n \rbrace_{n \in \mathbb{N}}$ exists.
	\end{thm}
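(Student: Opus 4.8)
The plan is to prove both directions of Theorem~\ref{local-toric-h-UK-1} by transporting everything through the isomorphism $\mathcal{G}$ of Theorem~\ref{nef-cone-UK} to the world of concave functions on $N_{\mathbb{R}}$, and then through Legendre-Fenchel duality (Corollary~\ref{roof-functions-UK}) to roof functions on $M_{\mathbb{R}}$. Write $\gamma_n \coloneqq \gamma_{\overline{D}_n} = \Psi_{D_n} - (g_{D_n,\textup{can}} - g_{D_n})\circ\textup{Trop}^{-1}$-type data; the hypothesis is that $\lbrace\gamma_n - \Psi_{D_n}\rbrace_{n\in\mathbb{N}}$ is a sequence of sublinear functions that is uniformly bounded in the sup-norm, and $\lbrace\gamma_n\rbrace$ is decreasing and $\mathcal{G}$-convergent to $\gamma_{\overline{D}}$. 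Taking duals, $\vartheta_n \coloneqq \gamma_n^{\vee}$ is an increasing (as $n$ grows the functions decrease, so their transforms... actually decreasing in the sense matching Remark~\ref{motivation-local-toric-h}) sequence of closed concave functions with $\textup{dom}(\vartheta_n) \subseteq \Delta_{D_n}$, and the uniform boundedness of $\gamma_n - \Psi_{D_n}$ translates, by elementary Legendre-Fenchel estimates, into $\sup\lbrace|\vartheta_n(y)| : y\in\textup{dom}(\vartheta_n),\ n\in\mathbb{N}\rbrace < \infty$. This is exactly observation (1') of Remark~\ref{motivation-local-toric-h}, and it is the statement ``$g_D - g_{D,\textup{can}}$ is bounded'' after passing to the limit, which follows since the sup-norm of $\gamma_{\overline{D}} - \Psi_D$ is bounded by $\limsup_n \|\gamma_n - \Psi_{D_n}\|_\infty$ by $\mathcal{G}$-convergence.

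For the main equality, the first step is Theorem~\ref{local-toric-height-proj} (the projective BPS formula): for each $n$, $\textup{h}^{\textup{tor}}(U;\overline{D}_n) = (d+1)!\int_{\Delta_{D_n}} \vartheta_n\,\textup{dVol}_M$, which is legitimate because $\overline{D}_n$ is a nef toric \emph{model} arithmetic divisor, hence defined on some projective toric model where the cited theorem applies, and the height is preserved under pullback to that model by the functoriality noted just before Definition~\ref{local-toric-h-dfn}. The second step is to pass to the limit in $\int_{\Delta_{D_n}}\vartheta_n\,\textup{dVol}_M$. Here I would extend every $\vartheta_n$ by $-\infty$ outside its domain, so all integrals are over a fixed large ball, split the integral as $\int_{\Delta_D}\vartheta_n + \int_{(\text{ball})\setminus\Delta_D}\vartheta_n$, and argue: on $\Delta_D$ the $\vartheta_n$ are uniformly bounded (above by the common constant $C$ from uniform boundedness, below by $\vartheta_{\overline{D}}$ which is $L^1$ on the compact set $\Delta_D$ being bounded), decreasing, and converge pointwise to $\vartheta_{\overline{D}}$ on the relative interior, so monotone (or dominated) convergence gives $\int_{\Delta_D}\vartheta_n \to \int_{\Delta_D}\vartheta_{\overline{D}}$; on the complementary region, $\textup{dom}(\vartheta_n)\setminus\Delta_D$ has volume tending to $0$ (since $\Delta_{D_n}\to\Delta_D$ in Hausdorff distance by Theorem~\ref{nefcorrespondence} together with Theorem~\ref{cont-vol}) and $|\vartheta_n|\le C$ there, so that contribution vanishes. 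This yields $\lim_n \textup{h}^{\textup{tor}}(U;\overline{D}_n) = (d+1)!\int_{\Delta_D}\vartheta_{\overline{D}}\,\textup{dVol}_M$, and finiteness ($> -\infty$) is then automatic because $\vartheta_{\overline{D}}$ is bounded on the bounded set $\Delta_D$. I should also remark that this limit is independent of the chosen sequence $\lbrace\overline{D}_n\rbrace$ with the stated properties, since the right-hand side depends only on $\overline{D}$; this is what makes Definition~\ref{local-toric-h-dfn}(i) well-posed.

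For the converse, suppose $f \coloneqq g_D - g_{D,\textup{can}}$ is bounded, equivalently $\gamma_{\overline{D}} - \Psi_D$ is a bounded sublinear function. I want a decreasing sequence of nef toric \emph{model} arithmetic divisors $\overline{D}_n$ converging to $\overline{D}$ with $\|\gamma_{\overline{D}_n} - \Psi_{D_n}\|_\infty$ uniformly bounded. The idea is to approximate $\gamma_{\overline{D}}$ directly rather than approximating $\Psi_D$ and the bounded part separately. By Theorem~\ref{nef-cone-UK}, $\gamma_{\overline{D}}\in\mathcal{G}^+(N_{\mathbb{R}})$ is globally Lipschitz concave; apply the monotone approximation theorem for concave functions (the cited Theorem~\ref{closure-P}, or in the non-archimedean model case Lemma~\ref{nef-tor-model-arith-div} together with a decreasing piecewise-affine approximation) to get a decreasing sequence $\lbrace\gamma_n\rbrace$ of concave rational piecewise-affine functions converging to $\gamma_{\overline{D}}$ in the $\mathcal{G}$-norm; adding $1/n$ times a fixed reference one can arrange strict decrease. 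Set $\overline{D}_n = \mathcal{G}^{-1}(\gamma_n)$; since $\mathcal{G}$ is a homeomorphism onto $\mathcal{G}(N_{\mathbb{R}})$ (Theorem~\ref{arith-toric-div-UK}) and maps nef model divisors to rational piecewise-affine concave functions, each $\overline{D}_n$ is a nef toric model arithmetic divisor and $\overline{D}_n \to \overline{D}$ in the boundary topology; decreasing $\gamma_n$ correspond to decreasing $\overline{D}_n$. Finally, $\gamma_n - \Psi_{D_n} = \gamma_n - \textup{rec}(\gamma_n)$, and since $\gamma_n \to \gamma_{\overline{D}}$ in the $\mathcal{G}$-norm while the recession map is $\mathcal{G}$-continuous, $\|\gamma_n - \textup{rec}(\gamma_n)\|_\infty \to \|\gamma_{\overline{D}} - \Psi_D\|_\infty = \|f\|_\infty < \infty$, so the sequence is uniformly bounded.

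The main obstacle I anticipate is the interchange of limit and integral near the boundary of $\Delta_D$: I must be careful that the pointwise limit $\vartheta_{\overline{D}}$ may fail to equal $\lim_n\vartheta_n$ on the relative boundary of $\Delta_D$ (closed concave functions can jump down on the boundary), but since the relative boundary has Lebesgue measure zero this does not affect the integrals, and on the relative interior pointwise convergence holds by standard properties of decreasing limits of closed concave functions (or by continuity of Legendre-Fenchel duality under such limits). A secondary subtlety is making sure the decreasing model approximation of a globally Lipschitz concave function can be taken with \emph{rational} piecewise-affine functions whose recession functions remain genuine support functions on projective fans—this is where Lemma~\ref{nef-tor-model-arith-div} and Remark~\ref{nef-cont-type} are used in the non-archimedean case, and Remark~\ref{nef-cont-type} (reducing to continuous-type model divisors) together with Theorem~\ref{closure-P} in the archimedean case. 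I would state the necessary Legendre-Fenchel estimate—namely that for concave $\gamma$ with $\|\gamma - \textup{rec}(\gamma)\|_\infty \le C$ one has $\textup{dom}(\gamma^{\vee}) = \textup{stab}(\textup{rec}(\gamma))$ and $\|\gamma^{\vee}\|_\infty \le C$ on that domain—as a small lemma in the convex-analysis appendix and invoke it here.
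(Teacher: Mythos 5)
Your forward direction follows the paper's argument essentially verbatim: for each $n$ apply Theorem~\ref{local-toric-height-proj}, split the integral over $\Delta_{D_n}$ into a piece over $\Delta_D$ (handled by monotone convergence, using the uniform bound $C$ from below) and a piece over $\Delta_{D_n}\setminus\Delta_D$ (killed by $C\cdot\textup{Vol}(\Delta_{D_n}\setminus\Delta_D)\to 0$ via continuity of the volume). That part is correct, modulo a sign slip — decreasing $\overline{D}_n$ corresponds to an \emph{increasing} sequence $\gamma_{\overline{D}_n}$ and hence a decreasing sequence $\vartheta_n$, which you half-corrected mid-sentence.

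The converse has a real gap. You claim that, having chosen \emph{any} $\mathcal{G}$-convergent monotone sequence $\gamma_n$ of concave rational piecewise affine functions approximating $\gamma_{\overline{D}}$, the $\mathcal{G}$-continuity of the recession map (Theorem~\ref{G-banach}) forces $\|\gamma_n - \textup{rec}(\gamma_n)\|_\infty \to \|\gamma_{\overline{D}} - \Psi_D\|_\infty$, hence uniform boundedness. This is false: $\mathcal{G}$-continuity of $\textup{rec}$ only gives convergence $\gamma_n - \textup{rec}(\gamma_n)\to \gamma_{\overline{D}} - \Psi_D$ in the $\mathcal{G}$-norm, which controls $|\gamma_n(x)-\textup{rec}(\gamma_n)(x)|/(1+\|x\|)$, not the sup-norm. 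The paper's own Example~\ref{CEH-1} is a counterexample to your claim in dimension one: there $\gamma_n\to\gamma$ in the $\mathcal{G}$-norm, $\gamma-\textup{rec}(\gamma)\equiv 0$ is bounded, yet $\sup|\gamma_n - \textup{rec}(\gamma_n)| = n^2\to\infty$. So a generic $\mathcal{G}$-convergent approximation does \emph{not} have uniformly bounded sublinear parts, and the divergence phenomenon of that example is precisely what you would reintroduce.

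The fix is what the paper does: one cannot take an arbitrary piecewise affine approximation. Instead, first produce a decreasing sequence $\{D_n\}$ of nef toric model divisors with $\Psi_{D_n}\uparrow\Psi_D$ (Proposition~\ref{m-d-approx-1}), and then apply Lemma~\ref{mono-approx-3}, whose construction $\gamma_n := \min\{\gamma_{\overline{D}},\ \Psi_{D_n}+C\}$ builds in the uniform bound $|\gamma_n-\Psi_{D_n}|\le C$ by design, with $C=\sup|\gamma_{\overline{D}}-\Psi_D|$. The bound is a property of a \emph{chosen} approximating sequence, not an automatic consequence of $\mathcal{G}$-convergence, and this is exactly why the hypothesis ``bounded uniformly in $n$'' appears as an explicit requirement in the statement of the theorem rather than following from the other hypotheses. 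Your suggested ``small lemma'' on the Legendre--Fenchel side (that a uniform bound $|\gamma-\textup{rec}(\gamma)|\le C$ forces $|\gamma^\vee|\le C$ on its domain) is correct and corresponds to Proposition~\ref{LF-prop} plus Example~\ref{LF-constant}, but it does not repair the converse since it still presupposes a uniform $C$ that your construction fails to deliver.
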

	\begin{proof}
		Let $\lbrace \overline{D}_n \rbrace_{n \in \mathbb{N}}$ be any such sequence and $C$ be a constant such that
		\begin{displaymath}
			\sup \lbrace |g_{D_n}(p) - g_{D_n,\textup{can}}(p)| \, \vert \, p \in U^{\textup{an}} \textup{ and } n \in \mathbb{N} \rbrace \leq C.
		\end{displaymath}
		It follows that $g_{D} - g_{D,\textup{can}}$ is bounded by $C$. By the definition of the map $\mathcal{G}$ and Theorem~\ref{arith-toric-div-UK}, the sequence $\lbrace \gamma_{\overline{D}_n} \rbrace_{n \in \mathbb{N}}$ of tropical Green's functions is increasing and converges in the $\mathcal{G}$-norm to the tropical Green's function $\gamma_{\overline{D}}$. Moreover, the sequence $\lbrace \gamma_{\overline{D}_n} - \Psi_{D_n} \rbrace_{n \in \mathbb{N}}$ is bounded by $C$. By Lemma~\ref{mono-conv-G}, we get that:
		\begin{enumerate}[label=(\alph*)]
			\item The sequence of compact convex sets $\lbrace \Delta_{D_n} \rbrace_{n \in \mathbb{N}}$ is decreasing and converges in the Hausdorff distance to the compact convex set $\Delta_D$.
			\item The sequence of roof functions $\lbrace \vartheta_{\overline{D}_n} \rbrace_{n \in \mathbb{N}}$ is decreasing, bounded by $C$, and converges pointwise to the roof function $\gamma_{\overline{D}}$.
		\end{enumerate}
		On the other hand, for each $n \in \mathbb{N}$, Theorem~\ref{local-toric-height-proj} gives the identity
		\begin{displaymath}
			\textup{h}^{\textup{tor}}(U;\overline{D}_n) = (d+1)! \int_{\Delta_{D_n}} \vartheta_{\overline{D}_n} \, \textup{dVol}_{M}.
		\end{displaymath} 
		For each $n \in \mathbb{N}$ we have an inclusion $\Delta_{D} \subset \Delta_{D_n}$. Then
		\begin{displaymath}
			\int_{\Delta_{D_n}} \vartheta_{\overline{D}_n} \, \textup{dVol}_{M} = \int_{\Delta_{D}} \vartheta_{\overline{D}_n} \, \textup{dVol}_{M} + \int_{\Delta_{D_n} \setminus \Delta_D} \vartheta_{\overline{D}_n} \, \textup{dVol}_{M}.
		\end{displaymath}
		By property (b), the monotonic convergence theorem implies
		\begin{displaymath}
			\lim_{n \rightarrow \infty} \int_{\Delta_{D}} \vartheta_{\overline{D}_n} \, \textup{dVol}_{M}  = \int_{\Delta_{D}} \vartheta_{\overline{D}} \, \textup{dVol}_{M} .
		\end{displaymath}
		For the second term, property (b) gives the estimate
		\begin{displaymath}
			\left\vert \int_{\Delta_{D_n} \setminus \Delta_D} \vartheta_{\overline{D}_n} \, \textup{dVol}_{M} \right\vert \leq \int_{\Delta_{D_n} \setminus \Delta_D} | \vartheta_{\overline{D}_n} | \, \textup{dVol}_{M} \leq C \cdot  \textup{Vol}_{M} (\Delta_{D_n} \setminus \Delta_D ).
		\end{displaymath}
		By property (a) and Proposition~\ref{comp-int-num}, we know that $\lim_{n} \textup{Vol}_M (\Delta_{D_n}) = \textup{Vol}_M (\Delta_D)$. Therefore
		\begin{displaymath}
			\lim_{n \rightarrow \infty} \int_{\Delta_{D_n} \setminus \Delta_D} \vartheta_{\overline{D}_n} \, \textup{dVol}_{M} = 0.
		\end{displaymath}
		Combining the two identities, we obtain
		\begin{displaymath}
			\lim_{n \rightarrow \infty} \textup{h}^{\textup{tor}}(U;\overline{D}_n) = (d+1)! \lim_{n \rightarrow \infty} \int_{\Delta_{D_n}} \vartheta_{\overline{D}_n} \, \textup{dVol}_{M} = (d+1)! \int_{\Delta_D} \vartheta_{\overline{D}} \, \textup{dVol}_{M}.
		\end{displaymath}
		Now assume that $\overline{D}$ is nef and the function $g_{D} - g_{D,\textup{can}}$ is bounded by $C$. By definition of the map $\mathcal{G}$ and Theorem~\ref{nef-cone-UK}, we know that the tropical Green's function $\gamma_{\overline{D}}$ and the support function $\Psi_{D}$ are concave and their difference $\gamma_{\overline{D}} - \Psi_D$ is bounded by $C$. By Proposition~\ref{m-d-approx-1}, there exists a decreasing sequence $\lbrace D_n \rbrace_{n \in \mathbb{N}}$ of nef toric model divisors converging to $D$. In particular, the sequence $\lbrace \Psi_{D_n} \rbrace_{n \in \mathbb{N}}$ converges increasingly to $\Psi_D$. Then, apply Lemma~\ref{mono-approx-3} to construct the tropical Green's function $\gamma_{\overline{D}_n}$ of the nef toric model arithmetic divisor $\overline{D}_n$. The result follows.
	\end{proof}
	\begin{cor}\label{local-toric-h-UK-1-mixed}
		Let $\overline{D}_0, \ldots, \overline{D}_d$ be nef toric compactified arithmetic divisors on $U/K$ such that the functions $g_{D_i} - g_{D_{i},\textup{can}}$ are bounded. For each $0 \leq i \leq d$, let $\lbrace \overline{D}_{i,n} \rbrace_{n \in \mathbb{N}}$ be a decreasing sequence of nef toric model arithmetic divisors of $U/K$, converging in the boundary topology to $\overline{D}_i$, and such that the sequence $\lbrace g_{D_{i,n}} - g_{D_{i,n}, \textup{can}} \rbrace_{n \in \mathbb{N}}$ of Green's functions is bounded uniformly in $n$. Then, the following identity holds
		\begin{displaymath}
			\textup{h}^{\textup{tor}}(U; \overline{D}_{0}, \ldots , \overline{D}_{d})\coloneqq \lim_{n \rightarrow \infty} \textup{h}^{\textup{tor}}(U; \overline{D}_{0,n}, \ldots , \overline{D}_{d,n} ) = \textup{MI}_{M}(\vartheta_{0}, \ldots , \vartheta_{d}),
		\end{displaymath}
		where $\vartheta_i$ is the roof function of $\overline{D}_i$ and $\textup{MI}_{M}$ is the mixed integral of Definition~\ref{mixed-integral}.
	\end{cor}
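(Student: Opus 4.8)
The plan is to reduce the mixed statement of Corollary~\ref{local-toric-h-UK-1-mixed} to the already-established single-divisor case of Theorem~\ref{local-toric-h-UK-1} by the standard polarization trick, exactly as was done for intersection numbers in Theorem~\ref{comp-int-pair}. First I would observe that since each $g_{D_i} - g_{D_i,\textup{can}}$ is bounded, the same boundedness holds for every Minkowski combination: for a subset $I \subseteq \{0,\ldots,d\}$ set $\overline{D}_I \coloneqq \sum_{i \in I} \overline{D}_i$ and $\overline{D}_{I,n} \coloneqq \sum_{i \in I} \overline{D}_{i,n}$. The cone $\overline{\textup{Div}}_{\mathbb{T}}^{\textup{nef}}(U/K)$ is closed and convex, so each $\overline{D}_I$ is nef, $\lbrace \overline{D}_{I,n} \rbrace_{n \in \mathbb{N}}$ is a decreasing sequence of nef toric model arithmetic divisors converging to $\overline{D}_I$, and $g_{D_{I,n}} - g_{D_{I,n},\textup{can}} = \sum_{i \in I} (g_{D_{i,n}} - g_{D_{i,n},\textup{can}})$ is bounded uniformly in $n$ (a finite sum of uniformly bounded sequences, the number of summands independent of $n$). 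Hence Theorem~\ref{local-toric-h-UK-1} applies to each $\overline{D}_I$ and gives $\lim_n \textup{h}^{\textup{tor}}(U; \overline{D}_{I,n}) = (|I|_{\ast})! \int_{\Delta_{D_I}} \vartheta_{\overline{D}_I}$, where I must be careful: $\overline{D}_I$ lives on the $d$-torus, so in the notation of that theorem the self-height is $(d+1)! \int_{\Delta_{D_I}} \vartheta_{\overline{D}_I}$ with $\vartheta_{\overline{D}_I} = \boxplus_{i \in I} \vartheta_{\overline{D}_i}$ the sup-convolution of the roof functions (the Legendre--Fenchel dual of a sum of tropical Green's functions).

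Next I would expand the multilinear local toric height at finite level. By Definition~\ref{local-toric-h-proj-def} the local toric height on model divisors is symmetric and multilinear, so for each fixed $n$ one has the inclusion--exclusion identity
\begin{displaymath}
	\textup{h}^{\textup{tor}}(U; \overline{D}_{0,n}, \ldots, \overline{D}_{d,n}) = \frac{1}{(d+1)!} \sum_{j=0}^{d} (-1)^{d-j} \sum_{\substack{I \subseteq \{0,\ldots,d\} \\ |I| = j+1}} \textup{h}^{\textup{tor}}(U; \overline{D}_{I,n}),
\end{displaymath}
where $\textup{h}^{\textup{tor}}(U; \overline{D}_{I,n})$ denotes the $(d+1)$-fold self-height of $\overline{D}_{I,n}$; this is just the polarization formula for the symmetric multilinear form $\textup{h}^{\textup{tor}}$ in $d+1$ slots, and the number of terms is a fixed finite number independent of $n$. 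Taking the limit $n \to \infty$ and using the single-divisor convergence just established term by term, I get
\begin{displaymath}
	\lim_{n\to\infty}\textup{h}^{\textup{tor}}(U; \overline{D}_{0,n}, \ldots, \overline{D}_{d,n}) = \sum_{j=0}^{d} (-1)^{d-j} \sum_{\substack{I \subseteq \{0,\ldots,d\} \\ |I| = j+1}} \int_{\Delta_{D_I}} \boxplus_{i \in I} \vartheta_{\overline{D}_i} \, \textup{dVol}_M,
\end{displaymath}
and since $\Delta_{D_I} = \sum_{i\in I}\Delta_{D_i}$ and $\boxplus_{i\in I}\vartheta_{\overline{D}_i}$ is the roof function on that Minkowski sum, the right-hand side is precisely $\textup{MI}_M(\vartheta_0,\ldots,\vartheta_d)$ of Definition~\ref{mixed-integral}. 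In particular the limit exists and is independent of the chosen sequences, so the definition in Definition~\ref{local-toric-h-dfn}.(i) is unambiguous in this setting.

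The main point requiring care — rather than a genuine obstacle — is the bookkeeping of which convergence results one may invoke at the level of Minkowski sums: one needs that the compact convex set associated to $\overline{D}_I$ is the Minkowski sum of the $\Delta_{D_i}$ (Theorem~\ref{nefcorrespondence}), that the roof function of a sum of nef compactified divisors is the sup-convolution of the individual roof functions (a Legendre--Fenchel duality computation, compatible with the constructions in Corollary~\ref{roof-functions-UK}), and that the polarization identity valid at each finite level is preserved in the limit because all sums involved are finite with a fixed number of terms. Alternatively, and perhaps more cleanly, I could bypass polarization entirely by running the proof of Theorem~\ref{local-toric-h-UK-1} verbatim in the mixed setting: replace the single roof function $\vartheta_{\overline{D}_n}$ by the mixed object, use the continuity of the mixed volume $\textup{MV}_M$ under Hausdorff convergence (Theorem~\ref{cont-vol}, Proposition~\ref{comp-int-num}) to control the contribution of $\Delta_{D_{I,n}} \setminus \Delta_{D_I}$, and apply the monotone convergence theorem on the fixed limiting domains; the uniform boundedness of all $\vartheta_{i,n}$ on their domains, inherited from the uniform boundedness of $g_{D_{i,n}} - g_{D_{i,n},\textup{can}}$, is what makes the error terms vanish. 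Either route works; I would present the polarization argument as the shorter one, since the analytic heavy lifting has already been done in Theorem~\ref{local-toric-h-UK-1} and Corollary~\ref{local-toric-height-proj-integrable}.
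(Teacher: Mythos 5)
Your argument is correct and follows the same route as the paper: reduce to the single-divisor case by forming the Minkowski combinations $\overline{D}_I = \sum_{i\in I}\overline{D}_i$ (which inherit nefness, monotone approximation, and uniform boundedness of $g - g_{\textup{can}}$), apply Theorem~\ref{local-toric-h-UK-1} to each, and invoke multilinearity of the local toric height exactly as in the expansion of the mixed volume in Proposition~\ref{comp-int-num}. The paper's own proof is essentially this argument stated tersely; your spelled-out polarization formula and the identification $\vartheta_{\overline{D}_I} = \boxplus_{i\in I}\vartheta_i$ via Legendre--Fenchel duality are the same ingredients.
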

	\begin{proof}
		Observe that for all $0 \leq i,j \leq d$, the toric compactified arithmetic divisor $\overline{D}_i + \overline{D}_j$ is nef and the function
		\begin{displaymath}
			g_{D_i}+g_{D_j} - (g_{D_i, \textup{can}} + g_{D_j, \textup{can}})
		\end{displaymath}
		is bounded. Proceeding as in Corollary~\ref{comp-int-num}, the result follows by the multilinearity of the local toric height and Theorem~\ref{local-toric-h-UK-1}. 
	\end{proof}
	Now, we remove the boundedness condition on the difference $g_D - g_{D,\textup{can}}$.
	\begin{thm}\label{local-toric-h-UK-2}
		Let $U/K$ be the $d$-dimensional split torus and $\overline{D} = (D,g)$ be a nef toric compactified arithmetic divisor of $U/K$. Denote by $\vartheta_{\overline{D}}$ and $\Delta_D$ the corresponding roof function and compact convex set, respectively. Then, the following identity holds
		\begin{displaymath}
			\textup{h}^{\textup{tor}}(U, \overline{D}) \coloneqq \inf_{\overline{E}}  \textup{h}^{\textup{tor}}(U; \overline{E})  = (d+1)! \int_{\Delta_D} \vartheta_{\overline{D}} \, \textup{dVol}_{M},
		\end{displaymath}
		where $\overline{E}$ runs over all nef toric compactified arithmetic divisors $(D, g^{\prime})$ such that $g^{\prime} \geq g$ and $g^{\prime} - g_{D,\textup{can}}$ is bounded. In particular, $\textup{h}^{\textup{tor}}(U, \overline{D}) > - \infty $ if and only if $\vartheta_{\overline{D}} \in L^{1}(\Delta_D)$.
	\end{thm}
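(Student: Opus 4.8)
The plan is to reduce the statement to a convex-analytic approximation and then invoke Theorem~\ref{local-toric-h-UK-1}. Write $\overline{D}=(D,g)$ with tropical Green's function $\gamma_{\overline{D}}$ and roof function $\vartheta_{\overline{D}}=\gamma_{\overline{D}}^{\vee}$, so that $\Delta_D=\textup{cl}(\textup{dom}(\vartheta_{\overline{D}}))$ by Corollary~\ref{roof-functions-UK}. Observe at the outset that $\vartheta_{\overline{D}}(x)=\inf_{u\in N_{\mathbb{R}}}(\langle x,u\rangle-\gamma_{\overline{D}}(u))\le -\gamma_{\overline{D}}(0)$ for all $x$, so $\vartheta_{\overline{D}}$ is bounded above and $\int_{\Delta_D}\vartheta_{\overline{D}}\,\textup{dVol}_M$ is a well-defined element of $\mathbb{R}_{-\infty}$. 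The first step is to identify, via Theorems~\ref{arith-toric-div-UK},~\ref{nef-cone-UK} and Corollary~\ref{roof-functions-UK}, the set $\{\vartheta_{\overline{E}}\mid \overline{E}\text{ satisfies }(\star)\}$ with the set of closed concave functions $\psi$ on $M_{\mathbb{R}}$ with $\textup{dom}(\psi)\subseteq\Delta_D$, $\psi$ bounded on $\Delta_D$, and $\psi\ge \vartheta_{\overline{D}}$. Indeed, $g'-g_{D,\textup{can}}$ bounded translates (by Lemma~\ref{local-toric-bdry-norm} and the isomorphism $\mathcal{G}$) into $\gamma_{\overline{E}}-\Psi_D$ bounded, hence $\psi=\vartheta_{\overline{E}}$ is bounded below on $\Delta_D$; the condition $g'\ge g$ translates, since $\textup{Trop}$ is surjective and the Legendre--Fenchel transform is order-reversing, into $\gamma_{\overline{E}}\le\gamma_{\overline{D}}$ and then into $\vartheta_{\overline{E}}\ge\vartheta_{\overline{D}}$; and the geometric part of $\overline{E}=(D,g')$ being $D$ is built into the definition of $(\star)$, whence $\Delta_E=\Delta_D$.

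For the inequality $\textup{h}^{\textup{tor}}(U,\overline{D})\ge (d+1)!\int_{\Delta_D}\vartheta_{\overline{D}}\,\textup{dVol}_M$ I would take an arbitrary $\overline{E}=(D,g')$ satisfying $(\star)$. Since $g'-g_{D,\textup{can}}$ is bounded and $\overline{E}$ is nef, the converse part of Theorem~\ref{local-toric-h-UK-1} furnishes an admissible decreasing approximating sequence, and the theorem gives $\textup{h}^{\textup{tor}}(U,\overline{E})=(d+1)!\int_{\Delta_E}\vartheta_{\overline{E}}\,\textup{dVol}_M=(d+1)!\int_{\Delta_D}\vartheta_{\overline{E}}\,\textup{dVol}_M$. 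As $\vartheta_{\overline{E}}\ge\vartheta_{\overline{D}}$ on $\Delta_D$, this is $\ge (d+1)!\int_{\Delta_D}\vartheta_{\overline{D}}\,\textup{dVol}_M$; taking the infimum over $\overline{E}$ yields the claim.

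For the reverse inequality, which is the main point, I would approximate $\vartheta_{\overline{D}}$ from above by roof functions of bounded type. Pick a countable dense subset $\{u_j\}_{j\in\mathbb{N}}\subset N_{\mathbb{R}}$ and set $\ell_j(x)\coloneqq\langle x,u_j\rangle-\gamma_{\overline{D}}(u_j)$; each $\ell_j$ is affine with $\ell_j\ge\vartheta_{\overline{D}}$, and since $\gamma_{\overline{D}}$ is continuous (globally Lipschitz, by Theorem~\ref{nef-cone-UK}) one has $\inf_j\ell_j=\inf_{u\in N_{\mathbb{R}}}(\langle\cdot,u\rangle-\gamma_{\overline{D}}(u))=\vartheta_{\overline{D}}$ pointwise. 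Let $\psi_k$ be the function equal to $\min(\ell_1,\dots,\ell_k)$ on $\Delta_D$ and $-\infty$ off $\Delta_D$; this is a closed concave function, bounded on $\Delta_D$, with $\textup{dom}(\psi_k)=\Delta_D$ and $\psi_k\ge\vartheta_{\overline{D}}$, so by the identification above $\psi_k=\vartheta_{\overline{E}_k}$ for $\overline{E}_k\coloneqq\mathcal{G}^{-1}(\psi_k^{\vee})$ satisfying $(\star)$, and $\textup{h}^{\textup{tor}}(U,\overline{E}_k)=(d+1)!\int_{\Delta_D}\psi_k\,\textup{dVol}_M$ by Theorem~\ref{local-toric-h-UK-1}. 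The sequence $\{\psi_k\}$ decreases pointwise to $\vartheta_{\overline{D}}$, and since $\psi_1$ is bounded the monotone convergence theorem gives $\int_{\Delta_D}\psi_k\,\textup{dVol}_M\to\int_{\Delta_D}\vartheta_{\overline{D}}\,\textup{dVol}_M$ in $\mathbb{R}_{-\infty}$. Hence $\textup{h}^{\textup{tor}}(U,\overline{D})\le\inf_k\textup{h}^{\textup{tor}}(U,\overline{E}_k)=(d+1)!\int_{\Delta_D}\vartheta_{\overline{D}}\,\textup{dVol}_M$, which together with the previous paragraph proves the identity. I expect the main obstacle to be exactly this step: arranging that the approximants $\psi_k$ are genuine roof functions of divisors satisfying $(\star)$ (bounded effective domain $\Delta_D$, closedness, finiteness and boundedness on $\Delta_D$, domination of $\vartheta_{\overline{D}}$) while simultaneously controlling their integrals; taking finite minima of affine majorants of $\vartheta_{\overline{D}}$ and truncating their domain to $\Delta_D$ is what resolves both requirements.

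Finally, for the dichotomy: since $(\vartheta_{\overline{D}})_{+}\le -\gamma_{\overline{D}}(0)$ is bounded it is integrable on the compact set $\Delta_D$, so $\int_{\Delta_D}\vartheta_{\overline{D}}\,\textup{dVol}_M>-\infty$ if and only if $\int_{\Delta_D}(\vartheta_{\overline{D}})_{-}\,\textup{dVol}_M<\infty$, i.e. if and only if $\vartheta_{\overline{D}}\in L^1(\Delta_D)$, in which case the value is a finite real number. Combined with the established identity $\textup{h}^{\textup{tor}}(U,\overline{D})=(d+1)!\int_{\Delta_D}\vartheta_{\overline{D}}\,\textup{dVol}_M$, this gives the last assertion and shows the height is either finite or equal to $-\infty$.
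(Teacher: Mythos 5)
Your proof is correct and follows the same overall strategy as the paper: establish the inequality $\ge$ by monotonicity of the integral together with Theorem~\ref{local-toric-h-UK-1}, then establish $\le$ by producing a decreasing sequence of bounded closed concave functions on $\Delta_D$ converging pointwise to $\vartheta_{\overline{D}}$ and passing to the limit. The one place you depart from the paper is in how you build that approximating sequence. The paper invokes Lemma~\ref{mono-approx-1}, which constructs the approximants by sup-convolution with indicator functions of shrinking balls, $g_n = g \boxplus \iota_{\overline{\textup{B}}(0,1/n)} + 1/n$ restricted to $\Delta_D$, and then deduces the limit of the integrals by Fatou's lemma. You instead take a countable dense set $\{u_j\} \subset N_{\mathbb{R}}$, form the affine majorants $\ell_j(x) = \langle x, u_j\rangle - \gamma_{\overline{D}}(u_j)$ of $\vartheta_{\overline{D}}$ (these are exactly the partial Legendre transforms), let $\psi_k = \min(\ell_1,\dots,\ell_k)$ truncated to $\Delta_D$, and pass to the limit by monotone convergence. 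This is slightly more elementary in that it bypasses the machinery of Lemma~\ref{mono-approx-1} and the Fatou argument, at the cost of only giving a sequence of approximants whose duals need not lie in $\mathcal{PA}(N_{\mathbb{R}}, \mathbb{Q})$; but since Theorem~\ref{local-toric-h-UK-1} only requires $g'_k - g_{D,\textup{can}}$ bounded and nefness, that is no loss here. One small inaccuracy in your setup: you write $\textup{dom}(\psi) \subseteq \Delta_D$ in your identification of the admissible roof functions, but the condition that $g' - g_{D,\textup{can}}$ be bounded is equivalent to $\textup{dom}(\vartheta_{\overline{E}}) = \Delta_D$ exactly (with $\vartheta_{\overline{E}}$ bounded there), not merely contained in it. Your construction does yield equality, so the argument stands, but the identification as stated is not quite tight.
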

	\begin{proof}
		The last part of the theorem immediately follows the identity, which we now prove. Let $\overline{E} = (D,g')$ be such a divisor. Since $\overline{D}$ and $\overline{E}$ have the same divisorial part, their corresponding compact convex sets are the same. The inequality $\overline{E} \geq \overline{D}$ implies the inequality of roof functions $\vartheta_{\overline{E}} \geq \vartheta_{\overline{D}}$. By Theorem~\ref{local-toric-h-UK-1} and the monotonicity of integrals,
		\begin{displaymath}
			\textup{h}^{\textup{tor}}(U; \overline{E})  = (d+1)! \int_{\Delta_D} \vartheta_{\overline{E}} \, \textup{dVol}_{M} \geq (d+1)! \int_{\Delta_D} \vartheta_{\overline{D}} \, \textup{dVol}_{M}.
		\end{displaymath}
		Taking the infimum over $\overline{E}$, we obtain the inequality
		\begin{displaymath}
			\inf_{\overline{E}}  \textup{h}^{\textup{tor}}(U; \overline{E})  \geq (d+1)! \int_{\Delta_D} \vartheta_{\overline{D}} \, \textup{dVol}_{M}.
		\end{displaymath}
		It remains to show the reverse inequality. By Lemma~\ref{mono-approx-1}, we can find a decreasing sequence of continuous concave functions $\lbrace \vartheta_n \rbrace_{n \in \mathbb{N}}$ with effective domain $\textup{dom}(\vartheta_n)=\Delta_D$ and converging pointwise to the function $\vartheta_{\overline{D}}$. The sequence $\lbrace \vartheta_n \rbrace_{n \in \mathbb{N}}$ is bounded above by a constant $C$. Then, without loss of generality, we may assume $C=0$. By Fatou's lemma (stated for non-positive functions), we get
		\begin{displaymath}
			\limsup_{n \in \mathbb{N}} \int_{\Delta_D} \vartheta_n \, \textup{dVol}_{M} \leq \int_{\Delta_D} \limsup_{n \in \mathbb{N}} \vartheta_n \, \textup{dVol}_{M}.
		\end{displaymath}
		Recall that $\lbrace \vartheta_n \rbrace_{n \in \mathbb{N}}$ converges pointwise to $\vartheta_{\overline{D}}$. Then,
		\begin{displaymath}
			\limsup_{n \in \mathbb{N}} \vartheta_n (y) = \lim_{n \in \mathbb{N}} \vartheta (y) = \vartheta_{\overline{D}} (y), \quad \textup{for all } y \in \Delta_D.
		\end{displaymath}
		Since the limit superior is always larger than the infimum, we obtain
		\begin{displaymath}
			\inf_{n \in \mathbb{N}}  \int_{\Delta_D} \vartheta_n \, \textup{dVol}_{M}  \leq \limsup_{n \in \mathbb{N}} \int_{\Delta_D} \vartheta_n \, \textup{dVol}_{M} \leq \int_{\Delta_D} \vartheta_{\overline{D}} \, \textup{dVol}_{M}.
		\end{displaymath}
		On the other hand, by Corollary~\ref{roof-functions-UK}, the decreasing sequence  $\lbrace \vartheta_n \rbrace_{n \in \mathbb{N}}$ corresponds to a decreasing sequence of nef toric compactified arithmetic divisors
		\begin{displaymath}
			\lbrace \overline{E}_n \rbrace_{n \in \mathbb{N}}, \quad \overline{E}_n = (D, g_n),
		\end{displaymath}
		which converges to $\overline{D}$ and such that the differences $g_n - g_{D,\textup{can}}$ are bounded. Applying Theorem~\ref{local-toric-h-UK-1} to the previous inequality of integrals, we obtain
		\begin{displaymath}
			\inf_{\overline{E}}  \textup{h}^{\textup{tor}}(U; \overline{E}) \leq \inf_{n \in \mathbb{N}} \textup{h}^{\textup{tor}}(U; \overline{E}_n) \leq (d+1)! \int_{\Delta_D} \vartheta_{\overline{D}} \, \textup{dVol}_{M}.
		\end{displaymath}
		Therefore, the desired identity holds. 
	\end{proof}
	\begin{rem}
		The previous argument also implies $\textup{h}^{\textup{tor}}(U; \overline{D})  = \lim_{n \in \mathbb{N}}  \textup{h}^{\textup{tor}}(U; \overline{D}_n)$, for any decreasing sequence $\lbrace \overline{D}_n \rbrace_{n \in \mathbb{N}}$ of nef toric compactified divisors with divisorial part $D$, converging to $\overline{D}$, and such that the difference $g_{D_n} - g_{D,\textup{can}}$ is bounded.
	\end{rem}
	Naturally, we want to obtain the mixed version of Theorem~\ref{local-toric-h-UK-2}. Recall that the mixed integral, introduced in Definition~\ref{mixed-integral}, is a sum of integrals of sup-convolutions. In the context of Corollary~\ref{local-toric-h-UK-1-mixed}, each of these integrals corresponds to the local toric height of a sum of nef toric compactified arithmetic divisors. We cannot repeat the same argument of Corollary~\ref{local-toric-h-UK-1-mixed} in the unbounded situation: Let $\overline{D}_1$ and $\overline{D}_2$ be nef toric compactified arithmetic divisors. Even if both $\textup{h}^{\textup{tor}}(U; \overline{D}_1)$ and $\textup{h}^{\textup{tor}}(U; \overline{D}_2)$ are finite, is not necessarily true that $\textup{h}^{\textup{tor}}(U; \overline{D}_1 +\overline{D}_2)$ is finite. We give an example below, inspired by Example 3.5~of~\cite{DN15}.
	\begin{ex}\label{sum-not-finite}
		Let $U/K$ be the $2$-dimensional split torus over $K$ and identify $N_{\mathbb{R}}$ with $\mathbb{R}^2$. Define the compact convex sets
		\begin{displaymath}
			\Delta_1 \coloneqq \lbrace (x,y) \, \vert \, x \geq 0, \, y \geq 0 \textup{ and } 1 \geq x+y \rbrace \quad \textup{and} \quad \Delta_2 = [0,1] \times \lbrace 0 \rbrace,
		\end{displaymath}
		the standard $2$-dimensional simplex and an embedding of the unit interval, respectively. Then, consider the closed concave functions $\vartheta_1 \colon \Delta_1 \rightarrow \mathbb{R}_{-\infty}$ and $\vartheta_2 \colon \Delta_2 \rightarrow \mathbb{R}_{-\infty}$, given by
		\begin{displaymath}
			\vartheta_1 (x,y) \coloneqq -(1-y)^{-1} \quad \textup{ and } \quad \vartheta_2(x,y) \coloneqq 0.
		\end{displaymath}
		By Corollary~\ref{roof-functions-UK}, the functions $\vartheta_1$ and $\vartheta_2$ correspond to nef toric arithmetic divisors $\overline{D}_1$ and $\overline{D}_2$ respectively. By Theorems~\ref{local-toric-h-UK-1}~and~\ref{local-toric-h-UK-2}, we get
		\begin{displaymath}
			\textup{h}^{\textup{tor}}(U; \overline{D}_1) = 3! \int_{\Delta_1} \vartheta_{1} \, \textup{dVol} = -6 \int_{0}^{1} \int_{0}^{1-y} (1-y)^{-1} \, \textup{d}x \, \textup{d}y = -6.
		\end{displaymath}
		Similarly, we obtain $\textup{h}^{\textup{tor}}(U; \overline{D}_2) = 0$. On the other hand, the nef toric compactified arithmetic divisor $\overline{D}_3 = \overline{D}_1 + \overline{D}_2$ has a corresponding compact convex set
		\begin{displaymath}
			\Delta_3 = \Delta_1 + \Delta_2 = \lbrace (x,y) \, \vert \, x \geq 0, \, 1 \geq y \geq 0, \textup{ and } 2 \geq x+y \rbrace.
		\end{displaymath}
		By Proposition~\ref{prop-conv}, the roof function $\vartheta_3 \colon \Delta_3 \rightarrow \mathbb{R}_{-\infty}$ is given by the sup-convolution $\vartheta_3 = \vartheta_1 \boxplus \vartheta_2$. A quick calculation shows that
		\begin{displaymath}
			\vartheta_3 (x,y) =  -(1-y)^{-1}.
		\end{displaymath}
		By Theorem~\ref{local-toric-h-UK-2}, we know that
		\begin{displaymath}
			\textup{h}^{\textup{tor}}(U; \overline{D}_3) = 3! \int_{\Delta_3} \vartheta_{3} \, \textup{dVol}.
		\end{displaymath}
		Since the function $\vartheta_3$ is non-positive and $[0,1]^2$ is contained in $\Delta_3$, we get the bound
		\begin{displaymath}
			\int_{\Delta_3} \vartheta_{3} \, \textup{dVol} \leq  \int_{[0,1]^2} \vartheta_{3} \, \textup{dVol} = -\int_{0}^{1} \int_{0}^{1} (1-y)^{-1} \, \textup{d}x \, \textup{d}y = - \int_{0}^{1} (1-y)^{-1} \, \textup{d}y = -\infty.
		\end{displaymath}
		Summarizing, we have constructed nef toric compactified arithmetic divisors $\overline{D}_1$ and $\overline{D}_2$ with finite toric height, whose sum $\overline{D}_3$ does not have finite toric height.
	\end{ex}
	In the following way, we can generalize Corollary~\ref{local-toric-h-UK-1-mixed} to the unbounded setting.
	\begin{cor}\label{local-toric-h-UK-2-mixed}
		Let $\overline{D}_0, \ldots, \overline{D}_d$ be nef toric compactified arithmetic divisors on $U/K$, and write $\Delta_i$ and $\vartheta_i$ for the compact convex set and roof function of $\overline{D}_i$, respectively. For each $I \subset \lbrace 0,1, \ldots, d \rbrace$, denote
		\begin{displaymath}
			\overline{D}_I \coloneqq \sum_{i \in I} \overline{D}_i, \quad \Delta_I \coloneqq \sum_{i \in I} \Delta_i, \quad \textup{and} \quad  \vartheta_I \coloneqq \boxplus_{i \in I} \, \vartheta_i.
		\end{displaymath}
		Suppose that, for each $I$, we have $\vartheta_I \in L^{1}(\Delta_I)$. Then, the following identity holds
		\begin{displaymath}
			\textup{h}^{\textup{tor}}(U; \overline{D}_{0}, \ldots , \overline{D}_{d})= \sum_{I \subset \lbrace 0,1, \ldots, d \rbrace} (-1)^{d-|I|} \,  \textup{h}^{\textup{tor}}(U; \overline{D}_I) =  \textup{MI}_{M}(\vartheta_{0}, \ldots , \vartheta_{d}) > -\infty.
		\end{displaymath}
	\end{cor}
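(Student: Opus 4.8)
The plan is to reduce the statement to the single-divisor formula of Theorem~\ref{local-toric-h-UK-2} by a polarization argument, and then to match the infimum defining the unbounded mixed height with the mixed integral via monotone approximation. First I would fix, for each nonempty $I\subseteq\{0,\ldots,d\}$, the convex-analytic data of the nef toric compactified arithmetic divisor $\overline{D}_I=\sum_{i\in I}\overline{D}_i$. Because $\mathcal{G}$ is a group morphism (Proposition~\ref{G/K}), $\gamma_{\overline{D}_I}=\sum_{i\in I}\gamma_{\overline{D}_i}$, and dualizing together with Proposition~\ref{prop-conv} (the Legendre--Fenchel transform exchanges sums and sup-convolutions) shows the roof function of $\overline{D}_I$ is $\vartheta_I=\boxplus_{i\in I}\vartheta_i$, whose effective domain has closure the Minkowski sum $\sum_{i\in I}\Delta_i=\Delta_I$. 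Since $\vartheta_I\in L^1(\Delta_I)$ by hypothesis, Theorem~\ref{local-toric-h-UK-2} gives
\begin{displaymath}
\textup{h}^{\textup{tor}}(U;\overline{D}_I)=(d+1)!\int_{\Delta_I}\vartheta_I\,\textup{dVol}_M\in\mathbb{R}.
\end{displaymath}
Substituting these finite values into Definition~\ref{mixed-integral} and reindexing the outer sum over the nonempty subsets $I$ (with $|I|=j+1$) produces the middle equality $\sum_{I}(-1)^{d-|I|}\textup{h}^{\textup{tor}}(U;\overline{D}_I)=\textup{MI}_M(\vartheta_0,\ldots,\vartheta_d)$ as a purely formal identity, and simultaneously shows this number is $>-\infty$. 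It then remains to prove the first equality $\textup{h}^{\textup{tor}}(U;\overline{D}_0,\ldots,\overline{D}_d)=\textup{MI}_M(\vartheta_0,\ldots,\vartheta_d)$.

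By Definition~\ref{local-toric-h-dfn}\,(ii) the left-hand side is the infimum of $\textup{h}^{\textup{tor}}(U;\overline{E}_0,\ldots,\overline{E}_d)$ over all tuples with $\overline{E}_i=(D_i,g_i')$, $g_i'\ge g_i$, and $g_i'-g_{D_i,\textup{can}}$ bounded. For such a tuple each $\overline{E}_i$ is bounded, hence admits an admissible approximating sequence by the converse part of Theorem~\ref{local-toric-h-UK-1}, so Corollary~\ref{local-toric-h-UK-1-mixed} applies and gives $\textup{h}^{\textup{tor}}(U;\overline{E}_0,\ldots,\overline{E}_d)=\textup{MI}_M(\vartheta_{\overline{E}_0},\ldots,\vartheta_{\overline{E}_d})$ with $\vartheta_{\overline{E}_i}\ge\vartheta_i$ on the common domain $\Delta_i$. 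Thus I must show the infimum of $\textup{MI}_M(\vartheta_{\overline{E}_0},\ldots,\vartheta_{\overline{E}_d})$ over these tuples equals $\textup{MI}_M(\vartheta_0,\ldots,\vartheta_d)$. For the bound $\ge$ I would use the monotonicity of the mixed integral of concave functions with fixed domains under the pointwise order — most transparently through the description of $\textup{MI}_M$ as a mixed volume of truncated hypographs in $M_{\mathbb{R}}\times\mathbb{R}$, which is monotone under inclusion — together with $\vartheta_{\overline{E}_i}\ge\vartheta_i$.

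For the reverse bound I would, exactly as in the proof of Theorem~\ref{local-toric-h-UK-2}, apply Lemma~\ref{mono-approx-1} to each $\vartheta_i$ to obtain a decreasing sequence $\{\vartheta_i^{(n)}\}_{n}$ of continuous concave functions with $\textup{dom}(\vartheta_i^{(n)})=\Delta_i$ converging pointwise to $\vartheta_i$. By Corollary~\ref{roof-functions-UK} (and Theorem~\ref{nefcorrespondence}) these are the roof functions of nef toric compactified arithmetic divisors $\overline{E}_i^{(n)}$ with divisorial part $D_i$ satisfying the condition $(\star)$, since a continuous function on the compact set $\Delta_i$ is bounded. For each $I$ one then has $\boxplus_{i\in I}\vartheta_i^{(n)}\downarrow\vartheta_I$ pointwise on $\Delta_I$ — a compactness argument shows that sup-convolution over the compact sets $\Delta_i$ commutes with decreasing limits — and the hypothesis $\vartheta_I\in L^1(\Delta_I)$ lets the dominated convergence theorem give $\int_{\Delta_I}\boxplus_{i\in I}\vartheta_i^{(n)}\to\int_{\Delta_I}\vartheta_I$. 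Summing the finitely many terms with the signs of Definition~\ref{mixed-integral} yields $\textup{MI}_M(\vartheta_0^{(n)},\ldots,\vartheta_d^{(n)})\to\textup{MI}_M(\vartheta_0,\ldots,\vartheta_d)$, so the infimum is $\le\textup{MI}_M(\vartheta_0,\ldots,\vartheta_d)$, completing the argument.

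The main obstacle is the identification of the infimum with $\textup{MI}_M(\vartheta_0,\ldots,\vartheta_d)$. For a single divisor (Theorem~\ref{local-toric-h-UK-2}) the lower bound was mere monotonicity of an integral, whereas here the alternating signs in $\textup{MI}_M$ make the monotonicity statement genuine and it must be imported as an external input; and the upper bound uses the $L^1$ hypothesis precisely to pass to the limit term-by-term in that alternating sum — Example~\ref{sum-not-finite} shows that without integrability the individual terms, and the mixed height itself, can be $-\infty$. The remaining ingredients, namely the convex-analytic description of $\overline{D}_I$ and the combinatorial reindexing, are routine.
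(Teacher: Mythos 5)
Your argument is correct, but it takes a genuinely different route from the paper. The paper's proof is a single limit-passage argument: it constructs, ``by elementary properties of infimums'' and by replacing each $g_{i,n}$ with a running minimum, a decreasing sequence of tuples $(\overline{E}_{0,n},\dots,\overline{E}_{d,n})$ satisfying $(\star)$ that both realizes the infimum defining $\textup{h}^{\textup{tor}}(U;\overline{D}_0,\dots,\overline{D}_d)$ and converges to $(\overline{D}_0,\dots,\overline{D}_d)$; it then expands each $\textup{h}^{\textup{tor}}(U;\overline{E}_{0,n},\dots,\overline{E}_{d,n})$ into the alternating sum $\sum_I(-1)^{d-|I|}\textup{h}^{\textup{tor}}(U;\overline{E}_{I,n})$ via Corollary~\ref{local-toric-h-UK-1-mixed}, and passes to the limit term-by-term using the remark after Theorem~\ref{local-toric-h-UK-2} (each $\overline{E}_{I,n}$ decreases to $\overline{D}_I$, whose height is finite by the $L^1(\Delta_I)$ hypothesis). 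Your proof instead establishes the middle equality formally and then splits the first equality into a lower bound, obtained from monotonicity of the mixed integral in the pointwise order via the truncated-hypograph mixed-volume description, and an upper bound, obtained by applying Lemma~\ref{mono-approx-1} to each $\vartheta_i$, checking that the sup-convolutions commute with decreasing limits, and invoking dominated (or monotone) convergence. The trade-off is real: the paper's route never needs monotonicity of $\textup{MI}_M$ nor the sup-convolution limit interchange, relying instead on monotonicity of individual heights and the single-divisor Theorem~\ref{local-toric-h-UK-2}; your route makes the $\geq/\leq$ structure explicit, at the cost of importing the $\textup{MI}_M$-monotonicity as an external fact (it does hold, via the mixed-volume interpretation as in [BPS], but the paper nowhere needs or cites it) and having to justify the interchange $\boxplus_i\vartheta_i^{(n)}\downarrow\vartheta_I$ (which is a short compactness-plus-upper-semicontinuity argument, so not a gap, but an extra step). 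Both proofs use the $L^1$ hypothesis in the same essential place: to make the individual alternating terms finite so that the term-by-term passage to the limit is legitimate.
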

	\begin{proof}
		For each $0 \leq i \leq d$, write $\overline{D}_i = (D_i,g_i)$. By elementary properties of infimums, we can find a sequence $\lbrace (\overline{E}_{0,n}, \ldots, \overline{E}_{d,n})\rbrace_{n \in \mathbb{N}}$ satisfying the following properties:
		\begin{enumerate}[label=(\alph*)]
			\item For each $0 \leq i \leq d$ and each $n \in \mathbb{N}$, the nef toric compactified arithmetic divisor $\overline{E}_{i,n} = (D_i,g_{i,n})$ is such that $g_i \leq g_{i,n}$ and $g_{i,n} - g_{D_{i},\textup{can}}$ is bounded.
			\item For each $0 \leq i \leq d$, the sequence $\lbrace \overline{E}_{i,n} \rbrace_{n \in \mathbb{N}}$ is decreasing and
			\begin{displaymath}
				\textup{h}^{\textup{tor}}(U; \overline{D}_{0}, \ldots , \overline{D}_{d}) =\lim_{n \in \mathbb{N}} \textup{h}^{\textup{tor}}(U; \overline{E}_{0,n}, \ldots , \overline{E}_{d,n}).
			\end{displaymath}
		\end{enumerate}
		Moreover, we can assume that $\overline{D}_i = \lim_{n \in \mathbb{N}} \overline{E}_{i,n}$. Indeed, we may find a sequence $\lbrace(D_i, g_{i,n}^{\prime} ) \rbrace_{n \in \mathbb{N}}$ satisfying the condition (a) and converging to $\overline{D}_i$. Then, we may replace $g_{i,n}$ by $\min \lbrace g_{i,n}, \, g_{i,n}^{\prime} \rbrace$. By monotonicity of heights, we know that property (b) is still satisfied. On the other hand, Corollary~\ref{local-toric-h-UK-1-mixed} implies
		\begin{displaymath}
			\textup{h}^{\textup{tor}}(U; \overline{E}_{0,n}, \ldots , \overline{E}_{d,n}) = \sum_{I \subset \lbrace 0,1, \ldots, d \rbrace} (-1)^{d-|I|} \,  \textup{h}^{\textup{tor}}(U; \overline{E}_{I,n}),
		\end{displaymath}
		where $ \overline{E}_{I,n}$ is defined similarly to $\overline{D}_I$. The result follows from the combination of property (b), the above identity, and Theorem~\ref{local-toric-h-UK-2}.
	\end{proof}
	We state the analogous result for integrable toric compactified arithmetic divisors.
	\begin{cor}\label{local-toric-h-UK-2-integrable}
		Let $\overline{D}_0, \ldots, \overline{D}_d \in  \overline{\textup{Div}}^{\textup{int}}_{\mathbb{T}}(U/K)_{\mathbb{Q}}$. For each $i = 0, \ldots , d$, write $\overline{D}_{i}$ as a difference $\overline{D}_{i,+} - \overline{D}_{i,-}$ of nef toric compactified arithmetic divisors. Denote by $\vartheta_{i,+}$ and $\vartheta_{i,-}$ the corresponding roof function of $\overline{D}_{i,+}$ and $\overline{D}_{i,-}$, respectively. Copying the notation in Corollary~\ref{local-toric-h-UK-2-mixed}, assume that $\vartheta_{I,\pm} \in L^{1}(\Delta_{I,\pm})$. Then, the following identity holds
		\begin{displaymath}
			\textup{h}^{\textup{tor}}(U; \overline{D}_0 , \ldots , \overline{D}_d ) = \sum_{\epsilon_0 , \ldots , \epsilon_d \in \lbrace \pm 1 \rbrace} \epsilon_0 \ldots \epsilon_d \, \textup{MI}_M ( \vartheta_{0,\epsilon_0}, \ldots , \vartheta_{d,\epsilon_d }) > -\infty.
		\end{displaymath}
	\end{cor}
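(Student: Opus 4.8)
The plan is to deduce this from the mixed formula for nef divisors, Corollary~\ref{local-toric-h-UK-2-mixed}, together with the multilinear extension of the local toric height to integrable divisors built into Definition~\ref{local-toric-h-dfn}; the argument is the compactified analogue of the derivation of Corollary~\ref{local-toric-height-proj-integrable} from the mixed integral formula in~\cite{BPS}, and I expect it to be essentially routine once the finiteness bookkeeping is handled.

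First I would fix, for each $0 \le i \le d$, the chosen decomposition $\overline{D}_i = \overline{D}_{i,+} - \overline{D}_{i,-}$ into nef toric compactified arithmetic divisors, set $\overline{D}_{i,+1} \coloneqq \overline{D}_{i,+}$ and $\overline{D}_{i,-1} \coloneqq \overline{D}_{i,-}$, and write $\vartheta_{i,\epsilon_i}$, $\Delta_{i,\epsilon_i}$ for the corresponding roof functions and compact convex sets. By the extension of the local toric height to integrable divisors by linearity (Definition~\ref{local-toric-h-dfn}),
\begin{displaymath}
	\textup{h}^{\textup{tor}}(U; \overline{D}_0, \ldots, \overline{D}_d) = \sum_{\epsilon_0, \ldots, \epsilon_d \in \lbrace \pm 1 \rbrace} \epsilon_0 \cdots \epsilon_d \, \textup{h}^{\textup{tor}}(U; \overline{D}_{0,\epsilon_0}, \ldots, \overline{D}_{d,\epsilon_d}).
\end{displaymath}
Next I would check that for each fixed sign vector $(\epsilon_0, \ldots, \epsilon_d)$ the tuple $(\overline{D}_{0,\epsilon_0}, \ldots, \overline{D}_{d,\epsilon_d})$ of nef toric compactified arithmetic divisors meets the integrability hypothesis of Corollary~\ref{local-toric-h-UK-2-mixed}: for every $I \subset \lbrace 0, \ldots, d \rbrace$ the sup-convolution $\boxplus_{i \in I} \vartheta_{i,\epsilon_i}$ lies in $L^1$ of its domain $\sum_{i \in I} \Delta_{i,\epsilon_i}$. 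This is precisely the standing assumption $\vartheta_{I,\pm} \in L^1(\Delta_{I,\pm})$, read over all the relevant sign patterns. Corollary~\ref{local-toric-h-UK-2-mixed} then gives, for each such tuple,
\begin{displaymath}
	\textup{h}^{\textup{tor}}(U; \overline{D}_{0,\epsilon_0}, \ldots, \overline{D}_{d,\epsilon_d}) = \textup{MI}_M(\vartheta_{0,\epsilon_0}, \ldots, \vartheta_{d,\epsilon_d}) > -\infty,
\end{displaymath}
and substituting into the previous display yields the asserted identity, with the right-hand side a finite sum of finite real numbers, hence $> -\infty$.

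The one point I would present most carefully — and the only real obstacle — is the bookkeeping ensuring that no $\infty - \infty$ arises and that the expression does not depend on the chosen nef decomposition of the $\overline{D}_i$. Since each $\textup{MI}_M(\vartheta_{0,\epsilon_0}, \ldots, \vartheta_{d,\epsilon_d})$ expands via Definition~\ref{mixed-integral} into a finite sum of integrals of \emph{mixed-sign} sup-convolutions $\boxplus_{k} \vartheta_{i_k,\epsilon_{i_k}}$, it is essential that the $L^1$ hypothesis be understood to cover all such mixed-sign sup-convolutions, not merely the all-plus and all-minus ones; under that reading every term in the two displays above is finite, the linear extension of Definition~\ref{local-toric-h-dfn} is unambiguous, and the identity follows exactly as in the bounded case treated in Corollary~\ref{local-toric-h-UK-1-mixed}. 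The subset-by-subset form of the hypothesis is genuinely necessary here: by Example~\ref{sum-not-finite} the local toric height of a sum of nef toric compactified arithmetic divisors with finite height may be $-\infty$, which is exactly the pathology these $L^1$ conditions rule out.
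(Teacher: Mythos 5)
Your proof is correct and spells out exactly what the paper's one-line ``It is immediate from Corollary~\ref{local-toric-h-UK-2-mixed}'' leaves implicit: expand $\textup{h}^{\textup{tor}}(U;\overline{D}_0,\ldots,\overline{D}_d)$ multilinearly over the nef decompositions $\overline{D}_i=\overline{D}_{i,+}-\overline{D}_{i,-}$ and evaluate each of the $2^{d+1}$ nef terms by Corollary~\ref{local-toric-h-UK-2-mixed}. Your observation that the hypothesis ``$\vartheta_{I,\pm}\in L^1(\Delta_{I,\pm})$'' must be read as covering all mixed-sign sup-convolutions $\boxplus_{i\in I}\vartheta_{i,\epsilon_i}$ (not only the all-plus and all-minus ones) correctly resolves the ambiguity in the paper's notation and is precisely what guarantees that each term in the expansion is finite, avoiding the pathology of Example~\ref{sum-not-finite}.
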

	\begin{proof}
		It is immediate from Corollary~\ref{local-toric-h-UK-2-mixed}.
	\end{proof}
	\begin{ex}\label{local-height-singular}
		The following example gives a nef toric compactified arithmetic divisor $\overline{D}$ on the one dimensional torus $U = \mathbb{G}_m / K$ such that difference $g_D - g_{D,\textup{can}}$ is not bounded and has finite local toric height $\textup{h}^{\textup{tor}}(U, \overline{D})$. Consider a real number $\alpha < 0$ and define the concave function
		\begin{displaymath}
			\vartheta (y) \coloneqq  \begin{cases} -\infty, & y=0 \\ 1-y^{\alpha} , & 0 < y \leq 1. \end{cases}
		\end{displaymath}
		By Corollary~\ref{roof-functions-UK}, the function $\vartheta$ of concave functions determines a s a nef toric compactified arithmetic divisor $\overline{D}$, whose geometric divisor $D$ is induced by the equation $x_0 = 0$ in $\mathbb{P}^1/K$. Now, use Theorem~\ref{local-toric-h-UK-2} to compute its local toric height:
		\begin{displaymath}
			\textup{h}^{\textup{tor}}(U, \overline{D}) = 2! \int_{0}^{1}\vartheta (y) \, \textup{d}y =  2 \int_{0}^{1} (1-y^{\alpha})  \, \textup{d}y =  \begin{cases} \frac{2 \alpha}{\alpha +1}, &  -1 < \alpha < 0 \\ - \infty,  & \alpha \leq -1. \end{cases}
		\end{displaymath}
		This cannot be computed using the methods of \cite{BPS}.
	\end{ex}
	\appendix
	\section{Convex-analytic tools} 
	This Appendix summarizes and develops the convex-analytic machinery needed to study the Arakelov geometry of toric quasi-projective varieties. The main results presented here are density properties of piecewise affine functions in certain Banach spaces and their cones of concave functions. This Appendix is subdivided into four subsections, which we now briefly describe. The first subsection consists of basic notation and properties of concave functions and their Legendre-Fenchel duals. The second subsection introduces the Banach spaces of continuous conical and sublinear functions, while the third studies the space of rational piecewise affine functions. In both of these subsections, we study in detail the convergence of sequences in these spaces. In the final subsection, we use the tools developed in the second and third subsections to prove the desired density theorems. Our main reference for this Appendix is Rockafeller's~book~\cite{Roc}, while we try to stick to the notation in Chapter~2~of~\cite{BPS}.
	
	\subsection{Convex functions and conical functions} Throughout this appendix, we fix a lattice $N$ of rank $d$ and follow the conventions from the introduction. We now recall the basic definitions from convex geometry.	Let $S$ be a subset of the Euclidean space $N_{\mathbb{R}}$. We denote by $\textup{Conv}(S)$ (resp. $c(S)$) the \textit{convex hull} of $S$  (resp. the \textit{cone} spanned by $S$). Note that the set $S$ is \textit{convex} (resp. a \textit{cone}) if and only if $\textup{Conv}(S) = S$ (resp. $c(S) = S$). For every pair of convex sets $A, B \subset N_{\mathbb{R}}$, their \textit{Minkowski sum} is the convex set given by $A+B \coloneqq \lbrace a + b \, \vert \, a \in A, \, b \in B \rbrace$. A convex set $C$ is \textit{strongly convex} if it contains no line. The \textit{span} of a convex set $C \subset N_{\mathbb{R}}$ is the smallest affine subspace $\textup{Span}(C)$ that contains it. The \textit{dimension} of $C$ is the dimension of  $\textup{Span}(C)$. The \textit{relative interior} $\textup{ri}(C)$ of $C$ is the interior of $C$ in its span.  The \textit{relative boundary} $\textup{rb}(C)$ of $C$ is boundary of $C$ in $\textup{Span}(C)$. The topological closure $\textup{cl}(C)$ of a convex set $C$ is also convex.
	
	A convex subset $F$ of a convex set $C$ is a \textit{face} if for every closed line segment $L \subset C$ such that $\textup{ri}(L) \cap F \neq \emptyset$ then $L \subset F$. We denote $F \leq C$ whenever $F$ is a face of $C$. A face is \textit{proper} if $F \neq \emptyset \, ,  C$. A \textit{facet} is a face of codimension one. A \textit{supporting hyperplane} $H$ of a convex set $C$ is a hyperplane $H \subset N_{\mathbb{R}}$ such that $C$ is contained in one of the closed half-spaces determined by $H$ and $\textup{rb}(C) \cap H \neq \emptyset$. An \textit{exposed face} $F$ is a face given by the intersection of $C$ with a closed half-space determined by a support hyperplane, and an \textit{exposed point} is an exposed face of dimension $0$. An \textit{extreme point} $x \in C$ is a point that is not contained in the interior of any closed line segment $L \subset C$. Every exposed face is a face, and every exposed point is an extreme point, but neither converse is true.
	
	We now recall the basic definitions from convex analysis. A function $f \colon N_{\mathbb{R}} \rightarrow \mathbb{R}_{- \infty}$ is \textit{concave} if it is not identically $-\infty$ and for all $x,y \in N_{\mathbb{R}}$ and $0 < t < 1$ we have
	\begin{displaymath}
		f(t \, x + (1-t) \, y) \geq t f(x) + (1-t) f(y).
	\end{displaymath}
	A function $g$ is \textit{convex} if $-g$ is concave. In this definition, we may replace $N_{\mathbb{R}}$ by a convex subset $C \subset N_{\mathbb{R}}$. Note, however, that any convex function $f$ on $C$ can be extended to the whole of $N_{\mathbb{R}}$ by defining it as $-\infty$ on the complement of $C$. We will use both approaches as we see fit. Now, let $C \subset N_{\mathbb{R}}$ be a cone. A function $f \colon  C \rightarrow \mathbb{R}$ is said to be \textit{conical} if for every $\lambda \in \mathbb{R}_{\geq 0}$ the identity $f( \lambda \cdot x ) = \lambda f(x)$ holds.
	
	Concavity of functions has a geometric characterization; namely, a function $f$ is concave if and only if its \textit{hypograph} $\textup{hyp}(f)$ is a convex set, where
	\begin{displaymath}
		\textup{hyp}(f) \coloneqq \lbrace (x , t) \in N_{\mathbb{R}} \times \mathbb{R} \, \vert \, - \infty < t \leq f(x) \rbrace.
	\end{displaymath}
	The \textit{effective domain} $\textup{dom}(f)$ and \textit{stability set} $\textup{stab}(f)$ of a concave function $f$ are given by
	\begin{displaymath}
		\textup{dom}(f) \coloneqq \lbrace x \in N_{\mathbb{R}} \, \vert \, f(x) \neq - \infty \rbrace, \quad \textup{stab}(f) \coloneqq \lbrace y \in M_{\mathbb{R}} \, \vert \, \langle y, x \rangle - f(x) \textup{ is bounded below} \rbrace.
	\end{displaymath}
	The effective domain $\textup{dom}(f)$ is the projection $p_1(\textup{hyp}(f))$ of the hypograph to the $x$-coordinate and, therefore, is convex. A concave function $f$ is \textit{closed} if it is upper semi-continuous. This is equivalent to the hypograph $\textup{hyp}(f)$ being a closed set. The \textit{closure} $\textup{cl}(f)$ of $f$ is the function whose hypograph is exactly $\textup{cl}(\textup{hyp}(f))$. It is a closed concave function and coincides with $f$ on $\textup{ri}(\textup{dom}(f))$. An important property of concave functions is that they are relatively continuous.
	\begin{prop}\label{cont-1}
		Let $f \colon  N_{\mathbb{R}} \rightarrow \mathbb{R}_{- \infty}$ be concave, then it is continuous as a function on $\textup{ri}(\textup{dom}(f))$ with the subspace topology. In particular, $\textup{dom}(f) = N_{\mathbb{R}}$ implies that $f$ is continuous, and therefore, closed.
	\end{prop}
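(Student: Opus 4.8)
The plan is to reduce to the full-dimensional situation and then prove that $f$ is locally Lipschitz, which is the classical route for this fact (see, for instance, Theorem~10.1 of \cite{Roc}). First I would replace $N_{\mathbb{R}}$ by the affine subspace $A \coloneqq \textup{Span}(\textup{dom}(f))$: concavity is inherited by the restriction $f|_{A}$, the set $\textup{ri}(\textup{dom}(f))$ is precisely the topological interior of $\textup{dom}(f)$ inside $A$, and the subspace topology on it is the one induced from $A$. Thus it suffices to fix an arbitrary $x_0$ in the interior of $\textup{dom}(f)$ relative to $A$ and show that $f$ is Lipschitz on a neighbourhood of $x_0$; continuity of $f|_{\textup{ri}(\textup{dom}(f))}$ then follows since $x_0$ is arbitrary.

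The first step is local boundedness near $x_0$. Since $x_0$ is an interior point of $\textup{dom}(f)$ in $A$, I would pick a simplex $S \subset \textup{dom}(f)$ whose interior contains $x_0$, with vertices $v_0, \dots, v_e$ where $e = \dim A$; each value $f(v_i)$ is finite because $v_i \in \textup{dom}(f)$. Writing an arbitrary $x \in S$ as a convex combination of the $v_i$ and applying concavity gives $f(x) \geq \min_i f(v_i) \eqqcolon m > -\infty$, so $f$ is bounded below on $S$. For the upper bound, choose $r > 0$ with $\overline{\textup{B}}(x_0, 2r) \subset S$; for $x \in \overline{\textup{B}}(x_0, 2r)$ the reflected point $y = 2x_0 - x$ again lies in $\overline{\textup{B}}(x_0, 2r) \subset S$, and concavity applied to the identity $x_0 = \tfrac12 x + \tfrac12 y$ yields $f(x) \leq 2 f(x_0) - f(y) \leq 2 f(x_0) - m$. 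Hence $|f| \leq M$ on $\overline{\textup{B}}(x_0, 2r)$ with $M \coloneqq \max\{\, |m|,\ |2f(x_0) - m|\,\}$.

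For the second step, let $x, y \in \textup{B}(x_0, r)$ be distinct and set $\delta \coloneqq \| x - y \| \leq 2r$. I would introduce the point $z \coloneqq y + \tfrac{r}{\delta}(y - x)$, which satisfies $\| z - x_0 \| \leq \| y - x_0 \| + \tfrac{r}{\delta}\,\delta \leq 2r$, hence $z \in \overline{\textup{B}}(x_0, 2r)$; moreover $y = \tfrac{\delta}{r+\delta}\, z + \tfrac{r}{r+\delta}\, x$ is a convex combination. Concavity then gives $f(y) - f(x) \geq \tfrac{\delta}{r+\delta}\bigl(f(z) - f(x)\bigr) \geq -\tfrac{2M}{r}\,\delta$, and interchanging the roles of $x$ and $y$ produces the local Lipschitz estimate $|f(x) - f(y)| \leq \tfrac{2M}{r}\, \| x - y \|$. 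This shows $f$ is continuous at $x_0$, and therefore $f|_{\textup{ri}(\textup{dom}(f))}$ is continuous. Finally, when $\textup{dom}(f) = N_{\mathbb{R}}$ we have $\textup{ri}(\textup{dom}(f)) = N_{\mathbb{R}}$, so $f$ is continuous on all of $N_{\mathbb{R}}$; in particular it is upper semicontinuous, hence closed, since closedness of a concave function is equivalent to the hypograph being closed, which holds for any continuous function.

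The only genuinely non-formal point is the local upper bound in the first step: the lower bound is immediate from Jensen's inequality applied on the simplex, but boundedness \emph{above} really uses that $x_0$ is an interior point (via the reflection trick), and one must take care to run this argument inside the affine hull $A$ rather than $N_{\mathbb{R}}$ when $\textup{dom}(f)$ is not full-dimensional. Everything downstream of local boundedness is a routine computation.
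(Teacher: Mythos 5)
Your proof is correct and is, in substance, the standard argument: the paper itself simply cites Theorem~10.1 of Rockafellar for this fact, and that theorem is proved by exactly the reduction to the full-dimensional case followed by local boundedness and a local Lipschitz estimate that you carry out. You have thus supplied a complete, self-contained derivation of the cited result rather than a different approach; the details (Jensen for the lower bound on a simplex, the reflection trick for the upper bound using interiority of $x_0$, the auxiliary point $z$ for the Lipschitz constant $2M/r$, and the observation that continuity gives upper semicontinuity and hence closedness) are all correct, including the caveat that balls and the simplex must be taken inside the affine hull $A$ when $\textup{dom}(f)$ is not full-dimensional.
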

	\begin{proof}
		This is just Theorem~10.1~of~\cite{Roc}.
	\end{proof}
	One of our goals is to construct sequences of concave functions that approximate a given concave function. The following concavity-preserving operations will be essential for these constructions. 
	\begin{prop}\label{prop-conv}
		Let $f,\, g  \colon  N_{\mathbb{R}} \rightarrow \mathbb{R}_{- \infty}$ be concave functions and let $c \in \mathbb{R}_{\geq 0}$, $\lambda \in \mathbb{R}_{>0}$, $x_0 \in N_{\mathbb{R}}$. We have:
		\begin{enumerate}[label=(\roman*)]
			\item If $\textup{dom}(f) \cap \textup{dom}(g) \neq \emptyset$, the sum $f+g$ is a concave function with effective domain $\textup{dom}(f) \cap \textup{dom}(g)$.
			\item The function $c \cdot f$ is a concave function with effective domain $\textup{dom}(f)$.
			\item The dilation $(\varpi_{\lambda} f) (x) = f(\lambda \cdot x)$ is concave with effective domain $(1/\lambda ) \cdot \textup{dom}(f)$.
			\item The translation $(\tau_{x_{0}}f) (x) = f(x - x_0)$ is concave with effective domain $\textup{dom}(f) + x_0$.
			\item If  $\textup{stab}(f) \cap \textup{stab}(g) \neq \emptyset$, the \textit{sup-convolution} of $f$ and $g$, defined as 
			\begin{displaymath}
				(f \boxplus g) (x) \coloneqq \sup_{y + z = x} f(y) + g(z)
			\end{displaymath}
			is concave with effective domain $\textup{dom}(f) + \textup{dom}(g)$. Moreover,
			\begin{displaymath}
				\textup{hyp}(f \boxplus g ) = \textup{hyp}(f) +  \textup{hyp}(g).
			\end{displaymath}
			\item The \textit{recession function} of $f$ is defined as
			\begin{displaymath}
				\textup{rec}(f)(x)\coloneqq \inf_{y \in \textup{dom}(f)} f(x+y) - f(y).
			\end{displaymath}
			It is a conical and concave function. If $f$ is closed, it can be defined as
			\begin{displaymath}
				\textup{rec}(f)(x) \coloneqq \lim_{\lambda \rightarrow + \infty} f (y_0 + \lambda \, x)/ \lambda
			\end{displaymath}
			where $y_0 \in \textup{dom}(f)$. The function $\textup{rec}(f)$ is closed in this case.
		\end{enumerate}
	\end{prop}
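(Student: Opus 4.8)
The strategy is to deduce every item from the corresponding statement about \emph{convex} functions in~\cite{Roc} via the involution $f\mapsto -f$, supplementing the reference only where a short direct argument is cleaner. Items \textit{(i)}--\textit{(iv)} are immediate: for \textit{(i)}, adding the concavity inequalities for $f$ and $g$ at a common pair of points gives the one for $f+g$, while $(f+g)(x)\ne-\infty$ exactly when $f(x)\ne-\infty$ and $g(x)\ne-\infty$, so $\textup{dom}(f+g)=\textup{dom}(f)\cap\textup{dom}(g)$, which is nonempty by hypothesis; for \textit{(ii)} (with $c>0$, the case $c=0$ being trivial) scaling by $c$ preserves the inequality and the effective domain; for \textit{(iii)} and \textit{(iv)}, $\varpi_{\lambda}f$ and $\tau_{x_{0}}f$ are the compositions of $f$ with the invertible affine maps $x\mapsto\lambda x$ and $x\mapsto x-x_{0}$, which preserve concavity, and the domains transform accordingly. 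All of this is the concave form of~\cite[Theorem~5.7]{Roc}.

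For \textit{(v)} I would establish the hypograph identity first and read everything else off it. Expanding the Minkowski sum,
\begin{displaymath}
\textup{hyp}(f)+\textup{hyp}(g)=\lbrace\,(y+z,\ s+t)\ \vert\ s\le f(y),\ t\le g(z)\,\rbrace,
\end{displaymath}
so a pair $(x,u)$ lies in it iff there is a splitting $x=y+z$ with $u\le f(y)+g(z)$, and this is exactly the condition $u\le(f\boxplus g)(x)$; hence $\textup{hyp}(f\boxplus g)=\textup{hyp}(f)+\textup{hyp}(g)$. Since a Minkowski sum of convex sets is convex, $f\boxplus g$ is concave, and applying the projection $p_{1}$ gives $\textup{dom}(f\boxplus g)=\textup{dom}(f)+\textup{dom}(g)\ne\emptyset$. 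It remains to see that $f\boxplus g$ is proper, i.e.\ never equal to $+\infty$: picking $y_{0}\in\textup{stab}(f)\cap\textup{stab}(g)$, one has $f(a)\le\langle y_{0},a\rangle+c_{f}$ and $g(b)\le\langle y_{0},b\rangle+c_{g}$ for suitable constants, whence $(f\boxplus g)(x)\le\langle y_{0},x\rangle+c_{f}+c_{g}$ for all $x$. This is the concave analogue of~\cite[Theorem~5.4]{Roc}; the one point requiring care is that the equivalence ``$u\le(f\boxplus g)(x)$'' $\Leftrightarrow$ ``$u\le f(y)+g(z)$ for some splitting'' uses attainment of the defining supremum at values where $f\boxplus g$ is finite, which is ensured in all cases of interest (e.g.\ once $f$ and $g$ are closed, so that the hypographs are closed, or once their domains are bounded) and is handled exactly as in~\cite{Roc}.

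For \textit{(vi)}, conicality and concavity of $\textup{rec}(f)$ are the concave forms of~\cite[Theorem~8.5 and Corollary~8.5.2]{Roc}; the mechanism is visible already from the elementary inequality $f(y+\lambda x)-f(y)\ge\lambda\big(f(y+x)-f(y)\big)$ for $0<\lambda\le1$ (concavity applied to $y+\lambda x=(1-\lambda)y+\lambda(y+x)$), which upon taking the infimum over $y\in\textup{dom}(f)$ gives $\textup{rec}(f)(\lambda x)\ge\lambda\,\textup{rec}(f)(x)$, while $\textup{rec}(f)(0)=\inf_{y}\big(f(y)-f(y)\big)=0$. Concavity is also transparent from the description of $\textup{rec}(f)$, for closed $f$, as the support function $\sup\lbrace\langle y,x\rangle\ \vert\ y\in\textup{stab}(f)\rbrace$ of the convex set $\textup{stab}(f)$, cf.~\cite[Theorem~13.3]{Roc}. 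Finally, when $f$ is closed the alternative formula $\textup{rec}(f)(x)=\lim_{\lambda\to+\infty}f(y_{0}+\lambda x)/\lambda$ holds with a limit independent of $y_{0}\in\textup{dom}(f)$, and $\textup{rec}(f)$ is then closed, again by~\cite[Theorem~8.5]{Roc}. The only real obstacle across the whole proposition is bookkeeping: transporting the sign conventions of~\cite{Roc} to the concave setting and keeping track of the effective domains, together with the attainment subtlety noted above for the sup-convolution.
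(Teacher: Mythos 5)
Your overall strategy — deducing everything from the corresponding convex statements in Rockafellar via $f\mapsto -f$, which is exactly what the paper's one-line proof asks of the reader — matches the paper, and items \textit{(i)}--\textit{(iv)} and \textit{(vi)} are fine modulo citation of Theorem~8.5~of~\cite{Roc} for conicality (your ``elementary inequality'' only yields $\textup{rec}(f)(\lambda x)\ge\lambda\,\textup{rec}(f)(x)$, not the reverse, so it is a heuristic, not a proof; deferring to Rockafellar is fine).

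The gap is in \textit{(v)}. The first step of your argument, that $(x,u)\in\textup{hyp}(f)+\textup{hyp}(g)$ iff there is a splitting $x=y+z$ with $u\le f(y)+g(z)$, is correct; but the second step, that this ``is exactly the condition $u\le(f\boxplus g)(x)$,'' is not. The condition $u\le\sup_{y+z=x}\big(f(y)+g(z)\big)$ does \emph{not} produce a splitting $y+z=x$ with $u\le f(y)+g(z)$ unless the supremum is attained. Your caveat points at this, but the proposed sufficient condition ``$f$ and $g$ closed, so the hypographs are closed'' is false: closedness of each hypograph does not make their Minkowski sum closed. A concrete counterexample with $d=1$: take $f(x)=-\sqrt{1+x^{2}}$ (closed, concave, $\textup{dom}(f)=\mathbb{R}$, $\textup{stab}(f)=[-1,1]$) and $g(x)=x$ (closed, concave, $\textup{stab}(g)=\{1\}$); then $\textup{stab}(f)\cap\textup{stab}(g)\ne\emptyset$, and one computes $(f\boxplus g)(x)=x+\sup_{y}\big(-\sqrt{1+y^{2}}-y\big)=x$, with the inner supremum equal to $0$ but not attained. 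Hence $(a,a)\in\textup{hyp}(f\boxplus g)$ for every $a$, yet $(a,a)\in\textup{hyp}(f)+\textup{hyp}(g)$ would force $\sqrt{1+y^{2}}+y\le 0$ for some $y$, which is impossible. So $\textup{hyp}(f)+\textup{hyp}(g)\subsetneq\textup{hyp}(f\boxplus g)$ here. What is true in general is the inclusion $\textup{hyp}(f)+\textup{hyp}(g)\subseteq\textup{hyp}(f\boxplus g)$, with equality when the supremum defining $f\boxplus g$ is attained at every $x\in\textup{dom}(f)+\textup{dom}(g)$ (for instance, $f$ and $g$ closed with compact effective domains, or piecewise affine with polyhedral domains — the cases actually used later in the paper). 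If you want a clean statement to prove, establish the inclusion and the identity $\textup{hyp}(\textup{cl}(f\boxplus g))=\textup{cl}\big(\textup{hyp}(f)+\textup{hyp}(g)\big)$, and note the exact identity under an attainment hypothesis; Rockafellar's Theorem~5.4 only asserts convexity and the domain formula, not the epigraph identity, and his Theorem~16.4 (which the paper uses for \ref{LF-prop}\textit{(ii)}) has the closure precisely because of this issue. In short: your argument for \textit{(v)} proves the inclusion, not the stated equality, and the sufficient condition you offer for equality is wrong; this should be flagged rather than swept under ``handled exactly as in Rockafellar,'' because Rockafellar does not prove the identity as stated.
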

	\begin{proof}
		Parts \textit{(i)} to \textit{(v)} follow immediately from the definition. The last part is proven in Theorem~8.5~of~\cite{Roc}.
	\end{proof}
	Another essential property of concavity is that it is preserved under pointwise limits and pointwise infima.
	\begin{prop}\label{pointwise-conv}
		Let $\lbrace f_i \rbrace_{i \in \mathbb{N}}$ and  $\lbrace g_i \rbrace_{i \in I}$ be a sequence and a collection of concave functions on $N_{\mathbb{R}}$ respectively. Then, the pointwise limit and the pointwise infimum
		\begin{displaymath}
			f(x) \coloneqq \lim_{i \in \mathbb{N}} f_i (x), \quad g(x) \coloneqq \inf_{i \in I} g_i (x)
		\end{displaymath}
		are concave functions whenever they exist and are not identically $- \infty$.
	\end{prop}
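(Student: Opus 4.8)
The plan is to verify the defining inequality of concavity directly, treating the pointwise infimum and the pointwise limit separately, and relying on the arithmetic conventions for $\mathbb{R}_{-\infty}$ recalled in Subsection~\ref{notation} (namely $-\infty < x$ and $-\infty + x = -\infty$, whence $t\cdot(-\infty) = -\infty$ for every $t > 0$). In both cases the hypotheses already guarantee that the resulting function is not identically $-\infty$ and is $\mathbb{R}_{-\infty}$-valued, so only the concavity inequality needs checking.

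For the pointwise infimum $g = \inf_{i \in I} g_i$, the cleanest route is through the hypograph characterization recalled above: a function is concave if and only if it is not identically $-\infty$ and its hypograph is convex. Since $(x,t) \in \textup{hyp}(g)$ if and only if $t \leq g_i(x)$ for every $i \in I$, one has the identity $\textup{hyp}(g) = \bigcap_{i \in I} \textup{hyp}(g_i)$. Each $\textup{hyp}(g_i)$ is convex by concavity of $g_i$, and an arbitrary intersection of convex subsets of $N_{\mathbb{R}} \times \mathbb{R}$ is convex; hence $\textup{hyp}(g)$ is convex and $g$ is concave.

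For the pointwise limit $f = \lim_{i \in \mathbb{N}} f_i$ the hypograph is no longer simply a limit of the sets $\textup{hyp}(f_i)$, so instead I would pass the concavity inequality to the limit. Fix $x, y \in N_{\mathbb{R}}$ and $0 < t < 1$. For every $i \in \mathbb{N}$,
\[
  f_i\bigl(t\,x + (1-t)\,y\bigr) \;\geq\; t\, f_i(x) + (1-t)\, f_i(y).
\]
If $f(x)$ and $f(y)$ are both finite, the right-hand side converges to $t\,f(x) + (1-t)\,f(y)$ and the left-hand side converges to $f\bigl(t\,x + (1-t)\,y\bigr)$ by hypothesis, so the inequality survives. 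If $f(x) = -\infty$ or $f(y) = -\infty$, then $t\, f_i(x) + (1-t)\, f_i(y) \to -\infty$ since $t, 1-t > 0$, while the left-hand side still converges to $f\bigl(t\,x + (1-t)\,y\bigr) \in \mathbb{R}_{-\infty}$, so the inequality holds trivially. Hence $f$ is concave.

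The argument is entirely elementary; the only point demanding a little care is the bookkeeping with the extended-real conventions, in particular ensuring that the degenerate cases where $f$ or $g$ takes the value $-\infty$ at $x$ or $y$ do not break the inequalities. No genuine obstacle is expected.
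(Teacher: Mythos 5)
Your proof is correct, and it takes the same elementary route as the paper, which simply records the statement with the one-line justification ``Follows easily from definitions.'' Both the hypograph identity $\textup{hyp}(g) = \bigcap_i \textup{hyp}(g_i)$ for the infimum and the passage-to-the-limit argument with the $\mathbb{R}_{-\infty}$ bookkeeping are precisely the details the paper leaves to the reader.
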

	\begin{proof}
		Follows easily from definitions.
	\end{proof}
	The most important tool in our study is the \textit{Legendre-Fenchel transform}. 
	\begin{dfn}\label{LF-dfn}
		Let $f \colon N_{\mathbb{R}} \rightarrow \mathbb{R}_{- \infty}$ be a concave function. The \textit{Legendre-Fenchel transform} of $f$ is the function $f^{\vee} \colon M_{\mathbb{R}} \rightarrow \mathbb{R}_{- \infty}$ given by
		\begin{displaymath}
			f^{\vee}(y) \coloneqq \inf_{x \in N_{\mathbb{R}}} \lbrace \langle y , x \rangle - f(x) \rbrace.
		\end{displaymath}
	\end{dfn}
	\begin{rem}\label{LF-duality}
		The Legendre-Fenchel transform $f^{\vee}$ of a concave function $f$ is always a closed concave function, satisfying $(f^{\vee})^{\vee}=\textup{cl}(f)$. If the function $f$ is closed, then $(f^{\vee})^{\vee}=f$. In this case, we call $f^{\vee}$ the \textit{dual} of $f$. Note that the stability set of $f$ is exactly the effective domain of $f^{\vee}$, namely $\textup{stab}(f) = \textup{dom}(f^{\vee})$. If $f$ is closed, we have $\textup{dom}(f)=\textup{stab}(f^{\vee})$.
	\end{rem}
	\begin{ex}\label{LF-constant}
		Let $c \in N_{\mathbb{R}}$ and $f  \colon  N_{\mathbb{R}} \rightarrow \mathbb{R}$ the constant function $c$. Then
		\begin{displaymath}
			f^{\vee}(y) = \inf_{x \in N_{\mathbb{R}}} \lbrace \langle y , x \rangle - c \rbrace = \begin{cases} - c ,& y = 0 \\ - \infty, &\textup{otherwise}. \end{cases}
		\end{displaymath}
	\end{ex}
	\begin{rem}\label{conical-polyhedron}
		The Legendre-Fenchel transform establishes a correspondence between closed convex sets and closed conical concave functions. Let $\Delta \subset M_{\mathbb{R}}$, its \textit{indicator function} $\iota_{\Delta} \colon  M_{\mathbb{R}}  \rightarrow \mathbb{R}_{- \infty} $ is given by
		\begin{displaymath}
			\iota_{\Delta}(y) \coloneqq \begin{cases} 0 & y \in \Delta \\ -\infty & \textup{else} \end{cases}.
		\end{displaymath}
		This is just the logarithm of its characteristic function. If $\Delta$ is convex, its \textit{support function} $\Psi_{\Delta} \colon  N_{\mathbb{R}} \rightarrow \mathbb{R}_{- \infty}$ is given by
		\begin{displaymath}
			\Psi_{\Delta}(x) \coloneqq \inf_{y \in \Delta} \lbrace \langle y, x \rangle \rbrace.
		\end{displaymath}
		Note that the function $\Psi_{\Delta}$ is a conical function and satisfies the relations $\iota_{\Delta}^{\vee} = \Psi_{\Delta}$ and $\Psi_{\Delta}^{\vee} = \textup{cl}(\iota_{\Delta}) = \iota_{\textup{cl}(\Delta)}$. If $\Delta$ is closed, so are $\iota_{\Delta}$ and $\Psi_{\Delta}$. Therefore, the Legendre-Fenchel transform induces a bijection between closed convex sets $\Delta \subset M_{\mathbb{R}}$ and conical closed concave functions $f \colon  N_{\mathbb{R}}  \rightarrow \mathbb{R}_{- \infty}$.
	\end{rem}
	\begin{ex}\label{example-ball}
		Let $\| \cdot \|$ be any norm on $N_{\mathbb{R}}$. The triangle inequality and conical property imply that the function $f(x) = - \| x \|$ is conical and concave. Since it is continuous, it is also closed. By Remark~\ref{conical-polyhedron}, its Legendre-Fenchel dual is determined by its stability set. Let $y \in M_{\mathbb{R}}$, for each $x \in N_{\mathbb{R}}$ write $x = \| x \| \cdot \hat{x}$, where $\hat{x}$ is an element of norm $1$. By multilinearity,
		\begin{displaymath}
			\langle y, x \rangle + \| x \| = \| x \| \cdot ( \langle y, \hat{x} \rangle + 1).
		\end{displaymath}
		By definition of the dual norm, $\| y \| = \sup_{ \hat{x} } \lbrace |\langle y, \hat{x} \rangle | \rbrace$. Thus, we obtain the inequality
		\begin{displaymath}
			1 - \| y \| \leq ( \langle y, \hat{x} \rangle + 1) \leq 1 + \|y \|.
		\end{displaymath}
		Since we are in a finite-dimensional Banach space, the above inequalities are sharp. Then,
		\begin{displaymath}
			\| x \| \cdot ( 1 - \| y \| ) \leq \langle y, x \rangle + \| x \|.
		\end{displaymath}
		Since $\| x \| \geq 0$ can be chosen to be arbitrarily large, the left-hand side is bounded below exactly when $0 \leq 1 - \| y \|$. Therefore, $\textup{stab}(f) = \overline{\textup{B}}(0,1)$.
	\end{ex}
	The following proposition lists essential properties of the Legendre-Fenchel transform.
	\begin{prop}\label{LF-prop}
		Let $f,\, g  \colon  N_{\mathbb{R}} \rightarrow \mathbb{R}_{- \infty}$ be concave functions. The following statements hold:
		\begin{enumerate}[label=(\roman*)]
			\item Assume that $\textup{stab}(f) \cap \textup{stab}(g) \neq \emptyset$, then $(f \boxplus g)^{\vee} = f^{\vee} + g^{\vee}$. The stability sets satisfy the relation $\textup{stab}(f \boxplus g) = \textup{stab}(f) \cap \textup{stab}(g)$.
			\item Assume that  $\textup{dom}(f) \cap \textup{dom}(g) \neq \emptyset$, then $(\textup{cl}(f)+\textup{cl}(g))^{\vee} = \textup{cl}(f^{\vee} \boxplus g^{\vee})$. If additionally $\textup{ri}(\textup{dom}(f)) \cap \textup{ri}(\textup{dom}(g)) \neq \emptyset$, then we can omit the closures in the equation above.
			\item The function $\textup{rec}(f^{\vee})$ is the support function of $\textup{dom}(f)$. If $f$ is closed, then $\textup{rec}(f)$ is the support function of $\textup{stab}(f)$.
			\item Let $\lambda \in \mathbb{R}_{>0}$, then $(\lambda \cdot f)^{\vee} = \varpi_{\lambda} f^{\vee}$ and $(\varpi_{\lambda} f)^{\vee} = \lambda f^{\vee}$.
			The stability sets satisfy $\textup{stab}(\lambda \cdot f) = \lambda \cdot \textup{stab}(f)$ and $\textup{stab}(\varpi_{\lambda} f ) = \textup{stab}(f)$.
			\item Let $x_0 \in N_{\mathbb{R}}$, then $(\tau_{x_0} f)^{\vee} = f^{\vee} + x_0$ and $\textup{stab}((\tau_{x_0} f)^{\vee}) = \textup{stab}(f)$.
			\item Assume $f(x) \leq g(x)$ for all $x \in N_{\mathbb{R}}$, then $f^{\vee}(y) \geq g^{\vee}(y)$ for all $y \in M_{\mathbb{R}}$. In particular, $\textup{stab}(g) \subset \textup{stab}(f)$. 
		\end{enumerate}
	\end{prop}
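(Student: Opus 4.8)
The plan is to verify the six properties one at a time, reducing each to an elementary manipulation of the defining infimum and invoking Rockafellar~\cite{Roc} only at the one point where a genuine separation argument intervenes. I would begin with \textit{(i)}, which is the cornerstone. Unwinding the definitions and exchanging the order of the two optimizations,
\[
(f\boxplus g)^\vee(y)=\inf_{x}\Bigl(\langle y,x\rangle-\sup_{u+v=x}\bigl(f(u)+g(v)\bigr)\Bigr)=\inf_{u}\bigl(\langle y,u\rangle-f(u)\bigr)+\inf_{v}\bigl(\langle y,v\rangle-g(v)\bigr)=f^\vee(y)+g^\vee(y),
\]
the only subtlety being that the split infimum is $>-\infty$ precisely when each factor is, which also yields $\textup{stab}(f\boxplus g)=\textup{dom}((f\boxplus g)^\vee)=\textup{dom}(f^\vee)\cap\textup{dom}(g^\vee)=\textup{stab}(f)\cap\textup{stab}(g)$, using Remark~\ref{LF-duality} and the fact that $(f\boxplus g)^\vee$ is closed.

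For \textit{(ii)} I would apply \textit{(i)} to the pair $f^\vee,g^\vee$ — whose stability sets meet, since $\textup{stab}(f^\vee)\cap\textup{stab}(g^\vee)=\textup{dom}(f)\cap\textup{dom}(g)\neq\emptyset$ after closing, again by Remark~\ref{LF-duality} — obtaining $(f^\vee\boxplus g^\vee)^\vee=f^{\vee\vee}+g^{\vee\vee}=\textup{cl}(f)+\textup{cl}(g)$, and then take the Legendre--Fenchel transform of both sides, using $(\,\cdot\,)^{\vee\vee}=\textup{cl}(\,\cdot\,)$ from Remark~\ref{LF-duality}. The sharp form, that the closure on the right may be dropped as soon as $\textup{ri}(\textup{dom}(f))\cap\textup{ri}(\textup{dom}(g))\neq\emptyset$, is exactly Theorem~16.4 of~\cite{Roc}; I would cite it rather than reprove it, as it rests on a relative-interior separation lemma that is not worth reproducing. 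Part \textit{(vi)} is immediate: $f\le g$ forces $\langle y,x\rangle-f(x)\ge\langle y,x\rangle-g(x)$ pointwise, hence $f^\vee\ge g^\vee$ after taking infima, and then $g^\vee(y)>-\infty$ implies $f^\vee(y)>-\infty$, i.e. $\textup{stab}(g)=\textup{dom}(g^\vee)\subset\textup{dom}(f^\vee)=\textup{stab}(f)$.

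Parts \textit{(iv)} and \textit{(v)} I would dispatch with a single change of variables in the defining infimum (substituting $\lambda x$, resp.\ $x+x_0$, for $x$), reading off the stability-set assertions from $\textup{dom}(h^\vee)=\textup{stab}(h)$. Finally, for \textit{(iii)} I would use that $f^\vee$ is always closed, so by part \textit{(vi)} of Proposition~\ref{prop-conv} its recession function has the limit description $\textup{rec}(f^\vee)(y)=\lim_{\lambda\to+\infty}f^\vee(y_0+\lambda y)/\lambda$ with $y_0\in\textup{dom}(f^\vee)=\textup{stab}(f)$; comparing this against the support function $\Psi_{\textup{dom}(f)}$ of Remark~\ref{conical-polyhedron} gives the first claim (this is Theorem~13.3 of~\cite{Roc}, where the closedness of $\textup{rec}(f^\vee)$ kills the closure present in the general statement), and applying the same statement to the closed function $f^\vee$, for which $\textup{dom}(f^\vee)=\textup{stab}(f)$ and $f^{\vee\vee}=f$, yields the second. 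The only real obstacle here is the relative-interior hypothesis in \textit{(ii)}; everything else is a formal consequence of the definition and of biduality, so the write-up is essentially bookkeeping of effective domains and stability sets.
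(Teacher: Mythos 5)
Your proof is correct and, in substance, the same as the paper's, which simply cites Theorem~16.4 of Rockafellar for parts \textit{(i)}--\textit{(ii)}, Theorem~13.3 for \textit{(iii)}, and declares \textit{(iv)}--\textit{(vi)} immediate from the definitions. The only real deviation is that you unwind \textit{(i)} directly by splitting the double infimum rather than quoting 16.4 as a black box, and then recover the first equality of \textit{(ii)} by conjugating the result of \textit{(i)} applied to $f^\vee,g^\vee$; this is in fact precisely how Rockafellar establishes the two halves of his Theorem~16.4, so you have reproduced the content of the cited theorem rather than taken a genuinely different route. One small inaccuracy worth flagging: $\textup{stab}(f^\vee)=\textup{dom}((f^\vee)^\vee)=\textup{dom}(\textup{cl}(f))$ \emph{contains} $\textup{dom}(f)$ but need not equal it when $f$ is not closed, so the displayed equality $\textup{stab}(f^\vee)\cap\textup{stab}(g^\vee)=\textup{dom}(f)\cap\textup{dom}(g)$ should be an inclusion $\supseteq$; nonemptiness still follows, and your hedge ``after closing'' suggests you are aware of this. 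Parts \textit{(iii)}--\textit{(vi)} match the paper's treatment exactly.
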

	\begin{proof}
		Parts \textit{(i)} and \textit{(ii)} follow from Theorem~16.4~of~\cite{Roc}. Part \textit{(iii)} follows from Theorem 13.3~of~\cite{Roc}, and the remaining parts follow easily from the definition.
	\end{proof}
	Note that the Legendre-Fenchel transform maps increasing sequences into decreasing sequences and vice versa. We are combining parts \textit{(ii)} and \textit{(vi)} above with Example~\ref{LF-constant} to get the following corollary.
	\begin{cor}\label{LF-conv}
		Let $\lbrace f_i \rbrace_{i \in \mathbb{N}}$ be a sequence concave functions on $N_{\mathbb{R}}$. The following statements hold:
		\begin{enumerate}[label=(\roman*)]
			\item If $\lbrace f_i \rbrace_{i \in \mathbb{N}}$ converges uniformly to $f$, then $\lbrace f^{\vee}_{i} \rbrace_{i \in \mathbb{N}}$ converges uniformly to $f^{\vee}$.
			\item Assume that  $\lbrace f_i \rbrace_{i \in \mathbb{N}}$ is increasing and it converges pointwise to $f$. Then $\lbrace f^{\vee}_{i} \rbrace_{i \in \mathbb{N}}$ is decreasing and converges pointwise to $f^{\vee}$.
		\end{enumerate}
	\end{cor}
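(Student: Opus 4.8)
The plan is to derive both statements from the order-reversing property of the Legendre--Fenchel transform (Proposition~\ref{LF-prop}\textit{(vi)}) together with the explicit behaviour under addition of a constant. For the latter, recall that if $c\in\mathbb{R}$ and $g\equiv c$ is the constant function on $N_{\mathbb{R}}$, then $\textup{dom}(g)=N_{\mathbb{R}}$, so the relative-interior hypothesis in Proposition~\ref{LF-prop}\textit{(ii)} is automatically satisfied for any concave $f$, giving $(f+c)^{\vee}=f^{\vee}\boxplus g^{\vee}$. By Example~\ref{LF-constant}, $g^{\vee}$ equals $-c$ at the origin and $-\infty$ elsewhere, so the supremum defining the sup-convolution is attained at $z=0$ and $f^{\vee}\boxplus g^{\vee}=f^{\vee}-c$. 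Thus $(f+c)^{\vee}=f^{\vee}-c$, and since adding a constant does not affect whether $\langle y,\cdot\rangle-f$ is bounded below, we also get $\textup{stab}(f+c)=\textup{stab}(f)$.

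For part \textit{(i)}, I would set $\varepsilon_i\coloneqq\|f_i-f\|_{\infty}$, so that $\varepsilon_i\to0$ and $f-\varepsilon_i\le f_i\le f+\varepsilon_i$ pointwise on $N_{\mathbb{R}}$. Applying Proposition~\ref{LF-prop}\textit{(vi)} to these two inequalities, together with the constant-shift identity above, yields $f^{\vee}-\varepsilon_i\le f_i^{\vee}\le f^{\vee}+\varepsilon_i$; moreover the same inequalities and Proposition~\ref{LF-prop}\textit{(vi)} force $\textup{stab}(f_i)=\textup{stab}(f)$, so all of $f^{\vee},f_i^{\vee}$ share the effective domain $\textup{stab}(f)$ and the estimate is meaningful. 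Consequently $\|f_i^{\vee}-f^{\vee}\|_{\infty}\le\varepsilon_i\to0$.

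For part \textit{(ii)}, monotonicity of $\{f_i\}_{i\in\mathbb{N}}$ and Proposition~\ref{LF-prop}\textit{(vi)} give at once that $\{f_i^{\vee}\}_{i\in\mathbb{N}}$ is decreasing, and $f_i\le f$ (the pointwise limit of an increasing sequence being its supremum) gives $f_i^{\vee}\ge f^{\vee}$; hence the decreasing sequence $\{f_i^{\vee}\}$ is bounded below by $f^{\vee}$ and converges pointwise to a concave function $h\coloneqq\inf_i f_i^{\vee}\ge f^{\vee}$ (concavity by Proposition~\ref{pointwise-conv}). It remains to show $h\le f^{\vee}$: fixing $y\in M_{\mathbb{R}}$ and $\varepsilon>0$, choose $x_0\in N_{\mathbb{R}}$ with $\langle y,x_0\rangle-f(x_0)<f^{\vee}(y)+\varepsilon$ (taking the left-hand side arbitrarily negative when $f^{\vee}(y)=-\infty$); since $f_i(x_0)\uparrow f(x_0)$, for $i$ large we get $\langle y,x_0\rangle-f_i(x_0)<f^{\vee}(y)+2\varepsilon$, whence $f_i^{\vee}(y)\le f^{\vee}(y)+2\varepsilon$, and therefore $h(y)\le f^{\vee}(y)$.

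The main obstacle is really just bookkeeping with $\pm\infty$: one must check that in part \textit{(i)} the stability sets of all the $f_i$ and of $f$ coincide, so that the uniform-norm statement about the duals is well posed, and in part \textit{(ii)} one must handle the case $f^{\vee}(y)=-\infty$ separately in the $\varepsilon$-argument. Both points are routine once everything is phrased via the sandwich inequalities and the order-reversal of Proposition~\ref{LF-prop}\textit{(vi)}.
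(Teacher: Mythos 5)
Your proof of part \textit{(i)} is essentially the paper's argument: sandwich $f$ between $f_i\pm\varepsilon_i$, dualize using order-reversal and the constant-shift identity $(f+c)^{\vee}=f^{\vee}-c$, and read off $\|f_i^{\vee}-f^{\vee}\|_{\infty}\le\varepsilon_i$. Your added remark that the sandwich forces $\textup{stab}(f_i)=\textup{stab}(f)$ (for $i$ large enough that $\varepsilon_i<\infty$) is a useful observation the paper leaves implicit, since it explains why the uniform bound on the duals is well posed on a common effective domain.

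For part \textit{(ii)} you take a genuinely different route. The paper sets $g\coloneqq\lim_i f_i^{\vee}$ (closed concave, being a decreasing infimum of closed concave functions), then dualizes the chain $f_i^{\vee}\ge g\ge f^{\vee}$ to get $f_i\le g^{\vee}\le f$, passes to the limit to force $g^{\vee}=f$, and concludes $g=f^{\vee}$ via biduality $(g^{\vee})^{\vee}=g$. You instead argue directly from the definition of the transform: the infimand $\langle y,x_0\rangle - f_i(x_0)$ decreases to $\langle y,x_0\rangle - f(x_0)$ pointwise in $x_0$, so for any near-minimizer $x_0$ of $f^{\vee}(y)$ and any $\varepsilon>0$ one gets $f_i^{\vee}(y)\le f^{\vee}(y)+2\varepsilon$ for $i$ large. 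Both arguments are correct under the standing assumption (implicit in "concave functions on $N_{\mathbb{R}}$") that the $f_i$ and $f$ are finite-valued, which by Proposition~\ref{cont-1} makes them closed and lets the paper drop closures in its dualization step. Your version is more elementary — it does not invoke the biduality theorem or the closedness of the decreasing pointwise limit — at the cost of the explicit case split for $f^{\vee}(y)=-\infty$, which you handle correctly. The paper's version is shorter once the convex-analytic apparatus is in place. Either is acceptable.
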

	\begin{proof}
		This is a standard result in convex analysis. We include a proof to illustrate the type of argument that will appear later in this section. Let us start with \textit{(i)}. Let $\varepsilon >0$ and $i_0$ such that for all $i \geq i_0$ we have
		\begin{displaymath}
			f_i - \varepsilon \leq f \leq f_i + \varepsilon.
		\end{displaymath}
		Using that $(f_i + c )^{\vee} = f^{\vee}_{i} - c$ combined with part \textit{(vi)} of Proposition~\ref{LF-prop} above, we get
		\begin{displaymath}
			f^{\vee}_{i} - \varepsilon \leq f^{\vee} \leq f^{\vee}_{i} + \varepsilon.
		\end{displaymath}
		The uniform convergence follows.
		
		Now, we prove \textit{(ii)}. If $\lbrace f_{i} \rbrace_{i \in \mathbb{N}}$ is increasing, by part \textit{(vi)} of Proposition~\ref{LF-conv} we have $f^{\vee}_{i} \geq f^{\vee}$. Let $g$ be the pointwise limit of the sequence $\lbrace f^{\vee}_{i} \rbrace_{i \in \mathbb{N}}$. Then, $f^{\vee}_{i} \geq g \geq f^{\vee}$. By duality, we obtain $f_i \leq g^{\vee} \leq f$. Taking the limit over $i$, we obtain $g^{\vee} =f$. Since $g$ is a closed concave function, we get $g=f^{\vee}$.
	\end{proof}
	We finish this section by recalling some results relating the boundedness of concave functions to their stability sets. 
	\begin{prop}\label{bounds-concave}
		Let $f  \colon  N_{\mathbb{R}} \rightarrow \mathbb{R}_{- \infty }$ be a closed concave function. Then:
		\begin{enumerate}[label=(\roman*)]
			\item The function $f$ is bounded above if and only if $0 \in \textup{stab} (f)$. Moreover,
			\begin{displaymath}
				\sup_{x \in N_{\mathbb{R}}} f(x) = - f^{\vee}(0).
			\end{displaymath}
			\item Let $C \subset \textup{dom}(f)$ be convex. If $f|_{C}$ attains its infimum in $\textup{ri}(C)$, it is constant on $C$.
			\item Let $C \subset \textup{ri}( \textup{dom}(f))$ be compact convex, then $f$ attains its minimum at an extreme point of $C$. 
		\end{enumerate}
	\end{prop}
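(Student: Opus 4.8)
The plan is to dispatch the three assertions by reducing each to the one-variable concavity inequality, after recording the relevant finiteness. For \textit{(i)}, note that by definition $0\in\textup{stab}(f)$ means exactly that $\langle 0,\cdot\rangle-f=-f$ is bounded below on $N_{\mathbb{R}}$, i.e.\ that $f$ is bounded above; and unwinding Definition~\ref{LF-dfn} gives $f^{\vee}(0)=\inf_{x\in N_{\mathbb{R}}}\{\langle 0,x\rangle-f(x)\}=-\sup_{x\in N_{\mathbb{R}}}f(x)$, which is the stated identity (the supremum being $+\infty$ precisely when $0\notin\textup{stab}(f)$). This part needs nothing beyond the definitions, and could also be read off from~\cite{Roc}.

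For \textit{(ii)}, write $m\coloneqq\inf_{x\in C}f(x)$ and let $x_{0}\in\textup{ri}(C)$ be a point where this infimum is attained; since $C\subset\textup{dom}(f)$ we have $m=f(x_{0})\in\mathbb{R}$. Given any $y\in C$, the relative-interior hypothesis produces $\varepsilon>0$ with $z\coloneqq x_{0}+\varepsilon(x_{0}-y)\in C$, so that $x_{0}$ is the proper convex combination $\tfrac{1}{1+\varepsilon}z+\tfrac{\varepsilon}{1+\varepsilon}y$. Concavity then yields $m=f(x_{0})\ge\tfrac{1}{1+\varepsilon}f(z)+\tfrac{\varepsilon}{1+\varepsilon}f(y)\ge m$, and since both $f(y)$ and $f(z)$ are $\ge m$ the outer inequality must be an equality, forcing $f(y)=m$. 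Hence $f$ is constant on $C$.

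For \textit{(iii)}, the extra hypothesis $C\subset\textup{ri}(\textup{dom}(f))$ makes $f|_{C}$ continuous by Proposition~\ref{cont-1}, so $f$ attains its minimum over the compact set $C$; let $F\subset C$ be the (nonempty, closed, convex) locus of minimizers. The crux is that $F$ is a \emph{face} of $C$: if $L\subset C$ is a closed segment with $\textup{ri}(L)\cap F\ne\emptyset$, then applying \textit{(ii)} to $f|_{L}$ shows $f$ is constant on $L$ with value $\min_{C}f$, whence $L\subset F$. A nonempty compact convex subset of the finite-dimensional space $N_{\mathbb{R}}$ has an extreme point; choose an extreme point $x_{1}$ of $F$. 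Because $F$ is a face of $C$, $x_{1}$ is an extreme point of $C$ as well: if $x_{1}$ lay in the relative interior of a segment contained in $C$, that segment would be contained in $F$ by the face property, contradicting extremality of $x_{1}$ in $F$. Thus $f$ attains its minimum on $C$ at the extreme point $x_{1}$.

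I expect the only genuinely delicate point to be the identification in \textit{(iii)} of the minimizer set $F$ with a face of $C$, together with the passage from ``extreme in $F$'' to ``extreme in $C$''; the rest is the elementary convex-combination estimate of \textit{(ii)} (where one must not forget to check that the attained infimum is finite, which is where $C\subset\textup{dom}(f)$ is used) and the tautological computation in \textit{(i)}.
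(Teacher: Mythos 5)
Your arguments for \textit{(i)} and \textit{(ii)} are correct and give a self-contained route where the paper simply cites Theorems~27.1 and~32.1 of~\cite{Roc}; in particular the convex-combination estimate in~\textit{(ii)} is exactly right, including the remark that $C\subset\textup{dom}(f)$ is what keeps the attained infimum finite.

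Part~\textit{(iii)} has a genuine gap. You assert that the locus $F$ of minimizers is convex, hence a face of $C$, and then pick an extreme point of~$F$. But the minimizer set of a \emph{concave} function over a convex set need not be convex: with $d=1$, $f(x)=-|x|$ and $C=[-1,1]$, the minimum is $-1$ and $F=\{-1,1\}$, which is not convex and so is not a face under the paper's definition (which builds convexity into the notion). Your segment argument does establish the \emph{face property} of $F$ (any segment $L\subset C$ whose relative interior meets $F$ lies in $F$), but that is weaker than being a face, and ``extreme point of $F$'' is then undefined. The most economical repair keeps your use of~\textit{(ii)}: fix any $x_{0}\in F$ and let $G$ be the unique face of $C$ with $x_{0}\in\textup{ri}(G)$ (Theorem~18.2 of~\cite{Roc}); since $\min_{G}f=\min_{C}f$ is attained at $x_{0}\in\textup{ri}(G)$, part~\textit{(ii)} applied to $G$ shows $f$ is constant on $G$, so $G\subset F$, and $G$ is a genuine (compact, convex) face whose extreme points are extreme in $C$ by the argument you already gave. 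Alternatively one can bypass faces entirely via Minkowski's theorem: write any $x_{0}\in F$ as a finite convex combination $x_{0}=\sum\lambda_{i}e_{i}$ of extreme points of $C$ with $\lambda_{i}>0$; concavity gives $m=f(x_{0})\ge\sum\lambda_{i}f(e_{i})\ge m$, forcing $f(e_{i})=m$ for every~$i$.
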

	\begin{proof}
		This is shown in Theorems~27.1,~32.1,~and Corollary~32.3.2~of~\cite{Roc} respectively.
	\end{proof}
	\begin{rem}
		The condition $C \subset \textup{ri}( \textup{dom}(f))$ in part \textit{(iii)} of the above proposition cannot be relaxed. There exist closed concave functions with compact effective domains that are not bounded, or for which the infimum is not attained. For examples, see~\cite{Roc}~p. 345--345. If the effective domain of a concave function is a polytope, then the function is bounded and attains its minimum (see~Corollary~32.3.4~of~\cite{Roc}). 
	\end{rem}
	\subsection{The space of conical functions and sublinear functions}
	This subsection studies globally Lipschitz concave functions on $N_{\mathbb{R}}$. Its main goal is to realize the set of all such functions on $N_{\mathbb{R}}$ as a closed cone in a Banach space, which consists of all continuous functions that can be written as the sum of a conical continuous function and a sublinear continuous function. We start by recalling the definition of globally Lipschitz.
	\begin{dfn} 
		A function $f \colon  N_{\mathbb{R}}\rightarrow \mathbb{R}$ is \textit{Lipschitz} on $S \subset N_{\mathbb{R}}$ if there exist a constant $C$, called a \textit{Lipschitz constant}, such that for every $x,y \in S$ we have $	|f(x) - f(y)| \leq C \cdot \| x - y \|$. If $S=N_{\mathbb{R}}$, we say that $f$ is \textit{globally Lipschitz}.
	\end{dfn}
	The Lipschitz condition on a concave function $f \colon N_{\mathbb{R}} \rightarrow \mathbb{R}$ can be characterized in terms of the conical concave function $\textup{rec}(f)$, or in terms of its Legendre-Fenchel dual $f^{\vee}$. 
	\begin{prop}\label{Lipschitz}
		Let $f \colon N_{\mathbb{R}} \rightarrow \mathbb{R}_{-\infty}$ be concave. The following are equivalent:
		\begin{enumerate}[label=(\roman*)]
			\item The function $f$ is globally Lipschitz.
			\item The function $f$ satisfies $\textup{dom}(f)=\textup{dom}(\textup{rec}(f)) = N_{\mathbb{R}}$.
			\item The function $f$ satisfies $\textup{dom}(f) = N_{\mathbb{R}}$ and $\textup{stab}(f)$ is bounded.
		\end{enumerate}
		Moreover, the minimal Lipschitz constant is given by $C = \sup \lbrace \| y \| \, \vert \, y \in \textup{stab}(f) \rbrace.$
	\end{prop}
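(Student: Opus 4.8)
The plan is to prove the cycle $(i)\Rightarrow(iii)\Rightarrow(ii)\Rightarrow(iii)\Rightarrow(i)$ and to extract the sharp constant along the way. Set $C_{\mathrm{st}} := \sup\{\|y\| : y\in\textup{stab}(f)\}\in[0,+\infty]$, and recall from Remark~\ref{LF-duality} that $\textup{stab}(f)=\textup{dom}(f^\vee)$, which is a nonempty convex set on which $f^\vee$ is real-valued.

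First I would establish $(i)\Rightarrow(iii)$ together with the lower bound $C_{\mathrm{st}}\le C$ for every Lipschitz constant $C$. If $f$ is globally Lipschitz it is in particular real-valued, so $\textup{dom}(f)=N_{\mathbb{R}}$. Let $y\in\textup{stab}(f)$; by definition there is $b\in\mathbb{R}$ with $f(x)\le\langle y,x\rangle+b$ for all $x$. Evaluating at $x=-t\hat x$ for a unit vector $\hat x$ and $t>0$, and combining with the Lipschitz bound $f(-t\hat x)\ge f(0)-Ct$, gives $f(0)-Ct\le -t\langle y,\hat x\rangle+b$, hence $\langle y,\hat x\rangle\le C+(b-f(0))/t$; letting $t\to\infty$ yields $\langle y,\hat x\rangle\le C$ for every unit vector $\hat x$, i.e.\ $\|y\|\le C$. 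Thus $\textup{stab}(f)$ is bounded and $C_{\mathrm{st}}\le C$.

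Next I would handle $(ii)\Leftrightarrow(iii)$ under their common hypothesis $\textup{dom}(f)=N_{\mathbb{R}}$. In that case $f$ is continuous, hence closed, by Proposition~\ref{cont-1}, so Proposition~\ref{LF-prop}\textit{(iii)} identifies $\textup{rec}(f)$ with the support function $\Psi_{\textup{stab}(f)}$ (a conical concave function by Proposition~\ref{prop-conv}\textit{(vi)}). Now $\textup{dom}(\textup{rec}(f))=N_{\mathbb{R}}$ is precisely the assertion that $\Psi_{\textup{stab}(f)}$ is everywhere finite, and the support function of a nonempty closed convex set is finite at every point if and only if that set is bounded: one direction is the estimate $\Psi_{\Delta}(x)\ge -\|x\|\sup_{y\in\Delta}\|y\|$, and for the converse an unbounded $\textup{cl}(\textup{stab}(f))$ contains a half-line $y_0+\mathbb{R}_{\ge 0}v$ with $v\ne 0$, so choosing $x_0$ with $\langle v,x_0\rangle<0$ forces $\textup{rec}(f)(x_0)=-\infty$. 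This yields $(ii)\Leftrightarrow(iii)$.

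Finally I would prove $(iii)\Rightarrow(i)$ with $C_{\mathrm{st}}$ serving as a Lipschitz constant, which together with the first step pins down the minimal constant. Under $(iii)$, $f$ is closed, so $f=(f^\vee)^\vee$ by Remark~\ref{LF-duality}; concretely $f(x)=\inf_{y\in\textup{dom}(f^\vee)}\{\langle y,x\rangle-f^\vee(y)\}$, the infimum running over the nonempty bounded set $\textup{dom}(f^\vee)=\textup{stab}(f)\subseteq\overline{\textup{B}}(0,C_{\mathrm{st}})$. For $x_1,x_2\in N_{\mathbb{R}}$ the elementary inequality $|\inf_y a(y)-\inf_y b(y)|\le\sup_y|a(y)-b(y)|$ then gives $|f(x_1)-f(x_2)|\le\sup_{y\in\textup{stab}(f)}|\langle y,x_1-x_2\rangle|\le C_{\mathrm{st}}\|x_1-x_2\|$ by definition of the dual norm. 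Hence $f$ is globally Lipschitz with constant $C_{\mathrm{st}}$, and combining with the first step the minimal Lipschitz constant equals $C_{\mathrm{st}}$. The only mildly delicate point is the fact that the support function of a nonempty closed convex set is everywhere finite exactly when the set is bounded; I would either invoke the standard recession-cone (barrier-cone) description in \cite{Roc} or record the short half-line argument sketched above. Everything else is a direct application of the Legendre–Fenchel formalism already assembled in the preceding subsection.
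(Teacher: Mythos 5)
Your argument is correct, and it is genuinely different from the paper's. The paper proves this proposition in one line by citing Rockafellar (Theorem~10.5 and Corollary~13.3.3 of~\cite{Roc}), whereas you rederive the equivalences from the Legendre--Fenchel formalism already set up in the subsection. Concretely: you read the Lipschitz constant off the stability set by evaluating the linear upper bound $f(x)\le\langle y,x\rangle+b$ along rays and letting $t\to\infty$; you identify $\textup{rec}(f)$ with $\Psi_{\textup{stab}(f)}$ via Proposition~\ref{LF-prop}\textit{(iii)} once closedness is guaranteed by Proposition~\ref{cont-1}, and translate finiteness of a support function into boundedness of the set by the half-line argument; and you recover the Lipschitz bound from the representation $f=(f^\vee)^\vee$ together with the inequality $|\inf a-\inf b|\le\sup|a-b|$. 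Each step is sound, including the observation that a globally Lipschitz concave function is necessarily real-valued (otherwise the Lipschitz inequality fails against any finite value), which licenses $\textup{dom}(f)=N_{\mathbb{R}}$ in $(i)\Rightarrow(iii)$. What your approach buys is a self-contained proof using only the duality machinery the paper has already introduced, and it incidentally produces the sharp constant $C_{\mathrm{st}}$ in both directions rather than quoting it; what the paper's approach buys is brevity. The only point worth spelling out in a final write-up is the standard fact that a nonempty unbounded closed convex set in $M_{\mathbb{R}}$ contains a half-line (i.e.\ has nontrivial recession cone), which you correctly flag as either a citation to~\cite{Roc} or a short argument.
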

	\begin{proof}
		This combines Theorem~10.5 and Corollary~13.3.3~of~\cite{Roc}.
	\end{proof}
	We will study the connections between the Lipschitz condition, the Legendre-Fenchel transform, and the recession function. Let us rephrase Proposition~\ref{Lipschitz} as follows.
	\begin{prop}\label{conical-compact}
		The Legendre-Fenchel transform induces the following bijections:
		\begin{enumerate}[label=(\roman*)]
			\item The map given by $\Psi \mapsto \Delta = \textup{stab}(\Psi)$ is a bijection between the set of conical concave functions $\Psi \colon  N_{\mathbb{R}} \rightarrow \mathbb{R}$ and the set of compact convex sets $\Delta \subset M_{\mathbb{R}}$.
			\item The map given by $f \mapsto g= f^{\vee}$ is a bijection between the set of globally Lipschitz concave functions $f \colon N_{\mathbb{R}} \rightarrow \mathbb{R}$ and the set of closed concave functions $g \colon  M_{\mathbb{R}} \rightarrow \mathbb{R}_{- \infty}$ with $\textup{dom}(g)$ bounded.
		\end{enumerate}
		Moreover, the condition $\textup{dom}(\textup{rec}(f)) = N_{\mathbb{R}}$ is equivalent to the existence of a positive constant $C$ such that $|f(x)| \leq C (1 + \| x \|)$ for all $x \in N_{\mathbb{R}}$.
	\end{prop}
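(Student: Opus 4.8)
The plan is to reduce all three claims to the Legendre--Fenchel dictionary of Remark~\ref{conical-polyhedron}, the Lipschitz characterization of Proposition~\ref{Lipschitz}, and the elementary duality identities in Remark~\ref{LF-duality}, isolating one genuinely new ingredient. \emph{Part~(i).} A conical concave $\Psi\colon N_{\mathbb R}\to\mathbb R$ is finite everywhere, so $\textup{dom}(\Psi)=N_{\mathbb R}$ and Proposition~\ref{cont-1} makes it continuous, hence closed; by Remark~\ref{conical-polyhedron}, $\Psi=\Psi_\Delta$ for a unique closed convex $\Delta\subseteq M_{\mathbb R}$, and unwinding the identities there gives $\textup{stab}(\Psi)=\textup{dom}(\Psi^\vee)=\textup{dom}(\iota_{\textup{cl}(\Delta)})=\Delta$, so $\Psi\mapsto\textup{stab}(\Psi)$ is the inverse of $\Delta\mapsto\Psi_\Delta$. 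It then remains to match the finiteness conditions: if $\Delta$ is compact then $\Psi_\Delta(x)=\inf_{y\in\Delta}\langle y,x\rangle$ is finite; conversely, a conical function satisfies $\Psi(0)=0$ and, being concave, is superadditive, so $\textup{rec}(\Psi)=\Psi$ and $\textup{dom}(\textup{rec}(\Psi))=N_{\mathbb R}$, whence Proposition~\ref{Lipschitz} forces $\textup{stab}(\Psi)=\Delta$ to be bounded, i.e.\ compact.

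\emph{Part~(ii).} If $f$ is globally Lipschitz concave, Proposition~\ref{Lipschitz} gives $\textup{dom}(f)=N_{\mathbb R}$ and $\textup{stab}(f)$ bounded, Proposition~\ref{cont-1} makes $f$ closed, and $g\coloneqq f^\vee$ is then closed concave (Remark~\ref{LF-duality}) with $\textup{dom}(g)=\textup{stab}(f)$ bounded. Conversely, given a closed concave $g\colon M_{\mathbb R}\to\mathbb R_{-\infty}$ with $\textup{dom}(g)$ bounded, I set $f\coloneqq g^\vee$; it is closed concave with $\textup{stab}(f)=\textup{stab}(g^\vee)=\textup{dom}(g)$ bounded, and $\textup{dom}(f)=\textup{dom}(g^\vee)=\textup{stab}(g)$. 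The crux is $\textup{stab}(g)=N_{\mathbb R}$: since $\textup{dom}(g)$ is bounded, $\langle y,x\rangle$ is bounded on it for each fixed $x$, so $x\in\textup{stab}(g)$ is equivalent to $g$ being bounded above, which reduces the claim to the lemma that a closed concave function with bounded effective domain is bounded above. Granting it, $\textup{dom}(f)=N_{\mathbb R}$, so $f$ is real-valued and, by Proposition~\ref{Lipschitz}, globally Lipschitz; closedness of $f$ and $g$ gives $f^\vee=g$, so the two maps are mutually inverse.

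This boundedness lemma is the step I expect to require real care, precisely because the remark after Proposition~\ref{bounds-concave} warns that such a $g$ may fail to be bounded below and may fail to attain its infimum: the argument must exploit that it is the \emph{supremum} we control and that $g$, being closed, is upper semicontinuous. Concretely, I would argue by contradiction --- if $g$ is unbounded above, pick $y_n\in\textup{dom}(g)$ with $g(y_n)\to+\infty$ and, using boundedness of $\textup{dom}(g)$, pass to a subsequence with $y_n\to y^\ast$; whether or not $y^\ast\in\textup{dom}(g)$ one has $g(y^\ast)<+\infty$, and upper semicontinuity yields the contradiction $+\infty=\limsup_n g(y_n)\le g(y^\ast)$ (for $y^\ast$ in the relative interior of $\textup{dom}(g)$ one may instead use continuity from Proposition~\ref{cont-1}).

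\emph{The ``moreover'' clause.} I would first observe that $\textup{dom}(\textup{rec}(f))=N_{\mathbb R}$ already forces $\textup{dom}(f)=N_{\mathbb R}$: if $x\in\textup{dom}(\textup{rec}(f))$ then $f(x+y)-f(y)\ge\textup{rec}(f)(x)>-\infty$ for all $y\in\textup{dom}(f)$, so $x+\textup{dom}(f)\subseteq\textup{dom}(f)$, and letting $x$ range over $N_{\mathbb R}$ gives $\textup{dom}(f)=N_{\mathbb R}$. Then Proposition~\ref{Lipschitz} makes $f$ globally Lipschitz with some constant $C_0$, so $|f(x)|\le|f(0)|+C_0\|x\|\le C(1+\|x\|)$ with $C=\max(|f(0)|,C_0)$. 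Conversely, if $|f(x)|\le C(1+\|x\|)$ then $f$ is finite everywhere, hence continuous and closed by Proposition~\ref{cont-1}, so $\textup{rec}(f)(x)=\lim_{\lambda\to\infty}f(y_0+\lambda x)/\lambda$ by Proposition~\ref{prop-conv}(vi), and the estimate $|f(y_0+\lambda x)/\lambda|\le C(1/\lambda+\|y_0/\lambda+x\|)\to C\|x\|$ shows $|\textup{rec}(f)(x)|\le C\|x\|<\infty$ for every $x$, i.e.\ $\textup{dom}(\textup{rec}(f))=N_{\mathbb R}$.
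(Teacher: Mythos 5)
Your argument is correct and tracks the paper's, but it is noticeably more careful in part~(ii) and in the ``moreover'' clause, and it diverges mildly in part~(i). For part~(i), the paper establishes boundedness of $\Delta$ directly via Example~\ref{example-ball}: $\Delta\subseteq\overline{\textup{B}}(0,r)$ is dual to $\Psi\geq -r\|\cdot\|$, and one takes $r=\sup_{\|x\|\leq 1}|\Psi(x)|$. You instead observe that a conical function equals its own recession function and invoke Proposition~\ref{Lipschitz}(ii)$\Leftrightarrow$(iii); both are fine and about equally short. For part~(ii), the paper's proof only states the equivalence ``$f$ globally Lipschitz $\Leftrightarrow$ $\textup{dom}(g)$ bounded'' for an already real-valued $f$, leaving implicit the reverse half of the bijection: that a closed concave $g$ with bounded effective domain has $g^{\vee}$ finite on all of $N_{\mathbb{R}}$. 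You correctly isolate this as the essential missing step, reduce it to ``a closed concave function with bounded effective domain is bounded above,'' and give a clean upper-semicontinuity argument for that lemma --- exactly the point where the remark after Proposition~\ref{bounds-concave} warns a priori that care is needed. (An alternative route to the same fact, more in the spirit of the paper's toolkit: Proposition~\ref{LF-prop}(iii) gives $\textup{rec}(g^\vee)=\Psi_{\textup{dom}(g)}$, real-valued since $\textup{dom}(g)$ is bounded, and your own observation $x+\textup{dom}(g^\vee)\subseteq\textup{dom}(g^\vee)$ then yields $\textup{dom}(g^\vee)=N_{\mathbb{R}}$.) Finally, your explicit derivation that $\textup{dom}(\textup{rec}(f))=N_{\mathbb{R}}$ forces $\textup{dom}(f)=N_{\mathbb{R}}$ tightens the paper's appeal to Proposition~\ref{Lipschitz} in the ``moreover'' direction, whose statement as written requires both domain conditions.
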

	\begin{proof}
		By Remark~\ref{LF-duality}, the assignment $f \mapsto f^{\vee}$ is a bijection between the cone of closed concave functions on $N_{\mathbb{R}}$ and the cone of closed concave functions on $M_{\mathbb{R}}$. It remains to show that this bijection restricts to the desired subsets.
		
		Proof of \textit{(i)}: By Remark~\ref{conical-polyhedron}, the Legendre-Fenchel dual of $\Psi$ is the indicator function $\iota_{\Delta}$ of the closed convex set $\Delta$. By definition of the indicator function, $\Delta$ is bounded if and only if there exist $r \in \mathbb{R}_{>0}$ such that $\iota_{ \overline{\textup{B}}(0,r)} \geq \iota_{\Delta}$. By Proposition~\ref{LF-prop} and Example~\ref{example-ball}, this inequality is equivalent to
		\begin{displaymath}
			\Psi(x) \geq \Psi_{\overline{\textup{B}}(0,r)} (x) =  - r \cdot \| x \|
		\end{displaymath}
		for all $x \in N_{\mathbb{R}}$. This holds if and only if $\textup{dom}(\Psi) = N_{\mathbb{R}}$; it is enough to choose
		\begin{displaymath}
			r = \sup \lbrace |\Psi (x) | \, | \, x \in \overline{\textup{B}}(0,1) \rbrace.
		\end{displaymath}
		
		Proof of \textit{(ii)}: By Proposition~\ref{Lipschitz}, the function  $f \colon N_{\mathbb{R}} \rightarrow \mathbb{R}$ is globally Lipschitz if and only if $\textup{dom}(g)$ is bounded. By Proposition~\ref{Lipschitz}, $\textup{dom}(\textup{rec}(f)) = N_{\mathbb{R}}$ is equivalent to $f$ being globally Lipschitz. Let $C$ be a Lipschitz constant, then for all $x \in N_{\mathbb{R}}$, $|f(x) - f(0)| \leq C \cdot \|x \|$. Apply the triangle inequality to get
		\begin{displaymath}
			|f(x)| \leq |f(x) - f(0)| + |f(0)| \leq  C \cdot \|x \| + |f(0)|.
		\end{displaymath}
		By enlarging $C$ if necessary, we get the result. The reverse implication follows from the bound
		\begin{displaymath}
			| \textup{rec}(f) (x) | =   \lim_{\lambda \rightarrow + \infty} |f ( \lambda \, x)|/ \lambda \leq  \lim_{\lambda \rightarrow + \infty} C(1+ \|\lambda \, x \| )/ \lambda = C \cdot \| x \|.
		\end{displaymath}
	\end{proof}
	To gain an intuition for the general case, we first investigate the set of conical concave functions. By Propositions~\ref{Lipschitz}~and~\ref{conical-compact}, these functions are globally Lipschitz. We will realize this set as a closed cone of the Banach space introduced below.
	\begin{dfn}\label{C-norm}
		Let $\mathcal{C}(N_{\mathbb{R}})$ be the vector space of real-valued continuous conical functions on $N_{\mathbb{R}}$. We equip it with the $\mathcal{C}$-norm, given by
		\begin{displaymath}
			\| f \|_{\mathcal{C}} \coloneqq \inf \lbrace \varepsilon \in \mathbb{Q}_{>0} \, \vert \, |f(x)| \leq \varepsilon \cdot \| x \| \textup{ for all } x \in N_{\mathbb{R}} \rbrace.
		\end{displaymath}
		Denote by $\mathcal{C}^{+}(N_{\mathbb{R}})$ the subset of $\mathcal{C}(N_{\mathbb{R}})$ consisting of concave functions.
	\end{dfn}
	\begin{rem}\label{comp-conical}
		The normed space $\mathcal{C}(N_{\mathbb{R}})$ is complete, as it is isomorphic to the Banach space $C^{0}(\mathcal{S}^{d-1})$ of real-valued continuous functions on the sphere $\mathcal{S}^{d-1} \subset N_{\mathbb{R}}$. Indeed, by mapping a continuous conical function $f$ to its restriction $ f|_{ \mathcal{S}^{d-1}}$ and observing that
		\begin{align*}
			\|f\|_{\mathcal{C}} = \sup_{ x \in \mathcal{S}^{d-1}} |f(x)| 
		\end{align*}
		we obtain an isometric isomorphism $\mathcal{C}(N_{\mathbb{R}}) \rightarrow C^{0}(\mathcal{S}^{d-1})$. Propositions~\ref{prop-conv}~and~\ref{pointwise-conv} show that $\mathcal{C}^{+}(N_{\mathbb{R}})$ is a closed cone in $\mathcal{C}(N_{\mathbb{R}})$.
	\end{rem}
	By Proposition~\ref{conical-compact}, every function $\Psi \in \mathcal{C}^{+}(N_{\mathbb{R}})$ is identified uniquely with the compact convex set $\textup{stab}(\Psi )$. We will show that this assignment is continuous. Let us introduce the necessary notation to formalize this statement.
	\begin{dfn}
		Denote by $\mathcal{K}(M_{\mathbb{R}})$ the set of compact subsets of $M_{\mathbb{R}}$. Given a compact set $C \in \mathcal{K}(M_{\mathbb{R}})$ and a non-negative number $\varepsilon$, the $\varepsilon$\textit{-thickening} of $C$ is defined as
		\begin{displaymath}
			C + \overline{\textup{B}}(0, \varepsilon ) \coloneqq \lbrace x \in M_{\mathbb{R}} \, \vert \, \textup{dist}(x,\, C) \leq \varepsilon \rbrace.
		\end{displaymath}
		The \textit{Hausdorff distance} on $\mathcal{K}(M_{\mathbb{R}})$ is given by
		\begin{displaymath}
			\textup{d}_{\mathcal{H}}( \Delta_1 , \Delta_2 ) \coloneqq \inf \lbrace \varepsilon > 0 \, \vert \, \Delta_1 \subset \Delta_2 + \overline{\textup{B}}(0,\varepsilon) \textup{ and } \Delta_2 \subset \Delta_1 + \overline{\textup{B}}(0,\varepsilon) \rbrace.
		\end{displaymath}
		We denote by $\mathcal{K}^{+}(M_{\mathbb{R}})$ the subspace of $\mathcal{K}(M_{\mathbb{R}})$ consisting of of convex sets.
	\end{dfn}
	By Chapter~7,~pp.~280--281~of~\cite{Mun00}, the pair $(\mathcal{K}(M_{\mathbb{R}}), \textup{d}_{\mathcal{H}})$ is a complete metric space. The following lemma is a geometric interpretation of convergence in the $\mathcal{C}$-norm. 
	\begin{lem}\label{geom-convergence}
		Let $\Psi \in \mathcal{C}^{+}(N_{\mathbb{R}})$ with stabilty set $\textup{stab}(\Psi) = \Delta$. Let $\lbrace \Psi_n \rbrace_{n \in \mathbb{N}}$ be a sequence in $\mathcal{C}^{+}(N_{\mathbb{R}})$ and write $\Delta_n = \textup{stab}(\Psi_n)$. Then, the sequence $\lbrace \Psi_n  \rbrace_{n \in \mathbb{N}}$ converges to $\Psi$ in the $\mathcal{C}$-norm if and only if the sequence $\lbrace \Delta_n  \rbrace_{n \in \mathbb{N}}$ converges to $\Delta$ in the Hausdorff distance, i.e. for each $\varepsilon > 0$ there is a $n_{\varepsilon}$ such that for all $n \geq n_{\varepsilon}$ we have 
		\begin{displaymath}
			\Delta_n \subset \Delta + \overline{\textup{B}}(0,\varepsilon) \quad \textup{and} \quad \Delta \subset \Delta_n + \overline{\textup{B}}(0,\varepsilon).
		\end{displaymath}
		Moreover, if the sequence $\lbrace \Psi_n  \rbrace_{n \in \mathbb{N}}$ converges to $\Psi$ in the $\mathcal{C}$-norm and $\Delta$ has non-empty interior, then for every $\varepsilon >0$ there is $n_{\varepsilon}$ such that for all $n \geq n_\varepsilon$ we have $\Delta_{\varepsilon} \subset \Delta_n$, where
		\begin{displaymath}
			\Delta_{\varepsilon} \coloneqq \textup{Conv} (\lbrace y \in \Delta \, \vert \, \textup{dist}(y,\textup{rb}(\Delta)) \geq \varepsilon \rbrace).
		\end{displaymath}
	\end{lem}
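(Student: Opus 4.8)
The plan is to transport the whole statement to the language of support functions. Since a continuous conical concave function is closed, and by Remark~\ref{conical-polyhedron} it coincides with the support function of its stability set, we may write $\Psi = \Psi_{\Delta}$ and $\Psi_n = \Psi_{\Delta_n}$, where $\Psi_{\Delta}(x) = \inf_{y \in \Delta} \langle y, x \rangle$. The first assertion then reduces to the identity
\begin{displaymath}
	\textup{d}_{\mathcal{H}}(\Delta_1, \Delta_2) = \| \Psi_{\Delta_1} - \Psi_{\Delta_2} \|_{\mathcal{C}}
\end{displaymath}
for compact convex sets $\Delta_1, \Delta_2 \subset M_{\mathbb{R}}$, and the ``moreover'' clause becomes a short estimate on support functions of inner parallel bodies.

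To establish the identity, the first step is the dictionary: for every $\varepsilon > 0$,
\begin{displaymath}
	\Delta_1 \subset \Delta_2 + \overline{\textup{B}}(0,\varepsilon) \iff \Psi_{\Delta_1}(x) \geq \Psi_{\Delta_2}(x) - \varepsilon \| x \| \text{ for all } x \in N_{\mathbb{R}}.
\end{displaymath}
For $\Rightarrow$, I would apply $\Psi$ to both sides (using that $A \subset B$ forces $\Psi_A \geq \Psi_B$, immediate from the definition as an infimum), together with the Minkowski-additivity and homogeneity of support functions and with Example~\ref{example-ball}, which gives $\Psi_{\overline{\textup{B}}(0,\varepsilon)} = -\varepsilon \| \cdot \|$. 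The additivity $\Psi_{A+B} = \Psi_A + \Psi_B$ is not cited in the Appendix, so I would derive it by applying Proposition~\ref{LF-prop}(i) to the indicator functions $\iota_A, \iota_B$ — both are bounded above, hence have $0$ in their stability sets by Proposition~\ref{bounds-concave}(i), so the hypothesis of that result is met — together with $\iota_A \boxplus \iota_B = \iota_{A+B}$, which follows from $\textup{hyp}(\iota_A \boxplus \iota_B) = \textup{hyp}(\iota_A) + \textup{hyp}(\iota_B)$ in Proposition~\ref{prop-conv}(v). For $\Leftarrow$, from $\Psi_{\Delta_1} \geq \Psi_{\Delta_2 + \overline{\textup{B}}(0,\varepsilon)}$ Proposition~\ref{LF-prop}(vi) gives $\textup{stab}(\Psi_{\Delta_1}) \subset \textup{stab}(\Psi_{\Delta_2 + \overline{\textup{B}}(0,\varepsilon)})$, i.e.\ $\Delta_1 \subset \Delta_2 + \overline{\textup{B}}(0,\varepsilon)$ since the stability set of the support function of a compact convex set is that set. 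Symmetrizing in $\Delta_1, \Delta_2$ and taking infima over $\varepsilon$ (the infimum over positive rationals equals that over positive reals, since the defining condition is upward-closed in $\varepsilon$) yields the displayed identity, which is exactly the first assertion once phrased for sequences.

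For the ``moreover'' clause, assume $\Delta = \textup{stab}(\Psi)$ has non-empty interior. If $\Delta_{\varepsilon} = \emptyset$ the inclusion is trivial, so assume otherwise; then $\Delta_{\varepsilon}$ is the inner parallel body $\{ y : \overline{\textup{B}}(y,\varepsilon) \subset \Delta \}$, which is compact and convex, and by construction satisfies $\Delta_{\varepsilon} + \overline{\textup{B}}(0,\varepsilon) \subset \Delta$. Applying $\Psi$ and the additivity as above gives $\Psi_{\Delta_{\varepsilon}} \geq \Psi_{\Delta} + \varepsilon \| \cdot \|$. On the other hand, $\mathcal{C}$-convergence $\Psi_n \to \Psi$ gives, by the first assertion, $\Delta \subset \Delta_n + \overline{\textup{B}}(0,\delta_n)$ with $\delta_n \to 0$, hence $\Psi_{\Delta_n}(x) \leq \Psi_{\Delta}(x) + \delta_n \| x \|$ for all $x$. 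Choosing $n_{\varepsilon}$ with $\delta_n \leq \varepsilon$ for $n \geq n_{\varepsilon}$, we obtain $\Psi_{\Delta_n} \leq \Psi_{\Delta} + \varepsilon \| \cdot \| \leq \Psi_{\Delta_{\varepsilon}}$, and Proposition~\ref{LF-prop}(vi) converts this pointwise inequality back into $\Delta_{\varepsilon} \subset \Delta_n$.

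I expect the main difficulty to be bookkeeping rather than conceptual: carefully deriving Minkowski-additivity and positive homogeneity of support functions from the results quoted in the Appendix instead of invoking them as folklore, and keeping straight that the Legendre–Fenchel correspondence reverses inclusions, so that the equivalence $A \subset B \Leftrightarrow \Psi_A \geq \Psi_B$ is used in the right direction at each of the several places it appears. The identification of $\Delta_{\varepsilon}$ with the inner parallel body, and the inclusion $\Delta_{\varepsilon} + \overline{\textup{B}}(0,\varepsilon) \subset \Delta$, is elementary but deserves an explicit sentence, since it is the only point where the non-empty interior hypothesis enters.
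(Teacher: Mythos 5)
Your proposal is correct and follows essentially the same strategy as the paper: rewrite $\mathcal{C}$-norm inequalities as pointwise support-function inequalities and apply Proposition~\ref{LF-prop} to translate them into Hausdorff-distance containments, then handle the ``moreover'' clause by observing $\Delta_{\varepsilon} + \overline{\textup{B}}(0,\varepsilon) \subset \Delta$ and applying the same dictionary. The one cosmetic difference is that you identify the set inside the convex hull in the definition of $\Delta_{\varepsilon}$ with the inner parallel body, which is already convex, whereas the paper keeps the convex hull and invokes the commutation of convex hulls with Minkowski sums; both resolutions are correct.
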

	\begin{proof}
		By definition, $\lbrace \Psi_n  \rbrace_{n \in \mathbb{N}}$ converges to $\Psi$ in the $\mathcal{C}$-norm if and only if for each $\varepsilon > 0$ there is a $n_{\varepsilon}$ such that for all $n \geq n_{\varepsilon}$ we have $\| \Psi - \Psi_n \|_{\mathcal{C}} \leq \varepsilon$. We can rewrite this inequality as
		\begin{displaymath}
			\Psi (x) - \varepsilon \cdot \| x \|  \leq \Psi_n (x) \quad\textup{and} \quad \Psi_n (x) - \varepsilon \cdot \| x \|  \leq \Psi (x),
		\end{displaymath}
		for all $x \in N_{\mathbb{R}}$. By Proposition~\ref{LF-prop}, this is equivalent to
		\begin{displaymath}
			\Delta_n \subset \Delta + \overline{\textup{B}}(0,\varepsilon) \quad \textup{and} \quad \Delta \subset \Delta_n + \overline{\textup{B}}(0,\varepsilon).
		\end{displaymath}
		Therefore, we get the first part of the lemma. Assume the convergence and that $\Delta$ has a non-empty interior. Then, $\Delta$ is of full dimension, and the relative boundary and boundary coincide. The case $\Delta_{\varepsilon} = \emptyset$ is trivial. Thus, we assume the opposite. Let $y \in \Delta$ such that $\textup{dist}(y,\textup{rb}(\Delta)) \geq \varepsilon$. By definition, $y + \overline{\textup{B}}(0,\varepsilon) \subset \Delta$. This implies
		\begin{displaymath}
			\lbrace y \in \Delta \, \vert \, \textup{d}(y,\textup{rb}(\Delta)) \geq \varepsilon \rbrace + \overline{\textup{B}}(0,\varepsilon) \subset \Delta.
		\end{displaymath}
		Taking the Minkowski sum and convex hull commute, therefore
		\begin{displaymath}
			\textup{Conv} (\lbrace y \in \Delta \, \vert \, \textup{d}(y,\textup{rb}(\Delta)) \geq \varepsilon \rbrace) + \overline{\textup{B}}(0,\varepsilon) = \textup{Conv} \left( \lbrace y \in \Delta \, \vert \, \textup{d}(y,\textup{rb}(\Delta)) \geq \varepsilon \rbrace + \overline{\textup{B}}(0,\varepsilon) \right).
		\end{displaymath}
		It follows that for all $n \geq n_{\varepsilon}$,
		\begin{displaymath}
			\Delta_{\varepsilon} + \overline{\textup{B}}(0,\varepsilon) \subset \Delta \subset \Delta_n + \overline{\textup{B}}(0,\varepsilon).
		\end{displaymath}
		Let $\Psi_{\Delta_{\varepsilon}}$ be the support function of  $\Delta_{\varepsilon}$. Using Proposition~\ref{LF-prop}, translate the above contention into the inequality
		\begin{displaymath}
			\Psi_{\Delta_{\varepsilon}} (x) - \varepsilon \cdot \|x \| \geq \Psi_n (x) - \varepsilon \cdot \|x \|
		\end{displaymath}
		for all $x \in N_{\mathbb{R}}$. This immediately implies $\Psi_{\Delta_{\varepsilon}}  \geq \Psi_n$. Using Proposition~\ref{LF-prop} again, we conclude $\Delta_{\varepsilon} \subset \Delta_n$.
	\end{proof}
	\begin{rem}
		If the interior of $\Delta$ in Lemma~\ref{geom-convergence} is empty, the conclusion on the set $\Delta_{\varepsilon}$ might fail. Consider the following convex subsets of $\mathbb{R}^2$:
		\begin{displaymath}
			\Delta_n = \left[ 0 , 1 \right] \times \lbrace 1/n \rbrace \quad \textup{and} \quad \Delta = \left[ 0 , 1 \right] \times \lbrace 0 \rbrace.
		\end{displaymath}
		The sequence $\lbrace \Delta_n  \rbrace_{n \in \mathbb{N}}$ satisfies the convergence condition, but $\Delta_n \cap \Delta = \emptyset$ for all $n$. 
	\end{rem}
	The previous discussion can be summarized as follows.
	\begin{cor}\label{Kompact}
		The map $\textup{stab} \colon \mathcal{C}^{+}(N_{\mathbb{R}}) \rightarrow \mathcal{K}^{+}(M_{\mathbb{R}})$ given by $\Psi \mapsto \textup{stab}(\Psi)$ is an isometry that commutes with sums. Its inverse is given by the assignment $\Delta \mapsto \Psi_{\Delta}$. In particular, the metric space $(\mathcal{K}^{+}(M_{\mathbb{R}}), \textup{d}_{\mathcal{H}})$ is complete.
	\end{cor}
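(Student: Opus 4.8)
The plan is to obtain the statement as a bookkeeping exercise built on the results already proved in this subsection: bijectivity and the sum‑compatibility will be essentially formal, while the metric content is exactly Lemma~\ref{geom-convergence}. First I would record that $\textup{stab}\colon \mathcal{C}^{+}(N_{\mathbb{R}}) \to \mathcal{K}^{+}(M_{\mathbb{R}})$ is a well-defined bijection with inverse $\Delta \mapsto \Psi_{\Delta}$. By Proposition~\ref{cont-1} a conical concave function that is finite on all of $N_{\mathbb{R}}$ is automatically continuous, so $\mathcal{C}^{+}(N_{\mathbb{R}})$ coincides with the set of real-valued conical concave functions in Proposition~\ref{conical-compact}(i); that proposition gives the bijection onto $\mathcal{K}^{+}(M_{\mathbb{R}})$, and Remark~\ref{conical-polyhedron} (together with Remark~\ref{LF-duality}) identifies the inverse as $\Delta \mapsto \Psi_{\Delta}$, since for a compact, hence closed, convex $\Delta$ one has $\textup{stab}(\Psi_{\Delta}) = \textup{dom}(\Psi_{\Delta}^{\vee}) = \textup{dom}(\iota_{\Delta}) = \Delta$. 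Sum-compatibility, $\textup{stab}(\Psi_1+\Psi_2) = \textup{stab}(\Psi_1)+\textup{stab}(\Psi_2)$, then reduces to the elementary identity $\Psi_{\Delta_1} + \Psi_{\Delta_2} = \Psi_{\Delta_1+\Delta_2}$ (immediate from $\Psi_{\Delta}(x) = \inf_{y\in\Delta}\langle y, x\rangle$); alternatively, since $\textup{dom}(\Psi_1)=\textup{dom}(\Psi_2)=N_{\mathbb{R}}$ one may apply Proposition~\ref{LF-prop}(i)--(ii) with closures dropped to get $(\Psi_1+\Psi_2)^{\vee} = \iota_{\Delta_1}\boxplus\iota_{\Delta_2} = \iota_{\Delta_1+\Delta_2}$, whence $\textup{stab}(\Psi_1+\Psi_2)=\Delta_1+\Delta_2$.

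The isometry is the substantive point, and it is supplied by Lemma~\ref{geom-convergence}. For $\Psi_1,\Psi_2 \in \mathcal{C}^{+}(N_{\mathbb{R}})$ with stability sets $\Delta_1,\Delta_2$, the difference $\Psi_1-\Psi_2$ is conical and continuous, hence lies in $\mathcal{C}(N_{\mathbb{R}})$, and the chain of equivalences used in the proof of that lemma shows, for each $\varepsilon>0$, that $\|\Psi_1-\Psi_2\|_{\mathcal{C}}\le\varepsilon$ if and only if $\Psi_1(x)-\varepsilon\|x\|\le\Psi_2(x)$ and $\Psi_2(x)-\varepsilon\|x\|\le\Psi_1(x)$ for all $x$, which by Proposition~\ref{LF-prop}(vi) and Example~\ref{example-ball} is equivalent to $\Delta_1\subset\Delta_2+\overline{\textup{B}}(0,\varepsilon)$ and $\Delta_2\subset\Delta_1+\overline{\textup{B}}(0,\varepsilon)$, i.e.\ to $\textup{d}_{\mathcal{H}}(\Delta_1,\Delta_2)\le\varepsilon$. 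Taking infima over $\varepsilon$ gives $\|\Psi_1-\Psi_2\|_{\mathcal{C}} = \textup{d}_{\mathcal{H}}(\Delta_1,\Delta_2)$; the passage from the rational infimum in the definition of the $\mathcal{C}$-norm to the real infimum in $\textup{d}_{\mathcal{H}}$ is harmless because the set of admissible $\varepsilon$ is an up-set in $\mathbb{R}_{>0}$. Thus $\textup{stab}$ is an isometric, sum-preserving bijection with the stated inverse.

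Completeness of $(\mathcal{K}^{+}(M_{\mathbb{R}}),\textup{d}_{\mathcal{H}})$ then follows formally: by Remark~\ref{comp-conical}, $\mathcal{C}(N_{\mathbb{R}})$ is a Banach space and $\mathcal{C}^{+}(N_{\mathbb{R}})$ is a closed cone in it, hence a complete metric space, and the isometric image of a complete metric space is complete. I do not expect a genuine obstacle here; the only points needing a little care are to invoke the pointwise (rather than the sequential) form of the equivalence in Lemma~\ref{geom-convergence} for the isometry, and to note the minor rational-versus-real infimum subtlety just mentioned.
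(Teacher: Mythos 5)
Your proof is correct and follows essentially the same route as the paper, which simply cites Lemma~\ref{geom-convergence} together with Propositions~\ref{LF-prop} and~\ref{conical-compact}; you have faithfully unpacked those references, correctly noting that the pointwise (not merely sequential) content of Lemma~\ref{geom-convergence} gives the isometry, that Proposition~\ref{cont-1} makes the continuity hypothesis in $\mathcal{C}^{+}(N_{\mathbb{R}})$ automatic, and that the rational-versus-real infimum in the $\mathcal{C}$-norm is harmless.
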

	\begin{proof}
		This follows immediately from Lemma~\ref{geom-convergence} and Propositions~\ref{LF-prop}~and~\ref{conical-compact}.
	\end{proof}
	Now, we realize the set of globally Lipschitz concave functions as a closed cone in a Banach space. Intuitively, the recession function $\textup{rec}(f)$ is the conical convex function that best approximates $f$. Thus, the difference between $f$ and $\textup{rec}(f)$ should grow slower than any continuous conical function. This motivates the following definition.
	\begin{dfn}\label{G-def}
		Let $f \colon  N_{\mathbb{R}} \rightarrow \mathbb{R}$ be continuous. We have the following definitions:
		\begin{enumerate}[label=(\roman*)]
			\item The function $f$ is \textit{sublinear} if $f(x)$ is $o(1 + \| x \| )$ as $\| x \| \rightarrow \infty$, that is, for each $\varepsilon > 0$ there exist $R_{\varepsilon}>0$ such that, whenever $\|x \|>R_{\varepsilon}$, we have $|f(x)| \leq \varepsilon \cdot (1 + \|x\|).$ Denote by $\mathcal{SL}(N_{\mathbb{R}})$ the vector space of all continuous sublinear functions on $N_{\mathbb{R}}$. 
			\item  Define the normed vector space $\mathcal{G}(N_{\mathbb{R}}) \coloneqq \mathcal{SL}(N_{\mathbb{R}}) + \mathcal{C}(N_{\mathbb{R}}) \subset C^{0}(N_{\mathbb{R}})$. We equip it with the $\mathcal{G}$-norm, given by
			\begin{displaymath}
				\| f \|_{\mathcal{G}} \coloneqq \inf \lbrace \varepsilon \in \mathbb{Q}_{>0} \, \vert \, |f (x) | \leq \varepsilon \cdot (1 + \| x \| ) \textup{ for all } x \in N_{\mathbb{R}} \rbrace.
			\end{displaymath}
			Denote by $\mathcal{G}^{+}(N_{\mathbb{R}})$ its subset of concave functions.
			\item The \textit{extended recession map} $\textup{rec} \colon \mathcal{G}(N_{\mathbb{R}})  \rightarrow \mathcal{C}(N_{\mathbb{R}})$ is given by
			\begin{displaymath}
				\textup{rec}(f) (x)\coloneqq  \lim_{\lambda \rightarrow \infty} f (\lambda \, x)/\lambda.
			\end{displaymath}
		\end{enumerate}
	\end{dfn}
	\begin{rem}\label{G-VS}
		We make the following observations:
		\begin{enumerate}[label=(\arabic*)]
			\item The linear map $\mathcal{G}(N_{\mathbb{R}}) \rightarrow C^{0}_{\textup{bd}} (N_{\mathbb{R}})$ given by the assignment
			\begin{displaymath}
				f \longmapsto \left( x \mapsto f(x)/(1+ \| x \|) \right)
			\end{displaymath}
			is an isometry (in particular, injective). Therefore, it identifies the image of $\mathcal{G}(N_{\mathbb{R}})$ under this map as a normed subspace of the Banach space $C^{0}_{\textup{bd}}(N_{\mathbb{R}})$ of continuous bounded functions on $N_{\mathbb{R}}$.
			\item By Proposition~\ref{prop-conv}, $\mathcal{G}^{+}(N_{\mathbb{R}})$ is a cone in $\mathcal{G}(N_{\mathbb{R}})$. Convergence in the $\mathcal{G}$-norm implies pointwise convergence. Then, Proposition~\ref{pointwise-conv} shows that $\mathcal{G}^{+}(N_{\mathbb{R}})$ is closed in $\mathcal{G}(N_{\mathbb{R}})$.
			\item Let $f \in \mathcal{G}(N_{\mathbb{R}})$ and write $f =  f_0 + \Psi$ with $f_0$ sublinear and $\Psi$ conical. It is immediate that $\textup{rec}(f) = \Psi$.  If $f$ is concave, Proposition~\ref{prop-conv} shows that the extended recession function of $f$ and the usual recession function coincide. Using Proposition~\ref{conical-compact}, we conclude that every function in $\mathcal{G}^{+}(N_{\mathbb{R}})$ is globally Lipschitz.
			\item By Proposition~\ref{conical-compact}, the $\mathcal{G}$-norm of a globally Lipschitz concave function $f \colon N_{\mathbb{R}} \rightarrow \mathbb{R}$ is finite.
		\end{enumerate}
		It remains to show that $\mathcal{G}(N_{\mathbb{R}})$ is complete and the closed cone $\mathcal{G}^{+}(N_{\mathbb{R}})$ coincides with the set of globally Lipschitz concave functions on $N_{\mathbb{R}}$. First, we will show the completeness. Then, we will exhibit each globally Lipschitz concave function $f$ as the limit in the $\mathcal{G}$-norm of a sequence in $\mathcal{G}^{+}(N_{\mathbb{R}})$. By completeness, $f \in \mathcal{G}^{+}(N_{\mathbb{R}})$.
	\end{rem}
	We defined $\mathcal{G}(N_{\mathbb{R}})$ as the sum of the spaces $\mathcal{SL}(N_{\mathbb{R}}) $ and $ \mathcal{C}(N_{\mathbb{R}})$. To show that  $\mathcal{G}(N_{\mathbb{R}})$ is Banach, it is enough to prove that this is a direct sum of Banach spaces. We prove the following auxiliary lemmas, which then yield the desired completeness result.
	\begin{lem}
		Let $\Psi \in \mathcal{C}(N_{\mathbb{R}})$. Then, the identity $\| \Psi \|_{\mathcal{C}} = \| \Psi \|_{\mathcal{G}}$ holds. In particular, $\mathcal{C}(N_{\mathbb{R}})$ is a complete subspace of $\mathcal{G}(N_{\mathbb{R}})$.
	\end{lem}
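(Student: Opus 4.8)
The statement to prove is that for a continuous conical function $\Psi \in \mathcal{C}(N_{\mathbb{R}})$, the two norms agree: $\|\Psi\|_{\mathcal{C}} = \|\Psi\|_{\mathcal{G}}$, and consequently $\mathcal{C}(N_{\mathbb{R}})$ is a complete subspace of $\mathcal{G}(N_{\mathbb{R}})$. The plan is to establish the norm identity directly from the defining infima and then invoke the already-established completeness of $\mathcal{C}(N_{\mathbb{R}})$ (Remark~\ref{comp-conical}).

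First I would prove the inequality $\|\Psi\|_{\mathcal{C}} \geq \|\Psi\|_{\mathcal{G}}$. If $\varepsilon \in \mathbb{Q}_{>0}$ satisfies $|\Psi(x)| \leq \varepsilon \cdot \|x\|$ for all $x \in N_{\mathbb{R}}$, then trivially $|\Psi(x)| \leq \varepsilon \cdot \|x\| \leq \varepsilon \cdot (1 + \|x\|)$ for all $x$, so $\varepsilon$ is admissible for the $\mathcal{G}$-norm as well; taking the infimum over such $\varepsilon$ gives the inequality. For the reverse inequality $\|\Psi\|_{\mathcal{G}} \geq \|\Psi\|_{\mathcal{C}}$, suppose $\varepsilon \in \mathbb{Q}_{>0}$ satisfies $|\Psi(x)| \leq \varepsilon \cdot (1 + \|x\|)$ for all $x$. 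I would use conicality: for any fixed $x \neq 0$ and any $\lambda > 0$, apply the bound at $\lambda x$ and use $\Psi(\lambda x) = \lambda \Psi(x)$, so that $\lambda |\Psi(x)| = |\Psi(\lambda x)| \leq \varepsilon(1 + \lambda\|x\|)$, hence $|\Psi(x)| \leq \varepsilon(\lambda^{-1} + \|x\|)$. Letting $\lambda \to \infty$ yields $|\Psi(x)| \leq \varepsilon \cdot \|x\|$ for all $x \neq 0$, and the case $x = 0$ is immediate since $\Psi(0) = 0$ by conicality. Thus $\varepsilon$ is admissible for the $\mathcal{C}$-norm, and taking the infimum gives $\|\Psi\|_{\mathcal{G}} \geq \|\Psi\|_{\mathcal{C}}$. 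Combining the two inequalities proves the identity.

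For the ``in particular'' clause, once the norms on $\mathcal{C}(N_{\mathbb{R}})$ agree, the inclusion $\mathcal{C}(N_{\mathbb{R}}) \hookrightarrow \mathcal{G}(N_{\mathbb{R}})$ is an isometric embedding. Since $(\mathcal{C}(N_{\mathbb{R}}), \|\cdot\|_{\mathcal{C}})$ is complete by Remark~\ref{comp-conical} (being isometrically isomorphic to $C^0(\mathcal{S}^{d-1})$), its image is a complete, hence closed, subspace of $\mathcal{G}(N_{\mathbb{R}})$.

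I do not expect a genuine obstacle here; the only point requiring a small amount of care is the passage to the limit $\lambda \to \infty$ in the reverse inequality, where one must note that the bound must hold for \emph{all} $x$ simultaneously but conicality lets us rescale each $x$ independently, so no uniformity issue arises. The argument is entirely elementary and mirrors the kind of rescaling manipulation already used in the proof of Theorem~\ref{torgeomdiv}.
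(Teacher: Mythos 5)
Your proof is correct and follows essentially the same route as the paper: both establish $\|\Psi\|_{\mathcal{C}} \geq \|\Psi\|_{\mathcal{G}}$ trivially and then prove the reverse by exploiting conicality via the rescaling $x \mapsto \lambda x$ followed by $\lambda \to \infty$. The only cosmetic difference is that the paper restricts to $x_0 \in \mathcal{S}^{d-1}$ and invokes the identification $\|\Psi\|_{\mathcal{C}} = \sup_{\mathcal{S}^{d-1}}|\Psi|$, whereas you derive the pointwise bound $|\Psi(x)| \leq \varepsilon\|x\|$ directly for all $x$; the content is identical.
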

	\begin{proof}
		The inequality $\| \Psi \|_{\mathcal{C}} \geq \| \Psi \|_{\mathcal{G}}$ follows trivially from the definitions. We only need to show the reverse inequality. Let $\varepsilon >0$ such that for all $x \in N_{\mathbb{R}}$ we have $|\Psi (x) | \leq \varepsilon \cdot (1 + \| x \| )$. Let $x_0 \in \mathcal{S}^{d-1}$ and $\lambda >0$. Replacing $x$ by $\lambda \cdot x_0$ we get $|\Psi ( \lambda \cdot x_0 ) | \leq \varepsilon (1 + \| \lambda \cdot x_0\|)$. Dividing by $1 + \| \lambda \cdot x_0 \|$ and taking the limit as $\lambda \rightarrow \infty$, we obtain
		\begin{displaymath}
			\lim_{\lambda \rightarrow  \infty} \frac{|\Psi ( \lambda \cdot x_0 ) |}{1 + \| \lambda \cdot x_0\|}  = \lim_{\lambda \rightarrow  \infty} \frac{ \lambda \cdot |\Psi (x_0 ) |}{1 + \lambda \cdot \|x_0\|}  = \lim_{\lambda \rightarrow  \infty} \frac{  |\Psi (x_0 ) |}{1/ \lambda + 1 }  = |\Psi(x_0)| \leq \varepsilon.
		\end{displaymath}
		Since $x_0 \in \mathcal{S}^{d-1}$ is arbitrary, taking the supremum we get that $\| \Psi \|_{\mathcal{C}} \leq \varepsilon$. The result follows.
	\end{proof}
	\begin{lem}\label{SL-banach}
		The pair $(\mathcal{SL}(N_{\mathbb{R}}), \| \cdot \|_{\mathcal{G}})$ is a Banach space.
	\end{lem}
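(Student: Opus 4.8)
The plan is to identify $\mathcal{SL}(N_{\mathbb{R}})$, via the isometry from Remark~\ref{G-VS}(1), with the space of continuous real-valued functions on $N_{\mathbb{R}}$ vanishing at infinity, and then to invoke the standard completeness of the latter.

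First I would recall from Remark~\ref{G-VS}(1) that the linear map
$$\Phi \colon \mathcal{G}(N_{\mathbb{R}}) \longrightarrow C^{0}_{\textup{bd}}(N_{\mathbb{R}}), \qquad \Phi(f)(x) = \frac{f(x)}{1 + \| x \|},$$
is an isometric embedding; indeed, directly from the definition of the $\mathcal{G}$-norm one has $\| f \|_{\mathcal{G}} = \sup_{x \in N_{\mathbb{R}}} |f(x)|/(1+\| x \|) = \| \Phi(f) \|_{\infty}$. Since a closed linear subspace of a Banach space is itself a Banach space, it suffices to show that $\Phi(\mathcal{SL}(N_{\mathbb{R}}))$ is closed in $C^{0}_{\textup{bd}}(N_{\mathbb{R}})$.

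Next I would identify this image explicitly. Unravelling Definition~\ref{G-def}(i): a continuous function $f$ is sublinear precisely when, for every $\varepsilon > 0$, there is $R_{\varepsilon} > 0$ with $|\Phi(f)(x)| = |f(x)|/(1 + \| x \|) \leq \varepsilon$ whenever $\| x \| > R_{\varepsilon}$, that is, precisely when $\Phi(f)$ is continuous and tends to $0$ as $\| x \| \to \infty$. Conversely, if $g \in C^{0}(N_{\mathbb{R}})$ vanishes at infinity, then $g$ is automatically bounded (being continuous on the compact ball $\overline{\textup{B}}(0,R_{1})$ and bounded by $1$ outside it, say), and $f(x) \coloneqq (1 + \| x \|) \, g(x)$ is continuous and sublinear with $\Phi(f) = g$. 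Hence $\Phi(\mathcal{SL}(N_{\mathbb{R}}))$ is exactly the space $C_{0}(N_{\mathbb{R}})$ of continuous real-valued functions on $N_{\mathbb{R}}$ vanishing at infinity, viewed as a subspace of $C^{0}_{\textup{bd}}(N_{\mathbb{R}})$.

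Finally I would check that $C_{0}(N_{\mathbb{R}})$ is closed in $C^{0}_{\textup{bd}}(N_{\mathbb{R}})$ by the usual $\varepsilon/2$-argument: if $g_{n} \to g$ uniformly with each $g_{n}$ vanishing at infinity, then given $\varepsilon > 0$ choose $n$ with $\| g_{n} - g \|_{\infty} < \varepsilon/2$ and then $R > 0$ with $|g_{n}(x)| < \varepsilon/2$ for $\| x \| > R$; the triangle inequality yields $|g(x)| < \varepsilon$ for $\| x \| > R$, so $g$ vanishes at infinity. Thus $C_{0}(N_{\mathbb{R}})$, hence its isometric preimage $\mathcal{SL}(N_{\mathbb{R}})$, is a Banach space. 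There is no serious obstacle here; the only points needing a little care are the exact translation of the sublinearity condition into vanishing at infinity under $\Phi$, and the remark that a continuous function vanishing at infinity is automatically bounded, so that $C_{0}(N_{\mathbb{R}}) \subset C^{0}_{\textup{bd}}(N_{\mathbb{R}})$ and the inherited topology is the uniform one.
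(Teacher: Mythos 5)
Your proof is correct and takes essentially the same approach as the paper: both use the isometry $f \mapsto f/(1+\|\cdot\|)$ to identify $\mathcal{SL}(N_{\mathbb{R}})$ with the space of continuous functions vanishing at infinity and deduce completeness from closedness. The only cosmetic difference is that the paper phrases this via the one-point compactification $N_{\mathbb{R}}^{\ast}$, identifying the image with the closed subspace $C^0_0(N_{\mathbb{R}}^{\ast}) \subset C^0(N_{\mathbb{R}}^{\ast})$, whereas you work directly inside $C^0_{\textup{bd}}(N_{\mathbb{R}})$ and spell out the $\varepsilon/2$ closedness argument.
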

	\begin{proof}
		Denote by $N_{\mathbb{R}}^{\ast}$ the Alexandroff (one-point) compactification of $N_{\mathbb{R}}$. It is clear from the definition of sublinearity that the map $\mathcal{SL}(N_{\mathbb{R}})  \rightarrow C^0 (N_{\mathbb{R}}^{\ast})$
		given by the assignment
		\begin{displaymath}
			f \longmapsto \left( x \mapsto f(x)/(1+ \| x \|) \right) 
		\end{displaymath}
		identifies $\mathcal{SL}(N_{\mathbb{R}})$ with the closed subspace $C^{0}_{0}(N_{\mathbb{R}}^{\ast}) \subset C^0 (N_{\mathbb{R}}^{\ast})$ of functions vanishing at $\infty \in N_{\mathbb{R}}^{\ast}$. Indeed, the inverse map is given by multiplication by $(1 + \|x\|)$ followed by restriction to $N_{\mathbb{R}}$. Hence, the space $\mathcal{SL}(N_{\mathbb{R}})$ is complete.
	\end{proof}
	\begin{thm}\label{G-banach}
		The extended recession map is continuous and linear. Its kernel is $\mathcal{SL}(N_{\mathbb{R}})$ and it fixes the subspace $\mathcal{C}(N_{\mathbb{R}})$. It induces a decomposition
		\begin{displaymath}
			\mathcal{G}(N_{\mathbb{R}}) = \mathcal{SL}(N_{\mathbb{R}}) \oplus  \mathcal{C}(N_{\mathbb{R}}).
		\end{displaymath}
		In particular, the space $\mathcal{G}(N_{\mathbb{R}})$ is complete. 
	\end{thm}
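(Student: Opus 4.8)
The plan is to show that the extended recession map $\operatorname{rec}\colon \mathcal{G}(N_{\mathbb{R}}) \to \mathcal{C}(N_{\mathbb{R}})$ is a bounded linear projection onto $\mathcal{C}(N_{\mathbb{R}})$ with kernel $\mathcal{SL}(N_{\mathbb{R}})$, and then deduce completeness of $\mathcal{G}(N_{\mathbb{R}})$ from the two preceding lemmas (completeness of $\mathcal{SL}(N_{\mathbb{R}})$ and of $\mathcal{C}(N_{\mathbb{R}})$ as subspaces of $\mathcal{G}(N_{\mathbb{R}})$). First I would verify linearity of $\operatorname{rec}$ directly from the defining limit $\operatorname{rec}(f)(x) = \lim_{\lambda \to \infty} f(\lambda x)/\lambda$: for $f,g \in \mathcal{G}(N_{\mathbb{R}})$ the limit defining $\operatorname{rec}(f+g)$ and $\operatorname{rec}(cf)$ splits because each is a genuine (finite) limit — this is guaranteed because, by Remark~\ref{G-VS}(3), writing $f = f_0 + \Psi$ with $f_0 \in \mathcal{SL}(N_{\mathbb{R}})$ and $\Psi \in \mathcal{C}(N_{\mathbb{R}})$ one has $f(\lambda x)/\lambda = f_0(\lambda x)/\lambda + \Psi(x)$, and $f_0(\lambda x)/\lambda \to 0$ by sublinearity while $\Psi(x)$ is independent of $\lambda$. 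This same computation shows $\operatorname{rec}(f) = \Psi$, hence $\operatorname{rec}$ fixes $\mathcal{C}(N_{\mathbb{R}})$ pointwise, kills $\mathcal{SL}(N_{\mathbb{R}})$, and in particular that the decomposition $f = f_0 + \Psi$ is \emph{unique}: if $f_0 + \Psi = f_0' + \Psi'$ then applying $\operatorname{rec}$ gives $\Psi = \Psi'$ and hence $f_0 = f_0'$. So $\mathcal{G}(N_{\mathbb{R}}) = \mathcal{SL}(N_{\mathbb{R}}) \oplus \mathcal{C}(N_{\mathbb{R}})$ as a vector-space direct sum.

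Next I would establish continuity of $\operatorname{rec}$ in the $\mathcal{G}$-norm. If $\|f\|_{\mathcal{G}} \le \varepsilon$, then $|f(x)| \le \varepsilon(1 + \|x\|)$ for all $x$, so for $x_0$ on the unit sphere and $\lambda > 0$ we get $|f(\lambda x_0)|/\lambda \le \varepsilon(1/\lambda + \|x_0\|) = \varepsilon(1/\lambda + 1)$, and letting $\lambda \to \infty$ yields $|\operatorname{rec}(f)(x_0)| \le \varepsilon$; since $\operatorname{rec}(f)$ is conical this gives $\|\operatorname{rec}(f)\|_{\mathcal{C}} \le \varepsilon$, and by the lemma identifying $\|\cdot\|_{\mathcal{C}}$ with $\|\cdot\|_{\mathcal{G}}$ on $\mathcal{C}(N_{\mathbb{R}})$ we conclude $\|\operatorname{rec}(f)\|_{\mathcal{G}} \le \|f\|_{\mathcal{G}}$. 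Thus $\operatorname{rec}$ is a norm-one (hence bounded) linear projection onto the closed subspace $\mathcal{C}(N_{\mathbb{R}})$, and its complementary projection $f \mapsto f - \operatorname{rec}(f)$ is also bounded with range $\mathcal{SL}(N_{\mathbb{R}})$; in particular $\mathcal{SL}(N_{\mathbb{R}})$ is closed in $\mathcal{G}(N_{\mathbb{R}})$.

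Finally, completeness of $\mathcal{G}(N_{\mathbb{R}})$ follows from the standard fact that a normed space which is the topological direct sum of two Banach subspaces (via bounded complementary projections) is itself Banach: given a Cauchy sequence $\{f_n\}$, the sequences $\{\operatorname{rec}(f_n)\}$ and $\{f_n - \operatorname{rec}(f_n)\}$ are Cauchy in $\mathcal{C}(N_{\mathbb{R}})$ and $\mathcal{SL}(N_{\mathbb{R}})$ respectively (by boundedness of the projections), hence converge to limits $\Psi \in \mathcal{C}(N_{\mathbb{R}})$ and $f_0 \in \mathcal{SL}(N_{\mathbb{R}})$ by Lemma~\ref{SL-banach} and the previous lemma, and then $f_n \to f_0 + \Psi \in \mathcal{G}(N_{\mathbb{R}})$. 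I do not expect a serious obstacle here; the only point requiring a little care is the justification that the limit defining $\operatorname{rec}$ genuinely exists and behaves linearly for \emph{all} of $\mathcal{G}(N_{\mathbb{R}})$ rather than only its concave cone — but this is exactly what the decomposition $f = f_0 + \Psi$ supplies, so the argument is essentially bookkeeping once that decomposition (established above via uniqueness) is in hand.
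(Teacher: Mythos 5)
Your proof is correct and follows the same overall structure as the paper's: establish linearity and that $\operatorname{rec}$ fixes $\mathcal{C}(N_{\mathbb{R}})$ and kills $\mathcal{SL}(N_{\mathbb{R}})$ via the decomposition $f = f_0 + \Psi$, then prove continuity of $\operatorname{rec}$, then deduce the topological direct sum and completeness from the two preceding lemmas. The one place you genuinely deviate is the continuity estimate, where your argument is cleaner and sharper. You bound $|\operatorname{rec}(f)(x_0)| \le \varepsilon$ directly from the defining limit $|f(\lambda x_0)|/\lambda \le \varepsilon(1/\lambda + 1) \to \varepsilon$, which gives $\|\operatorname{rec}\| \le 1$ immediately (and this is obviously sharp since $\operatorname{rec}$ restricts to the identity on $\mathcal{C}(N_{\mathbb{R}})$, where $\|\cdot\|_{\mathcal{C}} = \|\cdot\|_{\mathcal{G}}$). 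The paper instead writes $f = \Psi + f_0$, splits the quotient $|\Psi(x)|/\|x\|$ via the triangle inequality, and picks $\|x\| \ge R$ large enough that the sublinear contribution is controlled, obtaining a worse constant (it needs $\|f\|_{\mathcal{G}} \le \varepsilon/4$ to conclude $\|\operatorname{rec}(f)\|_{\mathcal{C}} \le \varepsilon$). Your route is essentially the same scaling argument the paper already used to prove the lemma $\|\Psi\|_{\mathcal{C}} = \|\Psi\|_{\mathcal{G}}$ for conical $\Psi$, applied to $f$ itself rather than to its conical part; recognizing that the same trick works here buys you both brevity and the exact operator norm.
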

	\begin{proof}
		Completeness of $\mathcal{G}(N_{\mathbb{R}})$ and the decomposition as a direct sum are direct implications of the assertions on the extended recession map. Linearity is obvious. We show that the kernel is $\mathcal{SL}(N_{\mathbb{R}})$. Let $f = \Psi + f_0$ be the sum of a conical and a sublinear function, respectively. Then,
		\begin{displaymath}
			\textup{rec}(f) (x) = \lim_{\lambda \rightarrow + \infty} f( \lambda \, x)/ \lambda = \Psi( x) +  \lim_{\lambda \rightarrow + \infty} f_0 ( \lambda \, x) / \lambda = \Psi(x).
		\end{displaymath}
		It remains to show that the recession map is continuous. Let $\varepsilon > 0$. For all $x \neq 0$ apply the triangle inequality and divide by $\|x\|$ to obtain
		\begin{displaymath}
			\frac{|\Psi(x)|}{\| x \| }  \leq \frac{|f (x)|}{ \| x \|} + \frac{|f_{0} (x)|}{ \| x \|} = \frac{1+ \|x \| }{ \|x \|}  \cdot \left( \frac{|f (x)|}{1 + \| x \|} +  \frac{|f_{0} (x)|}{ 1+\| x \|} \right).
		\end{displaymath}
		Let $f$ be such that $\|f\|_{\mathcal{G}} \leq \varepsilon /4$. Since $\Psi$ is conical, we may assume that $\|x \| \geq R > 1$, where $R$ is large enough so that $|f_0 (x)| \leq \varepsilon / 4 \cdot (1 + \|x\|)$. Then
		\begin{displaymath}
			\frac{|\Psi(x)|}{\| x \| } \leq \frac{1+ \|x \| }{ \|x \|}  \cdot \left( \frac{|f (x)|}{1 + \| x \|} +  \frac{|f_{0} (x)|}{ 1+ \| x \|} \right) \leq 2 \cdot ( \| f \|_{\mathcal{G}} + \varepsilon/4 )= \varepsilon.
		\end{displaymath}
		Take the supremum over $\|x\| = R$ to show that $\| \Psi \|_{\mathcal{C}}=\| \Psi \|_{\mathcal{G}} \leq \varepsilon$. Therefore, the map $\textup{rec}$ is continuous.
	\end{proof}
	We want to show that every globally Lipschitz function $f$ is the limit in the $\mathcal{G}$-norm of a sequence $\lbrace f_n \rbrace_{n \in \mathbb{N}}$ in $\mathcal{G}^{+}(N_{\mathbb{R}})$. In fact, for each $n$, the function $f_n - \textup{rec}(f_n)$ will be bounded. To distinguish this class of functions, we introduce the following terminology.
	\begin{dfn}\label{min-sing-NR}
		Let $f \in \mathcal{G}(N_{\mathbb{R}})$. We say that $f$ is \textit{relatively bounded} if the difference $f - \textup{rec}(f)$ is bounded on $N_{\mathbb{R}}$. Then, denote by $\mathcal{G}_{\textup{rbd}}(N_{\mathbb{R}})$ the set of all relatively bounded functions and $\mathcal{G}^{+}_{\textup{rbd}}(N_{\mathbb{R}})$ its subset of concave functions.
	\end{dfn}
	Now, we prove the following pair of auxiliary lemmas. Afterwards, we show the main theorem of this subsection.
	\begin{lem}\label{cauchy-concave-G}
		Let $f_1, f_2 \in \mathcal{G}^{+}( N_{\mathbb{R}})$ and denote $g_i = f^{\vee}_{i}$. Let $\varepsilon > 0$ and $c \in \mathbb{R}$, then the following are equivalent:
		\begin{enumerate}[label = (\roman*)]
			\item For all $x \in N_{\mathbb{R}}$, we have $|f_1 (x) - f_2(x) | \leq \varepsilon \cdot (c + \| x \|) $.
			\item The inequalities $g_1 \boxplus \iota_{\overline{\textup{B}}(0,\varepsilon)} + \varepsilon \, c \geq g_2$ and $g_2 \boxplus \iota_{\overline{\textup{B}}(0,\varepsilon)} + \varepsilon  \, c \geq g_1$ hold.
		\end{enumerate}
	\end{lem}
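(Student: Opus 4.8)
The plan is to prove the equivalence by translating each side through the Legendre–Fenchel transform and using the dictionary between sum-convolution, Minkowski thickening, and the indicator function of a ball already assembled in the excerpt (Propositions~\ref{prop-conv} and~\ref{LF-prop}, Example~\ref{example-ball}). The starting observation is that the inequality in \textit{(i)} splits into the two one-sided inequalities
\begin{displaymath}
	f_1(x) \leq f_2(x) + \varepsilon\,c + \varepsilon\,\| x \| \quad\textup{and}\quad f_2(x) \leq f_1(x) + \varepsilon\,c + \varepsilon\,\| x \|
	\qquad\textup{for all } x \in N_{\mathbb{R}}.
\end{displaymath}
By Example~\ref{example-ball}, the conical concave function $x \mapsto \varepsilon\,\| x \|$ is (up to sign) the Legendre–Fenchel dual behaviour of the ball $\overline{\textup{B}}(0,\varepsilon)$; more precisely $-\varepsilon\,\| x \| = \Psi_{\overline{\textup{B}}(0,\varepsilon)}(x)$ and $\Psi_{\overline{\textup{B}}(0,\varepsilon)}^{\vee} = \iota_{\overline{\textup{B}}(0,\varepsilon)}$. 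So I would rewrite $f_2(x) + \varepsilon\,c + \varepsilon\,\| x \|$ as $f_2(x) + \varepsilon c - \Psi_{\overline{\textup{B}}(0,\varepsilon)}(x)$ and recognize, via the formula for sum-convolution and its dual, that the function $x \mapsto f_2(x) - \Psi_{\overline{\textup{B}}(0,\varepsilon)}(x)$ is itself concave with dual $g_2 \boxplus \iota_{\overline{\textup{B}}(0,\varepsilon)}$ — here one needs $\textup{stab}(f_2) \cap \textup{stab}(-\Psi_{\overline{\textup{B}}(0,\varepsilon)}) \neq \emptyset$, which holds because $\textup{stab}(f_2)$ is bounded (Remark~\ref{G-VS}(3), Proposition~\ref{conical-compact}) and every bounded stability set meets a large enough ball. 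Wait — I should be careful about the direction of signs in the sum-convolution identity; let me instead argue at the level of hypographs, using $\textup{hyp}(f \boxplus g) = \textup{hyp}(f) + \textup{hyp}(g)$ and the fact that $\textup{hyp}$ of $x \mapsto -\Psi_{\overline{\textup{B}}(0,\varepsilon)}(x) = \varepsilon\|x\|$ relates to a ball by duality.

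Concretely, here are the steps I would carry out. First, record the two-sided splitting above and note that each half is of the form ``$f_i \leq$ (concave function) $+$ constant''. Second, apply the order-reversal property Proposition~\ref{LF-prop}\textit{(vi)}: the inequality $f_1 \leq h + \varepsilon c$ between closed concave functions is equivalent to $f_1^{\vee} \geq h^{\vee} - \varepsilon c$, i.e.\ $g_1 \geq h^{\vee} - \varepsilon c$. Third, identify $h^{\vee}$ where $h(x) = f_2(x) + \varepsilon\,\| x \|$: by Proposition~\ref{LF-prop}\textit{(ii)} (with the relative-interior hypothesis satisfied because $\textup{dom}(f_2) = N_{\mathbb{R}}$ has nonempty interior and the conical function has full effective domain too) one gets $h^{\vee} = f_2^{\vee} \boxplus (\varepsilon\|\cdot\|)^{\vee} = g_2 \boxplus \iota_{\overline{\textup{B}}(0,\varepsilon)}$, using Example~\ref{example-ball} for the last dual and Proposition~\ref{LF-prop}\textit{(iv)} for the scaling of the norm by $\varepsilon$ (the Legendre–Fenchel transform of $\varepsilon\|x\|$ is $\iota_{\varepsilon\overline{\textup{B}}(0,1)} = \iota_{\overline{\textup{B}}(0,\varepsilon)}$). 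Thus $f_1 \leq f_2 + \varepsilon c + \varepsilon\|\cdot\|$ becomes $g_1 \geq g_2 \boxplus \iota_{\overline{\textup{B}}(0,\varepsilon)} - \varepsilon c$, which after adding $\varepsilon c$ is exactly the second inequality in \textit{(ii)}. Symmetrically the other half of \textit{(i)} is equivalent to the first inequality of \textit{(ii)}. Finally, I would remark that all the concave functions involved are closed (continuous with full domain, or indicator functions of compact convex sets), so no spurious closures appear in the duality identities and the equivalences go both ways verbatim.

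The main obstacle I anticipate is bookkeeping around the hypotheses of Proposition~\ref{LF-prop}\textit{(ii)}: the clean identity $(\textup{cl}(f)+\textup{cl}(g))^{\vee} = f^{\vee} \boxplus g^{\vee}$ without closures requires $\textup{ri}(\textup{dom}(f)) \cap \textup{ri}(\textup{dom}(g)) \neq \emptyset$. Here one summand is $f_2$ with $\textup{dom}(f_2) = N_{\mathbb{R}}$ and the other is $x\mapsto\varepsilon\|x\|$ with domain also $N_{\mathbb{R}}$, so the intersection of relative interiors is all of $N_{\mathbb{R}}$ and the hypothesis is automatic — this should be a one-line check rather than a genuine difficulty, but it is the point where the argument could silently break if the function classes were slightly different. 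A secondary subtlety is making sure that the constant term $\varepsilon c$ transforms correctly; this is handled by Example~\ref{LF-constant} (the dual of adding a constant $k$ to a concave function subtracts $k$ from the dual), and I would state that translation-of-dual fact explicitly before invoking it. Everything else — the splitting into one-sided inequalities and the monotonicity of $(\cdot)^{\vee}$ — is routine and I would not belabour it.
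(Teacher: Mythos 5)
Your high-level plan — split $|f_1 - f_2| \leq \varepsilon(c + \|\cdot\|)$ into two one-sided inequalities and dualize each via Legendre--Fenchel, using $\boxplus$ and the ball example to identify the transform of the norm term — is the same strategy the paper follows, but there is a genuine sign error in the middle that breaks the argument. You move $\varepsilon c + \varepsilon\|\cdot\|$ to the side of $f_2$ and try to identify $(f_2 + \varepsilon\|\cdot\| + \varepsilon c)^{\vee}$, but the function $x \mapsto f_2(x) + \varepsilon\|x\|$ is \emph{not} concave (take $f_2$ constant to see it is convex), so Proposition~\ref{LF-prop}\textit{(ii)} does not apply to this sum. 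Correspondingly, your claim that the Legendre--Fenchel transform of $\varepsilon\|x\|$ is $\iota_{\overline{\textup{B}}(0,\varepsilon)}$ is false: as a concave function, $\varepsilon\|\cdot\|$ has $\left(\varepsilon\|\cdot\|\right)^{\vee}(y) = \inf_{x}\{\langle y,x\rangle - \varepsilon\|x\|\} \equiv -\infty$. What Example~\ref{example-ball} actually gives (and you state this correctly once, before undoing it) is that $-\varepsilon\|x\| = \Psi_{\overline{\textup{B}}(0,\varepsilon)}(x)$ and hence $\left(-\varepsilon\|\cdot\|\right)^{\vee} = \iota_{\overline{\textup{B}}(0,\varepsilon)}$.

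The fix is to keep the norm-plus-constant term on the $f_1$ side: rewrite the one-sided inequality as $f_1(x) - \varepsilon c - \varepsilon\|x\| \leq f_2(x)$, i.e., $f_1 + h \leq f_2$ with $h(x) = -\varepsilon c - \varepsilon\|x\|$. Now $h$ is concave, $f_1 + h$ is a sum of concave functions with full effective domains, Proposition~\ref{LF-prop}\textit{(vi)} gives $(f_1 + h)^{\vee} \geq g_2$, Proposition~\ref{LF-prop}\textit{(ii)} gives $(f_1 + h)^{\vee} = g_1 \boxplus h^{\vee}$, and combining Examples~\ref{LF-constant} and~\ref{example-ball} gives $h^{\vee} = \iota_{\overline{\textup{B}}(0,\varepsilon)} + \varepsilon c$. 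This yields $g_1 \boxplus \iota_{\overline{\textup{B}}(0,\varepsilon)} + \varepsilon c \geq g_2$, which is the first inequality of \textit{(ii)}; the other half follows by symmetry. Note also that your final displayed conclusion, $g_1 + \varepsilon c \geq g_2 \boxplus \iota_{\overline{\textup{B}}(0,\varepsilon)}$, is not the second inequality of \textit{(ii)} (which reads $g_2 \boxplus \iota_{\overline{\textup{B}}(0,\varepsilon)} + \varepsilon c \geq g_1$, the reverse direction) — this mismatch is a symptom of the sign error above, and it should have been a red flag that the decomposition was going wrong.
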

	\begin{proof}
		We use the definition of the absolute value and rearrange the inequalities to show that $|f_1 (x) - f_2(x) | \leq \varepsilon \cdot (c + \| x \|) $ is equivalent to
		\begin{displaymath}
			f_1 (x) - \varepsilon \, c - \varepsilon \cdot \| x \| \leq f_2 (x) \quad \textup{and} \quad f_2 (x) - \varepsilon \, c - \varepsilon \cdot \| x \| \leq f_1 (x).
		\end{displaymath}
		Note that the function $h (x) = - \varepsilon \, c - \varepsilon \cdot \| x \|$ is concave. By symmetry, it is enough to prove one of the inequalities in \textit{(ii)}. By Proposition~\ref{LF-prop}, the inequality on the left-hand side holds for all $x \in N_{\mathbb{R}}$ if and only if
		\begin{displaymath}
			(f_1 + h)^{\vee} = g_1 \boxplus h^{\vee} \geq g_2.
		\end{displaymath}
		By Proposition~\ref{LF-prop} and Examples~\ref{LF-constant},~\ref{example-ball}, we know that $	h^{\vee} = \iota_{\overline{\textup{B}}(0,\varepsilon)} + \varepsilon \, c$. The result follows from the identity $(f_1 + h)^{\vee} = g_1 \boxplus \iota_{\overline{\textup{B}}(0,\varepsilon)} + \varepsilon \, c$.
	\end{proof}
	\begin{lem}\label{mono-approx-1}
		Let $\Delta \subset M_{\mathbb{R}}$ be a non-empty compact convex set. Let $g \colon  M_{\mathbb{R}} \rightarrow \mathbb{R}_{ -\infty }$ be a proper closed concave function such that $\textup{cl}(\textup{dom}(g)) = \Delta$. Let $\lbrace \Delta_k \rbrace_{k \in \mathbb{N}}$ be a decreasing sequence of compact convex subsets of $M_{\mathbb{R}}$ converging to $\Delta$ in the Hausdorff distance. Then, there exist a decreasing sequence $\lbrace g_n  \rbrace_{n \in \mathbb{N}}$ of closed concave functions on $M_{\mathbb{R}}$ satisfying:
		\begin{enumerate}[label=(\roman*)]
			\item The sequence $\lbrace \textup{dom}(g_n) \rbrace_{n \in \mathbb{N}}$ is a subsequence of $\lbrace \Delta_k  \rbrace_{k \in \mathbb{N}}$.
			\item For each $n$, the function $g_n$ is continuous on $\textup{dom}(g_n)$. In particular, $g_n$ is bounded.
			\item The sequence $\lbrace g^{\vee}_{n}  \rbrace_{n \in \mathbb{N}}$ is an increasing sequence in $\mathcal{G}^{+}_{\textup{rbd}}(N_{\mathbb{R}})$, it is Cauchy in the $\mathcal{G}$-norm, with limit $g^{\vee}$.
		\end{enumerate}
	\end{lem}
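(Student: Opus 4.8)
The plan is to construct the approximating sequence $\{g_n\}$ explicitly by a two-step truncation, first cutting down the effective domain to one of the sets $\Delta_k$ and then forcing continuity on the boundary. For the first step, fix a point $y_0 \in \mathrm{ri}(\mathrm{dom}(g))$ (which is non-empty since $g$ is proper and concave) and note that $g$ is continuous on $\mathrm{ri}(\mathrm{dom}(g))$ by Proposition~\ref{cont-1}. For each $k$, since $\Delta_k \searrow \Delta$ in the Hausdorff distance, one can pass to a subsequence $\{\Delta_{k_n}\}$ such that $\Delta_{k_n} \supset \Delta_{k_{n+1}} \supset \Delta$ for all $n$. Define
\begin{displaymath}
	g_n(y) \coloneqq \begin{cases} \min\{g(y), \, M_n\} & y \in \mathrm{ri}(\mathrm{dom}(g)) \\ \liminf_{z \to y,\, z \in \mathrm{ri}(\mathrm{dom}(g))} \min\{g(z), M_n\} & y \in \mathrm{dom}(g) \setminus \mathrm{ri}(\mathrm{dom}(g)) \\ -\infty & y \notin \Delta_{k_n}, \end{cases}
\end{displaymath}
where $M_n$ is a suitable increasing sequence of real constants (for instance $M_n \coloneqq g(y_0) + n$) and we additionally restrict to $\Delta_{k_n}$. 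Concavity and closedness are preserved under $\min$ with a constant (Proposition~\ref{prop-conv}), under taking upper-semicontinuous closures, and under restriction to a closed convex set; and the resulting $g_n$ is continuous on $\mathrm{dom}(g_n)$ essentially because capping by $M_n$ removes the only possible obstruction to boundedness and one then invokes the criterion that a closed concave function with polytope-like (here compact convex) bounded domain on which it is bounded extends continuously — this is where I would be careful, using that $\Delta_{k_n}$ is compact and $g_n \leq M_n$, so that $g_n$ is bounded above, while the liminf-closure construction guarantees upper semicontinuity and the concavity plus boundedness on the interior give a finite continuous extension to the closure by an argument of the type in Corollary~32.3.4 of~\cite{Roc}. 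This establishes (i) and (ii). The sequence $\{g_n\}$ is decreasing: $\min\{g,M_n\}$ is increasing in $n$, but the domain constraint $\Delta_{k_n}$ shrinks, and one checks that the shrinking of the domain dominates, i.e. one should actually define $g_n$ as the minimum of $\min\{g, M_n\}$ (closed up) and a fixed large reference function on $\Delta_{k_n}$ so that monotonicity is transparent; alternatively, replace $g_n$ by $\min\{g_0, g_1, \dots, g_n\}$ after the fact, which is still closed concave with domain $\Delta_{k_n}$ and continuous on it.

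For part (iii), the strategy is to transfer everything through the Legendre-Fenchel transform using the results already assembled. Since $\mathrm{cl}(\mathrm{dom}(g_n)) = \Delta_{k_n}$ is compact and $g_n$ is bounded and closed concave, its dual $g_n^\vee \colon N_{\mathbb{R}} \to \mathbb{R}$ is finite everywhere (Remark~\ref{LF-duality}, Proposition~\ref{conical-compact}), globally Lipschitz (Proposition~\ref{conical-compact}(ii), since $\mathrm{dom}(g_n)$ is bounded), with recession function $\mathrm{rec}(g_n^\vee) = \Psi_{\Delta_{k_n}}$ the support function of $\Delta_{k_n}$ by Proposition~\ref{LF-prop}(iii). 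Boundedness of $g_n$ means $0$ lies in the stability set comfortably and in fact, by Proposition~\ref{bounds-concave}(i) applied to $g_n$ and translates of its argument, one gets that $g_n^\vee - \mathrm{rec}(g_n^\vee)$ is bounded on $N_{\mathbb{R}}$ — this is precisely the relative boundedness in Definition~\ref{min-sing-NR}, so $g_n^\vee \in \mathcal{G}^+_{\mathrm{rbd}}(N_{\mathbb{R}})$. Monotonicity: $g_n$ decreasing in $n$ forces $g_n^\vee$ increasing in $n$ by Proposition~\ref{LF-prop}(vi). It remains to prove the Cauchy property in the $\mathcal{G}$-norm with limit $g^\vee$. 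The pointwise limit of $\{g_n^\vee\}$ equals $g^\vee$: indeed, $g_n \searrow g$ pointwise (on $\mathrm{ri}(\mathrm{dom}(g))$ the cap $M_n$ eventually exceeds $g(y)$ and the domain eventually ignores the region outside $\Delta$; on the relative boundary and outside one uses closedness and the convergence $\Delta_{k_n} \to \Delta$), and then Corollary~\ref{LF-conv}(ii) applied to the increasing sequence $\{g_n^\vee\}$ of closed concave functions — whose duals $\{g_n\}$ decrease to $g$, hence whose duals' duals $g^{\vee\vee}$ behave correctly — gives $g_n^\vee \nearrow (g)^\vee = g^\vee$ pointwise. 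Upgrading pointwise convergence of the increasing sequence to $\mathcal{G}$-norm convergence is the final technical point: one invokes Lemma~\ref{cauchy-concave-G} (or its underlying computation) to translate a uniform-in-$x$ bound $|g_n^\vee(x) - g_m^\vee(x)| \leq \varepsilon(1 + \|x\|)$ into a statement about $g_n \boxplus \iota_{\overline{\mathrm{B}}(0,\varepsilon)} + \varepsilon \geq g_m$ and vice versa, i.e. about Hausdorff-closeness of the domains $\Delta_{k_n}$ plus uniform closeness of the capped functions; since $\Delta_{k_n} \to \Delta$ in the Hausdorff distance (Corollary~\ref{Kompact}) and the caps $M_n$ are chosen to grow slowly relative to the shrinking, the desired estimate holds for $n, m$ large.

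The main obstacle I expect is the interplay in part (iii) between the two independent "directions" of approximation — the domain shrinking $\Delta_{k_n} \to \Delta$ and the cap $M_n \to \infty$ — exactly the tension dramatized in Example~\ref{sum-not-finite} and analyzed in Remark~\ref{motivation-local-toric-h}: one must choose $M_n$ so that the integral/growth contribution from the region $\Delta_{k_n} \setminus \Delta$ does not blow up the $\mathcal{G}$-norm difference. Concretely, the $\mathcal{G}$-norm of $g_n^\vee - g^\vee$ is controlled by two quantities, the Hausdorff distance $\mathrm{d}_{\mathcal{H}}(\Delta_{k_n}, \Delta)$ and the sup of $|g_n - g|$ over $\Delta$, and one needs a simultaneous bound; passing to a further subsequence of $\{\Delta_{k_n}\}$ so that $\mathrm{d}_{\mathcal{H}}(\Delta_{k_n}, \Delta) \leq 2^{-n}$ and choosing $M_n$ accordingly makes this manageable, but the bookkeeping — especially near the relative boundary of $\Delta$ where $g$ may tend to $-\infty$ and the closure operation is delicate — requires care. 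Everything else (concavity-preservation, duality dictionary, completeness of $\mathcal{G}(N_{\mathbb{R}})$) is available from Propositions~\ref{prop-conv}, \ref{LF-prop}, Corollary~\ref{LF-conv}, and Theorem~\ref{G-banach}.
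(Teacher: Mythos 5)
Your construction cannot achieve property~(i) of the lemma, and this is a genuine gap rather than a matter of bookkeeping. The operation $y \mapsto \min\{g(y), M_n\}$, together with a liminf/upper-semicontinuous closure, never enlarges the effective domain: one still has $\textup{dom}(g_n) \subset \textup{dom}(g) \subset \Delta$, so $\textup{dom}(g_n)$ will never equal $\Delta_{k_n} \supsetneq \Delta$ as required. Your proposal to additionally "restrict to $\Delta_{k_n}$" is a no-op because $\textup{dom}(g)$ already sits inside $\Delta_{k_n}$, and taking a pointwise minimum with a reference function supported on $\Delta_{k_n}$ only shrinks the domain further (the domain of a minimum of concave functions is the intersection of domains). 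Moreover, the underlying idea of the cap is aimed at the wrong end: a closed concave function with compact domain is automatically bounded \emph{above} on its domain; the obstruction to continuity and to property~(ii) is the possible divergence of $g$ to $-\infty$ at the relative boundary of $\Delta$, and an upper cap $M_n$ does nothing to cure that. Your claimed monotonicity analysis also inherits this confusion — with a cap increasing in $n$ and a domain that in fact does not shrink, the resulting sequence is increasing, not decreasing.

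The correct mechanism, which the paper uses, is to \emph{enlarge} the domain by sup-convolution with an indicator of a small ball: $h_n := g \boxplus \iota_{\overline{\textup{B}}(0,1/n)} + 1/n$ has effective domain $\textup{dom}(g) + \overline{\textup{B}}(0,1/n)$, whose interior contains $\Delta_{k_n}$ once $\Delta_{k_n} \subset \Delta + \textup{B}(0,1/n)$. Restricting $h_n$ to the compact convex set $\Delta_{k_n}$, which now lies in $\textup{ri}(\textup{dom}(h_n))$, yields (by Proposition~\ref{cont-1}) a closed concave function $g_n$ continuous on all of $\textup{dom}(g_n)=\Delta_{k_n}$, hence bounded. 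This gives (i) and (ii) directly, the added $+1/n$ makes the sequence decreasing, and the two inequalities $g \boxplus \iota_{\overline{\textup{B}}(0,1/n)} + 1/n \geq g_n$ and $g_n \boxplus \iota_{\overline{\textup{B}}(0,1/n)} + 1/n \geq g$ feed into Lemma~\ref{cauchy-concave-G} to yield the $\mathcal{G}$-norm estimate $\|g^\vee - g_n^\vee\|_{\mathcal{G}} \leq 1/n$ with a single, simple choice rather than the delicate calibration of $M_n$ against $\textup{d}_{\mathcal{H}}(\Delta_{k_n},\Delta)$ that you anticipate. Your dualization machinery in part~(iii) — Legendre--Fenchel, $\mathcal{G}^+_{\textup{rbd}}$, Corollary~\ref{LF-conv}, Lemma~\ref{cauchy-concave-G} — is the right toolbox and matches the paper's; it is the underlying truncation construction in parts (i)--(ii) that needs to be replaced by sup-convolution.
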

	\begin{proof}
		The sequence $\lbrace \Delta_k \rbrace_{k \in \mathbb{N}}$ is decreasing and converging to $\Delta$. Therefore, we may choose a decreasing subsequence $\lbrace \Delta_{k_n}  \rbrace_{n \in \mathbb{N}}$ such that $\Delta_{k_n} \subset \Delta + \textup{B}(0,\min \lbrace 1, 1/n \rbrace)$. Without loss of generality, assume that $n \geq 1$. As $\textup{cl}(\textup{dom}(g)) = \Delta$, we get that $\Delta_{k_n}$ is a compact convex set contained in an open subset of $\textup{dom}(g)+ \overline{\textup{B}}(0,1/n)$. By Propositions~\ref{prop-conv}~and~\ref{LF-prop}, the function
		\begin{displaymath}
			h_n = g \boxplus  \iota_{\overline{\textup{B}}(0,1/n)} + 1/n
		\end{displaymath}
		is a closed concave function with effective domain $\textup{dom}(g)+ \overline{\textup{B}}(0,1/n)$. Denote by $g_n$ the restriction of $h_n$ to $\Delta_{k_n}$ (extended by $- \infty$ on the complement of $\Delta_{k_n}$). Since the function $h_n$ is continuous on the interior of its domain, the function $g_n$ is a closed concave function, continuous on its effective domain $\Delta_{k_n}$. It follows that $g^{\vee}_n \in \mathcal{G}^{+}_{\textup{rbd}}(N_{\mathbb{R}})$. By construction, the sequence $\lbrace g_n  \rbrace_{n \in \mathbb{N}}$ is decreasing and the inequalities
		\begin{displaymath}
			g \boxplus  \iota_{\overline{\textup{B}}(0,1/n)} + 1/n \geq g_n, \quad g_n \boxplus  \iota_{\overline{\textup{B}}(0,1/n)} + 1/n \geq g
		\end{displaymath}
		are satisfied trivially. By Lemma~\ref{cauchy-concave-G}, the above inequalities imply that for all $x \in N_{\mathbb{R}}$,
		\begin{displaymath}
			|g^{\vee} (x) - g^{\vee}_n (x) | \leq 1/n \cdot (1 + \| x \|).
		\end{displaymath}
		This inequality, combined with the triangle inequality, shows that the sequence $\lbrace (g)^{\vee}_n  \rbrace_{n \in \mathbb{N}}$ is increasing and Cauchy in the $\mathcal{G}$-norm. By fixing $x$ in the above inequality, we obtain that $\lbrace (g)^{\vee}_n  \rbrace_{n \in \mathbb{N}}$ converges pointwise to $g^{\vee}$, thus, its limit in the $\mathcal{G}$-norm must be $g^{\vee}$. By Corollary~\ref{LF-conv}, the sequence $g_n$ converges pointwise to $g$.
	\end{proof}
	\begin{thm}\label{closedcone}
		The closed cone $\mathcal{G}^{+}(N_{\mathbb{R}})$ coincides with the set of globally Lipschitz concave functions on $N_{\mathbb{R}}$.
	\end{thm}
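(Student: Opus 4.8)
The plan is to prove the two inclusions separately, using the machinery already in place. For the inclusion $\mathcal{G}^{+}(N_{\mathbb{R}}) \subset \{\text{globally Lipschitz concave}\}$, I would invoke Remark~\ref{G-VS}(3): every element of $\mathcal{G}^{+}(N_{\mathbb{R}})$ decomposes as $f = f_0 + \Psi$ with $f_0$ sublinear and $\Psi$ conical, its extended recession function equals $\Psi$, which by Proposition~\ref{conical-compact} forces $\textup{dom}(\textup{rec}(f)) = N_{\mathbb{R}}$; together with $\textup{dom}(f) = N_{\mathbb{R}}$ (automatic since $f$ is finite and continuous) and Proposition~\ref{Lipschitz}, this gives that $f$ is globally Lipschitz. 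So this direction is essentially immediate from the earlier remarks.

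For the reverse inclusion, let $f \colon N_{\mathbb{R}} \rightarrow \mathbb{R}$ be globally Lipschitz and concave. First I would note $\|f\|_{\mathcal{G}} < \infty$ by Remark~\ref{G-VS}(4), so $f$ is at least a candidate element of $\mathcal{G}(N_{\mathbb{R}})$ once we know it lies in the closure of the sum $\mathcal{SL}(N_{\mathbb{R}}) + \mathcal{C}(N_{\mathbb{R}})$; but the cleaner route is to exhibit $f$ as a $\mathcal{G}$-norm limit of a sequence in $\mathcal{G}^{+}(N_{\mathbb{R}})$ and then invoke the fact that $\mathcal{G}^{+}(N_{\mathbb{R}})$ is a \emph{closed} cone in the Banach space $\mathcal{G}(N_{\mathbb{R}})$ (Theorem~\ref{G-banach} and Remark~\ref{G-VS}(2)). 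To build the approximating sequence, pass to the dual: set $g = f^{\vee}$, which by Proposition~\ref{conical-compact}(ii) is a closed concave function on $M_{\mathbb{R}}$ with $\textup{dom}(g)$ bounded. Let $\Delta = \textup{cl}(\textup{dom}(g))$, a non-empty compact convex set, and choose $\Delta_k = \Delta$ for all $k$ (a trivially decreasing sequence converging to $\Delta$ in the Hausdorff distance). Applying Lemma~\ref{mono-approx-1} to $g$ and this sequence $\{\Delta_k\}_{k\in\mathbb{N}}$ produces a decreasing sequence $\{g_n\}_{n\in\mathbb{N}}$ of closed concave functions, each continuous (hence bounded) on its effective domain, such that $\{g_n^{\vee}\}_{n\in\mathbb{N}}$ is an increasing sequence in $\mathcal{G}^{+}_{\textup{rbd}}(N_{\mathbb{R}}) \subset \mathcal{G}^{+}(N_{\mathbb{R}})$, Cauchy in the $\mathcal{G}$-norm, with limit $g^{\vee} = (f^{\vee})^{\vee} = f$ (using that $f$ is closed, as it is continuous by Proposition~\ref{cont-1}). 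Since $\mathcal{G}^{+}(N_{\mathbb{R}})$ is closed in the complete space $\mathcal{G}(N_{\mathbb{R}})$, the limit $f$ lies in $\mathcal{G}^{+}(N_{\mathbb{R}})$, completing the inclusion.

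The only subtlety to check carefully is that Lemma~\ref{mono-approx-1} genuinely applies: it requires $g$ to be proper, closed, concave with $\textup{cl}(\textup{dom}(g)) = \Delta$, all of which hold for $g = f^{\vee}$ by construction, and it requires a decreasing sequence of compact convex sets converging to $\Delta$, which the constant sequence $\Delta_k = \Delta$ trivially provides. One should also verify that the resulting subsequence $\{\Delta_{k_n}\}$ in the statement of Lemma~\ref{mono-approx-1} is just $\Delta$ itself (or a shrinking family of thickenings intersected back down), so that $g_n^{\vee}$ really does converge to $g^{\vee} = f$ and not to something with a strictly smaller domain; the pointwise-convergence clause in Lemma~\ref{mono-approx-1}(iii) handles exactly this. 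I expect this bookkeeping — confirming the hypotheses of Lemma~\ref{mono-approx-1} line up and that the $\mathcal{G}$-norm limit is $f$ rather than a function with altered effective domain — to be the main (and essentially only) obstacle, and it is a minor one; the structural content has already been front-loaded into Theorem~\ref{G-banach}, Proposition~\ref{conical-compact}, and Lemma~\ref{mono-approx-1}.
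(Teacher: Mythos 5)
Your proposal is correct and follows essentially the same route as the paper: one direction via Remark~\ref{G-VS}, and the other by applying Lemma~\ref{mono-approx-1} to $g = f^{\vee}$ with the constant sequence $\Delta_k = \Delta$ and then invoking closedness of $\mathcal{G}^{+}(N_{\mathbb{R}})$ in the complete space $\mathcal{G}(N_{\mathbb{R}})$. The only cosmetic difference is that you write $\Delta = \textup{cl}(\textup{dom}(g))$ where the paper writes $\Delta = \textup{cl}(\textup{stab}(f))$; these agree by Remark~\ref{LF-duality}, so the arguments coincide.
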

	\begin{proof}
		From Remark~\ref{G-VS}, we know that every function in $\mathcal{G}^{+}(N_{\mathbb{R}})$ is globally Lipschitz. Assume now that $f \colon N_{\mathbb{R}} \rightarrow \mathbb{R}$ is globally Lipschitz. By Proposition~\ref{conical-compact}, the Legendre-Fenchel dual $g = f^{\vee}$ satisfies the conditions of Lemma~\ref{mono-approx-1}. Let $\Delta= \textup{cl}(\textup{stab}(f))$. Trivially, the constant sequence $\lbrace \Delta \rbrace_{n \in \mathbb{N}}$ is decreasing and converges in the Hausdorff distance to $\Delta$. Then, use the lemma to obtain a sequence $\lbrace f_n \rbrace_{n \in \mathbb{N}}$ in $f \in \mathcal{G}^{+}_{\textup{rbd}}(N_{\mathbb{R}})$ converging to $f$ in the $\mathcal{G}$-norm. It follows that $f \in \mathcal{G}^{+}(N_{\mathbb{R}})$.
	\end{proof}
	The type of convergence in Lemma~\ref{mono-approx-1} is a particular case of the following result.
	\begin{lem}\label{mono-conv-G}
		Let $f$ and $\lbrace f_n  \rbrace_{n \in \mathbb{N}}$ be a function and an increasing sequence in $\mathcal{G}^{+}(N_{\mathbb{R}})$ respectively. The following are equivalent:
		\begin{enumerate}[label=(\roman*)]
			\item The sequence $\lbrace f_n \rbrace_{n \in \mathbb{N}}$ converges to $f$ in the $\mathcal{G}$-norm.
			\item The sequence $\lbrace \textup{rec}(f_n) \rbrace_{n \in \mathbb{N}}$ converges to $\textup{rec}(f)$ in the $\mathcal{C}$-norm and $\lbrace f_n \rbrace_{n \in \mathbb{N}}$ converges pointwise to $f$.
			\item The sequence $\lbrace \textup{cl}(\textup{stab}(f^{\vee}_{n}))  \rbrace_{n \in \mathbb{N}}$ converges to $ \textup{cl}(\textup{stab}(f^{\vee}))$ in the Hausdorff distance and $\lbrace f^{\vee}_{n} \rbrace_{n \in \mathbb{N}}$ converges pointwise to $f^{\vee}$.
		\end{enumerate}
	\end{lem}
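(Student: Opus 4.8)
The plan is to establish the cycle $(i)\Rightarrow(ii)$, $(ii)\Leftrightarrow(iii)$ and $(ii)\Rightarrow(i)$, of which only the last is substantial. The implication $(i)\Rightarrow(ii)$ is immediate: the extended recession map $\textup{rec}\colon\mathcal{G}(N_{\mathbb{R}})\to\mathcal{C}(N_{\mathbb{R}})$ is continuous and linear by Theorem~\ref{G-banach}, so $\mathcal{G}$-norm convergence $f_n\to f$ forces $\textup{rec}(f_n)\to\textup{rec}(f)$ in the $\mathcal{C}$-norm, while $\mathcal{G}$-norm convergence implies pointwise convergence by Remark~\ref{G-VS}. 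For $(ii)\Leftrightarrow(iii)$ I would argue entirely through Legendre--Fenchel duality. Every element of $\mathcal{G}^{+}(N_{\mathbb{R}})$ is globally Lipschitz, hence continuous with full effective domain, hence closed (Remark~\ref{G-VS} and Proposition~\ref{cont-1}); thus $h^{\vee\vee}=h$ for $h\in\{f\}\cup\{f_n\}_{n\in\mathbb{N}}$. By part \textit{(iii)} of Proposition~\ref{LF-prop} together with Remark~\ref{LF-duality}, $\textup{rec}(h)$ is the support function of the compact convex set $\textup{cl}(\textup{dom}(h^{\vee}))=\textup{cl}(\textup{stab}(h))$, so the isometry $\textup{stab}\colon\mathcal{C}^{+}(N_{\mathbb{R}})\to\mathcal{K}^{+}(M_{\mathbb{R}})$ of Corollary~\ref{Kompact} turns $\mathcal{C}$-norm convergence of the recession functions into Hausdorff convergence of these sets and back; finally, since the $f_n$ are closed and monotone, pointwise convergence $f_n\to f$ is equivalent to pointwise convergence $f_n^{\vee}\to f^{\vee}$ by part \textit{(ii)} of Corollary~\ref{LF-conv}. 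Hence the whole statement reduces to $(ii)\Rightarrow(i)$.

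To prove $(ii)\Rightarrow(i)$, observe first that $\{f_n\}$ increasing with pointwise limit $f$ gives $0\le f-f_n$, so it suffices to show $\sup_{x\in N_{\mathbb{R}}}\tfrac{f(x)-f_n(x)}{1+\|x\|}\to 0$. I would split $N_{\mathbb{R}}$ into a closed ball $\overline{\textup{B}}(0,R)$ and its complement. On the ball, $\{f_n\}$ is an increasing sequence of continuous functions converging pointwise on a compact set to the continuous function $f$, so by Dini's theorem the convergence is uniform there, which controls the quotient on $\overline{\textup{B}}(0,R)$ for any fixed $R$. On the complement I would use a recession comparison: concavity of $f_n$ makes $\lambda\mapsto(f_n(\lambda x)-f_n(0))/\lambda$ non-increasing with limit $\textup{rec}(f_n)(x)$, whence $f_n(x)\ge f_n(0)+\textup{rec}(f_n)(x)$ for all $x$; combining this with $\textup{rec}(f_n)(x)\ge\textup{rec}(f)(x)-\varepsilon_n\|x\|$, where $\varepsilon_n\coloneqq\|\textup{rec}(f)-\textup{rec}(f_n)\|_{\mathcal{C}}\to 0$, yields
\[
0\ \le\ f(x)-f_n(x)\ \le\ s(x)+|f_n(0)|+\varepsilon_n\|x\|,\qquad s\coloneqq f-\textup{rec}(f)\in\mathcal{SL}(N_{\mathbb{R}}).
\]
Dividing by $1+\|x\|$, the term $\tfrac{s(x)+|f_n(0)|}{1+\|x\|}$ tends to $0$ as $\|x\|\to\infty$ uniformly in $n$ (because $s$ is sublinear and $f_n(0)\to f(0)$ is bounded), and $\varepsilon_n\to 0$; so first choosing $R$ large and then $n$ large makes the quotient small on the complement as well.

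The main obstacle is precisely this estimate away from the origin, where one must obtain uniformity in $n$: it rests on the $\mathcal{C}$-norm hypothesis in $(ii)$, which makes the comparison error $\textup{rec}(f)-\textup{rec}(f_n)$ controlled by $\varepsilon_n\|x\|$ uniformly in $x$, and on the genuine sublinearity of $f-\textup{rec}(f)$, which bounds its contribution to the quotient for large $\|x\|$ independently of $n$. The lower bound $f_n\ge f_n(0)+\textup{rec}(f_n)$ is the device that upgrades ``$f_n$ near $f$ on a ball and $\textup{rec}(f_n)$ near $\textup{rec}(f)$'' to ``$f_n$ near $f$ everywhere'', and together with the use of $f_n\le f$ it explains why monotonicity of $\{f_n\}$ is essential (compare the special case Lemma~\ref{mono-approx-1}). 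Everything else is a formal consequence of the duality machinery already assembled in this appendix.
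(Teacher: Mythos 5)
Your proof is correct and its global structure matches the paper's: (i)$\Rightarrow$(ii) by continuity and linearity of $\textup{rec}$ (Theorem~\ref{G-banach}); (ii)$\Leftrightarrow$(iii) by Legendre--Fenchel duality, identifying $\textup{rec}(h)$ with the support function of $\textup{cl}(\textup{stab}(h))$, the isometry of Corollary~\ref{Kompact}, and Corollary~\ref{LF-conv}; and (ii)$\Rightarrow$(i) by cutting $N_{\mathbb{R}}$ at a radius $R$, handling the ball with Dini and the tail with a recession estimate. The one genuine variation is the tail estimate. The paper fixes an index $k$ with $\|\textup{rec}(f)-\textup{rec}(f_k)\|_{\mathcal{C}}\leq\varepsilon/3$, invokes sublinearity of \emph{both} $f-\textup{rec}(f)$ and $f_k-\textup{rec}(f_k)$ to produce a radius $R_k$ beyond which $f\leq f_k+\varepsilon\|\cdot\|$, and then passes from $f_k$ to $f_n$ ($n\geq k$) by monotonicity. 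You replace the second sublinearity argument by the global bound $f_n(x)\geq f_n(0)+\textup{rec}(f_n)(x)$, valid for every $n$ and every $x$ because the difference quotient $\lambda\mapsto(f_n(\lambda x)-f_n(0))/\lambda$ is non-increasing (Lemma~\ref{dir-der-1}) with limit $\textup{rec}(f_n)(x)$. As a result you need the sublinearity of $f-\textup{rec}(f)$ only once, plus the uniform bound $\sup_n|f_n(0)|<\infty$ (from $f_0(0)\leq f_n(0)\leq f(0)$) and $\varepsilon_n\to 0$; the cutoff radius $R$ then depends only on $f$, not on a separately chosen index. This is slightly cleaner bookkeeping, though the underlying decomposition is the same.

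One remark on the statement rather than your argument: as printed, item (iii) involves $\textup{cl}(\textup{stab}(f_n^{\vee}))$, but by Remark~\ref{LF-duality} this equals $\textup{cl}(\textup{dom}(f_n))=N_{\mathbb{R}}$, which would make the Hausdorff condition vacuous and the equivalence with (ii) fail (pointwise convergence of the duals alone does not give $\mathcal{C}$-norm convergence of recession functions). The intended set is $\textup{cl}(\textup{stab}(f_n))=\textup{cl}(\textup{dom}(f_n^{\vee}))$, which is exactly the compact convex set you use through $\textup{cl}(\textup{dom}(h^{\vee}))=\textup{cl}(\textup{stab}(h))$. Your reading silently corrects the typo and is the correct one.
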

	\begin{proof}
		The proof of \textit{(i)} $\Rightarrow$ \textit{(ii)} is clear: Convergence in the $\mathcal{G}$-norm implies pointwise convergence. The convergence of the recession functions follows from Theorem~\ref{G-banach}.
		
		Assume \textit{(ii)}, then \textit{(iii)} follows from Corollaries~\ref{Kompact} and~\ref{LF-conv}. The converse statement follows from the homomorphism in Corollary~\ref{Kompact}, together with the fact that $f$ is a closed concave function. Then, follow the proof of Corollary~\ref{LF-conv} to obtain the result.
		
		The main content of the lemma is that \textit{(ii)} implies \textit{(i)}. Fix $\varepsilon >0$. The sequence $\lbrace f_n \rbrace_{n \in \mathbb{N}}$ is increasing. Thus, it is enough to show that for all $x \in N_{\mathbb{R}}$ and all $n \in \mathbb{N}$ large enough, we have
		\begin{displaymath}
			f(x) \leq f_n (x) +  \varepsilon \cdot (1 + \| x \|).
		\end{displaymath}
		The sequence $\lbrace \textup{rec}(f_n) \rbrace_{n \in \mathbb{N}}$ of recession functions is increasing. Indeed, it is a consequence of the monotonicity of pointwise limits. By the convergence of recession functions, there exist $k = k(\varepsilon )$ such that for all $n \geq k$ and all $x \in N_{\mathbb{R}}$,
		\begin{displaymath}
			| \textup{rec}(f)(x) - \textup{rec}(f_n)(x) | \leq \varepsilon / 3 \cdot \| x \|.
		\end{displaymath}
		After removing the absolute value in the above inequality and rearranging, we obtain
		\begin{displaymath}
			\textup{rec}(f)(x) \leq  \textup{rec}(f_k )(x) +  \varepsilon / 3 \cdot \| x \|.
		\end{displaymath}
		By sublinearity of $f_{k} - \textup{rec}(f_{k})$ and $f - \textup{rec}(f)$, there exist a positive $R_{k} > 1$ such that for all $x$ satisfying $\| x \|>R_{k}$,
		\begin{displaymath}
			| \textup{rec}(f_{k})(x) - f_{k}(x) |  \leq  \varepsilon / 3 \cdot \| x \| \quad \textup{and} \quad | f(x) - \textup{rec}(f)(x) |  \leq  \varepsilon / 3 \cdot \| x \|.
		\end{displaymath}
		Similarly to the first inequality, we get
		\begin{displaymath}
			\textup{rec}(f_k )(x) \leq  f_{k}(x) +  \varepsilon / 3 \cdot \| x \|  \quad \textup{and} \quad  f(x) \leq  \textup{rec}(f)(x) +  \varepsilon / 3 \cdot \| x \|.
		\end{displaymath}
		Combining the three inequalities, we showed that for all $x$ such that $\| x \|>R_{k}$,
		\begin{displaymath}
			f(x) \leq  \textup{rec}(f)(x) +  \varepsilon / 3 \cdot \| x \| \leq \textup{rec}(f_k )(x) +  2 \,  \varepsilon / 3 \cdot \| x \| \leq f_{k}(x) + \varepsilon \cdot \| x \|.
		\end{displaymath}
		The sequence $\lbrace f_n \rbrace$ is increasing, therefore, for all $n \geq k$ and all $\|x \| >R_{k}$,
		\begin{displaymath}
			f(x) \leq f_{n}(x) + \varepsilon \cdot \| x \|.
		\end{displaymath}
		By Dini's theorem, the sequence $f_n$ converges uniformly to $f$ on the ball $\overline{\textup{B}}(0, R_k)$. Thus, we may find $n_{\varepsilon} \geq k$ such that for all $n \geq n_{\varepsilon}$ and all $\|x \| \leq R_k$, we have $f(x) \leq f_{n}(x) + \varepsilon$.	The convergence in the $\mathcal{G}$-norm follows.
	\end{proof}
	The result below will be useful to verify the pointwise convergence of sequences in $\mathcal{G}^{+}(N_{\mathbb{R}})$.
	\begin{prop}\label{equicontinuity}
		Let $\lbrace f_n \rbrace_{n \in \mathbb{N}}$ be a sequence in $\mathcal{G}^{+}(N_{\mathbb{R}})$ such that:
		\begin{enumerate}[label=(\roman*)]
			\item There exist $x_0 \in N_{\mathbb{R}}$ such that $\lbrace f_n (x_0) \rbrace_{n \in \mathbb{N}}$ is bounded.
			\item The set $S = \bigcup_{n} \textup{stab}(f_n)$ is bounded.
		\end{enumerate}
		Then, for each compact $K \subset N_{\mathbb{R}}$, there is a subsequence $\lbrace f_{n_k}  \rbrace_{k \in \mathbb{N}}$ converging uniformly on $K$.
	\end{prop}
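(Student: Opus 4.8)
The plan is to reduce the statement to the classical Arzelà--Ascoli theorem by producing a \emph{uniform} Lipschitz bound for the family $\lbrace f_n \rbrace_{n \in \mathbb{N}}$. First I would record that, by Theorem~\ref{closedcone}, each $f_n \in \mathcal{G}^{+}(N_{\mathbb{R}})$ is a finite-valued globally Lipschitz concave function on $N_{\mathbb{R}}$, and by Proposition~\ref{Lipschitz} its minimal Lipschitz constant equals
\begin{displaymath}
	C_n = \sup \lbrace \| y \| \, \vert \, y \in \textup{stab}(f_n) \rbrace.
\end{displaymath}
Since hypothesis \textit{(ii)} guarantees that $S = \bigcup_{n} \textup{stab}(f_n)$ is bounded, the constant $C \coloneqq \sup \lbrace \| y \| \, \vert \, y \in S \rbrace$ is finite, and $C_n \leq C$ for every $n$. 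Hence every $f_n$ is $C$-Lipschitz on all of $N_{\mathbb{R}}$; in particular the family is (uniformly) equicontinuous.

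Next I would establish pointwise boundedness on compact sets. Let $M \coloneqq \sup_{n} |f_n(x_0)|$, which is finite by hypothesis \textit{(i)}. For any $x \in N_{\mathbb{R}}$ the Lipschitz bound gives
\begin{displaymath}
	|f_n(x)| \leq |f_n(x_0)| + C \cdot \| x - x_0 \| \leq M + C \cdot \| x - x_0 \|,
\end{displaymath}
so for a fixed compact $K \subset N_{\mathbb{R}}$ the quantities $|f_n(x)|$ are bounded uniformly in $n$ and in $x \in K$ by $M + C \cdot \sup_{x \in K} \| x - x_0 \| < \infty$. Thus the restrictions $\lbrace f_n|_K \rbrace_{n \in \mathbb{N}}$ form a bounded, equicontinuous subset of the Banach space $C^{0}(K)$.

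Finally, I would invoke Arzelà--Ascoli: a bounded equicontinuous family in $C^{0}(K)$ is relatively compact for the uniform norm, so there is a subsequence $\lbrace f_{n_k} \rbrace_{k \in \mathbb{N}}$ converging uniformly on $K$. If one wants a single subsequence converging uniformly on every compact subset simultaneously, one exhausts $N_{\mathbb{R}}$ by an increasing sequence of closed balls $\overline{\textup{B}}(0,R_j)$ and passes to a diagonal subsequence; I would include this remark but it is not needed for the statement as written. I do not expect a genuine obstacle here: the only nontrivial ingredient is the convex-analytic fact that control of the stability sets yields a uniform Lipschitz constant (Proposition~\ref{Lipschitz}), after which everything is the standard Arzelà--Ascoli argument.
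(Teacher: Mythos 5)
Your proof is correct and follows essentially the same route as the paper's: extract a uniform Lipschitz constant from the bounded stability sets via Proposition~\ref{Lipschitz}, bound $|f_n(x)|$ on compacts using hypothesis \textit{(i)}, and invoke Arzel\`{a}--Ascoli. The only cosmetic difference is that you track the two bounds $C$ and $M$ separately while the paper merges them into a single constant, and your remark on diagonalization is a helpful but optional addendum.
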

	\begin{proof}
		Based on the two conditions above, the following constant is finite.
		\begin{displaymath}
			C = \max \left\lbrace \sup_{n \in \mathbb{N}} \lbrace |f_n (x_0)| \rbrace , \,  \sup_{y \in S} \lbrace \|y \| \rbrace \right\rbrace < \infty.
		\end{displaymath}
		By Proposition~\ref{Lipschitz}, every function $f_n$ is globally Lipschitz with the same constant $C$. It follows that the sequence is equicontinuous. Now, for every $x \in N_{\mathbb{R}}$ and every $n$,
		\begin{displaymath}
			|f_n (x)| \leq |f_n (x) - f_n (x_0)| + |f_n (0)| \leq C \cdot (1+ \| x - x_0 \|). 
		\end{displaymath}
		Let $K$ be a compact subset of $N_{\mathbb{R}}$. The estimate above implies that the sequence $\lbrace f_n |_{  K}  \rbrace_{n \in \mathbb{N}}$ is uniformly bounded. These are the requirements for the theorem of Arzel\`{a}-Ascoli, which states that the sequence $\lbrace f_n|_{ K}  \rbrace_{n \in \mathbb{N}}$ admits a subsequence converging uniformly on $K$.
	\end{proof}
	The following corollary is an improved version of Dini's theorem for $\mathcal{G}^{+}(N_{\mathbb{R}})$.
	\begin{cor}\label{dini-conc}
		Let $\lbrace f_n  \rbrace_{n \in \mathbb{N}}$ be an increasing sequence in $\mathcal{G}^{+}(N_{\mathbb{R}})$. Suppose there is $x_0 \in N_{\mathbb{R}}$ such that $\lbrace f_n (x_0)  \rbrace_{n \in \mathbb{N}}$ is bounded. Then, there exist a function $f \in \mathcal{G}^{+}(N_{\mathbb{R}})$ such that, for every compact subset $K$ of $N_{\mathbb{R}}$, the sequence $\lbrace f_n |_{ K}  \rbrace_{n \in \mathbb{N}}$ converges uniformly to $f|_{  K}$.
	\end{cor}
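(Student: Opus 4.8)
\textbf{Proof plan for Corollary~\ref{dini-conc}.} The plan is to combine the equicontinuity input from Proposition~\ref{equicontinuity} with a standard monotone-limit argument and the completeness of $\mathcal{G}^+(N_{\mathbb{R}})$. First I would produce a candidate limit function. Since $\lbrace f_n \rbrace_{n \in \mathbb{N}}$ is increasing, for each $x \in N_{\mathbb{R}}$ the sequence $\lbrace f_n(x) \rbrace_{n \in \mathbb{N}}$ is monotone; I must check it is bounded above. For this I would invoke Proposition~\ref{equicontinuity}: its hypotheses require that $S = \bigcup_n \textup{stab}(f_n)$ be bounded. This is where I expect the main obstacle to lie, because as stated the corollary only assumes $\lbrace f_n(x_0) \rbrace_{n \in \mathbb{N}}$ is bounded, not that the stability sets are uniformly bounded. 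However, since the sequence is \emph{increasing}, Proposition~\ref{LF-prop}~\textit{(vi)} gives $\textup{stab}(f_n) \subset \textup{stab}(f_1)$ for all $n$, so $S \subset \textup{stab}(f_1)$, which is bounded by Proposition~\ref{Lipschitz} because $f_1 \in \mathcal{G}^+(N_{\mathbb{R}})$ is globally Lipschitz (Remark~\ref{G-VS}). Thus the hypotheses of Proposition~\ref{equicontinuity} are automatically satisfied, and I would make this explicit as the first step.

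Next, with $S \subset \textup{stab}(f_1)$ bounded and $C$ as in the proof of Proposition~\ref{equicontinuity}, every $f_n$ is globally $C$-Lipschitz, so $|f_n(x)| \leq C(1 + \|x - x_0\|) + |f_n(x_0)| \leq C(1 + \|x - x_0\|) + C$ for all $n$ and all $x$; in particular $\lbrace f_n(x) \rbrace_{n \in \mathbb{N}}$ is bounded for every $x$, and being monotone it converges. Define $f(x) \coloneqq \lim_{n} f_n(x)$ pointwise. By Proposition~\ref{pointwise-conv}, $f$ is concave (it is not identically $-\infty$ since $f(x_0)$ is finite), and passing to the limit in the Lipschitz estimate shows $f$ is globally $C$-Lipschitz, hence $f \in \mathcal{G}^+(N_{\mathbb{R}})$ by Theorem~\ref{closedcone}.

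Finally I would upgrade pointwise convergence to uniform convergence on compacta. Fix a compact $K \subset N_{\mathbb{R}}$. Each $f_n|_K$ is continuous, the sequence $\lbrace f_n|_K \rbrace_{n \in \mathbb{N}}$ is monotone increasing, the pointwise limit $f|_K$ is continuous, and $K$ is compact; these are exactly the hypotheses of Dini's theorem, which therefore gives uniform convergence of $\lbrace f_n|_K \rbrace_{n \in \mathbb{N}}$ to $f|_K$. (Alternatively, one could extract a uniformly convergent subsequence via Proposition~\ref{equicontinuity} and then use monotonicity to conclude the full sequence converges uniformly, but the direct appeal to Dini's theorem is cleaner.) This completes the argument. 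The only genuinely delicate point, as noted, is recognizing that monotonicity forces the stability-set hypothesis of Proposition~\ref{equicontinuity} for free; everything else is a routine assembly of Proposition~\ref{equicontinuity}, Theorem~\ref{closedcone}, and Dini's theorem.
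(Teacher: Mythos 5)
Your proposal is correct and takes essentially the same approach as the paper. Both proofs hinge on the observation that monotonicity forces $\textup{stab}(f_n)\subset\textup{stab}(f_0)$ (via Proposition~\ref{LF-prop}~\textit{(vi)}), giving a uniform Lipschitz bound so that the hypotheses of Proposition~\ref{equicontinuity} hold; you then pass from pointwise to locally uniform convergence via Dini's theorem, whereas the paper applies Proposition~\ref{equicontinuity} to extract a uniformly convergent subsequence on each ball and invokes monotonicity to promote it to the full sequence, but these two final steps are interchangeable.
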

	\begin{proof}
		The sequence $\lbrace f_n  \rbrace_{n \in \mathbb{N}}$ is increasing, then the sequence $\lbrace \textup{stab}(f_n)  \rbrace_{n \in \mathbb{N}}$ of stability sets is decreasing. Let $R > 0 $, then apply Proposition~\ref{equicontinuity} to $K= \overline{\textup{B}}(0,R)$. Since the sequence $\lbrace f_n  \rbrace_{n \in \mathbb{N}}$ is increasing, the whole sequence converges uniformly on $K$. Since $R$ is arbitrary, the function $f$ defined as the pointwise limit of $\lbrace f_n  \rbrace_{n \in \mathbb{N}}$ is concave, has effective domain $N_{\mathbb{R}}$ and bounded stability set. This shows that $f \in \mathcal{G}^{+}(N_{\mathbb{R}})$.
	\end{proof}
	\begin{cor}\label{convergenceconicconcave}
		Every increasing sequence $\lbrace \Psi_n  \rbrace_{n \in \mathbb{N}}$ in $\mathcal{C}^{+}(N_{\mathbb{R}})$ converges in the $\mathcal{C}$-norm.
	\end{cor}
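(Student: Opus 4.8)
The plan is to deduce this from Corollary~\ref{dini-conc} together with the isometry $\mathcal{C}(N_{\mathbb{R}}) \cong C^{0}(\mathcal{S}^{d-1})$ of Remark~\ref{comp-conical}. First I would record that $\mathcal{C}^{+}(N_{\mathbb{R}}) \subset \mathcal{G}^{+}(N_{\mathbb{R}})$: by Proposition~\ref{conical-compact} every conical concave function on $N_{\mathbb{R}}$ is globally Lipschitz, hence lies in $\mathcal{G}^{+}(N_{\mathbb{R}})$ by Theorem~\ref{closedcone}. Since each $\Psi_{n}$ is conical, $\Psi_{n}(0) = 0$, so the sequence $\lbrace \Psi_{n}(0) \rbrace_{n \in \mathbb{N}}$ is bounded, and Corollary~\ref{dini-conc} (applied with $x_{0} = 0$) produces a function $f \in \mathcal{G}^{+}(N_{\mathbb{R}})$ such that $\lbrace \Psi_{n} \rbrace_{n \in \mathbb{N}}$ converges to $f$ uniformly on every compact subset of $N_{\mathbb{R}}$; in particular, it converges pointwise.

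Next I would check that $f$ is conical: for $\lambda \geq 0$ and $x \in N_{\mathbb{R}}$,
\begin{displaymath}
	f(\lambda \, x) = \lim_{n \in \mathbb{N}} \Psi_{n}(\lambda \, x) = \lim_{n \in \mathbb{N}} \lambda \, \Psi_{n}(x) = \lambda \, f(x).
\end{displaymath}
Being also continuous, $f \in \mathcal{C}(N_{\mathbb{R}})$, and since it is concave and $\mathcal{C}^{+}(N_{\mathbb{R}})$ is closed (Remark~\ref{comp-conical}), we get $f \in \mathcal{C}^{+}(N_{\mathbb{R}})$. The difference $\Psi_{n} - f$ is again conical and continuous, so by Remark~\ref{comp-conical} its $\mathcal{C}$-norm equals $\sup_{x \in \mathcal{S}^{d-1}} |\Psi_{n}(x) - f(x)|$, which tends to $0$ because $\lbrace \Psi_{n} \rbrace_{n \in \mathbb{N}}$ converges to $f$ uniformly on the compact set $\mathcal{S}^{d-1}$. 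Hence $\lbrace \Psi_{n} \rbrace_{n \in \mathbb{N}}$ converges to $f$ in the $\mathcal{C}$-norm.

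There is no serious obstacle here: the only point requiring care is that the pointwise limit of the increasing sequence is a finite, continuous function, and this is exactly what Corollary~\ref{dini-conc} guarantees — once one notes, as in its proof and via Proposition~\ref{LF-prop}, that an increasing sequence of conical concave functions has decreasing stability sets, all contained in the compact set $\textup{stab}(\Psi_{1})$, so that the hypotheses of Proposition~\ref{equicontinuity} are met on every closed ball.
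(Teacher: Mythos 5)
Your proof is correct and follows essentially the same route as the paper: apply Corollary~\ref{dini-conc} with $x_0 = 0$ (using $\Psi_n(0) = 0$), then convert local uniform convergence into $\mathcal{C}$-norm convergence via the identification $\|\cdot\|_{\mathcal{C}} = \sup_{\mathcal{S}^{d-1}}|\cdot|$ from Remark~\ref{comp-conical}. You supply some details the paper leaves implicit (the inclusion $\mathcal{C}^{+}(N_{\mathbb{R}}) \subset \mathcal{G}^{+}(N_{\mathbb{R}})$ and the verification that the pointwise limit is conical), but the argument and the key inputs are the same.
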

	\begin{proof}
		The sequence $\lbrace \Psi_n (0)  \rbrace_{n \in \mathbb{N}}$ is the constant $0$. By Corollary~\ref{dini-conc}, $\lbrace \Psi_n  \rbrace_{n \in \mathbb{N}}$ converges uniformly on the unit ball $\overline{\textup{B}}(0,1)$. This is the same as convergence in the $\mathcal{C}$-norm.
	\end{proof}
	\begin{rem}
		Let $\lbrace f_n  \rbrace_{n \in \mathbb{N}}$ be an increasing sequence in $\mathcal{G}^{+}(N_{\mathbb{R}})$. We have the following observations:
		\begin{enumerate}
			\item Suppose there is a dense subset $S \subset N_{\mathbb{R}}$ and a function $f \in \mathcal{G}^{+}(N_{\mathbb{R}})$ such that, for all $x \in S$, the sequence $\lbrace f_n (x) \rbrace_{n \in \mathbb{N}}$ converges to $f(x)$. Then, by Corollary~\ref{dini-conc}, the function $f$ is the pointwise limit of $\lbrace f_n  \rbrace_{n \in \mathbb{N}}$ on the whole of $N_{\mathbb{R}}$.
			\item Even if the sequence $\lbrace f_n  \rbrace_{n \in \mathbb{N}}$ converges pointwise and the sequence $\lbrace \textup{rec}(f_n)  \rbrace_{n \in \mathbb{N}}$ converges, it does not need to converge in the $\mathcal{G}$-norm. Consider the function $f_n  \colon  \mathbb{R} \rightarrow \mathbb{R}$ given by
			\begin{displaymath}
				f_n (x) \coloneqq \begin{cases} x, & x \leq n \\ n, & \textup{otherwise.} \end{cases}
			\end{displaymath}
			The sequence $\lbrace f_n  \rbrace_{n \in \mathbb{N}}$ converges pointwise to $f(x)=x$ but does not converge in the $\mathcal{G}$-norm. It is clear from
			\begin{displaymath}
				\textup{rec}(f_n)(x) = \begin{cases} x, & x \leq 0 \\ 0, & \textup{otherwise.} \end{cases}
			\end{displaymath}
			The important detail is that $\lbrace \textup{rec}(f_n)  \rbrace_{n \in \mathbb{N}}$ does not converges to $\textup{rec}(f)$.
		\end{enumerate}
	\end{rem}
	\subsection{Piecewise affine functions and directional derivatives} 
	This section studies the space of rational piecewise affine functions on $N_{\mathbb{R}}$. As seen in Subsection~\ref{DVR-1}, these functions describe torus invariant Cartier divisors on toric schemes over a discrete valuation ring. We also study the convergence of sequences of rational piecewise affine functions with respect to the $\mathcal{C}$-norm; this is a crucial step in the proof of Theorem~\ref{nef-cone-US}.
	
	We start by recalling some basic concepts from discrete geometry. A convex set $C$ is said to be a \textit{polyhedron} if it is determined by the intersection of a finite number of closed half-spaces. A polyhedron $C$ is a \textit{polytope} if $C$ is a bounded set. A polytope $C$ is \textit{lattice} (resp. \textit{rational}) if it satisfies $C = \textup{Conv}(S)$ where $S \subset N$ is finite (resp. $S \subset N_{\mathbb{Q}}$). A polyhedral cone $C$ is rational if it satisfies $C = c(S)$, where  $S \subset N_{\mathbb{Q}}$ is finite. Unlike polytopes, there is no distinction between rational and lattice polyhedral cones; For any finite set $S \subset N_{\mathbb{Q}}$, there is a positive integer $r$ such that $r \cdot S \subset N$. A \textit{ray} is a cone generated by a single non-zero element $v \in N_{\mathbb{R}}$. We write $ \mathbb{R}_{\geq 0} \cdot v = c( \lbrace v \rbrace ) = \tau$. If $\tau$ is a rational ray, its \textit{ray generator} is the vector $v=v_{\tau}$ that generates the semigroup $\tau \cap N$. The set $N^{\textup{pr}}$ of \textit{primitive elements} is defined as the subset of $N$ consisting of all ray generators.  A rational cone is \textit{smooth} if a set generating $\sigma$ extends to a $\mathbb{Z}$-basis of $N$.
	\begin{dfn}
		Let $\Pi$ be a non-empty collection of convex subsets of $N_{\mathbb{R}}$. The collection $\Pi$ is called a \textit{convex subdivision} if it satisfies the following:
		\begin{enumerate}[label=(\roman*)]
			\item Every face of an element of $\Pi$ is also an element of $\Pi$.
			\item Any two elements of $\Pi$ are either disjoint or intersect at a common face.
		\end{enumerate}
		The \textit{support} $|\Pi|$ of $\Pi$ is the subset of $N_{\mathbb{R}}$ obtained by the union of all elements of $\Pi$. We say that $\Pi$ is \textit{complete} if $|\Pi|= N_{\mathbb{R}}$. We say that $\Pi$ is a \textit{polyhedral complex}  if it is finite and every element of $\Pi$ is a polyhedron. A polyhedral complex $\Pi$ is \textit{conical} (resp. \textit{strongly convex, rational, simplicial, etc.}) if every element of $\Pi$ is a cone (resp. strongly convex, rational, simplex, etc.). We denote by $\Pi(n)$ the subset of $n$-dimensional elements of $\Pi$.
	\end{dfn}
	\begin{ex}[Fans]\label{fan}
		A \textit{fan} $\Sigma$ in $N_{\mathbb{R}}$ is a strongly convex, conical, rational, polyhedral complex. The fan is \textit{smooth} if every cone is smooth. An example of a smooth complete fan is the fan $\overline{\Sigma}$ constructed as follows: Let $N = \mathbb{Z}^{d}$ and $e_1,...,e_d$ be the standard basis. Write $e_0 = - (e_1 + ... + e_d)$. Then the cones of $\overline{\Sigma}$ are the cones spanned by all proper subsets of $\lbrace e_0, ..., e_d \rbrace$.
	\end{ex}
	\begin{ex}[Refinement by a ray]\label{star}
		Let $\Sigma$ be a fan in $N_{\mathbb{R}}$. Choose any cone $\sigma \in \Pi$ of dimension at least $2$ and a minimal generating set $S$ of $\sigma$. Let $v \in \textup{ri}(\pi)$. Let $\Sigma^{\prime}(\sigma, S,v)$ be the fan given by all the cones generated by proper subsets of $S \cup \lbrace v \rbrace$ distinct from $S$. Then, define the fan
		\begin{displaymath}
			\Sigma^{\ast}(\sigma, S, v) \coloneqq (\Sigma \setminus \lbrace \sigma \rbrace ) \cup \Sigma^{\prime}(\sigma, S, v).
		\end{displaymath}
		It is a fan such that every element of $\Sigma^{\ast}(\sigma, S, v)$  is contained in or is equal to an element of $\Sigma$. If $\sigma$ is a smooth cone, $S$ is the set of ray generators of $\sigma$, and $v = \sum_{s \in S} s$, the fan $\Sigma^{\ast} (\sigma) = \Sigma^{\ast}(\sigma, S, v)$ is known as the \textit{star subdivision} of $\Sigma$ along $\sigma$ (see~Definition~3.3.13~of~\cite{CLS}).
	\end{ex}
	\begin{ex}[Barycentric subdivision]\label{barycentric}
		Let $\Delta$ be a simplex of dimension $d$. The \textit{barycenter} of $\Delta$ the point given by
		\begin{displaymath}
			b_{\Delta} \coloneqq\frac{1}{d+1} \sum_{v \in \Delta(0)} v.
		\end{displaymath}
		Regarding $\Delta$ as a simplicial complex, we may inductively define the following complex:
		\begin{enumerate}[label=(\arabic*)]
			\item If $d=0$, then $\Delta$ is a single point and the process finishes.
			\item Assume that $d>0$. Let $1\leq n \leq d$ and suppose that the $(n-1)$-dimensional faces of $\Delta$ are already divided. Let $\pi \in \Delta(n)$ and note that each facet $\tau_i$ of $\pi$, $0 \leq i \leq n$, has been divided into simplices $\lbrace \tau_{i,j} \rbrace_{j \in J_i}$. Define the $n$-dimensional simplicial complex $\beta(\pi )$ whose $n$-dimensional simplices are given by $\textup{Conv}(\lbrace b_{\pi} \rbrace \cup \tau_{i,j} )$, where $0 \leq i \leq n$ and $j \in J_i$.
			\item The \textit{barycentric subdivision} of $\Delta$ is defined as the simplicial complex $\beta (\Delta)$.
		\end{enumerate}
		This construction generalizes to any simplicial complex. Let $\Pi$ be a $d$-dimensional simplicial complex, then its \textit{barycentric subdivision} $\beta (\Pi)$ is given by
		\begin{displaymath}
			\beta( \Pi ) \coloneqq \bigcup_{\pi  \in \Pi (d)} \beta(\pi).
		\end{displaymath}
	\end{ex}
	Note that the identity map on $N_{\mathbb{R}}$ is compatible with the polyhedral complex structures in the previous Examples~\ref{star},~\ref{barycentric}. This is generalized in the following definition.
	\begin{dfn}\label{compatible}
		Let $N_1, \, N_2$ be lattices, $\phi \colon (N_{1})_{\mathbb{R}} \rightarrow (N_{2})_{\mathbb{R}}$ a $\mathbb{R}$-linear map and $\Pi_i$ a convex subdivision in $(N_i)_{\mathbb{R}}$. The map $\phi$ is said to be \textit{compatible} with $\Pi_1$ and $\Pi_2$ if for every element $\pi_1 \in \Pi_1$ there is an element $\pi_2 \in \Pi_2$ such that $\phi (\pi_1) \subset \pi_2$. If the subdivisions are rational, we also require $\phi$ to be the extension of scalars of a $\mathbb{Z}$-linear map $\phi  \colon  N_{1} \rightarrow N_{2}$.
	\end{dfn}
	\begin{dfn}
		Let $\Pi_1$ and $\Pi_2$ be convex subdivisions in $N_{\mathbb{R}}$. We say that $\Pi_1$ is a \textit{refinement} of $\Pi_2$,  denoted $\Pi_1 \geq \Pi_2$, if the identity map is compatible with $\Pi_1$ and $\Pi_2$.
	\end{dfn}
	\begin{rem}\label{refin}
		Note that any two polyhedral complexes $\Pi_1$ and $\Pi_2$ with the same support admit a common refinement. Indeed, define $\Pi_1 \cdot \Pi_2 \coloneqq \lbrace \pi_1 \cap \pi_2 \, \vert \, \pi_i \in \Pi_i \rbrace$. Since each element of the $\Pi_i$'s is an intersection of closed half-spaces, so is every element of $\Pi_1 \cdot \Pi_2$. If $\pi_1 \in \Pi_1$ and $\pi_2 \in \Pi_2$ have non-empty intersection, then any face of $\pi_1 \cap \pi_2$ is given as the intersection
		\begin{displaymath}
			(\pi_1 \cap \pi_2) \cap H = ( \pi_1 \cap H ) \cap ( \pi_2 \cap H )
		\end{displaymath}
		where $H$ is a closed half-space. Note that $\pi_i \cap H \in \Pi_i$, as it is a face of $\pi_i$. To see the support condition, note that
		\begin{displaymath}
			| \Pi_1 \cdot \Pi_2 | = \bigcup_{\pi_i \in \Pi_i}  \pi_1 \cap \pi_2 = \left(  \bigcup_{\pi_1 \in \Pi_1} \pi_1  \right) \cap  \left(  \bigcup_{\pi_2 \in \Pi_2} \pi_2  \right) = |\Pi_1| \cap |\Pi_2|.
		\end{displaymath}
		Moreover, if $\Pi_1$ and $\Pi_2$ are both fans, so is $\Pi_1 \cdot \Pi_2$. In particular, the set of all polyhedral complexes (resp. fans) with common support, ordered by refinement, is a directed set. 
	\end{rem}
	Now, we consider functions compatible with the structure of a polyhedral complex.
	\begin{dfn}\label{piecewise-affine}
		Let $C \subset N_{\mathbb{R}}$ be a polyhedron. A function $f \colon  C \rightarrow \mathbb{R}$ is \textit{piecewise affine} (resp. \textit{piecewise linear}) if there is a finite cover $\lbrace C_i \rbrace_{i \in I}$ of $C$ by closed subsets such that the restrictions $f|_{C_i}$ are affine (resp. linear). Then:
		\begin{enumerate}[label=(\roman*)]
			\item If $\Pi$ is a polyhedral complex on $C$, we say that $f$ is piecewise affine (resp. \textit{piecewise linear}) on $\Pi$ if it is affine (resp. linear) on each $\pi \in \Pi$.
			\item If $C$ is rational, a piecewise affine function $f$ is \textit{rational} if $f(N_{\mathbb{Q}} \cap C) \subset \mathbb{Q}$.
			\item If $\Pi$ is a rational polyhedral complex, we denote by $\mathcal{PA}(\Pi, \mathbb{Q})$ the $\mathbb{Q}$-vector space of rational piecewise affine functions on $\Pi$.
			\item We denote by $\mathcal{PA}(N_{\mathbb{R}}, \mathbb{Q})$ the space of rational piecewise functions $f\colon N_{\mathbb{R}} \rightarrow \mathbb{R}$. That is, there exists a polyhedral complex $\Pi$ with support $N_{\mathbb{R}}$ such that $f$ is piecewise affine on $\Pi$.
			\item We write $\mathcal{PA}^{+}(\Pi, \mathbb{Q})$ and $\mathcal{PA}^{+}(N_{\mathbb{R}}, \mathbb{Q})$ for the corresponding subsets of concave functions.
		\end{enumerate}
	\end{dfn}
	\begin{ex}
		Let  $\overline{\Sigma}$ be the fan from Example~\ref{fan}. Each proper subset $S \subset \lbrace e_0, ..., e_d \rbrace$ is a basis for its span. Hence, any linear function $f$ on $c(S)$ is uniquely determined by its values on $S$. More generally, a piecewise affine function on $\overline{\Sigma}$ is uniquely determined by its values on the set $ \lbrace 0, e_0, ..., e_d \rbrace$. For instance, the function $\overline{\Psi}  \colon  N_{\mathbb{R}} \rightarrow \mathbb{R}$ determined by $\overline{\Psi} (e_i) = -1$ is a rational piecewise linear on $\overline{\Sigma}$, while the function $\overline{\gamma} \colon  N_{\mathbb{R}}\rightarrow \mathbb{R}$ given by $\overline{\gamma} \coloneqq \overline{\Psi} - 1$ is rational piecewise affine on $\overline{\Sigma}$. This is an example of a virtual support function, introduced below.
	\end{ex}
	\begin{dfn}\label{def-sup-fun}
		Let $\Sigma$ be a fan in $N_{\mathbb{R}}$. We introduce the following notation:
		\begin{enumerate}[label = (\roman*)]
			\item A \textit{virtual support function} on the fan $\Sigma$ is a piecewise linear function $f \colon  |\Sigma| \rightarrow \mathbb{R}$ on $\Sigma$. Then, denote by $\mathcal{SF}(\Sigma, \mathbb{R})$ the space of virtual support functions on $\Sigma$ and by $\mathcal{SF}(\Sigma, \mathbb{Q})$ its subspace of rational functions.
			\item We denote by $\mathcal{SF}(N_{\mathbb{R}}, \mathbb{R})$ the space of all virtual support functions $\Psi \colon N_{\mathbb{R}} \rightarrow \mathbb{R}$. That is, there exists a complete fan $\Sigma$ in $N_{\mathbb{R}}$ such that $\Psi$ is a virtual support function on $\Sigma$. We write $\mathcal{SF}(N_{\mathbb{R}}, \mathbb{Q})$ for its subspace of rational support functions.
			\item For $K = \mathbb{R} \textup{ or } \mathbb{Q}$, denote by $\mathcal{SF}^{+}(\Sigma, K)$ and $\mathcal{SF}^{+}(N_{\mathbb{R}}, K)$ the respective subsets consisting of concave functions.
		\end{enumerate}
	\end{dfn}
	\begin{rem}\label{direct-lim}
		Let $K = \mathbb{R} \textup{ or } \mathbb{Q}$. By Remark~\ref{refin}, the poset consisting of complete fans $\Sigma$ in $N_{\mathbb{R}}$ ordered by refinement is a directed set. Note that $\Sigma' \geq \Sigma$ implies $\mathcal{SF}(\Sigma, K) \subset \mathcal{SF}(\Sigma', K)$. These relations yield an isomorphism
		\begin{displaymath}
			\mathcal{SF}(N_{\mathbb{R}}, K) = \bigcup_{\Sigma \textup{ complete}}  \mathcal{SF}(\Sigma, K) \cong  \varinjlim_{\Sigma \textup{ complete fan}} \mathcal{SF}(\Sigma, K).
		\end{displaymath}
		Similarly, the poset consisting of complete rational polyhedral complexes $\Pi$ in $N_{\mathbb{R}}$ ordered by refinement is a directed set, and we have an isomorphism
		\begin{displaymath}
			\mathcal{PA}(N_{\mathbb{R}}, \mathbb{Q}) \cong   \varinjlim_{\Pi \textup{ complete}} \mathcal{PA}(\Pi, \mathbb{Q}).
		\end{displaymath}
		Clearly, $\mathcal{SF}(N_{\mathbb{R}}, \mathbb{Q})$ is a $\mathbb{Q}$-vector subspace of $\mathcal{PA}(N_{\mathbb{R}}, \mathbb{Q})$. Moreover, the recession map is a retraction of the space $\mathcal{PA}(N_{\mathbb{R}}, \mathbb{Q})$ onto $\mathcal{SF}(N_{\mathbb{R}}, \mathbb{Q})$. Indeed, the recession function $\textup{rec}(f)$ of a rational piecewise affine function $f$ on $N_{\mathbb{R}}$ is a rational support function on some fan $\Sigma$ in $N_{\mathbb{R}}$ (see Proposition~2.6.3~of~\cite{BPS}).
	\end{rem}
	Now, we study the convergence in the $\mathcal{C}$-norm of sequences $\lbrace f_n \rbrace_{n \in \mathbb{N}}$ in the space $\mathcal{PA}(N_{\mathbb{R}}, \mathbb{Q})$. We will copy the strategy in Remark~\ref{comp-conical} and Lemma~\ref{SL-banach}, that is, to relate the convergence of the sequence $\lbrace f_n \rbrace_{n \in \mathbb{N}}$ with uniform convergence of a sequence $\lbrace \overline{\eta}(f_n) \rbrace_{n \in \mathbb{N}}$ of continuous functions over a compact set. The notion of a one-sided directional derivative arises naturally in the definition of the functions $\overline{\eta}(f_n)$. To motivate this, we have the following remark.
	\begin{rem}\label{C-conv}
		Assume that $\lbrace f_n \rbrace_{n \in \mathbb{N}}$ is a sequence in $\mathcal{G}(N_{\mathbb{R}})$ which converges to a function $f$ with respect to the $\mathcal{C}$-norm (we will later specialize to the rational piecewise affine case). Comparing Definitions~\ref{C-norm}~and~\ref{G-def}, it is clear that the sequence $\lbrace f_n \rbrace_{n \in \mathbb{N}}$ converges to $f$ with respect to the $\mathcal{G}$-norm. By completeness, we get that $f \in \mathcal{G}(N_{\mathbb{R}})$. Convergence in the $\mathcal{C}$-norm means that for each $\varepsilon >0$ there is a natural number $n_{\varepsilon}$ such that for all $n\geq n_{\varepsilon}$ and $x \in N_{\mathbb{R}}$ we have
		\begin{displaymath}
			|f(x) - f_n (x)| \leq \varepsilon \cdot \| x \|.
		\end{displaymath}
		In particular, $f_n (0) = f(0)$. Now consider the homeomorphism $N_{\mathbb{R}} \setminus \lbrace 0 \rbrace \rightarrow (0, \infty) \times \mathcal{S}^{d-1}$ given by writing $x$ in polar coordinates, namely, $x \mapsto ( \|x \| , \hat{x})$ where $\hat{x} \coloneqq x/ \| x \|$. Then, define the sequence $\lbrace g_n \rbrace_{n \in \mathbb{N}}$ in the Banach space $C^{0}_{\textup{bd}} ((0, \infty) \times \mathcal{S}^{d-1})$ of bounded continuous real-valued functions on the open cylinder $(0, \infty) \times \mathcal{S}^{d-1}$, where
		\begin{displaymath}
			g_n ( \| x \| , \hat{x}) \coloneqq (f(x) - f_n (x))/ \| x \| .
		\end{displaymath}
		We conclude that the sequence $\lbrace f_n \rbrace_{n \in \mathbb{N}}$ converges to $f$ in the $\mathcal{C}$-norm if and only if the sequence $\lbrace f_n (0 ) \rbrace_{n \in \mathbb{N}}$ is eventually the constant $f(0)$ and the sequence $\lbrace g_n \rbrace_{n \in \mathbb{N}}$ converges uniformly to $0$.
	\end{rem}
	Motivated by the previous discussion, for each function $f \in \mathcal{G}(N_{\mathbb{R}})$ we define a continuous function $\eta(f) \colon (0, \infty) \times \mathcal{S}^{d-1} \rightarrow \mathbb{R}$. It is given by the assignment
	\begin{displaymath}
		\eta(f)( \| x \|, \hat{x}) \coloneqq (f(x) - f(0))/\| x \|,
	\end{displaymath}
	where $(\|x\| , \hat{x})$ is the representation in polar coordinates of the non-zero element $x \in N_{\mathbb{R}}$. Now, we restrict to the $\mathbb{Q}$-vector space $\mathcal{PA}(N_{\mathbb{R}}, \mathbb{Q})$ of rational piecewise affine functions on $N_{\mathbb{R}}$ and describe the properties of the assignment $f  \mapsto \eta(f)$.
	\begin{lem}\label{eta}
		With the previous notation, the assignment $f \mapsto \eta(f)$ induces a $\mathbb{Q}$-linear map $\eta \colon \mathcal{PA} (N_{\mathbb{R}}, \mathbb{Q}) \rightarrow C^{0}_{\textup{bd}} ((0, \infty) \times \mathcal{S}^{d-1})$.
		Moreover, a sequence $\lbrace f_n \rbrace_{n \in \mathbb{N}}$ in $\mathcal{PA} (N_{\mathbb{R}}, \mathbb{Q})$ converges in the $\mathcal{C}$-norm to a function $f \in \mathcal{G}(N_{\mathbb{R}})$ if and only if the sequence $\lbrace f_n (0) \rbrace_{n \in \mathbb{N}}$ is eventually the constant $f(0)$ and the sequence $\lbrace \eta(f)_n \rbrace_{n \in \mathbb{N}}$ converges uniformly to the continuous function $\eta(f)$.
	\end{lem}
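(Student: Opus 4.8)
The plan is to prove Lemma~\ref{eta} by reducing everything to the discussion already carried out in Remark~\ref{C-conv}, where I observed that convergence of $\lbrace f_n\rbrace_{n\in\mathbb N}$ to $f$ in the $\mathcal{C}$-norm is equivalent to the two conditions (a) $f_n(0)$ is eventually the constant $f(0)$ and (b) the functions $g_n(\|x\|,\hat x)=(f(x)-f_n(x))/\|x\|$ converge uniformly to $0$ on the open cylinder $(0,\infty)\times\mathcal S^{d-1}$. Since $\eta(f_n)-\eta(f)=-g_n$ by $\mathbb Q$-linearity of the polar-coordinate formula (once $f_n(0)=f(0)$), condition (b) is literally the statement that $\eta(f_n)\to\eta(f)$ uniformly. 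So the only genuinely new content is the claim that $\eta$ takes values in $C^{0}_{\textup{bd}}((0,\infty)\times\mathcal S^{d-1})$, i.e.\ that for a rational piecewise affine $f$ on $N_{\mathbb R}$ the function $\eta(f)$ is both continuous and bounded.

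First I would establish $\mathbb Q$-linearity of the assignment $f\mapsto\eta(f)$: this is immediate from the defining formula $\eta(f)(\|x\|,\hat x)=(f(x)-f(0))/\|x\|$, since evaluation at points of $N_{\mathbb R}$, subtraction of the value at $0$, and division by the scalar $\|x\|$ are all $\mathbb Q$-linear in $f$. Continuity of $\eta(f)$ on $(0,\infty)\times\mathcal S^{d-1}$ is clear because $x\mapsto(\|x\|,\hat x)$ is a homeomorphism $N_{\mathbb R}\setminus\lbrace0\rbrace\to(0,\infty)\times\mathcal S^{d-1}$ and $x\mapsto(f(x)-f(0))/\|x\|$ is continuous on $N_{\mathbb R}\setminus\lbrace0\rbrace$ as $f$ is continuous (a piecewise affine function on a polyhedral complex with support $N_{\mathbb R}$ is continuous). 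For boundedness: writing $f=f_0+\Psi$ with $\Psi=\textup{rec}(f)\in\mathcal{SF}(N_{\mathbb R},\mathbb Q)$ and $f_0$ sublinear, as in Remark~\ref{direct-lim} and Remark~\ref{G-VS}, we get $\mathcal{PA}(N_{\mathbb R},\mathbb Q)\subset\mathcal G(N_{\mathbb R})$, hence $\|f\|_{\mathcal G}<\infty$, so $|f(x)-f(0)|\le|f(x)|+|f(0)|\le\|f\|_{\mathcal G}(1+\|x\|)+|f(0)|$. For $\|x\|\ge1$ this gives $|\eta(f)(\|x\|,\hat x)|\le 2\|f\|_{\mathcal G}+|f(0)|$, and since $\Psi$ is conical and $f_0$ sublinear one in fact has $\eta(f)(\|x\|,\hat x)\to\Psi(\hat x)$ as $\|x\|\to\infty$; on the compact region $\|x\|\le1$ continuity of $f$ on the closed ball yields boundedness there too. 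Thus $\eta(f)\in C^{0}_{\textup{bd}}((0,\infty)\times\mathcal S^{d-1})$, establishing the first assertion.

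For the equivalence, suppose $\lbrace f_n\rbrace_{n\in\mathbb N}$ converges to $f$ in the $\mathcal C$-norm. As noted in Remark~\ref{C-conv}, comparing Definitions~\ref{C-norm} and~\ref{G-def} shows this forces $f\in\mathcal G(N_{\mathbb R})$ and, for every $\varepsilon>0$, eventually $|f(x)-f_n(x)|\le\varepsilon\|x\|$ for all $x\in N_{\mathbb R}$; taking $x=0$ gives $f_n(0)=f(0)$ eventually, and dividing by $\|x\|$ for $x\ne0$ gives $|\eta(f)(\|x\|,\hat x)-\eta(f_n)(\|x\|,\hat x)|\le\varepsilon$ on the cylinder, i.e.\ uniform convergence $\eta(f_n)\to\eta(f)$. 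Conversely, if $f_n(0)$ is eventually $f(0)$ and $\eta(f_n)\to\eta(f)$ uniformly, then for $n$ large and all $x\ne0$ we have $|f(x)-f_n(x)|=\|x\|\,|\eta(f)(\|x\|,\hat x)-\eta(f_n)(\|x\|,\hat x)|\le\varepsilon\|x\|$, while at $x=0$ the difference is $0$; this is exactly $\|f-f_n\|_{\mathcal C}\le\varepsilon$, so $f_n\to f$ in the $\mathcal C$-norm.

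The main obstacle, such as it is, is bookkeeping rather than a real difficulty: one must be careful that $\eta$ is only defined after passing to polar coordinates on $N_{\mathbb R}\setminus\lbrace0\rbrace$, so the value $f_n(0)$ must be tracked separately (the $\mathcal C$-norm sees it as the ``$\|x\|=0$'' contribution), and one should note that $\mathbb Q$-linearity of $\eta$ is only strictly compatible with the convergence statement once one has arranged $f_n(0)=f(0)$, so that $\eta(f)-\eta(f_n)=\eta(f-f_n)$ with $f-f_n$ vanishing at the origin. All of this is already implicit in Remark~\ref{C-conv}; the lemma merely packages it with the observation that, for rational piecewise affine inputs, the target space is the concrete Banach space $C^{0}_{\textup{bd}}((0,\infty)\times\mathcal S^{d-1})$, which will be used in the next subsection to prove the density theorems feeding Theorem~\ref{nef-cone-US}.
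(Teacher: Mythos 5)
Your treatment of $\mathbb{Q}$-linearity, of continuity on the open cylinder, and of the equivalence between $\mathcal{C}$-norm convergence and the pair of conditions (eventual constancy of $f_n(0)$, uniform convergence of $\eta(f_n)$) all match the paper's argument and are correct; the paper also simply invokes Remark~\ref{C-conv} for the last part. However, your boundedness argument has a genuine gap near the origin. You cover the region $\|x\|\ge 1$ correctly using $\|f\|_{\mathcal{G}}<\infty$, but for $\|x\|\le 1$ you write that ``continuity of $f$ on the closed ball yields boundedness there too.'' This is false as a general principle: continuity of $f$ on $\overline{\textup{B}}(0,1)$ bounds the numerator $f(x)-f(0)$, but says nothing about the quotient $(f(x)-f(0))/\|x\|$ as $\|x\|\to 0$ (compare $f(x)=\sqrt{\|x\|}$, continuous at $0$ yet with unbounded difference quotient). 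The $\mathcal{G}$-norm estimate also degenerates there, since $|f(x)-f(0)|\le \|f\|_{\mathcal{G}}(1+\|x\|)+|f(0)|$ gives, after dividing by $\|x\|$, a bound that blows up as $\|x\|\to 0$.

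The missing ingredient is precisely the rational piecewise affine structure of $f$ near $0$: on a sufficiently small ball $\overline{\textup{B}}(0,r)$ only finitely many polyhedra of the underlying complex meet $0$, and on each such polyhedron $f$ is affine with $f(0)=c_\pi$, so $h(x)=f(x)-f(0)=\langle m_\pi,x\rangle$ is actually \emph{linear} on each piece. Hence $|h(x)|\le C_1\|x\|$ on $\overline{\textup{B}}(0,r)$ with $C_1=\max_\pi\|m_\pi\|$, which is exactly what the paper proves and what you need to conclude $\eta(f)$ is bounded near the origin. Once you replace ``continuity of $f$'' by ``piecewise linearity of $f-f(0)$ near $0$'' (or, equivalently, a local Lipschitz bound coming from the finite polyhedral structure), the rest of your argument goes through and coincides with the paper's proof.
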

	\begin{proof}
		It is clear from the definition that the assignment $f \mapsto \eta(f)$ is $\mathbb{Q}$-linear, and the function $\eta(f)$ is continuous. We must show that $\eta(f)$ is bounded. This is equivalent to the existence of a constant $C$ satisfying $|f(x) - f(0)| \leq C \cdot \|x \|$ for all $x \in N_{\mathbb{R}}$. That is, the function $h = f - f(0)$ has finite $\mathcal{C}$-norm. This follows from the following facts:
		\begin{enumerate}
			\item The function $h$ is piecewise linear when restricted to a sufficiently small ball $\overline{\textup{B}}(0,r)$ centered at $0$. Then, there exists a constant $C_1$ such that $|h(x)| \leq C_1 \cdot \| x \|$ for all $x \in \overline{\textup{B}}(0,r)$.
			\item The function $f$ is an element of the Banach space $\mathcal{G}(N_{\mathbb{R}})$. 
			By definition of the $\mathcal{G}$-norm, for every $x \in N_{\mathbb{R}}$ we have $|h(x)| \leq \| h \|_{\mathcal{G}} \cdot (1 + \| x \|)$. Given $r > 0$, for each $x $ of norm larger than $r$, we have $|h(x)| \leq (1 + 1/r ) \| h \|_{\mathcal{G}} \, \| x \|$.
		\end{enumerate}
		Clearly, the constant $C = \max \lbrace C_1, (1 + 1/r ) \| h \|_{\mathcal{G}} \rbrace$ satisfies the desired condition. The continuity property follows immediately from the discussion in Remark~\ref{C-conv}.
	\end{proof}
	\begin{rem}\label{eta-2}
		Unlike Remark~\ref{comp-conical} and Lemma~\ref{SL-banach}, the target of the map $\eta$ is not a space of continuous functions on a compact set. Then, our next objective is to show that, for each function $f \in \mathcal{PA}(N_{\mathbb{R}}, \mathbb{Q})$, the function $\eta (f)$ extends to a continuous function $\overline{\eta}(f)$ on the compactification $[0, \infty] \times \mathcal{S}^{d-1}$ of $(0, \infty) \times \mathcal{S}^{d-1}$. Suppose $\overline{\eta}(f)$ is such extension. Fixing a vector $\hat{x} \in \mathcal{S}^{d-1}$, continuity of $\overline{\eta}(f)$ implies the identities
		\begin{align*}
			\overline{\eta}(f)(\infty ,\hat{x}) =& \lim_{\lambda \rightarrow \infty} \eta(f)(\lambda, \hat{x}) = \lim_{\lambda \rightarrow \infty}  (f(\lambda \, \hat{x}) - f(0))/ \lambda = \textup{rec}(f)(\hat{x})\\
			\overline{\eta}(f)(0 ,\hat{x}) =& \lim_{\lambda \rightarrow + 0} \eta(f)(\lambda, \hat{x}) = \lim_{\lambda \rightarrow + 0}  (f(\lambda \, \hat{x}) - f(0))/ \lambda =\textup{D}_{+}(f, \hat{x})(0). 
		\end{align*}
		Here, $\lambda \rightarrow + 0$ means that $\lambda$ is positive and converges to $0$, and $\textup{D}_{+}(f, \hat{x})(0)$ is the one-sided directional derivative of $f$ at $0$ in the direction of $\hat{x}$. We recall its definition and show that the functions $\eta(f)$ extend continuously to the compactification $[0, \infty] \times \mathcal{S}^{d-1}$.
	\end{rem}
	\begin{dfn}
		Let $f \colon N_{\mathbb{R}} \rightarrow \mathbb{R}$ be a function. Given $x , y \in N_{\mathbb{R}}$ with $y \neq 0$, the \textit{one-sided directional derivative} of $f$ at $x$ in the direction of $y$ is defined as the limit
		\begin{displaymath}
			\textup{D}_{+}(f,y)(x) \coloneqq \lim_{\lambda \rightarrow + 0}  (f(\lambda \, y + x) - f(x))/ \lambda.
		\end{displaymath}
	\end{dfn}
	\begin{prop}\label{bar-eta}
		For each rational piecewise affine function $f \in \mathcal{PA}(N_{\mathbb{R}}, \mathbb{Q})$, the function $\eta(f) \colon (0, \infty) \times \mathcal{S}^{d-1} \rightarrow \mathbb{R}$ extends to the continuous function $\overline{\eta}(f) \colon [0, \infty] \times \mathcal{S}^{d-1} \rightarrow \mathbb{R}$ given by
		\begin{displaymath}
			\overline{\eta}(f) (\| x \|, \hat{x}) \coloneqq \begin{cases} \textup{D}_{+}(f,\hat{x})(0), & \| x \| = 0 \\ (f(x) - f(0))/\| x \|, & \|x \| \in (0,\infty) \\ \textup{rec}(f)(\hat{x}), & \|x \| = \infty. \end{cases}
		\end{displaymath}
		The assignment $f \mapsto \overline{\eta}(f)$ induces a $\mathbb{Q}$-linear map $\overline{\eta} \colon \mathcal{PA}(N_{\mathbb{R}}, \mathbb{Q}) \rightarrow C^0 ([0, \infty] \times \mathcal{S}^{d-1})$. Moreover, a sequence $\lbrace f_n \rbrace_{n \in \mathbb{N}}$ in $\mathcal{PA}(N_{\mathbb{R}}, \mathbb{Q})$ converges in the $\mathcal{C}$-norm if and only if the sequence $\lbrace f_n(0) \rbrace_{n \in \mathbb{N}}$ is eventually constant and $\lbrace \overline{\eta}(f_n) \rbrace_{n \in \mathbb{N}}$ converges uniformly.
	\end{prop}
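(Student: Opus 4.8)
\textbf{Proof plan for Proposition~\ref{bar-eta}.}

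The plan is to verify the three claims in sequence: first that $\eta(f)$ extends continuously to the compactification, then that $\overline{\eta}$ is a well-defined $\mathbb{Q}$-linear map into $C^0([0,\infty]\times\mathcal{S}^{d-1})$, and finally the characterization of $\mathcal{C}$-convergence. For the first claim, fix $f\in\mathcal{PA}(N_{\mathbb{R}},\mathbb{Q})$ and a complete polyhedral complex $\Pi$ on which $f$ is piecewise affine. The key structural fact I would exploit is that near $0$ the function $h=f-f(0)$ is piecewise \emph{linear}: the stars of $0$ in $\Pi$ form a finite fan $\Sigma$ in $N_{\mathbb{R}}$, and on each cone $\sigma\in\Sigma$ we have $h(x)=\langle m_\sigma,x\rangle$ for a suitable $m_\sigma\in M_{\mathbb{Q}}$ whenever $x$ lies in a small enough ball meeting $\sigma$. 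Hence for $\hat{x}\in\sigma\cap\mathcal{S}^{d-1}$ and small $\lambda>0$ we get $\eta(f)(\lambda,\hat{x})=h(\lambda\hat x)/\lambda=\langle m_\sigma,\hat x\rangle$, which is already independent of $\lambda$; this gives $\lim_{\lambda\to+0}\eta(f)(\lambda,\hat x)=\textup{D}_+(f,\hat x)(0)=\langle m_\sigma,\hat x\rangle$, and continuity at $\{0\}\times\mathcal{S}^{d-1}$ follows since the function $\hat x\mapsto\langle m_\sigma,\hat x\rangle$ on $\sigma\cap\mathcal{S}^{d-1}$ matches on overlapping cones (the $m_\sigma$ agree on common faces, as $h$ is well-defined and piecewise linear). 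For the limit at $\{\infty\}\times\mathcal{S}^{d-1}$, I would use that $f\in\mathcal{G}(N_{\mathbb{R}})$ by Remark~\ref{direct-lim} (the recession function $\textup{rec}(f)$ exists and lies in $\mathcal{SF}(N_{\mathbb{R}},\mathbb{Q})$), combined with the identity $\eta(f)(\lambda,\hat x)=(f(\lambda\hat x)-f(0))/\lambda\to\textup{rec}(f)(\hat x)$ as $\lambda\to\infty$, which is precisely the definition of the extended recession map in Definition~\ref{G-def}; the continuity at infinity reduces to the (far-field) piecewise linearity of $f$ together with continuity of $\textup{rec}(f)$.

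The subtle point, and the step I expect to be the main obstacle, is the \emph{joint} continuity of $\overline{\eta}(f)$ at the two boundary pieces $\{0\}\times\mathcal{S}^{d-1}$ and $\{\infty\}\times\mathcal{S}^{d-1}$, i.e.\ ruling out pathological behaviour as $(\lambda,\hat x)\to(0,\hat x_0)$ or $(\infty,\hat x_0)$ with $\hat x$ varying. The clean way to handle this is to note that $f$, being piecewise affine on the finite complex $\Pi$, is globally Lipschitz on $N_{\mathbb{R}}$ (each affine piece is Lipschitz with the norm of its linear part, and there are finitely many pieces), so $h=f-f(0)$ satisfies $|h(x)-h(x')|\le L\|x-x'\|$ for a uniform $L$. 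Then for $(\lambda,\hat x)$ and $(\lambda',\hat x')$ both small one estimates
\begin{displaymath}
	\bigl|\eta(f)(\lambda,\hat x)-\eta(f)(\lambda',\hat x')\bigr|\le \bigl|\eta(f)(\lambda,\hat x)-\eta(f)(\lambda,\hat x')\bigr|+\bigl|\eta(f)(\lambda,\hat x')-\eta(f)(\lambda',\hat x')\bigr|,
\end{displaymath}
where the first term is bounded using the Lipschitz bound $|h(\lambda\hat x)-h(\lambda\hat x')|\le L\lambda\|\hat x-\hat x'\|$, hence $\le L\|\hat x-\hat x'\|$, and the second term is $0$ for $\lambda,\lambda'$ small enough (by the local linearity, once $\hat x'$ is fixed $\eta(f)(\cdot,\hat x')$ is eventually constant). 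An entirely analogous estimate using the far-field linearity of $f$ and the Lipschitz constant of $\textup{rec}(f)$ handles the point at infinity. This shows $\overline{\eta}(f)\in C^0([0,\infty]\times\mathcal{S}^{d-1})$. $\mathbb{Q}$-linearity of $\overline{\eta}$ is immediate from $\mathbb{Q}$-linearity of $\eta$ (Lemma~\ref{eta}) together with $\mathbb{Q}$-linearity of $f\mapsto\textup{D}_+(f,\cdot)(0)$ and of the recession map on $\mathcal{PA}(N_{\mathbb{R}},\mathbb{Q})$ (Remark~\ref{direct-lim}), checked pointwise on each of the three regions of $[0,\infty]\times\mathcal{S}^{d-1}$.

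For the final equivalence, I would build directly on Lemma~\ref{eta}, which already states that $\lbrace f_n\rbrace$ converges in the $\mathcal{C}$-norm to some $f\in\mathcal{G}(N_{\mathbb{R}})$ if and only if $\lbrace f_n(0)\rbrace$ is eventually constant and $\lbrace\eta(f_n)\rbrace$ converges uniformly to $\eta(f)$ on the open cylinder $(0,\infty)\times\mathcal{S}^{d-1}$. The point is that for functions of the form $\overline{\eta}(f_n)$, which are continuous on the \emph{compact} space $[0,\infty]\times\mathcal{S}^{d-1}$, uniform convergence on the dense open subset $(0,\infty)\times\mathcal{S}^{d-1}$ is equivalent to uniform convergence on all of $[0,\infty]\times\mathcal{S}^{d-1}$: a uniformly Cauchy sequence of continuous functions on the dense subset extends to a uniformly Cauchy sequence on the closure (each $\overline\eta(f_n)$ being the unique continuous extension of $\eta(f_n)$), with the same sup-norm, and conversely restriction to the dense subset preserves uniform convergence. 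Thus $\lbrace\eta(f_n)\rbrace$ converges uniformly if and only if $\lbrace\overline{\eta}(f_n)\rbrace$ converges uniformly, and composing this with Lemma~\ref{eta} gives the stated characterization. I would also remark that when the sequence converges, its $\mathcal{C}$-limit $f$ satisfies $f(0)=\lim f_n(0)$ and $\overline\eta(f)=\lim\overline\eta(f_n)$, recovering $f$ from the boundary and cylinder data; this is not strictly needed for the statement but clarifies that the limit indeed lies in $\mathcal{G}(N_{\mathbb{R}})$ (though not necessarily in $\mathcal{PA}(N_{\mathbb{R}},\mathbb{Q})$, which is why the target Banach space in later applications is the completion).
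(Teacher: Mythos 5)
Your plan tracks the paper's proof closely: the same decomposition of the compactified cylinder into three regions, the same use of local piecewise linearity of $h=f-f(0)$ on a small ball around $0$, and the same density-plus-completeness argument for the convergence characterization. The $\mathbb{Q}$-linearity observation and the global Lipschitz bound for the $\hat x$-direction (the paper instead uses continuity of $\textup{rec}(f)$ for that term at $\infty$, which is a cosmetic difference) are both fine.

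There are, however, two steps that are stated too loosely to count as complete. First, at $\{\infty\}\times\mathcal{S}^{d-1}$ you claim an ``entirely analogous estimate'' to the one at $\{0\}$, with the second term vanishing by ``far-field linearity.'' But that term is not eventually zero as $\lambda\to\infty$: for $\lambda\hat x'$ in an unbounded cell $\pi$ with $f=\langle m_\pi,\cdot\rangle+c_\pi$ one gets $\eta(f)(\lambda,\hat x')=\langle m_\pi,\hat x'\rangle+(c_\pi-f(0))/\lambda$, which is merely $O(1/\lambda)$, and moreover the $\lambda$-threshold beyond which the ray enters a fixed unbounded cell depends on $\hat x'$. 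What actually closes this is the fact (used explicitly in the paper via sublinearity of $f_0=f-\textup{rec}(f)$, and true for piecewise affine $f$ because $f-\textup{rec}(f)$ is bounded on a finite complete complex) that $|\eta(f)(\lambda,\hat x')-\textup{rec}(f)(\hat x')|\le(\sup|f-\textup{rec}(f)|+|f(0)|)/\lambda$ \emph{uniformly} in $\hat x'$. You should state and use this bound rather than the eventual-constancy that only holds near $0$. Second, your treatment of the backward implication in the convergence characterization reduces everything to Lemma~\ref{eta}, but that lemma presupposes a candidate limit $f\in\mathcal{G}(N_{\mathbb{R}})$ as input. Starting only from ``$\{f_n(0)\}$ eventually constant $c$ and $\{\overline\eta(f_n)\}$ uniformly convergent to $g$,'' one must still produce the $f$: either define $f(x)\coloneqq\|x\|\,g(\|x\|,\hat x)+c$ and check $f\in\mathcal{G}(N_{\mathbb{R}})$ using the uniform continuity of $g$ on the compact cylinder (this is what the paper does and what Remark~\ref{eta-2} prepares), or argue that $\mathcal{C}$-Cauchy implies $\mathcal{G}$-Cauchy hence $\mathcal{G}$-convergent by Theorem~\ref{G-banach}, and then upgrade to $\mathcal{C}$-convergence by passing to the limit in the pairwise $\mathcal{C}$-estimates. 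Your closing remark gestures at the first option but frames it as optional (``not strictly needed for the statement''), which it is not: without it the backward direction is not established.
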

	\begin{proof}
		Let $ f \in \mathcal{PA}(N_{\mathbb{R}}, \mathbb{Q})$. We must show that the function $\overline{\eta}(f)$ is continuous. By Lemma~\ref{eta}, the function $\overline{\eta}(f)$ is continuous on $(0,\infty) \times \mathcal{S}^{d-1}$. Now, we show continuity on $\lbrace \infty \rbrace \times \mathcal{S}^{d-1}$. Fix an element $\hat{x}_0 \in \mathcal{S}^{d-1}$. Since $\mathcal{PA}(N_{\mathbb{R}}, \mathbb{Q})$ is a subspace of $\mathcal{G}(N_{\mathbb{R}})$, the function $f$ can be written as $f = f_0 + \textup{rec}(f)$, where $f_0$ is sublinear and $\textup{rec}(f)$ is the recession function of $f$, which is a continuous conical function. Then, for each $\varepsilon>0$ there exists $\delta_{\varepsilon} >0$ such that 
		\begin{displaymath}
			|\textup{rec}(f)(\hat{x}) - \textup{rec}(f)(\hat{x}_0) | \leq \varepsilon /2
		\end{displaymath}
		for every $\hat{x} \in \mathcal{S}^{d-1}$ satisfying $\| \hat{x} - \hat{x}_0 \| \leq \delta_{\varepsilon}$. By sublinearity of $f_0 = f - \textup{rec}(f)$, there exists $R_{\varepsilon}>1$ such that for all $\|x \| \geq R_{\varepsilon}$ we have
		\begin{displaymath}
			|f(x) - \textup{rec}(f)(x)| \leq \varepsilon/4 \cdot \|x \| .
		\end{displaymath}
		Enlarging $R_{\varepsilon}$ if necessary, we can assume that $|f(0)|/R_{\varepsilon} \leq \varepsilon /4$. Then, for all $\|x \| \geq R_{\varepsilon}$, the triangle inequality gives
		\begin{displaymath}
			| \overline{\eta}(f)(\|x \|, \hat{x}) -  \textup{rec}(f)(\hat{x}) | = |f(x) - f(0) - \textup{rec}(f)(x)|/\|x \| \leq  (|f(x) - \textup{rec}(f)(x)| + |f(0)|)/\|x \| \leq \varepsilon/2.
		\end{displaymath}
		A second application of the triangle inequality shows
		\begin{displaymath}
			| \overline{\eta}(f)(\|x \|, \hat{x}) -  \textup{rec}(f)(\hat{x}_0) |  \leq | \overline{\eta}(f)(\|x \|, \hat{x}) -  \textup{rec}(f)(\hat{x}) | + |\textup{rec}(f)(\hat{x}) - \textup{rec}(f)(\hat{x}_0) | \leq \varepsilon
		\end{displaymath}
		for all $(\|x \|, \hat{x})$ such that $\|x \| \geq R_{\varepsilon}$ and $\| \hat{x} - \hat{x}_0 \| \leq \delta_{\varepsilon}$. Therefore, the function $\overline{\eta}(f)$ is continuous at $(\infty, \hat{x}_0)$.
		
		Now, we show continuity on $\lbrace 0 \rbrace\times \mathcal{S}^{d-1}$. Following the proof of Lemma~\ref{eta}, the function $h = f-f(0)$ is piecewise linear on a small ball $ \overline{\textup{B}}(0,\delta)$. In particular, the restriction of $h$ to $ \overline{\textup{B}}(0,\delta)$ is conical. This means that for every $\lambda >0$ and $x \in  \overline{\textup{B}}(0,\delta) \setminus \lbrace 0 \rbrace$ such that $\lambda \cdot x \in \overline{\textup{B}}(0,\delta)$ we have $h(\lambda \cdot x) = \lambda \, h(x)$. Therefore,
		\begin{displaymath}
			\eta (f) ( \lambda \cdot \|x \|, \hat{x} ) = h(\lambda \cdot x)/\| \lambda \cdot x \| = h(x)/ \| x \| = \eta (f) (\|x\|, \hat{x}).
		\end{displaymath}
		This means that the function $\eta (f)$ is constant on every set of the form $\left(0, \delta \right] \times \lbrace \hat{x} \rbrace$. It follows immediately from the definition of the one-sided directional derivative that
		\begin{displaymath}
			\overline{\eta}(f)(0,\hat{x}) = \textup{D}_{+}(f,\hat{x})(0) = \eta (f) ( \delta , \hat{x}).
		\end{displaymath}
		Since $\eta(f)$ is continuous on $\left(0, \delta \right] \times \mathcal{S}^{d-1}$, the function $\overline{\eta}(f)$ is continuous.
		
		It remains to show the continuity property of the $\mathbb{Q}$-linear map $\overline{\eta}$. By Lemma~\ref{eta}, if $\lbrace f_n \rbrace_{n \in \mathbb{N}}$ converges in the $\mathcal{C}$-norm to a function $f$, then the sequence $\lbrace f_n(0) \rbrace_{n \in \mathbb{N}}$ is eventually constant and the sequence $\lbrace \eta(f)_n \rbrace_{n \in \mathbb{N}}$ converges uniformly on $(0,\infty) \times \mathcal{S}^{d-1}$ to a function $\eta(f)$. Since $(0,\infty) \times \mathcal{S}^{d-1}$ is dense in $[0, \infty] \times \mathcal{S}^{d-1}$ and each $\eta(f_n)$ extends continuously to $\overline{\eta}(f_n)$, the sequence $\lbrace \overline{\eta}(f_n) \rbrace_{n \in \mathbb{N}}$ is Cauchy in the uniform norm. Completeness of $C^0 ([0, \infty] \times \mathcal{S}^{d-1})$ implies the uniform convergence. Conversely, assume that the sequence $\lbrace f_n (0) \rbrace_{n \in \mathbb{N}}$ is eventually the  constant $c$ and $\lbrace \overline{\eta}(f_n) \rbrace_{n \in \mathbb{N}}$ converges uniformly. Denote by $g \colon [0, \infty] \times \mathcal{S}^{d-1} \rightarrow \mathbb{R}$ the function given by the limit of $\lbrace \overline{\eta}(f_n) \rbrace_{n \in \mathbb{N}}$. Then, define the function $f \colon N_{\mathbb{R}} \rightarrow \mathbb{R}$ given by
		\begin{displaymath}
			f(x) \coloneqq \|x \| \cdot g(\| x \|, \hat{x})  + c.
		\end{displaymath}
		Following Remark~\ref{eta-2}, the fact that $g$ is uniformly continuous on $[0, \infty] \times \mathcal{S}^{d-1}$  shows that $f \in \mathcal{G}(N_{\mathbb{R}})$. By construction, $\eta(f)$ coincides with the restriction of $g$ to $(0, \infty) \times \mathcal{S}^{d-1}$. Finally, Lemma~\ref{eta} implies that the sequence $\lbrace f_n \rbrace_{n \in \mathbb{N}}$ converges in the $\mathcal{C}$-norm to $f$.
	\end{proof}
	\begin{rem}\label{dir-der-0}
		Let $f \in \mathcal{G}(N_{\mathbb{R}})$ and suppose that for each $\hat{x} \in \mathcal{S}^{d-1}$ the one-sided directional derivative $\textup{D}_{+}(f, \hat{x} )(0)$ exists and is continuous as a function of $\hat{x}$. Then, we may define the function $\overline{\eta}(f) \colon [0, \infty] \times \mathcal{S}^{d-1} \rightarrow \mathbb{R}$ as in the previous result. That is,
		\begin{displaymath}
			\overline{\eta}(f) (\| x \|, \hat{x}) \coloneqq \begin{cases} \textup{D}_{+}(f,\hat{x})(0), & \| x \| = 0 \\ (f(x) - f(0))/\| x \|, & \|x \| \in (0,\infty) \\ \textup{rec}(f)(\hat{x}), & \|x \| = \infty. \end{cases}
		\end{displaymath}
		Suppose further that $(f(x) - f(0))/\| x \|$ converges to $\textup{D}_{+}(f,\hat{x})(0)$ uniformly on $\hat{x} \in \mathcal{S}^{d-1}$. Concretely, for every $\varepsilon >0$ there is $\delta_{\varepsilon} >0$ such that
		\begin{displaymath}
			|\textup{D}_{+}(f,\hat{x})(0) - (f(x) - f(0))/\| x \| | \leq \varepsilon
		\end{displaymath}
		for all $x$ such that $\| x \| \leq \delta_{\varepsilon}$. Copying the previous proof, we can show that the function $\overline{\eta}(f)$ is continuous on the compact set $ [0, \infty] \times \mathcal{S}^{d-1}$. As we will see now, every globally Lipschitz concave function $f \in \mathcal{G}^{+} (N_{\mathbb{R}})$ satisfies these assumptions.
	\end{rem}
	Theorem~23.1~of~\cite{Roc} gives the following lemma, which states that the one-sided directional derivative of a concave function $f \colon N_{\mathbb{R}} \rightarrow \mathbb{R}$ always exists.
	\begin{lem}\label{dir-der-1}
		Let $f \colon N_{\mathbb{R}} \rightarrow \mathbb{R}$ be a concave function. Then, for each $x, y \in N_{\mathbb{R}}$, the difference quotient $(f(\lambda \, y + x) - f(x))/ \lambda$ is a non-increasing function of $\lambda >0$. In particular, the one-sided directional derivative $\textup{D}_{+}(f,y)(x)$ exists and is a conical concave function of $y$.
	\end{lem}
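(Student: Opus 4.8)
The plan is to prove Lemma~\ref{dir-der-1} in two stages: first the monotonicity of the difference quotient, and then deduce the existence and the conicality/concavity of $\textup{D}_{+}(f,y)(x)$ as a function of $y$.

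\textbf{Step 1: Monotonicity of the difference quotient.} Fix $x,y \in N_{\mathbb{R}}$ with $y \neq 0$ (the case $y=0$ being trivial) and set $\phi(\lambda) \coloneqq (f(\lambda \, y + x) - f(x))/\lambda$ for $\lambda > 0$. I would show that $\phi$ is non-increasing. Take $0 < \lambda_1 \leq \lambda_2$ and write $\lambda_1 \, y + x = t \, (\lambda_2 \, y + x) + (1-t) \, x$ with $t = \lambda_1 / \lambda_2 \in (0,1]$. Concavity of $f$ gives
\begin{displaymath}
	f(\lambda_1 \, y + x) \geq t \, f(\lambda_2 \, y + x) + (1-t) \, f(x).
\end{displaymath}
Subtracting $f(x)$ from both sides and dividing by $\lambda_1 = t \, \lambda_2$ yields $\phi(\lambda_1) \geq \phi(\lambda_2)$, which is exactly monotonicity. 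This is a direct one-line manipulation of the concavity inequality; the only subtlety is bookkeeping the convex combination, and since $f$ is real-valued everywhere (i.e.\ $\textup{dom}(f) = N_{\mathbb{R}}$ in the setting where this lemma is applied), there are no $-\infty$ issues.

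\textbf{Step 2: Existence of the limit.} Since $\phi$ is non-increasing in $\lambda > 0$, the limit as $\lambda \to +0$ exists in $\mathbb{R}_{\infty}$, namely $\textup{D}_{+}(f,y)(x) = \sup_{\lambda > 0} \phi(\lambda)$. To see it is finite, I would use concavity again in the opposite direction: for $0 < \lambda$ small, $x = \frac{\lambda}{\lambda+\mu}(\,-\mu \, y + x\,)$-type convex combinations, or more simply, pick any $\mu > 0$ and write $x$ as a convex combination of $\lambda \, y + x$ and $-\mu \, y + x$; concavity then bounds $\phi(\lambda)$ above by a quantity independent of $\lambda$, so the supremum is finite. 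Alternatively, and more cleanly, one invokes Proposition~\ref{cont-1}: $f$ with $\textup{dom}(f) = N_{\mathbb{R}}$ is continuous, hence locally Lipschitz on a neighbourhood of $x$, which bounds $|\phi(\lambda)|$ for small $\lambda$. Either way the one-sided directional derivative is a well-defined real number.

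\textbf{Step 3: Conicality and concavity in the direction variable.} Conicality is immediate: for $c > 0$, the substitution $\lambda \mapsto \lambda/c$ in the defining limit gives $\textup{D}_{+}(f, c\, y)(x) = c \, \textup{D}_{+}(f, y)(x)$, and the case $c = 0$ gives $0$ on both sides. For concavity in $y$, fix $y_1, y_2$ and $0 < s < 1$. For each $\lambda > 0$,
\begin{displaymath}
	\lambda \, (s\, y_1 + (1-s)\, y_2) + x = s\, (\lambda \, y_1 + x) + (1-s)\, (\lambda \, y_2 + x),
\end{displaymath}
so concavity of $f$ gives $f(\lambda(s y_1 + (1-s) y_2) + x) \geq s\, f(\lambda y_1 + x) + (1-s)\, f(\lambda y_2 + x)$; subtracting $f(x) = s\, f(x) + (1-s)\, f(x)$, dividing by $\lambda$, and passing to the limit $\lambda \to +0$ yields $\textup{D}_{+}(f, s y_1 + (1-s) y_2)(x) \geq s\, \textup{D}_{+}(f, y_1)(x) + (1-s)\, \textup{D}_{+}(f, y_2)(x)$. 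Combined with conicality (positive homogeneity), this shows $y \mapsto \textup{D}_{+}(f,y)(x)$ is a conical concave function, and I would cite Theorem~23.1~of~\cite{Roc} for the statement as packaged. I do not anticipate a genuine obstacle here; the mild point to be careful about is ensuring that the monotone limit is not $+\infty$, which is precisely why one needs either the continuity input from Proposition~\ref{cont-1} or a two-sided concavity bound — everything else is a routine rearrangement of the concavity inequality.
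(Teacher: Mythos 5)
Your proposal is correct, and it follows the standard Rockafellar argument that the paper itself simply cites (Theorem~23.1 of~\cite{Roc}) without unpacking; you have, in effect, filled in exactly the proof the reference supplies. The only minor inaccuracy is in Step~2, where you appeal to Proposition~\ref{cont-1} and say ``continuous, hence locally Lipschitz'': that proposition only asserts relative continuity, so local Lipschitzness is a separate (true, but unstated in the paper) fact about finite concave functions on $\mathbb{R}^d$; your first finiteness argument — bounding $\phi(\lambda) \leq \bigl(f(x) - f(-\mu\,y + x)\bigr)/\mu$ via the two-sided convex combination — is fully rigorous on its own and makes the appeal unnecessary.
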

	By Remark~\ref{conical-polyhedron}, for each $x_0 \in N_{\mathbb{R}}$, the conical concave function $\textup{D}_{+}(f,y)(x_0)$ is the support function of a compact convex set, which can be described in terms of the \textit{sup-differential}.
	\begin{dfn}\label{sup-diff}
		Let $f \colon N_{\mathbb{R}} \rightarrow \mathbb{R}$ be a concave function and $x \in N_{\mathbb{R}}$. The \textit{sup-differential} of $f$ at $x$ is the set $\partial f (x) \coloneqq \lbrace z \in M_{\mathbb{R}} \, \vert \, \langle z, y-x \rangle \geq f(y) - f(x) \textup{ for all } y \in N_{\mathbb{R}} \rbrace$. We say that $f$ is \textit{sup-differentiable} at $x$ if $\partial f (x) \neq \emptyset$. The \textit{domain} $\textup{dom}(\partial f )$ of the sup-differential is the set of points where $f$ is sup-differentiable. If $E \subset N_{\mathbb{R}}$, then
		\begin{displaymath}
			\partial f (E) \coloneqq \bigcup_{x \in E}  \partial f (x) \subset M_{\mathbb{R}}.
		\end{displaymath}
	\end{dfn}
	\begin{lem}\label{dir-der-2}
		Let $f\colon N_{\mathbb{R}} \rightarrow \mathbb{R}$ be a concave function. Then, for each $x_0 \in N_{\mathbb{R}}$, the function $f$ is sup-differentiable at $x_0$ and the conical concave function $\textup{D}_{+}(f,y)(x_0)$ is the support function of the bounded convex set $\partial f (x_0)$. 
	\end{lem}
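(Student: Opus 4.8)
The plan is to reduce to the case $x_0 = 0$ with $f(0)=0$, and then identify the one-sided directional derivative $\textup{D}_{+}(f,\cdot)(x_0)$ with the support function of $\partial f(x_0)$ by an elementary double-inclusion argument. First I would translate: put $g(y) \coloneqq f(y+x_0) - f(x_0)$. Then $g$ is concave and finite on all of $N_{\mathbb{R}}$, $g(0)=0$, and directly from the definitions $\partial g(0) = \partial f(x_0)$ and $\textup{D}_{+}(g,y)(0) = \textup{D}_{+}(f,y)(x_0)$ for all $y \neq 0$. So it suffices to prove the statement for $g$ at the origin.

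Next I would establish that $h(y) \coloneqq \textup{D}_{+}(g,y)(0)$ is a \emph{real-valued} conical concave function. By Lemma~\ref{dir-der-1} the difference quotient $g(\lambda y)/\lambda$ is non-increasing in $\lambda>0$, so $h(y) = \sup_{\lambda>0} g(\lambda y)/\lambda$ and $h$ is conical and concave. The point where the hypothesis $\textup{dom}(f)=N_{\mathbb{R}}$ is essential is finiteness: fixing any $r>0$, concavity of $g$ at $\lambda y$, $0$ and $-ry$ (writing $0$ as the convex combination $\tfrac{r}{\lambda+r}(\lambda y) + \tfrac{\lambda}{\lambda+r}(-ry)$) yields $g(\lambda y)/\lambda \leq -g(-ry)/r$, a finite bound independent of $\lambda$; since also $h(y)\geq g(y)$, the value $h(y)$ is a real number. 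Being finite and concave on all of $N_{\mathbb{R}}$, $h$ is continuous by Proposition~\ref{cont-1}, hence closed; so by Remark~\ref{conical-polyhedron} and part \textit{(i)} of Proposition~\ref{conical-compact}, $h = \Psi_{\Delta}$ is the support function of a compact convex set $\Delta \coloneqq \textup{stab}(h) \subset M_{\mathbb{R}}$, which is non-empty because $h$ is real-valued (the support function of the empty set is identically $+\infty$).

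It then remains to show $\Delta = \partial g(0)$. Since $h$ is conical, $\langle z,\cdot\rangle - h$ is conical, and a conical function is bounded below exactly when it is nonnegative; hence $\textup{stab}(h) = \lbrace z \in M_{\mathbb{R}} : \langle z, y\rangle \geq h(y) \text{ for all } y \rbrace$. From $h(y)\geq g(y)$ we get $\textup{stab}(h) \subseteq \partial g(0)$. Conversely, if $\langle z, y\rangle \geq g(y)$ for all $y$, then replacing $y$ by $\lambda y$ and dividing by $\lambda$ gives $\langle z, y\rangle \geq g(\lambda y)/\lambda$ for every $\lambda>0$, so $\langle z,y\rangle \geq h(y)$; thus $\partial g(0)\subseteq \textup{stab}(h)$. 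Therefore $\partial f(x_0) = \partial g(0) = \Delta$ is a non-empty bounded convex set, so $f$ is sup-differentiable at $x_0$, and $\textup{D}_{+}(f,y)(x_0) = h(y) = \Psi_{\partial f(x_0)}(y)$, as required.

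The whole argument is routine convex analysis, and the only genuine subtlety is the finiteness of $h$: without the standing hypothesis $\textup{dom}(f)=N_{\mathbb{R}}$ the directional derivative may be $+\infty$ in some directions and $\partial f(x_0)$ may be empty, so that is the step I would treat most carefully.
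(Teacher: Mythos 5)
Your proof is correct, and every step checks out: the reduction to $x_0=0$ via $g(y)=f(y+x_0)-f(x_0)$, the identification $h(y)=\sup_{\lambda>0}g(\lambda y)/\lambda$ from the monotonicity in Lemma~\ref{dir-der-1}, the finiteness bound $g(\lambda y)/\lambda \leq -g(-ry)/r$ from writing $0$ as a convex combination of $\lambda y$ and $-ry$, the use of Proposition~\ref{cont-1} and Proposition~\ref{conical-compact} to get closedness and compactness of $\textup{stab}(h)$, and the double inclusion identifying $\textup{stab}(h)$ with $\partial g(0)$. The paper, by contrast, gives no argument at all and simply invokes Theorem~23.4 of Rockafellar, which is the standard reference for exactly this fact (there stated for convex rather than concave functions, with $x_0$ in the relative interior of the effective domain, a condition automatically satisfied here since $\textup{dom}(f)=N_{\mathbb{R}}$). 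Your self-contained argument follows essentially the same convex-analytic ideas as the cited Rockafellar proof but has the advantage of explicitly isolating, and making visible, the role of the global finiteness hypothesis $\textup{dom}(f)=N_{\mathbb{R}}$ in guaranteeing both the finiteness of $h$ and the non-emptiness of $\partial f(x_0)$, which in the general Rockafellar setting requires the relative-interior assumption.
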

	\begin{proof}
		Follows from Theorem~23.4~of~\cite{Roc}.
	\end{proof}
	Finally, we give a version of Proposition~\ref{bar-eta} for globally Lipschitz concave functions.
	\begin{prop}\label{eta-3}
		For each globally Lipschitz concave function $f \in \mathcal{G}^{+}(N_{\mathbb{R}})$, the function $\overline{\eta}(f) \colon [0, \infty] \times \mathcal{S}^{d-1} \rightarrow \mathbb{R}$ given by
		\begin{displaymath}
			\overline{\eta}(f) (\| x \|, \hat{x}) \coloneqq \begin{cases} \textup{D}_{+}(f,\hat{x})(0), & \| x \| = 0 \\ (f(x) - f(0))/\| x \|, & \|x \| \in (0,\infty) \\ \textup{rec}(f)(\hat{x}), & \|x \| = \infty \end{cases}
		\end{displaymath}
		is continuous. Moreover, an increasing sequence $\lbrace f_n \rbrace_{n \mathbb{N}}$ in $\mathcal{G}^{+}(N_{\mathbb{R}})$ converges to $f$ in the $\mathcal{C}$-norm if and only if the sequence $\lbrace f_n (0) \rbrace_{n \in \mathbb{N}}$ is eventually the constant $f(0)$ and the sequence $\lbrace \overline{\eta}(f_n) \rbrace_{n \in \mathbb{N}}$ converges pointwise to $\overline{\eta}(f)$.
	\end{prop}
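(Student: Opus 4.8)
\textbf{Proof strategy for Proposition~\ref{eta-3}.} The plan is to deduce this from the two auxiliary facts already at our disposal: Proposition~\ref{bar-eta} (which establishes the continuity of $\overline{\eta}(f)$ and the convergence criterion for rational piecewise affine functions) and Remark~\ref{dir-der-0} (which reduces the continuity of $\overline{\eta}(f)$ for a general $f \in \mathcal{G}(N_{\mathbb{R}})$ to: existence of the one-sided directional derivative $\textup{D}_{+}(f,\hat x)(0)$, its continuity in $\hat x$, and the \emph{uniform}-in-$\hat x$ convergence of the difference quotient $(f(x)-f(0))/\|x\|$ to $\textup{D}_{+}(f,\hat x)(0)$ as $\|x\|\to 0$). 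So the first half of the proof is to verify these three hypotheses for a globally Lipschitz concave $f$. Existence of $\textup{D}_{+}(f,\hat x)(0)$ is immediate from Lemma~\ref{dir-der-1}. For continuity and uniformity, I would invoke Lemma~\ref{dir-der-2}: the function $\hat x \mapsto \textup{D}_{+}(f,\hat x)(0)$ is the support function $\Psi_{\partial f(0)}$ of the bounded convex set $\partial f(0)$, hence continuous. For the uniform convergence, the key input is that $\lambda \mapsto (f(\lambda\hat x)-f(0))/\lambda$ is \emph{non-increasing} in $\lambda>0$ (Lemma~\ref{dir-der-1}) with limit $\textup{D}_{+}(f,\hat x)(0)$ as $\lambda\to +0$; thus for fixed small $\delta$, the function $\hat x \mapsto (f(\delta\hat x)-f(0))/\delta$ is continuous on the compact sphere $\mathcal{S}^{d-1}$, lies below $\textup{D}_{+}(f,\hat x)(0)$, and increases to it pointwise as $\delta\to 0$; Dini's theorem then upgrades this to uniform convergence on $\mathcal{S}^{d-1}$. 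With the three hypotheses of Remark~\ref{dir-der-0} verified, the continuity of $\overline{\eta}(f)$ on the compact cylinder $[0,\infty]\times\mathcal{S}^{d-1}$ follows verbatim from the argument there (which itself copies the proof of Proposition~\ref{bar-eta}).

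For the convergence criterion, suppose first that $\{f_n\}_{n\in\mathbb N}$ is an increasing sequence in $\mathcal{G}^{+}(N_{\mathbb R})$ converging to $f$ in the $\mathcal{C}$-norm. Then $f_n(0)=f(0)$ eventually (convergence in the $\mathcal{C}$-norm forces agreement at the origin, as noted in Remark~\ref{C-conv}), and by Proposition~\ref{eta-3}'s continuity statement each $\overline{\eta}(f_n)$ and $\overline{\eta}(f)$ are well-defined continuous functions on the compact set $[0,\infty]\times\mathcal{S}^{d-1}$; I would then show $\{\overline{\eta}(f_n)\}$ converges pointwise to $\overline{\eta}(f)$ by examining the three regimes separately. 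On $\|x\|\in(0,\infty)$, pointwise convergence $f_n(x)\to f(x)$ (which follows from $\mathcal{C}$-norm convergence, a fortiori $\mathcal{G}$-norm convergence) gives $\overline{\eta}(f_n)(\|x\|,\hat x)\to\overline{\eta}(f)(\|x\|,\hat x)$ directly. At $\|x\|=\infty$, $\overline{\eta}(f_n)(\infty,\hat x)=\textup{rec}(f_n)(\hat x)$, and by Theorem~\ref{G-banach} the extended recession map is continuous, so $\textup{rec}(f_n)\to\textup{rec}(f)=\overline{\eta}(f)(\infty,\cdot)$ in the $\mathcal{C}$-norm, hence pointwise. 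At $\|x\|=0$, $\overline{\eta}(f_n)(0,\hat x)=\textup{D}_{+}(f_n,\hat x)(0)$; here I would use that for the increasing sequence $\{f_n\}$ with $f_n(0)=f(0)$, the difference quotients satisfy $(f_n(\lambda\hat x)-f(0))/\lambda\le (f(\lambda\hat x)-f(0))/\lambda$ for each $n$, and combine monotonicity of the sequence with the monotonicity in $\lambda$ from Lemma~\ref{dir-der-1} to pass to the limit in the appropriate order. Concretely, for fixed $\hat x$ one has $\textup{D}_{+}(f_n,\hat x)(0)=\inf_{\lambda>0}(f_n(\lambda\hat x)-f(0))/\lambda$, and the double monotonicity lets one interchange $\sup_n$ and $\inf_\lambda$, yielding $\sup_n \textup{D}_{+}(f_n,\hat x)(0)=\inf_\lambda\sup_n(f_n(\lambda\hat x)-f(0))/\lambda=\inf_\lambda(f(\lambda\hat x)-f(0))/\lambda=\textup{D}_{+}(f,\hat x)(0)$, using that $f_n\uparrow f$ pointwise.

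For the converse, assume $\{f_n(0)\}$ is eventually the constant $f(0)$ and $\{\overline{\eta}(f_n)\}$ converges pointwise to $\overline{\eta}(f)$. Since the $f_n$ are increasing, the functions $\overline{\eta}(f_n)$ are increasing too, and $\overline{\eta}(f)$ is a continuous function on the \emph{compact} space $[0,\infty]\times\mathcal{S}^{d-1}$; Dini's theorem then promotes the pointwise convergence to uniform convergence of $\{\overline{\eta}(f_n)\}$. Restricting to $(0,\infty)\times\mathcal{S}^{d-1}$ gives uniform convergence of $\{\eta(f_n)\}$, which, together with $f_n(0)=f(0)$ eventually, translates back — via the polar-coordinate identity $f_n(x)-f(0)=\|x\|\cdot\eta(f_n)(\|x\|,\hat x)$ of Remark~\ref{C-conv} — into $\sup_{x}|f_n(x)-f(x)|/\|x\|\to 0$, i.e.\ convergence in the $\mathcal{C}$-norm. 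I expect the main obstacle to be the careful handling of the $\|x\|=0$ slice: one must justify the interchange of $\sup_n$ and $\inf_{\lambda>0}$ in the double limit defining $\textup{D}_{+}$, which is where the joint monotonicity (in $n$ by hypothesis, in $\lambda$ by Lemma~\ref{dir-der-1}) is essential, and one should make sure that the global Lipschitz bound — uniform in $n$ once $\{f_n(0)\}$ is bounded and the stability sets shrink, as in Proposition~\ref{equicontinuity} — is what keeps the directional derivatives uniformly controlled. Everything else is an assembly of the results already proved in this subsection.
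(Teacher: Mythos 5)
Your approach is close to the paper's for the continuity claim — existence of $\textup{D}_{+}(f,\hat x)(0)$ from Lemma~\ref{dir-der-1}, continuity in $\hat x$ from the support-function description in Lemma~\ref{dir-der-2}, uniform convergence by Dini's theorem, and then the argument of Remark~\ref{dir-der-0} — and the reverse implication of the convergence criterion (pointwise $\overline{\eta}$-convergence to a continuous limit on the compact cylinder $[0,\infty]\times\mathcal{S}^{d-1}$ upgrades to uniform convergence by Dini's theorem, then translate back) is exactly what the paper does. For the forward implication the paper simply reuses the density-and-completeness argument from the proof of Proposition~\ref{bar-eta}, whereas you split into the three regimes $\|x\|=0$, $\|x\|\in(0,\infty)$, $\|x\|=\infty$; both strategies work, though the paper's is shorter.

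There is, however, a sign error in your handling of the $\|x\|=0$ slice. You write $\textup{D}_{+}(f_n,\hat x)(0)=\inf_{\lambda>0}(f_n(\lambda\hat x)-f(0))/\lambda$, but by Lemma~\ref{dir-der-1} the difference quotient of a \emph{concave} function is \emph{non-increasing} in $\lambda$, so its limit as $\lambda\to 0^{+}$ is its \emph{supremum}, i.e.\ $\textup{D}_{+}(f_n,\hat x)(0)=\sup_{\lambda>0}(f_n(\lambda\hat x)-f(0))/\lambda$. This matters because your argument leans on interchanging $\sup_n$ with $\inf_\lambda$, and a minimax interchange is \emph{not} guaranteed by separate monotonicity in each variable — only $\sup_n\inf_\lambda g\leq\inf_\lambda\sup_n g$ holds in general (for instance $g(n,\lambda)=\arctan(n/\lambda)$ gives $0\neq\pi/2$). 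With the correct $\sup_\lambda$ formula the interchange becomes $\sup_n\sup_\lambda=\sup_\lambda\sup_n$, which is trivially valid, and your computation then yields $\sup_n\textup{D}_{+}(f_n,\hat x)(0)=\sup_\lambda(f(\lambda\hat x)-f(0))/\lambda=\textup{D}_{+}(f,\hat x)(0)$ using $f_n\uparrow f$ pointwise. Alternatively, no interchange is needed at all: from $\|f-f_n\|_{\mathcal{C}}\leq\varepsilon_n$ one has $|(f(\lambda\hat x)-f(0))/\lambda-(f_n(\lambda\hat x)-f(0))/\lambda|\leq\varepsilon_n$ uniformly in $\lambda>0$ and $\hat x$, and sending $\lambda\to 0^{+}$ gives $|\textup{D}_{+}(f,\hat x)(0)-\textup{D}_{+}(f_n,\hat x)(0)|\leq\varepsilon_n$ directly.
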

	\begin{proof}
		By Lemma~\ref{dir-der-1}, the function $\textup{D}_{+}(f,\hat{x})(0)$ on the sphere $\mathcal{S}^{d-1}$ is continuous. The same lemma implies that $\overline{\eta}(f) (\| x \|, \hat{x})$ regarded as a function on the sphere $\mathcal{S}^{d-1}$ is increasing as $\| x \|$ decreases. Since the sphere $\mathcal{S}^{d-1}$ is compact, Dini's theorem implies that $(f(x) - f(0))/\| x \|$ converges to $\textup{D}_{+}(f,\hat{x})(0)$ uniformly on $\hat{x} \in \mathcal{S}^{d-1}$ (in the sense of Remark~\ref{dir-der-0}). Thus, we can copy the proof of Proposition~\ref{eta-2} to show that $\overline{\eta}(f)$ is continuous.
		
		The argument in the proof of Proposition~\ref{eta-2} shows that the sequence $\lbrace f_n \rbrace_{n \mathbb{N}}$ converges in the $\mathcal{C}$-norm if and only if the sequence $\lbrace f_n (0) \rbrace_{n \in \mathbb{N}}$ is eventually the constant $f(0)$ and the sequence $\lbrace \overline{\eta}(f_n) \rbrace_{n \in \mathbb{N}}$ converges uniformly to $\overline{\eta}(f)$. Since the sequence $\lbrace f_n \rbrace_{n \mathbb{N}}$ is increasing, so is  $\lbrace \overline{\eta}(f_n) \rbrace_{n \in \mathbb{N}}$. Again by Dini's theorem, if the increasing sequence $\lbrace \overline{\eta}(f_n) \rbrace_{n \in \mathbb{N}}$ converges pointwise to the continuous function $\overline{\eta}(f)$ on the compact set $[0, \infty] \times \mathcal{S}^{d-1}$, it converges uniformly. The result follows.
	\end{proof}
	\subsection{Density theorems}
	In this final section, we will prove a density result for each of these spaces: The space $\mathcal{C}(N_{\mathbb{R}})$ of continuous conical functions, the cone $\mathcal{C}^{+}(N_{\mathbb{R}})$ of conical concave functions, and the cone $\mathcal{G}^{+}(N_{\mathbb{R}})$ of globally Lipschitz concave functions. In each case, the dense subset satisfies a rationality property. Finally, we compute the closure in the $\mathcal{C}$-norm of the cone $\mathcal{PA}^{+}(N_{\mathbb{R}}, \mathbb{Q})$ of concave rational piecewise affine functions.
	\begin{rem}\label{conventions}
		Recall that all norms on $N_{\mathbb{R}}$ are equivalent. Therefore, our results are independent of the choice of norm. The same is true for the lattice structure $N$. Then, without loss of generality, we may pick an isomorphism of lattices $N \cong \mathbb{Z}^d \subset \mathbb{R}^d$ and choose the norm $\| \cdot \|$ given by
		\begin{displaymath}
			\|(x_1,...,x_d) \| \coloneqq \max_{1 \leq i \leq d} | x_i | .
		\end{displaymath}
		This choice has the following advantages:
		\begin{enumerate}[label=(\arabic*)]
			\item For every rational number $R>0$, the closed ball $\overline{\textup{B}}(0,R)$ is equal to the hypercube $\left[ -R,R \right]^d$. Therefore, it is a rational polyhedral convex set.
			\item If $x \in N_{\mathbb{Q}}$, the norm $\|x \|$ is a rational number.
			\item Let $\Sigma_{0}^{\prime}$ be the fan whose $d$-dimensional cones are of the form $c(F)$, where $F$ runs over all facets of the unit hypercube. Let $\Sigma$ be any simplicial fan refining $\Sigma_{0}^{\prime}$. For each $\sigma \in \Sigma(d)$, the intersection $\hat{\sigma} = \sigma \cap \mathcal{S}^{d-1}$ is contained in a facet of the unit hypercube. Thus, the set $\hat{\sigma}$ is a rational $(d-1)$-simplex.
			\item By doing successive refinements of $\Sigma$ by rays (Example~\ref{star}), we can obtain fans such that the diameters of the sets $\hat{\sigma}$ are arbitrarily small.
		\end{enumerate}
		In the following, we will use these advantages to construct sequences of support functions.
	\end{rem}
	\paragraph*{\textbf{Density in} $\mathcal{C}(N_{\mathbb{R}})$.} The following theorem shows that we can approximate any continuous conical function by rational support functions.
	\begin{thm}\label{density-1}
		The space $\mathcal{SF}(N_{\mathbb{R}}, \mathbb{Q})$ of rational support functions on $N_{\mathbb{R}}$ is dense in the space $\mathcal{C}(N_{\mathbb{R}})$ of continuous conical functions, equipped with the $\mathcal{C}$-norm.
	\end{thm}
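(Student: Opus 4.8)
The plan is to use the identification $\mathcal{C}(N_{\mathbb{R}}) \cong C^0(\mathcal{S}^{d-1})$ from Remark~\ref{comp-conical}, under which the $\mathcal{C}$-norm becomes the uniform norm on the sphere $\mathcal{S}^{d-1}$. Under this identification, a rational support function $\Psi \in \mathcal{SF}(\Sigma,\mathbb{Q})$ restricts to a function on $\mathcal{S}^{d-1}$ which is, on each set $\hat{\sigma} = \sigma \cap \mathcal{S}^{d-1}$, the restriction of a rational linear functional. So the task reduces to: given $f \in C^0(\mathcal{S}^{d-1})$ and $\varepsilon > 0$, produce a complete rational fan $\Sigma$ and a rational support function $\Psi$ on $\Sigma$ with $\|\Psi - f\|_\infty < \varepsilon$ on the sphere.

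First I would adopt the conventions of Remark~\ref{conventions}: take the sup-norm on $N_{\mathbb{R}} = \mathbb{R}^d$, so that $\overline{\textup{B}}(0,R)$ is a rational hypercube, $\|x\| \in \mathbb{Q}$ for $x \in N_{\mathbb{Q}}$, and, crucially, starting from the fan $\Sigma_0'$ whose $d$-dimensional cones are cones over the facets of the unit cube, I can refine by rays (Example~\ref{star}, using rational interior vectors) to obtain simplicial rational fans $\Sigma$ in which the diameters of the pieces $\hat{\sigma}$ are as small as I like. By uniform continuity of $f$ on the compact sphere, choose a modulus $\delta$ so that $|f(x) - f(y)| < \varepsilon/2$ whenever $\|x - y\| < \delta$ (with $x,y$ on the sphere), and pick such a fan $\Sigma$ with all $\hat{\sigma}$ of diameter less than $\delta$. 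Next, for each $d$-dimensional cone $\sigma \in \Sigma(d)$ with ray generators $v_{\tau_1},\dots,v_{\tau_d}$ (these are rational primitive vectors, and since $\Sigma$ is simplicial these are linearly independent), I want to define $\Psi$ on $\sigma$ to be the unique linear functional with prescribed values $\Psi(v_{\tau_j}) = c_j$ at the generators; this will be a rational linear functional provided the $c_j$ are rational. The natural choice is $c_j$ a rational number close to $f(v_{\tau_j}/\|v_{\tau_j}\|)$, say within $\varepsilon/4$.

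The main obstacle — and the only real content beyond bookkeeping — is the gluing/well-definedness of $\Psi$: the values prescribed on a shared ray $\tau$ of two adjacent maximal cones must agree, which they do automatically if I assign the rational value $c_\tau$ once per ray rather than per cone; so I should index the prescribed values by the rays $\tau \in \Sigma(1)$, not by the maximal cones. Then $\Psi$ is a genuine piecewise-linear function on $\Sigma$, rational because all ray generators and all prescribed values are rational, hence $\Psi \in \mathcal{SF}(\Sigma,\mathbb{Q}) \subset \mathcal{SF}(N_{\mathbb{R}},\mathbb{Q})$. It remains to estimate $\|\Psi - f\|_\infty$ on $\mathcal{S}^{d-1}$. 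Fix $x \in \mathcal{S}^{d-1}$; it lies in some $\hat\sigma$, and $x = \sum_j \lambda_j \, \hat{v}_j$ where $\hat v_j := v_{\tau_j}/\|v_{\tau_j}\|$ are the normalized generators of $\sigma$ and $\lambda_j \geq 0$ with $\sum_j \lambda_j \le 1$ — here I use that the $\hat v_j$ all lie in a common facet of the unit cube (by Remark~\ref{conventions}(3)), so the barycentric-type coefficients of $x$ with respect to them are nonnegative and sum to something $\le 1$ on the sphere; more precisely one works on the cone and normalizes. Since $\Psi$ is linear on $\sigma$ and conical, $\Psi(x) = \sum_j \lambda_j \Psi(\hat v_j) = \sum_j (\lambda_j/\|v_{\tau_j}\|)\, c_{\tau_j}$, and I compare this to $f(x)$ using: (a) $|c_{\tau_j}/\|v_{\tau_j}\| - f(\hat v_j)| < \varepsilon/4$ by choice of the $c$'s; (b) $|f(\hat v_j) - f(x)| < \varepsilon/2$ since $\hat v_j$ and $x$ both lie in $\hat\sigma$ of diameter $<\delta$; (c) $\sum_j \lambda_j = 1$ when $x$ is on the sphere and the $\hat v_j$ span the facet containing $x$ — or, if one prefers to avoid this partition-of-unity point, one interpolates $f$ affinely on each $\hat\sigma$ first (getting a continuous function within $\varepsilon/2$ of $f$ by uniform continuity) and then perturbs the vertex values to rationals, which is cleaner. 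Either way, a triangle inequality gives $|\Psi(x) - f(x)| < \varepsilon$, completing the argument. I would present the affine-interpolation version to sidestep delicate coefficient estimates, noting that the interpolant has rational "slopes" once the vertex values are rationalized, and conclude that $\mathcal{SF}(N_{\mathbb{R}},\mathbb{Q})$ is dense in $(\mathcal{C}(N_{\mathbb{R}}), \|\cdot\|_{\mathcal{C}})$.
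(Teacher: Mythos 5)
Your proposal follows essentially the same route as the paper's proof: fix the sup-norm conventions of Remark~\ref{conventions}, choose simplicial rational fans whose spherical pieces $\hat\sigma$ have diameter below the modulus of uniform continuity of $f$, define the approximating support function by prescribing rational values near $f$ at the (rational, normalized) ray generators, and close with a triangle inequality using that the barycentric coefficients of $x\in\hat\sigma$ sum to $1$. The one small slip is that $\sum_j\lambda_j=1$ exactly (not merely $\le 1$) once $x$ and the $\hat v_j$ lie in a common facet of the unit cube, which is precisely what the paper exploits, so the affine-interpolation detour you sketch is not needed.
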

	\begin{proof}
		Let $f \in \mathcal{C}(N_{\mathbb{R}})$. We will construct a sequence $\lbrace (\Sigma_n , f_n ) \rbrace_{n \in \mathbb{N}}$ consisting of complete fans $\Sigma_n$ in $N_{\mathbb{R}}$ and functions $f_n \in \mathcal{SF}(\Sigma, \mathbb{Q})$ such that $f_n$ converges to $f$ with respect to the $\| \cdot \|_{\mathcal{C}}$-norm. We first construct the fans. Since $f$ is continuous on the compact set $S^{d-1}$, it is uniformly continuous. For each $n>0$ there is a $\delta_n > 0$ such that for every pair $x,y \in \mathcal{S}^{d-1}$ satisfying $\| x-y \| \leq \delta_n$, we have $|f(x) - f(y)| \leq 1/2n$. Following Remark~\ref{conventions}, we can find a sequence of simplicial fans $\lbrace \Sigma_n  \rbrace_{n \in \mathbb{N}}$ such that
		\begin{enumerate}[label=(\roman*)]
			\item For every $n$, $\Sigma_{n+1} \geq \Sigma_n$ and $\Sigma_1 \geq \Sigma_{0}^{\prime}$.
			\item For every cone $\sigma \in \Sigma_n (d)$, the intersection $\hat{\sigma} = \sigma \cap \mathcal{S}^{d-1}$ has diameter at most $\delta_n$.
		\end{enumerate}
		Now, we define the functions $f_n$. Since the fans $\Sigma_n$ are simplicial, a function $g \in \mathcal{SF}(\Sigma_n , \mathbb{Q})$ is uniquely determined by its values on the set 
		\begin{displaymath}
			L_n \coloneqq \lbrace v \in N_{\mathbb{Q}} \, \vert \, v \textup{ is a vertex of } \hat{\sigma}, \, \sigma \in \Sigma_n \rbrace.
		\end{displaymath}
		For each positive $n \in \mathbb{N}$ and vertex $v \in L_n$, choose $r_{n,v} \in \mathbb{Q}$ such that $|r_{n,v} - f(v)| < 1/2n$. Then, define $f_n \in \mathcal{SF}(\Sigma_n , \mathbb{Q})$ by declaring $f_n (v) \coloneqq r_{n,v}$ for each $v \in L_n$. We claim that the sequence $f_n$ converges to $f$ in the $\mathcal{C}$-norm. Let $x \in \mathcal{S}^{d-1}$ and choose $\sigma \in \Sigma(d)$ such that $x \in \hat{\sigma}$. Denote by $v_1,...,v_d$ the vertices of $\hat{\sigma}$ and consider a convex combination $x = \sum_{i=1}^{d} \lambda_i \cdot v_i$. Recall that $f$ is conical and $f_n$ is linear on $\sigma$, then
		\begin{displaymath}
			f(x) - f_n (x) = \sum_{i=1}^{d} \lambda_i (f(x) - f_n (v_i)) = \sum_{i=1}^{d} \lambda_i (f(x)  - f(v_i)) +  \sum_{i=1}^{d} \lambda_i (f(v_i)  - f_n (v_i)).
		\end{displaymath}
		Apply the triangle inequality and substitute to obtain the estimate
		\begin{align*}
			|f(x) - f_n (x)| & \leq \sum_{i=1}^{d}  \lambda_i |f(x) - f(v_i)| + \sum_{i=1}^{d}  \lambda_i |f(v_i) - f_n (v_i)|\\
			& \leq \sum_{i=1}^{d}  \lambda_i/2n + \sum_{i=1}^{d} \lambda_i \cdot |r_{n,v_i} - f(v_i)| \leq 1/n.
		\end{align*}
		This implies $\| f - f_n \|_{\mathcal{C}} < 1/n$.
	\end{proof}
	
	\paragraph*{\textbf{Density in} $\mathcal{C}^{+}(N_{\mathbb{R}})$.} Our goal is to show that the cone $\mathcal{SF}^{+}(N_{\mathbb{R}}, \mathbb{Q})$ of rational support functions is dense in the cone $\mathcal{C}^{+}(N_{\mathbb{R}})$ of conical concave functions. First, we remove the rationality condition on the support functions. Then, we prove the density result.
	\begin{lem}\label{density-2}
		Let $\Sigma$ be a complete fan in $N_{\mathbb{R}}$. Then, the set $\mathcal{SF}^{+}(\Sigma, \mathbb{Q})$ of concave rational support functions on $\Sigma$ is dense in the cone $\mathcal{SF}^{+}(\Sigma, \mathbb{R})$ of concave support functions on $\Sigma$, equipped with the $\mathcal{C}$-norm.
	\end{lem}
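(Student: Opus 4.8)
The plan is to reduce the statement to a finite-dimensional fact about rational polyhedral cones. Since $\Sigma$ is complete, every $\Psi \in \mathcal{SF}(\Sigma,\mathbb{R})$ is a continuous conical function on all of $N_{\mathbb{R}}$, so $\mathcal{SF}(\Sigma,\mathbb{R})$ is a finite-dimensional $\mathbb{R}$-vector subspace of $\mathcal{C}(N_{\mathbb{R}})$ and $\mathcal{SF}(\Sigma,\mathbb{Q})$ is a $\mathbb{Q}$-structure on it; concretely, via the embedding $\Psi \mapsto (m_\sigma)_{\sigma \in \Sigma(d)}$, where $\Psi|_\sigma = \langle m_\sigma,\cdot\rangle$, the space $\mathcal{SF}(\Sigma,\mathbb{R})$ is identified with the $\mathbb{Q}$-subspace of $\prod_{\sigma \in \Sigma(d)} M_{\mathbb{R}}$ cut out by the integral gluing conditions $m_\sigma - m_{\sigma'} \perp \sigma \cap \sigma'$ (cf.\ Proposition~\ref{toric-divisors}), with $\mathcal{SF}(\Sigma,\mathbb{Q})$ its rational points. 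Because all norms on a finite-dimensional space are equivalent, $\mathcal{SF}(\Sigma,\mathbb{Q})$ is already dense in $\mathcal{SF}(\Sigma,\mathbb{R})$ for the $\mathcal{C}$-norm. The key point to stress is that this does \emph{not} by itself give the lemma: rational points of an arbitrary real convex cone need not be dense in it near its boundary. What makes the argument work is that the cone $\mathcal{SF}^{+}(\Sigma,\mathbb{R})$ is \emph{rational polyhedral}, and I would establish this first.

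To see this, recall the standard characterization of concavity of a virtual support function on a complete fan (cf.\ Theorem~\ref{nefdiv} and \S6.1 of~\cite{CLS}): $\Psi \in \mathcal{SF}(\Sigma,\mathbb{R})$ is concave if and only if $\langle m_\sigma, v_\rho \rangle \geq \Psi(v_\rho)$ for every maximal cone $\sigma \in \Sigma(d)$ and every ray $\rho \in \Sigma(1)$, with $v_\rho$ the ray generator. In the coordinates above, both $\Psi \mapsto m_\sigma$ and $\Psi \mapsto \Psi(v_\rho)$ are $\mathbb{Q}$-linear, and the $v_\rho$ are lattice points; hence this is a finite system of homogeneous linear inequalities with rational coefficients, and $\mathcal{SF}^{+}(\Sigma,\mathbb{R})$ is the intersection of $\mathcal{SF}(\Sigma,\mathbb{R})$ with finitely many rational half-spaces through the origin, i.e.\ a rational polyhedral cone.

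The proof then concludes by Minkowski--Weyl: a rational polyhedral cone $C$ in a finite-dimensional $\mathbb{Q}$-vector space can be written as $C = W + \mathrm{cone}(r_1,\dots,r_k)$ with $W$ a rational subspace and $r_1,\dots,r_k$ rational vectors, so here $W$ and the $r_i$ may be taken in $\mathcal{SF}^{+}(\Sigma,\mathbb{Q})$. Given $\Psi \in \mathcal{SF}^{+}(\Sigma,\mathbb{R})$, write $\Psi = w + \sum_i \lambda_i r_i$ with $w \in W$ and $\lambda_i \geq 0$, choose $w' \in W \cap \mathcal{SF}(\Sigma,\mathbb{Q})$ close to $w$ and rationals $\lambda_i' \geq 0$ close to $\lambda_i$, and set $\Psi' := w' + \sum_i \lambda_i' r_i$. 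By Proposition~\ref{prop-conv}, sums and positive scalar multiples of concave support functions on $\Sigma$ are again concave support functions on $\Sigma$, so $\Psi' \in \mathcal{SF}^{+}(\Sigma,\mathbb{Q})$, and $\|\Psi - \Psi'\|_{\mathcal{C}} \leq \|w - w'\|_{\mathcal{C}} + \sum_i |\lambda_i - \lambda_i'|\,\|r_i\|_{\mathcal{C}}$ is arbitrarily small. Thus the only genuinely non-formal ingredient is the concavity characterization recalled above together with the observation that polyhedrality of the cone cannot be bypassed; once that is in place the approximation is mechanical, so I expect the "hard part" to amount to no more than correctly recording the rational linear structure on $\mathcal{SF}(\Sigma,\mathbb{R})$ in the possibly non-simplicial case.
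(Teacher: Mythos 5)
Your proof is correct and takes a genuinely different route from the paper's. The paper's argument is constructive: pick rationals $r_{\rho,\varepsilon}\in[\Psi(v_\rho)-\varepsilon,\Psi(v_\rho)]$ at each ray generator and take $\Psi_\varepsilon$ to be the conical concave envelope of these values (the function whose hypograph is the cone over the points $(v_\rho,r_{\rho,\varepsilon})$ and $(0,-1)$), then check $\Psi-\varepsilon\|\cdot\|\leq\Psi_\varepsilon\leq\Psi$. Your argument is structural: identify $\mathcal{SF}(\Sigma,\mathbb{R})$ with the $\mathbb{Q}$-linear subspace of $\prod_{\sigma\in\Sigma(d)}M_{\mathbb{R}}$ cut out by the rational gluing conditions, observe that the concavity criterion $\langle m_\sigma,v_\rho\rangle\geq\Psi(v_\rho)$ for all $\sigma\in\Sigma(d)$, $\rho\in\Sigma(1)$ realizes $\mathcal{SF}^+(\Sigma,\mathbb{R})$ as a rational polyhedral cone in that space, and then finish by Minkowski--Weyl with small rational perturbations of the coefficients. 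Your emphasis on rational polyhedrality as the structural reason density holds---rather than density of $\mathbb{Q}$-points in an arbitrary real cone, which can indeed fail near the boundary---is exactly the right point to stress, and the correctness of the reduction rests on the detail (which you should record explicitly) that $v_\rho\in N$ is a lattice vector and $\Psi(v_\rho)=\langle m_{\sigma'},v_\rho\rangle$ for any maximal $\sigma'\supset\rho$, so the inequalities are $\mathbb{Q}$-linear in the coordinates $(m_\sigma)$. A further advantage of your route: when $\Sigma$ has non-simplicial maximal cones, an arbitrary admissible choice of the $r_{\rho,\varepsilon}$ need not extend to a single linear functional across such a cone (consider a cone over a square with $r_1=r_2=r_3=0$, $r_4=-\varepsilon$), so the paper's $\Psi_\varepsilon$ is in general only piecewise linear on a subdivision of $\Sigma$ rather than on $\Sigma$ itself; this is harmless for the downstream use in Theorem~\ref{mono-approx-2} since one can always pass to a simplicial refinement, but your Minkowski--Weyl argument has no such caveat because the cone generators already lie in $\mathcal{SF}^+(\Sigma,\mathbb{Q})$ by construction.
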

	\begin{proof}
		Let $\Psi \in \mathcal{SF}^{+}(\Sigma, \mathbb{R})$. Since $\Psi$ is linear on each cone of $\Sigma$, it is determined by its values at rays. Since the $\mathcal{C}$-norm is determined by the values of $\Psi$ on the sphere $\mathcal{S}^{d-1}$, it is enough to approximate the values of $\Psi$ at the normalized ray generators $ \lbrace v_{\rho} \rbrace_{\rho \in \Sigma (1)}$, which are rational by the conventions of Remark~\ref{conventions}. Let $\varepsilon > 0$, for each $\rho$ choose a rational number $r_{\rho, \varepsilon}$ such that
		\begin{displaymath}
			\Psi(v_{\rho}) - \varepsilon \leq r_{\rho, \varepsilon} \leq \Psi(v_{\rho}).
		\end{displaymath}
		Define the function $\Psi_{\varepsilon}  \colon  N_{\mathbb{R}} \rightarrow \mathbb{R}$ as
		\begin{displaymath}
			\Psi_{\varepsilon} (x) \coloneqq \sup \left \lbrace \left. \sum_{ \rho } \lambda_{\rho}  \, r_{\rho, \varepsilon} \right\vert \, \lambda_{\rho} \geq 0, \, x = \sum_{ \rho }  \lambda_{\rho} \, v_{\rho}  \right \rbrace.
		\end{displaymath}
		This is the unique closed concave function whose hypograph is the convex hull of the rays generated by $(v_{\rho}, r_{\rho, \varepsilon})$ and $(0, -1)$ in $N_{\mathbb{R}} \times \mathbb{R}$. Then, $\Psi_{\varepsilon} \in \mathcal{SF}^{+}(\Sigma, \mathbb{Q})$ and for each $\rho \in \Sigma (1)$ satisfies
		\begin{displaymath}
			\Psi(v_{\rho}) \geq \Psi_{\varepsilon}(v_{\rho}) \geq r_{\rho, \varepsilon} \geq \Psi(v_{\rho}) - \varepsilon,
		\end{displaymath}
		where the leftmost inequality is given by the concavity of $\Psi$ and the definition of $\Psi_{\varepsilon}$.
	\end{proof}
	\begin{thm}\label{mono-approx-2}
		The set $\mathcal{SF}^{+}(N_{\mathbb{R}}, \mathbb{Q})$ of concave rational support functions on $N_{\mathbb{R}}$ is dense in the cone $\mathcal{C}^{+}(N_{\mathbb{R}})$ of concave conical functions, equipped with the $\mathcal{C}$-norm. Moreover, for each concave conical function $\Psi \in \mathcal{C}^{+}(N_{\mathbb{R}})$, we can choose an approximating sequence $\lbrace \Psi_n  \rbrace_{n \in \mathbb{N}}$ in $\mathcal{SF}^{+}(N_{\mathbb{R}}, \mathbb{Q})$ which is increasing to $\Psi$.
	\end{thm}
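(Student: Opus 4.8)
The plan is to reduce the statement about conical concave functions on $N_{\mathbb{R}}$ to the already-established density of $\mathcal{SF}^{+}(\Sigma,\mathbb{Q})$ inside $\mathcal{SF}^{+}(\Sigma,\mathbb{R})$ (Lemma~\ref{density-2}), and then handle the monotonicity separately. First I would fix $\Psi \in \mathcal{C}^{+}(N_{\mathbb{R}})$ and, using uniform continuity of $\Psi$ on the sphere $\mathcal{S}^{d-1}$ together with the fan-construction from Remark~\ref{conventions}, produce for each $n$ a complete simplicial fan $\Sigma_n$ (with $\Sigma_{n+1} \ge \Sigma_n$) such that the cones of $\Sigma_n$ cut $\mathcal{S}^{d-1}$ into pieces of diameter at most $\delta_n \to 0$. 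On each such fan, the piecewise-linear interpolant $\widetilde{\Psi}_n$ of $\Psi$ at the (rational) normalized ray generators is a real support function; concavity of $\Psi$ forces $\widetilde{\Psi}_n \le \Psi$, and uniform continuity gives $\|\Psi - \widetilde{\Psi}_n\|_{\mathcal{C}} \le \delta'_n \to 0$. Then I would apply Lemma~\ref{density-2} (rational approximation of a concave support function on a fixed fan, again from below) to replace $\widetilde{\Psi}_n$ by a rational $\Psi'_n \in \mathcal{SF}^{+}(\Sigma_n,\mathbb{Q})$ with $\Psi'_n \le \widetilde{\Psi}_n$ and $\|\widetilde{\Psi}_n - \Psi'_n\|_{\mathcal{C}}$ as small as desired. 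This already yields density, which is the first assertion.

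The second assertion — that the approximating sequence can be taken \emph{increasing} — requires a bit more care, since the naive interpolants on refining fans need not be monotone. The cleanest route is to build the sequence recursively and take running suprema: having constructed $\Psi_1 \le \dots \le \Psi_{n-1}$ in $\mathcal{SF}^{+}(N_{\mathbb{R}},\mathbb{Q})$, I would produce a rational concave support function $\Psi'_n \le \Psi$ with $\|\Psi - \Psi'_n\|_{\mathcal{C}} < 1/n$ by the paragraph above, and then set $\Psi_n := \max\{\Psi_{n-1}, \Psi'_n\}$. The pointwise maximum of finitely many rational concave support functions is again a rational concave support function (defined on a common refinement of the underlying fans — this uses Remark~\ref{refin} and the fact that the max of finitely many linear forms is piecewise linear concave), it still satisfies $\Psi_n \le \Psi$ since each term does, and it satisfies $\Psi_n \ge \Psi'_n$, hence $\|\Psi - \Psi_n\|_{\mathcal{C}} \le \|\Psi - \Psi'_n\|_{\mathcal{C}} < 1/n$. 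By construction $\{\Psi_n\}_{n\in\mathbb{N}}$ is increasing, lies in $\mathcal{SF}^{+}(N_{\mathbb{R}},\mathbb{Q})$, and converges to $\Psi$ in the $\mathcal{C}$-norm; by Corollary~\ref{convergenceconicconcave} the monotone sequence automatically converges in norm, so one only needs to check the limit is $\Psi$, which follows from the sup-bound just displayed.

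The step I expect to be the main obstacle is the first one: arranging that the piecewise-linear interpolant of $\Psi$ on a fine simplicial fan is genuinely a support function on that fan and genuinely lies below $\Psi$, with a uniform error controlled purely by the mesh $\delta_n$. Concavity of $\Psi$ gives $\widetilde{\Psi}_n \le \Psi$ on each simplicial cone (the interpolant of a concave function at the vertices of a simplex is $\le$ the function), but one must also verify that $\widetilde{\Psi}_n$ is globally concave across adjacent cones — this is where the refining/simplicial structure and the fact that we interpolate a single globally concave $\Psi$ is used, rather than arbitrary cone-wise data. Once that local-to-global concavity is pinned down, the error estimate $|\Psi(x) - \widetilde{\Psi}_n(x)| \le \omega_\Psi(\delta_n)$ for $x \in \mathcal{S}^{d-1}$ (with $\omega_\Psi$ the modulus of continuity) is immediate from writing $x$ as a convex combination of the vertices of the simplex containing it, exactly as in the proof of Theorem~\ref{density-1}. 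Everything else — the rational approximation from below, and the $\max$ trick for monotonicity — is routine given Lemmas~\ref{density-2} and Remark~\ref{refin}.
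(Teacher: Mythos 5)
There are two genuine gaps, both in places you flagged as the main obstacle but then waved through.

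First, the claim that the cone-by-cone piecewise-linear interpolant $\widetilde{\Psi}_n$ of a concave conical $\Psi$ on a simplicial fan $\Sigma_n$ is again concave is false for $d \ge 3$. Concavity of $\Psi$ does force $\widetilde{\Psi}_n \le \Psi$ on each cone, but it does \emph{not} force the linear pieces to bend downward across shared facets, and the refining/simplicial structure does not save you. Concretely, take $N_{\mathbb{R}} = \mathbb{R}^3$ and $\Psi(x,y,z) = \min(0,-x,-y)$, a concave conical function that is everywhere finite. Let $\Sigma$ be any complete simplicial fan containing the two adjacent maximal cones $\sigma_1 = c\bigl((0,0,1),(1,0,1),(0,1,1)\bigr)$ and $\sigma_2 = c\bigl((1,0,1),(0,1,1),(1,1,1)\bigr)$, sharing the facet $\tau = c\bigl((1,0,1),(0,1,1)\bigr)$. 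The values at the ray generators are $0,-1,-1,-1$, so the interpolant is $L_1(x,y,z) = -x-y$ on $\sigma_1$ and $L_2(x,y,z) = -z$ on $\sigma_2$. Concavity across $\tau$ requires $L_1 \ge L_2$ on $\sigma_2$, but $L_1(1,1,1) = -2 < -1 = L_2(1,1,1)$. So $\widetilde{\Psi} \notin \mathcal{SF}^{+}(\Sigma,\mathbb{R})$ and you cannot invoke Lemma~\ref{density-2} on it. (In $d=2$ the interpolant does happen to be concave, which may be the source of the misleading intuition; in higher dimensions it is a regularity/coherence condition on the triangulation that generic simplicial fans violate.)

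Second, the running-maximum step is wrong as stated: the pointwise maximum of concave functions is \emph{not} concave. The max of finitely many linear forms is the prototype of a \emph{convex} piecewise linear function, not a concave one ($\max\{x,-x\} = |x|$); it is the \emph{min} of linear forms that is concave. So $\Psi_n := \max\{\Psi_{n-1},\Psi'_n\}$ need not lie in $\mathcal{SF}^{+}(N_{\mathbb{R}},\mathbb{Q})$ and the monotonicity argument collapses. If you want to merge two concave rational support functions while staying below $\Psi$, you should instead pass to the support function of the intersection of their stability sets (equivalently, the concave envelope of the pointwise max), which is again a rational concave support function and still bounded above by $\Psi$.

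The paper repairs both problems with a single device: rather than the per-cone interpolant, it takes the closed concave function $\Psi_n$ whose hypograph is the convex hull of the rays through $(v,\Psi(v))$, for $v$ ranging over a nested family of finite subsets $S_n \subset N^{\textup{pr}}$, together with the downward ray $(0,-1)$. This is concave by construction of the hypograph, is bounded above by $\Psi$ because $\text{hyp}(\Psi)$ is convex and contains all those rays, is piecewise linear on the fan determined by the faces of that convex hull (the induced regular triangulation), and the nesting $S_n \subset S_{n+1}$ gives $\text{hyp}(\Psi_n) \subset \text{hyp}(\Psi_{n+1})$, hence $\Psi_n \le \Psi_{n+1}$, with no need for a running max. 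The paper also dispenses with the mesh/uniform-continuity estimate by verifying pointwise convergence at $N^{\textup{pr}}$ and then invoking Corollary~\ref{convergenceconicconcave}; that is a tidy shortcut but not essential to the repair. Your rational-approximation step via Lemma~\ref{density-2} is fine once the real concave approximants are correctly produced.
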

	\begin{proof}
		Let $\Psi \in \mathcal{C}^{+}(N_{\mathbb{R}})$. By Lemma~\ref{density-2}, it is enough to construct a sequence $ \lbrace \Psi_n  \rbrace_{n \in \mathbb{N}}$ in $\mathcal{SF}^{+}(N_{\mathbb{R}}, \mathbb{R})$ converging to $\Psi$. By Corollary~\ref{convergenceconicconcave}, it is enough to show that the constructed sequence converges pointwise for each $x \in N_{\mathbb{Q}}$. Furthermore, by the conical property of $\Psi$, it is enough to prove convergence for the ray generators $x \in N^{\textup{pr}}$.
		
		Start with any complete fan $\Sigma$ in $N_{\mathbb{R}}$. Denote by $v_{\rho}$ the ray generator of $\rho \in \Sigma (1)$. Consider the countably infinite set $S \coloneqq N^{\textup{pr}} \setminus \lbrace v_{\rho} \, | \, \rho \in \Sigma(1) \rbrace$ and choose a bijection $s \colon  \mathbb{N} \rightarrow S$. Define the sequence of nested sets $\lbrace S_n \rbrace_{n \in \mathbb{N}}$, where
		\begin{displaymath}
			S_n \coloneqq \lbrace v_{\rho} \, | \, \rho \in \Sigma(1) \rbrace \cup s( \lbrace 0,...,n \rbrace ).
		\end{displaymath}
		For each $n \in \mathbb{N}$, let $\Psi_n  \colon  N_{\mathbb{R}} \rightarrow \mathbb{R}$ be the function given by
		\begin{displaymath}
			\Psi_n (x) = \sup \left\lbrace \left. \sum_{v \in S_n} \lambda_v \Psi (v) \, \right\vert \, \lambda_v \geq 0, \, x = \sum_{v \in S_n} \lambda_v \, v  \right \rbrace.
		\end{displaymath}
		By construction, the hypograph of $\Psi_n$ is the convex hull $C_n$ of the rays generated by the vectors $(v,\Psi (v))$, $v \in S_n$, and $(0,-1)$. Denote by $p_1 \colon  N_{\mathbb{R}} \times \mathbb{R} \rightarrow N_{\mathbb{R}}$ the first projection. Then, $\Psi_n$ is a concave support function on the complete fan $\Sigma_n \coloneqq \lbrace p_1 (F) \, | \, F \textup{ proper face of }C_n \rbrace$. The fan $\Sigma_n$ is rational; each cone $p(F)$ is generated by elements of $S_n$. We claim that $\lbrace \Psi_n \rbrace_{n \in \mathbb{N}}$ is the desired sequence. We first prove the monotonicity. The sets $S_n$ are nested, and $\Psi$ is closed and concave. Then, for each $n \leq m$ we have $C_n \subset C_m \subset \textup{hyp}(\Psi)$. It follows that $\Psi_n (x) \leq \Psi_m (x) \leq \Psi(x)$ for all $x \in N_{\mathbb{R}}$. Now, let $v \in S_n$.  By definition of $\Psi_n$, we have $\Psi_n (v) \geq \Psi (v)$. It follows that $\Psi_n (v) = \Psi (v)$ for all $v \in S_n$. The sets $S_n$ are nested, and their union is $N^{\textup{pr}}$. Therefore, for each $v \in N^{\textup{pr}}$ the sequence $\lbrace \Psi_n (v) \rbrace_{n \in \mathbb{N}}$ is eventually constant with limit $\Psi (v)$. The claim follows.
	\end{proof}
	
	\paragraph*{\textbf{Density in} $\mathcal{G}^{+}(N_{\mathbb{R}})$.} Now, we show that the set $\mathcal{PA}^{+}(N_{\mathbb{R}}, \mathbb{Q})$ of concave rational piecewise affine functions is dense in the cone of globally Lipschitz concave functions $\mathcal{G}^{+}(N_{\mathbb{R}})$, equipped with the $\mathcal{G}$-norm. Before proving this, we introduce the following notation and show some auxiliary results.
	\begin{dfn}\label{G-min-rec}
		Let $\Sigma$ be a complete fan in $N_{\mathbb{R}}$. Denote by $\mathcal{G}_{\textup{rbd}}(\Sigma, \mathbb{Q})$ the space consisting of functions $f \in \mathcal{G}(N_{\mathbb{R}})$ such that $f$ is relatively bounded and $\textup{rec}(f) \in \mathcal{SF}(\Sigma, \mathbb{Q})$. Then, we write $\mathcal{G}^{+}_{\textup{rbd}}(\Sigma, \mathbb{Q})$ for its subset of concave functions.
	\end{dfn}
	\begin{lem}\label{density-polytope}
		Consider a globally Lipschitz concave relatively bounded function $f \in \mathcal{G}^{+}_{\textup{rbd}}(N_{\mathbb{R}})$ (\ref{min-sing-NR}) and write $\Psi = \textup{rec}(f)$ for its recession function. Assume further that $\Psi \in \mathcal{SF}^{+}(N_{\mathbb{R}}, \mathbb{Q})$. Then, there exist an increasing sequence $\lbrace f_n \rbrace_{n}$ of concave piecewise affine rational functions on $N_{\mathbb{R}}$ with recession function $\textup{rec}(f_n) = \Psi$ converging uniformly to $f$.
	\end{lem}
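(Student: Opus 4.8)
\textbf{Proof plan for Lemma~\ref{density-polytope}.} The plan is to reduce the statement to the compact-domain picture on the dual side, approximate there, and transport back via the Legendre--Fenchel transform. Write $g \coloneqq f^{\vee}$. By Proposition~\ref{conical-compact}, since $f$ is globally Lipschitz, $g$ is a closed concave function on $M_{\mathbb{R}}$ with bounded effective domain; let $\Delta \coloneqq \textup{cl}(\textup{dom}(g))$, which is a compact convex set, and by Proposition~\ref{LF-prop}~\textit{(iii)} its support function is $\textup{rec}(f) = \Psi \in \mathcal{SF}^{+}(N_{\mathbb{R}},\mathbb{Q})$, so $\Delta$ is in fact a rational polytope. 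The relative-boundedness hypothesis $f - \Psi$ bounded translates, via the identity $(f-\Psi)$ bounded $\Leftrightarrow$ $\textup{stab}(f) = \textup{dom}(g)$ having the same recession behaviour, into the fact that $g$ is bounded on $\Delta$ and controlled near $\textup{rb}(\Delta)$; more precisely, relative boundedness of $f$ forces $\textup{dom}(g)$ to be all of $\Delta$ up to a set of measure zero and $g$ to be bounded. Concretely, $\sup f - \inf(f-\Psi)$ finite gives $0 \in \textup{int}(\textup{stab}(f))$-type control, hence $g$ extends to a bounded closed concave function on $\Delta$ whose values are finite on $\textup{ri}(\Delta)$.

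The core step is then purely convex-geometric on $\Delta$: I would construct a \emph{decreasing} sequence $\{g_n\}_{n\in\mathbb{N}}$ of closed concave functions on $M_{\mathbb{R}}$, each with effective domain exactly $\Delta$, each rational piecewise affine on $\Delta$ (i.e.\ $\textup{hyp}(g_n)$ a rational polytope), such that $g_n \searrow g$ pointwise and $\|g^{\vee}_n - g\|$ is controlled. The construction: since $g$ is closed concave and bounded on the rational polytope $\Delta$, its hypograph $\textup{hyp}(g)$ is a closed convex (possibly non-polyhedral) subset of $M_{\mathbb{R}}\times\mathbb{R}$ with compact base $\Delta$. Pick finite rational point-sets $T_n \subset \textup{ri}(\Delta)$ that become $\delta_n$-dense with $\delta_n \to 0$, together with the vertices of $\Delta$, and let $g_n$ be the closed concave function whose hypograph is the convex hull in $M_{\mathbb{R}}\times\mathbb{R}$ of the points $(t, q_{n,t})$ for $t\in T_n$ (with $q_{n,t}\in\mathbb{Q}$ a rational number slightly above $g(t)$, say $g(t) \le q_{n,t} \le g(t)+1/n$) together with $(v, c_v)$ at each vertex $v$ of $\Delta$ for suitable rationals $c_v \ge g(v)$ — one takes the infimum of finitely many rational affine functions dominating these data. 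By construction $\textup{dom}(g_n)=\Delta$, $g_n$ is rational piecewise affine, $g_n \ge g$ on $T_n$ and everywhere (since $g_n$ is the largest concave function with the prescribed upper bounds, or directly by the convex-hull description and concavity of $g$), and by uniform continuity of $g$ on compact subsets of $\textup{ri}(\Delta)$ (Proposition~\ref{cont-1}) together with the $\delta_n$-density one gets $g_n \to g$ pointwise on $\textup{ri}(\Delta)$; arranging the $q_{n,t}$ and the vertex values monotonically makes $\{g_n\}$ decreasing. This is essentially the argument already used in the proof of Lemma~\ref{mono-approx-1} and Theorem~\ref{closedcone}, specialized so that the domain stays fixed at $\Delta$ rather than shrinking.

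Finally I transport back: set $f_n \coloneqq g^{\vee}_n$. By Remark~\ref{LF-duality} each $f_n$ is closed concave on $N_{\mathbb{R}}$; since $\textup{dom}(g_n)=\Delta$ is a polytope, $f_n$ is globally Lipschitz (Proposition~\ref{conical-compact}), and since $g_n$ is piecewise affine with rational polytopal hypograph, $f_n$ is rational piecewise affine, i.e.\ $f_n \in \mathcal{PA}^{+}(N_{\mathbb{R}},\mathbb{Q})$; moreover $\textup{rec}(f_n)$ is the support function of $\textup{dom}(g_n)=\Delta$, which is exactly $\Psi$, by Proposition~\ref{LF-prop}~\textit{(iii)}. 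Because $\{g_n\}$ is decreasing and converges pointwise to $g$, Corollary~\ref{LF-conv}~\textit{(ii)} gives that $\{f_n\} = \{g^{\vee}_n\}$ is increasing and converges pointwise to $f$; and using the estimate from Lemma~\ref{cauchy-concave-G} applied to the inequalities $g_n \boxplus \iota_{\overline{\textup{B}}(0,1/n)} + 1/n \ge g$ and $g \boxplus \iota_{\overline{\textup{B}}(0,1/n)} + 1/n \ge g_n$ (valid once one throws in a $1/n$-thickening as in Lemma~\ref{mono-approx-1}) yields $|f(x)-f_n(x)| \le \tfrac{1}{n}(1+\|x\|)$ for all $x$, which is exactly Cauchy-ness in the $\mathcal{G}$-norm with limit $f$; by Dini's theorem on compact balls (Corollary~\ref{dini-conc}) the convergence is uniform on compacta, hence uniform on all of $N_{\mathbb{R}}$ since $f-f_n$ is controlled by $\tfrac{1}{n}(1+\|x\|)$ and all the $f_n$ share the same recession function $\Psi$ so $f-f_n$ is bounded. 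The main obstacle I anticipate is bookkeeping the rationality simultaneously with monotonicity and with the domain staying exactly $\Delta$: one must choose the rational perturbations $q_{n,t}, c_v$ so that $g_{n+1} \le g_n$ holds on the nose while keeping $\textup{hyp}(g_n)$ a rational polytope with base precisely $\Delta$ (not a slightly larger or smaller polytope), and one must verify that the relative-boundedness hypothesis really does give boundedness of $g$ on all of $\Delta$ so that the vertices can be handled — if $g$ were $-\infty$ on part of $\textup{rb}(\Delta)$ the vertex values $c_v$ would have to be chosen more carefully (large but finite), which is still fine but needs care.
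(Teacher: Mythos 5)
Your reduction to the dual side is correct, and in fact cleaner than you make it sound: writing $g=f^{\vee}$ and $\Delta=\operatorname{cl}(\operatorname{dom}(g))$, the hypothesis $|f-\Psi|\le C$ dualizes directly (via Example~\ref{LF-constant} and Proposition~\ref{LF-prop}\textit{(vi)}) to $\iota_{\Delta}-C\le g\le\iota_{\Delta}+C$, so $\operatorname{dom}(g)=\Delta$ \emph{exactly} (not ``up to a set of measure zero'') and $|g|\le C$ on all of $\Delta$; moreover, a bounded concave function on a polytope is automatically continuous on the whole polytope (Theorem~10.2 of~\cite{Roc}), which you will need and should say. Note the paper's own proof is just a citation to Propositions~2.5.23 and~2.5.24 of~\cite{BPS}, so your plan is essentially to reprove those; that is fine in principle, but your construction has a real gap.

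The gap is in the construction of $g_n$ and the claim that $g_n\ge g$ ``everywhere~\dots~directly by the convex-hull description and concavity of $g$.'' This is backwards. The closed concave function whose hypograph is the convex hull of finitely many points $(t,q_{n,t})$ with $q_{n,t}\ge g(t)$ is the \emph{lower} concave interpolant; concavity of $g$ forces it to lie \emph{below} $g$ between the nodes, not above. Concretely, with $\Delta=[0,1]$, $g(y)=-y^{2}$, $T_n=\{0,\tfrac12,1\}$ and $q_{n,t}=g(t)+\tfrac1n$, one gets $g_n(\tfrac14)=\tfrac1n-\tfrac18<-\tfrac1{16}=g(\tfrac14)$ for $n>16$. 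So your $f_n=g_n^{\vee}$ would satisfy $f_n\ge f$ and decrease, the opposite of what the lemma asserts. You also describe $g_n$ in the same breath as ``the infimum of finitely many rational affine functions dominating these data,'' which is the \emph{upper} envelope and a genuinely different object; the two constructions are conflated, and only one direction of monotonicity can actually hold. The correct construction is the upper envelope: for a nested family of finite rational sets $\Xi_n\subset\operatorname{ri}(\Delta)$, pick $z_{\xi}\in\partial g(\xi)$ (nonempty by Lemma~\ref{dir-der-2}) so that $a_{\xi}(y)=g(\xi)+\langle z_{\xi},y-\xi\rangle\ge g(y)$, perturb the coefficients of $a_{\xi}$ to rationals while preserving $a_{\xi}\ge g$ on $\Delta$ (possible since $g$ is bounded on the compact polytope $\Delta$, so increasing the constant term by a small rational absorbs the perturbation in the slope), and set $g_n=\min_{\xi\in\Xi_n}a_{\xi}$ restricted to $\Delta$. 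Then $g_n\ge g$, $\{g_n\}$ is decreasing, $\operatorname{dom}(g_n)=\Delta$, $g_n$ is rational piecewise affine, and — using the continuity of $g$ on $\Delta$ noted above — Dini's theorem gives $g_n\to g$ uniformly on $\Delta$, from which $f_n=g_n^{\vee}\nearrow f$ uniformly follows by Corollary~\ref{LF-conv}\textit{(i)}. The remainder of your plan (recession functions via Proposition~\ref{LF-prop}\textit{(iii)}, rationality of $\Delta$, Remark~\ref{LF-duality}) is sound once this is fixed.
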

	\begin{proof}
		This is implied by Propositions~2.5.23~and~2.5.24~in~\cite{BPS}.
	\end{proof}
	The next step is to remove the polyhedral condition $\Psi \in \mathcal{SF}^{+}(N_{\mathbb{R}}, \mathbb{Q})$ in Lemma~\ref{density-polytope}.
	\begin{lem}\label{mono-approx-3}
		Consider a function $f \in \mathcal{G}^{+}_{\textup{rbd}}(N_{\mathbb{R}})$ and write $\Psi$ for its recession function. Let $\lbrace \Psi_n \rbrace_{n \in \mathbb{N}}$ be an increasing sequence in $\mathcal{C}^{+}(N_{\mathbb{R}})$ converging to $\Psi$ in the $\mathcal{C}$-norm. Then, there exists an increasing sequence $\lbrace f_n \rbrace_{n \in \mathbb{N}}$ in $\mathcal{G}^{+}_{\textup{rbd}}(N_{\mathbb{R}})$ and a positive constant $C$ such that:
		\begin{enumerate}[label=(\roman*)]
			\item For each $n$, $\textup{rec}(f_n) = \Psi_n$ and the function $| f_n - \Psi_n |$ is bounded by $C$.
			\item The sequence $\lbrace f_n \rbrace_{n \in \mathbb{N}}$ converges to $f$ in the $\mathcal{G}$-norm.
		\end{enumerate}
	\end{lem}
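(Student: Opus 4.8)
The plan is to build the sequence $\{f_n\}_{n\in\mathbb{N}}$ in two steps: first reduce, by $\mathbb{S}$-symmetrization-free convex-analytic means, to the setting of Lemma~\ref{density-polytope}, and then patch the resulting functions together along the increasing family $\{\Psi_n\}_{n\in\mathbb{N}}$. Concretely, since $f\in\mathcal{G}^{+}_{\textup{rbd}}(N_{\mathbb{R}})$, the difference $f-\Psi$ is bounded, say by a constant $C_0$, so $\Psi - C_0 \le f \le \Psi + C_0$ on $N_{\mathbb{R}}$. The key observation is that for each $n$ the function $\Psi_n$ is itself globally Lipschitz concave (being in $\mathcal{C}^{+}(N_{\mathbb{R}})$), hence $\Psi_n \pm C_0 \in \mathcal{G}^{+}(N_{\mathbb{R}})$, and $\Psi_n \le \Psi$ since the sequence is increasing with limit $\Psi$. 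I would set
\[
f_n \coloneqq \min\{\, f,\ \Psi_n + C_0 \,\},
\]
which is concave by Proposition~\ref{pointwise-conv} (a pointwise infimum of concave functions), is $\le f$, and satisfies $\textup{rec}(f_n)=\min\{\textup{rec}(f),\textup{rec}(\Psi_n+C_0)\}=\min\{\Psi,\Psi_n\}=\Psi_n$ by Proposition~\ref{prop-conv} and the conicality of recession functions. Moreover $f_n \ge \min\{\Psi - C_0,\ \Psi_n + C_0\} \ge \Psi_n - C_0$ (using $\Psi \ge \Psi_n$ and, where $\Psi_n+C_0 < \Psi - C_0$, that still $\Psi_n + C_0 \ge \Psi_n - C_0$), so $|f_n - \Psi_n| \le C_0$, giving $f_n \in \mathcal{G}^{+}_{\textup{rbd}}(N_{\mathbb{R}})$ with the uniform bound $C=C_0$; this is assertion (i). Monotonicity of $\{f_n\}$ in $n$ follows from monotonicity of $\{\Psi_n\}$.

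For assertion (ii), I would invoke Lemma~\ref{mono-conv-G}: since $\{f_n\}_{n\in\mathbb{N}}$ is increasing in $\mathcal{G}^{+}(N_{\mathbb{R}})$, it converges in the $\mathcal{G}$-norm to $f$ provided (a) $\{\textup{rec}(f_n)\}=\{\Psi_n\}$ converges to $\textup{rec}(f)=\Psi$ in the $\mathcal{C}$-norm, and (b) $\{f_n\}$ converges pointwise to $f$. Condition (a) is the hypothesis. For (b), fix $x\in N_{\mathbb{R}}$: $f_n(x) = \min\{f(x),\Psi_n(x)+C_0\}$; since $\Psi_n(x)\to\Psi(x)$ and $\Psi(x)+C_0 \ge f(x)$, eventually $\Psi_n(x)+C_0 \ge f(x)$ fails only if $\Psi(x)+C_0 = f(x)$ exactly — a boundary case one handles by noting $\Psi_n(x)+C_0 \to \Psi(x)+C_0 = f(x)$, so $f_n(x)\to f(x)$ regardless. (If instead $\Psi(x)+C_0 > f(x)$ strictly, then $\Psi_n(x)+C_0 > f(x)$ for $n$ large, hence $f_n(x)=f(x)$ for such $n$.) Thus pointwise convergence holds, and Lemma~\ref{mono-conv-G} yields convergence in the $\mathcal{G}$-norm.

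The one subtlety — and what I expect to be the main obstacle — is making the reduction to \emph{rational} recession functions genuinely produce \emph{rational piecewise affine} $f_n$, rather than merely relatively bounded concave $f_n$. If the statement of Lemma~\ref{mono-approx-3} is taken literally (landing in $\mathcal{G}^{+}_{\textup{rbd}}(N_{\mathbb{R}})$, not necessarily piecewise affine or rational), then the $\min$-construction above suffices as written and the $\Psi_n$ are only required to be in $\mathcal{C}^{+}(N_{\mathbb{R}})$, as hypothesized. If one additionally wants each $f_n$ to be rational piecewise affine, one would first apply Theorem~\ref{mono-approx-2} to replace $\{\Psi_n\}$ by an increasing sequence in $\mathcal{SF}^{+}(N_{\mathbb{R}},\mathbb{Q})$ with the same limit, then for each $n$ apply Lemma~\ref{density-polytope} to $f_n$ (whose recession function is now the rational support function $\Psi_n$) to get an increasing sequence of concave rational piecewise affine functions converging uniformly to $f_n$, and finally extract a diagonal subsequence; the bookkeeping of the uniform constant $C$ across this diagonalization, together with preserving monotonicity in $n$, is the delicate point and would be carried out by choosing the diagonal indices so that the approximants stay nested. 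I would present the simpler $\min$-argument first as the core of the proof and remark on the diagonalization as needed for the piecewise-affine refinement.
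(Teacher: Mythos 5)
Your proposal is correct and takes essentially the same approach as the paper: both define $f_n \coloneqq \min\{f,\,\Psi_n + C\}$ with $C$ a bound for $|f-\Psi|$, verify $|f_n-\Psi_n|\le C$ by the same two-case estimate, and conclude $\mathcal{G}$-norm convergence from pointwise convergence via Lemma~\ref{mono-conv-G}. The extra discussion in your last paragraph about forcing the $f_n$ to be rational piecewise affine is orthogonal to this lemma (that refinement is the content of the subsequent corollary and Theorem~\ref{closure-P-C}, not of Lemma~\ref{mono-approx-3}), and the citation of Proposition~\ref{prop-conv} for the identity $\textup{rec}(\min\{g,h\})=\min\{\textup{rec}(g),\textup{rec}(h)\}$ is imprecise since that proposition does not state such a rule for pointwise minima — but the identity follows directly from the bound $|f_n-\Psi_n|\le C$ you prove anyway, which is how the paper handles it.
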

	\begin{proof}
		Define the real number $C \coloneqq \sup | f- \Psi |$. For each $n \in \mathbb{N}$, let $f_n$ be the function given by
		\begin{displaymath}
			f_n (x) \coloneqq \min \lbrace f (x) , \, \Psi_n (x) + C \rbrace.
		\end{displaymath}
		We claim that the sequence $\lbrace f_n \rbrace_{n \in \mathbb{N}}$ satisfies the desired properties. Indeed, $f_n$ is the pointwise minimum of two closed concave functions; hence, it is closed concave. The sequence $\lbrace \Psi_n  \rbrace_{n \in \mathbb{N}}$ is increasing, therefore $\lbrace f_n  \rbrace_{n \in \mathbb{N}}$ is increasing. Fix $x \in N_{\mathbb{R}}$, then
		\begin{align*}
			\lim_{n \rightarrow \infty} f_n (x) &= \lim_{n \rightarrow \infty} \min \left \lbrace f (x) , \, \Psi_n (x) + C \right \rbrace \\
			&= \min \left \lbrace f (x) , \,  \lim_{n \rightarrow \infty} \Psi_n (x) + C \right \rbrace \\
			&= \min \lbrace f (x) , \, \Psi (x) + C \rbrace = f(x).
		\end{align*}
		Now, we show that $|f_n - \Psi_n|$ is bounded by $C$. By definition, for each $x \in N_{\mathbb{R}}$ we have
		\begin{displaymath}
			f_n(x) - \Psi_n (x) =  \min \lbrace f (x) - \Psi_n (x), \, C \rbrace.
		\end{displaymath}
		The bound is trivial if $f_n(x) = \Psi_n(x)+C$. Otherwise, use the definition of $C$ and that $\lbrace \Psi_n  \rbrace_{n \in \mathbb{N}}$ is increasing to get
		\begin{displaymath}
			-C \leq f(x) - \Psi(x) \leq  f(x) - \Psi_n (x) = f_n(x) - \Psi_n (x)  \leq C.
		\end{displaymath}
		Finally, apply Lemma~\ref{mono-conv-G} to see that $\lbrace f_n \rbrace_{n \in \mathbb{N}}$ converges in the $\mathcal{G}$-norm.
	\end{proof}
	Combining the previous two lemmas yields the following corollary, which we then apply to establish the desired density result.
	\begin{cor}
		The cone $\mathcal{PA}^{+}(N_{\mathbb{R}}, \mathbb{Q})$ of concave rational piecewise affine functions on $N_{\mathbb{R}}$ is a dense subset of the cone $\mathcal{G}^{+}_{\textup{rbd}}(N_{\mathbb{R}})$ of globally Lipschitz concave relatively bounded functions on $N_{\mathbb{R}}$, equipped with the $\mathcal{G}$-norm. 
	\end{cor}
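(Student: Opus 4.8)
The plan is to combine the two preceding lemmas with the density result Theorem~\ref{mono-approx-2} via a diagonal argument. First I would record the (easy) inclusion $\mathcal{PA}^{+}(N_{\mathbb{R}},\mathbb{Q}) \subseteq \mathcal{G}^{+}_{\textup{rbd}}(N_{\mathbb{R}})$: by Remark~\ref{direct-lim} the recession function of a concave rational piecewise affine function on $N_{\mathbb{R}}$ is a rational support function on some complete fan, and the difference between a rational piecewise affine function and its recession function is bounded (this is Proposition~2.6.3 of \cite{BPS}, already invoked in Remark~\ref{direct-lim}); together with Remark~\ref{G-VS}, which identifies $\mathcal{PA}(N_{\mathbb{R}},\mathbb{Q})$ as a subspace of $\mathcal{G}(N_{\mathbb{R}})$, this places such a function in $\mathcal{G}^{+}_{\textup{rbd}}(N_{\mathbb{R}})$. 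It then remains to prove density.

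Next, fix $f \in \mathcal{G}^{+}_{\textup{rbd}}(N_{\mathbb{R}})$ and set $\Psi = \textup{rec}(f) \in \mathcal{C}^{+}(N_{\mathbb{R}})$. By Theorem~\ref{mono-approx-2} there is an increasing sequence $\lbrace \Psi_n \rbrace_{n \in \mathbb{N}}$ in $\mathcal{SF}^{+}(N_{\mathbb{R}},\mathbb{Q})$ converging to $\Psi$ in the $\mathcal{C}$-norm. Applying Lemma~\ref{mono-approx-3} to $f$ and this sequence produces an increasing sequence $\lbrace f_n \rbrace_{n \in \mathbb{N}}$ in $\mathcal{G}^{+}_{\textup{rbd}}(N_{\mathbb{R}})$ with $\textup{rec}(f_n) = \Psi_n$, with $|f_n - \Psi_n|$ uniformly bounded, and with $f_n \to f$ in the $\mathcal{G}$-norm.

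Now each $f_n$ is globally Lipschitz, concave, relatively bounded, and has recession function $\Psi_n \in \mathcal{SF}^{+}(N_{\mathbb{R}},\mathbb{Q})$, so Lemma~\ref{density-polytope} furnishes a sequence $\lbrace f_{n,k} \rbrace_{k \in \mathbb{N}}$ in $\mathcal{PA}^{+}(N_{\mathbb{R}},\mathbb{Q})$ with $\textup{rec}(f_{n,k}) = \Psi_n$ converging to $f_n$ uniformly on $N_{\mathbb{R}}$. Since $1 + \| x \| \geq 1$ for all $x$, a uniform bound $|g(x)| \leq \delta$ forces $\| g \|_{\mathcal{G}} \leq \delta$; hence $f_{n,k} \to f_n$ in the $\mathcal{G}$-norm as $k \to \infty$. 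Choosing for each $n$ an index $k_n$ with $\| f_{n,k_n} - f_n \|_{\mathcal{G}} < 1/n$, the triangle inequality gives $\| f_{n,k_n} - f \|_{\mathcal{G}} \leq \| f_{n,k_n} - f_n \|_{\mathcal{G}} + \| f_n - f \|_{\mathcal{G}} \to 0$, so the diagonal sequence $\lbrace f_{n,k_n} \rbrace_{n \in \mathbb{N}}$ in $\mathcal{PA}^{+}(N_{\mathbb{R}},\mathbb{Q})$ converges to $f$ in the $\mathcal{G}$-norm, establishing density.

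The argument is essentially bookkeeping once the two lemmas are available; the only point needing a moment's care is the interplay of the three convergence notions in play — $\mathcal{C}$-norm convergence of the recession functions feeding Lemma~\ref{mono-approx-3}, uniform convergence coming out of Lemma~\ref{density-polytope}, and $\mathcal{G}$-norm convergence for the final diagonal — which is reconciled by the elementary observation above that uniform convergence dominates $\mathcal{G}$-norm convergence. I do not anticipate any substantive obstacle.
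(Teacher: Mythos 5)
Your argument is correct and follows the same route as the paper: approximate $\textup{rec}(f)$ by an increasing sequence of rational support functions (Theorem~\ref{mono-approx-2}), lift to an increasing sequence in $\mathcal{G}^{+}_{\textup{rbd}}(N_{\mathbb{R}})$ via Lemma~\ref{mono-approx-3}, and then invoke Lemma~\ref{density-polytope}; your explicit diagonal argument and the observation that uniform convergence dominates $\mathcal{G}$-norm convergence are just a careful unpacking of the paper's one-line closing step ``we may choose $f_n$ to be a rational piecewise affine function.''
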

	\begin{proof}
		Let $f \in  \mathcal{G}^{+}_{\textup{rbd}}(N_{\mathbb{R}})$. By Theorem~\ref{density-2}, there exist an an increasing sequence $\lbrace \Psi_n  \rbrace_{n \in \mathbb{N}}$ in $\mathcal{SF}^{+}(N_{\mathbb{R}}, \mathbb{Q})$ converging to $\textup{rec}(f)$ in the $\mathcal{C}$-norm. Then, by Lemma~\ref{mono-approx-3}, there exist an increasing sequence $\lbrace f_n  \rbrace_{n \in \mathbb{N}}$ of globally Lipschitz concave functions converging to $f$ in the $\mathcal{G}$-norm and such that $\textup{rec}(f_n) = \Psi_n$. Finally, by Lemma~\ref{density-polytope}, we may choose $f_n$ to be a rational piecewise affine function.
	\end{proof}
	\begin{thm}\label{closure-P}
		The cone $\mathcal{PA}^{+}(N_{\mathbb{R}},\mathbb{Q})$ of concave rational piecewise affine functions on $N_{\mathbb{R}}$ is dense in the cone $\mathcal{G}^{+}(N_{\mathbb{R}})$ of globally Lipschitz concave functions on $N_{\mathbb{R}}$, equipped with the $\mathcal{G}$-norm. 
	\end{thm}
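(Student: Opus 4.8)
The plan is to deduce the statement by transitivity of density, composing two facts already in hand: the Corollary immediately preceding this theorem, which states that $\mathcal{PA}^{+}(N_{\mathbb{R}},\mathbb{Q})$ is dense in $\mathcal{G}^{+}_{\textup{rbd}}(N_{\mathbb{R}})$ for the $\mathcal{G}$-norm, together with the content of the proof of Theorem~\ref{closedcone}, which exhibits every globally Lipschitz concave function as a $\mathcal{G}$-limit of functions lying in $\mathcal{G}^{+}_{\textup{rbd}}(N_{\mathbb{R}})$. Recall also that $\mathcal{G}^{+}_{\textup{rbd}}(N_{\mathbb{R}}) \subset \mathcal{G}^{+}(N_{\mathbb{R}})$, which is immediate from Definitions~\ref{G-def} and~\ref{min-sing-NR} and Theorem~\ref{closedcone}.

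Concretely, I would fix $f \in \mathcal{G}^{+}(N_{\mathbb{R}})$ and $\varepsilon>0$. By Theorem~\ref{closedcone}, $f$ is a globally Lipschitz concave function, so by Proposition~\ref{conical-compact} its Legendre--Fenchel dual $g=f^{\vee}$ is a proper closed concave function on $M_{\mathbb{R}}$ whose effective domain has nonempty compact convex closure $\Delta = \textup{cl}(\textup{stab}(f))$. Applying Lemma~\ref{mono-approx-1} to $g$ and the constant sequence $\lbrace \Delta \rbrace_{n\in\mathbb{N}}$ (which trivially decreases to $\Delta$ in the Hausdorff distance) yields, via part (iii), an increasing sequence $\lbrace f_n \rbrace_{n\in\mathbb{N}}$ in $\mathcal{G}^{+}_{\textup{rbd}}(N_{\mathbb{R}})$ that is Cauchy in the $\mathcal{G}$-norm with limit $f$. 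Choose $n$ with $\| f - f_n \|_{\mathcal{G}} < \varepsilon/2$, and then apply the preceding Corollary to $f_n \in \mathcal{G}^{+}_{\textup{rbd}}(N_{\mathbb{R}})$ to obtain $p \in \mathcal{PA}^{+}(N_{\mathbb{R}},\mathbb{Q})$ with $\| f_n - p \|_{\mathcal{G}} < \varepsilon/2$. The triangle inequality for the $\mathcal{G}$-norm then gives $\| f - p \|_{\mathcal{G}} < \varepsilon$, which proves the density. One may instead present this as a diagonal argument: for each $n$ pick $p_n \in \mathcal{PA}^{+}(N_{\mathbb{R}},\mathbb{Q})$ with $\| f_n - p_n \|_{\mathcal{G}} < 1/n$, so that $\lbrace p_n \rbrace_{n\in\mathbb{N}}$ converges to $f$ in the $\mathcal{G}$-norm; this variant also records that the approximants can be chosen with recession functions that are genuine rational support functions.

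I do not expect a serious obstacle here, as the theorem is essentially the concatenation of two established density statements; the only points requiring care are purely bookkeeping, namely checking that the sequence produced by Lemma~\ref{mono-approx-1} really lands in $\mathcal{G}^{+}_{\textup{rbd}}(N_{\mathbb{R}})$ (its part (iii)) and that the inclusion $\mathcal{G}^{+}_{\textup{rbd}}(N_{\mathbb{R}}) \subset \mathcal{G}^{+}(N_{\mathbb{R}})$ holds. If one wants to isolate the conceptually substantive step, it is Lemma~\ref{mono-approx-1} itself: that is where the simultaneous shrinking of effective domains and control of the oscillation $|g_n \boxplus \iota_{\overline{\textup{B}}(0,1/n)}+1/n - g|$ are reconciled, and it is precisely what legitimizes the two-step passage from $\mathcal{PA}^{+}$ through $\mathcal{G}^{+}_{\textup{rbd}}$ to all of $\mathcal{G}^{+}$.
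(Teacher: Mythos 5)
Your proof is correct and follows essentially the same route as the paper: compose the density of $\mathcal{PA}^{+}(N_{\mathbb{R}},\mathbb{Q})$ in $\mathcal{G}^{+}_{\textup{rbd}}(N_{\mathbb{R}})$ (the immediately preceding corollary) with the density of $\mathcal{G}^{+}_{\textup{rbd}}(N_{\mathbb{R}})$ in $\mathcal{G}^{+}(N_{\mathbb{R}})$ established in the proof of Theorem~\ref{closedcone}. The only difference is that you re-derive the second density explicitly from Lemma~\ref{mono-approx-1} (applied to the constant sequence $\{\Delta\}$), whereas the paper simply invokes Theorem~\ref{closedcone}; this is a harmless, and arguably clearer, unfolding of the same argument.
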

	\begin{proof}
		By the previous corollary $\mathcal{PA}^{+}(N_{\mathbb{R}},\mathbb{Q})$ is dense in $\mathcal{G}^{+}_{\textup{rbd}}(N_{\mathbb{R}})$. By Theorem~\ref{closedcone}, $\mathcal{G}^{+}_{\textup{rbd}}(N_{\mathbb{R}})$ is dense in $\mathcal{G}^{+}(N_{\mathbb{R}})$. We conclude that $\mathcal{PA}^{+}(N_{\mathbb{R}},\mathbb{Q})$ is dense in $\mathcal{G}^{+}(N_{\mathbb{R}})$.
	\end{proof}
	\paragraph*{\textbf{The closure in the }$\mathcal{C}$\textbf{-norm of} $\mathcal{PA}^{+}(N_{\mathbb{R}}, \mathbb{Q})$.} By Proposition~\ref{eta-2}, if a globally Lipschitz concave function $f \in \mathcal{G}^{+}(N_{\mathbb{R}})$ is the limit $\mathcal{C}$-norm of a sequence $\lbrace f_n  \rbrace_{n \in \mathbb{N}}$ of concave rational piecewise affine functions, then the number $f(0)$ is rational. The final result of this appendix is that this condition is sufficient.
	\begin{thm}\label{closure-P-C}
		Consider a globally Lipschitz concave function $f \in \mathcal{G}^{+}(N_{\mathbb{R}})$ such that $f(0) \in \mathbb{Q}$. Given an increasing sequence $\lbrace \Psi_n  \rbrace_{n \in \mathbb{N}}$ in $\mathcal{SF}^{+}(N_{\mathbb{R}}, \mathbb{Q})$ converging in the $\mathcal{C}$-norm to $\Psi \coloneqq \textup{rec}(f)$, there exist an increasing sequence $\lbrace f_n  \rbrace_{n \in \mathbb{N}}$ in $\mathcal{PA}^{+}(N_{\mathbb{R}}, \mathbb{Q})$ satisfying:
		\begin{enumerate}[label = (\roman*)]
			\item For each $n \in \mathbb{N}$, we have $f_n (0) = f(0)$.
			\item For each $n \in \mathbb{N}$, the recession function $\textup{rec}(f_n)$ is the function $\Psi_n$.
			\item For each $n \in \mathbb{N}$, the inequality $\| f - f_n \|_{\mathcal{C}} \leq 2 \, \varepsilon_n$ holds, where $\varepsilon_n \coloneqq  \| \textup{rec}(f) - \Psi_n \|_{\mathcal{C}}$.
		\end{enumerate}
		In particular, the sequence $\lbrace f_n \rbrace_{n \in \mathbb{N}}$ converges to $f$ in the $\mathcal{C}$-norm.
	\end{thm}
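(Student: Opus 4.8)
Looking at Theorem~\ref{closure-P-C}, the goal is to upgrade the $\mathcal{G}$-norm approximation of a globally Lipschitz concave function $f$ (with $f(0)\in\mathbb{Q}$) by rational piecewise affine concave functions to a $\mathcal{C}$-norm approximation, while also matching the value at the origin and the recession functions exactly. The key conceptual input is Proposition~\ref{eta-3} (and Proposition~\ref{bar-eta}): convergence in the $\mathcal{C}$-norm of a sequence in $\mathcal{PA}^{+}(N_{\mathbb{R}},\mathbb{Q})$ is controlled by (a) eventual constancy of the values at $0$ and (b) uniform convergence of the functions $\overline{\eta}(f_n)$ on the compact set $[0,\infty]\times\mathcal{S}^{d-1}$. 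So the strategy is to build the $f_n$ so that $\overline{\eta}(f_n)\to\overline{\eta}(f)$ is forced, and the natural way to do that is to sandwich $\overline{\eta}(f_n)$ between $\overline{\eta}(f)-\varepsilon_n$ and $\overline{\eta}(f)+\varepsilon_n$ where $\varepsilon_n=\|\textup{rec}(f)-\Psi_n\|_{\mathcal{C}}$.

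The plan is as follows. First I would reduce to the case $f(0)=0$ by translating vertically (subtracting the rational constant $f(0)$), which changes none of the relevant structure and is reversible; this cleans up the bookkeeping. Next, I would exploit the explicit inequality $|f(x)-f_n(x)|\le\varepsilon_n\cdot\|x\|$ being equivalent, via Lemma~\ref{cauchy-concave-G} and the Legendre--Fenchel dictionary (Proposition~\ref{LF-prop}), to a pair of inclusions relating the stability sets $\textup{stab}(f_n^{\vee})$ and $\textup{stab}(f^{\vee})$ up to $\varepsilon_n$-thickening. Concretely, since $\Psi_n\le\Psi=\textup{rec}(f)$ with $\|\Psi-\Psi_n\|_{\mathcal{C}}=\varepsilon_n$, the set $\Delta_n:=\textup{stab}(\Psi_n)\supset\Delta:=\textup{stab}(\Psi)$ and $\Delta_n\subset\Delta+\overline{\textup{B}}(0,\varepsilon_n)$ by Lemma~\ref{geom-convergence}. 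Then, using Lemma~\ref{mono-approx-1} (or a direct construction via the sup-convolution $g\boxplus\iota_{\overline{\textup{B}}(0,\varepsilon_n)}$ restricted to $\Delta_n$, as in its proof) applied to $g=f^{\vee}$, I would manufacture closed concave functions $g_n$ on $M_{\mathbb{R}}$ with $\textup{cl}(\textup{dom}(g_n))=\Delta_n$, continuous (hence bounded) on their domains, whose duals $f_n^{\vee\vee}$ are globally Lipschitz, satisfy $\textup{rec}(f_n)=\Psi_n$, and obey $|f(x)-f_n(x)|\le\varepsilon_n(1+\|x\|)$. The gain needed here over the $\mathcal{G}$-version is to drop the $1$ in $(1+\|x\|)$: this is exactly where $f(0)=0$ must be used, together with arranging $f_n(0)=0$, so that at $x=0$ the difference vanishes and the estimate is genuinely controlled by $\|x\|$ near the origin. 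I expect to get the factor $2\,\varepsilon_n$ in (iii) from combining an $\varepsilon_n$-control of the recession part with an $\varepsilon_n$-control of the bounded remainder after forcing agreement at $0$, via the triangle inequality applied to $\overline{\eta}(f)-\overline{\eta}(f_n)$ at the endpoints $\|x\|=0$ and $\|x\|=\infty$ of the cylinder, where these functions equal $\textup{D}_{+}(\cdot,\hat x)(0)$ and $\textup{rec}(\cdot)(\hat x)$ respectively.

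Then I would make the sequence increasing: replacing $f_n$ by $\min\{f_{n+1},\ldots\}$-type operations preserves concavity (Proposition~\ref{pointwise-conv}) and the recession function is monotone in the pointwise order, but one must be careful that taking minima does not destroy $\textup{rec}(f_n)=\Psi_n$; the cleanest fix is to build the $g_n$ on the duals as an \emph{increasing} sequence directly (the sup-convolutions $g\boxplus\iota_{\overline{\textup{B}}(0,1/n)}$ restricted to the decreasing $\Delta_n$ form a decreasing sequence, whose duals form an increasing sequence), exactly mirroring the argument of Lemma~\ref{mono-approx-1}. Finally, I would promote $f_n$ to rational piecewise affine functions: since $f_n\in\mathcal{G}^{+}_{\textup{rbd}}(N_{\mathbb{R}})$ with $\textup{rec}(f_n)=\Psi_n\in\mathcal{SF}^{+}(N_{\mathbb{R}},\mathbb{Q})$, Lemma~\ref{density-polytope} provides an increasing sequence of rational piecewise affine approximants converging uniformly; a diagonal choice, taking one term close enough that the extra uniform error is $\le\varepsilon_n$ (hence still $\le 2\varepsilon_n$ overall in the $\mathcal{C}$-norm after absorbing, or re-indexing), gives the $f_n$ we want. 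One must check the chosen rational piecewise affine term can be taken with $f_n(0)=f(0)=0$ exactly — this is where $f(0)\in\mathbb{Q}$ is indispensable, since the approximants in Lemma~\ref{density-polytope} can be adjusted (or the construction there already respects a fixed rational value at $0$) to hit the rational value $0$ at the origin without disturbing the recession function or the bound. The main obstacle I anticipate is the simultaneous bookkeeping of \emph{three} constraints — exact value at $0$, exact recession function $\Psi_n$, and the sharp $2\varepsilon_n$ estimate in the $\mathcal{C}$-norm (not merely the $\mathcal{G}$-norm) — because the natural constructions (sup-convolution, restriction to $\Delta_n$, piecewise-affine approximation) each tend to perturb one of the other two; resolving this will require doing the construction on the dual side where the value at $0$ becomes $-g_n(0)=-f_n^{\vee}(0)$ (Proposition~\ref{bounds-concave}) and is easy to pin down, and where the recession/stability-set conditions become the clean inclusions already described.
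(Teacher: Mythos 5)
Your overall plan — normalize $f(0)=0$, translate $\mathcal{C}$-norm control to dual inclusions via Lemma~\ref{cauchy-concave-G}, build the approximants on the dual side, then rationalize — is conceptually reasonable, but two of its load-bearing steps do not work as stated. First, a small but telling slip: you write ``the value at $0$ becomes $-g_n(0)=-f_n^{\vee}(0)$,'' but $-g_n(0)=\sup_x f_n(x)$ by Proposition~\ref{bounds-concave}(i), whereas $f_n(0)=g_n^{\vee}(0)=-\sup_y g_n(y)$. Pinning down $f_n(0)$ therefore means controlling the \emph{supremum} of $g_n$, which the Lemma~\ref{mono-approx-1}-style construction $g_n = \left(g\boxplus\iota_{\overline{\textup{B}}(0,1/n)}+1/n\right)\big|_{\Delta_{k_n}}$ explicitly breaks by the additive shift $+1/n$ (that shift forces $\sup g_n>\sup g$, hence $f_n(0)<f(0)$). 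Dropping the shift does recover $\sup g_n=\sup g$ and hence $f_n(0)=f(0)$, and then Lemma~\ref{cauchy-concave-G} with $c=0$ can in principle give a $\mathcal{C}$-norm estimate on the un-rationalized $f_n$; but you never identify that the shift must go, and your justification of the $\mathcal{C}$-norm bound rests on controlling $\overline{\eta}(f)-\overline{\eta}(f_n)$ ``at the endpoints of the cylinder,'' which is insufficient — the $\mathcal{C}$-norm is a supremum over the whole of $[0,\infty]\times\mathcal{S}^{d-1}$, and the sup-convolution construction gives no direct control of the one-sided directional derivatives at the origin beyond the $\mathcal{G}$-estimate $\varepsilon_n(1+\|x\|)$.

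The more serious, and in my view irreparable, gap is the final ``rationalize by Lemma~\ref{density-polytope} and re-index'' step. Lemma~\ref{density-polytope} produces an approximation in the \emph{uniform} norm $\|\cdot\|_\infty$. A uniform error $\delta>0$, even with $f_n(0)=\tilde f_n(0)=0$ and $\textup{rec}(f_n)=\textup{rec}(\tilde f_n)$ fixed, gives no control of the quotient $|f_n(x)-\tilde f_n(x)|/\|x\|$ as $x\to 0$: the one-sided directional derivatives at $0$ of two concave functions can differ by an amount not bounded by their $\|\cdot\|_\infty$ distance (e.g.\ perturbing $f$ by $-\min\{1/n,\|x\|/2\}$ keeps the uniform error $\le 1/n$ but shifts $\textup{D}_+(\cdot,\hat x)(0)$ by $1/2$). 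So your two-stage scheme — first approximate in $\mathcal{G}^{+}_{\textup{rbd}}$, then rationalize uniformly — cannot deliver (iii). This is exactly what the paper's primal construction is designed to avoid: it works directly with a sequence of rational simplicial complexes on the annuli $[\delta_n,R_n]\times\mathcal{S}^{d-1}$ with mesh controlled by the modulus of uniform continuity of $\overline{\eta}(f)$, chooses rational values at the vertices sandwiched in $[\max\{\Psi_n(x),f(x)-\varepsilon_n\|x\|\},\,f(x)]$, adds $0$ to the generating set of the hypograph (which is where $f(0)\in\mathbb{Q}$ enters), and derives the $2\varepsilon_n$ bound \emph{directly} from this construction by casework on $\|x\|\le\delta_n$, $\delta_n\le\|x\|\le R_n$, and $\|x\|\ge R_n$ — never passing through a uniform approximation that would lose the $\mathcal{C}$-norm estimate.
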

	\begin{proof}
		By Theorem~\ref{mono-approx-2}, there exists an increasing sequence $\lbrace \Psi_n  \rbrace_{n \in \mathbb{N}}$ in $\mathcal{SF}^{+}(N_{\mathbb{R}}, \mathbb{Q})$ converging in the $\mathcal{C}$-norm to $\Psi$. Then, we only need to show that such a sequence $\lbrace f_n \rbrace_{n \in \mathbb{N}}$ exists. We construct it inductively. Without loss of generality, we may assume that $f(0) = 0$. Proceeding as in the proof of Lemma~\ref{mono-conv-G}, there exist $R_n >1$ such that $f(x) \leq  \Psi_n (x) + 2 \, \varepsilon_n \cdot \|x \|$ for all $x$ with norm at least $R_n$. We may assume that $\lbrace R_n  \rbrace_{n \in \mathbb{N}}$ is an increasing, unbounded sequence of rational numbers. Any $f \in \mathcal{G}^{+}(N_{\mathbb{R}})$ such that $f(0)=0$ satisfies $f \geq \textup{rec}(f)$. This is immediate from concavity; if $\lambda >0$ and $x \neq 0$ then
		\begin{displaymath}
			f(x) \geq  f(0) + f( \lambda x )/\lambda = f( \lambda x )/ \lambda.
		\end{displaymath}
		By construction, $\textup{hyp}(\Psi_n)$ is a rational polyhedral cone satisfying $\textup{hyp}(\Psi_n) \subset \textup{hyp}(\Psi) \subset \textup{hyp}(f)$. By compactness, the continuous function $\overline{\eta}(f)$ is uniformly continuous on $[0, R_n] \times \mathcal{S}^{d-1}$. Then, for each $n$ there exist  $\delta_n >0$ such that for all $z,w \in [0, R_n] \times \mathcal{S}^{d-1}$ at distance at most $2 \delta_n$ we have $|\overline{\eta}(f) (z) - \overline{\eta}(f) (w)| \leq \varepsilon_n$. Without loss of generality, we may assume that $\lbrace \delta_n  \rbrace_{n \in \mathbb{N}}$ is a decreasing sequence of rational numbers strictly smaller than $1$, with limit $0$.
		
		Following the conventions in Remark~\ref{conventions}, the set $[ \delta_n, R_n] \times \mathcal{S}^{d-1}$ identifies with the $d$-annulus
		\begin{displaymath}
			\overline{\textup{B}}(0, R_n) \setminus \overline{\textup{B}}(0, \delta_n) = [- R_n, R_n]^d \setminus [- \delta_n , \delta_n ]^d.
		\end{displaymath}
		For each $n$, let $\Pi_n$ be a rational simplicial complex with support $|\Pi_n |= [ \delta_n  , R_n] \times \mathcal{S}^{d-1}$ satisfying the following properties:
		\begin{enumerate}[label=(\arabic*)]
			\item The complex obtained by intersecting $\Pi_n$ with $[\delta_{n-1} , R_{n-1} ] \times \mathcal{S}^{d-1}$ is a subcomplex of $\Pi_n$ which refines $\Pi_{n-1}$.
			\item The diameter of each simplex in $\Pi_n$ is smaller than $\delta_n$.
		\end{enumerate}
		The sequence $\lbrace \Pi_n  \rbrace_{n \in \mathbb{N}}$ always exists; start with a rational simplicial complex with support $ [ \delta_1, R_1] \times \mathcal{S}^{d-1}$ and do successive barycentric subdivisions until the diameter condition is satisfied. Then, the sequence $\lbrace \Pi_n  \rbrace_{n \in \mathbb{N}}$ is constructed inductively. If $\Pi_n$ is already constructed, extend $\Pi_n$ to a rational simplicial complex $\Pi_{n+1}^{\prime}$ with support $ [ \delta_{n+1} , R_{n+1}] \times \mathcal{S}^{d-1}$. The simplicial complex $\Pi_{n+1}$ is obtained by successive barycentric subdivisions until the diameter condition is satisfied.
		
		Now, for each vertex $x \in \Pi_n (0)$, we inductively choose a rational number $r_{x,n} \in \mathbb{Q}$ such that:
		\begin{enumerate}[label=(\arabic*)]
			\item If $x \in  \Pi_{n-1} (0)$, then $r_{x,n} \geq r_{x,n-1}$.
			\item The inequality $\max \lbrace \Psi_n(x), \, f(x) - \varepsilon_n \cdot \| x \| \rbrace  \leq r_{x,n} \leq f(x)$ holds.
		\end{enumerate}
		For each $n$, define a rational polytope $C_n$ and a rational polyhedral set $H_n$ by
		\begin{displaymath}
			C_n \coloneqq \textup{Conv}( \lbrace (x , r_{x,n}) \, \vert \, x \in \Pi_n (0) \rbrace \cup \lbrace 0 \rbrace), \quad H_n \coloneqq C_n + \textup{hyp}(\Psi_n).
		\end{displaymath}
		Since $f$ is closed and concave, $C_n \subset \textup{hyp}(f)$. Then, theorem 19.6~of~\cite{Roc} shows that
		\begin{displaymath}
			H_n = \textup{cl}(\textup{Conv}(C_n \cup \textup{hyp}(\Psi_n))) \subset \textup{hyp}(f).
		\end{displaymath}
		Let $f_n$ be the unique closed concave function such that $\textup{hyp}(f_n) = H_n$. By construction, $\lbrace f_n  \rbrace_{n \in \mathbb{N}}$ is an increasing sequence in $\mathcal{PA}^{+}_{0} (N_{\mathbb{R}}, \mathbb{Q})$, bounded from above by $f$, and such that $\textup{rec}(f_n) = \Psi_n$.
		
		We claim that $\| f_n - f \|_{\mathcal{C}} \leq  2 \, \varepsilon_n$. Note that $f_n (0) = 0$, hence $\Psi_n \leq f_n$. Let $x \in N_{\mathbb{R}}$ such that $\| x \| \geq R_n$, then
		\begin{displaymath}
			f_n (x) \leq f(x) \leq \Psi_n (x) + 2 \, \varepsilon_n \cdot \|x \| \leq f_n (x) +2 \, \varepsilon_n \cdot \|x \| .
		\end{displaymath}
		Let $x \in  [ \delta_n  , R_n] \times \mathcal{S}^{d-1}$ and choose a simplex $S_x \in \Pi_n (0)$ containing $x$. Let $S_x (0)$ be the set of vertices of $S_x$ and write $x$ as a convex combination $x = \sum_{z \in S_x (0)} \lambda_z \cdot z$. By concavity of $f_n$, we know that
		\begin{displaymath}
			f_n (x) \geq \sum_{z \in S_x (0)} \lambda_z \cdot f_n(z) \geq  \sum_{z \in S_x (0)} \lambda_z \left( f(z) - \varepsilon_n \cdot \| z \| \right) = \sum_{z \in S_x (0)} \lambda_z \cdot \| z \| (  f(z)/\| z\| - \varepsilon_n ).
		\end{displaymath}
		The diameter of $S_x$ is less than $\delta_n$, hence $\varepsilon_n \geq \overline{\eta}(f) (x) - \overline{\eta}(f) (z) $ for all $z \in S_x$. After substitution and rearranging, we get $f(z)/ \|z \| \geq f(x)/\|x \| - \varepsilon_n$. We substitute this estimate in the above equation to obtain
		\begin{displaymath}
			f_n (x) \geq  \sum_{z \in S_x (0)} \lambda_z \cdot \| z \| ( f(z)/ \| z\| -  \varepsilon_n ) \geq  \sum_{z \in S_x (0)} \| \lambda_z \cdot z \| ( f(x)/\|x \|  - 2 \,\varepsilon_n ).
		\end{displaymath}
		By the triangle inequality, $ \sum_{z \in S_x (0)} \| \lambda_z \cdot z \| \geq \| x \|$. After substitution in the above inequality,
		\begin{displaymath}
			f_n (x) \geq   ( f(x)/ \|x \| - 2 \,\varepsilon_n )  \sum_{z \in S_x (0)} \| \lambda_z \cdot z \| \geq ( f(x)/\|x \|  - 2 \,\varepsilon_n ) \|x \| = f(x) - 2 \,\varepsilon_n \cdot \|x\|.
		\end{displaymath}
		If $x \in  \overline{\textup{B}}(0, \delta_n)$, there exist a simplex $S_x \in \Pi_n$ such that $S_x \subset \lbrace \delta_n \rbrace \times \mathcal{S}^{d-1}$ and the convex hull $\textup{Conv}(S_x \cup \lbrace 0 \rbrace )$ is contained in $\overline{\textup{B}}(0, \delta_n)$. Then, we may assume $x \neq 0$ and write $x$ as a convex combination of the form $x = \lambda_0 \cdot 0 +  \sum_{z \in S_x (0)} \lambda_z \cdot z$. Note that $f_n (0) = f(0) = 0$. By the concavity of $f_n$ and repeating the argument above, we obtain
		\begin{displaymath}
			f_n (x)  \geq \sum_{z \in S_x (0)} \lambda_z \cdot \| z \| (  f(z) / \| z\| - \varepsilon_n ).
		\end{displaymath}
		Since $x, z \neq 0$ and the diameter of the ball $\overline{\textup{B}}(0, \delta_n)$ is $2 \, \delta_n$, we can repeat the above computation to obtain the bound $	f_n (x)  \geq f(x) - 2 \,\varepsilon_n \cdot \|x\|$.
	\end{proof}

\end{document}